\documentclass[a4paper,twoside,11pt,reqno]{amsart}
\usepackage{mathrsfs,mathtools,amssymb,latexsym,eucal,extarrows,caption,dsfont} 
\usepackage[headings]{fullpage}
\usepackage[dvips]{graphicx} 
\usepackage[usenames]{color}
\usepackage[T1]{fontenc} 
\usepackage[utf8]{inputenc}
\usepackage[shortlabels]{enumitem}
\usepackage{tikz-cd} 

\usepackage[pdfpagelabels, pdftex]{hyperref}
\definecolor{darkblue}{rgb}{0,0,.85} 
\definecolor{darkred}{rgb}{0.84,0,0}
\hypersetup{
  pdftitle={},
  pdfauthor={},
  pdfsubject={},
  pdfkeywords={},
  colorlinks=true,    
  linkcolor=darkred,     
  citecolor=darkblue,     
  filecolor=darkblue,      
  urlcolor=darkblue,       
  breaklinks=true,
  bookmarksopen=true,
  bookmarksnumbered=true,
  pdfpagemode=UseOutlines,
  plainpages=false}
\usepackage[capitalise]{cleveref} 

\usepackage[kerning=true,spacing=true]{microtype}
    \microtypecontext{spacing=nonfrench}

\usetikzlibrary{arrows.meta, positioning, decorations.markings,decorations.pathmorphing} 

\usepackage{scalerel,stackengine}
\stackMath
\newcommand\widerhat[1]{%
\savestack{\tmpbox}{\stretchto{%
  \scaleto{%
    \scalerel*[\widthof{\ensuremath{#1}}]{\kern-.6pt\bigwedge\kern-.6pt}%
    {\rule[-\textheight/2]{1ex}{\textheight}}
  }{\textheight}%
}{0.5ex}}%
\stackon[1pt]{#1}{\tmpbox}%
}

\DeclareRobustCommand{\SkipTocEntry}[5]{}

    \DeclareMathOperator{\rH}{H}

    \renewcommand{\mathbb}{\mathds}
    \newcommand{\bA}{{\mathbb A}}
    \newcommand{\bC}{{\mathbb C}}
    \newcommand{\bF}{{\mathbb F}}
    \newcommand{\bN}{{\mathbb N}}
    \newcommand{\bP}{{\mathbb P}}
    \newcommand{\bQ}{{\mathbb Q}}
    
    \newcommand{\bZ}{{\mathbb Z}}
    \newcommand{\bD}{{\mathbb D}}
    
    \newcommand{\cC}{{\mathcal C}}
    
    \newcommand{\cG}{{\mathcal G}}
    \newcommand{\cM}{{\mathcal M}}
    \newcommand{\cO}{{\mathcal O}}
    
    \newcommand{\cX}{{\mathcal X}}
    
    \newcommand{\cU}{{\mathcal U}}

    \newcommand{\fp}{{\mathfrak p}}
    \newcommand{\fq}{{\mathfrak q}}

    \newcommand{\fW}{{\mathfrak{W}}}
    \newcommand{\fX}{{\mathfrak{X}}}
    \newcommand{\fY}{{\mathfrak{Y}}}
    \newcommand{\fZ}{{\mathfrak{Z}}}

    \DeclareRobustCommand\longtwoheadrightarrow {\relbar\joinrel\twoheadrightarrow}
    \newcommand{\surj}{\longtwoheadrightarrow} 
   
    \newcommand{\inj}{\hookrightarrow}
    \newcommand{\isomto}{\xlongrightarrow{\,\smash{\raisebox{-0.65ex} 
    {\ensuremath{\displaystyle\sim}}}\,}}
    
    \newcommand{\la}{\longrightarrow} 
    \tikzset{ 
        open/.style = {decoration = {markings, mark = at position 0.5 with { \node[transform shape, scale = .7] {$\circ$}; } }, postaction = {decorate} }
    }

    \DeclareMathOperator{\id}{id}
    \DeclareMathOperator{\pr}{pr}
    \DeclareMathOperator{\Hom}{Hom}
    \DeclareMathOperator{\Ext}{Ext}
    \DeclareMathOperator{\Aut}{Aut}

    \DeclareMathOperator{\Gal}{Gal}
    \DeclareMathOperator{\Spec}{Spec}
    \DeclareMathOperator{\Spf}{Spf}
    \DeclareMathOperator{\Spa}{Spa}
    \DeclareMathOperator{\idem}{idem}
    \DeclareMathOperator{\dlog}{dlog}
            
    \newcommand{\et}{\text{\rm \'et}} 
    \newcommand{\op}{{\rm op}}
    \newcommand{\sep}{{\rm sep}} 
    \newcommand{\sh}{{\rm sh}}   
    \newcommand{\gp}{{\rm gp}} 
    
    \newcommand{\rh}{{\rm h}}  

    \newcommand{\rt}{{\rm t}}  

    \newcommand{\cat}[1]{\mathbf{#1}}    

    \newcommand{\Gsets}[1]{{#1\text{-}\sets}}   

    \newcommand{\FEt}{\cat{F\acute{E}t}}  
    \newcommand{\sets}{\cat{sets}} 
    \newcommand{\Sets}{\cat{Sets}}
    \newcommand{\Sch}{\cat{Sch}}

    \newcommand{\hZ}{{\widehat{\bZ}}}

    \newcommand{\ov}[1]{\overline{#1}}
    \newcommand{\vtheta}{\vartheta}
    \newcommand{\ph}{\varphi}
    \newcommand{\ep}{\varepsilon}

    \newcommand{\stacks}[2][Tag]{\cite[\href{https://stacks.math.columbia.edu/tag/#2}{#1~#2}]{StacksProject}}

    \newcommand{\typeVd}{(\mathrm{V}_{\rm div})}
    \newcommand{\pitame}{\pi_1^{\mathrm{t}}}
    \newcommand{\FEtt}{{\cat{F\acute{E}t}}^{\!\mathrm{t}}}
    \DeclareMathOperator{\Desc}{DD}
    \DeclareMathOperator{\spe}{sp}
    \newcommand{\rig}{{\rm rig}}

    \makeatletter
    \def\subsubsection{\@startsection{subsubsection}{3}%
      \z@{.5\linespacing\@plus.7\linespacing}{-.5em}%
      {\normalfont\bfseries}}
    \makeatother
    
    \crefformat{subsection}{\S#2#1#3}
    
    \crefname{prop}{Proposition}{Propositions}
    \crefname{lem}{Lemma}{Lemmas}
    \crefname{cor}{Corollary}{Corollaries}
    \crefname{defi}{Definition}{Definitions}
    \crefname{rmk}{Remark}{Remarks}
    \crefname{rmks}{Remarks}{Remarks}
    \crefname{nota}{Notation}{Notations}
    \crefname{setup}{Setup}{Setups}
    \crefname{ex}{Example}{Examples}
    \crefname{fact}{Fact}{Facts}

    \AddToHook{env/prop/begin}{\crefalias{thm}{prop}}
    \AddToHook{env/lem/begin}{\crefalias{thm}{lemma}}
    \AddToHook{env/cor/begin}{\crefalias{thm}{cor}}
    \AddToHook{env/defi/begin}{\crefalias{thm}{defi}}
    \AddToHook{env/rmk/begin}{\crefalias{thm}{rmk}}
    \AddToHook{env/rmks/begin}{\crefalias{thm}{rmks}}
    \AddToHook{env/nota/begin}{\crefalias{thm}{nota}}
    \AddToHook{env/setup/begin}{\crefalias{thm}{setup}}
    \AddToHook{env/ex/begin}{\crefalias{thm}{ex}} 
    \AddToHook{env/fact/begin}{\crefalias{thm}{fact}}
    
\makeatletter

\renewcommand{\tocsection}[3]{%
  \indentlabel{\@ifnotempty{#2}{\bfseries\ignorespaces#1 #2\quad}}\bfseries#3}

\renewcommand{\tocsubsection}[3]{%
  \indentlabel{\@ifnotempty{#2}{\ignorespaces#1 #2\quad}}#3}

\newcommand\@dotsep{4.5}
\def\@tocline#1#2#3#4#5#6#7{\relax
  \ifnum #1>\c@tocdepth 
  \else
    \par \addpenalty\@secpenalty\addvspace{#2}%
    \begingroup \hyphenpenalty\@M
    \@ifempty{#4}{%
      \@tempdima\csname r@tocindent\number#1\endcsname\relax
    }{%
      \@tempdima#4\relax
    }%
    \parindent\z@ \leftskip#3\relax \advance\leftskip\@tempdima\relax
    \rightskip\@pnumwidth plus1em \parfillskip-\@pnumwidth
    #5\leavevmode\hskip-\@tempdima{#6}\nobreak
    \leaders\hbox{$\m@th\mkern \@dotsep mu\hbox{.}\mkern \@dotsep mu$}\hfill
    \nobreak
    \hbox to\@pnumwidth{\@tocpagenum{\ifnum#1=1\bfseries\fi#7}}\par
    \nobreak
    \endgroup
  \fi}
\AtBeginDocument{%
\expandafter\renewcommand\csname r@tocindent0\endcsname{0pt}
}
\def\l@subsection{\@tocline{2}{0pt}{2.5pc}{5pc}{}}
\makeatother
    
    \newtheorem{thm}{Theorem}[subsection]
    \newtheorem{prop}[thm]{Proposition}
    \newtheorem{lem}[thm]{Lemma}
    \newtheorem{cor}[thm]{Corollary}
    \newtheorem{thmABC}{Theorem} 
    
    \theoremstyle{definition}
    \newtheorem{defi}[thm]{Definition}
    \newtheorem{defiABC}[thmABC]{Definition} 
    
    \newtheorem{rmk}[thm]{Remark}
    \newtheorem{rmks}[thm]{Remarks}
    
    \newtheorem{setup}[thm]{Setup}
    \newtheorem{ex}[thm]{Example}
    \newtheorem{fact}[thm]{Fact} 
    
    \numberwithin{equation}{subsection} 
    
\usepackage[style=alphabetic, 
    backend=biber, 
    natbib=true,
    url=false,
    doi=false, 
    isbn=false,
    eprint=false, maxbibnames=99,
    maxalphanames=6, minalphanames=3]{biblatex}
\DeclareFieldFormat[article, inbook, incollection, inproceedings, misc, thesis, unpublished]{title}{\textit{#1}}
\DeclareFieldFormat{journaltitle}{{\rmfamily #1}}
\DeclareFieldFormat{booktitle}{{\rmfamily #1}}
\DeclareFieldFormat{pages}{#1}
\renewrobustcmd*{\bibinitdelim}{\addnbthinspace{}}
\renewrobustcmd*{\bibnamedelima}{\addnbthinspace{}}
\renewrobustcmd*{\bibnamedelimd}{\addnbthinspace{}}

\DeclareBibliographyDriver{article}{%
  \usebibmacro{author/editor}%
  \newunit
  \usebibmacro{title}%
  \newunit
  \usebibmacro{journal}%
  \setunit*{\addspace}%
  \printtext{\textbf{\printfield{volume}}}%
  \setunit*{\addspace}%
  \printtext[parens]{\printfield{year}}%
  \setunit{\addcomma\space}%
  \iffieldundef{number}{}{\printtext{no.~\printfield{number}}}%
  \setunit{\addcomma\addspace}%
  \printfield{pages}%
  \iffieldundef{pubstate}{}{\addcomma\addspace\printfield{pubstate}%
  	 \iffieldundef{doi}{}{\addcomma\addspace\printfield{doi}}%
	 }%
  \usebibmacro{finentry}}
    
\begin{document}

\hrule width\hsize
\vskip 1cm

\title{Tame fundamental groups of rigid spaces} 

\author{Piotr Achinger}
\address{Piotr Achinger
\newline \indent
Instytut Matematyczny PAN, Śniadeckich 8, 00-656 Warsaw, Poland 
\newline \indent
Kyiv School of Economics, Shpaka 3, 02-000 Kyiv, Ukraine}
\email{pachinger@impan.pl}

\author{Katharina H\"ubner}
\address{Katharina H\"ubner
\newline \indent
Institut f\"ur Mathematik, Goethe--Universit\"at Frankfurt 
\newline \indent
Robert-Mayer-Stra\ss e~6--8, 60325~Frankfurt am Main, Germany}
\email{huebner@math.uni-frankfurt.de}

\author{Marcin Lara}
\address{Marcin Lara
\newline \indent
Instytut Matematyczny PAN, Śniadeckich 8, 00-656 Warsaw, Poland}
\email{marcin.lara@impan.pl}

\author{Jakob Stix}
\address{Jakob Stix
\newline \indent
Institut f\"ur Mathematik, Goethe--Universit\"at Frankfurt
\newline \indent
Robert-Mayer-Str.~6--8, 60325~Frankfurt am Main, Germany}
\email{stix@math.uni-frankfurt.de}

\newcommand{\grants}{The first author (PA) was supported by the project KAPIBARA funded by the European Research Council (ERC) under the European Union's Horizon 2020 research and innovation programme (grant agreement No 802787). The second to fourth authors (KH, ML, and JS) acknowledge support by Deutsche Forschungsgemeinschaft (DFG) through the Collaborative Research Centre TRR 326 \emph{Geometry and Arithmetic of Uniformized Structures} project number 444845124. The third author (ML) was later supported by the National Science Centre, Poland, grant number 2023/51/D/ST1/02294. For the purpose of Open Access, the author has applied a CC-BY public copyright licence to any Author Accepted Manuscript (AAM) version arising from this submission.}

\date{\today} 

\maketitle

\begin{quotation} 
    \noindent {\small {\bf Abstract} --- 
    We introduce the tame \'etale fundamental group $\pi_1^{\rm t}(X/K)$ of a rigid space $X$ over a non-archimedean field $K$. We show that if $X$ is qcqs and $K$ has topologically finitely generated tame Galois group (e.g.\ algebraically closed or a local field), then $\pitame(X/K)$ is topologically finitely generated. If $X$ is moreover the rigid generic fibre of a strictly semistable formal scheme such that the smooth locus of its special fibre admits a projective snc compactification, then $\pi_1^{\rm t}(X/K)$ is topologically finitely presented. The proofs rely on techniques of logarithmic geometry (extended beyond its usual scope of finitely generated monoids), in particular on an analogous finiteness statement for the tame log \'etale fundamental group, and on the  ``vertical compactification'' of a map of adic spaces.}
\end{quotation}

\hypersetup{linkcolor=black}
{\small \tableofcontents}
\hypersetup{linkcolor=darkred}

\section{Introduction}
\label{s:intro}

Rigid-analytic spaces over non-archimedean fields of mixed or positive characteristic have intricate local and global topology. For example, the \'etale fundamental group of the affinoid unit disc is not finitely generated\footnote{We call a profinite group finitely generated if it is topologically finitely generated, i.e.\ if it admits a finitely generated dense subgroup.}. For a smooth and proper rigid-analytic variety $X$ over $\mathbb{C}_p$, it is known thanks to Scholze \cite{Scholze} that the \'etale cohomology groups $\rH^*(X,\bZ/n\bZ)$ are finite, and it is expected\footnote{We learned of this question from Bogdan Zavyalov.} that its \'etale fundamental group is finitely generated, maybe even finitely presented. This is known to hold if $X$ is the analytification of an algebraic variety, but the general case of this question seems to require genuinely new ideas. 

The `culprit' responsible for the lack of finite generation in the affinoid case, and for the difficulty in the smooth and proper case, is the phenomenon of wild ramification. 
Indeed, a finite \'etale cover of a proper rigid-analytic variety may be wildly ramified along the special fibre of a formal model.
In this paper, we avoid this issue and develop a theory of tame fundamental groups $\pitame(X/K)$ of rigid spaces over a non-archimedean field $K$. We show that this invariant is reasonably well-behaved, most notably that it is finitely generated if $X$ is qcqs and $K$ is algebraically closed or a local field.

This will go in parallel to the story for schemes, which we will now recall briefly as a point of reference. Let $X$ be a connected scheme of finite type over an algebraically closed field $k$. A finite \'etale morphism $f\colon Y\to X$ is tame (or ``tamely ramified at infinity'') if for every $y\in Y$, the extension $k(y)/k(f(y))$ is tamely ramified with respect to every valuation of $k(y)$ which is trivial on $k$ (see \cref{sec:tame pi1} for a review of this and other notions of tameness). Tame covers form a Galois category corresponding to the tame \'etale fundamental group $\pitame(X/k)$, a quotient of the \'etale fundamental group $\pi_1(X)$. Finite generation of $\pitame(X/k)$ has been established in various parts by Grothendieck (if $X$ is a smooth curve \cite[XIII, Corollaire 2.12]{SGA1} or a proper variety \cite[X, Th\'eor\`eme 2.9]{SGA1}), Esnault--Kindler \cite{EsnaultKindler2016:LefschetzTheoremsTamely} (if $X$ admits a projective snc compactification), and in \cref{thm:naked pi1tame fg} of this paper in the general case. Recently, finite presentation of $\pitame(X/k)$ has been proved in \cite{EsnaultShustermanSrinivas2022:FinitePresentationTame} (if $X$ admits a projective snc compactification) and  \cite{LaraSrinivasStix2024:FundamentalGroupsProper} (if $X$ is proper), but remains an open question in general.

\addtocontents{toc}{\SkipTocEntry}
\subsection*{Main results}

Let $K$ be a non-archimedean field, i.e.\ a field complete with respect to a rank one valuation. By a rigid space over $K$ we shall mean an adic space locally of finite type over $\Spa(K)$. The difficulty in giving the right definition of tameness in the context of rigid spaces stems from subtleties surrounding the idea of ``boundary at infinity''. In fact, there are two natural notions of tameness, absolute and relative tameness, and it is the relative one that one should use in order to have hopes for finiteness, as we explain below.

Absolute tameness makes use of the fact that every point of an adic space is equipped with a valuation subring $k(x)^+\subseteq k(x)$ of its residue field. 
By definition (see \cite[Definition~3.3]{Hubner2021:AdicTameSite}), a finite \'etale morphism $f\colon Y\to X$ of adic spaces is tame if for every $y\in Y$, the extension of valued fields $(k(y), k(y)^+)/(k(f(y)), k(f(y))^+)$ is tamely ramified. Using this absolute notion of tame covers, we can define the \emph{absolute} tame fundamental group $\pitame(X)$ of a connected adic space $X$. 

However, the tame fundamental group $\pitame(X)$ of a connected rigid space $X$ over $K$ is not topologically finitely generated in general. For example, if $X$ is the affinoid unit disc over $K = \bC_p$, the surjective specialisation map of étale fundamental groups
\[ 
    \pi_1(X) \la \pi_1(\bA^1), \qquad \bA^1 = \Spec(\overline{\mathbb{F}}_p[T])
\]
factors through $\pitame(X)$, and as $\pi_1(\bA^1)$ is not finitely generated, neither is $\pitame(X)$. This is because, by definition of $X = \Spa(K\langle T\rangle)$, we are only testing tameness at valuations on $K\langle T\rangle$ for which $|T(x)|\leq 1$, or which have a centre on $\bA^1$. At the same time, the wild ramification expressed by the non-finite generation of $\pi_1(\bA^1)$ happens ``at infinity'' of $\bA^1$ and is witnessed by a valuation of $\overline{\mathbb{F}}_p[T]$ with $|T(x)|>1$. We refer to \cref{ex:unit-disc-Cp} for more details. In order to address this disparity, we need to add certain boundary points to $X$ corresponding to the \emph{infinity in the special fibre}. This example motivates our \emph{relative} notion of tameness:

\begin{defiABC}[{see \cref{def:tame-rig}}]
    A finite \'etale morphism of rigid spaces $f\colon Y\to X$ is {\bf tame relative to $K$} if for every $y\in Y$, the field extension $k(y)/k(f(y))$ is tamely ramified with respect to every valuation on $k(y)$ whose valuation ring $V$ satisfies $K^\circ\subseteq V\subseteq k(y)^+$.
\end{defiABC}

If $X$ is quasi-compact and separated, one can rephrase this condition using Huber's universal compactification \cite[\S 5.1]{HuberBook}: $Y\to X$ is tame relative to $K$ if the induced finite \'etale morphism $\ov{Y}\to\ov{X}$ of universal compactifications (relative to $K$)  is tame in the absolute sense defined previously (see \cref{prop:tame-S-compactification}). If $X$ is connected and non-empty, then covers $Y\to X$ which are tame relative to $K$ form a Galois category, leading to the \textit{tame relative to $K$} fundamental group $\pitame(X/K)$. 

This turns out to be a well-behaved invariant. For example, $\pitame(X/K)=1$ if $X$ is the unit disc and $K=\overline{K}$ (see \S\ref{ss:tamepi1-rig-fgfp} for further examples).  The following two theorems are our main results regarding $\pitame(X/K)$.

\begin{thmABC}[{see \cref{thm:fg-pi1-rig}}] \label{thmABC:analytic pi1tame fg}
    Suppose that the tame Galois group of $K$ is finitely generated (e.g.\ $K$ is algebraically closed or it is discretely valued and its residue field is either algebraically closed or finite). Let $X$ be a non-empty connected, quasi-compact and quasi-separated rigid-analytic space over $K$. 
    Then $\pitame(X/K)$ is finitely generated.
\end{thmABC}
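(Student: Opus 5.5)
The plan is to reduce, via formal models and logarithmic geometry, to a finiteness statement for a tame log étale fundamental group of a log scheme over the residue field, and then to the scheme case (\cref{thm:naked pi1tame fg}).

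First, the homotopy exact sequence for $X \to \Spa(K)$ allows us to reduce to the case where $K$ is algebraically closed, or at least to a geometric statement. Finite generation of $\pitame(X/K)$ follows from finite generation of the image of $\pitame(X_{\widehat{\ov K}}/\widehat{\ov K})$ together with finite generation of the tame Galois group of $K$. To pass to this situation, I would check that the tame relative fundamental group is compatible with completed algebraic extensions, using the valuation-theoretic definition, and I would use a limit argument over finite tame extensions. Next, since $X$ is qcqs, there is a finite cover by affinoids. Using an alteration-type or semistable reduction result over the possibly non-discrete valuation ring $K^\circ$ (this is where monoids that are not finitely generated enter), I would find a proper hypercovering, or at least a surjective proper map $X' \to X$, from a space with a semistable (log smooth) formal model $\fX$. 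For a proper surjective map, finite generation descends: the map $\pitame(X'_i/K)\to\pitame(X/K)$ from finitely many connected components has image of finite index, possibly after a Stein-factorization argument. So it suffices to treat $X$ with a log smooth formal model $\fX$ over $K^\circ$.

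For such $X$, the key comparison is a specialization map. The main step will be to show that covers of $X$ tame relative to $K$ extend, after a possibly non-finitely generated base change of the log structure on $K^\circ$, to Kummer log étale covers of $\fX$ that are tame along a horizontal boundary. Equivalently, they correspond to tame log étale covers of the special fibre $\fX_s$ with its induced log structure, viewed relative to the log point. This is where the vertical compactification of $X \to \Spa(K)$ is needed: the points of the universal compactification lying over infinity in the special fibre must be matched with a log compactification of $\fX_s$. This yields a surjection $\pitame^{\log}(\fX_s^\times) \surj \pitame(X/K)$.

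Finite generation of the tame log fundamental group of a log scheme of finite type over an algebraically closed field should follow from \cref{thm:naked pi1tame fg} for the underlying smooth strata by dévissage over the stratification by log rank: local log tame inertia is abelian and finitely generated, namely $\hZ'(1)^r$, and there are finitely many strata.

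I expect the main obstacle to be the surjectivity of the specialization. One must show that every cover that is tame relative to $K$ is log étale on a suitable model, which is a purity or Abhyankar-type statement for tame covers over a rank one, non-noetherian base. I would first attempt this by proving it on the generic points of the components of the special fibre via Abhyankar's lemma for valued fields, and then extend it by log purity.
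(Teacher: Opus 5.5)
Your architecture matches the paper's: reduce to $K$ algebraically closed, pass to a cover with a strictly semistable model, identify its group with the tame group of the log special fibre, then reduce by d\'evissage to \cref{thm:naked pi1tame fg}. But the descent step you state is false, and the reduction to semistable models depends on it. For a surjection $X'\to X$, the images of the groups $\pitame(X'_i/K)$ need not generate a subgroup of finite index. Take an algebraically closed complete $C$ and the normalisation $\bP^{1,{\rm an}}_C\to X$ of the analytified nodal cubic. The source has trivial fundamental group by GAGA, whereas the cyclic $n$-gon covers with $p\nmid n$ show $\pitame(X/C)\surj\hZ'$. Stein factorisation does not help, since the map is already finite.

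What is true is that $\FEtt_{X/K}\to\Desc(X'/X,\FEtt_{-/K})$ is fully faithful for \emph{every} surjection of qcqs rigid spaces, because such maps are topologically submersive (\cref{lem:submersive}, \cref{lem:univ-top-subm-descent}). The van Kampen formula (\cref{prop:DD-fgfp}) then generates $\pitame(X/K)$ by the vertex groups \emph{together with} the profinite fundamental group of the graph with vertex set $\pi_0(X')$ and edge set $\pi_0(X'\times_X X')$. So you need both sets to be finite. Properness then plays no role. That matters, because over a general $K^\circ$ only local uniformisation is available. The paper combines Gabber's uniformisation (an $h$-cover refined to normal form and relatively analytified, giving a smooth qcqs $Y\surj X$, \cref{prop:h-locally-regular}) with Temkin's altered local uniformisation, which gives an \emph{\'etale}, non-proper surjection from the generic fibre of a \emph{strictly} semistable formal scheme (\cref{thm:Temkin-unif}).

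Three further steps need repair.

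(i) The sequence $\pitame(X_C/C)\to\pitame(X/K)\to\pitame(K)\to 1$ is not exact in general. A wildly ramified $L/K$ can make $X_L\to X$ tame relative to $K$ (\cref{ex:moregeneral-twisted-nodal-curve}), and a limit over tame extensions does not detect this. The correct quotient is the group of the category $\cC_{X/K}$ of such $L$. It surjects onto $\pitame(K)$ with finite kernel, because every such $L$ lies in $K'K^t$ for the residue field $K'$ of a classical point. This suffices once $X$ has been made geometrically connected, which you also omit (\cref{prop:weak-fundamental-exact-seq}).

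(ii) Your purity plan matches the paper: tameness at boundary points, then classical log purity after Zavyalov's approximation of $K^\circ$ (\cref{thm:Abhyankar-formal-more-precise}). But it only uses \emph{absolute} tameness. For $\pitame(\fX^{\log}_k/k)\surj\pitame(X/K)$ you must also show that a cover tame relative to $K$ has special fibre tame relative to $k$, and this is the delicate direction. A boundary point of $\Spa(X,K)$ composes a maximal point $x^\circ$ with a valuation on $k(x^\circ)^\succ$. That field can be much larger than the residue field of ${\rm sp}(x^\circ)$ and can kill wild ramification there. The paper tests tameness on $\Spa(\fX_k,k)$ only at valuations supported at generic points, where $\eta$-normality gives equality of residue fields. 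This is legitimate because $\fX_k$ has the cdh uniformisation property (\cref{tameness-support-generic-point}, \cref{lem:log-sm-cdh-unif}). That property needs Zariski-local charts, i.e.\ strict semistability, since it is not \'etale local. No compactification of the special fibre is involved.

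(iii) A d\'evissage that only counts local inertia $\hZ'(1)^r$ and the groups of strata misses loops running through several strata, exactly as in the nodal example. The paper treats the locally constant stratum $U=D(f)$ by \cref{cor:torus-fibration-tame}. It glues $U$ to the $f$-adic completion $T$ along the complement, using Artin's exactness of $0\to\cO(X)\to\cO(T)\times\cO(U)\to\cO(T^\circ)$. This gives full faithfulness into descent data over a finite graph (\cref{prop:pi1tame-strat}).
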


\begin{thmABC}[{see \cref{thm:fp-sometimes}}] \label{thmABC:analytic pi1tame fp}
    Suppose that $K$ is algebraically closed and let $X$ be a non-empty connected rigid-analytic space over $K$. Suppose moreover that $X$ admits a strictly semistable formal model $\fX$ such that the smooth locus of the special fibre $\fX_k$ admits a projective snc compactification. Then $\pitame(X/K)$ is finitely presented. 
\end{thmABC}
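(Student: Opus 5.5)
We reduce finite presentation of $\pitame(X/K)$ to a finite presentation statement for the tame (log) étale fundamental group of a log scheme attached to the special fibre. Then we invoke the Esnault--Shusterman--Srinivas theorem on the projective snc compactification.

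\textbf{Step 1 (specialisation equivalence).} Let $\fX$ be the strictly semistable formal model, and give it its canonical log structure: the divisorial one from the special fibre, with $\Spf K^\circ$ carrying the log structure of its closed point. Let $\fX_k$ denote the special fibre with the induced log structure over the log point $k^\times$ (if $K^\circ$ is not noetherian, the monoid $\Gamma_K^{\geq 0}$ is not finitely generated, which is why non-fine logarithmic geometry is needed). The first goal is an equivalence between
\begin{itemize}
\item the category of finite étale covers of $X$ that are tame relative to $K$, and
\item the category of Kummer (tame) log étale covers of $\fX_k$ over the log point which are moreover tame at infinity along the boundary of the smooth locus.
\end{itemize}
Concretely:
\begin{itemize}
\item Given a cover tame relative to $K$, take its normalisation over $\fX$. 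Relative tameness at the valuations $K^\circ\subseteq V\subseteq k(y)^+$ centred on the special fibre means ramification along the components of $\fX_k$ is tame. Abhyankar's lemma in the semistable setting then makes the normalisation Kummer log étale after a tame base change.
\item The valuations going ``to infinity in the special fibre'' (those coming from the universal compactification, cf. \cref{prop:tame-S-compactification}) translate into tameness at infinity on $\fX_k$.
\item Conversely, Kummer log étale covers of $\fX_k$ deform uniquely to $\fX$ by log-étale topological invariance and formal GAGA-type lifting, and have tame generic fibre.
\end{itemize}
Since $K=\overline K$, the base log point has trivial tame group, so $\pitame(X/K)\cong\pitame^{\log}(\fX_k)$.

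\textbf{Step 2 (decomposition of the log fundamental group).} Let $D_1,\dots,D_r$ be the irreducible components of $\fX_k$, and let $U$ be the smooth locus. Compute $\pitame^{\log}(\fX_k)$ by van Kampen-style descent along the closed cover by the $D_i$ with their induced log structures. Each $D_i$ is a smooth variety carrying the log structure of the double locus $D_i\cap \bigcup_{j\ne i}D_j$ (a normal crossings divisor) together with the base log point. Its tame log fundamental group is then an extension of $\pitame(D_i^\circ)$ by a finitely generated abelian ``vertical'' part, with $D_i^\circ$ the complement of the double locus, i.e.\ the part of $U$ on $D_i$. The hypothesis gives a projective snc compactification of $U$, hence of each $D_i^\circ$. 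So \cite{EsnaultShustermanSrinivas2022:FinitePresentationTame} shows $\pitame(D_i^\circ)$ is finitely presented, and thus so are these extension groups. The descent along the finite semisimplicial cover (intersections $D_{ij},D_{ijk}$, themselves strata with snc-compactifiable opens, requiring only finite generation) expresses $\pitame^{\log}(\fX_k)$ via a finite graph-of-groups/2-complex presentation. A finite amalgam of finitely presented groups with finitely generated edge groups is finitely presented, and the profinite completion preserves this.

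\textbf{Main obstacle.} The main obstacle is Step 1, specifically:
\begin{itemize}
\item ensuring that relative tameness exactly matches tameness at infinity of the smooth locus, including covers whose log structure degenerates toward deeper strata;
\item establishing log purity/Abhyankar for non-fine monoids when $K$ is not discretely valued.
\end{itemize}
I would first treat the case $\Gamma_K$ discrete via a fine model over a DVR and pass to the limit over finitely generated submonoids of $\Gamma_K$. A secondary subtlety is that the descent in Step 2 needs effective descent for tame log covers along the closed cover of $\fX_k$. For that I would use finite-type surjective log descent (proper hypercovers) as established earlier in the paper for tame log étale covers.
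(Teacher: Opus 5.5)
Your outline matches the paper's proof. \cref{cor:Abhy-infty-equiv} identifies $\pitame(X/K)$ with $\pitame(\fX^{\log}_k/k)$. Then \cref{cor:surgery-fp} runs van Kampen along $\coprod D_i\to\fX_k$, strips off the vertical log structure, applies log purity and finishes with Esnault--Shusterman--Srinivas; your Step 2 is essentially that argument.

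The gap is Step 1. The two items you list as the ``main obstacle'' contain all the real work, and the proposal gives no mechanism for either.

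\textbf{Tameness at infinity.} The direction you need, already to get a surjection $\pitame(\fX_k/k)\to\pitame(X/K)$, is: tame relative to $K$ on $X$ $\Rightarrow$ tame relative to $k$ on $\fX_k$. This is not automatic.
\begin{itemize}
\item A root triple of $X$ carries the composite valuation $k(x^\circ)^+\succ V_x$. By \cref{composition-tame} you only learn that $V_x$ is tame on the specialisation field $k(x^\circ)^\succ$.
\item That field can be much larger than the residue field of $\spe(x^\circ)\in\fX_k$, and this residue extension can kill the wild ramification of the special-fibre cover.
\item The paper first makes the cover strict \'etale by a Kummer base change $\fX_Q\to\fX$; this is how degenerating log structures are handled.
\item It then shows that on $\fX_k$ tameness may be tested at valuations supported at generic points, where $\eta$-normality gives $k(x^\circ)^\succ=k(\beta)$ (\cref{tameness-support-generic-point}, \cref{prop:tameness-comparison-etale}).
\item This requires the cdh uniformisation property of $\fX_k$ (\cref{lem:log-sm-cdh-unif}). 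Its proof uses Zariski-local charts, the residue-field-preserving normalisation, toric resolution, and rational points in fibres of log blow-ups.
\item This is exactly where \emph{strict} semistability is indispensable, since the property is not \'etale local (\cref{ex:nodal-curve-noncdh}).
\end{itemize}
The converse direction is the easier one. It uses the vertical compactification $\Spa(\fX^{\rm ad},K^\circ)$, whose fibres are $\Spa(X,K)$ and $\Spa(\fX_k,k)$, together with openness of the tame locus.

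\textbf{Purity for $K=\overline K$.} Your plan does not work as stated.
\begin{itemize}
\item Passing to a limit over finitely generated submonoids of $\Gamma_K$ leaves the underlying ring non-noetherian, so classical purity still does not apply. You must algebraise (\cref{finite-etale-Huber-pair}) and approximate the \emph{ring} $K^\circ$ by noetherian log regular rings.
\item DVRs do not suffice in general: if $\Gamma_K$ has two $\bQ$-independent elements, the valuation restricted to a finitely generated subfield need not be discrete.
\item Tameness at the generic points over $K^\circ$ also does not descend to an arbitrary model. For example, $\Spec \bQ_p(p^{1/p})[T]\to\Spec\bQ_p[T]$ is wild above $(p)\subset\bZ_p[T]$, but it splits after base change to $\bC_p$.
\end{itemize}
The paper handles this in three steps:
\begin{itemize}
\item it approximates $K^\circ$ by Zavyalov's regular, strictly henselian $R_\lambda$ with $V(\pi)_{\rm red}$ snc (\cref{lem:Zavyalov});
\item it spreads out the Kummer domination that witnesses tameness (\cref{lem:locus of tameness});
\item it checks that the resulting open contains all boundary generic points of the approximation (\cref{lem:generic-points-ugggh}).
\end{itemize}

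\textbf{Two smaller points in Step 2.}
\begin{itemize}
\item The open strata of the $D_{ij}$ need not have snc compactifications, since the hypothesis concerns only $U$. Finite generation of the edge groups must therefore come from a general result such as \cref{thm:pi1tame log is fg}.
\item Effective descent of \emph{tame} covers along $\coprod D_i\to\fX_k$ is not a general fact. It holds here because the normalisation preserves residue fields (\cref{lem:surgery-properties}).
\end{itemize}
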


Finite generation of the tame fundamental group is not only a desirable outcome by itself.
It is also crucial in establishing further properties of the tame fundamental group.
In upcoming work, we aim to prove a Künneth formula and invariance under algebraically closed base field extensions for the tame fundamental group.
A major input in our approach to the Künneth formula is the present result on finite generation.

\addtocontents{toc}{\SkipTocEntry}
\subsection*{Proof strategy}

Before we discuss the contents of the paper in detail, let us briefly explain the overall strategy. The proof of \cref{thmABC:analytic pi1tame fg} starts with a reduction, using desingularisation results due to Temkin and Gabber, to the case $K$ algebraically closed and $X$ admitting a semistable formal model $\fX$. The next step is to compare the tame fundamental group of $X/K$ with a suitably defined tame Kummer \'etale fundamental group $\pitame(\fX^{\log}_k/k)$ of the special fibre $\fX_k$ endowed with its natural log structure. This is difficult to do in part because the log structures in question do not satisfy the customary finiteness conditions, and also because it is unclear why the two notions of ``ramification at infinity'' for $X/K$ and for $\fX^{\log}_k/k$ should be compatible. Having done this step, we are left with showing that $\pitame(Y/k)$ is finitely generated for a certain class of log schemes $Y$ over $k$ with non-finitely generated log structures. Using a ``formal gluing'' type argument along the log stratification, this question is reduced to the case of locally constant log structures. In turn, this case is easily reduced to the case of trivial log structures: the finite generation of $\pitame(Y/k)$ for a scheme of finite type over $k$. Using alterations, finite generation follows from the case of smooth varieties admitting a projective snc compactification, known\footnote{The result form \cite{EsnaultKindler2016:LefschetzTheoremsTamely} exploits Lefschetz theorems for generic hyperplane sections, and, ironically, finally in the case of curves lifts back to characteristic $0$ and appeals to topology via the Riemann Existence Theorem.} thanks to \cite{EsnaultKindler2016:LefschetzTheoremsTamely}.

\addtocontents{toc}{\SkipTocEntry}
\subsection*{Tame fundamental groups of schemes}

The structure of our paper traces the above argument backwards. In \cref{sec:pit schemes} we treat tame fundamental groups of schemes. For a map of schemes $X\to S$ with $X$ connected and non-empty, one can define the tame fundamental group $\pitame(X/S)$ (\cref{defi:tamepi1 schemes}) by imposing tameness conditions on finite \'etale covers $Y\to X$ (\cref{defi:tame cover of schemes X over S}). Following the constructions in \cite{Temkin2011:RelativeRiemannZariskiSpacesa} and \cite{Hubner2021:AdicTameSite}, this is the tame fundamental group of the discretely ringed adic space $\Spa(X,S)$.
This definition is non-standard in the classical literature on tame covers but should be considered the most natural one.
In \cref{sec:adic tame discrete case} we compare it with existing notions of tameness and also report on improvements on the comparison theorems in \cite{KerzSchmidt2010:DifferentNotionsTameness} due to new developments in non-archimedean geometry, see \cite{HuebnerTemkin2026:WildLocus}.

Van Kampen/descent type arguments for Galois categories play a big role in our paper, and we review the relevant facts in \cref{sec:descent for Gal cats}, in particular, giving a descent criterion for finite generation and presentation of tame fundamental groups (\cref{cor:DD-fgfp}) that essentially goes back to \cite[IX, \S5]{SGA1}. 
These descent techniques find their first application in \cref{thm:naked pi1tame fg} (resp.~\cref{thm:naked pi1tame fp}) to show that the tame fundamental group of a non-empty connected scheme of finite type over an algebraically closed field is finitely generated (resp.~finitely presented under some extra conditions).
Finite generation and presentation of the tame fundamental group of a sufficiently nice variety was previously known due to \cite{EsnaultShustermanSrinivas2022:FinitePresentationTame} but the above described descent techniques allow us to deduce finite generation in full generality and finite presentation under some extra conditions that are far milder than smoothness.

In later sections the descent results from \cref{sec:descent for Gal cats} will also be crucial in treating the tame fundamental groups of log schemes (\cref{s:fg-pi1-log}) and adic spaces (\cref{s:fg-pi1-rig}).

\addtocontents{toc}{\SkipTocEntry}
\subsection*{Tame fundamental groups of log schemes}

In Section~\ref{s:fg-pi1-log} we turn to log schemes.
For later application in \cref{s:fg-pi1-rig} it is essential that we work with monoids that are not necessarily finitely generated.
More precisely, we will encounter log schemes of finite type over $\Spec(M \to k)$, where~$k$ is the residue field of an algebraically closed non-archimedean field $(K,K^\circ)$ and $M = K^\circ \cap K^\times$.
These log schemes are not fs, and accordingly this section relies on the extension of logarithmic geometry beyond fs log schemes in our companion paper \cite{AHLS-Log} (see the review in \S\ref{ss:log-review}), which in particular defines the (Kummer) \'etale fundamental group $\pi_1(X)$ of a saturated log scheme $X$. In this paper, we only need to consider log schemes of type $\typeVd$, i.e.\ those which locally admit a chart by a monoid which is finitely generated over a divisible valuative submonoid. Under this simplifying assumption, every Kummer \'etale map is locally of finite presentation as a map of schemes. In \cref{def:FKEt-pi1-tame} we introduce the tame fundamental group $\pitame(X/S)$ of a log scheme $X$ 
(with underlying scheme $\underline{X}$)
relative to a map of schemes $\underline{X}\to S$ by imposing tameness conditions on the finite (or integral) maps of schemes underlying Kummer \'etale covers. In case the log structure on $X$ is locally constant, we show in \cref{cor:torus-fibration-tame} that we have an exact sequence 
\[
        \begin{tikzcd}
            \Hom\big(\ov \cM^\gp_{X,\bar x} ,\hZ'(1)\big) \ar[r] & \pitame(X/S,\bar x) \ar[r] & \pitame(\underline{X}/S,\bar x) \ar[r] & 1.
        \end{tikzcd}
    \]
Thanks to our $\typeVd$ assumption, the group on the left is a finitely generated $\widehat{\bZ}'$-module (the divisible part of the monoid does not contribute), and we conclude that $\pitame(X/S)$ is finitely generated or finitely presented if and only if $\pitame(\underline{X}/S)$ has this property. If $X$ is a log scheme of type $\typeVd$ over an algebraically closed field $k$, then $X$ admits a dense open $U$ on which the log structure is locally constant. In \S\ref{ss:strat}, we employ this observation and a delicate formal gluing argument to show the following result. 

\begin{thmABC}[{see \cref{thm:pi1tame log is fg}}] \label{thmABC:log pi1tame fg}
    Let $X$ be a non-empty connected log scheme of type $\typeVd$ whose underlying scheme is of finite type over an algebraically closed field $k$.  Then $\pitame(X/k,\bar x)$ is finitely generated.
\end{thmABC}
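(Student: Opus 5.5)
The plan is to argue by noetherian induction on the underlying scheme $\underline{X}$, using the log stratification together with the descent criterion of \cref{cor:DD-fgfp}. Since $X$ is of type $\typeVd$ and $k$ is algebraically closed, there is a dense open $U\subseteq X$ on which the log structure is locally constant. On each connected component $U_i$ of $U$, \cref{cor:torus-fibration-tame} gives an exact sequence
\[
\Hom\big(\ov\cM^\gp_{U_i,\bar x},\hZ'(1)\big)\la \pitame(U_i/k,\bar x)\la \pitame(\underline{U_i}/k,\bar x)\la 1 .
\]
The left term is a finitely generated $\hZ'$-module, because the divisible valuative part of the chart dies after $\Hom(-,\hZ'(1))$. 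The right term is finitely generated by \cref{thm:naked pi1tame fg}, so $\pitame(U_i/k)$ is finitely generated. The same reasoning applies to each stratum $Z$ of the complement $X\setminus U$, with the induced log structure (again of type $\typeVd$). Iterating, we obtain a finite stratification of $X$ into locally closed pieces with locally constant log structure, and each piece has finitely generated tame fundamental group.

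To pass from the strata to $X$, I would first reduce, via a proper hypercovering or a finite Zariski cover and \cref{cor:DD-fgfp}, to a situation where descent can be applied. The concrete route is to realise $X$ as obtained by formally gluing $U$ with a tubular neighbourhood of the closed complement $Z=X\setminus U$. For this, consider the henselisation (or formal completion) $X^h_Z$ of $X$ along $Z$, and the ``punctured'' tube $X^h_Z\times_X U$. A tame Kummer étale cover of $X$ should be equivalent to the descent data consisting of covers of $U$ and of $X^h_Z$ agreeing on the overlap. The descent criterion then bounds generators of $\pitame(X/k)$ in terms of:
\begin{itemize}
\item generators for the components of $U$;
\item generators for $X^h_Z$, which agrees with $\pitame(Z/k)$ by henselian invariance of Kummer étale covers for the finitely presented Kummer étale maps guaranteed by type $\typeVd$;
\item finitely many extra generators accounting for the combinatorics of the components.
\end{itemize}
We do not need generators of the overlap for finite generation, only the finiteness of its set of connected components.

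The main obstacle is justifying this formal gluing with tameness relative to $k$. First, tameness at valuations whose centres lie on the boundary of $\underline{X}$ must be checked on both pieces. Second, the overlap $X^h_Z\times_X U$ is not of finite type, yet it must have finitely many connected components. As a first attempt, I would handle the boundary issue by replacing $X$ with a compactification $\ov{X}$ and the relative adic space $\Spa(X,k)$, as in \cref{sec:adic tame discrete case}, so that tameness is an absolute condition checked pointwise. For the overlap, I would use that the branches of $U$ near $Z$ are finite in number (Zariski's connectedness for excellent schemes) to get finiteness of $\pi_0$. If direct gluing fails, the fallback is a proper surjective cover: apply de Jong alterations to make $(\underline X, Z)$ snc-like and use \cref{cor:DD-fgfp} for proper descent, where a finitely generated group for the cover and finitely many components in the double fibre product suffice.
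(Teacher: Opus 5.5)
Your architecture is the same as the paper's. You use noetherian induction on $\underline X$, an open $U$ with locally constant log structure handled by \cref{cor:torus-fibration-tame} and \cref{thm:naked pi1tame fg}, and the induction hypothesis for the closed complement $Z$. You then run a graph-of-groups descent over $U$, a formal or henselian neighbourhood $T$ of $Z$, and the punctured neighbourhood $T^\circ=T\times_X U$, fed into \cref{cor:DD-fgfp}~\labelcref{coritem:DD-fg}.

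The gap is that you never establish the one hypothesis this criterion needs, and the obstacle you single out is not the real one. For finite generation you do not need tame covers of $X$ to be \emph{equivalent} to descent data. You only need the pull-back functor $\FEtt_{X/k}\to\Desc(\cG_\bullet)$ to be fully faithful, i.e.\ that every connected cover $X'\to X$ induces a connected descent datum. The functor lands in tame descent data by base-change stability of tameness. Since all categories involved are full subcategories of the corresponding categories of finite Kummer \'etale covers, tameness at boundary valuations plays no role here. Compactifications and $\Spa(X,k)$ are therefore beside the point. Even the tame half of effective descent would be harmless, since tameness relative to $k$ is tested at points of $X$, and every point lies in $U$ or in $Z\subseteq T$.

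What must be proved is a purely topological statement. Put $T'=T\times_X X'$, $U'=U\times_X X'$ and $T'^\circ=T^\circ\times_X X'$. The graph with vertex set $\pi_0(T')\sqcup\pi_0(U')$ and edge set $\pi_0(T'^\circ)$ must be connected whenever $X'$ is; \cref{lem:local-system-connected} then gives connectedness of the descent datum.

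The paper obtains this from Artin's formal gluing of idempotents. First reduce to $X=\Spec R$ affine and shrink $U$ to a distinguished open $D(f)$, so that $Z=V(f)$, which is still covered by the induction hypothesis. Then $\cO(X)\to\cO(T)\times\cO(U)\rightrightarrows\cO(T^\circ)$ is exact, so idempotents glue and $\pi_0(|E_\bullet|)\to\pi_0(X)$ is bijective. The same argument applies verbatim to $X'$: because $X'\to X$ is finite (type $\typeVd$), $T'$ is again the $f$-adic completion of $X'$. Nothing in your proposal plays this role. Zariski's connectedness is not it, and finiteness of $\pi_0(T^\circ)$ is automatic anyway, since $T^\circ$ is noetherian.

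Two smaller points. First, at the vertices $T_\alpha$ you only need $\pitame(Z_\alpha/k)\to\pitame(T_\alpha/k)$ to be surjective. This follows from the bijection $\pi_0(Z\times_T T')\isomto\pi_0(T')$ for every finite $T'\to T$. The isomorphism you assert would require controlling tameness relative to $k$ at points of $T$ outside $Z$, which $Z$ does not see.

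Second, the alteration fallback is circular. Pulling the log structure back strictly along an alteration of $\underline X$ produces another log scheme of type $\typeVd$. Its log structure is no closer to being locally constant or compactifying, so making $(\underline X,Z)$ snc does not reduce the problem.
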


\noindent 
We also show how $\pitame(X/k)$ can be computed in practice in some situations (smooth over a log point) by performing ``surgery'' that disassembles a log scheme into strata closures with their compactifying log structures (\S\ref{ss:surgery}). This gives sufficient conditions for $\pitame(X/k)$ to be finitely presented (\cref{cor:surgery-fp}), a crucial ingredient in the proof of \cref{thmABC:analytic pi1tame fp}. 

\addtocontents{toc}{\SkipTocEntry}
\subsection*{Vertical compactifications of adic spaces}

Section~\ref{s:compactifications} deals with vertical compactifications of adic spaces and can be read independently of the previous material. The goal is to build, for a map of adic spaces $X\to S$, a space $\Spa(X,S)$ of ``valuations on $X$ with a centre on $S$'' which exists under some mild conditions. If $X\to S$ is a map of analytic adic spaces (for example, $X$ is a rigid space over $K$ and $S=\Spa(K)$), then $\Spa(X,S)$ is the universal compactification constructed by Huber \cite[\S 5]{HuberBook}. If $X_0\to S_0$ is a map of schemes, then $\Spa(X_0^{\rm ad}, S_0^{\rm ad})$ is the discretely ringed adic space $\Spa(X_0,S_0)$ mentioned above. Our key case of interest, the space $\Spa(\fX^{\rm ad}, \Spa(K^\circ))$ for an adic formal scheme $\fX$ over $K^\circ$, interpolates between the two constructions. The space $\Spa(X,S)$ has been constructed by Solodov \cite{Solodov-vertical-compactification} following Huber's approach. We prove additional facts about it, and characterise its points as ``root triples'' $(x^\circ, k(x)^+, x^\succ)$ where $x^\circ\in X$ is a point without vertical generalisations, $k(x)^+$ is a valuation subring of $k(x^\circ)$ contained in its valuation ring $k(x^\circ)^+=k(x^\circ)^\circ$, and where $x^\succ$ is a centre of $k(x)^+$ on $S$ (see \cref{defi:root-triple}). These triples are the test objects used in our definition of tameness (\cref{def:tame-rig}).
We want to emphasize that although the definition of tameness only involves the underlying set of $\Spa(X,S)$, the arguments used in \cref{ss:tameness-comparison} rely on the adic space structure of $\Spa(X,S)$, which is the main novelty of this section compared to \cite{Solodov-vertical-compactification}.

\addtocontents{toc}{\SkipTocEntry}
\subsection*{Tame fundamental groups of rigid spaces} 

In Section~\ref{s:fg-pi1-rig} we finally turn our attention to rigid spaces over a non-archimedean field $K$. We introduce the tame fundamental group $\pitame(X/K)$ by imposing a pointwise tameness condition on root triples (\cref{def:tamepi1-rigid}). We show a variant of the fundamental exact sequence (\cref{prop:weak-fundamental-exact-seq}), whose existence allows us to reduce the proof of \cref{thmABC:analytic pi1tame fg} to the case when $K$ is algebraically closed. 
In \S\ref{ss:alterations-v-descent} we use the fact that surjections between qcqs rigid spaces are of descent for finite \'etale covers combined with  uniformisation theorems due to Gabber \cite{TravauxGabberIX} and Temkin \cite{Temkin2017:AlteredLocalUniformization} to reduce the proof further to the case when $X$ admits a strictly semistable formal model $\fX$. At this point, we are enabled to use log geometry, since $\fX$ endowed with the standard log structure (\cref{prop:log-sm-vertical}) becomes $\fX^{\log}$ which is of type $\typeVd$ and log smooth over $\Spf(K^\circ)^{\log}$.

The two most difficult steps in this
  section are the following. First, we show using approximation techniques and a lemma of Zavyalov (\cref{lem:Zavyalov}) that a finite \'etale map $Y\to X$ which is tame (in the absolute sense) extends uniquely to a Kummer \'etale map $\fY^{\log} \to \fX^{\log}$ (\cref{thm:Abhyankar-formal-more-precise}). In terms of fundamental groups, this means that we have an isomorphism $\pitame(X) \simeq \pi_1(\fX^{\log}_k)$ where $\fX^{\log}_k$ is the log special fibre of $\fX^{\log}$ (see \cref{cor:generic fibre and eta normalisation} and \cref{prop:FEt-formal-vs-sp-fibre}). 
  Second, we show that this isomorphism induces an isomorphism (\cref{prop:tameness-comparison} and \cref{cor:Abhy-infty-equiv})
\[
    \pitame(X/K) \simeq \pitame(\fX^{\log}_k/k)
\]
of the tame quotients.
The argument for this is delicate, relying on the properties of the vertical compactification in \cref{s:compactifications} and a ``cdh uniformisation property'' (\cref{def:cdh-unif}) of the underlying schemes of certain log schemes. Given the above isomorphism and the aforementioned reductions, \cref{thmABC:analytic pi1tame fg} now follows from \cref{thmABC:log pi1tame fg}, and \cref{thmABC:analytic pi1tame fp} from \cref{cor:surgery-fp}.

\addtocontents{toc}{\SkipTocEntry}
\subsection*{Acknowledgements}

First and foremost, we thank IMPAN at Sopot for excellent working conditions during our stay in September 2023 when this project started. We also thank Heidelberg University for hosting us during an intense few weeks in February 2024. We thank Paul Alexander Helminck, Ben Heuer, Michael Temkin, Alex Youcis, and Bogdan Zavyalov for valuable discussions.

\addtocontents{toc}{\protect\SkipTocEntry}
\subsection*{Funding}

\grants

\addtocontents{toc}{\SkipTocEntry}
\section*{Notation and conventions}

\begin{itemize}[leftmargin=*]
    \item 
    If $X$ is a (formal) scheme, or an adic space, we denote by $\FEt_X$ the category of finite \'etale morphisms to $X$. 
    \item 
    Monoids are commutative, in additive notation, and with~$0$.
    \item 
    We denote log schemes by single symbols e.g.~$X$ (not $(X, \cM_X)$) and the underlying scheme is denoted by $\underline{X}$. For log schemes and maps between them, we use adjectives like regular, smooth, \'etale to mean the respective notions in log geometry. In particular, if $X$ is a log scheme of type $\typeVd$, we denote by $\FEt_X$ the category of finite Kummer \'etale maps to $X$, see \cref{def:Kummer-etale-map}. See \S\ref{ss:log-review} for a review of log geometry as used in this paper.

    \item 
    By \emph{locally} we mean working locally in the \'etale topology; in the logarithmic context this means in the strict \'etale topology, i.e.~the \'etale topology after forgetting the log structure. We nevertheless sometimes say \emph{\'etale locally} to stress the use of the \'etale topology.
    
    \item 
    If $X$ is a non-empty connected (log) (formal) scheme or an adic space, we denote by $\pi_1(X,\bar x)$ the fundamental group of the Galois category $\FEt_X$ with respect to the fibre functor induced by a given geometric (log) point $\bar x\to X$. In particular, we never use the notation $\pi_1^{\log}$ for the fundamental group of a log scheme.
    
    \item 
    For a Huber pair $(A,A^+)$ we denote by $A^\circ\subseteq A$ the ring of powerbounded elements and use $\Spa(A)$ as an abbreviation for $\Spa(A,A^\circ)$. 
    
    \item 
    By a \emph{non-archimedean field} we mean a field $K$ which is complete with respect to a rank one valuation. We denote its valuation ring by $K^\circ=K^+$. A \emph{rigid space over $K$} is an adic space locally of finite type over $\Spa(K)$. See \S\ref{ss:adic-spaces} for our conventions regarding adic spaces, rigid spaces, and formal schemes. 
    
    \item 
    For a point $x$ of an adic space $X$, we denote by $k(x)$ the residue field of $\cO_{X, x}$, by $k(x)^+\subseteq k(x)$ the corresponding valuation ring (so that $\cO_{X,x}^+ \subseteq \cO_{X,x}$ is the preimage of $k(x)^+$), and by $k(x)^\succ$ the residue field of $k(x)^+$ (or equivalently of $\cO_{X,x}^+$), which we call the \emph{specialisation field} of $x$. The last convention is not standard, but it seems that the ``residue field of the residue field'' does not have an established name or notation in the literature. 
        
    \item We call a profinite group finitely generated (resp.~presented) if it is topologically finitely generated (resp.~presented).
    
    \item 
    Once we work over a fixed algebraically closed field $k$ (or over a valuation ring $K^\circ$ with residue field $k$), we denote by $\hZ{}'(1) = \varprojlim \mu_n(k)$ the Tate module of $k^\times$. If $p$ is the characteristic exponent of $k$, the group $\hZ{}'(1)$ is isomorphic to $\hZ' = \prod_{\ell\neq p}\bZ_\ell$, the prime-to-$p$ completion of $\bZ$.

    \item 
    The fundamental group of a monoid $P$ is defined  as $\pi_1(P) = \Hom(P^{\rm gp}, \widehat{\bZ}'(1))$ in \cref{defi:pi1-of-monoid}.
    \item 
    For a field $K$ we denote its absolute Galois group by $\pi_1(K)$ where the omitted base point is some chosen algebraic closure $\ov{K}$ of $K$ so that $\pi_1(K) = \Gal(K^\sep/K)$ with the separable closure $K^\sep$ of $K$ in $\ov{K}$. If $(K,K^+)$ is a henselian valued field, then we denote the maximal tamely ramified quotient of the absolute Galois group by $\pitame(K)$.

    \item For a map of schemes $X\to S$, an \emph{snc compactification} is a factorisation $X\to \overline{X}\to S$ where $\overline{X}\to S$ is smooth and proper and where $X\to \overline{X}$ is an open immersion whose complement is the support of an effective Cartier divisor $D\subseteq \ov{X}$ which is snc (simple normal crossings) relative to $S$, i.e.\ Zariski locally on $\overline{X}$, the divisor $D$ is the preimage of the union of coordinate hyperplanes under an \'etale map to $\bA^n_S$.

    \item For an adic formal scheme $\fX$ over $K^\circ$, we denote by $\fX^{\rm log}$ the log formal scheme obtained by endowing $\fX$ with the \emph{standard log structure} $\cM_\fX^{\rm std}=\cO_\fX\times_{\cO_\fX\otimes K} (\cO_\fX\otimes K)^\times$, i.e.\ the one induced by the rigid generic fibre (see \S\ref{ss:log-formal-schemes}). 

\end{itemize}

\section{Tame fundamental groups of schemes}
\label{sec:pit schemes}

The goal of this section is to study the tame fundamental group of a connected scheme~$X$ relative to a base scheme $S$.
It is defined as the automorphism group of a fibre functor of the Galois category of tame covers of~$X$.
Accordingly, we start with a discussion of tame extensions of valued fields (\cref{sec:tame extensions valued fields}) and then of tame covers of schemes including a comparison of different notions of tameness found in the literature (\cref{sec:adic tame discrete case}).
The definition of tameness we will be using is the maximal one, which imposes tameness conditions on all valuations with a centre on $S$ of every residue field of~$X$. 
After a review of Galois categories in \cref{sec:Gal cats}, we are in the position to define the tame fundamental group in \cref{sec:tame pi1}.
The final results of this section are \cref{thm:naked pi1tame fg} and \cref{thm:naked pi1tame fp} showing that the tame fundamental group of a connected scheme of finite type over an algebraically closed field~$k$ is finitely generated and even finitely presented if it satisfies additional regularity assumptions.
We deduce these theorems from existing results \cite{EsnaultShustermanSrinivas2022:FinitePresentationTame} on finite generation and presentation of the tame fundamental group of a smooth variety with a projective snc compactification.
The reduction to the smooth case is done by a descent argument, for which we lay foundations in \cref{sec:descent for Gal cats}.

\subsection{Tame extensions of valued fields} 
\label{sec:tame extensions valued fields}

In this opening subsection we review tame extensions of valued fields, which form the basis of any discussion of tame phenomena in arithmetic geometry.
We give various characterisations of tameness and study the Galois group of the maximal tame extension.
Moreover, we explain how to describe tame extensions of compositions of valuations.

Throughout we will use the notation $(K,K^+)$ for a valued field with valuation ring~$K^+$ and write $K^\succ$ for the residue field of~$K^+$, which we also call the \textbf{specialisation field} of~$K$.
We denote the corresponding value group by~$\Gamma_K$, for which we use multiplicative notation, so the valuation of~$K$ is a multiplicative map
\[
 \lvert \cdot \rvert \colon K \longrightarrow \Gamma_K \cup \{0\}
\]
sending~$1$ to~$1$ and~$0$ to~$0$ and satisfying the strong triangle inequality.

For the valued field $(K,K^+)$ we fix an algebraic closure $(\overline{K},\overline{K}^+)$.
Then we have intermediate extensions of valued fields
\[
 (K,K^+) \subseteq (K^\rh,K^{\rh,+}) \subseteq (K^\sh,K^{\sh,+}) \subseteq (\overline{K},\overline{K}^+),
\]
where $(K^\rh,K^{\rh,+})$ is the {\bf henselisation}, and $(K^\sh,K^{\sh,+})$ is the {\bf strict henselisation} (or {\bf maximal unramified extension}).
Accordingly, $(K,K^+)$ is {\bf henselian} (resp.\ {\bf strictly henselian}) if and  only if $(K,K^+) = (K^\rh,K^{\rh,+})$ (resp.\ $(K,K^+) = (K^\sh,K^{\sh,+})$).

\begin{defi}
\label{defi:tame-extension-of-valued-fields}
    A finite extension of valued fields $(K,K^+) \to (L,L^+)$ is called \textbf{tame} if the residue characteristic does not divide the degree $[L^\sh:K^\sh]$.
    An algebraic extension of valued fields is \textbf{tame} if all of its finite subextensions are tame.
\end{defi}

From the definition it is clear that compositions and base changes of tame extensions are tame.
Consequently, there is a \textbf{maximal tamely ramified} extension $(K^t,K^{t,+})$ of $(K,K^+)$ (also called \textbf{tame henselisation}) complementing the above tower:
\[
 (K,K) \subseteq (K^\rh,K^{\rh,+}) \subseteq (K^\sh,K^{\sh,+}) \subseteq (K^t,K^{t,+}) \subseteq (\overline{K},\overline{K}^+).
\]
The valued field $(K,K^+)$ is called {\bf tamely henselian} if $(K,K^+) = (K^t,K^{t,+})$.
In the literature, the term {\bf tame closed} is often used for tamely henselian.

The structure of tame extensions is best understood via Galois theory.
Suppose that $(L,L^+)/(K,K^+)$ is a Galois extension of valued fields.
Its inertia group~$I$ is naturally identified with $\Gal(L^\sh/K^\sh)$.
We can now understand tame extensions in terms of the Kummer pairing
\[
 I \times (\Gamma_L/\Gamma_K) \longrightarrow \mu(L^\succ) \subseteq \bQ/\bZ'(1) 
\]
that maps $(\sigma,\gamma) \in I \times \Gamma_L$ to the residue class of $\sigma(a)/a$ in $(L^\succ)^\times$, where $a \in L$ is an element with $\lvert a \rvert = \gamma$.
It induces a surjective group homomorphism (the tame character)
\begin{equation} \label{Kummer-hom}
 I \longrightarrow \Hom(\Gamma_L/\Gamma_K,\bQ/\bZ'(1)), 
\end{equation}
whose kernel~$I_p$, called wild inertia, is a $p$-group, where $p \ge 0$ is the residue characteristic of~$K$ (see \cite[Proposition~6.2.12]{GabberRamero}).
Applying these considerations to the extension $K^t/K^\rh$, we obtain the following structural result for the Galois group of the maximal tame extension

\begin{lem} \label{lem:max tame extension}
    Suppose that $(K,K^+)$ is henselian of residue characteristic $p \ge 0$.
    \begin{enumerate}[ref=(\arabic*)]
        \item \label{lemitem:tame galois sequence}
        There is a short exact sequence
        \[
        1 \la \Hom(\Gamma_K,\hZ'(1)) \la \Gal(K^t/K) \la \Gal(K^\sh/K) \la 1
        \]
        where the map on the left comes from the Kummer theory pairings for $p \nmid n$:
        \[
        \Gal(K^t/K^\sh) \times \Gamma_{K^t}/\Gamma_K \la \bQ/\bZ'(1), 
        \qquad (\sigma,\gamma) \mapsto [\sigma(a)/a]
        \]
        with $a \in K^t$ of valuation $\gamma$, and $[\sigma(a)/a]$ is the residue class of $\sigma(a)/a$ in $(K^{t,\succ})^\times$.
  
        \item \label{lemitem:tame galois split}
        The sequence in \labelcref{lemitem:tame galois sequence} is split and 
        $\Gal(K^\sh/K)$ is canonically isomorphic to the absolute Galois group $\Gal_{K^\succ}$ of the residue field.
    \end{enumerate}
\end{lem}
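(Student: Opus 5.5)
We apply the Galois-theoretic description recalled just before the statement to the Galois extension $K^t/K$. Since $(K,K^+)$ is henselian, $K = K^\rh$. The valuation extends uniquely to $K^t$, so the decomposition group of $K^t/K$ is all of $\Gal(K^t/K)$ and its inertia group is $I = \Gal(K^t/K^\sh)$. Restriction to $K^\sh$ then gives the exact sequence
\[
1 \la \Gal(K^t/K^\sh) \la \Gal(K^t/K) \la \Gal(K^\sh/K) \la 1.
\]
The identification $\Gal(K^\sh/K)\cong \Gal_{K^\succ}$ in \labelcref{lemitem:tame galois split} is the standard fact for henselian valued fields. Finite unramified extensions of $K$ correspond bijectively, via the residue field, to finite separable extensions of $K^\succ$, by Hensel lifting of separable polynomials. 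Moreover $K^{\sh,\succ}=(K^\succ)^\sep$, and the inertia group is the kernel of the action on residue fields.

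For the kernel, use the tame character \eqref{Kummer-hom} for $L=K^t$:
\[
I\to \Hom(\Gamma_{K^t}/\Gamma_K,\bQ/\bZ'(1)).
\]
Its kernel $I_p$ is a pro-$p$ group. First we show that $I_p=1$. The fixed field of $I_p$ is a Galois extension of $K^t$ of $p$-power degree with trivial residue extension and value group extension of $p$-power index. By the definition of tameness (the residue characteristic does not divide $[L^\sh:K^\sh]$), every finite subextension of $K^t/K^\sh$ has prime-to-$p$ degree, so $I$ is a pro-prime-to-$p$ group. Hence $I_p$ is both pro-$p$ and pro-prime-to-$p$, so it is trivial. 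Thus $I\cong \Hom(\Gamma_{K^t}/\Gamma_K,\bQ/\bZ'(1))$.

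Next we compute $\Gamma_{K^t}$. It is the prime-to-$p$ divisible hull $\Gamma_K\otimes \bZ_{(p)}$, with the convention $\bZ_{(0)}=\bQ$. The inclusion $\Gamma_{K^t}\subseteq\Gamma_K\otimes\bZ_{(p)}$ holds because $e$ divides the degree, and a tame extension has degree prime to $p$ over $K^\sh$. The reverse inclusion holds because adjoining $a^{1/n}$ with $p\nmid n$ gives a tame extension, and this realises $|a|^{1/n}$. Therefore
\[
\Gamma_{K^t}/\Gamma_K=\varinjlim_{p\nmid n} \tfrac1n\Gamma_K/\Gamma_K,
\]
and
\[
\Hom(\Gamma_{K^t}/\Gamma_K,\bQ/\bZ'(1)) =\varprojlim_{p\nmid n}\Hom(\Gamma_K/n,\mu_n)=\Hom(\Gamma_K,\hZ'(1)).
\]
Here $\mu_n\subseteq K^{t,\succ}$ has order $n$ because $K^{t,\succ}$ is separably closed. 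Tracing through the definitions shows that this isomorphism is given by the stated Kummer pairing, which proves \labelcref{lemitem:tame galois sequence}.

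For the splitting, we try first to construct a complement to $I$ directly. Choose a section $s$ of $K^\times\to\Gamma_K$ after base change to $K^\sh$. More concretely, choose a compatible system of roots: for each $\gamma\in\Gamma_K$ and each $n$ with $p\nmid n$, pick $a_\gamma\in K$ with $|a_\gamma|=\gamma$ and a compatible system of $n$-th roots $a_\gamma^{1/n}$. Let $M$ be the field generated over $K$ by these roots. Then $M/K$ is totally ramified with $\Gamma_M=\Gamma_{K^t}$ and residue field $K^\succ$. Since $M\cdot K^\sh=K^t$ and $M\cap K^\sh=K$, the subgroup $\Gal(K^t/M)$ maps isomorphically onto $\Gal(K^\sh/K)$, which gives the splitting.

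The main obstacle is choosing the $a_\gamma^{1/n}$ compatibly, both in $n$ and multiplicatively in $\gamma$, when $\Gamma_K$ is not free. Here I would use a Zorn's lemma argument, or argue that the relevant obstruction lies in a profinite cohomology group that vanishes. Concretely, the extension class lies in $H^2(\Gal_{K^\succ},\Hom(\Gamma_K,\hZ'(1)))$. I would show that it vanishes by reducing to finite levels $\Hom(\Gamma_K/n,\mu_n)$, where the residue field sequence $1\to\mu_n\to (K^{\sh,\succ})^\times\xrightarrow{n}\cdots$ gives a lift, and then passing to the limit by compactness. Alternatively, I would cite the literature, for instance the known splitting results for tame Galois groups of henselian fields, as is done for \eqref{Kummer-hom} via \cite{GabberRamero}.
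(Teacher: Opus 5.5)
Your treatment of part (1) and of $\Gal(K^\sh/K)\cong\Gal_{K^\succ}$ follows the paper's argument, with the details written out. The inertia group of $K^t/K$ is $I=\Gal(K^t/K^\sh)$. The wild inertia $I_p$ is trivial because $I$ is pro-prime-to-$p$; your sentence calling the fixed field of $I_p$ an extension ``of $K^t$'' is garbled and can simply be deleted. Finally, $\Hom(\Gamma_{K^t}/\Gamma_K,\bQ/\bZ'(1))=\varprojlim_{p\nmid n}\Hom(\Gamma_K/n,\mu_n)=\Hom(\Gamma_K,\hZ'(1))$. For the splitting, the paper gives no argument of its own; it only cites Mel'nikov--Tavgen' and Kuhlmann--Pank--Roquette. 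Your final fallback is therefore exactly the paper's route.

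The direct construction you put first has a genuine gap, and neither repair you sketch closes it.

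\emph{Independent choices.} Choosing the $a_\gamma$ and their roots independently does not give a totally ramified $M$. For instance, if $a_{\gamma^2}=u\,a_\gamma^2$ with $u\in K^{+,\times}$, then $M$ contains $a_{\gamma^2}^{1/n}/(a_\gamma^{1/n})^2$, which is an $n$-th root of $u$. This generates a nontrivial unramified extension as soon as the residue of $u$ is not an $n$-th power in $K^\succ$, and then $M\cap K^\sh\neq K$. So multiplicative compatibility is the whole content of the step, and it is left open.

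\emph{The $H^2$ route.} Vanishing at each finite level controls the limit only up to a $\varprojlim^1$ of the groups $H^1(\Gal_{K^\succ},\Hom(\Gamma_K/n,\mu_n))$. These need not be finite (e.g.\ when $\Gal_{K^\succ}$ is not finitely generated), so no compactness or Mittag-Leffler argument is available. What you would actually need is that complements at level $n$ lift to level $nm$.

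\emph{A clean fix.} Apply Zorn's lemma to subfields rather than to root systems.
Let $W\subseteq K^t$ be maximal among extensions of $K$ with residue field $K^\succ$; this class is closed under unions of chains.
Suppose some $\gamma\in\Gamma_W$ were not $\ell$-divisible in $\Gamma_W$ for a prime $\ell\neq p$, and pick $a\in W$ with $|a|=\gamma$.
Since $K^t$ is strictly henselian with $\ell$-divisible value group, $a$ has an $\ell$-th root $b\in K^t$.
Then $[W(b):W]\le\ell\le e_{W(b)/W}$ forces $W(b)$ to have residue field $K^\succ$, contradicting maximality.
Hence $\Gamma_W$ equals the prime-to-$p$ divisible hull $\Gamma_{K^t}$ of $\Gamma_K$.
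It follows that $K^t/WK^\sh$ is tame with $e=f=1$, so $WK^\sh=K^t$.
Also $W\cap K^\sh=K$, because $W$ has residue field $K^\succ$.
Restriction $\Gal(K^t/W)\to\Gal(K^\sh/K)$ is therefore an isomorphism, which gives the splitting.
This $W$ is precisely the field $M$ you were trying to build, obtained without choosing any roots.
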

\begin{proof}
    We first note that 
    \[
        \Hom(\Gamma_K,\hZ'(1)) = \varinjlim_{p \mid n} \Hom\big(\Gamma_K/n\Gamma_K,\mu_n(K^{t,\succ})\big) = \Hom(\Gamma_{K^t}/\Gamma_K,\bQ/\bZ'(1)).
    \]
     Since~$K$ is henselian, the extension $K^\sh/K$ is Galois with group isomorphic to $\Gal_{K^\succ}$.
    The kernel of the surjection $\Gal(K^t/K) \twoheadrightarrow \Gal(K^\sh/K) = \Gal_{K^\succ}$ is the inertia group of the extension $K^t/K$ by definition.
    Via the Kummer pairing it identifies with $\Hom(\Gamma_K,\hZ'(1))$ because the wild inertia group is trivial (see the discussion above).
    This accounts for the exact sequence in \labelcref{lemitem:tame galois sequence}.
    That the sequence splits follows from  \cite{MelnikovTavgen85} or  \cite[Theorem~2.2]{KuhlmannPankRoquette}.
\end{proof}  

\begin{cor}
\label{cor:pitame injective for fields}
    Let $K'/K$ be a finite extension of henselian valued fields. Then the induced map $\Gal(K'^t/K') \to \Gal(K^t/K)$ is an isomorphism onto an open subgroup.
\end{cor}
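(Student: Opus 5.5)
We read the statement as follows: $K'$ is a finite extension of the henselian field $K$, carrying the unique extension of the valuation, and all fields are taken inside the fixed algebraic closure $\overline{K}$. The claimed map is then induced by $K'^t \supseteq K^t$; since $K'^t$ is the compositum $K'K^t$, the map is restriction of automorphisms.

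The plan is to prove first the field identity
\[
K'^t = K' K^t ,
\]
which we verify as two inclusions.

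For ``$\supseteq$'', note that $K^t/K$ is tame. Base change of tame extensions is tame, as remarked after \cref{defi:tame-extension-of-valued-fields}. Hence $K'K^t/K'$ is tame, and so $K'K^t \subseteq K'^t$.

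For ``$\subseteq$'', the cleanest route is via Galois theory. Let $P \subseteq \Gal(\overline{K}/K)$ be the wild inertia group, so that $K^t = \overline{K}^{P}$. Here we use that $K^t$ is the fixed field of the kernel $I_p$ of the tame character \eqref{Kummer-hom}, applied to $\overline{K}/K$ or to $K^{\sep}/K$. This kernel is the maximal pro-$p$ normal subgroup of inertia, namely the unique $p$-Sylow subgroup of $I$, since the quotient $I/I_p \cong \Hom(\Gamma_{\overline{K}}/\Gamma_K, \bQ/\bZ'(1))$ is prime-to-$p$. Then $\Gal(\overline{K}/K')$ is open in $\Gal(\overline{K}/K)$, its inertia is $I' = I \cap \Gal(\overline{K}/K')$, and its wild inertia $P'$ is the $p$-Sylow of $I'$. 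Because $P$ is normal in $I$, the $p$-Sylow of $I'$ is $P' = P \cap I' = P \cap \Gal(\overline{K}/K')$. Taking fixed fields gives $K'^t = \overline{K}^{P'} = K'K^t$.

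It remains to deduce the statement. Since $K^t/K$ is Galois (normality of $P$), the standard result on composita gives that restriction
\[
\Gal(K'K^t/K') \to \Gal(K^t/K)
\]
is injective, with image $\Gal(K^t/K^t\cap K')$. This subgroup has finite index at most $[K':K]$. Being closed of finite index in a profinite group, it is open, and restriction is a homeomorphism onto it by compactness.

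The main obstacle I expect is the identification $P' = P \cap \Gal(\overline{K}/K')$, that is, that wild inertia is compatible with passing to finite extensions. This rests on two facts: $I_p$ is a normal pro-$p$ subgroup with prime-to-$p$ quotient, and inertia for $K'$ equals inertia for $K$ intersected with $\Gal(\overline{K}/K')$. The latter follows from $K'^{\sh} = K'K^{\sh}$.
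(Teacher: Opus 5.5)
Your proof is correct, but it takes a different route from the paper's.

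\textbf{The paper's route.} The paper never proves $K'^t = K'K^t$ directly. It places both Galois groups in the exact sequences of \cref{lem:max tame extension}, which describe $\Gal(K^t/K)$ as an extension of the unramified quotient $\Gal(K^{\sh}/K)$ by tame inertia $\Hom(\Gamma_K,\hZ'(1))$, and compares the two pieces separately.
\begin{itemize}
\item On the unramified piece, $K'K^{\sh} = K'^{\sh}$ makes $\Gal(K'^{\sh}/K') \to \Gal(K^{\sh}/K)$ an isomorphism onto $\Gal(K^{\sh}/K'\cap K^{\sh})$.
\item On tame inertia, the map becomes restriction $\Hom(\Gamma_{K'},\hZ'(1)) \to \Hom(\Gamma_K,\hZ'(1))$ along the finite-index inclusion $\Gamma_K \subseteq \Gamma_{K'}$. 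It is injective because $\hZ'(1)$ is torsion-free, and its cokernel is finite, sitting inside $\Ext^1(\Gamma_{K'}/\Gamma_K,\hZ'(1))$.
\item A snake-lemma diagram chase then combines the two pieces.
\end{itemize}

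\textbf{Your route.} You only use that wild inertia is a normal pro-$p$ subgroup of inertia with pro-prime-to-$p$ quotient, hence its unique $p$-Sylow subgroup. This gives $P_{K'} = P_K \cap \Gal(K^{\sep}/K')$, and the Galois correspondence turns this into $K'^t = K'K^t$. Natural irrationalities then finish the argument.

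\textbf{Comparison.} Your route avoids the Kummer-theoretic description of tame inertia and the diagram chase. It also yields the exact image $\Gal(K^t/K^t\cap K')$. Moreover, it makes explicit the identity $K'^t = K'K^t$, which the paper later extracts from this corollary in the proof of \cref{prop:weak-fundamental-exact-seq} (via the equivalence between injectivity of restriction and this identity). The paper's route instead gives separate control over the unramified part and the tame-inertia part of the image.

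\textbf{One small correction.} Work in $K^{\sep}$ rather than $\overline{K}$ throughout. In characteristic $p$, the fixed field $\overline{K}^{P}$ is the perfect closure of $K^t$, not $K^t$ itself.
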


\begin{proof}
We compare both tame fundamental groups by means of the short exact sequence in \cref{lem:max tame extension}~\labelcref{lemitem:tame galois sequence}.
We first note that $K'K^\sh$ agrees with the strict henselisation of $K'$  since $K'K^\sh$ is henselian with separably closed specialisation field and unramified over~$K'$.
Hence, the induced map of the unramified (strongly \'etale) quotients yields an isomorphism onto an open subgroup.
\[
\Gal(K'^\sh/K') \isomto \Gal(K^\sh/K'\cap K^\sh) \subseteq \Gal(K^\sh/K).
\]

For the comparison of tame inertia groups, we note that the inclusion of value groups $\Gamma_K \inj \Gamma_{K'}$
is of finite index $e_{K'/K} \mid [K':K]$. It follows that $\Ext^1(\Gamma_{K'}/\Gamma_K, \hZ'(1))$ is finite and so the induced map 
\[
\Hom(\Gamma_K,\hZ'(1)) \la \Hom(\Gamma_{K'},\hZ'(1))
\]
is injective with open image of finite index. The claim now follows from  the diagram
\[
 \begin{tikzcd}
    1 \ar[r]    & \Hom(\Gamma_{K'},\hZ'(1)) \ar[r] \ar[d,hookrightarrow]   & \Gal(K'^t/K') \ar[r]    \ar[d]  & \Gal(K'^\sh/K') \ar[r]  \ar[d,hookrightarrow]   & 1 \\
    1 \ar[r]    & \Hom(\Gamma_K,\hZ'(1)) \ar[r]   & \Gal(K^t/K) \ar[r]    & \Gal(K^\sh/K) \ar[r]  & 1. 
 \end{tikzcd}
\]
and an application of the snake lemma.
\end{proof}

Also from Kummer theory we obtain Abhyankar's lemma for tame extensions of valued fields:

\begin{lem} \label{lem:tameness-conditions-galois}
    Let $(L,L^+)/(K,K^+)$ be a finite Galois extension of residue characteristic $p \ge 0$.
    The following conditions are equivalent:
    \begin{enumerate}[(a)]
        \item 
        \label{lemitem:tame}
        The extension $(K,K^+) \to (L,L^+)$ is tame.
        \item
        \label{lemitem:tame-galois}
        The wild inertia group~$I_p$ is trivial and the inertia group~$I$ is abelian and via the Kummer pairing isomorphic to $\Hom(\Gamma_L/\Gamma_K,\bQ/\bZ'(1))$.
        \item
        \label{lemitem:tame-abhyankar}
        There exist a finite unramified extension $(K',K'^+)/(K,K^+)$, elements $a_1, \ldots, a_r\in K'$, and an integer $n\geq 1$ not divisible by the residue characteristic such that we have an inclusion of extensions of $K$
        \[ 
            L \subseteq K'(\sqrt[n]{a_1}, \ldots, \sqrt[n]{a_r}).
        \]
    \end{enumerate}
\end{lem}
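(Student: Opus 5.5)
We prove (a)$\Rightarrow$(b)$\Rightarrow$(c)$\Rightarrow$(a). First we reduce to the henselian case. Fix the extension $L^+$ of $K^+$; its decomposition field $L^{\rm d}$ has henselisation equal to the henselisation of $K$ with respect to $K^+$ (composed appropriately). Moreover $L^{\rm d}$ has the same value group and residue field as $K$. Replacing $K$ by $K^\rh$ and $L$ by $LK^\rh$ changes neither $L^\sh$, $K^\sh$, $I$, $I_p$ nor the value groups. Also, an inclusion $L\subseteq K'(\sqrt[n]{a_i})$ over $K^\rh$ can be descended to $K$ after approximating the $a_i$ and $K'$. This descent is the step needing care, and I discuss it at the end. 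So for the main argument assume $K$ henselian, so that $I=\Gal(L/L\cap K^\sh)$ and $L^\sh = LK^\sh$.

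(a)$\Rightarrow$(b): by definition $p\nmid [L^\sh:K^\sh]=|I|$. Since $I_p$ is a $p$-group contained in $I$, it is trivial. The tame character \eqref{Kummer-hom} is surjective with kernel $I_p$, so it is an isomorphism $I\isomto \Hom(\Gamma_L/\Gamma_K,\bQ/\bZ'(1))$, and in particular $I$ is abelian.

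(b)$\Rightarrow$(c): let $K'=L\cap K^\sh$, which is finite unramified over $K$. By (b), $I$ is abelian of order $|\Gamma_L/\Gamma_K|$. This order is prime to $p$, because the $\bQ/\bZ'(1)$-dual of a finite group is of prime-to-$p$ order and has the same order only if the group itself does. Take $n=\exp(\Gamma_L/\Gamma_K)$ and choose $a_i\in L$ whose values generate $\Gamma_L/\Gamma_K$, so that $b_i=a_i^n$ has value in $\Gamma_K=\Gamma_{K'}$. Multiplying $a_i$ by elements of $K'$ (or of $K^\sh$), we may assume $|b_i|=1$ up to scaling, and then use Hensel/henselianity over $K^\sh$ to adjust $b_i$ into $K'$ after possibly enlarging $K'$ by a finite unramified extension. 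Then $M=K'(\sqrt[n]{b_i})$ is tame with inertia dual to the group generated by the $|a_i|$, which equals $\Gamma_L/\Gamma_K$. Consider $L\cdot K^\sh\supseteq M\cdot K^\sh$. Both have degree $|I|$ over $K^\sh$, so they coincide, and hence $L\subseteq MK^\sh$. Since $L/K$ is finite, $L$ lies in $M K''$ for some finite unramified $K''$. Enlarging $K'$ to contain $K''$ gives (c).

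(c)$\Rightarrow$(a): tameness is stable under composition, base change and subextensions, since $[L^\sh:K^\sh]$ divides $[M^\sh:K^\sh]$. Unramified extensions are tame. Finally, $K'(\sqrt[n]{a})/K'$ is tame because its strict henselisation is a Kummer extension of $K'^\sh$ of degree dividing $n$.

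The main obstacle is the adjustment in (b)$\Rightarrow$(c): arranging that the $n$-th powers actually lie in an unramified extension of $K$, rather than merely having the correct value. I would handle this by working over $K^\sh$, where any unit $u$ with $p\nmid n$ has an $n$-th root because the residue field is separably closed and we may apply Hensel. This lets us replace $a_i$ by $a_i u$ with $a_i^n\in K^\sh$, and finiteness then places everything over a finite unramified $K'$. The same Hensel argument is what permits the descent from $K^\rh$ to $K$ mentioned at the start.
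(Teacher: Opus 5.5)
Your overall route is the paper's: (a)$\Rightarrow$(b) by the $p$-group argument and surjectivity of the tame character; (b)$\Rightarrow$(c) by Kummer theory over $K^\sh$ followed by a finiteness argument; (c)$\Rightarrow$(a) by bounding $[L^\sh:K^\sh]$. Where the paper cites Gabber--Ramero for the Kummer description of $L^\sh$, you substitute a degree count. That count, however, rests on a step that does not follow from (b) as written.

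From (b) you only get $I\simeq\Hom(\Gamma_L/\Gamma_K,\bQ/\bZ'(1))$. For a finite group $A$ this dual has order $|A|_{p'}$, the prime-to-$p$ part of $|A|$, not $|A|$. So ``$I$ is abelian of order $|\Gamma_L/\Gamma_K|$'' is exactly what needs proof, and your deduction that $e=|\Gamma_L/\Gamma_K|$ is prime to $p$ is circular. This matters twice. First, you need $p\nmid n=\exp(\Gamma_L/\Gamma_K)$, both for condition (c) and for the Hensel step. Second, you need $|I|=e$ to conclude $MK^\sh=L^\sh$ by comparing degrees.

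The missing input is Ostrowski's fundamental equality over the henselian field $K^\sh$: $|I|=[L^\sh:K^\sh]=e\cdot f\cdot p^{\delta}$. Combined with $|I|=e_{p'}\le e$, this forces $f=1$, $\delta=0$ and $p\nmid e$. Incidentally, it also gives (b)$\Rightarrow$(a) at once. With this patched, the rest of your (b)$\Rightarrow$(c) works.

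Two smaller points. First, the Hensel adjustment must be done in $L^\sh$, not in $K^\sh$. Write $a_i^n=c_iu_i$ with $c_i\in K$ and $u_i$ a unit of $L^\sh$. Since $L^\sh$ is strictly henselian and $p\nmid n$, we have $u_i=v_i^n$ for some $v_i\in L^\sh$, and then $(a_i/v_i)^n=c_i\in K$ with $|a_i/v_i|=|a_i|$. Taking $n$-th roots of units of $K^\sh$, as you propose, never touches $u_i$.

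Second, the preliminary passage to $K^\rh$ and the ``approximation'' descent are unnecessary. A finite unramified extension of $K^\rh$ lies in $K^\sh$, so you land in $L\subseteq K^\sh(\sqrt[n]{a_1},\ldots,\sqrt[n]{a_r})$ with $a_i\in K^\sh$. Exactly as in the paper, finiteness of $L/K$ then yields a finite subextension $K'$ of $K^\sh/K$ that works. Approximating the $a_i$ would only create a Krasner-type problem you do not need to solve.
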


\begin{proof}
    \labelcref{lemitem:tame} implies \labelcref{lemitem:tame-galois} since $I_p \subseteq I$ is a $p$-group and~$I$ has order prime to~$p$ by assumption.
    
    If~\labelcref{lemitem:tame-galois} holds, the homomorphism (\ref{Kummer-hom}) is an isomorphism as explained above.
    Consequently, the extension $L^\sh/K^\sh$ is abelian and unravelling the definition of the Kummer pairing we see that an element $\gamma \in \Gamma_L/\Gamma_K$ of order~$n$ `corresponds' to the extension $K^\sh(x)/K^\sh$, where $x \in L^\sh$ is such that $\lvert x \rvert = \gamma$ and $x^n \in K^\sh$.
    In total we get that $L^\sh = K^\sh(a_1^{1/q_1},\ldots,a_k^{1/q_r})$  for positive integers~$q_i$ prime to~$p$ and $a_i \in K^\sh$ (see \cite[Corollary 6.2.14]{GabberRamero} for a detailed argument).
    Taking for~$n$ the least common multiple of the~$q_i$, we obtain inclusions
    \[
      L \subseteq L^\sh \subseteq K^\sh(\sqrt[n]{a_1}, \ldots, \sqrt[n]{a_r}).      
    \]
    Since~$L/K$ is finite, $L$ is already contained in $K'(\sqrt[n]{a_1}, \ldots, \sqrt[n]{a_r})$ for a finite subextension~$K'$ of $K^\sh/K$ with $a_i \in K'$.
    This shows the claim of \labelcref{lemitem:tame-abhyankar}.
   
    Finally, from~\labelcref{lemitem:tame-abhyankar} we conclude
    \[
     L^\sh \subseteq K'(\sqrt[n]{a_1}, \ldots, \sqrt[n]{a_r})^\sh = K^\sh(\sqrt[n]{a_1}, \ldots, \sqrt[n]{a_r})
    \]
    and the latter field is abelian of degree prime to~$p$ over~$K^\sh$.
    This shows~\labelcref{lemitem:tame}.
\end{proof}

For extensions that are not necessarily Galois we deduce the following characterisations.

\begin{lem} \label{lem:tameness-conditions}
    Let $(K,K^+) \to (L,L^+)$ be a finite extension of valued fields of residue characteristic~$p$. 
    The following conditions are equivalent: 
    \begin{enumerate}[(a)]
        \item 
        \label{lemitem:tame1}
        The extension $(K,K^+) \to (L,L^+)$ is tame.
        \item
        \label{lemitem:tame-abhyankar1}
        There exist a finite unramified extension $(K',K'^+)/(K,K^+)$, elements $a_1, \ldots, a_r\in K'$, and an integer $n\geq 1$ not divisible by the residue characteristic such that we have an inclusion of extensions of $K$
        \[ 
            L \subseteq K'(\sqrt[n]{a_1}, \ldots, \sqrt[n]{a_r}).
        \]
        \item 
        \label{lemitem:tame-numerical}
        The ramification index $e_{L/K} = \lvert \Gamma_L/\Gamma_K \rvert$ is not divisible by~$p$, the specialisation field extension $L^\succ/K^\succ$ is separable (of degree $f_{L/K}$), and the extension is defectless: the degree $[(L,L^+)^{\rh}:(K,K^+)^{\rh}]$ of the extension of the respectively henselised fields agrees with $e_{L/K} \cdot f_{L/K}$.
    \end{enumerate}
\end{lem}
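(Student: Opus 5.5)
The plan is to reduce to the Galois case \cref{lem:tameness-conditions-galois} together with standard ramification theory of henselian valued fields. Throughout I will freely replace $(K,K^+)$ by its henselisation $K^\rh$ and $L$ by $L^\rh = LK^\rh$ (for the chosen extension of valuations). This is harmless because all three conditions are insensitive to this change. Tameness is defined via strict henselisations, which already contain $K^\rh$. Condition \labelcref{lemitem:tame-numerical} is phrased via henselisations, and $e$ and $f$ are unchanged by henselisation. For condition \labelcref{lemitem:tame-abhyankar1}, an unramified extension $K'$ of $K^\rh$ descends to a finite unramified extension of $K$ after enlarging, because $K^\rh$ is a direct limit of finite unramified (strongly \'etale) extensions; the finitely many $a_i$ and the inclusion descend similarly. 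So I assume $K$ is henselian.

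\labelcref{lemitem:tame1}$\Rightarrow$\labelcref{lemitem:tame-abhyankar1}: I will let $M$ be the Galois closure of $L$ inside $K^t$. Since $L\subseteq K^t$ by tameness, $M\subseteq K^t$ as well, so $M/K$ is tame Galois. Applying \cref{lem:tameness-conditions-galois} to $M$ gives $L\subseteq M\subseteq K'(\sqrt[n]{a_1},\dots,\sqrt[n]{a_r})$. The step I need to check is the claim $L\subseteq K^t$. It holds because $L^\sh/K^\sh$ has degree prime to $p$, and over a strictly henselian field every finite extension of degree prime to $p$ is tame (hence lies in $K^t$), as the inertia there is a pro-$p$ extension of an abelian prime-to-$p$ group.

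\labelcref{lemitem:tame-abhyankar1}$\Rightarrow$\labelcref{lemitem:tame1}: As in the proof of \cref{lem:tameness-conditions-galois}, we have $L^\sh\subseteq K^\sh(\sqrt[n]{a_i})$. This is an abelian Kummer extension of $K^\sh$ of exponent dividing $n$, so its degree is prime to $p$, and hence so is $[L^\sh:K^\sh]$.

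\labelcref{lemitem:tame1}$\Leftrightarrow$\labelcref{lemitem:tame-numerical}: Here $K$ is henselian, so $[L:K]=[L^\sh:K^\sh]\cdot[L\cap K^\sh:K]$. For the unramified part, $[L\cap K^\sh:K]=f_{\rm sep}$, the separable degree of $L^\succ/K^\succ$. For $L^\sh/K^\sh$, the fundamental inequality with defect $d$, where $d$ is a power of $p$ (Ostrowski), gives
\[
[L^\sh:K^\sh]=e\cdot f_{\rm insep}\cdot d .
\]
Here $e$ and $d$ do not change under unramified base change, and $f_{\rm insep}$ is the inseparable degree of $L^\succ/K^\succ$. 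Since $f_{\rm insep}$ and $d$ are powers of $p$, the degree $[L^\sh:K^\sh]$ is prime to $p$ if and only if $p\nmid e$, $f_{\rm insep}=1$ (separable residue extension) and $d=1$ (defectless). This is exactly \labelcref{lemitem:tame-numerical}. I expect this bookkeeping to be the main obstacle: one must verify that defect and inseparability are stable under passage to $K^\sh$, and that Ostrowski's lemma applies. I would cite \cite{GabberRamero} or Kuhlmann for this rather than reprove it. The case $p=0$ is trivial, since then $d=1$ and everything is separable.
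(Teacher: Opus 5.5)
Your proof is correct, but it is organised differently from the paper's.

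The paper runs the cycle (a)$\Rightarrow$(b)$\Rightarrow$(c)$\Rightarrow$(a) without first henselising:
\begin{itemize}
\item For (a)$\Rightarrow$(b) it applies \cref{lem:tameness-conditions-galois} directly to $L^\sh/K^\sh$ and then shrinks $K^\sh$ to a finite subextension $K'$. This implicitly uses that a prime-to-$p$ extension of a strictly henselian field is abelian Galois, which holds because the pro-$p$ wild inertia lies in every subgroup of prime-to-$p$ index.
\item It obtains (c) from the explicit Kummer shape in (b): $e$ divides $n^r$, the residue field lands in $K^{\succ\sep}$, and defectlessness is inherited from $K'^\rh(\sqrt[n]{a_i})/K^\rh$.
\item (c)$\Rightarrow$(a) is the one-liner $[L^\sh:K^\sh]=e_{L/K}$.
\end{itemize}

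You proceed as follows instead:
\begin{itemize}
\item You first reduce to $K$ henselian. This costs you the descent of (b) from $K^\rh$ to $K$, which works since $K'\subseteq K^\sh$ and $L$ is finitely generated.
\item You obtain (a)$\Rightarrow$(b) from the Galois closure of $L$ in $K^t$. This is fine because $K^t/K$ is normal for henselian $K$. Note that $L\subseteq K^t$ is immediate from the definition of $K^t$; no inertia argument is needed there.
\item You prove (a)$\Leftrightarrow$(c) in one stroke from $[L^\sh:K^\sh]=e\cdot f_{\rm insep}\cdot d$ and Ostrowski.
\end{itemize}

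Your route makes explicit the bookkeeping that the paper's last step leaves implicit: the equality $[L^\sh:K^\sh]=e$ under (c) is exactly your formula with $f_{\rm insep}=d=1$.

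The invariance of the defect under passage to $K^\sh$, which you flag as the main obstacle, is in fact automatic. Comparing $[L:K]=e\,f_{\rm sep}\,f_{\rm insep}\,d(L/K)$ with $[L:K]=[L^\sh:K^\sh]\,f_{\rm sep}$ forces $d(L^\sh/K^\sh)=d(L/K)$.

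The paper's route avoids invoking Ostrowski explicitly in (b)$\Rightarrow$(c). However, the defectlessness of a prime-to-$p$ Kummer extension that it uses rests on the same $p$-power defect fact.
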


\begin{proof}
    If \labelcref{lemitem:tame1} holds, the extension $L^\sh/K^\sh$ is abelian by \cref{lem:tameness-conditions-galois}~\labelcref{lemitem:tame-galois}.
    Applying the characterisation 
    \cref{lem:tameness-conditions-galois}~\labelcref{lemitem:tame-abhyankar} to $L^\sh/K^\sh$, we get $a_1,\ldots,a_r \in K^\sh$ and an 
    $n \ge 1$ such that
    \[
        L^\sh \subseteq K^\sh(\sqrt[n]{a_1}, \ldots, \sqrt[n]{a_r}).
    \]
    Choosing a suitable finite subextension $K'$ of $K^\sh/K$ as in the proof of \cref{lem:tameness-conditions-galois} we obtain~\labelcref{lemitem:tame-abhyankar1}. 

    From~\labelcref{lemitem:tame-abhyankar1} we deduce that~$e_{L/K}$ divides $n^r$, which is prime to~$p$.
    Regarding residue field extensions, we know that
    \[
        L^\succ \subseteq L^{\sh\succ} \subseteq K^\sh(\sqrt[n]{a_1}, \ldots, \sqrt[n]{a_r})^\succ = K^{\sh\succ} = K^{\succ\sep}.
    \]
    Moreover, $L^\rh$ is contained in $K'^\rh(\sqrt[n]{a_1}, \ldots, \sqrt[n]{a_r})$, which is a defectless extension of~$K^\rh$, so~$L^\rh$ is also defectless.

    If~\labelcref{lemitem:tame-numerical} holds, then $[L^\sh:K^\sh] = e_{L/K}$ is prime to~$p$ and hence $L/K$ is tame, i.e.\ \labelcref{lemitem:tame1} holds.
\end{proof}

\subsubsection*{Tameness of composite valuations}

In \cref{ss:tameness-comparison-pre} we will need to investigate tame covers of the universal compactification of a rigid space.
The relevant valuations are naturally compositions of two valuations.
As a preparation we study tame extensions of composite valuations.
Our aim is to express the tameness condition in terms of tameness of the composite parts.

\begin{defi}
    Two valued fields $(K_1,K_1^+)$ and $(K_2,K_2^+)$ are said to be \textbf{composable} if $K_1^\succ = K_2$.
    In this case their \textbf{composition} is the valued field $(K,K^+)$ where $K = K_1$ and $K^+$ is the preimage of~$K_2^+$ under the projection $K_1^+ \surj K_1^\succ = K_2$.
    We use the following notation for the composition:
    \[
        (K,K^+) = (K_1,K_1^+) \succ (K_2,K_2^+).
    \]
\end{defi}

The ring $K^+$ in the above definition is indeed a valuation ring of $K$ because for all $x \in K^\times$ we have $x \in K^+$ or $x^{-1} \in K^+$.
The composition $(K,K^+) = (K_1,K_1^+) \succ (K_2,K_2^\succ)$ is illustrated by the following cartesian diagram.

\[
     \begin{tikzcd}
         K = K_1 & K_1^+ \ar[d,twoheadrightarrow] \ar[l,hook']  & K^+ \ar[d,twoheadrightarrow] \ar[l,hook'] \\
         & K_1^\succ = K_2 & K_2^+ \ar[l,hook'] \ar[d,twoheadrightarrow] \\
         & & K_2^\succ = K^\succ.
     \end{tikzcd}
\]

\begin{prop} \label{henselisation-composition}
    Let $(K,K^+) = (K_1,K_1^+) \succ (K_2,K_2^+)$ be a composition of valued fields such that~$K^\succ = K_2^\succ$ is of characteristic $p \ge 0$.
    \begin{enumerate}[ref=(\arabic*)]
        \item \label{lemitem:hens-comp1}
        $(K,K^+)$ is henselian if and only if $(K_1,K_1^+)$ and $(K_2,K_2^+)$ are henselian.
        \item \label{lemitem:hens-comp2}
        $(K,K^+)$ is strictly henselian if and only if $(K_1,K_1^+)$ is henselian and $(K_2,K_2^+)$ is strictly henselian.
        
        \item \label{lemitem:hens-comp3}
        $(K,K^+)$ is tamely henselian 
        if and only if $(K_1,K_1^+)$ is henselian with value group divisible by all primes except possibly~$p$ and $(K_2,K_2^+)$ is tamely henselian.
    \end{enumerate}
\end{prop}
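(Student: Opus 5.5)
For \labelcref{lemitem:hens-comp1} I will use the characterisation of henselian valued fields by unique extension of the valuation to every finite extension. Let $L/K$ be finite and fix a valuation ring $L_1^+$ of $L$ above $K_1^+$. Valuation rings of $L$ above $K^+$ correspond bijectively to pairs consisting of a valuation ring $L_1^+$ above $K_1^+$ together with a valuation ring $L_2^+$ of $L_1^\succ$ above $K_2^+$. This is the standard correspondence between valuation rings contained in $L_1^+$ and valuation rings of its residue field. Moreover, every valuation ring of $L$ above $K^+$ is contained in a unique coarsening lying above $K_1^+$. So uniqueness of extensions of $K^+$ to all finite $L$ is equivalent to two statements: uniqueness of extensions of $K_1^+$ to all finite $L$, and uniqueness of extensions of $K_2^+$ to the residue extensions $L_1^\succ/K_2$. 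For the converse direction, I need every finite extension of $K_2$ to occur as some $L_1^\succ$ when $K_1$ is henselian. I will get this by lifting a primitive element: take a monic lift of its minimal polynomial to $K_1^+$ and adjoin a root. Since the lift is monic and the residual polynomial is irreducible and separable, or in general by a Hensel-type argument, the residue extension is exactly the given one. Inseparable residue extensions are handled the same way, since only finite extensions are needed.

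For \labelcref{lemitem:hens-comp2}, by \labelcref{lemitem:hens-comp1} we may assume all three fields are henselian. Strict henselianity then means that the specialisation field is separably closed. This holds since $K^\succ = K_2^\succ$, and $K_2$ is strictly henselian exactly when it is henselian with separably closed specialisation field. That gives the equivalence immediately.

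For \labelcref{lemitem:hens-comp3}, tamely henselian means henselian with no nontrivial tame extension. By \cref{lem:max tame extension} this is equivalent to $K^\succ$ being separably closed and $\Hom(\Gamma_K,\hZ'(1))=0$. The latter holds if and only if $\Gamma_K$ is $\ell$-divisible for all $\ell\neq p$, because $\Gamma_K$ is torsion-free and $\Hom(\Gamma_K,\bZ/\ell)=0$ is equivalent to $\ell$-divisibility. Now use the short exact sequence of value groups
\[ 1 \la \Gamma_{K_2} \la \Gamma_K \la \Gamma_{K_1} \la 1, \]
which comes from $K_2^{+\times}\subseteq K_1^{+\times}/(1+\mathfrak m_1)$. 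Since $\Gamma_{K_2}$ is a convex subgroup, $\ell$-divisibility of $\Gamma_K$ is equivalent to $\ell$-divisibility of both $\Gamma_{K_1}$ and $\Gamma_{K_2}$. The forward implication uses that an $\ell$-th root of an element of a convex subgroup of a torsion-free ordered group stays in the convex subgroup. The converse uses torsion-freeness: an $\ell$-th root in $\Gamma_K$ can be adjusted by an element of $\Gamma_{K_2}$. Combining this with \labelcref{lemitem:hens-comp2}, $K$ is tamely henselian if and only if $K_1$ is henselian with $\ell$-divisible value group and $K_2$ is henselian with separably closed specialisation field and $\ell$-divisible value group, i.e.\ $K_2$ is tamely henselian.

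The main obstacle is the converse in \labelcref{lemitem:hens-comp1}: realising every finite extension of $K_2$ as a residue extension of $K_1$ and matching extension counts. Alternatively, I would use the criterion that a valued field is henselian if and only if every monic polynomial over the valuation ring with a simple residual root has a lift of that root. I would prove this by reducing Hensel's lemma for $K^+$ in two steps: first apply Hensel's lemma for $K_2^+$ to the image in $K_2^+[T]$, then apply it for $K_1^+$. A simple root modulo $\mathfrak m_K$ lifts to a simple root in $K_2$ and then to $K_1^+$; the resulting root lies in $K^+$ because its residue lies in $K_2^+$. Conversely, the roots for $K_1$ and for $K_2$ come from polynomials over $K^+$, after scaling to reduce to the residue field. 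I would take this Hensel-lemma route first, as it avoids counting extensions.
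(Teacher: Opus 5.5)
Your proposal is correct. For (2) and (3) it is the paper's own argument. The paper also deduces (2) from (1) via ``strictly henselian $=$ henselian with separably closed specialisation field'' together with $K^\succ=K_2^\succ$. For (3) it recalls that tamely henselian means strictly henselian with value group divisible by every prime $\ell\neq p$; you derive this from the structure of $\Gal(K^t/K)$, which is fine. It then uses that $\Gamma_{K_2}\subseteq\Gamma_K$ is convex with quotient $\Gamma_{K_1}$.

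The real difference is (1), which the paper does not prove but cites from Ribenboim (Chapter F, Propositions 9 and 10). Your self-contained arguments for (1) work and remove the dependence on that reference, but your sketch puts the difficulty in the wrong place.

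\emph{Extension-counting route.} The bijection $L^+\leftrightarrow(L_1^+,L_2^+)$ makes ``$K_1,K_2$ henselian $\Rightarrow$ $K$ henselian'' immediate, because every $L_1^\succ$ is already a finite extension of $K_2$. Realising finite extensions of $K_2$ inside residue fields is needed for the \emph{forward} implication $K\Rightarrow K_2$. It requires neither $K_1$ henselian, nor separability, nor a Hensel argument:
\begin{itemize}
\item a monic lift $f\in K_1^+[T]$ of an irreducible $\bar f$ is irreducible over $K_1$, since monic factors have coefficients in the integrally closed ring $K_1^+$;
\item a root of $f$ is integral over $K_1^+$, so every valuation ring of $K_1[T]/(f)$ above $K_1^+$ has residue field containing a root of $\bar f$;
\item uniqueness of extensions only has to be tested on finite separable, hence simple, extensions, so no towers are needed.
\end{itemize}

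\emph{Hensel route.} The converse and $K\Rightarrow K_2$ are easy: lift polynomials along the surjection $K^+\to K_2^+$. The implication $K\Rightarrow K_1$, however, needs an actual trick that ``after scaling'' does not supply. The issue is that $g\in K_1^+[T]$ need not have coefficients in $K^+$, and its residual root $b$ need not lie in $K_2^+$. The fix is as follows.
\begin{itemize}
\item Translate so that $b=0$, and write $g=h_0+h_1T+\dots$ with $h_0\in\mathfrak m_{K_1^+}$ and $h_1\in K_1^{+\times}$.
\item If $h_0\neq 0$, consider $h_0^{-1}g(h_0h_1^{-1}Y)=1+Y+\sum_{i\ge 2}c_iY^i$, whose coefficients $c_i$ lie in $\mathfrak m_{K_1^+}\subseteq\mathfrak m_{K^+}$.
\item Apply Hensel's lemma for the henselian local ring $K^+$ (non-monic form) at the simple residual root $-1$.
\item The resulting root $h_0h_1^{-1}y$ of $g$ reduces to $0$ in $K_2$, as required.
\end{itemize}

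The shortest complete proof mixes your two routes. Get $K\Rightarrow K_1$ from the bijection, since two extensions of $K_1^+$ compose to two extensions of $K^+$. Get $K\Rightarrow K_2$ by lifting polynomials. Get the converse by either route.
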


\begin{proof}
    Assertion~\labelcref{lemitem:hens-comp1} has been shown in \cite[Chapter~F, Propositions~9 and 10]{Ribenboim:Theorie-valuations}. 
    From this~\labelcref{lemitem:hens-comp2} follows immediately as a henselian local ring is strictly henselian if and only if its residue field is separably closed and the residue fields of $K^+$ and~$K_2^+$ are the same.
    
    For~\labelcref{lemitem:hens-comp3} remember that a valued field of residue characteristic $p > 0$ is tamely henselian if and only if it is strictly henselian and its value group is divisible by all primes except~$p$.
    The value group $\Gamma_{K_2}$ is a convex subgroup of~$\Gamma_K$ with quotient group~$\Gamma_{K_1}$.
    Therefore, $\Gamma_K$ is divisible by a prime $\ell \ne p$ if and only if $\Gamma_{K_1}$ and~$\Gamma_{K_2}$ are.
    This shows the equivalence.
\end{proof}

\begin{prop} \label{composition-tame}
    Let $(L,L^+)/(K,K^+)$ be an algebraic extension of valued fields of residue characteristic $p \ge 0$.
    Suppose that $(L,L^+) = (L_1,L_1^+) \succ (L_2,L_2^+)$ and set $(K_1,K_1^+) := (K,L_1^+ \cap K)$ and $(K_2,K_2^+) := (K_1^\succ,L_2^+ \cap K_1^\succ)$ so that $(K,K^+) = (K_1,K_1^+) \succ (K_2,K_2^\succ)$.
    Then
    \begin{enumerate}[ref=(\arabic*)]
        \item \label{lemitem:composition etale}
        $(L,L^+)/(K,K^+)$ is unramified if and only if $(L_1,L_1^+)/(K_1,K_1^+)$ and $(L_2,L_2^+)/(K_2,K_2^+)$ are unramified.
        
        In particular, the henselisation and strict henselisation of $(K,K^+)$ decompose as
        \begin{align*}
            (K^\rh,K^{\rh,+}) &= (K'_1,{K'_1}^+) \succ (K_2^\rh,K_2^{\rh,+}),   \\
            (K^\sh,K^{\sh,+}) &= (K''_1,{K''_1}^+) \succ (K_2^\sh,K_2^{\sh,+}),
        \end{align*}
        where $(K'_1,{K'_1}^+)$ and $(K''_1,{K''_1}^+)$ are the unique unramified extensions of $(K_1^\rh,K_1^{\rh,+})$ with residue fields $K_2^\rh$ and $K_2^\sh$, respectively.
        \item \label{lemitem:composition tame}
        $(L,L^+)/(K,K^+)$ is tame if and only if $(L_1,L_1^+)/(K_1,K_1^+)$ and $(L_2,L_2^+)/(K_2,K_2^+)$ are tame and the ramification index of $(L_1,L_1^+)/(K_1,K_1^+)$ is prime to~$p$ \footnote{It might happen that the residue characteristic of $(K_1,K_1^+)$ is~$0$ and the residue characteristic of $(K_2,K_2^+)$ is $p > 0$.
        In this case extracting $p$-th roots in $(K_1,K_1^+)$ is tame but does not lead to a tame extension of $(K,K^+)$.
        This is the reason for the extra condition on the ramification index.}.

        In particular, the tame henselisation of $(K,K^+)$ decomposes as
        \[
            (K^t,K^{t,+}) = (K'_1,{K'_1}^+) \succ (K_2^t,K_2^{t,+}),
        \]
         where $(K'_1,{K'_1}^+) \subseteq (K_1^t,K_1^{t,+})$ is the minimal extension of $(K_1^\rh,K_1^{\rh,+})$ with residue field~$K_2^t$ and value group divisible by all primes except possibly~$p$.
    \end{enumerate}
\end{prop}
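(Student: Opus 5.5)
We reduce both parts to finite extensions and work with henselisations throughout. Since unramifiedness and tameness of an algebraic extension are defined via finite subextensions, and the decomposition $(L_1,L_1^+)\succ(L_2,L_2^+)$ restricts compatibly to finite subextensions, we may assume $L/K$ finite. We first replace $K$ by $K^\rh$ and $L$ by $L K^\rh$; neither unramifiedness nor tameness changes. By \cref{henselisation-composition}~\labelcref{lemitem:hens-comp1}, henselianity of $K$ is equivalent to henselianity of both $K_1$ and $K_2$. So in the henselian situation every finite extension carries a unique extension of the valuation. The decompositions of $K^\rh$, $K^\sh$, $K^t$ will then be identified by characterising each as the maximal extension of the given type.

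\emph{Part \labelcref{lemitem:composition etale}.} With $K$ henselian, the key numerical input is the multiplicativity of invariants under composition:
\[
e_{L/K}=e_{L_1/K_1}\,e_{L_2/K_2}, \qquad f_{L/K}=f_{L_2/K_2}, \qquad f_{L_1/K_1}=[L_2:K_2].
\]
This follows from the exact sequence $1\to\Gamma_{K_2}\to\Gamma_K\to\Gamma_{K_1}\to1$ of value groups (convex subgroup) and its analogue for $L$, by a snake-lemma argument. The defect is multiplicative as well, since $[L:K]=e_{L_1/K_1}f_{L_1/K_1}d_1$ and $[L_2:K_2]=e_{L_2/K_2}f_{L_2/K_2}d_2$ give $[L:K]=e_{L/K}f_{L/K}d_1d_2$.

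Now $L/K$ is unramified iff $e_{L/K}=1=d$ and $L^\succ/K^\succ$ is separable. This holds iff $e_{L_1/K_1}=e_{L_2/K_2}=1$, $d_1=d_2=1$, and $L_2^\succ/K_2^\succ$ is separable. The remaining point is separability of $L_1^\succ=L_2$ over $K_1^\succ=K_2$. It is automatic when $L_2/K_2$ is unramified, since then $L_2/K_2$ is defectless with separable residue extension and $e=1$, which forces $L_2/K_2$ separable. This gives the equivalence.

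The decompositions of $K^\rh$ and $K^\sh$ follow by taking limits over unramified extensions. The residue field of $K_1'$ must be the maximal unramified-over-$K_2$ field, namely $K_2^\rh$ (resp.\ $K_2^\sh$). Existence and uniqueness of the unramified lift $K_1'$ come from the equivalence between finite unramified extensions of the henselian field $K_1^\rh$ and finite separable extensions of its residue field.

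\emph{Part \labelcref{lemitem:composition tame}.} We use \cref{lem:tameness-conditions}~\labelcref{lemitem:tame-numerical}: $L/K$ is tame iff $p\nmid e_{L/K}$, $L^\succ/K^\succ$ is separable, and $d=1$. By the product formulas, this is equivalent to the following conditions: $p\nmid e_{L_1/K_1}$, $p\nmid e_{L_2/K_2}$, $d_1=d_2=1$, and $L_2^\succ/K_2^\succ$ separable; together with separability of $L_2/K_2$.

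We then compare with tameness of the two pieces. Tameness of $L_2/K_2$ is exactly $p\nmid e_{L_2/K_2}$, $d_2=1$, plus residue separability. Tameness of $L_1/K_1$ with $p\nmid e_{L_1/K_1}$ gives $d_1=1$ and separability of $L_2/K_2$. Note that tameness of $L_1/K_1$ is measured in the residue characteristic of $K_1$, which may be $0$; this is why the extra condition $p\nmid e_{L_1/K_1}$ is needed.

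\emph{Main obstacle.} The subtle case is $\operatorname{char}K_2=0<p$ in the converse direction. Here defectlessness $d_1=1$ is automatic, and the defectless/separable conditions match. The main care is the bookkeeping: showing that defect and separability split exactly as claimed, where I would rely on the standard multiplicativity of the defect under composition from Ribenboim.

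Finally, the decomposition of $K^t$ follows from \cref{henselisation-composition}~\labelcref{lemitem:hens-comp3}. The compositum $(K_1',K_1'^+)\succ(K_2^t,K_2^{t,+})$ is tamely henselian. It is tame over $K$ by the criterion just proven, since each finite subextension of $K_1'/K_1^\rh$ has ramification index prime to $p$ and residue extension inside $K_2^t$. Hence it is the maximal tame extension $K^t$.
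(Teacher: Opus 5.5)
Your henselian-case argument is sound and takes a different route from the paper's. You derive $e_{L/K}=e_{L_1/K_1}e_{L_2/K_2}$ from the value-group sequences and $d_{L/K}=d_1d_2$ from the fundamental equality, and then read off both equivalences from the numerical criterion of \cref{lem:tameness-conditions}. The paper instead argues structurally: for one direction it base-changes $L^+$ along $K^+\to K_1^+$ and $K^+\to K_2^+$; for the other it reduces to a strictly, resp.\ tamely, henselian base, where the extensions become trivial.

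The gap is your opening reduction, ``replace $K$ by $K^\rh$ and $L$ by $LK^\rh$; neither unramifiedness nor tameness changes''.
\begin{itemize}
\item This is true for $L/K$ itself. But the reduction also replaces the pieces $L_1/K_1$ and $L_2/K_2$ by $(LK^\rh)_1/(K^\rh)_1$ and $(LK^\rh)_2/(K^\rh)_2$.
\item For the implications from the composite to the pieces in (1) and (2), including $p\nmid e_{L_1/K_1}$, you must transport the conclusion back down to $L_1/K_1$ and $L_2/K_2$.
\item That requires knowing that $K^\rh$, with the coarsened valuation, is unramified over $K_1$ and has residue field unramified over $K_2$. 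This is exactly the decomposition of $K^\rh$ claimed in (1).
\item You obtain that decomposition only at the end, ``by taking limits'', i.e.\ from (1) over a non-henselian base. As written, the argument is therefore circular.
\end{itemize}

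It can be repaired in either of two ways.
\begin{itemize}
\item Reorder the argument. The implications from the pieces to the composite do reduce honestly to the henselian case, because unramifiedness and tameness of the pieces survive arbitrary base change. From the converse of (1) together with \cref{henselisation-composition}~\labelcref{lemitem:hens-comp1}, the field $K_1'\succ K_2^\rh$ is henselian and unramified over $K$, with value group $\Gamma_K$ and residue field $K^\succ$. Hence it equals $K^\rh$. Only after this may you reduce the forward directions to the henselian case.
\item Give a direct argument over an arbitrary base. One option is the paper's base-change argument. Another uses the decomposition and inertia groups of (an extension to $K^\sep$ of) the valuation $v$ and its coarsening $v_1$: since $I_{v_1}\subseteq D_v\subseteq D_{v_1}$, the field $K^\rh$ lies in the inertia field of $v_1$.
\end{itemize}

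A minor correction: $K_2^\rh$ is not the maximal unramified extension of $K_2$; that is $K_2^\sh$. Rather, $K_2^\rh$ is the maximal unramified extension with trivial residue extension.
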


\begin{proof}
    Let $\fp \subseteq K^+$ be the maximal ideal of~$K_1^+$, so that $K_1^+ = K^+_\fp$ and $K_2^+ = K^+/\fp$.
    
    In order to prove~\labelcref{lemitem:composition etale} suppose that $(L,L^+)/(K,K^+)$ is unramified.
    Then $\fp_L := \fp L^+$ is the maximal ideal of~$L_1^+$.
    Moreover, $L_1^+$ and $L_2^+$ are the base changes of~$L^+$ via $K^+ \hookrightarrow K_1^+$ and $K^+ \twoheadrightarrow K_2^+$, respectively.
    In particular, both extensions $L_1^+/K_1^+$ and $L_2^+/K_2^+$ are unramified.
    
    Suppose now that $(L_1,L_1^+)/(K_1,K_1^+)$ and $(L_2,L_2^+)/(K_2,K_2^+)$ are unramified.
    We can check that $(L,L^+)/(K,K^+)$ is unramified after base changing to $(K^\sh,K^{\sh,+})$.
    So we may assume that $(K,K^+)$ is strictly henselian.
    By \cref{henselisation-composition}~\labelcref{lemitem:hens-comp2}, also $(K_2,K_2^+)$ is strictly henselian and $(K_1,K_1^+)$ is henselian.
    Therefore, the unramified extension $(L_2,L_2^+)/(K_2,K_2^+)$ is trivial.
    Now $(L_1,L_1^+)/(K_1,K_1^+)$ is an unramified extension with trivial residue field extension of a henselian valued field, hence trivial as well.
    This shows that $(L,L^+) = (K,K^+)$, which is trivially unramified.

    For the second part of~\labelcref{lemitem:composition etale} we note that the unramified extension $(K^h,K^{h,+})/(K,K^+)$ induces unramified extensions of $(K_1,K_1^+)$ and $(K_2,K_2^+)$ that have to contain the respective henselisations by \cref{henselisation-composition}~\labelcref{lemitem:hens-comp1}.
    Invoking that the residue field extension over~$K_2^\succ$ has to be trivial we obtain the desired decomposition.
    The argument for the strict henselisation is analogous.

    In order to prove~\labelcref{lemitem:composition tame} we first assume that $(L,L^+)/(K,K^+)$ is tame.
    We already know from~\labelcref{lemitem:composition etale} that passing to the strict henselisation of $(K,K^+)$ induces unramified extensions of $(K_1,K_1^+)$ and $(K_2,K_2^+)$.
    We may therefore assume that $(K,K^+)$ is strictly henselian.
    Then $L/K$ is of degree prime to~$p$ and the same holds for $L_1/K_1$ and $L_2/K_2$, so both extensions $(L_1,L_1^+)/(K_1,K_1^+)$ and $(L_2,L_2^+)/(K_2,K_2^+)$ are tame and the ramification index of the former extension is prime to~$p$.
    
    For the converse we reduce to the situation where $(K,K^+)$ is tamely henselian.
    Then we argue in the same way as before using that a tame extension of henselian valued fields with trivial residue field extension and trivial value group extension is trivial.
\end{proof}

\begin{rmk}
    The extension $\bQ_p((T^{1/p}))/\bQ_p((T))$ certainly defines a tame extension of $(\bQ_p((T)),\bQ_p[[T]])$ since the residue field has characteristic~$0$.
    However, if we consider it over the composition
    \[
     (\bQ_p((T)),\bQ_p[[T]]) \succ (\bQ_p,\bZ_p) = (\bQ_p((T)),\bZ_p + T\bQ_p[[T]]),
    \]
    the same extension is not tame as it is ramified of degree~$p$ and the residue characteristic is now~$p$.
    One might be tempted to declare such an extension to be tame since it induces tame extensions over both constituents $(\bQ_p((t)),\bQ_p[[T]])$ and $(\bQ_p,\bZ_p)$.
    However, this is not a good idea for various reasons.

    Firstly, the module of log differentials for the extension over $(\bQ_p((T)),\bZ_p + T\bQ_p[[T]])$ is generated by $\dlog T^{1/p}$ subject to the relation $0 = \dlog T = p \dlog T^{1/p}$.
    So it is isomorphic to $\bZ/p\bZ$, which is not zero as expected for tame extensions.

    Secondly, the base change of the extension to $\bQ_p(((pT)^{1/p}))$ transforms it into the extension $\bQ_p((S))(p^{1/p})/\bQ_p((S))$, where $S = (pT)^{1/p}$.
    This extension is certainly wild with respect to the valuation ring $\bZ_p + T\bQ_p[[S]]$ but a base change of a tame extension should be tame.

    In what follows, we will work with rigid spaces in mixed or positive characteristic.
    In this setup the above described subtlety does not come up as for all relevant valued fields $(K,K^+)$ the residue characteristic of $K^\circ$ (the power bounded elements) is~$p$.
    In other words: the only possible localisation of~$K^+$ of residue characteristic~$0$ equals~$K$.
\end{rmk}

\subsection{Tame covers of schemes} 
\label{sec:adic tame discrete case}

In this section, we introduce tame covers of schemes over a base scheme~$S$.
Given its valuation-theoretic nature, the tameness of an \'etale (or just locally quasi-finite) morphism of schemes $Y\to X$ relative to $S$ is best defined using the associated map of discretely ringed adic spaces $\Spa(Y, S)\to \Spa(X, S)$, see \cite[\S3.1]{Temkin2011:RelativeRiemannZariskiSpacesa} and \cite{Hubner2021:AdicTameSite}.

In the literature, tame covers of schemes have been studied for decades, but there are varying notions of tameness used for their definition. 
After giving the definition of tame covers we use in this article (sometimes we refer to this as \emph{adic tameness} because it relies on tameness of the associated morphism of adic spaces), we compare it to the existing notions of tameness.
We will see that under some mild assumptions most of the tameness notions are equivalent.
This was already established in \cite{KerzSchmidt2010:DifferentNotionsTameness}.
However, they had to assume resolution of singularities for some of the implications and we will report on new developments that allow for a proof without using resolutions.

\begin{defi}
\label{defi:Spa for schemes}
Let $f\colon X\to S$ be a morphism of schemes. We define $\Spa(X,S)$ as the set of triples $(x,V,s)$, where $x \in X$ is a point, $V$ is a valuation subring of $k(x)$, and $s \colon \Spec(V) \to S$ is a map (that we call a \textbf{centre of~$V$ on~$S$}) fitting inside a commutative square
\[
 \begin{tikzcd}
     \Spec(k(x)) \ar[r] \ar[d] & X \ar[d] \\
     \Spec(V) \ar[r,"s"] & S.
 \end{tikzcd}
\]
\end{defi}

Often we denote a triple $(x,V,s)\in \Spa(X,S)$ simply by $x$, and the valuation ring $V\subseteq k(x)$ will be denoted by $k(x)^+$. By abuse of notation we sometimes also write~$s$ for the image of the closed point under the map $\Spec(V) \to S$ in the definition.
This is justified as for a given point $s \in S$ there is at most one map $\Spec(V) \to S$ as above mapping the closed point of~$\Spec(V)$ to~$s$.

    A commutative square 
    \[ 
        \begin{tikzcd}
            X'\ar[d] \ar[r,"g"] & X\ar[d] \\
            S'\ar[r,"h"] & S
        \end{tikzcd}
    \]
    induces a map $\Spa(X', S')\to \Spa(X, S)$ sending $(x', V',s')$ to $(x,V,s)$ where $x=g(x')$, $V = V'\cap k(x)$, and $s$ is induced by $h\circ s'$.

The construction of $\Spa(X,S)$ was introduced in \cite[\S~3.1]{Temkin2011:RelativeRiemannZariskiSpacesa}, where it is also shown that $\Spa(X,S)$ naturally carries the structure of an adic space.
In \cref{s:compactifications} we will see that $\Spa(X,S)$ is a special instance of a vertical compactification, namely of $\Spa(X,X)$ over $\Spa(S,S)$ (see \cref{example-Spa-schemes}).

\begin{defi}
\label{defi:tame cover of schemes X over S}
    Let $f\colon Y \to X$ be an \'etale morphism of schemes over $S$.   
    \begin{enumerate}
        \item 
        We say $f\colon Y \to X$ is \textbf{tame relative to $S$} if for every point $y = (k,k^+)$ in $\Spa(Y, S)$ with image $x = f(y)$ in $\Spa(X, S)$ the induced extension
        $
            \big(k(y),k(y)^+\big) / \big(k(x), k(x)^+\big)
        $
        is tamely ramified.
        \item 
        The category of all finite \'etale morphisms $Y\to X$ which are tame relative to $S$ is denoted by $\FEtt_{X/S}\subseteq \FEt_X$. 
    \end{enumerate}
\end{defi}

The base change functor $g^\ast \colon \FEt_X \to \FEt_{X'}$ induces a base change functor
\[
    g^\ast \colon \FEtt_{X/S} \la \FEtt_{X'/S'}
\]
on the respective full subcategories of tame covers relative to $S$ (resp. to $S'$).

\subsubsection*{Comparison with other notions of tameness}

In \cite{KerzSchmidt2010:DifferentNotionsTameness} Kerz and Schmidt compare various tameness conditions that we review quickly in order to compare with (adic) tameness as understood in this note. 
Let $X$ be a separated scheme of finite type over an integral, pure-dimensional, separated and excellent base scheme $S$.
Then we consider for a finite \'etale map $Y \to X$ the following properties. 

The oldest definition of a tame cover goes back to \cite[Exposé~X]{SGA1} and requires~$X$ to be regular with a regular compactification~$\ov X$, proper over the base~$S$, whose boundary is the support of a normal crossing divisor $D$.
In this setting we define \textbf{Grothendieck-Murre tameness} as follows:
A finite étale cover $Y \to X$ is \textbf{tame along~$D$} if the normalisation $\ov{Y} \to \ov{X}$ is tamely ramified at rank~$1$ discrete valuation rings corresponding to irreducible components of $D$.

\begin{prop}
    For~$X$ and~$S$ as above, a finite étale cover $Y \to X$ is tame relative to~$S$ if and only if it is tame along~$D$.
\end{prop}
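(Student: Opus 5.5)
One direction is immediate. Suppose $Y\to X$ is tame relative to $S$. Let $\eta_i$ be the generic point of a component $D_i$ of $D$. The local ring $\cO_{\ov X,\eta_i}$ is a discrete valuation ring $V_i$ of $k(\xi)$, where $\xi$ is the generic point of the relevant component of $X$. Since $\ov X\to S$ is proper, hence separated, $V_i$ has a centre on $S$ via $\Spec V_i\to\ov X\to S$. Thus $(\xi,V_i,s)\in\Spa(X,S)$. Tameness relative to $S$ says that for every point $y$ of $Y$ over $\xi$ and every extension of $V_i$ to $k(y)$, the extension of valued fields is tame. This is exactly tame ramification of the normalisation $\ov Y\to\ov X$ at $\eta_i$. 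Hence $Y\to X$ is tame along $D$.

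For the converse, assume $Y\to X$ is tame along $D$, and let $(x,V,s)\in\Spa(X,S)$ with $y\in Y$ over $x$ and a valuation ring $W\subseteq k(y)$ over $V$. By properness of $\ov X\to S$ and the valuative criterion, $V$ has a centre $\bar x\in\ov X$ specialising $x$. Tameness may be checked after strict henselisation, since base change of tame is tame and tameness descends along unramified extensions. So we may replace $\ov X$ by $\Spec\cO^{\sh}_{\ov X,\bar x}$.

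There Abhyankar's lemma applies in its Grothendieck--Murre form (SGA1 XIII, 5.3, which uses regularity of $\ov X$ and normal crossings of $D$). Write $D=V(t_1\cdots t_r)$ locally at $\bar x$. Tameness along $D$ implies that, over the strict henselisation, $Y$ becomes a quotient of a disjoint union of Kummer covers $\cO^{\sh}[t_1^{1/n},\dots,t_r^{1/n}][1/t_1\cdots t_r]$ with $n$ invertible at $\bar x$. After this base change, the residue field extension $k(y)/k(x)$ is contained in an extension obtained by adjoining $n$-th roots of the units $t_j$ of $k(x)$ to a field unramified over $(k(x),V)$. Here $V$ dominates $\cO_{\ov X,\bar x}$, so $n$ is prime to the residue characteristic of $V$. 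Criterion \labelcref{lemitem:tame-abhyankar1} of \cref{lem:tameness-conditions} then yields tameness of $(k(y),W)/(k(x),V)$.

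A subtlety is that the $t_j$ restricted to the closure of $x$ may not be invertible if $x$ is not generic. This is harmless, since we only need the field $k(x)$: the Kummer description over $\Spec\cO^{\sh}_{\ov X,\bar x}$ restricts to the integral scheme $\ov{\{x\}}$ through $\bar x$, and $Y_x$ is étale over $x$, so only the $t_j$ that are units at $x$ contribute. Equally, the roots of the other $t_j$ split étale-locally over $x$.

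The main obstacle is the Abhyankar step. One must make sure that tameness at the generic points of $D_i$ alone implies the Kummer description at every point $\bar x$, including points of $D$ where several branches meet and points where the characteristic differs between $S$ and its residues. I would invoke SGA1 XIII, 5.3, together with purity of the branch locus (Zariski--Nagata, via regularity of $\ov X$) to control the normalisation over $\bar x$.
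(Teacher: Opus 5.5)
Your proposal is correct and is essentially the paper's argument: the discrete valuation ring points $(\xi,V_i,\eta_i)$ at the generic points of the components of $D$ give one direction. For the converse, you pass to the strict henselisation at the centre of $V$ on $\ov X$ and use Abhyankar's lemma (SGA1 XIII) to dominate $Y$ by a Kummer cover of degree prime to $p$, which is exactly what the paper does. The only blemish is your ``subtlety'' paragraph, which is vacuous, since $x\in X=\ov X\setminus D$ already forces every $t_j$ to be a unit at $x$.
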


\begin{proof}
    By the valuative criterion of properness we may assume that $S = \ov X$.
    A discrete valuation ring~$V_P$ corresponding to the irreducible component~$P$ of~$D$ defines the point $(\eta,V_P,\eta_P)$ of $\Spa(X,\ov X)$, where~$\eta$ is the generic point of~$X$ and~$\eta_P$ the generic point of~$P$.
    Tameness of $Y \to X$ along~$D$ translates to tameness of $\Spa(Y,\ov X) \to \Spa(X,\ov X)$ over these points $(\eta,V_P,\eta_P)$.
    In particular, tameness of $Y \to X$ relative to~$S$ implies tameness along~$D$.

    In order to show that Grothendieck-Murre tameness along~$D$ implies tameness relative to~$\ov X$ we reduce to the case where~$\ov X = \Spec(A)$ is strictly henselian.
    Then Abhyankar's lemma \cite[XIII Proposition~5.2]{SGA1} tells us that~$Y$ is dominated by a cover of the form
    \[
     \Spec(A[a_1^{1/n},\ldots,a_r^{1/n}]) \longrightarrow \Spec(A),
    \]
    where $a_1,\ldots,a_r \in A$ define the irreducible components of~$D$ and~$n$ is prime to the residue characteristic~$p$ of~$A$.
    Since the degree of this cover is prime to~$p$, it is tame (compare with the proof of \cite[Proposition~4.2]{KerzSchmidt2010:DifferentNotionsTameness}).
\end{proof}

We could rephrase the proof above as Abhyankar's lemma telling us that $\ov Y \to \ov X$ is Kummer étale with respect to the log structure defined by~$D$.
We will encounter a variant of this argument in a more sophisticated (nonnoetherian) setting in \cref{ss:log-abhyankar}.

Grothendieck-Murre tameness is a well-behaved concept only in the presence of a normal crossing compactification.
In order to define tame covers also under less favourable circumstances, various notions of tameness have been introduced.
For a separated scheme~$X$ of finite type over the base~$S$ (which is still assumed to be integral, pure-dimensional, separated, and excellent), we have the following list of tameness conditions for a finite étale cover $Y \to X$:
\begin{enumerate}
    \item \label{def:curve tameness}
    \textbf{Curve tameness} (following Wiesend \cite{Wiesend2008:TamelyRamifiedCovers}):    
    the pull-back $Y_C \to C$ along $C \to X$ to every connected scheme $C$ of finite type over $S$ with $\dim_S C = 1$ (see \cite[Definition on p.653]{KerzSchmidt2010:DifferentNotionsTameness}, this might deviate from the Krull dimension) is tame.
    Any such $C$ has a canonical set of discrete valuations of its function field as points at infinity with respect to which tameness can be defined.
\end{enumerate}
If in addition~$X$ is normal, the following tameness concepts are available:
\begin{enumerate}
    \setcounter{enumi}{1}
    \item  \label{def:divisor tameness}
    \textbf{Divisor tameness:} every discrete rank $1$ valuation of the function field $k(X)$ of $X$ arising from a boundary divisor in a normal compactification $X \subseteq \ov X$ over $S$ is tamely ramified in the function field of $Y$.
    \item  \label{def:chain tameness}
    \textbf{Chain tameness:} for every normal compactification $\ov X \to S$ of $X$,
    every discrete rank $d = \dim_S(X)$ valuation of $k(X)$ dominating a Parshin chain on $\ov X$ whose valuation ring dominates a local ring $\cO_{S,s}$ on $S$ is tamely ramified in $k(Y)$.
    \item  \label{def:valuation tameness}
    \textbf{Valuation tameness:} every valuation of $k(X)$ with centre on $S$ is tamely ramified in $k(Y)$.   
\end{enumerate}
All of the above definitions can be interpreted as placing tameness conditions on points of $\Spa(Y,S)$ of a special kind and are thus implied by adic tameness.
For instance, for curve tameness we only use points $(y,V,s) \in \Spa(Y,S)$ where~$y$ is the image of the generic point of a test curve~$C$ as in the definition above.
For curve tameness, divisor tameness and chain tameness these special points arise from the geometry of the scheme~$Y$.
The corresponding valuations are rather well behaved and geometric in nature.
In \cite{KerzSchmidt2010:DifferentNotionsTameness} the authors show that these notions of tameness are equivalent using some smart scheme theoretic arguments to obtain the following result.

\begin{prop}[\cite{KerzSchmidt2010:DifferentNotionsTameness}, Theorem~4.4]
    Let~$X$ be a regular, separated scheme of finite type over~$S$.
    Then curve tameness, divisor tameness, and chain tameness are equivalent for finite étale covers of~$X$.
\end{prop}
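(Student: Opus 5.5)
The proposition is cited from Kerz–Schmidt, so the plan is to reconstruct their argument in the form of a cycle of implications. Chain tameness trivially implies divisor tameness. The reason is that the valuation attached to a boundary divisor $P$ of a normal compactification $\ov X$ is dominated by a rank $d$ discrete valuation refining it along a Parshin chain through $\eta_P$. Tameness is inherited by the coarsened valuation: if the composite is tame, then the rank-one part is tame, by \cref{composition-tame}~\labelcref{lemitem:composition tame}. Divisor tameness implies curve tameness by a normalisation and pullback argument. Let $C \to X$ be a test curve with $\dim_S C=1$, and let $v$ be a point at infinity of $C$. Since $X$ is regular, we may reduce to $C$ closed in $X$ (normalised). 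We then need to realise the tameness of $Y_C\to C$ at $v$ from divisorial data on $X$.

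For this I would blow up $\ov X$ so that the closure $\ov C$ meets the boundary in a controlled way. Concretely, I would choose a normal compactification in which the closure of $C$ passes through the generic point of a boundary divisor $D_1$, or at least through a point where the boundary is a normal crossing divisor. One can arrange this locally near the point of $\ov C$ at infinity by repeated blowing up, which is possible since $X$ is regular and $S$ is excellent. Tameness along the divisors through that point, together with Abhyankar's lemma \cite[XIII Proposition~5.2]{SGA1}, then shows that over the strict henselisation the cover is dominated by a Kummer cover $A[a_i^{1/n}]$ with $p\nmid n$. Restricting this to $\ov C$ gives a prime-to-$p$ cover, hence a tame one, at $v$.

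Curve tameness implies chain tameness by induction on $d=\dim_S X$. Given a Parshin chain $x_0 < x_1 < \dots < x_d$ on a normal compactification, I would cut down by a curve $C\subset X$ whose closure passes through the chain in a suitable sense. Tameness of the rank $d$ valuation decomposes, via \cref{composition-tame}, into tameness of its rank-one pieces, each of which should be detected on curves using a Bertini/Hensel-type argument over the local rings $\cO_{\ov X,x_i}$.

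The main obstacle is precisely the step where, without resolution of singularities, one must pass from valuations on the function field to curves: divisor to curve and curve to chain. Kerz–Schmidt handle this with ``smart scheme-theoretic arguments'', namely a key lemma that tameness along a normal divisor can be tested on curves meeting it transversally at a regular point of the divisor. That lemma is what I would try to establish first.
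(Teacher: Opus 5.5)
The paper gives no proof of this statement. It imports it from \cite{KerzSchmidt2010:DifferentNotionsTameness}, and its summary diagram of tameness notions records curve $\Rightarrow$ divisor and divisor $\Rightarrow$ chain as Lemmas~2.4 and~3.5 there, without regularity. Regularity of $X$ is used only for the return implications (Theorem~4.4 there).

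Your leg chain $\Rightarrow$ divisor is fine. Refine the divisorial valuation at $\eta_P$ to a discrete rank $d$ valuation along a Parshin chain, normalising the successive closures, and coarsen using \cref{composition-tame}. The other two legs carry all the content, and neither works as sketched.

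\textbf{Divisor $\Rightarrow$ curve.} $\overline{C}$ meets $\overline{X}\setminus X$ only in the finitely many points of $\overline{C}\setminus C$, which have $\dim_S=0$. So for $d\geq 2$ it can never pass through the generic point of a boundary divisor. Making $\overline{X}$ regular with normal crossings boundary at the point at infinity of $\overline{C}$ is embedded resolution (local uniformisation) at a point of $\overline{X}\setminus X$. Regularity of $X$ and excellence of $S$ say nothing about $\overline{X}$ there. This step therefore assumes exactly the resolution that, as the paper notes, Kerz--Schmidt only need for comparing with valuation tameness.

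\textbf{Curve $\Rightarrow$ chain.} The intermediate constituents of a chain valuation live on function fields of boundary strata, over which $Y$ is not \'etale. There is no restricted cover there to test on curves of $X$, and no Bertini/Hensel mechanism is given that would detect them. Also, \cref{composition-tame} does not split tameness into tameness of the rank-one pieces: it needs the extra ramification-index condition. The transversal-curve lemma you plan to prove first is a tool for curve $\Rightarrow$ divisor, which your cycle never uses. It says nothing about curves whose point at infinity is a bad point of $\overline{X}$ or of the boundary.

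The missing idea is where regularity enters: at the generic point $\xi$ of the test curve, not at the boundary. Run the cycle as curve $\Rightarrow$ divisor $\Rightarrow$ chain $\Rightarrow$ curve.

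For the last step, let $C\subseteq X$ be a curve with generic point $\xi$, and let $c$ be a point at infinity of $C$ with image $\bar c\in\overline{C}\setminus C$.
\begin{itemize}
\item Since $\cO_{X,\xi}$ is regular, \cref{lem:valuation-on-regular} gives a discrete valuation ring $W\subseteq k(X)$ of rank $\dim\cO_{X,\xi}=d-1$, centred at $\xi$, with residue field $k(C)$.
\item Composing $W$ with the valuation ring of $c$ gives a discrete rank $d$ valuation dominating the Parshin chain $\eta\leadsto\cdots\leadsto\xi\leadsto\bar c$ on $\overline{X}$.
\item Because $Y\to X$ is \'etale over $\xi$, the extensions of $W$ to $k(Y)$ are unramified, with residue fields those of the points of $Y_\xi$.
\item Every pair consisting of a point $y\in Y_\xi$ and an extension of the valuation of $c$ to $k(y)$ is therefore the residual part of an extension of this chain valuation.
\item By \cref{composition-tame}, chain tameness then gives tameness of $Y_C\to C$ at $c$.
\end{itemize}
So chain $\Rightarrow$ curve is short for regular $X$. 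The regularity-free but hard inputs are curve $\Rightarrow$ divisor and divisor $\Rightarrow$ chain, which is where the Kerz--Schmidt lemmas do the work.
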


Valuation tameness includes tameness conditions for arbitrary $S$-valuations of the function field of~$X$.
It is a priori stronger than the above more geometric notions of tameness and closer to our (adic) definition of tameness.
Accordingly, it is not so hard to prove a comparison between the two as follows.

\begin{prop}[\cite{Hubner2021:AdicTameSite}, Proposition~9.4]
    Let $X/S$ be regular, separated, and of finite type.
    Then adic tameness of finite étale covers of~$X$ is equivalent to valuation tameness.
\end{prop}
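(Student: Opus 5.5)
One direction is immediate. Valuation tameness imposes tameness only at points of $\Spa(Y,S)$ of the form $(\eta_Y,V,s)$, where $\eta_Y$ is a generic point of $Y$ (we may assume $X$, hence $Y$, is connected, so both are integral since they are regular). Adic tameness imposes it at all points, so adic tameness implies valuation tameness.

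For the converse, suppose $f\colon Y\to X$ is valuation tame and let $y=(y,W,s)\in\Spa(Y,S)$ lie over $x=(x,V,s)$. We must show that $(k(y),W)/(k(x),V)$ is tame. The plan is to lift the valuation at the possibly non-generic point $x$ to a composite valuation on the function field $k(X)$ and then apply \cref{composition-tame}.

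\textbf{Step 1: lifting to the function field.} Since $X$ is regular, the local ring $\cO_{X,x}$ is regular. Choose a regular system of parameters $t_1,\dots,t_c$ and form the discrete valuation of rank $c$ on $k(X)$ given by successively taking $t_i$-adic orders along the chain of regular subschemes $V(t_1,\dots,t_i)$. Its valuation ring $V_1$ has residue field $k(x)$ and value group $\bZ^c$ with the lexicographic order. The composition $(k(X),V_1)\succ(k(x),V)$ is a valuation of $k(X)$, and it still has centre $s$ on $S$, because $V_1$ dominates $\cO_{X,x}$ and $V$ has centre $s$. By hypothesis the extension $k(Y)/k(X)$ is tame with respect to this composite valuation, for a suitable extension of it to $k(Y)$ compatible with $W$ (extensions exist and all are conjugate after passing to a Galois closure). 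The Galois closure is again a tame cover of the same kind, so we may assume $Y\to X$ is Galois.

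\textbf{Step 2: decomposing.} Apply \cref{composition-tame}~\labelcref{lemitem:composition tame} to the extension $k(Y)/k(X)$ with the decomposition $(L_1,L_1^+)\succ(L_2,L_2^+)$, where $L_1^+$ is the extension of $V_1$ to $k(Y)$. This shows that $(L_2,L_2^+)/(k(x),V)$ is tame. Since $Y\to X$ is étale at $y$, the ring $\cO_{Y,y}$ is regular with the same parameters $t_i$, so $L_1^+$ can be taken to be the analogous rank $c$ valuation on $k(Y)$ centred at $y$. Its residue field is $k(y)$, and $L_2^+$ restricts to $W$. Hence $(L_2,L_2^+)=(k(y),W)$, and the desired tameness follows.

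\textbf{Main obstacle.} The delicate part is matching the extension of the valuation to $k(Y)$ with the prescribed point $y$. One must show that, among the extensions of the composite valuation of $k(X)$, there is one whose first component is centred at $y$ and whose second component is exactly $W$; equivalently, that the residue field of the first component is $k(y)$ and not some other point of the fibre. I would handle this by using étaleness to identify the completions: $\widehat{\cO}_{Y,y}$ is finite étale over $\widehat{\cO}_{X,x}$, so the rank $c$ valuation built from the $t_i$ extends unramifiedly, with residue extension $k(y)/k(x)$. If the fibre degree creates trouble, a fallback is to pass to the strict henselisation of $\cO_{X,x}$, where the cover splits near $x$ and the matching becomes trivial.
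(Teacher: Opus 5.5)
Your proposal is correct and follows essentially the paper's own route. The statement itself is only cited from Hübner, but its nontrivial direction is exactly \cref{tameness-support-generic-point}, since a regular scheme trivially has the cdh uniformisation property, and that lemma is proved just as you propose: lift via the rank-$c$ valuation of \cref{lem:valuation-on-regular}, then apply \cref{composition-tame}.

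Your ``main obstacle'' is handled differently, and both treatments work. The paper avoids the matching problem by base-changing $Y$ along $\Spec(V_1)\to X$: the closed fibre is $Y_x$, so $\cO_{Y\times_X\Spec(V_1),y}$ is automatically a valuation ring over $V_1$ with residue field $k(y)$. Your direct construction is also valid, because the $t_i$ remain a regular system of parameters at $y$, so each step is an unramified extension of DVRs and the composite valuation at $y$ restricts to $V_1$. The reduction to the Galois case is unnecessary, and you should restrict to the component of $Y$ through $y$, since $Y$ itself need not be connected.
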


In \cite{KerzSchmidt2010:DifferentNotionsTameness} Kerz and Schmidt bridge the gap between valuation tameness and the more geometric tameness notions by resorting to resolution of singularities.
Using alterations, Schmidt proves an unconditional comparison for regular varieties over a field of positive characteristic  in \cite[Theorem~4.4]{Schmidt-quasi-purity}.
The following is the most general comparison theorem, accomplished by using non-archimedean geometry in order to avoid the need for resolution of singularities.

\begin{prop}[\cite{HuebnerTemkin2026:WildLocus}, Theorem~8.11, Corollary~8.12]
    Suppose~$S$ is of finite type over a Dedekind scheme and~$X$ is separated and of finite type over~$S$.
    \begin{enumerate}
        \item If~$X$ is regular, valuation tameness for finite étale covers over~$X$ implies adic tameness.
        \item Curve tameness for finite étale covers over~$X$ implies adic tameness.
    \end{enumerate}
\end{prop}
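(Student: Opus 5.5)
Both parts should reduce, via the composition formalism of \cref{composition-tame}, to tameness at the finitely many ``geometric'' points the weaker notions see. Part (1) is a direct argument. Part (2) needs a structural result on where tameness fails. Throughout, $f\colon Y\to X$ is a finite \'etale cover. Fix a point $y=(y,k(y)^+,s)\in\Spa(Y,S)$ with image $x=(x,k(x)^+,s)$. We must show that $(k(y),k(y)^+)/(k(x),k(x)^+)$ is tame.

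For (1) I would ``lift'' the valuation $k(x)^+$ on $k(x)$ to a valuation on the function field. Since $X$ is regular, $\cO_{X,x}$ is a regular local ring with residue field $k(x)$ and a regular system of parameters $t_1,\dots,t_d$.
\begin{enumerate}
\item Choose a monomial valuation $W$ on $k(X)=\mathrm{Frac}(\cO_{X,x})$ with centre $x$, attached to $t_1,\dots,t_d$ with rationally independent real weights (or a lexicographic rank-$d$ version). Its residue field is exactly $k(x)$, and its value group is $\bigoplus_i \bZ\,\lvert t_i\rvert$.
\item Form the composite valuation $(k(X),k(X)^+)=(k(X),W)\succ(k(x),k(x)^+)$. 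This valuation has a centre on $S$, namely the one induced by $s$, so valuation tameness makes $k(Y)/k(X)$ tame at every extension of it.
\item Since $\cO_{Y,y}$ is \'etale over $\cO_{X,x}$, the $t_i$ remain a regular system of parameters at $y$. Hence $W$ extends to a monomial valuation $W_y$ with residue field $k(y)$, and every extension of the composite restricting to $W_y$ has the form $W_y\succ(k(y),k(y)^+)$.
\item The first constituent $W_y/W$ is unramified, since the value groups agree and the residue extension $k(y)/k(x)$ is separable. In particular its ramification index is $1$, so the footnoted subtlety in \cref{composition-tame} does not arise.
\item \cref{composition-tame}~\labelcref{lemitem:composition tame} now yields tameness of the second constituent, which is exactly $(k(y),k(y)^+)/(k(x),k(x)^+)$.
\end{enumerate}
The only point needing care is step 3: that the extension of $W$ to the component of $k(Y)$ specialising to $y$ has residue field $k(y)$ and is compatible with the chosen $k(y)^+$. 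I would check this after strict henselisation of $\cO_{X,x}$, where the cover splits.

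For (2) there is no regularity, so points cannot be lifted to the generic point. Instead I would study the \emph{wild locus} $\mathcal{W}\subseteq\Spa(X,S)$, the set of points over which the cover fails to be tame.
\begin{enumerate}
\item Show that $\mathcal{W}$ is closed, and stable under specialisation and generalisation in the appropriate sense. The tool is a different or log-different criterion: a tame extension of valued fields is one whose log-different $\Omega^{\log}$ vanishes, as in the remark with $\dlog T^{1/p}$. Such a vanishing condition defines an open subset of the adic space $\Spa(Y,S)$, and its image under the finite map to $\Spa(X,S)$ is again open.
\item Show that every non-empty subset of this type contains a point of ``curve type''. Such a point has the form $(c,V,s)$, where $c$ is the image of the generic point of a curve $C\to X$ with $\dim_S C=1$ and $V$ is one of the canonical boundary valuations of $C$. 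I would try to obtain this via non-archimedean geometry: pass to a rank-one generalisation and use that type-$2$ or Abhyankar points, which come from curves, are dense in suitable fibres.
\item Conclude: curve tameness gives tameness at all curve-type points, so $\mathcal{W}=\emptyset$.
\end{enumerate}
I expect step 2, the density of curve points in $\mathcal{W}$, to be the main obstacle. This is where \cite{HuebnerTemkin2026:WildLocus} replaces resolution of singularities.
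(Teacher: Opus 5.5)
\paragraph{Part (1).} Your argument is correct. It is the same mechanism as the proof of \cref{tameness-support-generic-point}, with \cref{lem:valuation-on-regular} supplying the auxiliary valuation. Your monomial valuation works as well as the lexicographic composite of discrete valuations used there.

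One justification needs fixing. Equal value groups and a separable residue extension do not make $W_y/W$ unramified, because they do not exclude defect. The correct reason is that $Y\times_X\Spec(W)\to\Spec(W)$ is finite \'etale. Since $W$ has residue field $k(x)$, its closed fibre is $Y_x$. So the extension of $W$ centred at $y$ is unique, unramified, and has residue field $k(y)$, which also settles your worry in step~3. In any case, you only use the ``only if'' direction of \cref{composition-tame}~\labelcref{lemitem:composition tame}, which needs nothing about the first constituent.

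\paragraph{Part (2).} This part has a genuine gap. Your step~2, applied to wild loci, is not an intermediate lemma. By contraposition and base-change stability of tameness it is equivalent to (2) itself, so the whole content of (2) is left open.

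The mechanism you suggest cannot supply it either. Rank-one, Abhyankar or divisorial (``type~2'') points near a wild point $(x,k(x)^+,s)$ are valuations of the function field $k(x)$ of $Z=\overline{\{x\}}$, and they still have support $x$. When $\dim_S Z\geq 2$ they are not of curve type: type~2 points correspond to divisors on models, not to curves.

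Two separate inputs are needed.
\begin{itemize}
\item Passing from an arbitrary wild valuation of $k(Z)$ with centre on $S$ to a wild \emph{divisorial} one is the hard theorem \cite[Theorem~8.11]{HuebnerTemkin2026:WildLocus} (divisor tame $\Rightarrow$ valuation tame). This is where non-archimedean geometry replaces resolution of singularities.
\item Passing from a wild boundary divisor to a wild test curve is not a density statement in $\Spa(X,S)$. It is a separate geometric argument with curves meeting a boundary divisor of a normal compactification, recorded as \cite[Lemma~2.4]{KerzSchmidt2010:DifferentNotionsTameness} (curve tame $\Rightarrow$ divisor tame).
\end{itemize}
The paper gives no proof of its own, but its diagram in \cref{sec:adic tame discrete case} indicates the route. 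Tameness at $(x,k(x)^+,s)$ depends only on $Y_x$ over $(k(x),k(x)^+)$, and curve tameness is inherited by $Y\times_X Z\to Z$. So one reduces to points with generic support on each integral $Z=\overline{\{x\}}$ (normalised if necessary) and chains the two results above. Your sketch supplies neither of them.

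\paragraph{Two smaller points.}
\begin{itemize}
\item Vanishing of $\Omega^{\log}$ does not characterise tameness. An Artin--Schreier defect extension of a perfect henselian valued field has perfect valuation rings, hence vanishing (log) differentials, yet it is wild. Closedness of $\mathcal{W}$ should instead come from openness of the tame locus in $\Spa(Y,S)$ \cite[Corollary~4.4]{Hubner2021:AdicTameSite}, together with closedness of the finite map $\Spa(Y,S)\to\Spa(X,S)$. The image of the tame locus, which you use, is the set of points with \emph{some} tame preimage, not the complement of $\mathcal{W}$.
\item In step~3 the test curve must satisfy $k(C)=k(c)$, e.g.\ $C=\overline{\{c\}}$. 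Otherwise pulling back along a wildly ramified $k(C)/k(c)$ can kill the wild ramification.
\end{itemize}
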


The last notion of tameness we would like to discuss is \textbf{numerical tameness} with respect to a fixed normal compactification $\ov X \to S$ of $X$:
a finite étale cover $Y \to X$ is numerically tame if for all geometric points $\bar x \in \ov{X} \setminus X$ the restriction of $Y \to X$ to the scheme of nearby points (also called the link of $X$ near $\bar x$), $ U_{\bar x}^{\sh} = X \times_{\bar X} \Spec(\cO^\sh_{\bar X, \bar x}) $ decomposes into connected components whose Galois closures have degree prime to the characteristic of $\bar x$.

This notion of tameness differs from the preceding ones in two aspects: it requires a fixed normal compactification and it does not involve valuations.
In fact, numerical tameness depends on the choice of the compactification as Examples~2 and~3 in the appendix of \cite{KerzSchmidt2010:DifferentNotionsTameness} illustrate.
It is a priori stronger than the above discussed notions of tameness.

\begin{prop}[\cite{KerzSchmidt2010:DifferentNotionsTameness}, Theorem~5.3] \label{numerical-implies-tame}
    Let~$X$ be normal, separated, and of finite type over~$S$ with a normal compactification~$\ov X$.
    Then numerical tameness of finite étale covers of~$X$ along $\ov X \setminus X$ implies adic tameness (and hence curve, divisor, and chain tameness).
\end{prop}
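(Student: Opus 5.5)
We must show: numerical tameness of $Y\to X$ along $\ov X\setminus X$ implies adic tameness relative to $S$, i.e.\ for every $(y,V,s)\in\Spa(Y,S)$ with image $x=(x,V\cap k(x),s)$, the extension $(k(y),k(y)^+)/(k(x),k(x)^+)$ is tame. The plan is to localise at the centre of the valuation on $\ov X$, and then read off tameness from the degree condition on the link.

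\emph{Step 1: locate the centre.} Let $x\in X$ with valuation ring $V_x=k(x)^+$ having a centre on $S$. Since $\ov X\to S$ is proper, the valuative criterion extends $\Spec V_x\to S$ uniquely to $\Spec V_x\to\ov X$; let $\bar z$ be a geometric point over the image of the closed point. If $\bar z$ lies in $X$, then $V_x$ has a centre on $X$ itself, and finite \'etale covers are unramified, hence tame, at such valuations: after strict henselisation at $\bar z$ the cover splits completely, so the extension of valued fields embeds into $(k(x),V_x)^\sh$. So we may assume $\bar z\in\ov X\setminus X$.

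\emph{Step 2: pass to the link.} The map $\Spec V_x\to\ov X$ lifts, after replacing $(k(x),V_x)$ by its strict henselisation, to $\Spec V_x^\sh\to\Spec\cO^\sh_{\ov X,\bar z}$ (strict henselisations being functorial for local maps with separable residue extension, here to the separable closure of the specialisation field). The generic point lands in $U^\sh_{\bar z}=X\times_{\ov X}\Spec\cO^\sh_{\ov X,\bar z}$. Tameness of $k(y)/k(x)$ at $V$ can be tested after base change to $K^\sh$, where $K=k(x)$: it amounts to $[L:K^\sh]$ being prime to $p$ for each factor $L$ of $k(y)\otimes_K K^\sh$ (components corresponding to extensions of $V_x$). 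Each such factor is a residue field of a point of $Y\times_X U^\sh_{\bar z}$ over the image point $u\in U^\sh_{\bar z}$ of $\Spec K^\sh$, tensored up to $K^\sh$.

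\emph{Step 3: degree bound.} By numerical tameness, each connected component $W$ of $Y\times_X U^\sh_{\bar z}$ is finite \'etale over $U^\sh_{\bar z}$ with Galois closure $W'$ of degree $d$ prime to $p=\operatorname{char}(\bar z)$, which is the residue characteristic of $V_x$. Pulling $W'$ back along $\Spec K^\sh\to U^\sh_{\bar z}$ yields a Galois algebra whose factors are Galois over $K^\sh$ with group a subgroup (decomposition group) of $\Gal(W'/U^\sh_{\bar z})$, hence of order dividing $d$, thus prime to $p$. The factors of $k(y)\otimes K^\sh$ are subextensions of these, so their degrees are prime to $p$, giving tameness by \cref{defi:tame-extension-of-valued-fields}.

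The main obstacle is Step 2: making precise that the strict henselisation $K^\sh$ of the valued field maps to $U^\sh_{\bar z}$ compatibly, i.e.\ that $\cO^\sh_{\ov X,\bar z}\to V_x^\sh$ exists as a local homomorphism. I would use that $V_x^\sh$ is a strictly henselian local ring dominating $\cO_{\ov X,z}$ with residue field a separable closure, so the universal property of strict henselisation provides the map; normality of $\ov X$ ensures $U^\sh_{\bar z}$ is a normal (so componentwise integral) scheme, which is needed for Galois closures of components to make sense.
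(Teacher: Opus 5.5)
Your proposal is correct and follows essentially the same route as the paper. You take the unique centre $\bar z$ of the valuation on $\ov X$ given by the valuative criterion, factor $\Spec(V^\sh)\to\ov X$ through $\Spec(\cO^\sh_{\ov X,\bar z})$, and note that the prime-to-$p$ Galois closures on the link $U^\sh_{\bar z}$ stay prime-to-$p$ after pulling back to $X\times_{\ov X}\Spec(V^\sh)$; beyond this you only spell out details the paper leaves implicit (the case of a centre inside $X$, the decomposition-group bound on the degrees, and the use of normality of $\ov X$ to make the link integral).
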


\begin{proof}
    A variant of this has been shown in \cite[Theorem~5.3]{KerzSchmidt2010:DifferentNotionsTameness} for regular schemes~$X$ and divisor tameness as a conclusion.
    A similar argument can be used here:
    Let $Y \to X$ be a finite étale cover and denote by $\ov Y$ the normalisation of~$\ov X$ in~$Y$.
    We start with a point $(x,V,s)$ of $\Spa(X,S)$ and denote by $\bar{x}$ the unique centre of~$x$ on~$\ov X$ lying over~$s$.
    The centre map $\Spec(V^\sh )\to \ov X$ naturally factors through $\Spec(\cO_{\ov X,\bar{x}}^\sh)$.
    Now by definition the cover $Y \to X$ restricted to the link~$U_{\bar x}^{\sh}$ splits into connected components with Galois closures of degree prime to the residue characteristic~$p$ of~$\bar{x}$.
    Restricting further to $X \times_{\ov X} \Spec(V^\sh)$ this remains true, which implies that $Y \to X$ is tame at all points of $\Spa(Y,S)$ over $(x,V,s)$.
\end{proof}

A converse holds if we are allowed to change the compactification.

\begin{prop}[\cite{HuebnerTemkin2026:WildLocus}, Theorem~9.3]
    Suppose that~$X$ is normal, separated, and of finite type over~$S$.
    If $Y \to X$ is a finite étale cover that is tame, there exists a compactification~$\ov X$ of~$X$ over~$S$ such that $Y \to X$ is numerically tame along $\ov X \setminus X$.
\end{prop}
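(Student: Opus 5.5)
The strategy is to reduce to a local statement at boundary points and use the characterisations of tameness from \cref{lem:tameness-conditions}. Replacing $Y$ by a Galois closure (still finite étale and tame, since composites and base changes of tame extensions are tame), we may assume $Y\to X$ is Galois with group $G$. Numerical tameness only constrains the restrictions to the links $U^{\sh}_{\bar x}$, and adic tameness is a condition on valuations $(x,V,s)\in\Spa(X,S)$. Each such valuation has a unique centre $\bar x$ on the proper $\ov X$ over $s$, and numerical tameness is automatic for points with $\bar x\in X$. So the task is to find a compactification $\ov X$ such that, at every boundary geometric point $\bar x$, the decomposition group of $G$ over $U^{\sh}_{\bar x}$ has order prime to the residue characteristic $p$ of $\bar x$.

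The first step is a quasi-compactness argument on $\Spa(X,S)$. Tameness is pointwise, and for a fixed valuation $v=(x,V,s)$ the inertia group $I_v\subseteq G$ has order prime to $p$ by \cref{lem:tameness-conditions-galois}. I would then show that for each $v$ there is a compactification $\ov X_v$ (a $V$-admissible blow-up of a fixed normal compactification, followed by normalisation) with the following property: for all valuations centred in a neighbourhood of the centre of $v$, the decomposition groups on the links are contained in a conjugate of $I_v$. The sets of valuations with centre in such a neighbourhood are open in the constructible/valuative topology, since they are the Riemann–Zariski-type opens $\Spa(X,S)\cap\{$centre in $W\}$. Because $\Spa(X,S)$, or rather its boundary part lying over the complement of $X$, is quasi-compact as a Riemann–Zariski space, finitely many such opens suffice. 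A common dominating compactification (a blow-up dominating all the $\ov X_v$, normalised) then works. Links only shrink under domination: the link at a point over $\bar x$ maps into the link at $\bar x$, so decomposition groups decrease. Hence numerical tameness is inherited.

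The key local input, and the main obstacle, is the spreading-out step: passing from the pointwise statement that $v$ is tame to a statement that the link at the centre on a suitable model has prime-to-$p$ decomposition group. I would attack it via \cref{lem:tameness-conditions-galois}\labelcref{lemitem:tame-abhyankar}. Over the strict henselisation $V^\sh$, the cover is dominated by $K'(\sqrt[n]{a_1},\dots,\sqrt[n]{a_r})$ with $p\nmid n$. We choose a model on which the $a_i$ extend to functions whose divisors are supported on the boundary near the centre, and on which the unramified extension $K'$ spreads out to an étale neighbourhood. Then on the link near the centre, $Y$ is dominated by a Kummer cover of degree prime to $p$, which yields the claim.

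Making this approximation uniform requires that $\cO^\sh_{\ov X,\bar x}$ be approximated by $V^\sh$ after blowing up. This is exactly where one uses the structure of the Riemann–Zariski space as the limit of all such models, as in \cite{Temkin2011:RelativeRiemannZariskiSpacesa}, together with the excellence of $S$ to keep normalisations finite. I expect this limit argument, realising the Kummer description on an actual finite-type model, to be the technical heart of the proof.
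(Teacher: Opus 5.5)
The paper gives no proof of this statement; it is quoted from \cite{HuebnerTemkin2026:WildLocus}, so your argument has to stand on its own. Its global skeleton is sound. You reduce to a Galois cover with group $G$ and restate numerical tameness on a normal compactification as ``each link $U^{\sh}_{\bar z}$ has decomposition group of order prime to $p$''. You then use quasi-compactness of the boundary of the Riemann--Zariski space $\varprojlim \ov X'$, and the fact that decomposition groups only shrink under domination.

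The gap is the local step. You leave it open, and as you formulate it, it is false. One cannot always find $\ov X_v$ on which the decomposition groups near the centre of $v$ lie in a conjugate of $I_v$, and $\cO^{\sh}_{\ov X',\bar z'}$ does not in general approximate $V^{\sh}$.

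For example, let $\ell$ be invertible in an algebraically closed field $k$. Take $S=\ov X=\bA^3=\Spec k[t,u,w]$, $X=\{t\neq 0\}$ and $Y\colon s^\ell=t$. Let $V$ be the composite of $\mathrm{ord}_E$, where $E$ is the exceptional divisor of the blow-up of $\Gamma=\{u=w=0\}\subseteq X$, with the monomial valuation $v(t)=1$, $v(u/w)=1/\ell$ on $k(E)=k(t)(u/w)$. Then $V$ is centred at the origin, which lies in the boundary. Moreover $t/(u/w)^\ell$ is a $V$-unit, so $I_v=1$.

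However, the coarsening $\mathrm{ord}_E$ is centred at the generic point of $\Gamma$, which lies in $X$ and is untouched by every compactification. So on every $\ov X'$ the centre of $V$ lies on the closure of $\Gamma$ (still the $t$-line), and the link contains the punctured strictly henselian germ of that line at $t=0$. Over this germ $Y$ is the connected Kummer cover $s^\ell=t$. The decomposition group therefore never lies in $I_v$, and no Kummer description of the cover over $K^{\sh}_v$ can be transported to a link on a finite-type model.

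What the limit argument actually yields is Temkin's description of the stalks of the relative Riemann--Zariski space. The colimit of the $\cO_{\ov X',z'}$ over all compactifications is not $V$ but the composite ring $\cO_{X,x_1}\times_{k(x_1)}\bar V$. Here $(x_1,\bar V)$ is the point of $\Spa(X,S)$ that has the same centres as $v$ on all compactifications and none of whose non-trivial coarsenings is centred in $X$. In the example, $x_1$ is the generic point of $\Gamma$ and $\bar V$ is the $t$-adic valuation.

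Strict henselisation commutes with this colimit, so the limit of the links is a henselian local scheme whose closed point lies over $x_1$. Since $\pi_0$ of the finite \'etale $Y$ over this limit is the colimit of the finite-level $\pi_0$'s, on a fine enough normal $\ov X'$ the decomposition group at the centre equals the inertia group of $Y$ at $(x_1,\bar V)$. That group has order prime to $p$ by hypothesis. This is exactly where tameness at points of $\Spa(X,S)$ with non-generic support is needed.

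Next, spread this to a Zariski neighbourhood: choose a connected \'etale neighbourhood of $\bar z'$ over which the monodromy of $Y$ already equals this inertia group. After that, your compactness and domination argument goes through, and no Kummer theory is needed.
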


The following diagram summarises the notions of tameness we have discussed.

\[
\begin{tikzcd}[column sep=7em, row sep=4em]
\fbox{\begin{tabular}{c}
numerical \\
tame wrt $X \subseteq \ov{X}$
\end{tabular}} 
\ar[r, Rightarrow, shift left=1.5ex, "\text{Prop}~\ref{numerical-implies-tame}"] 
& 
\fbox{\begin{tabular}{c}
(adic) tame \\
relative $S$
\end{tabular}}
\ar[l, Rightarrow, shift left=1.5ex, "\substack{\text{[HT26, Thm~9.3]} \\ \exists \ov X}"]
\ar[d, Rightarrow, shift left=1.5ex, "\text{def}"] 
\ar[r, Rightarrow, shift left=1.5ex, "\text{def}"] 
& 
\fbox{\begin{tabular}{c}
curve \\
tame
\end{tabular}}
\ar[l, Rightarrow, shift left=1.5ex, "\text{[HT26, Cor~8.12]}"]
\ar[d, Rightarrow, shift left=1.5ex, "\substack{\text{[KS10,} \\ \text{Lem~2.4]}}"]
\\
& 
\fbox{\begin{tabular}{c}
valuation \\
tame
\end{tabular}}
\ar[u, Rightarrow, shift left=1.5ex, "\substack{\text{[HT26,} \\ \text{Thm~8.11]} \\ \text{$X$ regular}}"]
\ar[r, Rightarrow, shift left=1.5ex, "\text{def}"]
&
\fbox{\begin{tabular}{c}
divisor \\
tame
\end{tabular}}
\ar[l, Rightarrow, shift left=1.5ex, "\text{[HT26, Thm~8.11]}"]
\ar[u, Rightarrow, shift left=1.5ex, "\substack{\text{[KS10,} \\ \text{Thm~4.4]} \\ \text{$X$ regular}}"] 
\ar[d, Rightarrow, shift left=1.5ex, "\substack{\text{[KS10,} \\ \text{Lem~3.5]}}"]
\\
&
&
\fbox{\begin{tabular}{c}
chain \\
tame
\end{tabular}}
\ar[u, Rightarrow, shift left=1.5ex, "\substack{\text{[KS10,} \\ \text{Thm~4.4]} \\ \text{$X$ regular}}"]
\end{tikzcd}
\]

\subsection{Review of Galois categories}
\label{sec:Gal cats}

We recall the definition of a Galois category from 
\stacks[Definition]{0BMY}.  These axioms are stronger than  but ultimately equivalent to the original axioms from \cite[V, \S4]{SGA1}. In particular, we use the term \emph{connected object} in the sense of \stacks{0BMY}: an object $X$ is \textbf{connected}  if it is not the initial object and every monomorphism $Y \to X$ is an isomorphism or $Y$ is the initial object. 

Unlike \stacks{0BMY}, we do not consider a fixed fibre functor to be part of the datum of a Galois category.

\begin{defi}
    \label{defi:Gal cat}
    A \textbf{Galois category} is a category $\cC$ that admits a \textbf{fibre functor} $F \colon \cC \to \Sets$ such that the following axioms hold. 
    \begin{enumerate}[(G\arabic*)]
        \item 
        The category $\cC$ has finite limits and finite colimits.
        \item 
        Every object of $\cC$ is a finite (possibly empty) coproduct of connected objects. 
        \item 
        The functor $F$ takes values in finite sets.
        \item 
        The functor $F$ is exact 
        and conservative. 
    \end{enumerate}
    A \textbf{morphism} between Galois categories is an exact functor.    
    Recall that a functor $F$
    \begin{itemize}
        \item 
        is called \textbf{exact} if it commutes with finite limits and finite colimits, and 
        \item 
        it is \textbf{conservative} if a morphism $f$ is an isomorphism if and only if $F(f)$ is an isomorphism. 
    \end{itemize} 
\end{defi}

The main theorem of Galois categories \cite[V~Th\'eor\`eme 4.1]{SGA1}, see also \stacks{0BMR} and \stacks{0BN4}, states the following. The group
\[
    \pi_1(\cC,F) =  \Aut(F \colon \cC \to \Sets),
\]
which is called the \textbf{fundamental group of $\cC$ with base point $F$}, 
is a profinite group (with topology induced by declaring stabilisers of points $x \in F(X)$ to be open). Moreover, the natural enrichment
\[
    F \colon \cC \longrightarrow \pi_1(\cC,F)\text{-}\sets
\]
is an equivalence of $\cC$ with the category of finite sets with continuous action of $\pi_1(\cC,F)$. Furthermore, by \cite[V Proposition~6.1]{SGA1} and similarly \stacks{0BN5}, composition with an exact functor transforms fibre functors into fibre functors. In fact this property of a functor between Galois categories is equivalent to being a morphism.

As in \cref{defi:tame cover of schemes X over S}, we will frequently encounter a full subcategory $\cC'$ of a Galois category $\cC$ and would like to deduce that $\cC'$ is Galois with respect to the restriction of the fibre functor. 

\begin{lem} 
\label{lem:Galois-subcategory-criterion} 
    Let $\cC$ be a Galois category with fibre functor $F \colon \cC\to \mathrm{Set}$, and let $\cC'$ be a full
    subcategory of $\cC$ containing $1_{\cC}$. Then, $\cC'$ is a Galois category with fibre functor $F|_{\cC'}$ if and only if the following conditions are satisfied:
    \begin{enumerate}[label=(\roman*)]
        \item 
        \label{lemitem:fullGalCat1}
        if $X'\to X$ is an epimorphism and $X'$ is isomorphic to an object of $\cC'$, then so is $X$,
        \item 
        \label{lemitem:fullGalCat2}
        an object of $\cC$ is isomorphic to an object of  $\cC'$ if and only if all of its connected components are,
        \item 
        \label{lemitem:fullGalCat3}
        the product of two objects in $\cC'$ is isomorphic to an object of $\cC'$.
    \end{enumerate}
\end{lem}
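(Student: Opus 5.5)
We work throughout via the main theorem of Galois categories and identify $\cC$ with $\pi\text{-}\sets$ for $\pi=\pi_1(\cC,F)$, with $F$ the forgetful functor. Under this identification, the connected objects are the transitive $\pi$-sets, epimorphisms are surjections, and finite limits and colimits are computed on underlying sets. We may also replace $\cC'$ by its closure under isomorphism in $\cC$; none of the conditions, nor the Galois property of $F|_{\cC'}$, changes.

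\emph{Necessity.} Suppose $(\cC',F|_{\cC'})$ is Galois. By the main theorem, $\cC'$ is equivalent via $F$ to $\pi'\text{-}\sets$ for $\pi'=\pi_1(\cC',F|_{\cC'})$. Restriction of automorphisms gives a continuous homomorphism $\pi\to\pi'$, and it is surjective: its image is closed, and it acts transitively on the fibres of connected objects of $\cC'$, because those objects are connected in $\cC$. Indeed, a subobject of $X\in\cC'$ in $\cC$ is a $\pi$-stable subset. The crucial point is whether it is also $\pi'$-stable. Exactness of the inclusion $\cC'\subseteq\cC$ does not come for free, so we first show that finite limits and colimits in $\cC'$ agree with those in $\cC$. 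Both are computed by $F$, and $F$ is exact on each of $\cC$ and $\cC'$, so the comparison maps become bijections after applying $F$; conservativity of $F$ on $\cC$ then shows they are isomorphisms. Hence $\cC'\to\cC$ is exact, and the categories are identified as $\pi'\text{-}\sets\subseteq\pi\text{-}\sets$ via the surjection $\pi\to\pi'$. From this, (iii) is clear. For (ii), connected components of a $\pi'$-set in $\pi$-sets are $\pi'$-orbits, and finite disjoint unions of $\pi'$-sets are $\pi'$-sets. For (i), a $\pi$-equivariant quotient of a $\pi'$-set is a $\pi'$-set, since the kernel acts trivially.

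\emph{Sufficiency.} This is the main direction. Let $N$ be the intersection, over all $X\in\cC'$, of the kernels of the action on $F(X)$; it is a closed normal subgroup. The claim is that $\cC'$ is exactly the category of finite $\pi$-sets on which $N$ acts trivially, that is, the $\pi/N$-sets. Then (G1)--(G4) follow at once.

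One inclusion is immediate: objects of $\cC'$ are $\pi/N$-sets. For the converse, take a transitive $\pi/N$-set $\pi/U$ with $U\supseteq N$ open. Since $\pi/U$ is finite and the kernels of the actions on $F(X)$ are open, compactness gives finitely many $X_1,\dots,X_n\in\cC'$ whose kernels have intersection contained in $U$. By (iii), $X=X_1\times\dots\times X_n\in\cC'$, with kernel $\ker_X\subseteq U$. A connected component $X_0$ of $X$ is in $\cC'$ by (ii). Choosing components carefully, the stabiliser of a point $x$ of some component contains a conjugate-free description: more simply, take the $\pi$-orbit of a point of $X^m$, for suitable $m$, whose stabiliser is $\ker_X$. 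This uses (iii) and (ii) again, and produces $\pi/\ker_X$ in $\cC'$. The natural surjection $\pi/\ker_X\to\pi/U$ then gives $\pi/U\in\cC'$ by (i). Arbitrary $\pi/N$-sets are finite disjoint unions of these orbits, hence lie in $\cC'$ by (ii). Note that $1_\cC\in\cC'$ handles the empty product and the terminal object.

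\emph{Main obstacle.} The delicate step is realising $\pi/\ker_X$ as an object of $\cC'$. It requires that $F(X)^m$ contain a point whose stabiliser is exactly $\ker_X$; taking $m=|F(X)|$ and the tuple of all points achieves this. One must also track that the empty-coproduct and initial-object cases are handled by (ii) applied to the empty family.
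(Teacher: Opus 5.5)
Your proposal is correct and follows essentially the same route as the paper: identify $\cC$ with finite $\pi$-sets, let $N$ be the intersection of the kernels (equivalently, of all point stabilisers $U_{X,x}$), and use compactness together with (iii), (ii) and (i) to show that every transitive $\pi/N$-set $\pi/U$ lies in $\cC'$. Realising $\pi/\ker_X$ as the orbit of the tuple of all points in $X^m$ is just a longer way of saying that the stabiliser of a point of a product is the intersection of the stabilisers, and your exactness argument for the necessity direction supplies the details that the paper summarises as ``clearly''.
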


\begin{proof}
    For simplicity, we may first replace $\cC'$ by the strictly full subcategory of $\cC$ of all objects isomorphic to an object of $\cC'$. Moreover, the fibre functor yields an equivalence of $\cC$ with $\pi\text{-}\sets$, the category of finite sets with continuous $\pi$-action for the profinite group $\pi = \pi_1(\cC,F)$. We may therefore replace $\cC$ by $\pi\text{-}\sets$. 

    If $\cC'$ is Galois, then clearly 
    \labelcref{lemitem:fullGalCat1}--\labelcref{lemitem:fullGalCat3} hold. So we now suppose conditions \labelcref{lemitem:fullGalCat1}--\labelcref{lemitem:fullGalCat3} hold. 
    For every object $X$ of $\cC'$ and any point $x \in F(X)$ the stabiliser in $\pi$ is denoted by $U_{X,x}$. This is an open subgroup of $\pi$ and the $\pi$-set $\pi/U_{X,x}$ is isomorphic to the connected component of $X$ containing $x$. The object $\pi/U_{X,x}$ belongs to $\cC'$ by 
    \labelcref{lemitem:fullGalCat2}. The family of subgroups $U_{X,x}$ is closed under finite intersections by \labelcref{lemitem:fullGalCat3}, 
    under passing to a larger open subgroup by \labelcref{lemitem:fullGalCat1}, and under conjugation by arbitrary elements $\gamma \in \pi$ since $U_{X,\gamma(x)} = \gamma U_{X,x} \gamma^{-1}$.

    The intersection $N$ of all $U_{X,x}$ is a closed normal subgroup of $\pi$, and $\cC'$ is contained in the full subcategory $\pi/N\text{-}\sets$ of $\pi\text{-}\sets$. We claim that $\cC' \simeq \pi/N\text{-}\sets$. Indeed, since by \labelcref{lemitem:fullGalCat2} both are determined by their connected objects it suffices to see that any connected $X$ in $\pi/N\text{-}\sets$ actually belongs to $\cC'$. Such an object is isomorphic to a $\pi/U$ for an open subgroup $U$ that contains $N$. By the definition of $N$ as an intersection of the $U_{X,x}$ and compactness, there are finitely many $X_i$ and $x_i$ such that $U$ contains the intersection of the $U_{X_i,x_i}$. But then, $U$ itself is of the form $U_{X,x}$ for some $X$ and $x$. This shows that $\pi/U$  belongs to $\cC'$.
\end{proof}

\begin{lem}
\label{lem:filtered colimits of Galois categories}
   Let $\cC_i$ be a filtered system of Galois categories such that for $i < j$ the functor $\cC_i \to \cC_j$ is exact. Let $\cC = \varinjlim_i \cC_i$ be the colimit in the $(2,1)$-category of small categories\footnote{This colimit exists by  \cite[3.74]{Kelly:EnrichedCats}.}, and let $F\colon \cC \to \Sets$ be a functor such that each composition $F_i\colon \cC_i \to \cC \to \Sets$ is a fibre functor. 
   
   Then $\cC$ is a Galois category with fibre functor $F$, the functors $\cC_i \to \cC$ are exact, and the natural map 
   \[
   \pi_1(\cC,F) \longrightarrow \varprojlim_i \pi_1(\cC_i, F_i)
   \]
   is an isomorphism of profinite groups.
\end{lem}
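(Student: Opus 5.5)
We work throughout with the standard explicit model of the $(2,1)$-colimit of a filtered system of categories. An object of $\cC$ is a pair $(i,X)$ with $X\in\cC_i$. Morphisms are given by
\[
\Hom_\cC((i,X),(j,Y)) = \varinjlim_{k\geq i,j}\Hom_{\cC_k}(X_k,Y_k),
\]
where $X_k$ denotes the image of $X$ in $\cC_k$. The transition functors are only pseudo-functorial, so this model holds up to the coherence isomorphisms. Since the system is filtered, any finite diagram in $\cC$ lifts, up to isomorphism, to a finite diagram in some single $\cC_k$. The same is true for any finite set of morphisms together with the relations among them. With this model, the proof verifies the axioms (G1)--(G4) for $(\cC,F)$ directly and then identifies the fundamental group.

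For (G1) and exactness of $\cC_i\to\cC$: given a finite diagram in $\cC$, lift it to some $\cC_k$ and take its limit (resp.\ colimit) $L$ there. We claim the image of $L$ in $\cC$ is a limit. For a test object $(j,T)$, the Hom-sets into the diagram are filtered colimits over $m\geq k,j$ of Hom-sets in $\cC_m$. Each transition $\cC_k\to\cC_m$ is exact, so the image of $L$ is a limit in every $\cC_m$. Filtered colimits of sets commute with finite limits, so the universal property passes to $\cC$. The same argument gives colimits and shows that each $\cC_i\to\cC$ is exact.

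For (G3) and (G4): $F$ takes finite values because each $F_i$ does. For exactness of $F$, any finite (co)limit in $\cC$ is, up to isomorphism, the image of one in some $\cC_k$, where $F_k$ is exact. For conservativity, a morphism $f$ of $\cC$ is represented in some $\cC_k$. If $F(f)=F_k(f)$ is bijective, then $f$ is an isomorphism in $\cC_k$ because $F_k$ is conservative, and hence $f$ is an isomorphism in $\cC$.

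(G2) is the step needing most care, since we must show that connectedness is preserved by the functors $\cC_k\to\cC$. The plan is to use that in a Galois category an object $X$ is connected if and only if $F(X)\neq\emptyset$ and $X$ has no nontrivial coproduct decomposition. Equivalently, $\pi_1$ acts transitively on $F(X)$, which is what the fibre functor detects. So let $X$ be connected in $\cC_k$, and suppose $Y\to X$ is a monomorphism in $\cC$. This map is represented by a monomorphism in some $\cC_m$, using exactness: monomorphisms are characterised by a finite limit, namely the kernel pair. The image $X_m$ of $X$ is connected in $\cC_m$, because exact functors of Galois categories preserve connected objects. This last fact holds since $\pi_1(\cC_m)\to\pi_1(\cC_k)$ acts compatibly on the same fibre set, so transitivity is inherited. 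Hence $Y$ is initial or the map is an isomorphism. Every object decomposes into connected components already in some $\cC_k$, so (G2) follows.

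Finally, the map $\pi_1(\cC,F)\to\varprojlim_i\pi_1(\cC_i,F_i)$ is given by restricting automorphisms of $F$ to each $\cC_i$. It is injective, because every object of $\cC$ comes from some $\cC_i$. It is surjective, because a compatible family of automorphisms of the $F_i$ is natural with respect to all morphisms of $\cC$, each of which comes from some $\cC_k$. The map is continuous: the stabiliser of a point of $F(X)$ with $X$ from $\cC_i$ is the preimage of an open subgroup of $\pi_1(\cC_i,F_i)$. A continuous bijection of profinite groups is a homeomorphism, which completes the identification.
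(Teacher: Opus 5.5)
Your verification of (G1), (G3), (G4) and your identification of $\pi_1$ are fine. The argument for (G2), however, rests on a false claim: exact functors between Galois categories do \emph{not} preserve connected objects in general. Along $\cC_k\to\cC_m$, the group $\pi_1(\cC_m,F_m)$ acts on $F(X)$ through the homomorphism $\pi_1(\cC_m,F_m)\to\pi_1(\cC_k,F_k)$. Transitivity of the target group does not pass to the image of a non-surjective homomorphism. Surjectivity corresponds to full faithfulness of the functor, which the lemma does not assume.

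Concretely, take the two-term system $\Gsets{\bZ/2\bZ}\to\sets$ given by the forgetful functor. The colimit is the category of finite sets, and the connected object $\bZ/2\bZ$ becomes a two-point set. So knowing that every object decomposes into connected components in some $\cC_k$ does not give (G2). Your monomorphism argument breaks exactly at the sentence ``$X_m$ is connected in $\cC_m$''.

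The missing idea is to choose the stage so that connectedness becomes stable. Fix $X$ in $\cC_k$. For $m\ge k$, let $H_m$ be the image of $\pi_1(\cC_m,F_m)$ in the finite group $\Aut(F(X))$. For $m'\ge m$ one has $H_{m'}\subseteq H_m$. Since $\Aut(F(X))$ has only finitely many subgroups, choose $m_0$ with $H_{m_0}$ minimal; then $H_m=H_{m_0}$ for all $m\ge m_0$.

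Now decompose $X_{m_0}$ into connected components in $\cC_{m_0}$. Their fibres are the $H_{m_0}$-orbits on $F(X)$, and these remain $H_m$-orbits for every $m\ge m_0$. Hence each component stays connected in every later $\cC_m$. Only at this point does your monomorphism argument apply and show that these components are connected in $\cC$.

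The paper avoids the axiom-by-axiom check. It replaces each $\cC_i$ by $\Gsets{\pi_i}$ and identifies $\cC$ directly with the finite continuous $\varprojlim_i\pi_i$-sets, where orbit decomposition is automatic. The same stabilisation of images in finite groups (together with a compactness argument) is what makes that identification valid when the maps $\pi_j\to\pi_i$ are not surjective.
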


\begin{proof}
    The filtered system $(\cC_i,F_i)$ yields a cofiltered system of profinite groups 
    $\pi_i = \pi_1(\cC_i,F_i)$. Using the $F_i$ we may replace $\cC_i$ by $\pi_i$-$\sets$ and $F_i$ by just the forgetful functor into the category of sets. The colimit $\cC$ can then be computed as the category whose objects are finite sets with an action of $\pi_i$ for $i \gg 0$ (equivalently some $i$ or compatibly for all large enough $i$). Morphisms in $\cC$ are given by 
    \[
    \Hom_{\cC}(X,Y) = \varinjlim_i \Hom_{\cC_i}(X,Y)= \varinjlim_i \Hom_{\pi_i}(X,Y).
    \]
    This category agrees with the category of finite sets with continuous $\pi = \varprojlim_i \pi_i$-action, and $F$ enriches to an equivalence $F \colon \cC \isomto \pi\text{-}\sets$.  
\end{proof}

\subsection{The tame fundamental group of a scheme}
\label{sec:tame pi1}

We come back to the Galois theory of tame covers of an $S$-scheme $X$.

\begin{prop}
\label{prop:Fett is Galois for schemes}
    Let $X$ be a non-empty connected scheme, and let $X\to S$ be a morphism of schemes. Then $\FEtt_{X/S}$ is a Galois category and the inclusion $\FEtt_{X/S} \subseteq \FEt_X$ is exact.
\end{prop}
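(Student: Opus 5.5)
The plan is to apply \cref{lem:Galois-subcategory-criterion} to the full subcategory $\cC' = \FEtt_{X/S}$ of the Galois category $\cC = \FEt_X$. Here $\FEt_X$ is Galois with the fibre functor given by a geometric point, because $X$ is non-empty and connected. The identity cover $X \to X$ is tame, since every extension of valued fields it induces is trivial. Once $\cC'$ is known to be Galois with the restricted fibre functor, exactness of the inclusion follows formally. The criterion identifies $\cC'$ with $\pi/N$-$\sets$ sitting inside $\pi$-$\sets$, and that inclusion preserves finite limits and colimits. Equivalently, the restricted fibre functor is a fibre functor, which characterises morphisms of Galois categories.

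It remains to verify conditions \labelcref{lemitem:fullGalCat1}--\labelcref{lemitem:fullGalCat3}. All three reduce to pointwise statements about extensions of valued fields via the functoriality of $\Spa(-,S)$. For a morphism $Y' \to Y$ over $X$ and a point $y' \in \Spa(Y',S)$, let $y$ and $x$ be its images. Then $k(x)\subseteq k(y)\subseteq k(y')$, and the valuation rings are obtained by restriction: $k(y)^+ = k(y')^+\cap k(y)$. Every point of $\Spa(Y,S)$ lifts along a surjective finite \'etale $Y'\to Y$. To see this, pick $y'\in Y'$ over $y$ and extend the valuation $k(y)^+$ to $k(y')$; the centre $\Spec(k(y')^+)\to\Spec(k(y)^+)\to S$ is then inherited.

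\textbf{(i) Epimorphisms.} An epimorphism $Y'\to Y$ in $\FEt_X$ is surjective. Given $y\in\Spa(Y,S)$, lift it to $y'$. The extension $k(y')/k(x)$ is tame, and tameness passes to subextensions. Indeed, by \cref{defi:tame-extension-of-valued-fields} it is a divisibility statement: $[k(y)^\sh:k(x)^\sh]$ divides $[k(y')^\sh:k(x)^\sh]$, where strict henselisations are taken compatibly inside a common algebraic closure. So $Y$ is tame.

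\textbf{(ii) Connected components.} A disjoint union $Y=\coprod Y_i$ satisfies $\Spa(Y,S)=\coprod\Spa(Y_i,S)$, and the residue fields are unchanged. Hence $Y$ is tame if and only if each $Y_i$ is.

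\textbf{(iii) Products.} Let $Y_1, Y_2$ be tame. A point $z$ of $\Spa(Y_1\times_X Y_2,S)$ lies over a point of $Z=Y_1\times_XY_2$ with images $y_1,y_2$. Its residue field $k(z)$ is a compositum of $k(y_1)$ and $k(y_2)$ over $k(x)$, carrying the restricted valuations. Tameness of a compositum follows from the stability of tameness under composition and base change noted after \cref{defi:tame-extension-of-valued-fields}. Concretely, $k(z)/k(y_1)$ is a subextension of the base change of $k(y_2)/k(x)$ along $k(y_1)$, with the induced valuation, and is therefore tame by (i)-type reasoning. Then $k(z)/k(x)$ is the composition of the two tame extensions $k(z)/k(y_1)$ and $k(y_1)/k(x)$, hence tame.

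The main obstacle is making this base change argument precise for valued fields rather than for Galois extensions. The cleanest route is characterisation \labelcref{lemitem:tame-abhyankar1} of \cref{lem:tameness-conditions}. Write $k(y_2)\subseteq K'(\sqrt[n]{a_1},\dots,\sqrt[n]{a_r})$ with $K'/k(x)$ unramified. Then $k(z)$ is contained in $k(y_1)K'(\sqrt[n]{a_i})$, and $k(y_1)K'/k(y_1)$ is unramified for the restricted valuation because unramifiedness is stable under base change. This exhibits $k(z)/k(y_1)$ in the Abhyankar form.
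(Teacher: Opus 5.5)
Your proposal is correct and is the paper's own argument: the paper proves this by citing that $\FEt_X$ is Galois and asserting that the inclusion $\FEtt_{X/S}\subseteq\FEt_X$ satisfies the three conditions of \cref{lem:Galois-subcategory-criterion}. Your checks of those conditions are sound and fill in what the paper leaves implicit: lifting points of $\Spa(Y,S)$ along surjections, multiplicativity of the degrees $[k(-)^\sh:k(x)^\sh]$ in towers, and handling the compositum via the Abhyankar form of \cref{lem:tameness-conditions}.
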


\begin{proof}
We know from \stacks[Lemma]{0BNB} that $\FEt_X$ is a Galois category. Moreover, the conditions of \cref{lem:Galois-subcategory-criterion} are satisfied by the inclusion $\FEtt_{X/S}\subseteq \FEt_X$. 
\end{proof}

\begin{cor}
\label{cor:exact functor tame covers}
    The functor $g^\ast \colon \FEtt_{X/S} \to \FEtt_{X'/S'}$  
    is an exact functor of Galois categories if $X$ and $X'$ are connected and non-empty.
\end{cor}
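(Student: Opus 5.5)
The plan is to reduce to the general principle recalled after \cref{defi:Gal cat}: a functor between Galois categories is exact as soon as it carries fibre functors to fibre functors. By \cref{prop:Fett is Galois for schemes}, both $\FEtt_{X/S}$ and $\FEtt_{X'/S'}$ are Galois categories, and the inclusions $\iota\colon\FEtt_{X/S}\subseteq\FEt_X$ and $\iota'\colon\FEtt_{X'/S'}\subseteq\FEt_{X'}$ are exact. So it is enough to show that $g^\ast$ sends some fibre functor of the target to a fibre functor of the source.

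First I will check that $g^\ast\colon\FEt_X\to\FEt_{X'}$ is exact. This is standard: $g^\ast$ composed with the fibre functor at a geometric point $\bar x'$ of $X'$ is the fibre functor at $g(\bar x')$. Since $X$ is connected, that composite is a fibre functor of $\FEt_X$, and by the cited criterion $g^\ast$ is then exact.

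Next, the restricted functor $g^\ast\colon\FEtt_{X/S}\to\FEtt_{X'/S'}$ fits into a strictly commutative square with the inclusions: $\iota'\circ g^\ast=g^\ast\circ\iota$. Take a fibre functor $F'$ of $\FEt_{X'}$, for instance $F'=F_{\bar x'}$. Its restriction $F'\circ\iota'$ is a fibre functor of $\FEtt_{X'/S'}$, because the proof of \cref{lem:Galois-subcategory-criterion} shows that $\FEtt_{X'/S'}$ is Galois with respect to the restricted fibre functor. Then
\[
F'\circ\iota'\circ g^\ast \;=\; F'\circ g^\ast\circ\iota \;=\; F_{g(\bar x')}\circ\iota,
\]
and the right-hand side is the restriction of a fibre functor of $\FEt_X$ to $\FEtt_{X/S}$, hence a fibre functor of $\FEtt_{X/S}$. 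So $g^\ast$ carries a fibre functor to a fibre functor, and it is therefore a morphism of Galois categories, i.e.\ exact.

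An alternative, more hands-on route avoids the criterion. Because $\iota'$ is exact and fully faithful, finite limits and colimits computed in $\FEtt_{X/S}$ agree with those in $\FEt_X$. They are preserved by $g^\ast$ on $\FEt_X$, and they land in $\FEtt_{X'/S'}$, where they are again the limits and colimits of that subcategory.

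The only real subtlety is bookkeeping rather than mathematics. One must make sure that "fibre functor" in the criterion refers to restrictions along the inclusions, and that the restriction of any fibre functor of $\FEt_X$ is a fibre functor of $\FEtt_{X/S}$. This follows from (G3)/(G4) together with the exactness of the inclusion, and conservativity is inherited directly by restriction along a full subcategory.
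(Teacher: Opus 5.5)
Your proposal is correct. Your ``alternative, more hands-on route'' is exactly the paper's proof. The paper only says that $g^\ast$ on $\FEtt_{X/S}$ is the restriction of the exact functor $g^\ast\colon\FEt_X\to\FEt_{X'}$. This implicitly uses, as you do, two facts: the inclusion $\FEtt_{X/S}\subseteq\FEt_X$ is exact, so finite limits and colimits in the subcategory are computed in $\FEt_X$; and $\FEtt_{X'/S'}\subseteq\FEt_{X'}$ is fully faithful, so the images are again limits and colimits there. In that paragraph the first inclusion you invoke should be $\iota$, not $\iota'$.

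Your main route instead goes through the criterion that a functor of Galois categories is exact if and only if it carries fibre functors to fibre functors. You then check this for only one fibre functor $F'$, which needs a one-line justification. Either note that all fibre functors of a Galois category are isomorphic, or argue directly: $F'$ is exact and conservative and $F'\circ g^\ast$ is exact, so the comparison maps for finite limits and colimits become bijections after applying $F'$, hence are isomorphisms.

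What this route buys is the explicit identity $F_{\bar x'}\circ g^\ast = F_{g(\bar x')}$ on $\FEtt_{X/S}$. That identity is precisely what is used to define the homomorphism $g_\ast\colon \pitame(X'/S',\bar x')\to\pitame(X/S,\bar x)$ in the subsequent definition. The paper's route is shorter and purely categorical, and needs no fibre functors at all.
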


\begin{proof}
    This is simply the restriction of the exact functor $g^\ast \colon \FEt_X \to \FEt_{X'}$.
\end{proof}

\begin{defi}
\label{defi:tamepi1 schemes}
    Let $X$ be a connected $S$-scheme, and let $\bar x \to X$ be a geometric point. We denote by $F_{\bar x}$ the fibre functor on $\FEtt_{X/S}$ implicit in \cref{prop:Fett is Galois for schemes} that is the restriction of the fibre functor `fibre at $\bar x$' from $\FEt_X$.     
    \begin{enumerate}
        \item 
        The \textbf{tame fundamental group} of $X$ (relative to $S$) is the profinite fundamental group  
        \[
        \pitame(X/S,\bar x) = \pi_1(\FEtt_{X/S}, F_{\bar x}) = \Aut(F_{\bar x} \colon \FEtt_{X/S} \to \Sets)
        \]
        of the Galois category $\FEtt_{X/S}$. 
        \item
        In view of \cref{cor:exact functor tame covers} a    commutative square
        \[ 
        \begin{tikzcd}
            X'\ar[d] \ar[r,"g"] & X\ar[d] \\
            S'\ar[r] & S
        \end{tikzcd}
        \]
        with $X$ and $X'$ connected together with geometric points $\bar x' \to X'$ and $\bar x = g(\bar x')$ induces in a functorial way a group homomorphism
        \[
        g_\ast \colon \pitame(X'/S',\bar x')  \la \pitame(X/S,\bar x).
        \]
\end{enumerate}
\end{defi}

\begin{rmks}
    \begin{enumerate}
        \item 
        For morphisms $X \to S' \to S$, the fully faithful inclusions of Galois categories $\FEtt_{X/S}  \subseteq \FEtt_{X/S'}  \subseteq\FEt_X$ yield canonical surjective homomorphisms
        \[
        \pi_1(X,\bar x) \surj \pitame(X/S',\bar x) \surj \pitame(X/S,\bar x).
        \]
        These are just the maps $\id_{X,\ast}$, and if $X \to S$ is the 
        identity $\id\colon X \to X$, then the canonical map is an isomorphism
        \[
        \pi_1(X,\bar x) \isomto \pitame(X/X,\bar x).
        \]
     
        \item 
        The tame fundamental group of $X$ relative to $S$ was considered in \cite[\S5]{HubnerSchmidt2021:TameSiteSchemea} in the locally noetherian case and defined as the pro-discrete monodromy group of local systems on the locally connected tame site $\Spa(X,S)_{\rt}$ in the spirit of the \emph{pro-groupe fondamentale \'elargi}, see \cite[X~\S6]{SGA3:2}.  Upon profinite completion, we recover our $\pitame(X/S,\bar x)$, see  also \cite[\S9]{Hubner2021:AdicTameSite}.      
    \end{enumerate}
\end{rmks}

\begin{rmk}
For a connected and separated scheme of finite type~$X$ over an integral, pure-dimensional, and excellent scheme~$S$ and a geometric point $\bar{x} \to X$, the curve tame covers form a Galois category and give rise to the curve tame fundamental group of~$X$.
If, in addition,~$X$ is normal, we can form the chain tame, divisor tame, and valuation tame fundamental groups.
If~$X$ is regular with an snc compactification~$\ov X$, we furthermore have the Grothendieck-Murre tame fundamental group at our disposal, which we can identify with the Kummer étale fundamental group of~$\ov X$ endowed with the compactifying log structure defined by~$X$. 

Under the various hypotheses discussed in \cref{sec:adic tame discrete case} all of the above fundamental groups are isomorphic to the tame fundamental group $\pi_1^t(X/S,\bar{x})$.
We can thus reinterpret properties of some variant of a tame fundamental group proved in the literature as results about the (adic) tame fundamental group.
\end{rmk}

\subsection{Descent for Galois categories and graphs of groups}
\label{sec:descent for Gal cats}

We review the content of \cite{Stix2006:vanKampen}, a profinite version of \cite{HaefligerComplexesGroups}, with emphasis on the application to finite generation results. 

\subsubsection*{Complex of Galois categories}
Let $\Delta_{\leq 2}$ be the category with three objects $[n] = \{0,\ldots,n\}$ for $0 \leq n \leq 2$. Morphisms in $\Delta_{\leq 2}$ are \emph{strictly} monotone maps of sets. We will use the following notation for maps in $\Delta_{\leq 2}$:
\[
\begin{array}{rll}
    v_i \colon & [0] \to [2] & \qquad \text{ image equals $\{i\}$ and $i \in \{0,1,2\}$,} \\[1ex]
    \pr_{i} \colon & [0] \to [1] & \qquad \text{ image equals $\{i\}$ and $i \in \{0,1\}$,} \\[1ex] 
    \pr_{ij} \colon & [1] \to [2] & \qquad  \text{ image equals $\{i,j\}$ and $i,j \in \{0,1,2\}$.}
\end{array}
\]

A \textbf{$2$-complex of sets} is a functor $E \colon \Delta_{\leq 2}^\op  \to \Sets$, 
i.e.~essentially a diagram in the category of sets of the form
\[
E_\bullet = \Big[\begin{tikzcd}
E_0 & \ar[l,shift right=0.7ex] \ar[l,shift left=0.7ex] E_1 & \ar[l,shift right=1ex] \ar[l,shift left=1ex] \ar[l] E_2  
\end{tikzcd}\Big]
\]
with the usual simplicial identities among compositions of maps. 
We also consider $E_\bullet$ as a category: objects of $E_\bullet$ are the elements of the sets $E_i$ and morphisms are built from $\partial\colon [i] \to [j]$ in $\Delta_{\leq 2}$ and $t \in E_j$ with $s = E(\partial)(t)$ as  morphisms $\partial^\ast\colon s \to t$. Composition in $E_\bullet$ is derived from composition in $\Delta_{\leq 2}$. 

\begin{defi}
    We say that $E_\bullet$ is \textbf{connected} if the topological realisation $|E_\bullet|$ is connected. 
\end{defi}

Let $E_\bullet$ be a $2$-complex of sets. Let $\cG_\bullet \to E_\bullet$ be a fibred category in Galois categories, i.e. for all $s \in E_i$ the category $\cG_s = \cG_\bullet(s)$ is a Galois category and the pull-back functors are exact. The associated $2$-complex of multi-Galois categories (see \cite[V~\S9]{SGA1})
also denoted by~$\cG_\bullet$ is 
\[
\cG_\bullet = \left[\begin{tikzcd}
\cG_0 \ar[r,shift right=0.7ex,"{\pr_1^\ast}",swap] \ar[r,shift left=0.7ex,"\pr_0^\ast"]  &  \cG_1 \ar[r, shift left=1ex,"{\pr_{ij}^\ast}"] \ar[r,shift right=1ex] \ar[r] & \cG_2 
\end{tikzcd}\right]
\]
where $\cG_i = \prod_{s \in E_i} \cG_s$. Here there are three functors $v_i^\ast \colon \cG_0 \to \cG_2$, $i=0,1,2$. Each occurs in two ways as the composition $v_i^\ast \simeq  \pr_{jk}^\ast \circ \pr_\ell^\ast$ for all $j,k,\ell$ and $i = j$ for $\ell = 0$ while $i = k$ for $\ell = 1$. 
The $2$-complex $\cG_\bullet$ comes equipped with natural invertible transformations $v_i^\ast \simeq  \pr_{jk}^\ast \circ \pr_\ell^\ast$ as part of the datum implicitly. All these transformations will be denoted by $\alpha$ without causing too much confusion. 

\begin{ex}
\label{ex:FETdescent}
    Let $f\colon  X' \to X$ be a morphism of schemes. We set $X'' = X' \times_X X'$ and $X'''=X' \times_X X' \times_X X'$, and assume that $X'$, $X''$ and $X'''$ are locally connected, i.e.\ are the disjoint union of their connected components. Then 
    \[
    E_\bullet = \big[
    \begin{tikzcd} 
    \pi_0(X') & \ar[l,shift right=0.7ex] \ar[l,shift left=0.7ex] \pi_0(X'')  & \ar[l,shift right=1ex] \ar[l,shift left=1ex] \ar[l] \pi_0(X''')
    \end{tikzcd}
    \big]
    \]
    is a $2$-complex of sets. The fibred category $\FEt$ over schemes induces a fibred category over $E_\bullet$ that leads to the $2$-complex 
    \[
    \begin{tikzcd} 
    \FEt_{X'} \ar[r,shift left=0.7ex,"{\pr_0^\ast}"] \ar[r,shift right=0.7ex,"\pr_1^\ast",swap]   &  \FEt_{X''}  \ar[r, shift left=1ex,"{\pr_{ij}^\ast}"] \ar[r,shift right=1ex] \ar[r] & \FEt_{X'''} 
    \end{tikzcd}
    \]
    of multi-Galois categories. 
\end{ex}

\subsubsection*{Descent} 
We now recall the definition of the category $\Desc(\cG_\bullet)$ of descent data as the equaliser category of a fibred category in Galois categories $\cG_\bullet \to E_\bullet$  (compare with \cite[Definition~4.1]{Stix2006:vanKampen}):
\begin{enumerate}
    \item 
    Objects of $\Desc(\cG_\bullet)$ are pairs $(X,\ph)$ with an object $X$ of $\cG_0$ and an isomorphism in $\cG_1$
    \[
    \ph \colon \pr_0^\ast(X) \isomto \pr_1^\ast(X).
    \]
    The isomorphism $\ph$ is referred to as the \textbf{gluing isomorphism}. 
    \item
    The isomorphism $\ph$ must satisfy the cocycle relation in $\cG_2$ (well defined if the necessary identifications are made using the natural isomorphisms $\alpha$):
    \[
    \pr_{02}^\ast(\ph) = \pr_{12}^\ast(\ph) \circ \pr_{01}^\ast(\ph).
    \]
    We refer to \cite[Definition~4.1]{Stix2006:vanKampen} for details.  
    \item 
    Morphisms $(X,\ph) \to (Y,\psi)$ in $\Desc(\cG_\bullet)$ are morphisms $X \to Y$ in $\cG_0$ that commute with $\ph$ and $\psi$ in $\cG_1$, i.e. we have
    \[
    \pr_1^\ast(f) \circ \ph = \psi \circ \pr_0^\ast(f).
    \]
\end{enumerate}

\begin{ex}
For the $2$-complex of Galois categories coming from the fibred category $\FEt$ over schemes as in \cref{ex:FETdescent} we introduce the short-hand notation $\Desc(X'/X, \FEt)$ for the category of descent data. 

Fix a scheme $S$ and denote the category of schemes over $S$ by $\Sch/S$. We can consider tame covers relative $S$ as a fibred category $\FEtt_{-/S} \to \Sch/S$ in view of the base change stability of tameness. 
For a map of $S$-schemes $X'\to X$ as in \cref{ex:FETdescent} we obtain a full subcategory of tame descent data
\[
    \Desc(X'/X, \FEtt_{-/S}) \subseteq \Desc(X'/X, \FEt).
\]
\end{ex}

\begin{prop}
\label{prop:DD is galois for connected E}
    Let $E_\bullet$ be a connected $2$-complex of sets and $\cG_\bullet \to E_\bullet$ a fibred category in Galois categories. Then $\Desc(\cG_\bullet)$ is a Galois category.
\end{prop}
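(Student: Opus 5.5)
To show that $\Desc(\cG_\bullet)$ is a Galois category, I would exhibit a fibre functor directly and verify axioms (G1)--(G4) of \cref{defi:Gal cat}. Since $E_\bullet$ is connected, $E_0$ is non-empty (the realisation of an empty complex is empty, hence not connected by convention). I fix $s_0 \in E_0$ and a fibre functor $F_{s_0}$ of $\cG_{s_0}$. The candidate fibre functor is
\[
F \colon \Desc(\cG_\bullet) \la \Sets, \qquad (X,\ph) \mapsto F_{s_0}(X_{s_0}),
\]
where $X_{s_0}$ is the $s_0$-component of $X \in \cG_0 = \prod_{s\in E_0}\cG_s$.

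\textbf{Finite limits and colimits, and exactness of $F$.} The pull-back functors $\pr_i^\ast$ and $\pr_{ij}^\ast$ are exact and the categories $\cG_s$ have all finite limits and colimits. Hence finite limits and colimits in $\Desc(\cG_\bullet)$ can be formed componentwise in $\cG_0$, with gluing isomorphisms induced by functoriality; the cocycle condition is preserved because the $\pr_{ij}^\ast$ are exact. This gives (G1). It also shows that the forgetful functor $\Desc(\cG_\bullet)\to\cG_0$ is exact, and therefore so is $F$. Axiom (G3) is immediate.

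\textbf{Conservativity via connectedness.} This is the step I expect to be the main obstacle. Let $f\colon (X,\ph)\to(Y,\psi)$ be a morphism with $F(f)$ bijective. Consider the set $A\subseteq E_0$ of those $s$ for which $f_s$ is an isomorphism; it contains $s_0$.

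For $t\in E_1$ with endpoints $s=E(\pr_0)(t)$ and $s'=E(\pr_1)(t)$, the gluing isomorphisms identify $\pr_0^\ast(f)_t$ with $\pr_1^\ast(f)_t$. Since pull-back functors between Galois categories are exact, they are faithful and conservative: pulling back and then applying a fibre functor of $\cG_t$ gives a fibre functor of $\cG_s$, and fibre functors are conservative. Therefore $f_s$ is an isomorphism if and only if $f_{s'}$ is. So $A$ is closed under adjacency along $1$-simplices. Connectedness of $|E_\bullet|$ means exactly that the graph with vertex set $E_0$ and edge set $E_1$ is connected, so $A=E_0$. Then $f$ is an isomorphism in $\cG_0$, and its inverse automatically respects the gluing data, so $f$ is an isomorphism in $\Desc(\cG_\bullet)$.

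A similar argument gives the finiteness needed for (G2). The cardinality $|F_s(X_s)|$ is locally constant along edges, hence constant. So if $X$ is nonzero, every component $X_s$ is nonzero. An object is initial if and only if $F$ of it is empty, since the initial object is characterised by an empty fibre and this propagates along edges as above.

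\textbf{Decomposition into connected objects (G2).} I would argue by induction on $|F(X)|$. Suppose $(X,\ph)$ is not connected. Then it has a subobject that is neither initial nor the whole object, given by a monomorphism that is not an isomorphism. By conservativity, its fibre is a proper non-empty subset of $F(X)$. In a Galois category the complement of a subobject exists, since subobjects of $X_s$ are unions of connected components. The complement of a descent subobject inherits the gluing, because $\pr_i^\ast$ preserve coproduct decompositions. Hence $X$ splits as a coproduct of two descent data with strictly smaller fibres, and induction on $|F(X)|$ finishes (G2).
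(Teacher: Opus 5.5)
Your proposal is correct and follows essentially the same route as the paper's sketch. The paper also forms finite limits and colimits simplexwise, and uses connectedness of $|E_\bullet|$ to propagate degrees and fibrewise bijectivity along edges, so that the forgetful functor to a single vertex followed by a fibre functor is a fibre functor. The only difference is that you spell out axiom (G2) via complements of subobjects and induction on $|F(X)|$, which the paper dismisses as formal.
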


\begin{proof}
    A proof can be found in 
    \cite[Propositions~2.4,~4.4]{Stix2006:vanKampen}. We sketch the crucial steps. Finite limits and colimits are constructed `simplexwise'.  
    An object $(X,\ph)$ consists of objects $X_v$ for every vertex $v \in E_0$ together with isomorphisms 
    \[
    \ph_e\colon  \pr_0^\ast(X_{\pr_0(e)}) \isomto \pr_1^\ast(X_{\pr_1(e)})
    \]
    in $\cG_e$ for every edge $e \in E_1$. Every object $X_v$ has a degree defined as the size of its fibre $F_v(X_v)$ under any fibre functor $F_v$ of $\cG_v$. The isomorphisms $\ph_e$ show that the degree is locally constant on $|E_\bullet|$, hence constant. Similarly, a map $f \colon (X,\ph) \to (Y, \psi)$ has fibrewise, i.e. for $F_v(f) \colon F_v(X_v) \to F_v(Y_v)$, image and preimage of locally constant size. In particular, $f$ is an isomorphism if and only if for one vertex $v$ the fibrewise map $F_v(f)$ is bijective. This shows that for any $v \in E_0$ the forgetful functor $\Desc(\cG_\bullet) \to \cG_v$ followed by a fibre functor of $\cG_v$ is a fibre functor for $\Desc(\cG_\bullet)$. The rest is formal. 
\end{proof}

\begin{defi}
    Let $\cC$ be a Galois category and $E_\bullet$ a connected $2$-complex of sets. We consider the constant fibred category $\cC \times E_\bullet \to E_\bullet$ with fibre $\cC$, i.e.~the pull-back functors $\partial^\ast\colon  \cC_s \to \cC_t$ are all the identity $\cC \to \cC$, and all natural transformations $\alpha$ are given by the identity.  
    
    Sending an object $Z$ of $\cC$ to the constant descent datum $(X,\ph)$ with $X_v = Z$ for all $v \in E_0$ and gluing isomorphism $\ph = \id$ defines a functor
    \[
    \cC \la \Desc(\cC \times E_\bullet).
    \]
    Since $E_\bullet$ is connected, this functor is fully faithful (but might fail to be an equivalence in general).

    A functor $f_0^\ast\colon  \cC \times E_\bullet \to \cG_\bullet$ over $E_\bullet$ induces a functor $\Desc(\cC \times E_\bullet) \to \Desc(\cG_\bullet)$; precomposing with the constant descent datum yields a functor
    \[
    f^\ast \colon \cC \la \Desc(\cG_\bullet).
    \]
    We say that $f_0^\ast$ satisfies \textbf{descent} (resp.\ \textbf{effective descent}) if $f^\ast$ is fully faithful (resp.\ an equivalence).

    In geometric settings (see the example below), when $f_0^*$ (and so $f^*$) come from some morphism $f$ of schemes or rigid spaces, we will also be saying that "$f$ is of (effective) descent".
\end{defi}

\begin{ex}
In our guiding \cref{ex:FETdescent} coming from a fibred category over schemes, the property of (effective) descent concerns properties of the functors
\begin{equation}
\label{eq:CechDescentSettingForFEt}
    f^\ast \colon \FEt_X \la \Desc(X'/X, \FEt) \quad \textrm{ and } \quad f^\ast \colon \FEtt_{X/S} \la \Desc(X'/X, \FEtt_{-/S})
\end{equation}
induced by a morphism of schemes $f\colon X'\to X$.
\end{ex}
We now gather some known results about descent concerning these categories.
\begin{fact}\label{fact:eff-decent-maps-for-FEt}
    Let $f \colon X' \to X$ be a surjective map of schemes.
\begin{enumerate}
    \item 
    (\cite[IX Corollaire 3.3+Proposition 2.4]{SGA1}) If $f$ is quasi-compact and universally submersive, then $f$ satisfies  descent for $\FEt$.
    \item 
    (\cite[Corollary~5.18]{RydhSubmersions}) If $f$ is  universally topologically submersive and $X, X'$ are noetherian, then $f$ satisfies effective descent for $\FEt$.
    \item 
    (\cite[IX Proposition~4.1]{SGA1}+\cite[IX Th\'eor\`eme~4.12]{SGA1}) When $f$ is an fpqc covering\footnote{Our presentation of effective descent is geared towards schemes with discrete $\pi_0$ -- this case is sufficient for our purposes. In general, fibre products of fpqc maps can often lead outside of this case, e.g.\ $\Spec(k^{\rm sep}\otimes_k k^{\rm sep})$.} or proper and of finite presentation, then $f$ satisfies effective descent for $\FEt$.
\end{enumerate}
\end{fact}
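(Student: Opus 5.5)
The three assertions are imported from the literature, so my plan is to translate our formalism of $\Desc(\cG_\bullet)$ into the classical one and then check that each cited result says what we need. The first step is the translation. By \cref{ex:FETdescent}, $X'$, $X''$ and $X'''$ are disjoint unions of their connected components. Hence $\FEt_{X'}=\prod_{v\in\pi_0(X')}\FEt_{v}$, and likewise for $X''$ and $X'''$. So an object of $\Desc(X'/X,\FEt)$ is exactly a finite \'etale $Y'\to X'$ with an isomorphism $\pr_0^\ast Y'\simeq \pr_1^\ast Y'$ over $X''$ satisfying the cocycle condition over $X'''$, i.e.\ a classical descent datum in the sense of \cite[VIII, IX]{SGA1}. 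With this identification, "descent" and "effective descent" become the usual statements that $f^\ast$ is fully faithful, respectively an equivalence, onto classical descent data for finite \'etale schemes.

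For (1), I reduce full faithfulness to descent of open and closed subsets. A morphism $Y\to Z$ in $\FEt_X$ is the same as an open and closed subscheme $\Gamma\subseteq Y\times_X Z$ that projects isomorphically onto $Y$. A morphism of descent data gives such a $\Gamma'\subseteq (Y\times_X Z)\times_X X'$ whose two pullbacks to $X''$ agree. The map $(Y\times_X Z)\times_X X'\to Y\times_X Z$ is quasi-compact, surjective and submersive, because $f$ is universally submersive. Therefore $\Gamma'$ is the preimage of a subset $\Gamma$ of $Y\times_XZ$: the set-theoretic saturation follows from the cocycle compatibility together with the surjectivity of $X''\to X'\times_X X'$ on points. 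This subset $\Gamma$ is open and closed by submersivity. That $\Gamma\to Y$ is an isomorphism can be checked after the faithfully surjective base change, since $\Gamma\to Y$ is finite \'etale and its degree is read off fibrewise. Faithfulness is clear from surjectivity. This is \cite[IX 3.3, 2.4]{SGA1}, and I would only indicate this argument.

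For (2), the extra content is effectivity. I would invoke Rydh's result that universally subtrusive morphisms are of effective descent for \'etale (in fact quasi-finite separated) morphisms, together with the fact that for noetherian $X,X'$ a universally submersive morphism is universally subtrusive \cite{RydhSubmersions}. The remaining point is that the descended \'etale $Y\to X$ is finite. Finite is equivalent to proper and quasi-finite, so I need that properness descends along universally submersive surjections: separatedness and universal closedness can be tested after such a base change, and finite type also descends in Rydh's setting. I expect this finiteness step to be the main subtlety, and I would quote Rydh's corollary for it rather than reprove it.

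For (3), in the fpqc case I would use fpqc descent for affine morphisms \cite[VIII]{SGA1}. The properties finite and \'etale are fpqc local on the base, so the descended affine scheme is finite \'etale. The footnote's discreteness of $\pi_0$ is assumed only so that our $\Desc$ formalism applies. For proper morphisms of finite presentation, I would follow \cite[IX 4.12]{SGA1}: reduce by noetherian approximation to the noetherian case and use the proper base change theorem for $\pi_0$ of fibres, or equivalently Stein factorisation, to descend finite \'etale covers.
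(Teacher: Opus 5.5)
Your proposal is correct and takes the same approach as the paper: the paper gives no argument beyond identifying $\Desc(X'/X,\FEt)$ with classical descent data and quoting \cite{SGA1} and \cite{RydhSubmersions}, and your supplementary sketches are fine as indications. One small correction to (1): saturating $\Gamma'$ under $W'=W\times_XX'\to W$ (with $W=Y\times_XZ$) uses the surjectivity of $|W'\times_WW'|\to|W'|\times_{|W|}|W'|$ and the compatibility of the morphism with the canonical gluing isomorphisms, not the cocycle condition (and $X''$ is $X'\times_XX'$ by definition); that argument also never uses quasi-compactness of $f$.
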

As  $\FEtt_{X/S} \subset \FEt_X$ and $\Desc(X'/X, \FEtt_{-/S}) \subset \Desc(X'/X, \FEt)$ are full subcategories, full faithfulness of functors is preserved and gives the following corollary.
\begin{lem}\label{lem:f-desc-for-FEt-and-FEtt}
    If $f \colon X' \to X$ satisfies descent for $\FEt$, then it satisfies descent for $\FEtt_{-/S}$. In particular, this holds in  the situations listed in \Cref{fact:eff-decent-maps-for-FEt}.
\end{lem}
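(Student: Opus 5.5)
The argument is purely formal: descent for the tame subcategories follows from descent for $\FEt$ by restricting a fully faithful functor to full subcategories. Let $f\colon X'\to X$ be a morphism of $S$-schemes as in \cref{ex:FETdescent}, so that $X'$, $X''$, $X'''$ are locally connected. Consider the commutative square of functors
\[
\begin{tikzcd}
\FEtt_{X/S} \ar[r,"f^\ast"] \ar[d,hookrightarrow] & \Desc(X'/X,\FEtt_{-/S}) \ar[d,hookrightarrow] \\
\FEt_X \ar[r,"f^\ast"] & \Desc(X'/X,\FEt).
\end{tikzcd}
\]
The top functor is well defined because tameness relative to $S$ is stable under base change, as noted after \cref{defi:tame cover of schemes X over S}. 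Hence a tame cover $Y\to X$ pulls back to tame covers over $X'$, $X''$ and $X'''$, and its canonical gluing isomorphism is a morphism in the tame categories.

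The square commutes on the nose, since both composites send $Y$ to the constant descent datum $(Y\times_X X',\mathrm{can})$. Both vertical functors are fully faithful. For the left one this is the definition of $\FEtt_{X/S}$ as a full subcategory of $\FEt_X$. For the right one, morphisms of tame descent data are by definition morphisms in $\FEtt_{X'/S}\subseteq \FEt_{X'}$ compatible with the gluing isomorphisms, and the compatibility condition is the same in both categories.

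The conclusion then follows from elementary category theory. Given tame covers $Y_1,Y_2$, there is a chain of identifications
\[
\Hom_{\FEtt_{X/S}}(Y_1,Y_2)=\Hom_{\FEt_X}(Y_1,Y_2)\xrightarrow{\ \sim\ }\Hom_{\Desc(\FEt)}(f^\ast Y_1,f^\ast Y_2)=\Hom_{\Desc(\FEtt)}(f^\ast Y_1,f^\ast Y_2).
\]
The middle arrow is bijective by the descent hypothesis for $\FEt$, and the two outer equalities hold by fullness of the vertical inclusions. Therefore the top $f^\ast$ is fully faithful, which is exactly descent for $\FEtt_{-/S}$.

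The ``in particular'' clause is immediate from \cref{fact:eff-decent-maps-for-FEt}. Each case listed there yields at least descent for $\FEt$, since effective descent includes full faithfulness.

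There is no genuine obstacle in this argument; the only point requiring care is checking that the top row is well defined. That reduces to base-change stability of tameness together with the observation that the constant descent datum lands in the tame subcategory. We do not claim effectivity for the tame categories: that would require showing a descended cover is tame, which the full-subcategory argument does not give.
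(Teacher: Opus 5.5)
Your proposal is correct and matches the paper's argument. The paper likewise notes that $\FEtt_{X/S}\subseteq\FEt_X$ and $\Desc(X'/X,\FEtt_{-/S})\subseteq\Desc(X'/X,\FEt)$ are full subcategories, so the full faithfulness of $f^\ast$ restricts to the tame categories, and the ``in particular'' clause then follows from \cref{fact:eff-decent-maps-for-FEt}.
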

Note that we do not claim \emph{effective} descent to be satisfied in the last corollary. In fact, the effective descent property for $\FEt^t_{-/S}$ will often fail even if it holds for $\FEt$.

\begin{ex}
We consider the simplest example where the last assertion fails, maybe the embodiment of all examples: the Artin-Schreier cover $\wp \colon X' = \bA^1_K \to X = \bA^1_K$, $\wp(x) = x^p - x$ over $S = \Spec(K)$, a field of characteristic $p$. The Artin-Schreier cover considered as an object of $\FEt_X$ has a base change to $X'$ that splits completely. It thus yields a descent datum with respect to the fibred category $\FEtt_{-/S}$. This descent datum is not effective, for the only cover it can descend to is the Artin-Schreier cover itself, and that cover is not a tame cover.
\end{ex}

This problem can be fixed by adding extra assumptions that only hold in some specific situations.

\begin{lem}\label{lem:f-eff-desc-for-FEt-and-FEtt}
    Assume that $f\colon X'\to X$ satisfies effective descent for $\FEt$. Then it satisfies effective descent for $\FEt^t_{-/S}$ if and only if for every finite \'etale $Y\to X$ whose base change $Y'=Y\times_X X'\to X'$ is tame relative to $S$, we have that $Y\to X$ is tame relative to $S$. This holds when for every $x\in X$, there exists a point $x'\in X'$ such that $f(x')=x$ and $k(x')=k(x)$, or more generally such that $k(x')/k(x)$ is tame with respect to all valuations on $k(x')$ with centre on $S$.
\end{lem}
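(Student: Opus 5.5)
The plan is to compare the two descent functors in \eqref{eq:CechDescentSettingForFEt} via the commutative square
\[
\begin{tikzcd}
\FEtt_{X/S} \ar[r,"f^\ast"] \ar[d,hook] & \Desc(X'/X,\FEtt_{-/S}) \ar[d,hook] \\
\FEt_X \ar[r,"f^\ast","\sim"'] & \Desc(X'/X,\FEt),
\end{tikzcd}
\]
whose vertical arrows are fully faithful inclusions and whose bottom arrow is an equivalence by assumption. By \cref{lem:f-desc-for-FEt-and-FEtt} the top arrow is fully faithful, so effective descent for $\FEtt_{-/S}$ amounts to essential surjectivity of the top arrow.

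For the equivalence, first take a tame descent datum $(Y',\ph)$. By effective descent for $\FEt$ it is isomorphic to $f^\ast Y$ for some $Y\in\FEt_X$, and $Y'\simeq Y\times_X X'$ is tame. If tameness descends along $f$, then $Y$ is tame, which gives essential surjectivity. Conversely, suppose effective descent holds for $\FEtt_{-/S}$, and let $Y\in\FEt_X$ have tame base change $Y'$. Then the canonical descent datum on $Y'$ lies in $\Desc(X'/X,\FEtt_{-/S})$, because all its pullbacks to $X''$ and $X'''$ are base changes of the tame $Y'$, and tameness is stable under base change. Hence this datum is isomorphic to $f^\ast Z$ for some tame $Z$. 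Since the bottom functor is fully faithful, $Z\simeq Y$, so $Y$ is tame.

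For the sufficient criterion, let $Y\to X$ be finite étale with $Y'$ tame relative to $S$. Fix a point $y=(y,V,s)\in\Spa(Y,S)$ over $x=(x,V\cap k(x),s)$. We must show that $(k(y),V)/(k(x),V\cap k(x))$ is tame. Choose $x'$ with $f(x')=x$ and $k(x')/k(x)$ tame for all valuations with centre on $S$.
\begin{enumerate}
\item Since $Y'_{x'}=Y_x\otimes_{k(x)}k(x')$ is a finite étale $k(x')$-algebra, choose a factor field $L'$ that is a compositum of $k(y)$ and $k(x')$ over $k(x)$.
\item Extend $V$ to a valuation ring $W$ of $L'$. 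Its restriction $W\cap k(x')$ is centred on $S$ through $x'\to X\to S$, since the centre $s$ extends via $\Spec W\to\Spec V\to S$. So $(L',W)$ gives a point $y'\in\Spa(Y',S)$ over $x'$.
\item Tameness of $Y'$ at $y'$ gives that $(L',W)/(k(x'),W\cap k(x'))$ is tame. The hypothesis gives that $(k(x'),W\cap k(x'))/(k(x),V\cap k(x))$ is tame.
\item Tame extensions compose, so $(L',W)/(k(x),\dots)$ is tame. Its subextension $k(y)$ is then tame by \cref{defi:tame-extension-of-valued-fields}, because tameness passes to subextensions: $[k(y)^\sh:k(x)^\sh]$ divides $[L'^\sh:k(x)^\sh]$.
\end{enumerate}

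The main obstacle I expect is the bookkeeping in the second step. One has to check that an arbitrary valuation of $k(x)$ centred on $S$ extends to $L'$ compatibly with the given $V$ on $k(y)$, and that the induced centre really is a centre on $S$. Extension of valuations to the compositum handles the first point, and the centre is just the composite map $\Spec W\to\Spec V\to S$.
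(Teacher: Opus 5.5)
Your proof is correct. The paper states this lemma without proof, and your argument is the evident intended one: full faithfulness via \cref{lem:f-desc-for-FEt-and-FEtt}, the formal comparison of essential images in both directions, and, for the criterion, lifting a point of $\Spa(Y,S)$ to a point of $\Spa(Y',S)$ over the chosen $x'$, composing tame extensions, and passing to the subextension $k(y)$.

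One small repair is needed in step~4. The divisibility $[k(y)^\sh:k(x)^\sh]\mid[L'^\sh:k(x)^\sh]$ only makes sense if $k(x')/k(x)$ is finite, whereas the ``more generally'' hypothesis only forces it to be algebraic, since tameness is only defined for algebraic extensions. In the infinite algebraic case, argue instead that compositions of tame algebraic extensions are tame, and that tameness of an algebraic extension is by definition inherited by its finite subextensions, such as $k(y)/k(x)$.
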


\subsubsection*{Van Kampen formula} 
Let $\cG_\bullet \to E_\bullet$ be a fibred category in Galois categories over a $2$-complex of sets. For each simplex $s \in E_i$ we choose a base point (i.e.\ a fibre functor) $F_s \colon \cG_s \to \Sets$.
For $\partial(t) = s$ in $E_\bullet$ and the resulting exact functor $\iota_{st} \colon \cG_s \to \cG_t$ we choose a path, i.e. a natural isomorphism of fibre functors  $\gamma_{st} \colon F_t \circ \iota_{st} \to F_s$. This determines a group homomorphism 
\begin{equation}
\label{eq:boundary map 2-complex of pi1s}
    \partial_\ast \colon \pi_1(\cG_t ,F_t)\la \pi_1(\cG_s,F_s).
\end{equation}

Note that the datum of the natural transformations $\alpha$ that are implicit in $\cG_\bullet$ leads to a flag $vef$ in $E_\bullet$ consisting of a face $f \in E_2$, an edge $e = \pr_{jk}(f) \in E_1$ and a vertex $v = \pr_i(e) \in E_0$ (the indices $i,j,k$ are part of the datum!) to an element $\alpha_{vef} \in \pi_1(\cC_v,F_v)$ that makes composition commutative:
\[
\begin{tikzcd}
\pi_1(\cG_f,F_f) \ar[rr,"\pr_{jk,\ast}"] \ar[d,"v_{\ell,\ast}",swap] && \pi_1(\cG_e,F_e) \ar[d,"\pr_{i,\ast}"] \\
\pi_1(\cG_v,F_v) \ar[rr,"\alpha_{vef}(-)\alpha_{vef}^{-1}",swap] && \pi_1(\cG_v,F_v)
\end{tikzcd}
\]
where $\ell$ is the appropriate index that is determined by $v_\ell = \pr_{jk} \circ \pr_i$.
The above endows $E_\bullet$ with group data and allows one to define a Galois category of local systems, see \cite[\S4.3]{Stix2006:vanKampen}, equivalent to $\Desc(\cG_\bullet)$. The van Kampen formula 
\cite[Corollary~3.3]{Stix2006:vanKampen} computes the fundamental group of  $\Desc(\cG_\bullet)$ in terms of the $2$-complex $E_\bullet$ endowed with its group data. 

\begin{prop}
\label{prop:DD-fgfp}
    Let $\cG_\bullet \to E_\bullet$ be a category fibred in Galois categories over a connected $2$-complex. 
    \begin{enumerate}[ref=(\arabic*)]
        \item
        \label{propitem:DD-fg}
        Suppose that $\pi_1(\cG_v,F_v)$ is  finitely generated for all $v \in E_0$, and that $E_i$ is finite for $i \leq 1$. 
        Then $\pi_1(\Desc(\cG_\bullet),F_v)$ is finitely generated.
        \item
        \label{propitem:DD-fp}
        Suppose that $\pi_1(\cG_v,F_v)$ is finitely presented for all $v \in E_0$, that $\pi_1(\cG_e,F_e)$ is  finitely generated for all $e \in E_1$, and that $E_i$ is finite for all $i \leq 2$.
        Then $\pi_1(\Desc(\cG_\bullet),F_v)$ is finitely presented.
    \end{enumerate}
\end{prop}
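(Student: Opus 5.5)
We will derive both parts from the van Kampen formula \cite[Corollary~3.3]{Stix2006:vanKampen}, which presents $\pi_1(\Desc(\cG_\bullet),F_v)$ as a quotient of a free profinite product. Let $\pi_s = \pi_1(\cG_s,F_s)$ for $s \in E_\bullet$. Choose a maximal tree $T$ in the $1$-skeleton of $|E_\bullet|$; this is possible because $E_\bullet$ is connected. The formula then reads
\[
\pi_1(\Desc(\cG_\bullet),F_v) \cong \Big( \mathop{\ast}_{w \in E_0} \pi_w \ast \widehat{F}(E_1) \Big) \Big/ N .
\]
Here $\widehat{F}(E_1)$ is the free profinite group on generators $g_e$, one for each edge $e$. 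The closed normal subgroup $N$ is generated by three families of relations:
\begin{enumerate}[(a)]
\item edge relations $g_e\, \pr_{1,\ast}(h)\, g_e^{-1} = \pr_{0,\ast}(h)$ for $e\in E_1$ and $h \in \pi_e$, with the maps twisted by the chosen paths $\gamma$;
\item face relations $g_{\pr_{02}(f)} = \alpha\, g_{\pr_{12}(f)}\, \alpha' \, g_{\pr_{01}(f)}\, \alpha''$ for $f\in E_2$, where $\alpha, \alpha', \alpha''$ are the elements $\alpha_{vef}$;
\item tree relations $g_e = 1$ for $e\in T$.
\end{enumerate}
The only preparation needed is to check that the orientation and twisting conventions of \cite{Stix2006:vanKampen} match the $2$-complex setup fixed above. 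After that, both statements reduce to counting generators and relations.

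For \labelcref{propitem:DD-fg}, the free profinite product of finitely many finitely generated profinite groups $\pi_w$ ($w\in E_0$, with $E_0$ finite) and the free profinite group on the finite set $E_1$ is finitely generated. A quotient of a finitely generated profinite group is finitely generated, so the claim follows. No hypothesis on the edge groups or on $E_2$ is needed here, because we ignore the relations entirely.

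For \labelcref{propitem:DD-fp}, first note that the free profinite product of finitely many finitely presented groups with a free group of finite rank is finitely presented: concatenate the presentations. It then suffices to show that $N$ is the closed normal subgroup generated by finitely many elements.
\begin{itemize}
\item The face relations number $|E_2|$.
\item The tree relations number at most $|E_1|$.
\item The edge relations are parametrised by $h\in\pi_e$. Since the relation is a homomorphic condition in $h$, and $\langle h_1,\dots,h_m\rangle$ is dense in $\pi_e$, it suffices to impose it for a finite topological generating set $h_1,\dots,h_m$ of $\pi_e$. Indeed, the set of $h$ satisfying it modulo the closed normal subgroup generated by the $h_i$-relations is a closed subgroup containing all $h_i$, hence is all of $\pi_e$.
\end{itemize}
Hence $N$ is topologically normally generated by finitely many elements, and the quotient is finitely presented.

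The main obstacle is not the counting but justifying the precise shape of the van Kampen presentation in the profinite setting. Specifically, one must confirm that \cite[Corollary~3.3]{Stix2006:vanKampen} applies to $\Desc(\cG_\bullet)$ via the equivalence with local systems of \cite[\S4.3]{Stix2006:vanKampen}, that the relations are exactly (a)–(c), and that $N$ is the closed normal subgroup they generate. I would cite these results directly and, if needed, verify the presentation by comparing finite quotients. Continuous homomorphisms from the right-hand side to a finite group $G$ correspond to compatible families of maps $\pi_w\to G$ together with the elements $g_e$, and these in turn correspond to objects of $\Desc(\cG_\bullet)$ with $G$-action that are $G$-torsors fibrewise.
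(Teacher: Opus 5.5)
Your proposal is correct and takes essentially the paper's approach. The paper proves the statement by invoking the van Kampen formula \cite[Corollary~3.3]{Stix2006:vanKampen}, which supplies the general base-point case of \cite[IX Corollaires~5.2, 5.3]{SGA1}; your generator and relation count is the bookkeeping that citation leaves implicit, including the reduction of the edge relations to a finite topological generating set of each $\pi_e$ via the closed-equaliser argument, and for part (1) the paper likewise notes that the surjection $\big(\ast_{v\in E_0}\pi_1(\cG_v,F_v)\big)\ast\pi_1(|E_{\le 1}|,T)^\wedge \surj \pi_1(\Desc(\cG_{\le 1}))$ suffices, which is your ``ignore the relations'' step.
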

\begin{proof}
This is stated in \cite[IX Corollaire~5.2,~5.3]{SGA1} with a proof that requires a ``common'' base point for all Galois categories involved and leaves the general case to the reader as an exercise. The remaining ``exercise'' can be found in the van Kampen formula in \cite[Corollary~3.3]{Stix2006:vanKampen}.  
\end{proof}

\begin{cor}
\label{cor:DD-fgfp}
       Let $\cG_\bullet \to E_\bullet$ be a fibred category over a connected $2$-complex, fibred in Galois categories. Let $\cC$ be a Galois category with fibre functor $F$, and let $f\colon  \cC \times E_\bullet \to \cG_\bullet$  be an exact functor over $E_\bullet$.  
       \begin{enumerate}[ref=(\arabic*)]
           \item 
            \label{coritem:DD-fg}
           If $f$ satisfies descent and $\cG_\bullet$ satisfies the assumptions of \cref{prop:DD-fgfp} \labelcref{propitem:DD-fg}, then $\pi_1(\cC,F)$ is finitely generated.
           \item 
            \label{coritem:DD-fp}
           If $f$ satisfies effective descent and $\cG_\bullet$ satisfies the assumptions of \cref{prop:DD-fgfp} \labelcref{propitem:DD-fp}, then $\pi_1(\cC,F)$ is finitely presented.
       \end{enumerate}
\end{cor}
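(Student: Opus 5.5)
The plan is to read off both statements from \cref{prop:DD-fgfp} by comparing $\cC$ with $\Desc(\cG_\bullet)$ through the functor $f^\ast\colon \cC \to \Desc(\cG_\bullet)$. Throughout we use the fibre functor $F_v$ of $\Desc(\cG_\bullet)$, which is defined by forgetting down to a vertex $v$ and applying a fibre functor there; this is legitimate by \cref{prop:DD is galois for connected E}, since $E_\bullet$ is connected. First we observe that $f^\ast$ is exact. Finite limits and finite colimits in $\Desc(\cG_\bullet)$ are computed simplexwise, and the constant descent datum functor $\cC \to \Desc(\cC\times E_\bullet)$ is exact, so $f^\ast$ is a morphism of Galois categories. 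The composite $F_v\circ f^\ast$ equals $F'_v\circ f_v^\ast$, and this is a fibre functor on $\cC$. Since any two fibre functors on $\cC$ are isomorphic, and the isomorphism class of $\pi_1(\cC,-)$ does not depend on the choice, we may replace $F$ by $F_v\circ f^\ast$. This produces a continuous homomorphism
\[
    \phi\colon \pi_1(\Desc(\cG_\bullet),F_v) \la \pi_1(\cC, F_v\circ f^\ast).
\]

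For \labelcref{coritem:DD-fg} we use the standard dictionary for exact functors between Galois categories. The functor $f^\ast$ is fully faithful exactly when connected objects go to connected objects, and this in turn is exactly when $\phi$ is surjective. A quick way to see this: for a connected $\pi$-set $\pi/U$, the stabiliser of a point under the pulled-back action is $\phi^{-1}(U)$; transitivity of the pulled-back action for all open $U$ is equivalent to $\phi$ having dense image, and dense image is the same as surjectivity for profinite groups. Now, by hypothesis $f$ satisfies descent, so $f^\ast$ is fully faithful and $\phi$ is surjective. By \cref{prop:DD-fgfp}~\labelcref{propitem:DD-fg} the source of $\phi$ is finitely generated. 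A continuous quotient of a finitely generated profinite group is finitely generated, which proves the first claim.

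For \labelcref{coritem:DD-fp} we assume effective descent, so $f^\ast$ is an equivalence of Galois categories compatible with the chosen fibre functors. Then $\phi$ is an isomorphism of profinite groups, since $\pi_1$ is recovered as the automorphism group of the fibre functor. Finite presentation therefore transfers directly from \cref{prop:DD-fgfp}~\labelcref{propitem:DD-fp}.

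The only point that needs some care is matching base points. One has to know that $\pi_1(\cC,F)\cong\pi_1(\cC,F_v\circ f^\ast)$, by conjugation along a path between the two fibre functors, and that finite generation and finite presentation are invariant under such isomorphisms; both facts are standard. Beyond that, the argument just combines \cref{prop:DD-fgfp} with the correspondence between fully faithful functors and surjections of fundamental groups.
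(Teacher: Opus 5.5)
Your proposal is correct and follows the paper's proof. A path $F\simeq F_v\circ f^\ast$ gives a homomorphism $\pi_1(\Desc(\cG_\bullet),F_v)\to\pi_1(\cC,F)$. This map is surjective under descent and an isomorphism under effective descent, and \cref{prop:DD-fgfp} then gives the conclusions. You additionally spell out the standard facts the paper leaves implicit: exactness of $f^\ast$, the surjectivity criterion \cite[V Proposition~6.9]{SGA1}, and invariance under change of base point.
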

\begin{proof}
    A choice of a path $F \simeq F_v \circ f$ induces a homomorphism $\pi_1(\Desc(\cG_\bullet,F_v)) \to \pi_1(\cC,F)$. In the first case, this homomorphism is surjective, and in the second case it is an isomorphism. Then apply \cref{prop:DD-fgfp}.
\end{proof}

\subsubsection*{Graph of groups} 

In a $2$-complex of sets $E_\bullet$ the set $E_2$ might be empty. The $2$-complex in reality is then a graph. In this case the multi-Galois category $\cG_2$ for a fibred category in Galois categories is the category with just one object and one morphism. This leads in the definition of $\Desc(\cG_\bullet)$ to simplifications: the cocycle condition becomes automatic. 

We can truncate any given $2$-complex $E_\bullet$ by replacing $E_2$ with the empty set to obtain $E_{\le 1}$. This yields a right adjoint to the inclusion functor with counit $E_{\leq 1} \to E_\bullet$. Similarly, any fibred category $\cG_\bullet \to E_\bullet$ in Galois categories can be truncated by restriction to obtain a fibred $\cG_{\leq 1} \to E_{\leq 1}$, fibred in Galois categories. 

\begin{prop}
\label{prop:truncated DD}
    Let $\cG_\bullet \to E_\bullet$ be a fibred category in Galois categories over a connected $2$-complex.
    \begin{enumerate}
        \item 
        The natural forgetful functor `forget the cocycle condition'
        \[
        \Desc(\cG_\bullet) \la \Desc(\cG_{\leq 1})
        \]
        is fully faithful.
        \item
        A functor $\cC \times E_\bullet \to \cG_\bullet$ satisfies descent if and only if the restriction $\cC \times E_{\leq 1} \to \cG_{\leq 1}$ satisfies descent.
    \end{enumerate}
\end{prop}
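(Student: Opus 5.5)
The plan is to work directly from the definition of $\Desc(\cG_\bullet)$. Restricting from $\cG_\bullet$ to $\cG_{\le 1}$ keeps the objects and the gluing isomorphisms but drops the cocycle condition in $\cG_2$. The morphism condition $\pr_1^\ast(f)\circ\ph=\psi\circ\pr_0^\ast(f)$ only involves $\cG_0$ and $\cG_1$.

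For (1), I would start from this observation. A morphism $(X,\ph)\to(Y,\psi)$ in $\Desc(\cG_\bullet)$ is by definition a morphism $X\to Y$ in $\cG_0$ compatible with the gluing isomorphisms in $\cG_1$. That is literally the same condition as for a morphism in $\Desc(\cG_{\le 1})$. So the forgetful functor is the identity on Hom-sets, hence fully faithful. It is in effect the inclusion of the full subcategory of objects satisfying the cocycle relation.

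For (2), I would compare the functors $f^\ast\colon\cC\to\Desc(\cG_\bullet)$ and $f_{\le1}^\ast\colon\cC\to\Desc(\cG_{\le 1})$. The constant descent datum has gluing isomorphism $\ph=\id$ (transported by $f_0^\ast$). It satisfies the cocycle condition, so $f_{\le1}^\ast$ factors as $f^\ast$ followed by the forgetful functor of (1), up to the natural identification. Since the forgetful functor is fully faithful, hence bijective on Hom-sets, $f^\ast$ is fully faithful if and only if the composite $f^\ast_{\le1}$ is. This is the two-out-of-three property for full faithfulness when the second functor is fully faithful.

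The only point needing care is the connectedness hypothesis. The truncation $E_{\le1}$ has the same $1$-skeleton as $E_\bullet$, so $|E_{\le 1}|$ is connected exactly when $|E_\bullet|$ is. The notion of descent for the truncated functor therefore makes sense as in the definition. The identifications via the transformations $\alpha$ are irrelevant to the Hom-set comparison. I expect no real obstacle; the main work is bookkeeping to make the factorisation $f^\ast_{\le1}\simeq(\text{forget})\circ f^\ast$ precise.
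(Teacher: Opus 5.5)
Your proof is correct and is the verification the paper has in mind when it simply writes ``Obvious''. Morphisms in $\Desc(\cG_\bullet)$ and in $\Desc(\cG_{\leq 1})$ are defined by the same compatibility condition in $\cG_0$ and $\cG_1$, so the forgetful functor is bijective on Hom-sets; part (2) then follows from the factorisation of the truncated $f^\ast$ through the forgetful functor.
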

\begin{proof}
    Obvious.
\end{proof}

Combining \cref{prop:truncated DD} with \cref{cor:DD-fgfp} shows that finite generation of fundamental groups depends only on truncation of descent data at level $1$. Furthermore, since an exact functor between Galois categories is fully faithful if and only if the induced homomorphism of fundamental groups is surjective \cite[V Proposition~6.9]{SGA1}, we can spell out concretely when an object $(X,\ph)$ of $\Desc(\cG_{\leq 1})$ is connected in terms of this equivalence. 

The underlying $2$-complex in the $1$-truncated setting is actually a graph $|E_{\leq 1}|$. It is endowed with profinite groups and thus becomes a graph of groups (cf.\ \cite[I.4.4 Definition 8]{Serre:trees}) as follows. For every simplex $s \in E_i$ we use the group $\pi_1(\cG_s,F_s)$ and for every edge $e \in E_1$ with boundary vertices $v_i = \pr_i(s) \in E_0$ we obtain group homomorphisms
\[
\pi_1(\cG_{v_0},F_{v_0}) \xleftarrow{\pr_{0,\ast}} 
\pi_1(\cG_e,F_e)
\xrightarrow{\pr_{1,\ast}} \pi_1(\cG_{v_1},F_{v_1}) 
\]
where we have implicitly also fixed paths for change of base points as in \labelcref{eq:boundary map 2-complex of pi1s}. A \textbf{local system} $M = (M_s)_{s \in E_{\leq 1}}$ on this graph of groups now consists of a finite set $M_s$ for every vertex $s \in E_i$, $i=0,1$ with a continuous action by $\pi_1(\cG_s,F_s)$ together with, for every map $\partial \colon e \to v$ in the category $E_{\leq 1}$ a monodromy map $m_{\partial} \colon M_e \to M_v$ which is bijective and equivariant along the homomorphism $\partial_\ast \colon \pi_1(\cG_e,F_e) \to \pi_1(\cG_v,F_v)$. Local systems form a category with the obvious notion of morphism. If, moreover, $|E_{\leq 1}|$ is connected, then local systems form a Galois category equivalent to $\Desc(\cG_{\leq 1})$ by \cite[Proposition~4.4]{Stix2006:vanKampen}. 

Let $T$ be a maximal tree in the graph $|E_{\leq 1}|$. The functor $F_T$ on $\Desc(\cG_{\leq 1})$ that assigns the set of connected components of the restriction to $T$ of the underlying local system $M = (M_s)_{s \in E_{\leq 1}}$ yields a group homomorphism $\pi_1(|E_{\leq 1}|,T)^\wedge \to \pi_1(\Desc(\cG_{\leq 1}),F_T)$. Here $\pi_1(|E_{\leq 1}|,T)^\wedge$ denotes the profinite completion of the topological fundamental group of the graph $|E_{\leq 1}|$ with the maximal tree $T$ as a base point. Furthermore, for every vertex $v \in E_0$, the inclusion $v \in T$ induces a natural transformation of fibre functors $F_v \isomto F_T$. This leads to group homomorphisms $\pi_1(\cG_v,F_v) \to \pi_1(\Desc(\cG_{\leq 1}),F_T)$ that together combine to a natural group homomorphism
\[
    \bigg(\underset{v \in E_0}{\resizebox{.3cm}{!}{$\ast$}} \pi_1(\cG_v,F_v)\bigg)  \ast \pi_1(|E_{\leq 1}|,T)^\wedge \surj \pi_1(\Desc(\cG_{\leq 1}))  .
\]
By \cite[Proposition~2.7]{Stix2006:vanKampen} this is surjective. In particular, this proves \cref{prop:DD-fgfp} \labelcref{propitem:DD-fg}.

To a local system $M$ we can associate a graph $M_{\leq 1} = \big[\begin{tikzcd}
M_0 & \ar[l,shift right=0.7ex] \ar[l,shift left=0.7ex]  M_1 
\end{tikzcd}\big]$ over $E_{\leq 1}$ by 
\[
M_{\leq 1} = 
\Big[\begin{tikzcd}
\amalg_{v \in E_0} M_v & \ar[l,shift right=0.7ex] \ar[l,shift left=0.7ex] \amalg_{e \in E_1} M_e 
\end{tikzcd}\Big] 
\la
E_{\leq 1} = \Big[\begin{tikzcd}
E_0 & \ar[l,shift right=0.7ex] \ar[l,shift left=0.7ex] E_1 
\end{tikzcd}\Big]
\]
with source map $\pr_0$ and target  map  $\pr_1$ defined on $M_e$ by the monodromy maps
\[
M_{\pr_0(e)} \xleftarrow{m_{\pr_0}} M_e  \xrightarrow{m_{\pr_1}} M_{\pr_1(e)}
\]
of the local system $M$.
Recall that the $M_s$ for $s \in E_i$ are $\pi_1(\cG_s,F_s)$-sets and that the set of connected components $\pi_0(M_s)$ is the set of orbits for this group action. Associated to the graph $M_{\leq 1}$ there is the graph 
\[
\pi_0(M_{\leq 1}) = 
\Big[\begin{tikzcd}
\amalg_{v \in E_0} \pi_0(M_v) & \ar[l,shift right=0.7ex] \ar[l,shift left=0.7ex] \amalg_{e \in E_1} \pi_0(M_e) 
\end{tikzcd}\Big]
\]
of connected components. We have a factorisation
\[
M_{\leq 1} \la \pi_0(M_{\leq 1}) \la E_{\leq 1}.
\]

\begin{lem} \label{lem:local-system-connected}
    The object of $\Desc(\cG_{\leq 1})$ corresponding to the local system $M$ on the associated graph of groups is connected if and only if the associated graph $\pi_0(M_{\leq 1})$ of connected components is connected.
\end{lem}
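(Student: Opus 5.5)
Under the equivalence between local systems on the graph of groups and $\Desc(\cG_{\leq 1})$ \cite[Proposition~4.4]{Stix2006:vanKampen}, the plan is to describe subobjects of $M$ combinatorially and then compare with connected components of the graph $\pi_0(M_{\leq 1})$. First I would characterise subobjects. A subobject of $M$ (a monomorphism into $M$) is the same as a family of subsets $N_s \subseteq M_s$, one for each simplex $s \in E_{\leq 1}$, such that each $N_s$ is stable under $\pi_1(\cG_s,F_s)$ and each monodromy map $m_\partial$ restricts to a bijection $N_e \to N_v$. The second condition holds because $m_\partial$ is a bijection on the ambient sets, so $N$ itself is again a local system. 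Stability under the group means that each $N_s$ is a union of orbits, i.e.\ it corresponds to a subset $S_s \subseteq \pi_0(M_s)$.

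Next I would translate the compatibility condition into the graph $\pi_0(M_{\leq 1})$. The map $m_\partial$ is equivariant along $\partial_\ast$, so every orbit in $M_e$ maps into a single orbit in $M_v$; this is exactly the edge structure of $\pi_0(M_{\leq 1})$. Because $m_\partial$ is bijective, $m_\partial(N_e) = N_v$ holds if and only if $N_v$ contains precisely the images of the orbits in $N_e$ and nothing else. On the level of $\pi_0$ this says the following. For every orbit $o \in \pi_0(M_e)$ with endpoints $\pr_0(o)$ and $\pr_1(o)$, we have $o \in S_e$ if and only if $\pr_0(o) \in S_{v_0}$, and likewise if and only if $\pr_1(o) \in S_{v_1}$. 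Here one has to check that the preimage of an orbit of $M_v$ under the bijection $m_\partial$ is a union of orbits of $M_e$, so that membership is decided orbitwise; this follows from equivariance together with bijectivity.

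This identifies subobjects of $M$ with subsets $S \subseteq \pi_0(M_{\leq 1})$ (vertices and edges together) that are closed under incidence in both directions, that is, with unions of connected components of the graph $\pi_0(M_{\leq 1})$. The initial object corresponds to $S = \emptyset$ and $M$ itself to all of $\pi_0(M_{\leq 1})$. Hence $M$ is connected in the sense of \stacks{0BMY} (not initial, and every subobject is trivial or everything) if and only if $\pi_0(M_{\leq 1})$ is non-empty and has no proper non-empty union of components, i.e.\ is connected.

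For non-emptiness, $M$ is initial exactly when all $M_s$ are empty. Since the $m_\partial$ are bijections and $|E_{\leq 1}|$ is connected, this happens exactly when $\pi_0(M_{\leq 1})$ is empty, consistent with the convention that the empty graph is not connected. The only delicate point in the argument is the orbitwise bookkeeping in the second step, namely that the bijections $m_\partial$ interact correctly with orbits.
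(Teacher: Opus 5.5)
Your proof is correct and is essentially the paper's argument. Both pass to local systems via \cite[Proposition~4.4]{Stix2006:vanKampen}, observe that the relevant sub-structures are unions of orbits compatible with the bijective equivariant monodromy maps, and identify these with unions of connected components of $\pi_0(M_{\leq 1})$; the only difference is cosmetic, in that you work with subobjects (matching the definition in \stacks{0BMY} literally) while the paper works with coproduct decompositions $M = A \sqcup B$, which is equivalent here since the complement of a sub-local system is again a sub-local system.
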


\begin{proof}
    Since $\Desc(\cG_{\leq 1})$ is equivalent to the Galois category of local systems, we may focus on connectedness of $M$. 
    Passing from $M$ to $\pi_0(M_{\leq 1})$ is a functor that preserves disjoint unions, and $\pi_0(M_{\leq 1})$ is empty if and only if $M$ is empty. Conversely, any decomposition $M = A \sqcup B$ arises by a decomposition $M_s = A_s \sqcup B_s$ as $\pi_1(\cG_s,F_s)$-sets for every simplex $s$ of $E_{\leq 1}$ that are compatible with monodromy operators.  
    This always arises as the preimage of a decomposition $\pi_0(M_s) = \ov{A}_s \sqcup \ov{B}_s$ for every simplex $s$ of $E_{\leq 1}$. 
    But this agrees with the datum of a decomposition of 
    $\pi_0(M_{\leq 1})$ as a graph.
\end{proof}

\subsection{Finite generation and finite presentation}

We end \cref{sec:pit schemes} with a general finite generation result for the tame fundamental group of connected varieties over an algebraically closed field. The proof proceeds by reduction to the case of a variety admitting a smooth projective snc compactification, in which case the result is known due to \cite[Theorem~1.1]{EsnaultKindler2016:LefschetzTheoremsTamely} and the case of curves \cite[XIII, Corollaire 2.12]{SGA1}.
We also give a sufficient condition for finite presentation which relies on \cite[Theorem~1.1]{EsnaultShustermanSrinivas2022:FinitePresentationTame} instead. 

\begin{thm} \label{thm:naked pi1tame fg}
    Let $X$ be a connected scheme of finite type over an algebraically closed field $k$. Then $\pitame(X/k,\bar x)$ is finitely generated. 
\end{thm}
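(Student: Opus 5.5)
We reduce to the case of a smooth connected variety with a projective snc compactification, where finite generation of $\pitame(X/k)$ is \cite[Theorem~1.1]{EsnaultKindler2016:LefschetzTheoremsTamely}, together with \cite[XIII, Corollaire~2.12]{SGA1} for curves. The passage from there to arbitrary $X$ uses \cref{cor:DD-fgfp}\labelcref{coritem:DD-fg}. That corollary only needs \emph{descent}, i.e.\ full faithfulness, and not effective descent. By \cref{lem:f-desc-for-FEt-and-FEtt}, full faithfulness for $\FEtt_{-/k}$ follows from descent for $\FEt$. Moreover, by \cref{prop:truncated DD}, only the $1$-truncation of the Čech complex matters. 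So it suffices to find a surjective, proper (hence universally submersive, quasi-compact) morphism $f\colon X'\to X$ such that $X'$ has finitely many connected components, each with finitely generated tame fundamental group, and $X''=X'\times_X X'$ has finitely many connected components. The last condition is automatic for schemes of finite type.

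First reductions. Since $\FEt_X\simeq\FEt_{X_{\rm red}}$ and residue fields do not change, tameness is insensitive to nilpotents, so we may assume $X$ reduced. Next we apply de Jong's alteration theorem to the normalisation of each irreducible component of $X$. For each component $X_i$ this gives a proper, surjective, generically finite map $X_i'\to X_i$, where $X_i'$ is smooth, quasi-projective and connected and sits as the complement of an snc divisor in a smooth projective variety $\ov{X_i'}$. Taking $X'=\coprod_i X_i'$, the map $f\colon X'\to X$ is proper and surjective, hence of descent for $\FEt$ by \cref{fact:eff-decent-maps-for-FEt}, and therefore of descent for $\FEtt_{-/k}$. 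Set $E_0=\pi_0(X')$, $E_1=\pi_0(X'')$ and $E_2=\pi_0(X''')$; these are finite. The $2$-complex is connected because $X$ is connected and $f$ is surjective and proper: a disconnection of $|E_\bullet|$ would induce a partition of $X$ into closed images.

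Then \cref{cor:DD-fgfp}\labelcref{coritem:DD-fg} gives finite generation of $\pitame(X/k)$, provided each $\pitame(X_i'/k)$ is finitely generated. This is where Esnault--Kindler applies, once we identify our adic tameness on $X_i'$ with the tameness they use, which is Grothendieck--Murre tameness along the boundary divisor. This identification is the comparison proposition in \cref{sec:adic tame discrete case} stating that tameness relative to $S$ is equivalent to tameness along $D$ for an snc compactification. When $\dim X_i'\le 1$ we use SGA1 XIII directly; in dimension $0$ the group is trivial.

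The main obstacle is to confirm that the hypotheses of the cited Esnault--Kindler result match ours exactly, namely a smooth variety with a smooth projective snc compactification and the Grothendieck--Murre tame group. If their theorem requires an additional hypothesis, we would bypass it by iterating alterations and Lefschetz-type arguments. Everything else is formal, because we deliberately avoid effective descent, which fails for tame covers as the Artin--Schreier example shows.
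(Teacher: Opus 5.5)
Your strategy is the paper's: descent for $\FEt$ along a cover by alterations gives full faithfulness for $\FEtt_{-/k}$, and \cref{cor:DD-fgfp}\labelcref{coritem:DD-fg} then reduces the theorem to the snc case. One step fails as stated, however. The theorem does not assume $X$ separated, and a proper surjection $f\colon X'\to X$ with $X'$ quasi-projective can only exist when $X$ is separated. Indeed, if $Y\to X$ is proper and surjective with $Y$ separated over $k$, then the proper map $Y\times_k Y\to X\times_k X$ sends the closed set $\Delta_Y(Y)$ onto $\Delta_X(X)$. Hence $\Delta_X(X)$ is closed, and $X$ is separated. For a concrete failure, take the affine line with doubled origin: it is irreducible and already normal. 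De Jong's theorem is stated for varieties, i.e.\ separated integral schemes of finite type, and its conclusion cannot hold for this target.

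The repair is exactly how the paper begins. First replace $X$ by $X_1=\coprod U$ for a finite affine open cover. Then normalise, split into connected components $Z_\alpha$ (these are now genuine varieties), and alter those. The composite $h\colon Y\to X$ is no longer proper. It is still quasi-compact and universally submersive, and that is all \cref{fact:eff-decent-maps-for-FEt}~(1) requires.

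Your connectedness argument for $|E_\bullet|$ must then use submersiveness instead of closedness of images. Suppose $Y=Y_A\sqcup Y_B$ with no component of $Y\times_X Y$ joining the two parts. Whenever $h(a)=h(b)$, there is a point of $Y\times_X Y$ over $(a,b)$, so $h(Y_A)\cap h(Y_B)=\varnothing$. Hence $h^{-1}(h(Y_A))=Y_A$ is open, so $h(Y_A)$ is open; the same holds for $Y_B$. This contradicts the connectedness of $X$.

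With this change the rest of your proposal goes through. The snc input is exactly the one the paper uses (\cite[Proposition~3.2]{EsnaultShustermanSrinivas2022:FinitePresentationTame}, resp.\ Esnault--Kindler), so your fallback via further alterations and Lefschetz arguments is not needed.
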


\begin{proof}
Let $h_1 \colon X_1 \to X$ be the map from the disjoint union $X_1 = \coprod U$ of an affine open covering of $X$ by finitely many affine Zariski open $U$. 
Let further $h_2\colon X_2 \to X_1$ be the normalisation map of $X_1$, and let $X_2 = \coprod Z_\alpha$ be the decomposition into (finitely many) connected components. 
By \cite[Theorem~4.1]{deJong1996:SmoothnessSemistabilityAlterations} there exist  alterations $f_\alpha\colon  Y_\alpha \to Z_\alpha$ with a smooth projective compactification
$j\colon  Y_\alpha \inj \ov{Y}_\alpha$ such that the boundary $D_\alpha = \ov{Y}_\alpha - Y_\alpha$ is normal crossing. 
Set $Y$ equal to $\coprod_\alpha Y_\alpha$ and let $h_3: Y \to X_2$ be the map $\amalg f_\alpha$. 
The composition $h = h_1\circ  h_2 \circ h_3 \colon Y \to X$ is quasi-compact and universally submersive. So $h$ is of effective descent for $\FEt$ by \cref{fact:eff-decent-maps-for-FEt}.
As observed in \Cref{lem:f-desc-for-FEt-and-FEtt}, it follows that $h$ is of  descent for $\FEtt_{-/k}$.
Therefore the map
\[
h_\ast\colon  \pi_1\big(\Desc(X^\nu/X,\FEtt_{-/k}), F_{\bar y}\big) \surj \pitame(X/k,\bar x)
\]
is surjective (where $F_{\bar y}$ is the fibre functor at a geometric point $\bar y$ of $X^\nu$ above $\bar x$). 
By \cite[Proposition~3.2]{EsnaultShustermanSrinivas2022:FinitePresentationTame}, the group $\pitame(Y_\alpha/k,\bar y) = \pitame(\ov{Y}_\alpha,D_\alpha; \bar y)$ is finitely generated for every $\alpha$.  
As $\pi_0(Y)$ and $\pi_0(Y \times_X Y)$ are finite sets,
it follows by \cref{cor:DD-fgfp} \labelcref{coritem:DD-fg}
that $\pitame(X/k,\bar x)$ is finitely generated. 
\end{proof}

\cref{thm:naked pi1tame fg} recovers the finite generation part of \cite[Th\'eor\`eme~4.11]{Orgogozo2003:AlterationsGroupeFondamental}:

\begin{cor}
    The prime-to-$p$ quotient of the \'etale/tame fundamental group $\pi_1(X)^{\wedge p'}$ of any connected scheme of finite type over an algebraically closed field of char $p>0$ is finitely generated.
\end{cor}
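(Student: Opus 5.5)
The plan is to deduce the corollary from \cref{thm:naked pi1tame fg} by showing that $\pi_1(X,\bar x)^{\wedge p'}$ is a quotient of $\pitame(X/k,\bar x)$. A continuous quotient of a finitely generated profinite group is finitely generated, and the maximal pro-prime-to-$p$ quotient of a finitely generated profinite group is again finitely generated. So it suffices to check that the surjection $\pi_1(X,\bar x) \surj \pi_1(X,\bar x)^{\wedge p'}$ factors through the canonical surjection $\pi_1(X,\bar x) \surj \pitame(X/k,\bar x)$.

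In Galois-category terms, this factorisation amounts to the following claim: every finite \'etale cover $Y \to X$ whose Galois closure has degree prime to $p$ lies in $\FEtt_{X/k}$. By \cref{lem:Galois-subcategory-criterion} and \cref{prop:Fett is Galois for schemes}, the tame covers form a Galois subcategory of $\FEt_X$ closed under connected components and quotients. It therefore suffices to treat connected Galois covers $Y \to X$ with group $G$ of order prime to $p$; arbitrary objects of the prime-to-$p$ category are then disjoint unions of quotients of such covers.

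For such a $Y$, take a point $y \in Y$ with image $x$, and a valuation ring $V \subseteq k(y)$ with a centre on $\Spec k$. The extension $k(y)/k(x)$ is Galois with group the decomposition group $D_y \subseteq G$, so its degree is prime to $p$. Consequently $[k(y)^\sh : k(x)^\sh]$ also divides an integer prime to $p$, where the strict henselisations are taken with respect to $V$ and $V \cap k(x)$. It is in fact the order of the inertia subgroup, which is a subgroup of $D_y$. By \cref{defi:tame-extension-of-valued-fields}, the extension $(k(y),V)/(k(x),V\cap k(x))$ is tame. The residue characteristic of $V$ is $p$, because $k \subseteq V$, so the degree condition is exactly the right one.

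Hence $Y \in \FEtt_{X/k}$, and the factorisation follows. The only mildly delicate point is the Galois-theoretic bookkeeping in this last step: one must check that the degree over the strict henselisation divides the order of the decomposition group. This is standard, since $k(y)^\sh = k(y)\,k(x)^\sh$ is Galois over $k(x)^\sh$ with group the inertia subgroup. Everything else is formal.
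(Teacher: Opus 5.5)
Your proof is correct and follows the paper's argument. The paper also observes that covers whose connected components have Galois closures of degree prime to $p$ form a Galois subcategory of $\FEtt_{X/k}$ (via \cref{lem:Galois-subcategory-criterion}), so $\pi_1(X)^{\wedge p'}$ is a quotient of $\pitame(X/k)$ and inherits finite generation from \cref{thm:naked pi1tame fg}; your inertia-group check that such covers are tame is a step the paper leaves implicit.
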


\begin{proof}
    The category of all étale covers of~$X$ whose connected components have Galois closures of degrees prime to~$p$ is a full subcategory of $\FEtt_{X/K}$ satisfying the conditions of \cref{lem:Galois-subcategory-criterion}.
    The corresponding fundamental group is thus a quotient of $\pitame(X/K)$ and inherits the property of being finitely generated.
\end{proof}

\begin{thm} \label{thm:naked pi1tame fp}
    Let $X$ be a connected scheme of finite type over an algebraically closed field $k$. Suppose that there exists a morphism of finite type $f\colon Y\to X$ such that 
    \begin{enumerate}[(i)]
        \item 
        $f$ is universally topologically submersive (e.g.\ surjective and either proper or flat);
        
        \item 
        for every $x\in X$ there exists a point $y\in Y$ such that $f(y)=x$ and $k(y)=k(x)$, or more generally such that the extension $k(y)/k(x)$ is tame with respect to all valuations on $k(x)$ which are trivial on $k$;
        
        \item 
        there exists a projective snc compactification $Y \hookrightarrow \overline{Y}$ over $k$.
    \end{enumerate}
     Then $\pitame(X/k)$ is finitely presented.
\end{thm}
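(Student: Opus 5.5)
The plan is to apply \cref{cor:DD-fgfp}~\labelcref{coritem:DD-fp} to the descent complex of $f\colon Y\to X$ with respect to the fibred category $\FEtt_{-/k}$. We work with $Y$, $Y'' = Y\times_X Y$ and $Y''' = Y\times_X Y\times_X Y$. Since these are schemes of finite type over $k$, each has finitely many connected components. So $E_\bullet = [\pi_0(Y) \leftleftarrows \pi_0(Y'') \,\cdots\, \pi_0(Y''')]$ is a $2$-complex of finite sets. It is connected because $X$ is connected and $f$ is surjective and submersive: a decomposition of $|E_\bullet|$ would give a decomposition of $X$ into two disjoint open and closed subsets.

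The first step is effective descent. By (i), $f$ is universally topologically submersive between noetherian schemes, so \cref{fact:eff-decent-maps-for-FEt}(2) shows that $f$ is of effective descent for $\FEt$. Condition (ii) is precisely the hypothesis of \cref{lem:f-eff-desc-for-FEt-and-FEtt}. Hence $\FEtt_{X/k}\to\Desc(Y/X,\FEtt_{-/k})$ is an equivalence. If the cited criterion is stated with valuations on $k(x')$ rather than $k(x)$, I would spell out this step. Take $Z\to X$ finite \'etale with $Z\times_X Y$ tame, a point $z$ over $x$, and a valuation with centre on $k$. Choose $y$ over $x$ with $k(y)/k(x)$ tame, and pass to a compositum point of $Z\times_X Y$. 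Then tameness of $k(z)/k(x)$ follows, because in a tower whose composite $k(z)k(y)/k(x)$ is tame every subextension is tame.

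The second step is to verify the group-theoretic hypotheses of \cref{prop:DD-fgfp}~\labelcref{propitem:DD-fp}.
\begin{itemize}
\item On vertices, each connected component $Y_\alpha$ of $Y$ is an open subscheme of $\overline{Y}$ by (iii), namely the complement of part of an snc divisor in a smooth projective variety. Then $\pitame(Y_\alpha/k)$ is finitely presented by \cite[Theorem~1.1]{EsnaultShustermanSrinivas2022:FinitePresentationTame}, after identifying adic tameness with Grothendieck--Murre tameness along $D$ as in \cref{sec:adic tame discrete case}.
\item On edges, the components of $Y''$ are connected schemes of finite type over $k$, so their tame fundamental groups are finitely generated by \cref{thm:naked pi1tame fg}.
\item Finiteness of $E_0$, $E_1$ and $E_2$ was shown above.
\end{itemize}
\cref{cor:DD-fgfp}~\labelcref{coritem:DD-fp} then gives finite presentation of $\pitame(X/k)$.

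I expect the main obstacle to be the effective descent step, in particular getting tameness of the descended cover from (ii). The rest is bookkeeping with earlier results. A second, minor point is compatibility of the snc compactification with connected components. Components of $Y$ are open in $\overline{Y}$, and their closures are unions of components of $\overline{Y}$ because $\overline{Y}$ is smooth. So each component of $Y$ again has a projective snc compactification, namely the corresponding component of $\overline{Y}$.
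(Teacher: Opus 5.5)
Your proposal follows the paper's proof. The paper gets effective descent for $\FEt$ from (i) via \cref{fact:eff-decent-maps-for-FEt} and upgrades it to $\FEtt_{-/k}$ using (ii) and \cref{lem:f-eff-desc-for-FEt-and-FEtt}. It then gets finite presentation on vertices from (iii) and Esnault--Shusterman--Srinivas, and finite generation on edges from \cref{thm:naked pi1tame fg}, before concluding with \cref{cor:DD-fgfp}~\labelcref{coritem:DD-fp}. Your extra checks fill in details the paper leaves implicit: connectedness of $E_\bullet$, restricting the snc compactification to components, and the compositum argument for (ii), which is clean when $k(y)/k(x)$ is finite, e.g.\ $k(y)=k(x)$.
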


\begin{proof}
Condition (i) and \Cref{fact:eff-decent-maps-for-FEt} guarantee that the morphism $f$ satisfies effective descent for finite \'etale covers. Condition (ii) and \Cref{lem:f-eff-desc-for-FEt-and-FEtt} ensure that the same is true for $\FEtt_{-/k}$, i.e.\ finite \'etale covers tame over $k$. For every connected component $Z$ of $Y$, $Y\times_X Y$, or $Y\times_X Y\times_X Y$, the group $\pitame(Z/k)$ is finitely generated by \cref{thm:naked pi1tame fg}. Moreover, condition (iii) and \cite[Theorem~1.1]{EsnaultShustermanSrinivas2022:FinitePresentationTame} together imply that for every connected component $Z$ of $Y$, the group $\pitame(Z/k)$ is finitely presented. We are now able to apply \cref{cor:DD-fgfp}~\labelcref{coritem:DD-fp}.
\end{proof}

\begin{rmk}
    We mention a few further results for variants of tame fundamental groups of schemes.
    \begin{itemize}
        \item 
        The K\"unneth formula holds for $\pitame$ if both factors are smooth connected varieties over an algebraically closed field and $\pitame$ is computed with respect to curve tame covers. This was recently proved by de~Jong in \cite[Theorem A.1]{deJong2026:KuennethForPitameAppendixJLR2026} and was previously known for varieties with good compactifications (the boundary is a normal crossing divisor) by  
        \cite[Th\'eor\`eme~5.1]{Orgogozo2003:AlterationsGroupeFondamental}. 
        \item 
        Invariance of $\pitame$ under geometric (separably closed) base change of base fields for varieties with good compactification: \cite[Remarque~5.3]{Orgogozo2003:AlterationsGroupeFondamental}. See also \cite{Landesman}.
    \end{itemize}
\end{rmk}

\section{Tame fundamental groups of log schemes}
\label{s:fg-pi1-log}
In this section, we deal with tame fundamental groups of log schemes. For our application to tame fundamental groups of rigid spaces in \cref{s:fg-pi1-rig}, we need to consider certain log schemes which are not necessarily fs, but whose log structure is in a certain sense finitely presented relative to a base divisible valuative monoid. We introduced them in \cite{AHLS-Log} and called them \emph{log schemes of type $\typeVd$}. In the same paper, we extended the theory of the Kummer \'etale fundamental group to such log schemes. The relevant results are reviewed in \cref{ss:log-review}. 

The goal of this section is to introduce the \emph{tame} Kummer \'etale fundamental group $\pitame(X/S, \bar x)$ of a connected log scheme $X$ of type $\typeVd$ relative to a base scheme $S$. It is defined using Kummer \'etale covers which become prime-to-$p$ after pull-back to every strictly henselian valued field with a centre on $S$ (see \cref{def:tame-log}). The main result \cref{thm:pi1tame log is fg} states that if $X$ is of finite type over an algebraically closed field $k$, then $\pitame(X/k, \bar x)$ is finitely generated.

This finiteness result will be obtained in three steps. The first step, the case of trivial log structure, was proved in \cref{thm:naked pi1tame fg}. We next treat the case where the log structure is locally constant by means of the map to the underlying scheme endowed with trivial log structure (see \cref{cor:pi1tame locally constant log is fg}). As is well known, the intuition for this `forget log map' is that of a fibration in tori, and this works well here even for log schemes of type $\typeVd$. The general case is then obtained by formal gluing along the log stratification.

In the final \S\ref{ss:surgery} we explain how the tame fundamental group of certain smooth log schemes over a log point can be computed by a ``surgical procedure'' along the strict normalisation map. This will enable us to show that it is finitely presented in certain cases (\cref{cor:surgery-fp}) and to compute some examples.

\subsection{Review of logarithmic geometry beyond fs} 
\label{ss:log-review}

As we explained in the introduction, dealing with semistable reduction and tamely ramified covers over general valuation rings necessitates the extension of logarithmic geometry beyond the case of fs log schemes. This has been handled in the companion paper \cite{AHLS-Log}, where in particular it was observed that for the most natural definition of Kummer \'etale maps, the underlying maps of schemes might not be of finite type in general. Fortunately for us, this difficulty does not occur if one considers log schemes of type $\typeVd$ as defined in \cite[\S 5]{AHLS-Log} and recalled below. In particular, the theorems and definitions reviewed here are not exactly the general ones stated in \cite{AHLS-Log}, but their simplified versions in the case of log schemes of type $\typeVd$.

Following general conventions (see \cite{Kato1989:LogarithmicStructures,Ogus,KatoToricSingularities} in addition to \cite{AHLS-Log}), the underlying scheme of a log scheme $X$ is denoted by $\underline{X}$, and $\overline{\cM}_{X}$ is the quotient $\cM_X/\cO_X^\times$. We will only consider log schemes which \'etale locally admit a chart by a saturated (but not necessarily finitely generated) monoid.  

Recall that a monoid $V$ is called {\bf valuative} if it is integral and if $V^\gp = V \cup (-V)$. We call $V$ {\bf divisible} if the associated group $V^\gp$ is divisible. 

\begin{defi} \label{def:log-sch-type-Vd}
    A saturated monoid is of {\bf type $\typeVd$} if it is finitely generated over a divisible valuative submonoid. A saturated log scheme is of {\bf type $\typeVd$} if \'etale locally it admits a chart by a monoid of type $\typeVd$.
\end{defi}

Trivially, every fs monoid is of type $\typeVd$, and hence every fs log scheme is of type $\typeVd$. As we shall see, another natural source of such log schemes are semistable schemes over a valuation ring $K^+$ with algebraically closed field of fractions, endowed with the standard log structure (see \cref{ss:log-formal-schemes}). In this case, the base divisible valuative monoid is the non-negative part $\Gamma_K^+$ of the value group of $K$.

\begin{defi}[{\cite[Definition~2.4.4 and Lemma~2.6.1]{AHLS-Log}}] \label{def:Kummer-etale-monoid}
    Fix a set of primes $\Sigma$. Let $P$ be a monoid of type $\typeVd$ and let $\vtheta\colon P\to Q$ be a homomorphism to a saturated monoid $Q$. We say that $\vtheta$ is {\bf Kummer \'etale} if it is injective and exact, and if the quotient $Q^\gp/P^\gp$ is a finite group of order prime to all $p\in \Sigma$. We say that $\vtheta \colon P\to Q$ is {\bf smooth} (resp.\ {\bf \'etale}) if $Q$ is finitely generated over $P$ and the kernel and the torsion part of the cokernel (resp.\ the kernel and the cokernel) of $P^{\rm gp}\to Q^{\rm gp}$ are finite groups of order prime to all $p\in \Sigma$.
\end{defi}

We showed \cite[Lemma~2.6.2]{AHLS-Log} that if $P$ is of type $\typeVd$ and if $P\to Q$ is Kummer \'etale, then $Q$ is finitely generated as a $P$-set, and in particular finitely generated (in fact, even finitely presented) as a monoid over $P$. Consequently, $Q$ is of type $\typeVd$ as well, and the morphism of schemes $\Spec(\bZ[Q])\to \Spec(\bZ[P])$ is finite and of finite presentation. 

\begin{defi} \label{def:Kummer-etale-map}
    Let $X$ be a log scheme of type $\typeVd$ and let $f\colon Y\to X$ be a morphism of saturated log schemes. 
    \begin{enumerate}[(a)]
        \item 
        We say that $f$ is {\bf Kummer \'etale} if \'etale locally on $X$ and $Y$ it admits a chart by a Kummer \'etale morphism of saturated monoids $P\to Q$ for $\Sigma$ the set of primes non-invertible on $X$ such that the induced morphism of schemes
        \[ 
            \underline{Y} \la \underline{X}\times_{\Spec(\bZ[P])} \Spec(\bZ[Q])
        \]
        is \'etale (see \cite[Definition~3.5.1]{AHLS-Log}).
        \item 
        We say that $f$ is {\bf finite Kummer \'etale} if it is Kummer \'etale and if its underlying morphism of schemes is finite (see \cite[Definition~4.2.1 and Corollary~5.1.2]{AHLS-Log}).
    \end{enumerate}
\end{defi}

We note that if $X$ is a log scheme of type $\typeVd$ and if $Y\to X$ is Kummer \'etale, then $Y$ is of type $\typeVd$ as well, and the underlying morphism of schemes $\underline{Y}\to \underline{X}$ is locally quasi-finite and locally of finite presentation \cite[Proposition~5.1.2]{AHLS-Log}. Consequently, if $\underline{X}$ is locally noetherian, then so is $\underline{Y}$. In particular, if moreover $Y$ is quasi-compact, then $Y$ has a finite number of connected components.

A {\bf log geometric point} of a log scheme $X$ is a morphism of log schemes $\bar{x}\to X$ where $\bar{x} = \Spec(P\to k)$ is the spectrum of a separably closed field $k$ endowed with a log structure charted by a monoid $P$ such that $P^\gp$ is $n$-divisible for every $n\geq 1$ invertible in $k$. Every non-empty Kummer \'etale log scheme over such an $\bar{x}$ has a section.

\begin{thm}[{\cite[Theorem~4.3.2]{AHLS-Log}}] \label{thm:FKEt-Galois}
    Let $X$ be a connected log scheme of type $\typeVd$ and let $\bar{x}\to X$ be a log geometric point. Then the category $\FEt_X$ of finite Kummer \'etale morphisms $Y\to X$ is a Galois category on which $\bar{x}$ induces a fibre functor $F_{\bar{x}}$. 
\end{thm}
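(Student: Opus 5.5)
The proof will check the axioms (G1)--(G4) of \cref{defi:Gal cat} for $\FEt_X$ with $F_{\bar x}(Y) = \Hom_X(\bar x, Y)$. I will follow the classical fs argument (Illusie, Stix), strict-\'etale locally on $X$.

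\emph{Step 1: local structure.} By the definition of type $\typeVd$ and \cref{def:Kummer-etale-map}, strict-\'etale locally on $X$ there is a chart $P \to \cM_X$ with $P$ of type $\typeVd$. Every finite Kummer \'etale $Y \to X$ is then, after further strict-\'etale localisation, a disjoint union of pieces \'etale over the standard Kummer cover $X_Q = X\times_{\Spec\bZ[P]}\Spec\bZ[Q]$ for Kummer \'etale $P\to Q$. By \cite[Lemma~2.6.2]{AHLS-Log}, $Q$ is finitely generated as a $P$-set and $X_Q \to X$ is finite of finite presentation. For $P \to Q$ with $G = Q^\gp/P^\gp$ of order $n$ invertible on $X$, the scheme $X_{Q}$ carries an action of the diagonalisable group $D(G)\cong \mu_n$. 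Moreover $X_Q\times_X X_Q \simeq X_Q \times D(G)$ after saturation, so $X_Q\to X$ is a log Galois torsor. This is the key input for descent.

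\emph{Step 2: finite limits and colimits.} Fibre products are taken in saturated log schemes. This requires showing that the saturated fibre product of finite Kummer \'etale maps is again finite Kummer \'etale. That holds because pushouts of Kummer \'etale monoid maps saturate to Kummer \'etale maps, and the underlying scheme map stays finite since $Q$ is finite over $P$. Finite coproducts are disjoint unions. For coequalisers and quotients by finite groups, I will use that Kummer \'etale maps are of effective descent for finite Kummer \'etale objects (\cite[\S4]{AHLS-Log}). Locally one reduces, via the torsors of Step 1, to quotients of finite \'etale schemes over $X_Q$ by $D(G)$-equivariant data, where classical finite \'etale theory applies. Epimorphisms are then automatically Kummer \'etale surjective and of descent.

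\emph{Step 3: fibre functor.} $F_{\bar x}$ takes finite values because $\underline{Y}\to\underline X$ is finite and the log points over $\bar x$ are finitely many; a Kummer cover of $\bar x$ is a finite disjoint union of copies of $\bar x$, since $P^\gp$ is $n$-divisible. Exactness follows from the previous sentence: pulling back to $\bar x$ is exact, and $\FEt_{\bar x}\simeq$ finite sets. For conservativity I will use that the degree of a finite Kummer \'etale map is locally constant on $\underline{X}$, which is checked on the charts $X_Q$, where the degree is $|G|$ times the \'etale degree. Connectedness of $X$ then makes a morphism inducing a bijection on one fibre an isomorphism, as it is a finite Kummer \'etale map of degree one, hence an isomorphism. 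Axiom (G2) follows from local constancy of degree together with finiteness: connected components of $\underline{Y}$ are open and closed and finite in number.

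I expect the main obstacle to be Step 2, namely existence of quotients and coequalisers and effective Kummer descent beyond the fs setting. Here the non-finitely generated divisible part of the chart must be controlled. I would reduce to the fs case by writing $P = V \oplus$ (finitely generated part) up to saturation, and noting that the divisible $V$ contributes no Kummer extensions prime to $\Sigma$. Kummer \'etale maps are then base changes of fs Kummer \'etale maps, and the fs theory transfers.
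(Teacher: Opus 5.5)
The paper gives no proof of this statement; it is quoted from \cite[Theorem~4.3.2]{AHLS-Log}, so your proposal has to stand on its own. It has a genuine gap at the step you yourself flag as the main obstacle.

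Steps~1 and~3 are the classical Illusie--Stix verification and are fine in outline, with two small points. First, $D(G)$ is $\mu_n$ only for cyclic $G$. Second, you tacitly use that a morphism between two objects of $\FEt_X$ is itself finite Kummer \'etale. You need this both for fibre products $Y_1\times_{Y_3}Y_2$ over an arbitrary object and for ``$f$ is finite Kummer \'etale of degree one'' in your conservativity argument, and it needs proof in the $\typeVd$ setting.

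The gap is in Step~2, for two reasons.

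\emph{Circularity.} You take effective descent of finite Kummer \'etale objects along Kummer \'etale maps as input from \cite[\S4]{AHLS-Log}. But the Kummer-topology stack property appears there as Corollary~4.3.7, after the theorem you are proving, so citing it is potentially circular. Both your construction of coequalisers and your claim that $F_{\bar x}$ preserves them rest on it.

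\emph{A false decomposition.} The independent argument you offer rests on a false claim: a monoid of type $\typeVd$ is in general not $V\oplus F$, even up to saturation. Take $V=\Gamma_K^+$ of rank one and the basic semistable monoid $P_2(\pi)=V[e_1,e_2]/(e_1+e_2=\pi)$. This monoid is sharp, and its faces are exactly $0$, $\bN e_1$, $\bN e_2$ and $P_2(\pi)$. Indeed, a face containing some $0\neq v\in V$ contains $mv=\pi+w$ for some $m\geq 1$ and $w\in V$, hence contains $e_1$ and $e_2$. Now suppose $P_2(\pi)\simeq V\oplus F$. Then $F$ is sharp, so $V\oplus 0$ is a nonzero face with divisible group. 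By the list of faces this forces $F=0$. But $P_2(\pi)$ is not valuative, since $\pm(e_1-e_2)\notin P_2(\pi)$, a contradiction. Relatedly, ``$V$ contributes no Kummer extensions'' holds only for $\pi_1(P)=\Hom(P^{\rm gp},\hZ'(1))$. The Kummer cover $P_2(\pi)\to P_2(\tfrac1m\pi)$, $e_i\mapsto m e_i'$, exists precisely because $V$ is divisible, so $V$ cannot be split off at the level of monoids or covers.

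What works instead is a colimit rather than a splitting, which is also what the tools recalled in the paper point to.
\begin{enumerate}
\item Locally, with a chart by $P$, write $P$ as the filtered union of fs submonoids $P_\lambda$ (e.g.\ saturations of finitely generated submonoids). Let $X_\lambda$ be $\underline X$ with the log structure induced by $P_\lambda$.
\item Show that every Kummer \'etale $P\to Q$ is the saturated pushout of a Kummer \'etale $P_\lambda\to Q_\lambda$ for $\lambda\gg 0$. This gives $\FEt_X\simeq\varinjlim_\lambda\FEt_{X_\lambda}$; it is \cite[Proposition~4.2.6]{AHLS-Log}, used in the proof of \cref{cor:descent-beyond-fs}.
\item For connected charted $X$, the fs theory for the $X_\lambda$ (which share the underlying scheme of $X$) together with \cref{lem:filtered colimits of Galois categories} yields finite limits and colimits, exactness of $F_{\bar x}$, and the Galois property.
\item Since $X$ is only charted strict-\'etale locally, glue the local limits and colimits by strict \'etale descent (they commute with strict \'etale base change). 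Conservativity then follows from your degree argument on connected $X$.
\end{enumerate}
If you prefer to keep your direct route, you must actually prove descent along $X_Q\to X$. For instance, send a $D(G)$-equivariant strict finite \'etale $X_Q$-object to its $D(G)$-invariants, and check that the result is finite Kummer \'etale over $X$ and recovers the object after saturated base change to $X_Q$. As written, this step is missing.
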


\begin{defi}[{\cite[Definition~4.3.3]{AHLS-Log}}] \label{def:FKEt-pi1}
    Let $X$ be a connected log scheme of type $\typeVd$ and let $\bar{x}\to X$ be a log geometric point. The fundamental group of the Galois category $\FEt_X$ endowed with the fibre functor $F_{\bar{x}}$ is called the {\bf Kummer \'etale fundamental group} of $X$, and is denoted by $\pi_1(X, \bar{x})$.
\end{defi}

Finite Kummer \'etale maps can be defined locally on the base. More precisely, for a log scheme $X$ of type $\typeVd$, the association $U\mapsto \FEt_U$ forms a stack in the (strict) \'etale topology, and even in the Kummer \'etale topology \cite[Corollary~4.3.7]{AHLS-Log}. 
Another useful fact is its ``topological invariance''.

\begin{prop}[{\cite[Proposition~4.2.4(d)]{AHLS-Log}}] \label{prop:FKEt-top-inv}
    Let $i\colon X_0\to X$ be a strict morphism between log schemes of type $\typeVd$ whose underlying morphism of schemes is a universal homeomorphism. Then the induced functor
    \[ 
        i^*\colon \FEt_X \la \FEt_{X_0}
    \]
    is an equivalence. Consequently, if $X$ is connected and $\bar x\to X$ is a log geometric point, we have an isomorphism
    \[ 
        i_* \colon \pi_1(X_0, \bar{x}) \isomto \pi_1(X, \bar{x}).
    \]
\end{prop}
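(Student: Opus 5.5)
The statement to prove is \cref{prop:FKEt-top-inv}: for a strict morphism $i\colon X_0\to X$ of log schemes of type $\typeVd$ whose underlying map is a universal homeomorphism, pullback $i^*\colon \FEt_X\to\FEt_{X_0}$ is an equivalence. The plan is to reduce to the classical topological invariance of the étale site of schemes (SGA~1 IX / \stacks{04DY}), using that Kummer étale maps are étale maps over a finite flat cover coming from charts.

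\textbf{Step 1: full faithfulness.} Since $i$ is strict, $i$ induces an isomorphism on $\ov\cM$, and for $Y\to X$ finite Kummer étale the base change $Y_0=Y\times_X X_0$ (fibre product in saturated log schemes) is strict over $Y$. Because $i$ is strict, this fibre product has underlying scheme $\underline Y\times_{\underline X}\underline{X_0}$, so $Y_0\to Y$ is again a strict universal homeomorphism. A morphism $Y\to Y'$ over $X$ is a map of underlying schemes together with a compatible map of log structures. For the schemes, I would use that universal homeomorphisms induce equivalences on étale sites, together with the fact that $\underline Y'\to\underline X$ is locally étale over $\underline X\times_{\bZ[P]}\bZ[Q]$. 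For the log structures, I would argue that they are étale sheaves determined étale-locally by charts, so they transfer along the equivalence of étale sites. Both reduce, étale locally, to the classical statement.

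\textbf{Step 2: essential surjectivity.} Both sides are stacks in the strict étale topology (\cite[Corollary~4.3.7]{AHLS-Log}), and strict étale maps to $X$ and to $X_0$ correspond via the topological invariance of the étale site. We may therefore work étale locally on $X$, with a chart $P\to\cM_X$, $P$ of type $\typeVd$, which pulls back to a chart of $X_0$ by strictness. Given $Y_0\to X_0$ finite Kummer étale, étale locally it is modelled by a Kummer étale $P\to Q$ with
\[
\underline{Y_0}\la \underline{X_0}\times_{\Spec\bZ[P]}\Spec\bZ[Q]
\]
étale. Set $X_Q=X\times_{\bZ[P]}\bZ[Q]$ with its induced log structure. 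Then $X_{0,Q}\to X_Q$ is the base change of $i$, hence again a strict universal homeomorphism. The classical invariance lifts the étale $\underline{Y_0}\to \underline{X_{0,Q}}$ uniquely to an étale $\underline Y\to\underline{X_Q}$; give $\underline Y$ the pulled-back log structure. By Step 1 these local lifts glue, using the stack property. Finiteness of $\underline Y\to\underline X$ follows because finiteness can be tested after base change along a universal homeomorphism: properness is topological plus finite type, and quasi-finiteness is topological. Here $\underline Y\to\underline X$ is locally of finite presentation by \cite[Proposition~5.1.2]{AHLS-Log}.

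\textbf{Step 3: fundamental groups.} The isomorphism $i_*$ on $\pi_1$ follows formally from the equivalence of Galois categories, which is compatible with the fibre functors at $\bar x$.

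The main obstacle is Step 2: one must check that the chart-local lifts are independent of the chosen chart and Kummer presentation. I would handle this by proving full faithfulness first and then gluing via the stack property, rather than by comparing charts directly.
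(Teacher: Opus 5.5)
The paper does not prove this statement; it quotes it from \cite{AHLS-Log}. So I judge your argument on its own terms.

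\textbf{Step~1 has a genuine gap.} This is the step that carries essentially all the content, and neither half of it works as written.
\begin{itemize}
\item You cannot lift the morphism of underlying schemes separately. The map $\underline{Y'}\to\underline X$ is not \'etale. The fact that $\underline{Y'}$ is \'etale over $\underline{X_{Q'}}$ only becomes usable once you already have the $X$-morphism $Y\to X_{Q'}$. That morphism is log data: a monoid map $Q'\to\Gamma(Y,\cM_Y)$ extending $P\to\Gamma(Y,\cM_Y)$.
\item The log structures do not ``transfer along the equivalence of \'etale sites''. For the strict base change $i_Y\colon Y_0\to Y$ you do get $\ov\cM_{Y_0}=i_Y^{-1}\ov\cM_Y$. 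But $\cM_{Y_0}=i_Y^{-1}\cM_Y\oplus_{i_Y^{-1}\cO_Y^\times}\cO_{Y_0}^\times$, and $\cO_{Y_0}^\times\neq i_Y^{-1}\cO_Y^\times$ in general, already for $X_0=X_{\rm red}$. These units are exactly what a map $Q'\to\Gamma(Y,\cM_Y)$ involves.
\end{itemize}
A clear symptom is that Step~1 never uses that $n=[(Q')^{\rm gp}:P^{\rm gp}]$ is invertible on $X$, and without this the conclusion fails. Take $k$ of characteristic $p$, $X=\Spec(\bN\to k[t,\epsilon]/(\epsilon^2))$ with $1\mapsto t$, $X_0=X_{\rm red}$, and $Y=Y'=X\times_{\Spec\bZ[\bN]}\Spec\bZ[\tfrac1p\bN]$, with $s^p=t$. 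Then $s\mapsto(1+\epsilon)s$ is a non-trivial log automorphism of $Y$ over $X$ that is the identity over $X_0$. Here $\underline{Y'}=\underline{X_{Q'}}$, so your Step~1 applies to this example word for word.

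\textbf{The missing idea is to use the Kummer condition on the log side.}
\begin{itemize}
\item By \cref{lem:fKet-local-structure}, \'etale locally on $X$ you may assume $Y'=\coprod_j X_{Q'_j}$. Clopen decompositions of $Y$ and $Y_0$ correspond, so it suffices to match $X$-morphisms $Y\to X_{Q'}$ with $X_0$-morphisms $Y_0\to X_{0,Q'}$. These are extensions $Q'\to\Gamma(Y,\cM_Y)$ of $P\to\Gamma(Y,\cM_Y)$, and similarly on $Y_0$.
\item For $q\in Q'$, exactness gives $nq\in P$, so the image of $q$ is an $n$-th root of the image of $nq$.
\item Since $Y$ is saturated, $\ov\cM_Y^{\rm gp}$ is torsion free. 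Hence any two such roots differ by a section of $\mu_n(\cO_Y)$.
\item \'Etale locally, lifting a root given on $Y_0$ then amounts to lifting a section over $\underline{Y_0}$ of the finite \'etale $\underline Y$-scheme $w^n=v$, for a suitable unit $v$. This is where classical topological invariance and $n\in\cO_X^\times$ enter.
\item Uniqueness of the lift then gives additivity, compatibility on $P$, and gluing.
\end{itemize}
Alternatively, combine \cref{cor:fKet-local-structure} with the Kummer \'etale stack property \cite[Corollary~4.3.7]{AHLS-Log} to reduce to strict finite \'etale covers. There, morphisms over the base are just morphisms of underlying schemes.

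\textbf{Step~2 also needs repair.} The charts in \cref{def:Kummer-etale-map} are only given \'etale locally on $Y_0$ as well as on $X_0$. So the pieces you lift are \'etale over $Y_0$, not objects of $\FEt$, and Step~1 cannot glue them. Use \cref{lem:fKet-local-structure} instead. \'Etale locally on $X$ (the chart of $X$ pulls back to $X_0$ by strictness) it gives $Y_0\simeq\coprod_j X_{0,Q_j}$. This has the evident lift $\coprod_j X_{Q_j}$, so no finiteness argument is needed.

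Finally, the obstacle you single out, independence of charts, is exactly what full faithfulness takes care of. The real work is in Step~1.
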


\begin{defi}
\label{defi:pi1-of-monoid}
    For a (sometimes implicit) choice of a separably closed field $k$, we define $\widehat{\bZ}'(1) = \varprojlim_n \mu_n(k)$ to be the Tate module of $k^\times$.
    For a monoid $P$ we set
    \[
        \pi_1(P) = \Hom(P^{\rm gp}, \widehat{\bZ}'(1)).
    \]
\end{defi}

The notation $\pi_1(P)$ is motivated by the following result.

\begin{prop} \label{prop:FKEt-pi1-local}
    Let $X = \Spec(P\overset{\alpha}{\to} A)$ where $P$ is a monoid of type $\typeVd$ and $A$ is a strictly henselian local ring with residue field $k$. Let $x$ be the closed point of $\underline{X}$, and let $\bar{x}$ be a log geometric point above $x$. In this case, we have $\overline{\cM}_{X,x} = P/\alpha^{-1}(A^\times)$.  Then there is a natural isomorphism of profinite groups
    \[ 
        \pi_1(X, \bar{x}) \simeq \pi_1(\overline{\cM}_{X,x}).
    \]
    In particular, $\pi_1(X, \bar{x})$ is a free $\hZ'$-module of finite rank, so abelian, torsion-free, of order coprime 
    to the residue characteristic at $\bar{x}$, and finitely presented as a profinite group.
\end{prop}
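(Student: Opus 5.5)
We first reduce to the closed point and then classify covers there explicitly. Since $A$ is strictly henselian, the finite Kummer \'etale covers of $X$ are determined by their restriction to the closed point. More precisely, let $X_0 = \Spec(P \to k)$ be the strict closed subscheme at $x$. A finite Kummer \'etale $Y \to X$ has finite underlying scheme, which is a finite disjoint union of strictly henselian local schemes. Finite \'etale algebras over a henselian local ring are equivalent to those over its residue field, and we want the analogous statement for Kummer \'etale covers. We get it by working \'etale locally with charts $P\to Q$: by \cref{def:Kummer-etale-map}, $\underline Y$ is \'etale over $\underline X\times_{\Spec\bZ[P]}\Spec\bZ[Q]$, and the latter is finite over $A$. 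Henselianity then gives the restriction equivalence $\FEt_X \isomto \FEt_{X_0}$, exactly as in the strict case. Alternatively one can combine the stack property \cite[Corollary~4.3.7]{AHLS-Log} with a henselian rigidity statement from \cite{AHLS-Log}.

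Next we replace $P$ by $\overline{\cM}_{X,x} = P/\alpha^{-1}(A^\times)$. Over the field $k$, the log structure on $X_0$ is charted by $P$ with $\alpha^{-1}(k^\times)$ mapping to units. Since $k$ is separably closed, we may rechart $X_0$ by $\overline P := \overline{\cM}_{X,x}$, using a splitting of the units possible \'etale locally, i.e.\ after extracting roots in $k$. This $\overline P$ is sharp, saturated and of type $\typeVd$. So we reduce to $X_0 = \Spec(\overline P\to k)$ with $\overline P\setminus\{0\}\mapsto 0$.

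The classification is the core step. Every connected finite Kummer \'etale cover of $X_0$ should be of the form $\Spec(Q\to k)$, i.e.\ the reduced scheme underlying $\Spec(k\otimes_{\bZ[\overline P]}\bZ[Q])$. Here $\overline P\to Q$ is Kummer \'etale, with $Q^\gp/\overline P^\gp$ of order prime to $p$. Such $Q$ correspond exactly to subgroups $H\subseteq \overline P^{\gp}\otimes\bQ$ containing $\overline P^\gp$ with finite prime-to-$p$ index, via $Q = H\cap \overline P_{\bQ}^{\rm sat}$ (saturation/exactness). Automorphisms over $X_0$ are $\Hom(H/\overline P^\gp,\mu(k))$, acting through $k[Q]$. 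Taking the limit over $H$ gives the fibre functor at $\bar x$ and identifies $\pi_1(X_0,\bar x)$ with $\varprojlim_H \Hom(H/\overline P^\gp,\mu_\infty(k)) = \Hom(\overline P^\gp,\hZ'(1))$. This matches the standard fs computation, and the argument is formally the same since only the group $\overline P^\gp$ enters.

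For the finiteness claims, write $\overline P \supseteq V$ with $V$ divisible valuative and $\overline P$ finitely generated over $V$. Then $\overline P^\gp/V^\gp$ is a finitely generated abelian group, and $V^\gp$ is divisible. Hence $\Hom(\overline P^\gp,\hZ'(1)) = \Hom(\overline P^\gp/V^\gp,\hZ'(1))$, because $\hZ'$ has no nonzero divisible subgroups. This is a finitely generated free $\hZ'$-module, since the torsion of $\overline P^\gp/V^\gp$ maps trivially. Abelianness, torsion-freeness, prime-to-$p$ order and finite presentation follow at once.

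The main obstacle is the henselian reduction step together with the claim that \emph{every} connected Kummer cover of the log point is such a standard one. Both rely on the \'etale-local chart definition in the non-fs setting, and here I would lean on \cite[Lemma~2.6.2]{AHLS-Log} for finiteness of $Q$ over $P$.
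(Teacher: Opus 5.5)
For the group-theoretic consequences you argue exactly as the paper does: the divisible image of $V^\gp$ admits no nonzero maps to $\hZ'(1)$, which leaves $\Hom$ of a finitely generated group. The one step you skip is that finite presentation as a \emph{profinite group}, not merely as a $\hZ'$-module, needs the short argument of \cref{lem:abelian-fg-implies-finitely-presented}.

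The real difference is elsewhere. The paper does not prove the isomorphism $\pi_1(X,\bar x)\simeq\pi_1(\overline{\cM}_{X,x})$ at all; it quotes \cite[Proposition~4.3.1]{AHLS-Log}. You reconstruct it via henselian rigidity followed by an explicit classification over the log point. This is the standard route and makes the statement self-contained, but two of your steps are weaker than written.

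\emph{Recharting by $\overline P$.} Over the closed point, recharting amounts to extending $\alpha|_{F^\gp}\colon F^\gp\to k^\times$, where $F=\alpha^{-1}(A^\times)$, to a homomorphism $P^\gp\to k^\times$. Your reason (extracting roots in $k$) does not suffice, for three reasons:
\begin{itemize}
\item $k$ is only separably closed, so $k^\times$ need not be $p$-divisible;
\item \'etale localisation is vacuous over $k$;
\item unlike the fs case, $\overline P^\gp$ can have $\bQ$-summands, so there is no general reason for $\Ext^1(\overline P^\gp,k^\times)$ to vanish.
\end{itemize}
The extension nevertheless exists, for a different reason. Saturation makes $F^\gp$ pure in $P^\gp$. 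Hence $F^\gp\cap D$ is divisible, where $D$ is the image of $V^\gp$. Write $k^\times$ as its maximal divisible (hence injective) subgroup plus a reduced complement. The divisible component of $\alpha|_{F^\gp}$ extends for free. The reduced component kills $F^\gp\cap D$, so it factors through $(F^\gp+D)/D$. This is a direct summand of the finitely generated group $P^\gp/D$, because $P^\gp/(F^\gp+D)$ is torsion-free. Alternatively, skip the recharting and run your classification directly with $P$ and the face $F$.

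\emph{Henselian reduction.} Henselianity together with \cref{lem:fKet-local-structure} gives the shape of the covers and essential surjectivity of $\FEt_X\to\FEt_{X_0}$, but not full faithfulness. For full faithfulness:
\begin{enumerate}
\item Use \cref{cor:fKet-local-structure} to pass to a common $X_Q$ on which both covers become strict.
\item Apply the henselian statement for finite \'etale covers on $X_Q$ and on $X_Q\times_X X_Q$, whose underlying schemes are finite over $A$.
\item Descend along the Kummer \'etale covering $X_Q\to X$.
\end{enumerate}
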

\begin{proof}
All but the group theoretic properties of $\pi_1(\overline{\cM}_{X,x})$ have been proven in \cite[Proposition~4.3.1]{AHLS-Log}.
Since the monoid $\overline{\cM}_{X,x}$ is of type $\typeVd$ and sharp, the group 
$\overline{\cM}_{X,x}^{\rm gp}$ is an extension of a divisible group and a finitely generated free abelian group $A$. Since a divisible group does not admit non-zero maps to $\widehat{\bZ}'(1)$, we deduce that
$\pi_1(\overline{\cM}_{X,x})$ is isomorphic to $\Hom(A,\hZ'(1))$, a free $\widehat{\bZ}'$-module of finite rank.

That $\pi_1(\overline{\cM}_{X,x})$ is finitely presented as a profinite group follows from 
\cref{lem:abelian-fg-implies-finitely-presented} below.
\end{proof}

\begin{lem} \label{lem:abelian-fg-implies-finitely-presented}
    Every finitely generated abelian profinite group $A$ is finitely presented as a (potentially nonabelian) profinite group.
\end{lem}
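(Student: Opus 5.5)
The plan is to reduce to the structure theorem for finitely generated abelian profinite groups and then write down an explicit finite presentation. First, $A$ is a finitely generated abelian profinite group, hence a quotient of $\hZ^n$ for some $n$. The kernel $R$ is a closed subgroup of $\hZ^n = \prod_\ell \bZ_\ell^n$. Closed subgroups of $\bZ_\ell^n$ are finitely generated $\bZ_\ell$-modules, so $A \simeq \prod_\ell A_\ell$ with each $A_\ell$ a finitely generated $\bZ_\ell$-module, with rank and torsion bounded uniformly in $\ell$ because $A$ needs only $n$ generators. Equivalently, $A$ is a quotient of $\hZ^n$ by a closed $\hZ$-submodule, and using the Smith normal form over each $\bZ_\ell$, together with the bound, we get $A \simeq \prod_{i=1}^n \hZ/a_i\hZ$ for supernatural numbers $a_i$. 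This is the abelian structure input. Note, however, that supernatural $a_i$ can be infinite, as in $\bZ_\ell$ itself.

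Second, we present the free abelian profinite group $\hZ^n$ as a profinite group: it is $\widehat F_n/N$, where $N$ is the closed normal subgroup generated by the finitely many commutators $[x_i,x_j]$. Indeed, the quotient of $\widehat F_n$ by the closed normal closure of these commutators is the maximal abelian quotient $\widehat F_n^{\rm ab} = \hZ^n$. So $\hZ^n$ is finitely presented.

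Third, we handle the relations defining $A$. Each factor $\hZ/a\hZ$ for a supernatural $a$ equals $\hZ/\overline{a\hZ}$, and the subgroup $a\hZ = \prod_\ell \ell^{v_\ell(a)}\bZ_\ell$ is procyclic, generated by one element $c\in\hZ$ (take $c$ with $v_\ell(c)=v_\ell(a)$, or $c=0$ in components where $v_\ell(a)=\infty$). Then $\hZ/a\hZ = \hZ/\overline{\langle c\rangle}$ needs one relation $x^c = 1$, where $x^c$ is defined by $\hZ$-exponentiation in profinite groups. Altogether $A = \widehat F_n / \langle\!\langle [x_i,x_j], x_i^{c_i}\rangle\!\rangle$, a presentation with finitely many relations.

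The main obstacle is the first step: justifying cleanly that $A$ decomposes as $\prod_i \hZ/a_i\hZ$. Equivalently, we must show the relation module $R\subseteq\hZ^n$ is topologically generated by finitely many elements as a $\hZ$-module. I would argue prime by prime: $R=\prod_\ell R_\ell$, each $R_\ell\subseteq\bZ_\ell^n$ is free of rank at most $n$, so choose generators $r_{\ell,1},\dots,r_{\ell,n}$, and assemble $r_k=(r_{\ell,k})_\ell\in\hZ^n$. These $n$ elements generate $R$ topologically, giving at most $n$ relations beyond the commutators. This avoids Smith normal form entirely, so I would use this version.
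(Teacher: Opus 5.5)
Your final prime-by-prime version is correct and essentially the paper's proof. Both write the relation module $R\subseteq\hZ^n$ as $\prod_\ell R_\ell$ with $R_\ell\subseteq\bZ_\ell^n$ free of rank at most $n$, assemble $n$ generators into elements of $\hZ^n$, and then present $A$ by the commutators $[x_i,x_j]$ together with profinite words for these generators; the paper obtains the $R_\ell$ via exactness of pro-$\ell$ completion, and your Smith normal form detour is also valid but, as you note, unnecessary.
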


\begin{proof}
    Let $\psi \colon \hZ^n \surj A$ be a surjection with kernel $B$. Since for every prime $\ell$, pro-$\ell$ completion $(-)^{\wedge \ell}$ is exact on abelian profinite groups, the group $B^{\wedge \ell}$ is the kernel of $\bZ_\ell^n \surj A^{\wedge \ell}$. Since $\bZ_\ell$ is a principal ideal domain, the $\bZ_\ell$-module $B^{\wedge \ell}$ is free of rank at most $n$ and can thus be generated by $n$ elements. We fix a surjection $\bZ_\ell^n \surj B^{\wedge \ell}$ for every $\ell$. Combining these in the product over all $\ell$ we obtain a surjection $\hZ^n \surj B$, and so a finite presentation $\hZ^n \to \hZ^n \to A \to 0$ as $\hZ$-module. 

    To obtain a presentation as a profinite group we take generators $x_i$ for $i=1,\ldots,n$ that we map to the images $\psi(e_i) \in A$ of the standard generators $e_i \in \hZ^n$. As relations among the $x_i$ it suffices to take the pairwise commutators $[x_i,x_j]$ and profinite words representing the finitely many generators of $B \subseteq \hZ^n$. 
\end{proof}

We give two further results on the structure of finite Kummer \'etale maps.

\begin{lem}[{See \cite[Lemma~4.2.7]{AHLS-Log}}] 
\label{lem:fKet-local-structure}
    Let $Y\to X$ be a finite Kummer \'etale map and let $P\to \cM(X)$ be a chart by a~saturated monoid $P$. Then \'etale locally on $X$ there exists a finite collection of Kummer \'etale maps $P\to Q_j$, for $j=1,\ldots, r$, and an isomorphism over $X$ 
    \[
        Y \simeq \coprod_{j=1}^r X\times_{\Spec(P\to \bZ[P])}\Spec(Q_j\to\bZ[Q_j]).
    \]
\end{lem}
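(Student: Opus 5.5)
We work étale locally on $X$, which is legitimate because finite Kummer étale maps form a stack for the strict étale topology. Since a finite map is determined over the strict henselisation, we first reduce to the case where $\underline{X} = \Spec(A)$ with $A$ strictly henselian local, closed point $x$ and residue field $k$, and where $P \to \cM(X)$ is still a chart. We also fix a log geometric point $\bar x$ over $x$.

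In this situation the Galois category $\FEt_X$ is understood through \cref{prop:FKEt-pi1-local}: $\pi_1(X,\bar x) \simeq \pi_1(\overline{\cM}_{X,x})$, a free $\hZ'$-module of finite rank. Every connected finite Kummer étale $Y \to X$ therefore corresponds to an open subgroup, i.e.\ to a finite quotient $\pi_1(\overline{\cM}_{X,x}) \surj G$. We realise such covers explicitly. Write $\overline{P} = \overline{\cM}_{X,x}$; it is a quotient of $P$ by the face $F = \alpha^{-1}(A^\times)$.

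Any finite quotient $G$ of $\Hom(P^\gp,\hZ'(1))$ of order $n$ (prime to the residue characteristics) is dual to a subgroup $H \subseteq \frac1n P^\gp/P^\gp$ in which the elements of $F^\gp$ act trivially. We set $Q = \{q \in P^\gp + H' : nq \in P\}$, where $H'$ is the preimage of $H$ in $\frac1n P^\gp$, i.e.\ the saturation of $P$ in $P^\gp + H'$. Then $P \to Q$ is Kummer étale in the sense of \cref{def:Kummer-etale-monoid}: it is injective and exact because $Q$ is the saturation, and $Q^\gp/P^\gp \simeq H$ has order prime to $\Sigma$.

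The components of $F$ being units causes no trouble. Any $n$-th roots of units required by $Q$ already exist, since $A$ is strictly henselian and $n$ is invertible in $A$. Adjoining them therefore only makes $X \times_{\Spec \bZ[P]} \Spec \bZ[Q]$ split into copies of a connected cover. Connected covers of $X$ are thus of the form $X_Q$ up to this splitting, and an arbitrary $Y$ is a finite disjoint union of connected components. So we obtain
\[
Y \simeq \coprod_{j} X \times_{\Spec(P\to\bZ[P])} \Spec(Q_j\to\bZ[Q_j])
\]
after possibly refining by a further étale neighbourhood. Here we use the equivalence of $\FEt_X$ with finite $\pi_1$-sets given by \cref{thm:FKEt-Galois}.

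Finally, we descend from the strict henselisation to an actual étale neighbourhood. Both $Y$ and the model covers are finite and of finite presentation over $X$, by \cite[Proposition~5.1.2]{AHLS-Log} and the finiteness of $\Spec \bZ[Q]\to \Spec\bZ[P]$. An isomorphism over $\cO^{\sh}_{X,x}$ therefore spreads out to some étale neighbourhood of $x$ by a standard limit argument.

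The main obstacle is the middle step. We must check that every open subgroup of $\pi_1(X,\bar x)$ is realised by some $X_Q$ with $P \to Q$ Kummer étale for the given chart $P$, which need not be neat at $x$. Concretely, the comparison of \cref{prop:FKEt-pi1-local} has to be compatible with the dual construction $H \mapsto Q$. This is where the $\typeVd$ hypothesis enters: the divisible part of $P^\gp$ contributes nothing, so all the relevant $Q$ are finitely generated over $P$.
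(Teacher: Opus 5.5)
The paper gives no proof of this lemma; it quotes it from \cite[Lemma~4.2.7]{AHLS-Log}. Judged on its own terms, your proposal has a genuine gap. It sits exactly at the step you flag as the main obstacle, and your last paragraph does not close it.

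\textbf{The gap.} You have an equivalence of $\FEt_X$ with finite $\pi_1(X,\bar x)$-sets and an abstract isomorphism $\pi_1(X,\bar x)\simeq\pi_1(\overline{\cM}_{X,x})$. Neither tells you \emph{which} cover corresponds to a given open subgroup. To conclude that a given connected $Y$ is (a component of) some $X_Q$, you must know that the covers $X_Q$ exhaust $\FEt_X$. The sentence ``connected covers of $X$ are thus of the form $X_Q$'' is precisely the content of the lemma, asserted rather than proved. Your remark that the divisible part of $P^\gp$ contributes nothing only shows that $Q$ is finitely generated over $P$, which has nothing to do with exhaustion.

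\textbf{Circularity risk.} \cref{prop:FKEt-pi1-local} and \cref{thm:FKEt-Galois} are \cite[Proposition~4.3.1, Theorem~4.3.2]{AHLS-Log}, which come after Lemma~4.2.7 there. The identification $\pi_1(X,\bar x)\simeq\pi_1(\overline{\cM}_{X,x})$ is naturally built from this very local structure; the paper itself cites the two interchangeably in the proof of \cref{lem:criterion strict}.

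\textbf{Smaller points.}
\begin{enumerate}
\item $\pi_1(\overline{\cM}_{X,x})=\Hom(\overline P^{\gp},\hZ'(1))$ is the closed \emph{subgroup} of $\Hom(P^\gp,\hZ'(1))$ consisting of homomorphisms killing $F^\gp$. It is not a quotient of it, so your duality set-up has to be redone with $\overline P$.
\item The lemma asks for a disjoint union of $X_{Q_j}$'s. You must therefore also show that each connected component of an $X_Q$ is itself of the form $X_{Q'}$. Over a strictly henselian base this can be done using $n$-th roots of the units $\alpha(F)$ and re-choosing $Q'$, but it needs an argument.
\item Spreading out from the strict henselisation must include the log structures (the maps $Q_j\to\cM_Y$), not only the underlying finite schemes.
\end{enumerate}

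\textbf{Repairing the $\pi_1$ route.} You would need an extra idea. For example, show that the covers dominated by components of the $X_Q$'s form a Galois subcategory, and compute its fundamental group directly to be $\Hom(\overline P^\gp,\hZ'(1))\simeq\hZ'(1)^r$. Then use that finitely generated profinite groups are Hopfian: the surjection onto it from $\pi_1(X,\bar x)\simeq\hZ'(1)^r$ is an isomorphism, so the subcategory is everything. This is where the $\typeVd$ finiteness genuinely enters.

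\textbf{The direct route.} This avoids $\pi_1$ and the circularity.
\begin{enumerate}
\item By the definition of Kummer \'etale and the chart lifting property \cite[Proposition~3.5.8]{AHLS-Log} (as in the proof of \cref{lem:strict-locus-clopen}), \'etale locally on $Y$ there are Kummer \'etale maps $P\to Q$, for the \emph{given} chart $P$, with $Y\to X_Q$ strict \'etale.
\item Over $\Spec(A)$ strictly henselian, the finite $A$-scheme $\underline{Y}$ is a finite disjoint union of strictly henselian local schemes $Y_i$. Their residue fields are finite, hence purely inseparable, over the separably closed $k$.
\item So each such \'etale neighbourhood of the closed point of $Y_i$ has a section over $Y_i$. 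This gives a strict \'etale map $Y_i\to X_{Q_i}$.
\item That map lands in a connected component $Z$ of $X_{Q_i}$. $Z$ is again strictly henselian local: it is local and integral over $A$, with residue field algebraic over $k$.
\item The map $Y_i\to Z$ is finite \'etale between connected strictly henselian local schemes, hence an isomorphism; being strict, it is an isomorphism of log schemes.
\end{enumerate}
Combining this with the component statement in point 2 above and a limit argument gives the lemma.
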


\begin{cor} \label{cor:fKet-local-structure}
    In the situation of \cref{lem:fKet-local-structure}, if $X$ is quasi-compact, then there exists a Kummer \'etale homomorphism of monoids $P\to Q$ such that in the (saturated) base change diagram
    \[ 
     \begin{tikzcd}
         Y_Q \ar[r] \ar[d] & X_Q \ar[d] \ar[r] & \Spec(Q \to \bZ[Q]) \ar[d] \\
         Y \ar[r] & X \ar[r] & \Spec(P \to \bZ[P])
     \end{tikzcd}
    \]
    the map $Y_Q\to X_Q$ is strict \'etale. 
\end{cor}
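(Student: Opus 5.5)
The idea is to apply \cref{lem:fKet-local-structure} on a finite étale cover of $X$, take a single Kummer extension $Q$ of $P$ that dominates all the local $Q_j$, and check that after base change to $X_Q$ every local piece becomes a disjoint union of copies of the base.

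Since $X$ is quasi-compact, we may choose a finite family of étale maps $U_i\to X$, $i=1,\dots,m$, jointly surjective, such that on each $U_i$ \cref{lem:fKet-local-structure} gives an isomorphism
\[
Y\times_X U_i\simeq \coprod_{j} U_i\times_{\Spec(P\to\bZ[P])}\Spec(Q_{ij}\to\bZ[Q_{ij}])
\]
with finitely many Kummer étale maps $P\to Q_{ij}$. Each $Q_{ij}^\gp/P^\gp$ is finite of order prime to $\Sigma$. Let $n$ be the least common multiple of these orders; it is prime to $\Sigma$.

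I would set $Q$ to be the saturation of $P$ inside $\frac1n P^\gp$, that is,
\[
Q=\{q\in P^\gp\otimes\bZ[1/n] : nq\in P\}\cap \tfrac1n P^\gp .
\]
Then $P\to Q$ is injective and exact, and $Q^\gp/P^\gp\cong P^\gp/nP^\gp$. Because $P$ is of type $\typeVd$, its groupification is an extension of a finitely generated group by a divisible group. Hence $P^\gp/nP^\gp$ is finite and killed by $n$, so $P\to Q$ is Kummer étale.

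Each $Q_{ij}$ embeds into $Q$ over $P$, since $nQ_{ij}^\gp\subseteq P^\gp$ and saturation gives $Q_{ij}\subseteq Q$. The main step is a local computation: the saturated base change
\[
\Spec(Q_{ij}\to\bZ[Q_{ij}])\times^{\rm sat}_{\Spec(P\to\bZ[P])}\Spec(Q\to\bZ[Q])
\]
is strict étale over $\Spec(Q\to\bZ[Q])$. The saturated pushout $Q_{ij}\oplus_P^{\rm sat}Q$ should be identified with $Q\times (Q_{ij}^\gp/P^\gp)$. This follows because $Q_{ij}^\gp\subseteq Q^\gp$, so the pushout group is $Q^\gp\oplus Q_{ij}^\gp/P^\gp$. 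The resulting ring is $\bZ[Q][G^\vee]$-type, namely $\bZ[Q]\otimes\bZ[H]$ with $H=Q_{ij}^\gp/P^\gp$ twisted, which is finite étale over $\bZ[1/\Sigma][Q]$ because $|H|$ is invertible. The log structure is pulled back from $Q$, so this base change is strict.

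Base-changing along $U_i\to X$ and then to $X_Q$ shows that $Y_Q\times_{X_Q}(U_i)_Q\to(U_i)_Q$ is strict étale for every $i$. Since strict étaleness is étale-local on the base and the $(U_i)_Q$ cover $X_Q$, it follows that $Y_Q\to X_Q$ is strict étale.

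The step I expect to be the main obstacle is the careful identification of the saturated pushout $Q_{ij}\oplus_P^{\rm sat}Q$ and its exactness in the non-finitely generated $\typeVd$ setting. If needed, I would reduce to the fs quotient by the divisible valuative submonoid, where the divisible part contributes nothing to $P^\gp/nP^\gp$.
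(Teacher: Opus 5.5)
Your overall plan is the paper's: cover $X$ by finitely many étale $U_i$ on which \cref{lem:fKet-local-structure} applies, find one Kummer étale $P\to Q$ through which every $P\to Q_{ij}$ factors, and base change. Your local computation $Q_{ij}\oplus^{\rm sat}_P Q\cong Q\oplus (Q_{ij}^{\rm gp}/P^{\rm gp})$ is correct, and it is exactly what the paper leaves implicit in ``it suffices''. Note that it only needs a homomorphism $\phi\colon Q_{ij}\to Q$ over $P$; on groups the isomorphism is $(x,y)\mapsto(\phi(x)+y,[x])$. This matters, because $Q_{ij}^{\rm gp}$ can have torsion not coming from $P^{\rm gp}$, so your claim that $Q_{ij}$ \emph{embeds} into $Q$ is false in general.

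The genuine gap is the choice $Q=\frac1n P$. You use that $P$ is of type $\typeVd$, but the statement only assumes $P$ is a saturated chart. For example, any group $P$ mapping to $\cO_X^\times$, such as $P=\bZ^{(\bN)}$, charts the trivial log structure, and then $P^{\rm gp}/nP^{\rm gp}$ is infinite as soon as $n>1$. More seriously, the construction fails even for fs $P$, because $P^{\rm gp}$ may have torsion, necessarily lying in $P^\times$. Take $P=\bN\oplus\bZ/\ell$ with $\bZ/\ell\to\mu_\ell$, and suppose the lemma yields $Q_{ij}=\frac1\ell\bN\oplus\bZ/\ell$. Then $\ell\mid n$, the torsion dies in $P^{\rm gp}\otimes\bZ[1/n]$ (equivalently, multiplication by $n$ is not injective on $P^{\rm gp}$), and $P\to Q$ is not injective, hence not Kummer étale. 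Your $Q$ does work when $P$ is of type $\typeVd$ and $P^{\rm gp}$ has no $n$-torsion, with $\phi(x)=\frac1n(nx)$. Your fallback of passing to a quotient by the divisible part does not address either issue.

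The fix is the paper's construction. Let $G$ be the pushout of $\bigoplus_{i,j}P^{\rm gp}\to\bigoplus_{i,j}Q_{ij}^{\rm gp}$ along the sum map $\bigoplus_{i,j}P^{\rm gp}\to P^{\rm gp}$, and set $Q=\{g\in G : mg\in P \text{ for some } m\geq 1\}$. Then, for any saturated $P$:
\begin{itemize}
\item $P^{\rm gp}\hookrightarrow G$, and $G/P^{\rm gp}\cong\bigoplus_{i,j}Q_{ij}^{\rm gp}/P^{\rm gp}$ is finite of order prime to $\Sigma$;
\item $Q^{\rm gp}=G$, and $Q\cap P^{\rm gp}=P$ by saturation of $P$;
\item each $Q_{ij}^{\rm gp}\hookrightarrow G$ is a map over $P^{\rm gp}$ and carries $Q_{ij}$ into $Q$, since $mx\in Q_{ij}\cap P^{\rm gp}=P$.
\end{itemize}
With this $Q$ the rest of your argument goes through unchanged.
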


\begin{proof}
By \cref{lem:fKet-local-structure} and the quasi-compactness of $X$ there exists an \'etale covering $\{X_i\to X\}_{i=1}^n$ such that the assertion of said lemma holds over $X_i$. Let us denote the resulting monoids by $Q_{ij}$, for $j=1, \ldots, r(i)$. To obtain the assertion of the corollary, it suffices to take $P\to Q$ such that each of these $P\to Q_{ij}$ factors as $P\to Q_{ij}\to Q$. This is clearly possible, as we can take $Q$ to be the saturation of $P$ in the pushout of $\bigoplus P^{\rm gp}\to \bigoplus Q_{ij}^{\rm gp}$ along the sum map $\bigoplus P^{\rm gp}\to P^{\rm gp}$ (which is an extension of $\bigoplus (Q_{ij}^{\rm gp}/P^{\rm gp})$ by $P^{\rm gp}$).  
\end{proof}

\subsection{The tame fundamental group of a log scheme}

We now introduce a notion of tameness for finite Kummer \'etale maps of log schemes $Y\to X$ relative to a base scheme $S$, which lets us define the tame fundamental group $\pitame(X/S,\ov{x})$ if $X$ is connected. In the case of schemes, tameness ultimately is defined in terms of extensions of strict henselisations being of degree prime-to-$p$, see \cref{defi:tame-extension-of-valued-fields}. We take the same approach in the case of log schemes, replacing the Galois group of a strictly henselian valued field $(K,K^+)$ with the category of finite Kummer \'etale covers of the log scheme $\Spec(K)$ endowed with the log structure pulled back from $X$. 

\begin{defi} \label{def:prime-to-p-obj}
    Let $\cC$ be a Galois category and let $p\geq 1$ be a natural number. We say that an object $Y$ of $\cC$ is {\bf prime-to-$p$} if there exists a finite collection of Galois objects $Y_i$ of degree\footnote{The degree of an object of a Galois category is the cardinality of its image under a fibre functor.} prime to $p$ and a surjection $\coprod Y_i\to Y$. 
\end{defi}

\begin{rmk} \label{rmk:prime-to-p-obj}
    Let $\Gamma$ be a profinite group. The category of prime-to-$p$ objects in $\Gsets{\Gamma}$ coincides with the category of $\Gamma'$-sets where $\Gamma \surj \Gamma'$ is the prime-to-$p$ completion of $\Gamma$, and for an open subgroup $H\subseteq \Gamma$, the object $\Gamma/H$ is prime-to-$p$ if and only if $H$ is open in the prime-to-$p$ topology \cite[\S 3.1]{RibesZalesskii}. It follows that the full subcategory $\cC'\subseteq\cC$ consisting of prime-to-$p$ objects is a Galois subcategory in general. Further, every map of Galois categories preserves prime-to-$p$ objects, and for a Galois subcategory $\cC_0\subseteq \cC$, an object of $\cC_0$ is prime-to-$p$ if and only if it is prime-to-$p$ when considered as an object of $\cC$. Moreover, for a pair of objects $Y,Z\in\cC$ with $Y$ prime-to-$p$ and non-empty (i.e.\ non-initial), the object $Z$ is prime-to-$p$ if and only if the product $Y\times Z$ is prime-to-$p$.
\end{rmk}

\begin{lem} \label{lem:wreath}
    Let $Z\to Y$ be a morphism in a Galois category $\cC$ and let $p\geq 1$ be a natural number. Consider the following conditions.
    \begin{enumerate}[(i)]
        \item $Z$ is prime-to-$p$.
        \item $Y$ is prime-to-$p$.
        \item for every connected component $W$ of $Y$, the base change $Z\times_Y W$ is prime-to-$p$ as an object of the slice category $\cC_{/W}$ (which is a Galois category).
    \end{enumerate}
    Then (ii)\&(iii)$\Rightarrow$(i)$\Rightarrow$(iii), and if $Z\to Y$ is surjective, then also (i)$\Rightarrow$(ii).
\end{lem}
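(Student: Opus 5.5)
The plan is to translate everything into finite sets with continuous action of $\Gamma=\pi_1(\cC,F)$, using the equivalence $F\colon\cC\isomto\Gsets{\Gamma}$. By \cref{rmk:prime-to-p-obj}, an object is prime-to-$p$ if and only if its $\Gamma$-action factors through the maximal prime-to-$p$ quotient $\Gamma'$. Equivalently, the stabiliser of every point contains an open normal subgroup $N$ with $[\Gamma:N]$ prime to $p$. All three conditions reduce to statements about orbits and stabilisers, and connected components of $Y$ correspond to $\Gamma$-orbits.

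Before the implications, we identify the slice category $\cC_{/W}$ for a connected $W=\Gamma/H$ with $H$ open. It is equivalent to $\Gsets{H}$ via the fibre of $Z\times_Y W\to W$ over the coset $eH$. Under this equivalence, condition (iii) says: for every $y\in F(Y)$ with stabiliser $H_y$, the fibre $F(Z)_y$ is a prime-to-$p$ $H_y$-set. Concretely, each $z\in F(Z)_y$ has a stabiliser $\Gamma_z\subseteq H_y$ containing an open normal subgroup of $H_y$ of index prime to $p$.

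(i)$\Rightarrow$(iii): Suppose $\Gamma_z\supseteq N$ with $N$ normal in $\Gamma$ of index prime to $p$. Then $N\cap H_y=N$ is normal in $H_y$, and $[H_y:N]$ divides $[\Gamma:N]$, so it is prime to $p$. Hence the fibre is a prime-to-$p$ $H_y$-set.

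(i)$\Rightarrow$(ii) when $Z\to Y$ is surjective: every $y$ is the image of some $z$, and $\Gamma_z\subseteq\Gamma_y$. So $\Gamma_y$ contains the same normal subgroup $N$ of prime-to-$p$ index, and $Y$ is prime-to-$p$.

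(ii)\&(iii)$\Rightarrow$(i) is the main step, a wreath-product argument. Fix $z$ over $y$. By (ii) there is $N\trianglelefteq\Gamma$ open, of index prime to $p$, with $N\subseteq H_y$. By (iii) there is $M\trianglelefteq H_y$ with $M\subseteq\Gamma_z$ and $[H_y:M]$ prime to $p$. Set $M'=M\cap N$. It is normal in $N$, and $[N:M']$ divides $[H_y:M]$, so it is prime to $p$. Now take the normal core $C=\bigcap_{g\in\Gamma/N} gM'g^{-1}$. Each conjugate $gM'g^{-1}$ is normal in $N$ of prime-to-$p$ index, so $N/C$ embeds into the finite product $\prod N/gM'g^{-1}$. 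Therefore $[N:C]$ is prime to $p$, and so is $[\Gamma:C]=[\Gamma:N][N:C]$. Since $C\trianglelefteq\Gamma$ and $C\subseteq\Gamma_z$, the object $Z$ is prime-to-$p$.

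The only delicate point is to check that the slice category description of (iii) really matches the $H_y$-set description, independently of the chosen point $y$ in the orbit. Changing $y$ within its orbit conjugates $H_y$, which preserves prime-to-$p$-ness, so this is routine.
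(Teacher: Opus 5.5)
Your proof is correct and follows the same route as the paper: you pass to $\Gamma$-sets and reduce everything to stabilisers, where the only substantive point is that a subgroup open in the prime-to-$p$ topology of $H_y$ is open in the prime-to-$p$ topology of $\Gamma$, given that $H_y$ is itself open in the latter. The paper simply cites \cite[Lemma~3.1.4]{RibesZalesskii} for this fact and treats the other implications as evident, whereas your normal-core argument proves it directly, using only that finite groups of order prime to $p$ are closed under subgroups, finite products and extensions.
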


\begin{proof}
We only need to show (ii)\&(iii)$\Rightarrow$(i). For this, we may assume that $Y=W$ and $Z$ are connected. Write $\cC = \Gsets{\Gamma}$ for a profinite group $\Gamma$; we may assume $Y=\Gamma/H$ for a subgroup $H\subseteq \Gamma$ which is open in the prime-to-$p$ topology on $\Gamma$, and $Z = \Gamma/K$ for an open subgroup $K\subseteq H$. In this case, \cite[Lemma~3.1.4]{RibesZalesskii} implies that $K$ is open in the prime-to-$p$ topology on $H$ if and only if it is open in the prime-to-$p$ topology on $\Gamma$.
\end{proof}

\begin{defi} \label{def:tame-log}
    Let $X$ be a saturated log scheme and let $\underline{X}\to S$ be a map of schemes. 
    We say that a finite Kummer \'etale map $Y\to X$ is {\bf tame relative to $S$} if for every commutative square
    \begin{equation} \label{eqn:tame-log-test-sq}
        \begin{tikzcd}
            \Spec(K)\ar[r] \ar[d] & \underline{X}\ar[d] \\
            \Spec(K^+) \ar[r] & S
        \end{tikzcd}
    \end{equation}
    where $(K, K^+)$ is a strictly henselian valued field and, with $T$ as an abbreviation of $\Spec(K)$ with the pull-back log structure from $X$, the pull-back $Y_T \in \FEt_{T}$ is a prime-to-$p$ object where $p$ is the residue characteristic exponent of $K^+$. 
    
    The full subcategory of Kummer \'etale covers $\FEt_X$ consisting of those which are tame relative to $S$ is denoted by $\FEtt_{X/S}$.
\end{defi}

\begin{lem} \label{lem:tame-log-properties}
    Let $X$ be a saturated log scheme and let $\underline{X}\to S$ be a morphism of schemes. 
    \begin{enumerate}[(a)]
        \item \label{lemitem:tame-log-properties-basic} The full subcategory $\FEtt_{X/S}\subseteq \FEt_X$ is stable under fibre product and subquotients. For finite Kummer \'etale maps $Z\to Y\to X$ with $Y\to X$ tame relative to $S$, we have that $Z\to X$ is tame relative to $S$ if and only if $Z\to Y$ is tame relative to $S$.
        \item \label{lemitem:tame-log-properties-Galois} If $X$ is connected, then $\FEtt_{X/S}$ is a Galois subcategory of $\FEt_X$.
        \item \label{lemitem:tame-log-properties-bc} Given a map of saturated log schemes $X'\to X$ and a commutative square 
        \begin{equation} \label{eqn:tame-log-properties-bc-square}
            \begin{tikzcd}
                \underline{X}'\ar[r] \ar[d] & \underline{X} \ar[d] \\
                S' \ar[r] & S,
            \end{tikzcd}
        \end{equation}
        if $Y\to X$ is tame relative to $S$, then $Y'=Y\times_X X'\to X'$ is tame relative to $S'$. 
        \item \label{lemitem:tame-log-properties-no-log-str} If $X$ has trivial log structure, then $Y\to X$ is tame relative to $S$ if and only if it is tame relative to $S$ in the sense of \cref{defi:tame cover of schemes X over S}.
        \item \label{lemitem:tame-log-properties-tame-local} Let $U\to X$ be a strict \'etale map which is a covering for the Nisnevich topology.
        Then $Y\to X$ is tame relative to $S$ if and only if the base change $Y_U\to U$ is tame relative to $S$.
        \item \label{lemitem:tame-log-properties-strict} A strict finite \'etale map $Y\to X$ is tame relative to $S$ if and only if the underlying map of schemes is tame relative to $S$ in the sense of \cref{defi:tame cover of schemes X over S}. 
        \item \label{lemitem:tame-log-properties-XequalsS} If $\underline{X}=S$ then $\FEtt_{X/S} = \FEt_X$.
        \item \label{lemitem:tame-log-properties-XQ} Let $P\to Q$ be a Kummer \'etale map of saturated monoids with respect to the set of primes non-invertible on $S$ and let $X\to \Spec(P\to \bZ[P])$ be a chart (a strict morphism of log schemes). Set
        \[
            X_Q =  X\times_{\Spec(P\to\bZ[P])} \Spec(Q\to\bZ[Q]) .
        \]
        Then $Y\to X$ is tame relative to $S$ if and only if the base change $Y_Q=Y\times_X X_Q\to X_Q$ is. 
    \end{enumerate}
\end{lem}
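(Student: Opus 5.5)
The plan is to reduce almost everything to formal properties of prime-to-$p$ objects in Galois categories (\cref{rmk:prime-to-p-obj}, \cref{lem:wreath}). The key observation is that the test objects in \cref{def:tame-log} are squares \eqref{eqn:tame-log-test-sq} with $(K,K^+)$ strictly henselian. Pull-back to $T$ along such a square is an exact functor $\FEt_X\to\FEt_T$, so it commutes with fibre products and preserves epimorphisms and subobjects.

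For \labelcref{lemitem:tame-log-properties-basic}, stability under fibre products and subquotients follows because prime-to-$p$ objects form a Galois subcategory of $\FEt_T$. The transitivity statement follows from \cref{lem:wreath} applied to $Z_T\to Y_T$ in $\FEt_T$: the slice category over a component $W$ of $Y_T$ is $\FEt_W$. Every component $W$ of $Y_T$ is again the spectrum of a (finite, hence strictly henselian valued) extension of $K$ with pulled-back log structure, so it is itself a test object for $Y$. Then \labelcref{lemitem:tame-log-properties-Galois} follows from \labelcref{lemitem:tame-log-properties-basic} and \cref{lem:Galois-subcategory-criterion}, using that tameness is checked componentwise on $Y$.

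For \labelcref{lemitem:tame-log-properties-bc}, a test square for $X'/S'$ composed with \eqref{eqn:tame-log-properties-bc-square} gives a test square for $X/S$, and the two pullbacks of $Y$ to $T$ agree. Part \labelcref{lemitem:tame-log-properties-XequalsS} holds because when $\underline X=S$ the map $\Spec(K^+)\to\underline X$ endows $T$ with a log structure pulled back through $\Spec(K^+)$. Using \cref{lem:fKet-local-structure} over $K^+$, $Y_T$ then extends to a finite Kummer étale cover of a log valuation ring whose fundamental group is the free $\hZ'$-module of \cref{prop:FKEt-pi1-local}, so $Y_T$ is prime-to-$p$. For \labelcref{lemitem:tame-log-properties-tame-local}, every map $\Spec(K^+)$-centred point lifts along a Nisnevich cover: the generic point $\Spec K\to\underline X$ lifts to $U$ because Nisnevich covers admit sections on henselian local schemes, and $K$ is a field. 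Thus every test square factors through $U$, and conversely by \labelcref{lemitem:tame-log-properties-bc}. Part \labelcref{lemitem:tame-log-properties-XQ} is similar, via \cref{lem:wreath}: $X_Q\to X$ is surjective finite Kummer étale with $Q^\gp/P^\gp$ of order prime to $p$, so over each $T$ the cover $(X_Q)_T$ is prime-to-$p$ and non-empty. By \cref{rmk:prime-to-p-obj}, $Y_T$ is then prime-to-$p$ if and only if $Y_T\times(X_Q)_T$ is; the latter is tested on components of $(X_Q)_T$, which correspond to test squares for $X_Q$.

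For \labelcref{lemitem:tame-log-properties-no-log-str} and \labelcref{lemitem:tame-log-properties-strict}, when $Y\to X$ is strict, $Y_T$ is a strict finite étale cover of $T$, that is, an étale $K$-algebra. Over the strictly henselian $K$, it is prime-to-$p$ exactly when each component field extension $L/K$ has Galois closure of degree prime to $p$, which is tameness by \cref{defi:tame-extension-of-valued-fields}. It remains to match test squares with points $(y,V,s)$ of $\Spa(Y,S)$. Given such a point, strictly henselise $(k(x),V)$ to get a test square; conversely, a test square restricts to a point of $\Spa(X,S)$ with $K$ a strictly henselian extension of it. Tameness of $k(y)/k(x)$ is equivalent to prime-to-$p$-ness of $\Spec(k(y)\otimes_{k(x)}K^{\sh})$ by definition. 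The main obstacle I expect is the step in \labelcref{lemitem:tame-log-properties-XequalsS} and \labelcref{lemitem:tame-log-properties-XQ} relating the log structure on $T$ to a chart extended over $K^+$. Here one must check that finite Kummer étale covers of $T$ coming from $X$ really factor through Kummer extensions $P\to Q$ of order prime to $p$, which is where the choice of $\Sigma$ in \cref{def:Kummer-etale-map} enters.
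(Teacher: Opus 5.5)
\paragraph{A step in (a) that fails as stated.} Your plan matches the paper's proof part by part: prime-to-$p$ objects in $\FEt_T$ and \cref{lem:wreath} for (a)--(b), stacking squares for (c), lifting test squares along the Nisnevich cover for (e), passing to $\Spec(K^+)$ and using \cref{prop:FKEt-pi1-local} for (g), and \cref{rmk:prime-to-p-obj} plus transitivity for (h). The exception is the step in (a) that produces test objects for $Y$. A connected component $W$ of $Y_T$ need not be the spectrum of a field. The map $Y_T\to T$ is only finite Kummer \'etale, and its underlying scheme is typically non-reduced. For example, take $T=\Spec(\bN\to K)$ with $1\mapsto 0$, and let $Y\to X$ be the standard Kummer cover attached to $\bN\to\tfrac1n\bN$ with $n$ invertible on $X$. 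Then $Y_T=\Spec(\tfrac1n\bN\to K[s]/(s^n))$. So $W$ is not a test object for $Y$, and tameness of $Z\to Y$ says nothing directly about $Z_W$.

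The missing ingredient is topological invariance, \cref{prop:FKEt-top-inv}: since $\FEt_W\simeq\FEt_{W_{\rm red}}$, the object $Z_W$ is prime-to-$p$ if and only if $Z_{W_{\rm red}}$ is. Here $W_{\rm red}=\Spec(K')$ with $K'/K$ finite. Its log structure is the one pulled back from $Y$, because $T\to X$ strict implies $Y_T\to Y$ strict. Moreover $K'$, with the unique extension of $K^+$, is strictly henselian. This is the test square to which tameness of $Z\to Y$ applies, and it is exactly how the paper argues. The same issue occurs in your (h), since the components of $(X_Q)_T=\Spec(Q\to K\otimes_{\bZ[P]}\bZ[Q])$ are generally non-reduced. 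Either insert the same reduction there, or deduce (h) from the corrected (a) as the paper does.

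\paragraph{Points to make explicit.} You should spell out the converse half of transitivity, which identifying components of $Y_T$ with test objects does not by itself give. Take a test square for $Y$, with $T'=\Spec(K)$ carrying the log structure from $Y$. Composing gives a test object $T$ for $X$ with the same underlying scheme, and $T'\to Y_T$ lands in some component $W$. Then $Z_{T'}=Z_W\times_W T'$ is prime-to-$p$, because $Z_W$ is by (i)$\Rightarrow$(iii) of \cref{lem:wreath}, and exact functors preserve prime-to-$p$ objects. The paper phrases this as $Z_{T'}$ being a subobject of $(Z\times_X T)\times_T T'$.

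The same care is needed in (c). The log structures on $\Spec(K)$ pulled back from $X'$ and from $X$ differ in general. What you use is $(Y')_{T'}\simeq Y_T\times_T T'$ along the map $T'\to T$ that is the identity on underlying schemes.

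In (d)/(f), reading off prime-to-$p$-ness of the strict cover $Y_T$ from the \'etale $K$-algebra uses two facts. First, $\FEt_{\underline T}\subseteq\FEt_T$ is a Galois subcategory. Second, \cref{rmk:prime-to-p-obj} then lets you test prime-to-$p$-ness there. This is the entire content of the paper's proof of (f), so state it.

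Finally, the obstacle you anticipate in (g) and (h) is not one. In (g), the extension of $Y_T$ is simply $Y\times_X\Spec(K^+)$, with $\Spec(K^+)$ carrying the log structure pulled back from $X$; you do not need \cref{lem:fKet-local-structure}. In both (g) and (h), the residue characteristic of $K^+$ is non-invertible on $S$ (equal to $\underline X$ in (g)). It therefore lies in the relevant set $\Sigma$, so all Kummer degrees are automatically prime to $p$.
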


\begin{proof}
\ref{lemitem:tame-log-properties-basic} The first two assertions follow directly from the analogous statements for the categories of prime-to-$p$ objects of $\FEt_T$ for test objects $T$ as in the definition. 

For the last one, suppose first that $Z\to Y$ is tame relative to $S$. We will show that $Z\to X$ is tame relative to $S$. Consider a test square \eqref{eqn:tame-log-test-sq} and $T=\Spec(K)$ with log structure pulled back from $X$. We must check that $Z_T\in \FEt_T$ is prime-to-$p$ (with $p$ the residue characteristic exponent of $K^+$), which by \cref{lem:wreath} is equivalent to the fact that for every connected component $W$ of $Y_T$ (with the induced log structure), the preimage $Z_W = Z_T\times_{Y_T} W$ is prime-to-$p$ as an object of $\FEt_W$. Let thus $W\subseteq Y_T$ be a connected component and let $T'=W_{\rm red}$ with the induced log structure. By topological invariance \cite[Proposition~4.2.4(d)]{AHLS-Log}, we have $\FEt_W \simeq \FEt_{T'}$, so the second condition is equivalent to $Z_{T'}\in\FEt_{T'}$ being prime-to-$p$. On the other hand, since $Y\to X$ is integral, so is $T'\to W\to Y_T\to T$. Since $T'$ is connected and reduced and $T=\Spec(K)$, we have $T'=\Spec(K')$ for an algebraic extension $K'$ of $K$. Choosing a valuation subring $(K')^+\subseteq K'$ extending $K$, we obtain a test square of the form \eqref{eqn:tame-log-test-sq} but for $Z\to Y$ relative to $S$. As $Z\to Y$ is tame relative to $S$, the object $Z_{T'}\in \FEt_{T'}$ is prime-to-$p$ as desired.

Conversely, suppose $Z\to X$ is tame relative to $S$ and consider a test square 
\[
    \begin{tikzcd}
        \Spec(K)\ar[r] \ar[d] & \underline{Y}\ar[d] \\
        \Spec(K^+) \ar[r] & S.
    \end{tikzcd}
\]
This square induces by composition with $Y\to X$ a test square as in \eqref{eqn:tame-log-test-sq}. Let $T'$ (resp.\ $T$) be $\Spec(K)$ with log structure pulled back from $Y$ (resp.\ $X$); we have a map $T'\to T$ which is the identity on the underlying scheme. Then $Z_{T'}=Z\times_{Y} T'$ is a sub-object of $Z\times_X T' = (Z\times_X T)\times_T T'$. By assumption, $Z\times_X T$ is a prime-to-$p$ object of $\FEt_T$, and hence its pull-back to $\FEt_{T'}$ is prime-to-$p$.

\ref{lemitem:tame-log-properties-Galois} 
Follows from \ref{lemitem:tame-log-properties-basic} and \cref{lem:Galois-subcategory-criterion}.

\ref{lemitem:tame-log-properties-bc} 
Consider a test square 
    \[
        \begin{tikzcd}
            \Spec(K)\ar[r] \ar[d] & \underline{X}'\ar[d] \\
            \Spec(K^+) \ar[r] & S'
        \end{tikzcd}
    \]
and let $T'$ be $\Spec(K)$ with log structure pulled back from $X'$. By stacking with \eqref{eqn:tame-log-properties-bc-square} we obtain a test square for $X/S$, and let $T$ be $\Spec(K)$ with log structure coming from $X$. We have a map $T'\to T$ which is the identity on the underlying scheme. Now, the pull-back $(Y')_{T'}$ can be expressed as $Y_T\times_T T'$. Since $Y\to X$ is tame relative to $S$, the object $Y_T$ of $\FEt_T$ is prime-to-$p$, and hence so is its base change to $T'$.

\ref{lemitem:tame-log-properties-no-log-str} 
Clear from the definitions.

\ref{lemitem:tame-log-properties-tame-local} 
Follows since in this case 
every test square \eqref{eqn:tame-log-test-sq} lifts to $U$.

\ref{lemitem:tame-log-properties-strict} 
In this case $Y\to X$ is the pull-back of the \'etale map $\underline{Y}\to\underline{X}$ under $X\to\underline{X}$. Consider a test square \eqref{eqn:tame-log-test-sq}. We have a Galois subcategory $\FEt_{\underline{T}}\subseteq \FEt_T$, and hence $Y_T = (\underline{Y})_{\underline T}\times_{\underline{T}} T$ is prime-to-$p$ as an object of $\FEt_T$ if and only if $(\underline{Y})_{\underline T}$ is prime-to-$p$ as an object of $\FEt_{\underline{T}}$. The equivalence follows formally from this.  

\ref{lemitem:tame-log-properties-XequalsS} 
Consider a test square \eqref{eqn:tame-log-test-sq} and let $T^+$ be $\Spec(K^+)$ endowed with the log structure pulled back from $X$ (which makes sense since $\underline{X}=S$). We can base change $Y\to X$ to $T^+$ before base changing further to $T$. As $K^+$ is strictly henselian, by \cref{prop:FKEt-pi1-local} we see that $\pitame(T^+)$ is prime to $p$. So the base change of $Y$ to $T^+$, and hence to $T$, is prime to $p$.

\ref{lemitem:tame-log-properties-XQ} We may assume that $X=\Spec(K)$ and $S=\Spec(K^+)$ for a strictly henselian valued field $(K,K^+)$.
Now $X_Q\to X$ is a non-empty prime-to-$p$ object (when connected, it is Galois of degree $[Q^{\rm gp}:P^{\rm gp}]$ which is prime to $p$), so $X_Q\to X$ is tame.
By \ref{lemitem:tame-log-properties-basic} we see that $Y_Q\to X_Q$ is tame relative to $S$ if and only if $Y_Q\to X$ is.  Thus $Y\to X$ is prime-to-$p$ in $\FEt_X$ if and only if $Y_Q = Y \times_X X_Q$ is prime-to-$p$, see  \cref{rmk:prime-to-p-obj}.
\end{proof}

\begin{defi} \label{def:FKEt-pi1-tame}
    Let $X$ be a connected log scheme and let $\underline{X}\to S$ be a morphism of schemes. For a log geometric point $\overline{x}$ of $X$, we denote by $\pitame(X/S, \overline{x})$ the fundamental group of the Galois category $\FEtt_{X/S}$ with respect to the fibre functor at $\ov{x}$, called the {\bf tame fundamental group} of $X$ relative to $S$. 
\end{defi}

By \cref{lem:tame-log-properties}~\ref{lemitem:tame-log-properties-bc}, a commutative square \eqref{eqn:tame-log-properties-bc-square} with connected log schemes $X$ and $X'$ induces a continuous map $\pitame(X'/S',\ov x)\to\pitame(X/S,\ov x)$. This map is surjective if $X'\to X$ is an isomorphism, and $\pitame(X/X,\ov{x})\simeq \pi_1(X,\ov{x})$. If $X$ has trivial log structure, we have $\pitame(X/S,\ov{x})\simeq \pitame(\underline{X}/S,\ov{x})$.

\subsection{Log stratifications}
\label{ss:log-strat}

For our results about the tame Kummer \'etale fundamental group we will need to work with log stratifications. This needs some extra care as the monoids charting our log structures might not be finitely generated. Fortunately, things work well if the underlying scheme is noetherian (and with a global chart). 

Recall that a {\bf face} of an integral monoid $P$ is a submonoid $F\subseteq P$ such that $x+y\in F$ implies $x, y\in F$. A {\bf prime ideal} of $P$ is a subset $\fp\subseteq P$ such that $P + \fp = \fp$ and $x+y\in \fp$ implies $x\in \fp$ or $y\in \fp$. The quotient $P/F$ is the quotient of the localisation $(P+F^{\rm gp})/F^{\rm gp} \subseteq P^{\rm gp}/F^{\rm gp}$. Faces of $P$ are in an inclusion-reversing bijection $F \mapsto P \setminus F$ with prime ideals of $P$. The set of all prime ideals of $P$ is denoted by $\Spec(P)$, which we endow with the topology generated by the sets
\[
    D(x) = \{\fp \in \Spec(P) \ | \ x \notin \fp\} \qquad (x\in P).
\]
The space $\Spec(P)$ is a $T_0$-space. 

We proved in \cite[Corollary~2.3.3]{AHLS-Log} that if $\vtheta\colon P\to Q$ is an sfp (e.g.\ smooth) morphism of saturated monoids, then the fibres of the map 
\[ 
    \Spec(Q)\la \Spec(P), \quad \fq \mapsto \vtheta^{-1}(\fq)
\]
are finite; if $P\to Q$ is Kummer \'etale, this map is a homeomorphism. In particular, if $P$ is a monoid of type $\typeVd$, i.e.\ finitely generated over a divisible valuative submonoid $V$, 
then the fibres of $\Spec(P)\to \Spec(V)$ are finite. Hence, $P$ has finitely many faces if $V$ does. For any monoid $P$, we have a natural continuous map
\begin{equation} \label{eqn:SpecZP-to-SpecP} 
    \Spec(\bZ[P]) \la \Spec(P), \quad \fp \mapsto \fp\cap P.
\end{equation}

Consider a log scheme $X$ that admits a global chart $\alpha\colon P\to \cM(X)$ where $P$ is a saturated monoid. The chart corresponds to a strict map
\[ 
    X \la \Spec(\bZ[P])
\]
which we compose with \labelcref{eqn:SpecZP-to-SpecP} to obtain a continuous map
\[ 
    \ov\alpha\colon X \la \Spec(P), \quad x \mapsto P \setminus \alpha^{-1}_x(\cO_{X,x}^\times)
\]
where $\alpha_x$ is the composition $P\to \cM(X) \to \cM_{X,x}$. The fibres of $\ov\alpha$ are called the {\bf log strata} of~$X$. They are locally closed if $P$ has finitely many faces. With no such assumption on $P$, we have the following result in the noetherian case.

\begin{lem} \label{lem:noeth-faces}
    Let $X$ be a log scheme admitting a global chart by a monoid $P$ of type $\typeVd$ (or more generally which is sfp over a finitely generated valuative submonoid). Suppose that the underlying scheme of $X$ is noetherian. Then the image of $\ov\alpha\colon X\to \Spec(P)$ is finite, and its fibres are locally closed subsets of $X$. 
\end{lem}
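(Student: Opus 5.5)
We need to prove: a log scheme $X$ with a global chart $\alpha\colon P\to\cM(X)$, where $P$ is of type $\typeVd$ (or sfp over a finitely generated valuative submonoid) and $\underline{X}$ is noetherian. Then $\ov\alpha(X)\subseteq\Spec(P)$ is finite, and the fibres of $\ov\alpha$ are locally closed in $X$.

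The plan is to exploit that $\ov\alpha$ is continuous and that $\Spec(P)$ is close to a finite space. The one genuinely non-finite feature, the divisible valuative submonoid $V$, contributes a totally ordered chain of faces. Noetherianity of $X$ will then cut this chain down to finitely many relevant faces.

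\emph{Step 1: reduction to $V$.} Let $V\subseteq P$ be the valuative submonoid over which $P$ is finitely generated (sfp). By \cite[Corollary~2.3.3]{AHLS-Log}, the restriction map $\rho\colon\Spec(P)\to\Spec(V)$ has finite fibres. So it suffices to show that the composite $\beta=\rho\circ\ov\alpha\colon X\to\Spec(V)$ has finite image. Then $\ov\alpha(X)$ is finite, being contained in finitely many finite fibres.

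\emph{Step 2: $\Spec(V)$ is a chain.} Since $V$ is valuative, any two $v,w\in V$ satisfy $v-w\in V$ or $w-v\in V$. Hence the prime ideals of $V$ are totally ordered by inclusion, and the open sets $D(v)$ are totally ordered as well. For each $v\in V$ the set
\[
\beta^{-1}(D(v))=\{x\in X : \alpha_x(v)\in\cO_{X,x}^\times\}
\]
is open in $X$. It is the non-vanishing locus of the image of $v$ in $\cO(X)$, which is open because $\cM_X$ is a log structure.

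These opens form a chain indexed by $V$, and $v\le w$ in the divisibility order gives reverse inclusion. Since $\underline{X}$ is noetherian, every chain of opens satisfies both ACC and DCC? Noetherian only gives ACC on opens. So I would argue instead via the closed complements $Z_v=V(\alpha(v))$. These form a chain of closed subsets, hence satisfy DCC. A chain with DCC can be infinite but is well-ordered. To get finiteness I would use both conditions: noetherian spaces have ACC on opens, equivalently DCC on closed sets, so the chain $\{Z_v\}$ is well-ordered decreasing. Each distinct $Z_v$ is a union of irreducible components of the noetherian closed sets involved, and by noetherian induction the chain stabilises in both directions on generic points.

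Concretely: $X$ has finitely many points that are generic points of irreducible components of some $Z_v$? That is not a priori finite. The better route is to note that the sets $\beta^{-1}(\fp)$ are differences of consecutive members of the chain. The value $\beta(x)$ is determined by which $Z_v$ contain $x$, that is, by a cut in the chain. I then restrict to the finitely many generic points of irreducible components, recursively. Noetherian induction shows each closed subset $Z$ meets only finitely many strata: take the generic points $\eta_1,\dots,\eta_r$ of $Z$. The stratum $\beta(\eta_i)=\fp_i$ is the maximal value on the component, since $\beta$ is monotone under specialisation, as ideals grow under specialisation. The locus where $\beta>\fp_i$ within component $i$ is the intersection of the $Z_v$ for $v\notin\fp_i$. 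This intersection is closed, and by DCC it equals a single $Z_v\cap\ov{\{\eta_i\}}$, which is a proper closed subset. Applying induction gives finiteness.

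\emph{Step 3: local closedness.} Since the image is finite, say $\{\fq_1,\dots,\fq_m\}\subseteq\Spec(P)$, each fibre $\ov\alpha^{-1}(\fq_j)$ is the intersection of $\ov\alpha^{-1}(\ov{\{\fq_j\}})$, which is closed, with the complement of the finitely many $\ov\alpha^{-1}(\ov{\{\fq_k\}})$ for which $\fq_j\notin\ov{\{\fq_k\}}$. Since the image is finite, this complement is the preimage of an open subset of the finite $T_0$ subspace, so the fibre is locally closed.

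The main obstacle I expect is Step 2. The key points there are the monotonicity of $\beta$ under specialisation and the use of DCC to replace the infinite intersection of the $Z_v$ by a single member.
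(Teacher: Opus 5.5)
Your Steps~1 and~3 are fine and agree with the paper: $\Spec(P)\to\Spec(V)$ has finite fibres, and a continuous map to a finite $T_0$-space has locally closed fibres. The gap is in Step~2.

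On a component $C_i$ with generic value $\fp_i$, the locus $\{x\in C_i : \beta(x)\supsetneq\fp_i\}$ is not $\bigcap_{v\notin\fp_i}Z_v$. That intersection is empty, since $0\notin\fp_i$ and $Z_0=\emptyset$. The locus is instead the union $\bigcup_{v\notin\fp_i}(Z_v\cap C_i)$ of an \emph{increasing} chain of closed subsets. The descending chain condition says nothing about such unions, and in a noetherian space they need not be closed, so your noetherian induction never starts.

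In fact no argument can succeed that uses only the topology of $\underline{X}$, continuity of $\beta$, monotonicity under specialisation and chain conditions. Here is a counterexample:
\begin{itemize}
\item Let $V$ be the nonnegative part of $\bigoplus_{n\geq 1}\bQ$, ordered by the sign of the last nonzero coordinate. This is a divisible valuative monoid whose primes include a chain $\mathfrak{r}_1\supsetneq\mathfrak{r}_2\supsetneq\cdots$ with $\bigcap_n\mathfrak{r}_n=\emptyset$.
\item Pick distinct closed points $c_1,c_2,\dots$ of $\bA^1_k$. Send $c_n\mapsto\mathfrak{r}_n$ and every other point to $\emptyset$.
\item The preimage of each basic closed set $\{\fp : v\in\fp\}$ is a finite set $\{c_1,\dots,c_j\}$. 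So this map is continuous and has every property your argument uses.
\item Yet its image is infinite, and the locus where it exceeds the generic value is $\{c_1,c_2,\dots\}$, which is not closed.
\end{itemize}

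The missing ingredient is ring-theoretic, namely Krull's intersection theorem. The paper reduces, via the finitely many irreducible components, to $X=\Spec(R)$ with $R$ a noetherian domain. It then shows that the image in $\Spec(V)$ has at most two points.

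Suppose $\fp_0\subsetneq\fp_1\subsetneq\fp_2$ were in the image, with $\fp_i=\theta^{-1}(\fq_i)$ for primes $\fq_i$ of $R$. Choose $x\in\fp_1\setminus\fp_0$ and $y\in\fp_2\setminus\fp_1$. In the sharp valuative quotient $V/(V\setminus\fp_1)$, the element $y$ becomes $0$ while $x$ does not. Hence $-x+ny\notin V$, and by valuativity $x-ny\in V$ for all $n\geq 0$.

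Then $\theta(x)=\theta(y)^n\theta(x-ny)$ lies in $\bigcap_n\theta(y)^nR_{\fq_2}=0$. So $\theta(x)=0$ in the domain $R$, contradicting $\theta(x)\notin\fq_0$. This is exactly what excludes the example above. With finiteness in hand, your Step~3 finishes the proof.
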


\begin{proof}
Let $V\subseteq P$ be a divisible valuative submonoid such that $P$ is finitely generated over $V$. Since the fibres of the map $\Spec(P)\to \Spec(V)$ are finite, to show the first assertion it suffices to show that the image of $X\to\Spec(V)$ is finite. 
Since $X$ is noetherian, it has finitely many irreducible components, and therefore it suffices to treat the case when $X=\Spec(R)$ is the spectrum of a noetherian domain $R$. We shall see that in this case the image of $X\to \Spec(V)$ consists of at most two points. 

Suppose that the image of $\Spec(R)\to \Spec(V)$ contains three points, corresponding to prime ideals $\fp_0,\fp_1,\fp_2$ of $V$. Since the prime ideals of $V$ are totally ordered, we may assume that $\fp_0\subsetneq \fp_1\subsetneq \fp_2$ and can choose elements $x\in \fp_1\setminus \fp_0$ and $y\in \fp_2\setminus \fp_1$. 

We claim that $x - ny$ belongs to $V$ for all $n\in\bZ$. Indeed, it suffices to show that $-x + ny$ does not belong to $V$. Let $F_i = V\setminus\fp_i$ denote the respective faces, and let $W = V/F_1$, which is a sharp valuative monoid. The quotient map $V\to W$ sends $y$ to zero and $x$ to a non-zero element. This means that the image of $-x+ny$ in $W^{\rm gp}$ does not belong to $W$, and so $-x+ny$ does not belong to $V$. This proves the claim.

Let $\theta\colon V\to R$ denote the monoid map under consideration. Then $\theta(x)$ is infinitely divisible by $\theta(y)$ in $R$ due to  the claim and
\[
\theta(x) = \theta(y)^n \cdot \theta(x-ny).
\]
Let $\fq_i\subseteq R$  be a prime over $\fp_i\subseteq P$, for $i=1,2,3$. Since the element $\theta(y) \in \fq_2$ is not invertible in the local ring $R_{\fq_2}$, by Krull's intersection theorem we obtain $\bigcap_{n \geq 1} \theta(y)^nR_{\fq_2} = 0$ in $R_{\fq_2}$. 
So $\theta(x)$ maps to $0$ in $R_{\fq_2}$. Since $R$ is a domain, we get that $\theta(x) = 0$. This contradicts the fact that $\theta(x)\notin \fq_0$.

For the second assertion, the image $T = \ov\alpha(X)\subseteq \Spec(P)$ is a finite $T_0$-space, and hence for every $y\in T$, the subspace $\{y\}\subseteq T$ is locally closed. Therefore the fibres of $\ov\alpha\colon X\to T$ are locally closed subsets of $X$. 
\end{proof}

\subsection{Locally constant log structure}
\label{ss:loc-const}

We call a log structure $\cM_X$ on a scheme $X$ {\bf locally constant} if the sheaf $\overline{\cM}_X = \cM_X/\cM_X^\times$ is locally constant (in the \'etale topology). Our next goal is to relate the Kummer \'etale fundamental group of a log scheme to that of the underlying scheme in case the log structure is locally constant (see \cref{prop:torus-fibration-pi1} and \cref{cor:pi1tame locally constant log is fg}). This will be a key ingredient in our proof of finite generation in \S\ref{ss:strat}.

\begin{lem} \label{lem:ovalpha--const}
    Let $X$ be a log scheme of type $\typeVd$ and $\alpha\colon P\to \cM(X)$ a chart where $P$ is a monoid of type $\typeVd$, and $\ov\alpha\colon X\to \Spec(P)$ is the induced map. Suppose that $\ov{\alpha}$ maps $X$ to a single point of $\Spec(P)$ corresponding to a face $F\subseteq P$. 
    
    Then the sheaf $\ov{\cM}_X$ is the constant \'etale sheaf associated to the quotient monoid $P/F$. In particular, the log structure on $X$ is locally constant. 
\end{lem}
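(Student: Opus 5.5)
The plan is to compute the stalks of $\ov{\cM}_X$ at geometric points and then show that these identifications glue to an isomorphism with the constant sheaf $\underline{P/F}$.

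\emph{Stalks.} Let $\bar x$ be a geometric point of $X$ over $x$. Because $\alpha$ is a chart, the induced map $P\to\ov{\cM}_{X,\bar x}$ is surjective. It identifies $\ov{\cM}_{X,\bar x}$ with the quotient of $P$ by the face $\alpha_{\bar x}^{-1}(\cO_{X,\bar x}^\times)$; this is the standard description of the stalks of a charted log structure, cf.\ the formula $\ov\cM_{X,x}=P/\alpha^{-1}(A^\times)$ in \cref{prop:FKEt-pi1-local}.

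\emph{Identifying the face.} The face $\alpha_{\bar x}^{-1}(\cO_{X,\bar x}^\times)$ equals $\alpha_x^{-1}(\cO_{X,x}^\times)$, since a section of $\cO_{X,x}$ is a unit if and only if its image in the strict henselisation is one. By definition of $\ov\alpha$, this face is the complement of the prime ideal $\ov\alpha(x)$. The hypothesis says $\ov\alpha(x)=P\setminus F$ for every $x$, so every stalk is $P/F$.

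\emph{Gluing.} The chart gives a map of monoid sheaves $\underline{P}_X\to\ov{\cM}_X$. Every section of $F$ maps to a unit in each stalk, hence to $0$ in $\ov\cM_X$, so the map factors through $\underline{P}_X\to\underline{P/F}_X$. Recall that $P/F$ is the image of $P+F^{\gp}$ in $P^{\gp}/F^{\gp}$. One has to check that the factored map $\underline{P/F}_X\to\ov\cM_X$ is well defined, i.e.\ that the induced map on $P+F^{\gp}$ kills $F^{\gp}$. This holds because $\ov\cM_X$ is integral: $X$ is saturated, hence integral.

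\emph{Conclusion.} On stalks the map $\underline{P/F}_X\to\ov\cM_X$ is exactly the isomorphism $P/F\isomto\ov\cM_{X,\bar x}$ from the first step, so it is an isomorphism of étale sheaves. Therefore $\ov\cM_X$ is constant, and in particular locally constant.

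The main point needing care is the stalk formula: the quotient of $P$ by the face of elements becoming units must be identified with $\ov\cM_{X,\bar x}$. This uses that $\cM_{X,\bar x}$ is the pushout of $P\leftarrow\alpha^{-1}(\cO^\times)\to\cO^\times$ together with integrality, and no finiteness of $P$ is needed. I would cite the general fact for integral charts from \cite{AHLS-Log}, or verify it directly from the pushout description.
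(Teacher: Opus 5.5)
Your proposal is correct and follows the paper's proof: each stalk is $P\oplus_F\cO_{X,\bar x}^\times$ modulo units, i.e.\ $P/F$, and the global map $P\to\ov\cM_X(X)$ factors through $P/F$, giving a morphism $\underline{P/F}_X\to\ov\cM_X$ that is an isomorphism on stalks. Your appeal to integrality of $\ov\cM_X$ in the gluing step is harmless but not needed: any monoid map sending the face $F$ to $0$ automatically factors through $P/F$, because for integral $P$ this is the cokernel of $F\to P$.
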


\begin{proof}
By definition of $\ov\alpha$, for every $x\in X$, the preimage of $\cO_{X,\ov{x}}^\times$ under $\alpha_{\ov x}\colon P\to \cO_{X,\ov{x}}$ is equal to $F$. Thus $\cM_{X,\ov{x}} \simeq P\oplus_F \cO_{X,\ov{x}}^\times$, the map $P\to \cM_X(X)\to \ov{\cM}_X(X)$ factors through $P/F$ and induces a map of sheaves $\underline{P/F}_X\to \ov{\cM}_X$ which is an isomorphism on stalks and hence an isomorphism. 
\end{proof}

Combining the above with the results on log stratifications in the previous section, we obtain the following that we will use in the proof by noetherian induction of \cref{thm:pi1tame log is fg}.

\begin{cor} \label{cor:generically-log-constant}
    Let $X$ be a non-empty log scheme of type $\typeVd$ whose underlying scheme is noetherian. Then there exists a non-empty open subset $U\subseteq X$ on which the log structure is locally constant. 
\end{cor}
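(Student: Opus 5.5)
The idea is to localise to an open piece carrying a global chart by a monoid of type $\typeVd$. There \cref{lem:noeth-faces} says the log stratification is a finite partition into locally closed pieces, so some stratum contains a non-empty open set, and \cref{lem:ovalpha--const} then shows the log structure is locally constant on it.

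Concretely, I would proceed as follows. Since $X$ is of type $\typeVd$ and non-empty, there is an étale map $V\to \underline{X}$ with non-empty image such that $V$ (with the pulled-back log structure) admits a global chart $\alpha\colon P\to\cM(V)$ by a monoid $P$ of type $\typeVd$. Shrinking, we may take $V$ affine, so $\underline V$ is noetherian, since it is of finite type over... rather, étale over a noetherian scheme and quasi-compact. By \cref{lem:noeth-faces}, the map $\ov\alpha\colon V\to\Spec(P)$ has finite image, and its fibres $V_1,\dots,V_m$ are locally closed and cover $V$. A noetherian space is not a finite union of nowhere dense locally closed subsets: pick an irreducible component, whose generic point lies in some $V_i$. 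Then $V_i$, being locally closed and containing that generic point, contains a non-empty open subset $W$ of $V$ (shrink to avoid the other components). On $W$ the chart $P\to\cM(W)$ has $\ov\alpha$ constant, so \cref{lem:ovalpha--const} gives that $\ov\cM_W$ is constant, equal to $P/F$.

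Finally, I would descend back to $X$. The image $U$ of $W$ in $\underline X$ is open, because étale maps are open, and non-empty. Local constancy of $\ov\cM_X$ is étale local, and $\ov\cM_X$ pulls back to $\ov\cM_W$ for the strict map $W\to X$. So $\ov\cM_X|_U$ is locally constant in the étale topology, since $W\to U$ is an étale cover of $U$.

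The only delicate point is the choice of $W$. If one insists on $U$ being Zariski open in $X$, it must come from an étale chart and not a Zariski one, and openness of étale morphisms handles this. The rest is routine noetherian topology.
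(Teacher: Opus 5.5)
Your proof is correct and follows essentially the paper's route: pass étale locally to a global chart by a monoid of type $\typeVd$, use \cref{lem:noeth-faces} to get finitely many locally closed strata, find an open one, and apply \cref{lem:ovalpha--const}. The differences are minor. The paper gets the open stratum directly from the fact that the finite image of $\ov\alpha$ is a $T_0$-space and so has an open point, whereas you use the generic point of an irreducible component. You also spell out the descent to a Zariski open of $X$ via openness of étale maps, which the paper compresses into the remark that the question is étale local.
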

\begin{proof}
    The question of finding a Zariski open $U$ on which $\overline{\cM}_X|_U$ is locally constant is \'etale local on $X$. So we may assume that $X$ admits a global chart $\alpha\colon P\to \cM(X)$ as in \cref{lem:ovalpha--const}.
    By \cref{lem:noeth-faces} there is a finite log stratification, and since $\Spec(P)$ is $T_0$, there is a non-empty open stratum $U$. 
    Then the log structure on $U$ is locally constant by \cref{lem:ovalpha--const}.
\end{proof}

We now turn our attention to a log scheme $X$ of type $\typeVd$ with a locally constant log structure.
We moreover assume that $X$ is connected and fix a log geometric point $\tilde{x} \to X$ above a geometric point $\ov{x} \to \underline{X}$. We endow $\ov{x}$ with the induced log structure. We have a homomorphism
\[
     x_\ast \colon \pi_1(\overline{\cM}_{X,\ov x}) = \pi_1(\ov x,\tilde x) \la \pi_1(X, \tilde x).
\]
We think of the morphism forgetting the log structure
\[
    \ep\colon X \la \underline{X}
\]
as a (locally constant) fibration whose fibres are tori with character group $\overline{\cM}_{X,\ov x}^\gp$. Our next goal is to show right exactness of the associated homotopy sequence when the log structure is locally constant. This will rely on the following lemma. 

\begin{lem}
\label{lem:strict-locus-clopen}
    Let $X$ be a log scheme of type $\typeVd$ with a locally constant log structure. Let $Y\to X$ be a Kummer \'etale map. Then the log structure on $Y$ is locally constant and the set 
    \[ 
        \{ y\in Y \,:\, \ov{\cM}_{X,f(\ov y)}\isomto \ov{\cM}_{Y,\ov y} \text{ for some/every geom.\ pt. $\ov y\to \underline{Y}$ above $Y$}\}
    \]
    (called the {\bf strict locus} of $Y\to X$) is both open and closed in $Y$.
\end{lem}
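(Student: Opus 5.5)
The claim is étale local on $X$ and on $Y$: local constancy of $\ov{\cM}_Y$ is an étale-local property of $Y$, and openness/closedness of a subset of $Y$ can be checked on an étale covering of $Y$. Hence I may assume that $X$ has a chart $\alpha\colon P\to\cM(X)$ with $P$ of type $\typeVd$, and that $Y\to X$ has a chart by a Kummer étale map $\vtheta\colon P\to Q$ with $\underline{Y}\to\underline{X}\times_{\Spec(\bZ[P])}\Spec(\bZ[Q])$ étale (\cref{def:Kummer-etale-map}). Since $\ov{\cM}_X$ is locally constant, I shrink $X$ further, to a connected étale neighbourhood of a given point, so that $\ov{\cM}_X$ is constant. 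Then all stalks $\ov{\cM}_{X,\ov x}=P/\alpha_{\ov x}^{-1}(\cO^\times_{X,\ov x})$ agree. I expect that one may then arrange that $\ov\alpha$ is constant, equal to a single face $F\subseteq P$; if needed, replace $P$ by its localisation $P+F^{\gp}$, which is still of type $\typeVd$.

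Next, I compute $\ov\beta\colon Y\to\Spec(Q)$. The composite $Y\to X\to\Spec(P)$ equals the composite of $\ov\beta$ with $\Spec(Q)\to\Spec(P)$. By \cite[Corollary~2.3.3]{AHLS-Log}, the latter map is a homeomorphism because $\vtheta$ is Kummer étale. So $\ov\beta$ is constant, with value the unique face $G\subseteq Q$ satisfying $\vtheta^{-1}(G)=F$ (concretely $G$ is the saturation of $F$ in $Q$). By \cref{lem:ovalpha--const}, $\ov{\cM}_Y$ is the constant sheaf $Q/G$, which proves the first claim. At every geometric point, the map $\ov{\cM}_{X,f(\ov y)}\to\ov{\cM}_{Y,\ov y}$ is then the fixed map $P/F\to Q/G$, independent of $\ov y$.

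So, locally on $X$ and $Y$, the strict locus is either empty or all of $Y$. This shows it is open, and its complement is open as well, so it is clopen. The "some/every" in the statement is justified because the stalk map depends only on the underlying point of $Y$.

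The main obstacle is the reduction step: arranging a chart whose induced $\ov\alpha$ is constant, using only the local constancy of $\ov{\cM}_X$. The difficulty is that $P$ need not be finitely generated, so I cannot just take $P=\ov{\cM}_{X,\ov x}$ with the standard lifting argument for fs charts. I would first try to lift $\ov{\cM}_{X,\ov x}$ to a chart étale locally. Failing that, I would use that $\ov\alpha(X)$ is contained in the set of faces $F'$ with $P/F'\simeq\ov{\cM}_{X,\ov x}$ compatibly, and that these are totally ordered and equal by exactness of the induced maps.
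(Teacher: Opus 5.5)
Your proposal has the same skeleton as the paper's proof: a local chart, a single face $F$, a Kummer \'etale chart $P\to Q$, the unique face $G$ of $Q$ over $F$, $\ov{\cM}_Y\simeq\underline{Q/G}$, and a strict locus that is all-or-nothing locally. It is correct once the step you flag as the main obstacle is filled in, and that step is easy.

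Let $U$ be a connected \'etale neighbourhood on which $\ov{\cM}_X$ is constant, as you already arrange. For every geometric point $\bar y$ of $U$, the restriction $\ov{\cM}_X(U)\to\ov{\cM}_{X,\bar y}$ is an isomorphism of monoids, and $P\to\ov{\cM}_{X,\bar y}$ factors as $P\to\ov{\cM}_X(U)\to\ov{\cM}_{X,\bar y}$. The face $\alpha_{\bar y}^{-1}(\cO^\times_{X,\bar y})$ is the preimage of $0$ under $P\to\ov{\cM}_{X,\bar y}$. It therefore equals the preimage of $0$ under $P\to\ov{\cM}_X(U)$, which does not depend on $\bar y$. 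So $\ov\alpha$ is constant for \emph{every} chart on $U$.

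No lifting of $\ov{\cM}_{X,\bar x}$ to a chart, no ordering of faces and no localisation of $P$ is needed, and finite generation of $P$ plays no role. What matters is not that the stalks are isomorphic as abstract monoids, but that they are identified compatibly with the maps from $P$; constancy on a connected $U$ gives exactly this.

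Your worry about non-finitely generated $P$ would only be justified if $U$ could not be taken connected. In that case each $p\in P$ only gives a clopen locus $\{\bar y : p\in\alpha_{\bar y}^{-1}(\cO^\times_{X,\bar y})\}$, and an intersection of infinitely many such loci need not be open. Connected neighbourhoods are available, for example, when $\underline{X}$ is locally noetherian. The paper settles this step in one line by appeal to \cref{lem:ovalpha--const}.

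Where you genuinely differ from the paper is the computation on $Y$.
\begin{itemize}
\item The paper keeps the given chart $P$ and invokes the chart lifting property to write $Y$ locally as $X\times_{\Spec(\bZ[P])}\Spec(\bZ[Q])$. It then reduces to $X$ reduced and, by strict base change, to the stratum $X=\Spec(P\to\bZ[F^{\rm gp}])$, where it computes $Y_{\rm red}\simeq\Spec(Q\to\bZ[G^{\rm gp}])$ explicitly.
\item You use whatever chart $P\to Q$ the definition of Kummer \'etale provides. You note that $\ov\beta\colon Y\to\Spec(Q)$ followed by the homeomorphism $\Spec(Q)\to\Spec(P)$ equals $\ov\alpha\circ f$, because $\cO_{X,f(y)}\to\cO_{Y,y}$ is local. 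You then apply \cref{lem:ovalpha--const} to $Y$ itself.
\end{itemize}
Your route avoids chart lifting and both reduction steps, so it is somewhat more direct. Its only extra requirement is that $\ov\alpha$ be constant for an arbitrary chart, which the argument above provides.
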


\begin{proof}
The assertion is \'etale local on $X$ and we may replace $Y$ by a direct summand as a Kummer \'etale cover of $X$. We may therefore assume that $X$ admits a global chart $\alpha \colon P \to \cM(X)$ where $P$ is a monoid of type $\typeVd$. By \cref{lem:ovalpha--const} there is a face $F$ of $P$ and an isomorphism $\underline{P/F}_X \isomto \overline{\cM}_X$.

Using the chart lifting property \cite[Proposition~3.5.8]{AHLS-Log}, working locally and replacing $Y$ with a clopen subset we may assume that $Y\simeq X\times_{\Spec(\bZ[P])}\Spec(\bZ[Q])$ for a Kummer \'etale map of monoids $P\to Q$ (with respect to the set of primes non-invertible on $X$). 

We may further assume that $X$ is reduced. Note that in this case the map $\ov\alpha : X \to \Spec(P)$ factors through the subscheme (log stratum of $\Spec(\bZ[P])$)
\[ 
    \Spec(P\to \bZ[F^\gp])
\]
where the map is the inclusion $F\hookrightarrow F^\gp$ and sends everything else to zero (it factors set-theoretically, but since $X$ is reduced, it factors scheme-theoretically). Since for a strict map $g\colon W\to Z$ we have $g^*\ov{\cM}_Z\isomto \ov{\cM}_W$, we may assume that $X = \Spec(P\to \bZ[F^\gp])$, in which case 
\[ 
    Y_{\rm red} = \left(X\times_{\Spec(\bZ[P])}\Spec(\bZ[Q]))\right)_{\rm red} \simeq \Spec(Q\to \bZ[G^\gp])
\]
where $G\subseteq Q$ is the face generated by $F$ (note that since $P\to Q$ is Kummer \'etale, we have $F = P\cap G$). Thus $\overline{\cM}_Y$ is the constant sheaf with value $Q/G$, and $Y$ has locally constant log structure. Moreover, if $P/F\isomto Q/G$, the strict locus is everything, and otherwise it is empty. The strict locus is therefore open and closed.
\end{proof}

We shall combine \cref{lem:strict-locus-clopen} with the following result about the strict locus.

\begin{lem} \label{lem:criterion strict}
    Let $X$ be a log scheme of type $\typeVd$ with a locally constant log structure, let $Y\to X$ be a Kummer \'etale map, and let $\ov x$ be a geometric point of $\underline{X}$.  Then the set of points in the geometric fibre $h^{-1}(\ov x)$ above $\ov x$ at which $h$ is strict agrees with the projection under $h^{-1}(\tilde{x}) \to h^{-1}(\ov x)$ of the 
    fixed points of the natural action of $\pi_1(\ov x,\tilde x)$ on the log geometric fibre $h^{-1}(\tilde x)$.
\end{lem}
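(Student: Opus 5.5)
The claim is strict étale local on $X$, and both sides are compatible with strict étale base change. So I will reduce to the local situation of the proof of \cref{lem:strict-locus-clopen}. There I may assume the following: $X$ has a global chart $P\to\cM(X)$ with $P$ of type $\typeVd$, the sheaf $\ov\cM_X$ is the constant sheaf $\underline{P/F}$, and (after restricting to a clopen piece and shrinking) $Y\simeq X\times_{\Spec(\bZ[P])}\Spec(\bZ[Q])$ for a Kummer \'etale $P\to Q$. Since $h$ may be a disjoint union of such pieces, I treat each piece separately and then take the union of the resulting point sets. Both the strict locus and the fixed-point set are insensitive to passing to $X_{\rm red}$ and to base change along the strict map $\ov x\to X$. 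Hence it suffices to prove the statement for $X=\ov x=\Spec(P\to k)$ with $k$ separably closed, where $P$ acts through $P/F$ (faces of $P$ mapping to units, everything else to $0$).

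In this case \cref{prop:FKEt-pi1-local} gives $\pi_1(\ov x,\tilde x)\simeq \Hom((P/F)^{\gp},\hZ'(1))$. The fibre $h^{-1}(\ov x)_{\rm red}$ is $\Spec(Q\to k[G^{\gp}])_{\rm red}$, where $G$ is the face of $Q$ generated by $F$. Its points correspond to the $k$-points of the torus $\Spec k[G^{\gp}]$ lying over the fixed point of $\Spec k[F^{\gp}]$. These form a torsor under $\Hom(G^{\gp}/F^{\gp},k^\times)$. The log geometric fibre $h^{-1}(\tilde x)$ is the set of extensions of the map $P^{\gp}\to \tilde{\cM}^{\gp}$ to $Q^{\gp}$ compatible with the chart. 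This is a torsor under $\Hom(Q^{\gp}/P^{\gp},\mu_\infty(k))$ with the natural $\pi_1(\ov x,\tilde x)$-action.

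The projection $h^{-1}(\tilde x)\to h^{-1}(\ov x)$ is given by restriction to $G^{\gp}$. I will compute the action explicitly: $\sigma\in\Hom((P/F)^{\gp},\hZ'(1))$ acts on a lift $\phi\colon Q^{\gp}\to \tilde\cM^{\gp}$ by multiplying with the pairing
\[
Q^{\gp}/P^{\gp}\la \mu(k),\qquad q\mapsto \sigma(nq)\bmod n,
\]
where $n$ is chosen with $nq\in P^{\gp}$. Because $nq\in P^{\gp}$, this pairing factors through $(Q^{\gp}/G^{\gp}) \to (P/F)^{\gp}\otimes\bQ$. It follows that the stabiliser of every point of $h^{-1}(\tilde x)$ is the same subgroup, namely the annihilator of $\ker\big((Q^{\gp}/G^{\gp})\otimes\ldots\big)$. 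Consequently a point is fixed if and only if the whole action is trivial, which holds if and only if the map $(P/F)^{\gp}\to (Q/G)^{\gp}$ is onto. Since $P\to Q$ is Kummer with $F=P\cap G$, this is equivalent to $P/F\isomto Q/G$, which is exactly strictness of $h$ at the image point. Points of $h^{-1}(\ov x)$ in a piece where $h$ is not strict then have no fixed preimage, and in the strict pieces every preimage is fixed. This gives the equality of sets.

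The main obstacle is making the Kummer pairing and the identification of $\pi_1(\ov x,\tilde x)$ with $\Hom((P/F)^{\gp},\hZ'(1))$ precise and compatible, especially since $P/F$ has a divisible part. I would handle this by noting that the divisible part acts trivially, and then checking the surjectivity criterion on the finitely generated free quotient.
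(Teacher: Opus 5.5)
Your argument is correct. Its skeleton is the paper's: localise to a chart, base change along the strict map $\ov x\to X$, identify strictness with $P/F\isomto Q/G$, and compare this with triviality of the $\pi_1(P/F)$-action. The difference is in the last step.

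The paper first replaces $Y$ by a connected component of $Y\times_X\ov x$. By the local structure of finite Kummer \'etale covers, this component has the form $\Spec(Q\to k\otimes_{\bZ[P]}\bZ[Q])$. Then $h^{-1}(\tilde x)$ is a transitive $\pi_1(\ov x,\tilde x)$-set, so it has a fixed point if and only if the action is trivial, and this is equivalent to $P/F\isomto Q/G$, i.e.\ to strictness. That step is formal: no Kummer pairing is needed, and the divisible part of $(P/F)^{\gp}$ never appears.

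You instead keep the whole, possibly disconnected, fibre and compute the action explicitly. This gives a finer statement: every stabiliser equals the kernel of $\pi_1(P/F)\to\Hom(Q^{\gp}/P^{\gp},\mu_\infty(k))$. The price is three facts that you only gesture at and should state.
\begin{enumerate}
\item $P^{\gp}\cap G^{\gp}=F^{\gp}$. This follows from $F=P\cap G$, the Kummer property, and torsion-freeness of $(P/F)^{\gp}$. It is what makes the pairing vanish on $G^{\gp}$, and it gives $G^{\gp}/F^{\gp}\subseteq Q^{\gp}/P^{\gp}$.
\item $(Q/G)^{\gp}$ is torsion-free, because $Q/G$ is sharp and saturated. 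This is what lets the divisible part $D$ of $(P/F)^{\gp}$ split off from $(Q/G)^{\gp}$ with finitely generated free quotient. Only then does ``every $\sigma$ extends to $(Q/G)^{\gp}$'' force $(P/F)^{\gp}=(Q/G)^{\gp}$.
\item The map $h^{-1}(\tilde x)\to h^{-1}(\ov x)$ is surjective, since restriction $\Hom(Q^{\gp}/P^{\gp},\mu_\infty(k))\to\Hom(G^{\gp}/F^{\gp},\mu_\infty(k))$ is onto. You need this for the inclusion of the strict locus into the image of the fixed points.
\end{enumerate}

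Two smaller slips. Your parenthetical description of the common stabiliser is garbled, but the argument only uses that the stabiliser does not depend on the point. Also, the reduced fibre is $\Spec(k\otimes_{k[F^{\gp}]}k[G^{\gp}])$ rather than $\Spec(k[G^{\gp}])_{\rm red}$; your next sentence already uses the correct description.
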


\begin{proof}
We may work \'etale locally on $X$ and thus assume that $X$ has a chart by a monoid $P$ of type $\typeVd$.
We may further replace $Y \to X$ by a connected component of its base change to $\ov x = \Spec(P\to k)$. By the structure of Kummer \'etale covers over a strict henselian local ring (\cref{prop:FKEt-pi1-local} or \cite[Lemma~4.2.7]{AHLS-Log}) we may assume that $Y = \Spec(Q\to k\otimes_{\bZ[P]}\bZ[Q])$ for a Kummer \'etale map of monoids $P \to Q$. Let $F \subseteq P$ be the preimage of $k^\times$ and $G\subseteq Q$ the unique face above $P$ (we have $\Spec(Q)\isomto\Spec(P)$ as $P\to Q$ is Kummer \'etale). Then $Y\to X$ is strict if and only if $P/F\isomto Q/G$, which translates into the trivial action of $\pi_1(\ov{x},\tilde{x}) = \pi_1(P/F)$ on $h^{-1}(\tilde x) = Y_{\tilde x}$.
\end{proof}

\begin{prop} \label{prop:torus-fibration-pi1}
    Let $X$ be a connected log scheme of type $\typeVd$ with a locally constant log structure. 
    The map $\ep\colon X \to \underline{X}$ to the underlying scheme induces an exact sequence
    \[
        \begin{tikzcd}
            \pi_1(\ov \cM_{X,x}) \ar[r] & \pi_1(X,\bar x) \ar[r,"\ep_\ast"] & \pi_1(\underline{X},\bar x) \ar[r] & 1.
        \end{tikzcd}
    \]
\end{prop}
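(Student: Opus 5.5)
We use the standard criterion for exactness of homotopy sequences of Galois categories (\cite[V, Proposition~6.11]{SGA1}, or \stacks{0BTS}). For the pull-back functor $\ep^\ast\colon \FEt_{\underline X}\to \FEt_X$ and the composite $\FEt_X \to \FEt_{\ov x}$ (pull-back to the log point $\tilde x \to \ov x$, whose fundamental group is $\pi_1(\ov\cM_{X,\ov x})$ by \cref{prop:FKEt-pi1-local}), we verify the following properties.

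\begin{enumerate}[(a)]
\item Surjectivity of $\ep_\ast$ amounts to: for a connected finite étale $Z\to\underline X$, the log scheme $Z\times_{\underline X}X$ is connected. This is immediate because $\ep$ is a homeomorphism on underlying spaces and the pull-back is strict with the same underlying scheme $Z$.
\item The composite $\pi_1(\ov\cM_{X,\ov x})\to\pi_1(\underline X,\bar x)$ is trivial. Indeed, strict covers restrict to trivial (split) covers over $\tilde x\to\ov x$, since $\ov x$ is strictly henselian.
\item The key property: if $h\colon Y\to X$ is a connected finite Kummer étale cover such that the action of $\pi_1(\ov x,\tilde x)$ on $h^{-1}(\tilde x)$ has a fixed point, then $Y$ is strict, and hence of the form $\ep^\ast(\underline Y)$ with $\underline Y\to\underline X$ finite étale.
\end{enumerate}

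For (c), first note that any normal subgroup containing the image of $\pi_1(\ov\cM_{X,\ov x})$ gives the standard version, where the image is only normal-closure–dense. The criterion actually needed is: whenever the restriction of a connected $Y$ to $\tilde x$ has a section (a fixed point), $Y$ comes from $\underline X$. By \cref{lem:criterion strict}, a fixed point of $\pi_1(\ov x,\tilde x)$ on $h^{-1}(\tilde x)$ gives a point of $Y$ above $\ov x$ at which $h$ is strict. By \cref{lem:strict-locus-clopen}, the strict locus is open and closed in $Y$. Since $Y$ is connected and the locus is non-empty, $h$ is strict everywhere. A strict Kummer étale map is strict étale (from the chart description in \cref{def:Kummer-etale-map}: locally $P\to Q$ is then an isomorphism on $\ov\cM$, so $Y$ is étale over $X$ on underlying schemes) and finite. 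Hence $Y=\underline Y\times_{\underline X}X$ with $\underline Y\to\underline X$ finite étale.

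Exactness in the middle then follows formally. Given an open normal subgroup $N\subseteq\pi_1(X,\bar x)$ containing the image of $\pi_1(\ov\cM_{X,\ov x})$, the Galois cover $X_N$ has all points of its fibre fixed by $\pi_1(\ov x,\tilde x)$. It is therefore strict, so it is pulled back from $\underline X$, and $N$ contains $\ker\ep_\ast$. Since the image of $\pi_1(\ov\cM_{X,\ov x})$ is compact, it equals the intersection of such open subgroups when it is normal. In general, we apply the criterion as stated in \cite[V, 6.11]{SGA1}, phrased in terms of sections: a connected object has a section over $\tilde x$ iff it comes from the base. For this we need to choose fixed points compatibly with base points. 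We handle this by noting that strictness at one geometric point over $\ov x$ suffices, by clopenness. Since exactness is the statement that the kernel of $\ep_\ast$ is the closed normal subgroup generated by the image, the fixed-point version above gives precisely this.

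The main obstacle is step (c): controlling strictness globally on a possibly non-fs log scheme. This is exactly what \cref{lem:strict-locus-clopen} and \cref{lem:criterion strict} provide. The remaining work is the bookkeeping of base points between $\tilde x$ and $\bar x$.
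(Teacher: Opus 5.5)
Your proposal is correct and is essentially the paper's proof. Surjectivity of $\varepsilon_\ast$ holds because the strict pull-back of a connected finite étale cover has the same underlying scheme, and exactness in the middle holds because a connected cover with a fixed point over $\tilde x$ is strict somewhere by \cref{lem:criterion strict}, hence strict everywhere by the clopenness in \cref{lem:strict-locus-clopen}.

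One small correction: exactness means that $\ker\varepsilon_\ast$ equals the image of $\pi_1(\ov\cM_{X,x})$, not its closed normal closure. This is exactly what your fixed-point criterion gives when you apply it to all connected covers $\pi_1(X,\bar x)/U$ with $U$ an open subgroup containing the image, not only Galois ones, so the detour through open normal subgroups and the base-point bookkeeping are unnecessary.
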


\begin{proof}
An \'etale cover $\underline{Y} \to \underline{X}$ induces a strict Kummer \'etale map $Y \to X$. Therefore its base change $Y$ along $\ep$ simply endows $\underline{Y}$ with the pull-back log structure. In particular, the base change of a connected \'etale cover remains connected. This proves that $\ep_\ast$ is surjective. 

To prove exactness at $\pi_1(X, \bar x)$ we must show that a connected \'etale cover $h\colon  Y \to X$ with a section above $\bar x$ (the point being endowed with the pull-back log structure) actually already descends to an \'etale  cover of $\underline{X}$. By \cref{lem:criterion strict}, the existence of the section shows that the locus $S \subseteq Y$ where $Y \to X$ is strict is non-empty. The strict locus $S$ is both open and closed by \cref{lem:strict-locus-clopen}.
Since $Y$ is connected, we must have $S=Y$ and $Y \to X$ is a strict map. Hence $Y \to X$ descends as an \'etale cover $\underline{Y} \to \underline{X}$ and the proof is complete.
\end{proof}

We deduce an analogous result for tame fundamental groups.

\begin{cor} \label{cor:torus-fibration-tame}
    Let $X$ be a connected log scheme of type $\typeVd$ with a locally constant log structure. 
    Let $\underline{X}\to S$ be a morphism of schemes. Then the sequence of tame Kummer \'etale fundamental groups induces an exact sequence
    \[
        \begin{tikzcd}
            \pi_1(\ov \cM_{X,x}) \ar[r] & \pitame(X/S,\bar x) \ar[r,"\ep_\ast"] & \pitame(\underline{X}/S,\bar x) \ar[r] & 1.
        \end{tikzcd}
    \]
\end{cor}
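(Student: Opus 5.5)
We deduce this from \cref{prop:torus-fibration-pi1} by passing to tame quotients. The diagram of Galois categories is
\[
\FEtt_{\underline{X}/S} \la \FEtt_{X/S}, \qquad \FEt_{\underline{X}}\la \FEt_X,
\]
with vertical inclusions, where the top functor is pull-back along $\ep$. By \cref{lem:tame-log-properties}~\ref{lemitem:tame-log-properties-strict}, a strict cover $Y\to X$ is tame relative to $S$ exactly when $\underline{Y}\to\underline{X}$ is tame in the sense of \cref{defi:tame cover of schemes X over S}. Hence $\ep^*$ does map $\FEtt_{\underline{X}/S}$ into $\FEtt_{X/S}$, and this gives $\ep_\ast$.

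\emph{Surjectivity.} A connected tame cover of $\underline{X}$ pulls back to a connected object of $\FEt_X$, by the surjectivity part of \cref{prop:torus-fibration-pi1}. It therefore stays connected in the full Galois subcategory $\FEtt_{X/S}$, so $\ep_\ast$ is surjective.

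\emph{Exactness in the middle.} We use the standard criterion (\cite[V, Proposition~6.11]{SGA1}). Let $Y\to X$ be a connected object of $\FEtt_{X/S}$ on which the image of $\pi_1(\ov\cM_{X,x})$ acts with a fixed point in the log geometric fibre over $\tilde x$. We must show that $Y\cong \ep^*(Z)$ for some $Z$ in $\FEtt_{\underline{X}/S}$. Since $\pi_1(\ov\cM_{X,x})\to\pitame(X/S)$ factors through $\pi_1(\ov\cM_{X,x})\to\pi_1(X)$, the fixed-point condition is the same as in the untamed setting. So the argument of \cref{prop:torus-fibration-pi1} applies verbatim. By \cref{lem:criterion strict}, the strict locus of $Y\to X$ is non-empty. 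By \cref{lem:strict-locus-clopen} it is clopen, hence equal to $Y$ because $Y$ is connected. Thus $Y\to X$ is strict, and $Y=\ep^*(\underline{Y})$ for the finite \'etale cover $\underline{Y}\to\underline{X}$. It remains to check that $\underline{Y}$ is tame relative to $S$, and this is again \cref{lem:tame-log-properties}~\ref{lemitem:tame-log-properties-strict}.

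The only potential subtlety is that last tameness transfer. Precisely, the pull-back of $\underline{Y}$ to a test point $T=\Spec(K)$ is prime-to-$p$ in $\FEt_{\underline T}$ if and only if its pull-back is prime-to-$p$ in $\FEt_T$. This follows because $\FEt_{\underline T}\subseteq\FEt_T$ is a Galois subcategory and prime-to-$p$ objects are detected inside Galois subcategories (\cref{rmk:prime-to-p-obj}). This is already recorded in the cited lemma, so no real obstacle is expected.
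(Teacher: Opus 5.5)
Your proof is correct and follows essentially the paper's route: everything is deduced from \cref{prop:torus-fibration-pi1}, and tameness of strict covers is transferred via \cref{lem:tame-log-properties}~\ref{lemitem:tame-log-properties-strict}. The difference is only in packaging: you use the fixed-point form of the exactness criterion and rerun the strict-locus argument, which gives kernel $=$ image directly, whereas the paper first shows that the image of $\pi_1(\ov\cM_{X,x})$ in $\pitame(X/S,\bar x)$ is normal (from the surjections of the untamed row onto the tame row) and then only checks that covers on which $\pi_1(\ov\cM_{X,x})$ acts trivially are strict, citing exactness of the untamed row as a black box.
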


\begin{proof}
Consider the commutative diagram
\[
    \begin{tikzcd}
        \pi_1(\ov \cM_{X,x}) \ar[d,equal] \ar[r] & \pi_1(X,\bar x) \ar[r,"\ep_\ast"]\ar[d,two heads]  & \pi_1(\underline{X},\bar x) \ar[r]\ar[d,two heads] & 1 \\
        \pi_1(\ov \cM_{X,x}) \ar[r,"\iota_{x,\ast}"] & \pitame(X/S,\bar x) \ar[r,"\ep_\ast"] & \pitame(\underline{X}/S,\bar x) \ar[r] & 1.
    \end{tikzcd}
\]
The top row is exact by \cref{prop:torus-fibration-pi1}. The vertical arrows are surjective. Thus the image of $\iota_{x,\ast}$ is normal. Moreover, the map $\varepsilon_*$ in the bottom sequence is surjective. 

To prove that the bottom row is exact, it remains to check that a finite $\pitame(X/S,\ov{x})$-set $M$ on which $\pi_1(\ov \cM_{X,x})$ acts trivially comes from a $\pitame(\underline{X}/S,\bar x)$-set. Translating in terms of covering spaces, let $Y\to X$ be the corresponding finite Kummer \'etale map. By exactness of the top sequence, we see that $Y$ is in the image of $\FEt_{\underline{X}}$, i.e.\ $Y\to X$ is strict. Then \cref{lem:tame-log-properties}~\ref{lemitem:tame-log-properties-strict} implies that $\underline{Y}\to\underline{X}$ is tame relative to $S$, so that $Y\to X$ is in the image of $\FEtt_{\underline{X}/S}$.
\end{proof}

\begin{cor}
\label{cor:pi1tame locally constant log is fg}
    Let $X$ be a connected log scheme of type $\typeVd$ whose underlying scheme is of finite type over an algebraically closed field $k$. Suppose that $X$ has locally constant log structure.
    \begin{enumerate}
        \item 
        The group $\pitame(X/k,\bar x)$ is finitely generated.
        \item 
        The group $\pitame(X/k,\bar x)$ is finitely presented if and only if $\pitame(\underline{X}/k,\bar x)$ is finitely presented.
    \end{enumerate}
\end{cor}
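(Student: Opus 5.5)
The plan is to use the exact sequence of \cref{cor:torus-fibration-tame} with $S=\Spec(k)$:
\[
    \pi_1(\ov\cM_{X,x}) \xrightarrow{\iota_{x,\ast}} \pitame(X/k,\bar x) \xrightarrow{\ep_\ast} \pitame(\underline{X}/k,\bar x) \la 1,
\]
and to combine it with known finiteness properties of the two outer terms. Let $N$ denote the image of $\iota_{x,\ast}$. It is a closed normal subgroup. By \cref{prop:FKEt-pi1-local} and its proof, $\pi_1(\ov\cM_{X,x})$ is a free $\hZ'$-module of finite rank. The point is that $\ov\cM_{X,x}$ is of type $\typeVd$, so the divisible part contributes nothing. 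Hence $N$ is a finitely generated abelian profinite group, being a quotient of a finitely generated one.

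For (1), I will use that an extension of a finitely generated profinite group by a finitely generated closed normal subgroup is finitely generated. Concretely, lift finitely many topological generators of $\pitame(\underline{X}/k,\bar x)$ and adjoin the images of generators of $\pi_1(\ov\cM_{X,x})$; the closed subgroup they generate surjects onto the quotient and contains $N$, so it is everything. Finite generation of $\pitame(\underline{X}/k,\bar x)$ is \cref{thm:naked pi1tame fg}, applied to the connected finite type $k$-scheme $\underline{X}$.

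For (2), I will use the standard facts about finite presentability of profinite groups in short exact sequences $1\to N\to G\to Q\to 1$. If $N$ is finitely generated and $G$ is finitely presented, then $Q$ is finitely presented: take a presentation of $G$ and add the finitely many generators of $N$ as relators. Conversely, if $N$ and $Q$ are finitely presented, then $G$ is finitely presented. This direction is the profinite analogue of P.~Hall's lemma, obtained by combining presentations and adding finitely many conjugation relations $x_i n_j x_i^{-1} = w_{ij}(n)$ together with lifts of the relators of $Q$ expressed as words in $N$. The needed input here is that $N$ is finitely presented, which follows from \cref{lem:abelian-fg-implies-finitely-presented}, since $N$ is abelian and finitely generated.

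The main obstacle is this converse direction in the profinite setting. One has to make sure that the relevant notion of ``finitely presented'' (a finite presentation as a quotient of a free profinite group by a normal subgroup that is normally generated by finitely many elements) is stable under extensions. I would cite the standard profinite version, for instance in Ribes--Zalesskii or Lubotzky's characterisation. Alternatively, one can argue directly: with $F$ a free profinite group on the combined generators and the relators above, the kernel of the map from $F/\langle\langle R\rangle\rangle$ onto $G$ is contained in the image of the subgroup generated by the $n_j$. On that image the induced presentation of $N$ holds, so the map is injective. Everything else is formal once the exact sequence of \cref{cor:torus-fibration-tame} is in hand.
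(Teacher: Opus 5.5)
Your proposal is correct and follows the paper's proof. The paper likewise uses the exact sequence of \cref{cor:torus-fibration-tame}, observes via \cref{prop:FKEt-pi1-local} and \cref{lem:abelian-fg-implies-finitely-presented} that the image of $\pi_1(\ov\cM_{X,x})$ is finitely presented, and deduces both assertions from the behaviour of finite generation and presentation in extensions, together with \cref{thm:naked pi1tame fg} for (1). The only difference is that the paper cites these extension facts from the proof of Claim~2.7 in Esnault--Shusterman--Srinivas, whereas you prove them directly, and your Hall-type argument for the converse direction is sound.
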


\begin{proof}
    By \cref{prop:FKEt-pi1-local} and 
    \cref{lem:abelian-fg-implies-finitely-presented},
    the image of 
    $\pi_1(\ov \cM_{X,x})$ in $\pitame(X/k,\bar x)$ is a finitely presented profinite group.  
    Thus, by 
    \cite[Proof of Claim~2.7]{EsnaultShustermanSrinivas2022:FinitePresentationTame}, the group 
    $\pitame(X/k,\bar x)$ is finitely generated (resp.\ finitely presented) if and only if $\pitame(\underline{X}/k,\bar x)$ has the same property. 

    In particular, because $\pitame(\underline{X}/k,\bar x)$ is finitely generated by \cref{thm:naked pi1tame fg}, we also know that $\pitame(X/k,\bar x)$ is finitely generated.    
\end{proof}

\subsection{Finite generation} 
\label{ss:strat}

We shall use \cref{cor:pi1tame locally constant log is fg} (the case of locally constant log structure) as a bootstrap, combined with a formal gluing argument, to prove the following theorem, which is the main result of this section. 

\begin{thm} \label{thm:pi1tame log is fg}
    Let $k$ be an algebraically closed field.
    Let $X$ be a non-empty connected log scheme of type $\typeVd$
    whose underlying scheme is of finite type over $k$.
    Then $\pitame(X/k,\bar x)$ is finitely generated.
\end{thm}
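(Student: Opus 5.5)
We argue by noetherian induction on the underlying scheme $\underline{X}$, following the same descent pattern as \cref{thm:naked pi1tame fg}. More precisely, we prove the following statement $(\ast)$: for every log scheme $X$ of type $\typeVd$ with $\underline{X}$ of finite type over $k$, and every connected component $X'$ of $X$, the group $\pitame(X'/k)$ is finitely generated. Since $\underline{X}$ is noetherian, we may assume $(\ast)$ holds for every log scheme whose underlying scheme is a proper closed subscheme of $\underline{X}$, each endowed with the pulled-back log structure.

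\textbf{Reduction to the reduced, locally constant open piece.} By \cref{prop:FKEt-top-inv} we may assume $\underline{X}$ is reduced. By \cref{cor:generically-log-constant} there is a dense open $U\subseteq X$ on which the log structure is locally constant. Let $Z=X\setminus U$ with reduced structure and the induced strict log structure. By \cref{cor:pi1tame locally constant log is fg}, each connected component of $U$ has finitely generated $\pitame(-/k)$. By the induction hypothesis, the same holds for each connected component of $Z$.

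\textbf{Gluing $U$ and a formal neighbourhood of $Z$.} The key step is a formal gluing statement for $\FEt$, and then for $\FEtt_{-/k}$. Let $\widehat{X}_Z$ denote the henselisation of $X$ along $Z$, that is, the pair $(X,Z)$ henselised, with the pulled-back log structure. We want the square formed by $U$, $\widehat{X}_Z$, $U\times_X\widehat{X}_Z$ and $X$ to satisfy descent for finite Kummer \'etale covers, in the sense of \cref{cor:DD-fgfp}\labelcref{coritem:DD-fg}. For the complex $E_\bullet$ we take the vertices to be the components of $U$ and of $\widehat{X}_Z$, and the edges to be the components of $U\times_X\widehat X_Z$.

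Two inputs are needed here.
\begin{enumerate}[(a)]
\item A henselian-pair version of topological invariance: $\FEt_{\widehat X_Z}\simeq\FEt_Z$. This should follow from the stack property and the chart-wise description in \cref{lem:fKet-local-structure}, reducing to the classical Gabber/Elkik-type rigidity for finite \'etale algebras over henselian pairs.
\item Full faithfulness of $\FEt_X\to\Desc$, that is, Beauville--Laszlo / Ferrand--Artin style gluing for finite morphisms. Finite covers over $X$ are determined by their restrictions to $U$ and $\widehat X_Z$ together with the identification on the overlap.
\end{enumerate}
Only descent (full faithfulness) is needed, not effectiveness, so the tame subcategory inherits it exactly as in \cref{lem:f-desc-for-FEt-and-FEtt}. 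Given (a) and the compatibility of tameness with (a), which follows from \cref{lem:tame-log-properties}\ref{lemitem:tame-log-properties-bc} together with the fact that the points of $Z$ are reached by $\widehat X_Z$, each vertex group is finitely generated. It is either $\pitame$ of a component of $U$, or equal to $\pitame$ of a component of $Z$, which is a quotient of a finitely generated group.

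\textbf{Main obstacle: finiteness of the complex.} The sets $E_0$ and $E_1$ must be finite. $E_0$ is finite by noetherianity. For $E_1$, the overlap $U\times_X\widehat X_Z$ is the ``punctured tubular neighbourhood''. It is a noetherian scheme, since a henselisation along a closed subscheme of a finite type scheme is noetherian, so it has finitely many connected components. I expect the genuinely delicate part to be the rigidity in (a) for non-fs log structures of type $\typeVd$, and its compatibility with tameness. Here the log stratification of \cref{lem:noeth-faces} and charts may be needed to reduce, locally, to the fs-like situation $\Spec(Q)\isomto\Spec(P)$ and to explicit Kummer covers $X_Q$, using \cref{lem:tame-log-properties}\ref{lemitem:tame-log-properties-XQ}. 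If the henselisation approach is too awkward, the fallback is to use a completion along $Z$ and work with formal schemes. With these inputs, \cref{cor:DD-fgfp}\labelcref{coritem:DD-fg} yields finite generation of $\pitame(X/k,\bar x)$.
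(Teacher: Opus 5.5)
This is essentially the paper's proof. The paper first reduces étale locally (via descent and \cref{cor:DD-fgfp}) to $X$ affine with a global chart and $U=D(f)$, and then runs the noetherian induction through \cref{prop:pi1tame-strat}; that proposition is exactly your gluing of $U$ with a tubular neighbourhood of $Z$ over a finite graph, except that it uses the $f$-adic completion $\Spec(\widehat R)$ and Artin's exact sequence $0\to\cO(X)\to\cO(T)\times\cO(U)\to\cO(T^\circ)$ (which holds just as well for the flat map $R\to R^h$ to the noetherian henselisation) to get both connectedness of the graph and full faithfulness. Your input (a) is more than needed: the paper only uses that $\pi_0(Z')\to\pi_0(T')$ is bijective for every finite $T'\to T$, which formally makes $\pitame(Z_\alpha/k)\to\pitame(T_\alpha/k)$ surjective, so neither log rigidity nor a reverse tameness comparison is required.
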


The main difficulty in the proof of \cref{thm:pi1tame log is fg} (deferred to the end of this subsection) hides in the proof of the following proposition, in which a ``formal gluing'' argument is employed. In order to avoid dealing with log rigid analytic spaces, we only formulate it in the affine case.

\begin{prop} \label{prop:pi1tame-strat}
    Let $X=\Spec(R)$ be a non-empty connected affine log scheme of type $\typeVd$ which is of finite type over a field $k$. Let $U=\Spec(R[1/f])$ be a distinguished affine open and let $Z \subseteq X$ be a closed subscheme with $|Z| = |X|\setminus |U|$. We equip both $U$ and $Z$ with the induced log structure. 
    
    Suppose that for every connected component $W$ of either $U$ or $Z$, the tame fundamental group $\pitame(W/k)$ is finitely generated.     
    Then 
    $\pitame(X/k, \bar x)$ 
    is finitely generated. 
\end{prop}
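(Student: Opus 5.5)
We would prove this by a formal-gluing (Beauville--Laszlo type) descent argument and then apply \cref{cor:DD-fgfp}~\labelcref{coritem:DD-fg}. Let $\widehat{R}$ be the $f$-adic completion of $R$, and set $\widehat X=\Spec(\widehat R)$ and $\widehat U = \Spec(\widehat R[1/f])$, all with log structures pulled back from $X$. Consider the map
\[
    h\colon X' = U \sqcup \widehat X \la X .
\]
It is faithfully flat, since $R\to R[1/f]\times \widehat R$ is faithfully flat for noetherian $R$. Its \v{C}ech nerve has $X'' = U \sqcup \widehat U\sqcup\widehat U\sqcup \widehat X\times_X\widehat X$. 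By the formal gluing theorem for finite (Kummer) étale objects, which follows from Beauville--Laszlo for finite modules together with the chart description of \cref{lem:fKet-local-structure}, the category $\FEt_X$ is equivalent to the category of triples consisting of a cover of $U$, a cover of $\widehat X$, and an isomorphism over $\widehat U$. In particular the truncated datum with graph vertices $U,\widehat X$ and edges the components of $\widehat U$ is of descent. By \cref{prop:truncated DD} and \cref{lem:f-desc-for-FEt-and-FEtt}, the full subcategory of tame objects then also satisfies descent, and tameness is preserved by pullback (\cref{lem:tame-log-properties}~\ref{lemitem:tame-log-properties-bc}). So $\pitame(X/k)$ is a quotient of the fundamental group of the resulting graph of groups. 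For finite generation we need only \cref{prop:DD-fgfp}~\labelcref{propitem:DD-fg}: finitely many vertices and edges, and finitely generated vertex groups.

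Next we identify the vertex groups. The components of $U$ have finitely generated $\pitame(-/k)$ by hypothesis, and $U$ is noetherian, so it has finitely many components. For $\widehat X$, which is connected component-wise matching $Z$, we would use henselian rigidity: $\widehat R$ is $f$-adically complete, hence the pair $(\widehat R,f\widehat R)$ is henselian, and finite Kummer étale covers of $\widehat X$ are equivalent to those of $\widehat X\times_X Z$. We would check this strict-locally using the chart lifting and \cref{lem:fKet-local-structure}, reducing to the scheme-theoretic statement for étale algebras over henselian pairs, combined with \cref{prop:FKEt-top-inv} to pass from $\Spec(\widehat R/f)$ to $Z$. This gives $\pi_1(\widehat X_i)\simeq\pi_1(Z_i)$ for the components. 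The edges are the components of $\widehat U$, and only finiteness of their number is needed. Since $\widehat R[1/f]$ is noetherian, $\widehat U$ has finitely many components.

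The main obstacle is that the tame quotients do not match directly. A cover of $\widehat X$ tame relative to $k$ corresponds to a cover of $Z$, but the tameness conditions differ: on $\widehat X$ they are imposed at valuations of residue fields of $\widehat X$ (including points of $\widehat U$), while on $Z$ they are imposed only at points of $Z$. We avoid identifying the groups. For the surjectivity argument it suffices to replace the vertex $\widehat X$ by the Galois category $\cC_Z$ of covers of $\widehat X$ whose restriction to $Z$ is tame relative to $k$; this group is $\pitame(Z/k)$, hence finitely generated. We then need that a descent datum tame on $U$ and on $Z$ comes from a cover of $X$ tame relative to $k$, or at least that the functor $\FEtt_{X/k}\to \Desc$ of these enlarged categories is fully faithful. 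Full faithfulness is automatic, since all categories sit inside $\FEt$ with descent. Therefore $\pitame(X/k)$ is a quotient of the fundamental group of the graph of groups with vertex groups $\pitame(U_i/k)$ and $\pitame(Z_j/k)$, and it is finitely generated.
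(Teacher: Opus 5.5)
Your proposal is correct and follows essentially the paper's route: formal gluing along $U$ and $\widehat X=\Spec(\widehat R)$ over the finite graph with vertices $\pi_0(\widehat X)\sqcup\pi_0(U)$ and edges $\pi_0(\widehat U)$, vertex groups controlled by $Z$ and $U$, and \cref{prop:DD-fgfp} to conclude. The paper is lighter at the two points where you invoke heavier machinery. Instead of a full Beauville--Laszlo equivalence for Kummer \'etale log covers, it only uses Artin's exact sequence on idempotents: applied to $X$ this gives connectedness of the graph, and applied to each connected tame cover $X'$ it gives full faithfulness. Your ``main obstacle'' disappears once one notes that $\pi_0(Z')\isomto\pi_0(\widehat X')$ for every finite $\widehat X'\to\widehat X$, which makes $\pitame(Z_\alpha/k)\to\pitame(\widehat X_\alpha/k)$ surjective; the original vertex categories $\FEtt_{\widehat X_\alpha/k}$ therefore already have finitely generated groups, and no enlarged category $\cC_Z$ (and no correspondingly enlarged edge categories) is needed.
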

\begin{proof}
Let $T = \Spec(\widehat{R})$ where $\widehat{R} = \varprojlim R/f^{n+1}$ is the $f$-adic completion of $R$. The inclusion $Z \inj T$ induces a bijection $\pi_0(Z) \isomto \pi_0(T)$ of connected components. For $\alpha \in \pi_0(Z)$ corresponding to the connected component $Z_\alpha$  we denote by $T_\alpha$ the connected component of $T$ containing $Z_\alpha$. 

More generally, if $T'\to T$ is a finite morphism, and $Z' = Z\times_T T'$, then $T'$ is the spectrum of a ring which is complete with respect to the ideal $f\cdot \cO_{T'}$, and thus $\pi_0(Z')\isomto\pi_0(T')$ is also a bijection. Since the underlying maps of finite Kummer \'etale covers are finite (as remarked in \cref{def:Kummer-etale-map}, this follows from \cite[Corollary~5.1.2]{AHLS-Log} because $X$ is of type $\typeVd$), this formally implies that, if we endow $T$ with the log structure pulled back from $X$, so that $Z \inj T$ is strict, then for every $\alpha$ and a log geometric point $\overline{z}\to Z_\alpha$, the map
\[
    \pitame(Z_\alpha/k, \overline{z})\la\pitame(T_\alpha/k,\overline{z})
\]
is surjective. In particular, $\pitame(T_\alpha/k,\overline{z})$ is finitely generated.

Let us also consider $T^\circ = T\times_X U = \Spec(\widehat{R}[1/f])$ that fits into a cartesian square
\[ 
    \begin{tikzcd}
        T^\circ \ar[r,"s"] \ar[d,"c",swap] & U \ar[d] \\
        T \ar[r] & X.
     \end{tikzcd}
\]
Because $U$, $T$ and $T^\circ$ are noetherian, passing to connected components defines a finite graph 
\[
    E_\bullet = \Big[\begin{tikzcd}
        \pi_0(T) \sqcup \pi_0(U) & \ar[l,"c",shift left=0.7ex] \ar[l,"s",swap,shift right=0.7ex] \pi_0(T^\circ) & \ar[l,shift left=1ex] \ar[l,shift right=1ex] \ar[l] \varnothing  
    \end{tikzcd}\Big].
\]

Let us temporarily work with a more general $X$ that is not necessarily connected. Let $\idem(S)$ denote the set of idempotents in $\cO(S)$ for a scheme $S$. By \cite[Theorem~2.2]{ArtinFormalModuliII}, the sequence 
\[ 
    \begin{tikzcd}
        0 \ar[r] &\cO(X) \ar[r] &  \cO(T)\times \cO(U) \ar[r,"-"] &\cO(T^\circ),
    \end{tikzcd}
\]
is exact. Hence, the 
idempotents $\idem(X)$ of $X$ are the equaliser of 
\[
\begin{tikzcd}
        \idem(T) \times \idem(U) \ar[r,"c^\ast",shift left=0.7ex] \ar[r,"s^\ast",swap,shift right=0.7ex] 
        & \idem(T^\circ) .
    \end{tikzcd}
\]
This is equivalent to the map 
\[
\rH^0(X,\bZ/2\bZ) \isomto \rH^0(|E_\bullet|,\bZ/2\bZ)
\]
being an isomorphism. It follows that the natural map $\pi_0(|E_\bullet|) \la \pi_0(X)$
is bijective. In particular, if we again assume that $X$ is connected, then the graph $E_\bullet$ is also connected.

We now define a fibred category in Galois categories $\cG_\bullet$ over $E_\bullet$ by associating 
\begin{itemize}
    \item 
    $\FEtt_{T_\alpha/k}$ to $\alpha \in \pi_0(T)$, and
    \item $\FEtt_{U_\beta/k}$ to $\beta \in \pi_0(U)$ describing the connected component $U_\beta \subseteq U$, and likewise
    \item 
    $\FEtt_{T^\circ_\gamma/k}$ to $\gamma \in \pi_0(T^\circ)$ describing the connected component $T^\circ_\gamma \subseteq T^\circ$. 
    \item 
    The functors $c^*$ and $s^*$ of $\cG_\bullet$ are induced by the maps 
    $T^\circ_\gamma\to T_{c(\gamma)}$ and $T^\circ_\gamma\to U_{s(\gamma)}$. 
\end{itemize}
Because $E_\bullet$ is connected, \cref{prop:DD is galois for connected E} shows that $\Desc(\cG_\bullet)$ is a Galois category. Since $E_\bullet$ is finite and the connected components of $T$ and of $U$ have finitely generated tame fundamental groups, \cref{prop:DD-fgfp} shows that $\pi_1(\Desc(\cG_\bullet))$ is finitely generated. 

Next, we have an obvious pull-back functor between Galois categories
\begin{equation} \label{eqn:tame-desc}
    f^*\colon \FEtt_{X/k} \la \Desc(\cG_\bullet)
\end{equation}
that is exact and thus a morphism of Galois categories. It remains to show that 
the functor \eqref{eqn:tame-desc} is fully faithful. This means that we have to show that for a connected tame cover $X' \to X$, the induced object $f^*(X'\to X)$ in $\Desc(\cG_\bullet)$ is still connected. We use the criterion \cref{lem:local-system-connected} and need to show that with $T' = T \times_X X'$, and $U' = U \times_X X'$ and $T'^\circ = T^\circ \times_X X'$ the graph
\[
E'_\bullet = \Big[\begin{tikzcd}
        \pi_0(T') \sqcup \pi_0(U') & \ar[l,"c",shift left=0.7ex] \ar[l,"s",swap,shift right=0.7ex] \pi_0(T'^\circ) & \ar[l,shift left=1ex] \ar[l,shift right=1ex] \ar[l] \varnothing  
    \end{tikzcd}\Big]
\]
is connected. This follows from the bijection $\pi_0(|E'_\bullet|) \isomto  \pi_0(X')$
we proved above, because $X'$ is connected. This completes the proof. 
\end{proof}

\begin{proof}[Proof of \cref{thm:pi1tame log is fg}]
By descent \cite[Proposition~4.2.4]{AHLS-Log} and by an application of \cref{cor:DD-fgfp}~\ref{coritem:DD-fg}, we may work \'etale locally on $X$ (we have seen this strategy used in the proof of \cref{thm:naked pi1tame fg}). We may therefore assume that $X=\Spec(P \to R)$ is affine with a global chart by a monoid $P$ of type $\typeVd$.

\Cref{cor:generically-log-constant} shows that in the situation of \cref{thm:pi1tame log is fg}, there exists a non-empty open subset $U\subseteq X$ on which the log structure is locally constant. Shrinking $U$, we may assume that $U=\Spec(R[1/f])$ is a distinguished affine open. By \cref{cor:pi1tame locally constant log is fg}, the tame fundamental group of every connected component of $U$ is finitely generated. By noetherian induction, the proof now follows from \cref{prop:pi1tame-strat}.
\end{proof}

\subsection{Surgery, finite presentation, and examples}
\label{ss:surgery}

In this subsection, we show how the tame fundamental groups of certain log schemes (a class including the special fibres of strictly semistable formal schemes) can be computed by a surgical procedure of decomposing said scheme into its irreducible components and partially stripping off its log structure. This strategy is enabled by the following descent result, generalised beyond the fs setting in \cref{cor:descent-beyond-fs}.

\begin{thm}[{\cite[Theorem~3.2.25]{Stix2002:Thesis}}] \label{thm:Stix-descent}
    Let $f\colon S'\to S$ be an exact morphism of fs log schemes whose underlying map of schemes is finitely presented, proper, and surjective. Then $f$ satisfies universal effective descent for finite Kummer \'etale maps. 
\end{thm}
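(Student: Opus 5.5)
This statement is cited from \cite{Stix2002:Thesis}. If I had to reprove it, I would reduce to the two classical pillars: effective descent of finite étale covers along proper surjective finitely presented maps (\cref{fact:eff-decent-maps-for-FEt}), and the local description of finite Kummer étale covers by Kummer monoid maps (\cref{lem:fKet-local-structure}, \cref{cor:fKet-local-structure}). Since $\FEt$ is a stack for the strict étale topology, and even the Kummer étale topology, the question is étale local on $S$. We may therefore assume that $S$ is quasi-compact with a global fs chart $P\to\cM(S)$. Universality is automatic, because the hypotheses (exact, fs, proper, finitely presented, surjective) are stable under fs base change. Here exactness matters: it guarantees that the saturated fibre product has underlying scheme finite over the scheme-theoretic fibre product, so surjectivity and properness survive.

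Full faithfulness comes first. Given $Y,Z\in\FEt_S$, a morphism $Y\to Z$ is a section of $Y\times_S Z\to Y$, which is again finite Kummer étale. It therefore suffices to show that sections of a finite Kummer étale $W\to S$ descend along $f$. After a Kummer étale base change $S_Q=S\times_{\Spec\bZ[P]}\Spec\bZ[Q]$ as in \cref{cor:fKet-local-structure}, $W$ becomes strict étale over $S_Q$. Strict finite étale maps are just finite étale maps of underlying schemes, so descent there follows from \cref{fact:eff-decent-maps-for-FEt} applied to $\underline{S'_Q}\to\underline{S_Q}$. That map is proper, surjective and finitely presented by the exactness remark above. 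We then return to $S$ using that $S_Q\to S$ is itself a Kummer étale cover and $\FEt$ is a Kummer étale stack.

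For effectivity, take a descent datum $(Y',\varphi)$ on $S'$. I would first produce a single Kummer étale $P\to Q$ such that the pullback of $Y'$ to $S'_Q$ becomes strict étale. The difficulty is that $S'$ carries its own charts, not necessarily pulled back from $P$. Here exactness of $f$ is used again: at each point, $\ov\cM_{S,f(s')}\to\ov\cM_{S',s'}$ is exact, so the local Kummer monoid extensions needed on $S'$ are controlled by those on $S$. Concretely, they are determined by the finite index sublattice of $\ov\cM^{\gp}$ cut out by the stabilisers of the local tame inertia action (\cref{prop:FKEt-pi1-local}), and these sublattices pull back from $S$ up to saturation. By quasi-compactness of $S'$, finitely many such $Q$ suffice, and we take a common refinement as in the proof of \cref{cor:fKet-local-structure}.

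I expect this step, making $Y'$ uniformly strict after one Kummer base change coming from the base, to be the main obstacle. If the exactness argument proves insufficient, I would fall back to working Kummer étale locally on $S$, where the chart becomes $\bN^r$-saturated enough that the pullback of the log structure to $S'$ is divisible enough to kill the Kummer part.

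Once $Y'_{Q}\to S'_Q$ is strict étale with its induced descent datum, \cref{fact:eff-decent-maps-for-FEt} descends it to a finite étale $\underline{Y_Q}\to\underline{S_Q}$, hence to a strict object of $\FEt_{S_Q}$. The Galois descent datum for $S_Q\to S$ then descends it to $S$ by the Kummer étale stack property. Finally, the descended object restricts to $(Y',\varphi)$ by construction.
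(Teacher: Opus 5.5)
The paper does not prove this theorem (it is quoted from Stix's thesis), so your sketch has to stand on its own. Its effectivity part has two genuine gaps.

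The first gap is the claim that exactness of $f$ produces a Kummer extension $P\to Q$ of the base making $Y'$ strict. Exactness only says that $\ov\cM_{S,f(\bar s')}\to\ov\cM_{S',\bar s'}$ is exact, hence injective on groups. Its cokernel can have positive rank, and a general finite Kummer \'etale cover of $S'$ ramifies in those extra directions. Take $S=\Spec(k)$ with trivial log structure, $S'=\bP^1_k$ with the log structure of $\{0,\infty\}$, and $Y'\to S'$ given by $z\mapsto z^n$. Then $f$ is exact, proper and surjective, yet no base change from $S$ makes $Y'$ strict. Your fallback fails for the same reason: a Kummer base change of $S$ only extracts roots of sections coming from $S$.

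What rules this out is the descent datum, which you never use at this point. The cocycle condition gives $\Delta^*\varphi=\id$ for the diagonal $\Delta\colon S'\to S'\times_S S'$. The local Kummer group at $\Delta(\bar s')$ is $\pi_1(\bar s')\times_{\pi_1(\bar s)}\pi_1(\bar s')$, where $\pi_1(\bar s)=\Hom(\ov\cM^{\gp}_{S,\bar s},\hZ'(1))$. Equivariance of $\varphi$ at this point, applied to elements $(\sigma,1)$, shows that $\ker(\pi_1(\bar s')\to\pi_1(\bar s))$ acts trivially on $Y'_{\bar s'}$. Only this makes the local monodromy of $Y'$ killable by a Kummer base change of $S$, after which quasi-compactness gives a uniform $Q$. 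In the example, $Y'$ admits no descent datum: its two pullbacks to $\bP^1\times\bP^1$ ramify along different divisors.

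The second gap is your last step. Even when $Y'_Q\to S'_Q$ is strict, $\varphi$ is an isomorphism over the fs fibre product $S'_Q\times_{S_Q}S'_Q$. Its underlying scheme is finite over, and by exactness surjects onto, but is in general not isomorphic to, $\underline{S'_Q}\times_{\underline{S_Q}}\underline{S'_Q}$. So you do not get a classical descent datum, \cref{fact:eff-decent-maps-for-FEt} does not apply, and the descended object need not be strict over $S_Q$.

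Example: take $S=\Spec(\bN\to k[x])$ and $S'=\Spec(\bN\to k[y])$ with $x=y^2$ and $2\in k^\times$. The fs fibre product $S'\times_S S'$ is the disjoint union of the lines $y_1=y_2$ and $y_1=-y_2$, i.e.\ the normalisation of $\{y_1^2=y_2^2\}$. On the strict cover $Y'=S'\sqcup S'$, let $\varphi$ be the identity over the first line and the swap over the second. This is a descent datum, and it descends to the non-strict cover $S'\to S$. It does not extend across the node of the scheme-theoretic fibre product. Your recipe, which here allows $Q=P$, would instead produce a strict cover of $S$.

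You need an extra argument, for instance enlarging $Q$ until $\varphi$ comes from the scheme-theoretic fibre product (here $Q=\tfrac12\bN$ makes $S'_Q\to S_Q$ strict). The same mismatch appears in your full faithfulness step, where it is harmless. There the fs fibre product surjects onto the scheme-theoretic one, and two sections of a finite \'etale map agree on a clopen set. Note that this surjectivity, not finiteness (which holds for any fs fibre product), is what exactness provides.
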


\begin{cor} \label{cor:descent-beyond-fs}
    Let $f\colon S'\to S$ be a strict morphism of saturated log schemes whose underlying map of schemes is finitely presented, proper, and surjective. Then $f$ satisfies universal effective descent for finite Kummer \'etale maps.
\end{cor}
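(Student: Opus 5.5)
We reduce \cref{cor:descent-beyond-fs} to \cref{thm:Stix-descent} by approximating the saturated log structures by fs ones. The question is étale local on $S$: finite Kummer étale maps form a stack for the strict étale topology (\cite[Corollary~4.3.7]{AHLS-Log}), and effective descent for a proper surjective cover may be checked after an étale base change on $S$. We may therefore assume that $S$ is affine and has a global chart $P\to \cM(S)$ by a saturated monoid $P$. Since $f$ is strict, the composition $P\to \cM(S)\to\cM(S')$ is a chart for $S'$, and likewise for all the fibre products $S''=S'\times_S S'$ and $S'''$ (strict fibre products are computed on underlying schemes). Because $\underline{f}$ is of finite presentation, noetherian approximation lets us write $\underline{f}$ as the base change of a proper surjective finitely presented map $\underline{f}_0\colon \underline{S}'_0\to \underline{S}_0$ of finite type $\bZ[P_0]$-schemes, for a finitely generated submonoid $P_0\subseteq P$. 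Here $\underline{S}\to\underline{S}_0$ is compatible with $\Spec(\bZ[P])\to\Spec(\bZ[P_0])$, with $P_0$ replaced by its saturation so that it is fs.

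Next, any finite Kummer étale cover of $S$, $S'$, $S''$ or $S'''$, together with the isomorphisms in a descent datum, involves only finitely much information. By \cref{cor:fKet-local-structure} (for quasi-compact bases), each such cover becomes strict étale after base change along $X\mapsto X_Q$ for a Kummer étale $P\to Q$, and is described by finitely many Kummer étale maps $P\to Q_j$ together with étale data on the underlying schemes. Since $P=\varinjlim P_\lambda$ is a filtered union of fs submonoids and $\Spec(\bZ[P])=\varprojlim\Spec(\bZ[P_\lambda])$, one expects
\[
\FEt_{S}\simeq \varinjlim_\lambda \FEt_{S_\lambda},
\]
and similarly for $S'$, $S''$, $S'''$. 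Here $S_\lambda$ denotes the fs log scheme obtained from a finite type model charted by $P_\lambda$. This limit formula is the step I expect to be the main obstacle. One must show that a Kummer étale $P\to Q$ descends to a Kummer étale $P_\lambda\to Q_\lambda$ with $Q= (P\oplus_{P_\lambda}Q_\lambda)^{\rm sat}$; this uses that $Q$ is finitely generated as a $P$-set \cite[Lemma~2.6.2]{AHLS-Log}. One must also check that exactness and the prime-to-$\Sigma$ condition on $Q^\gp/P^\gp$ survive for $\lambda$ large. The rest is the standard limit formalism for finitely presented étale and finite morphisms. Strictness of $f$ makes the approximation compatible along $f$, so the models $S'_\lambda\to S_\lambda$ are strict, hence exact, proper, surjective, and finitely presented morphisms of fs log schemes.

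With this in place, \cref{thm:Stix-descent} applies to each $S'_\lambda\to S_\lambda$. A descent datum for $S'/S$ descends to some $S'_\lambda/S_\lambda$, becomes effective there, and pulls back to an effective one on $S$. Full faithfulness follows by the same limit argument. Universality follows because any base change $T\to S$ of saturated log schemes keeps $f_T$ strict, proper, finitely presented and surjective, so the same argument applies to it. Alternatively, if $T$ is not of type $\typeVd$, one restricts to the base changes within the class considered in the paper.
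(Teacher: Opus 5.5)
Your overall strategy is the paper's: localise to a global chart by a saturated $P$, write $P=\varinjlim P_\lambda$ with $P_\lambda$ fs, and use $\FEt_S\simeq\varinjlim_\lambda \FEt_{S_\lambda}$ (and likewise for $S'$, $S'\times_S S'$ and $S'\times_S S'\times_S S'$). Then apply \cref{thm:Stix-descent} at a finite level and conclude formally.

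The difference is your noetherian approximation of $\underline{f}$, which is unnecessary and makes the argument harder. \cref{thm:Stix-descent} has no noetherian hypothesis; it only needs the underlying map to be finitely presented, proper and surjective, and $\underline{f}$ already is. The paper therefore takes $S_\lambda$ to be $\underline{S}$ itself, with the log structure induced by $P_\lambda\to P\to \cM(S)$, and $S'_\lambda$ to be $\underline{S}'$ with the pulled-back log structure. Then $S'_\lambda\to S_\lambda$ is strict with the same underlying map as $f$. Moreover, the limit formula you single out as the main obstacle is then exactly \cite[Proposition~4.2.6]{AHLS-Log}, which can simply be cited.

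With your finite-type models you need a two-parameter limit statement instead, with scheme and log structure varying at once. That is not the available result, and your sketch of it does not hold in the generality of the corollary. The sketch relies on \cite[Lemma~2.6.2]{AHLS-Log} and on ``the standard limit formalism for finitely presented morphisms'', and both are only available for log schemes of type $\typeVd$. For an arbitrary saturated $P$, a Kummer \'etale $P\to Q$ need not make $Q$ finitely generated over $P$. Accordingly, the underlying maps of Kummer \'etale covers need not be of finite type, as the paper warns in its review of log geometry. As written, that step is therefore unjustified; dropping the approximation of the underlying schemes and citing the limit formula for a fixed $\underline{S}$ removes the problem. Your treatment of universality (strict base change preserves all the hypotheses) is fine.
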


\begin{proof}
The assertion is (strict) \'etale local on $S$, and hence we may assume that $S$ has a chart by a saturated monoid $P$. Write $P=\varinjlim P_\lambda$ with $P_\lambda$ fs, let $S_\lambda$ be $\underline{S}$ with log structure induced by $P_\lambda$, and let $S'_\lambda$ be $\underline{S}'$ with log structure so that $S'_\lambda\to S_\lambda$ strict. Then by \cite[Proposition~4.2.6]{AHLS-Log} we have
\[
    \FEt_S \simeq \varinjlim \FEt_{S_\lambda}, \qquad \FEt_{S'} \simeq \varinjlim \FEt_{S'_\lambda}
\]
(and the same for $S'\times_S S'$ and $S'\times_S S'\times_S S'$). The maps $S'_\lambda \to S_\lambda$ satisfy the assumptions of \cref{thm:Stix-descent}, and the result follows formally from this. 
\end{proof}

\begin{rmk}
    It is likely that the assertion of \cref{cor:descent-beyond-fs} holds more generally for $f$ exact but not necessarily strict. However, one would need to argue why the maps $S'_\lambda\to S_\lambda$ will be exact, proper, and surjective for $\lambda\gg 0$.
\end{rmk}

We are interested in tame fundamental groups of log schemes which are smooth over a generalised log point. Because of the tameness condition, we have to assume that charts exist Zariski locally, not just \'etale locally. The precise assumptions and notation are listed below.

\begin{setup} \label{setup:surgery}
Consider the following situation: 
\begin{itemize}
    \item 
    $k$ is a field,
    
    \item 
    $S = \Spec(M\to k)$ for a saturated monoid $M$ which is finitely generated over a sharp divisible valuative submonoid $V\subseteq M$ and a map $\alpha\colon M\to k$ with $\alpha^{-1}(k^\times) = \{0\}$ (the key cases of interest are $M=V$ and $V=0$), in particular $M$ is sharp,
    
    \item 
    $X\to S$ is a smooth morphism admitting a chart Zariski locally in the following sense: Zariski locally on $X$ there exists a smooth monoid morphism $\theta\colon M\to P$ (with respect to the set of primes non-invertible in $k$) and a strict \'etale map
    \[
        X \la \Spec(P\to k[P]\otimes_{k[M]} k) = \Spec\big(P\to k[P]/(\theta(M_+))\big)
    \]
    where $M_+ = M\setminus M^\times = M \setminus 0$.
    We assume moreover that $X$ is quasi-compact and connected.
     
    \item $\nu\colon X^{(0)}\to X$ is the normalisation map on the underlying schemes, endowed with the pull-back log structure,
    
    \item $X^{(a)}$ is the $(a+1)$-fold fibre product of $X^{(0)}\to X$, endowed with the pull-back log structure,
    
    \item $U^{(a)}\subseteq X^{(a)}$ is the open subset where $\overline{\cM}_{X^{(a)}}$ (which coincides with the pull-back of $\overline{\cM}_X$ to $X^{(a)}$) is locally constant ($U^{(a)}$ is open and dense by \cref{cor:generically-log-constant}),
    
    \item $X^{(a)}_\star$ is the scheme $\underline{X}{}^{(a)}$ endowed with the log structure induced by the open subset $U^{(a)}$.
\end{itemize}
\end{setup}

\begin{rmk}
As we shall learn in \cref{lem:surgery-properties}, there are natural maps $\varepsilon^{(a)}\colon X^{(a)}\to X^{(a)}_\star$ which intuitively are torus bundles as in \S\ref{ss:loc-const}, and the log schemes $X^{(a)}_\star$ are log smooth over $k$ with no log structure, i.e.\ toroidal embeddings. The following diagram then illustrates how in our setup the log scheme $X$ is disassembled into the log schemes $X_\star^{(a)}$. 
\[
    \begin{tikzcd}
        X \ar[dd,"\text{smooth}",swap] & X^{(0)} \ar[l,swap,"\nu"] \ar[d,"\varepsilon^{(0)}"] & \ar[l,shift right=0.7ex] \ar[l,shift left=0.7ex] X^{(1)} \ar[d,"\varepsilon^{(1)}"] \ar[d] & \ar[l,shift right=1ex] \ar[l,shift left=1ex] \ar[l] X^{(2)} \ar[d] \ar[d,"\varepsilon^{(2)}\text{ ``torus bundle''}"] & \cdots\\
        & X^{(0)}_\star \ar[dr] & X^{(1)}_\star \ar[d] & X^{(2)}_\star \ar[dl,"\text{smooth}"] & \cdots \\
        S = \Spec(M\to k) \ar[rr] & & \Spec(k)
    \end{tikzcd}
\]
\end{rmk}

\begin{ex}[Semistable curve] \label{ex:semistable-log-curve}
    Consider the above setup with $M=V$, let $\pi\in V$ be a nonzero element, and let $P= P_2(\pi) = V[e_1,e_2]/(e_1+e_2=\pi)$ be the ``standard semistable'' monoid as in \cite[Example~2.4.5]{AHLS-Log}. Let $X = \Spec(P\to k[x_1,x_2]/(x_1x_2))$ where the map sends $e_i$ to $x_i$ and all of $V\setminus \{0\}$ to zero. In this case, we have 
    \begin{align*}
        X^{(0)} & = \Spec(P\to k[x_1]) \sqcup \Spec(P\to k[x_2]), \\
        X^{(1)} & = X^{(0)} \sqcup \Spec(P\to k) \sqcup \Spec(P\to k),
    \end{align*}
    and 
    \[
        X^{(0)}_\star  = \Spec(\bN\to k[x_1]) \sqcup \Spec(\bN\to k[x_2]),
        \qquad 
        X^{(1)}_\star = X^{(0)}_\star \sqcup \Spec(k) \sqcup \Spec(k).
    \]
    We notice that $X^{(a)}_\star$ is log smooth over $k$ (with no log structure). This is the case in general.
\end{ex}

\begin{lem} \label{lem:strata-closures}
     In the Zariski local situation of \cref{setup:surgery}, where we consider the log scheme  
    \[
         W = \Spec(P\to k[P]\otimes_{k[M]} k) = \Spec\big(P\to k[P]/(\theta(M_+))\big),
    \]  
    let $F \subseteq P$ be a face in the image of $W \to \Spec(P)$. Then the following hold.
    \begin{enumerate}[(a)]
        \item  \label{lemitem:strata-closures1}
        The monoid $F$ is fs and $0 \to F$ is smooth.
        
        \item \label{lemitem:strata-closures2}
        The log scheme $Z_F = \Spec(P \to k[F])$, where $x \in P \setminus F$ maps to $0$ and $x \in F$ maps to $x \in k[F]$, is the closure of the log stratum of $W$ associated to $F$.
        
        \item  \label{lemitem:strata-closures3}
        The log scheme $Z_{F,\ast} = \Spec(F \to k[F])$ is log smooth over $\Spec(k)$ with trivial log structure, and it agrees with the log scheme structure on $\underline{Z}_F$ induced by the open subset that is the log stratum associated to $F$.
        
        \item  \label{lemitem:strata-closures4}
        The underlying scheme $\underline{Z}_F = \Spec(k[F])$ is normal.
    \end{enumerate}
\end{lem}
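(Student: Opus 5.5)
The plan is to reduce everything to the combinatorics of the smooth monoid map $\theta\colon M\to P$ and the face $F$. First I would identify which faces occur in the image of $W\to\Spec(P)$. A point of $W$ with face $F$ (meaning $F$ is the preimage of units) must send $\theta(M_+)$ to $0$. Since the image of $M$ must meet the units only in $\theta(M^\times)=\theta(0)$, this forces $F\cap\theta(M)=\{0\}$. So $F$ lies in the \emph{vertical} part over the sharp monoid $M$.

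For \ref{lemitem:strata-closures1}, I would use that $P$ is finitely generated over $M$ with $P$ saturated. Take generators $p_1,\dots,p_n$ of $P$ over $\theta(M)$. Any $x\in F$ can be written $x=\theta(m)+\sum a_ip_i$, and the face property gives $\theta(m)\in F$, hence $m=0$ (using that $\theta$ is injective on $M$, from smoothness). It also gives $p_i\in F$ for every $i$ with $a_i>0$. So $F$ is generated by the finitely many $p_i$ lying in $F$, and it is fine since $F\subseteq P^{\gp}$ is integral. Saturation of $F$ follows from saturation of $P$ together with the face property: if $nx\in F$ and $x\in P^{\gp}\cap F^{\gp}$, then $x\in P$, and then $x\in F$. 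I expect the main obstacle to be showing that $F^{\gp}$ has no $p$-torsion and that $0\to F$ is smooth. My approach is to use the structure of smooth maps over $M$: since $F^{\gp}\cap\theta(M)^{\gp}=0$, the composite $F^{\gp}\to P^{\gp}\to P^{\gp}/M^{\gp}$ is injective. The torsion of $P^{\gp}/M^{\gp}$ has order prime to $\Sigma$ by the smoothness hypothesis (\cref{def:Kummer-etale-monoid}), so the torsion of $F^{\gp}$ is prime to $\Sigma$. Here I would have to be careful that the injectivity really uses $F\cap\theta(M)=0$ at the level of groups. If it does not, I would argue after localising $P$ at $F$, using that localisation preserves smoothness.

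For \ref{lemitem:strata-closures2}, I would note that the ideal $P\setminus F$ is prime, so $k[P]\to k[F]$, sending $P\setminus F$ to $0$, is a surjection. It kills $\theta(M_+)$ because $\theta(M_+)\cap F=\emptyset$. Hence $Z_F$ is a closed subscheme of $W$ with the pulled-back (strict) log structure. It is integral, since $k[F]$ is a domain. Its open dense locus $\Spec(k[F^{\gp}])$ is exactly the set of points where the units pull back to $F$, i.e.\ the stratum. Conversely, every point of that stratum sends $P\setminus F$ to $0$, so the stratum lies in $Z_F$. This gives that $Z_F$ is the closure, at least set-theoretically. To get the reduced structure, I would use reducedness of $k[F]$.

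Parts \ref{lemitem:strata-closures3} and \ref{lemitem:strata-closures4} then follow from standard toric facts. Since $F$ is fs with torsion prime to the characteristic and $0\to F$ is smooth, $\Spec(F\to k[F])$ is log smooth over $k$ and $k[F]$ is normal (Hochster: a saturated affine monoid algebra over a field is normal). Finally, the log structure defined by the chart $F\to k[F]$ is the compactifying log structure of the open torus $\Spec(k[F^{\gp}])$, because $k[F]$ is a normal toric variety. This is the standard equality for toric log structures (Kato), and it identifies $Z_{F,\ast}$ with $\underline{Z}_F$ carrying the log structure induced by the stratum.
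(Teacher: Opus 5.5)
The gap is in the torsion step of (a). You assert that $F^\gp\to P^\gp/\theta(M)^\gp$ is injective because $F\cap\theta(M)=0$. That is false in the generality of \cref{setup:surgery}.

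Take $V=0$, $M=\bN^2$, and $\theta\colon M\to P=\bN^2$ with $\theta(e_1)=(1,1)$ and $\theta(e_2)=(0,1)$. Then $\theta^\gp$ is an isomorphism, so $\theta$ is smooth (even \'etale). Here $W=\Spec(k[x,y]/(y))=\Spec(k[x])$, with $(a,b)\mapsto x^a$ for $b=0$ and $(a,b)\mapsto 0$ for $b>0$. The face at the generic point is $F=\bN\times 0$. Indeed $F\cap\theta(M)=0$, but $\theta(M)^\gp=P^\gp\supseteq F^\gp\simeq\bZ$, so your composite is the zero map on $F^\gp$.

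Your fallback does not rescue this: localising $P$ at $F$ changes neither $P^\gp$ nor $\theta(M)^\gp$. Your claim does hold when $M$ is valuative, as in the key case $M=V$, since then $\theta(M)^\gp=\theta(M)\cup-\theta(M)$ and $F^\gp\cap P=F$. However, the setup also allows, e.g., fs $M$ over $V=0$.

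What you actually need is only that the torsion subgroup $F^\gp_{\rm tors}$ injects into $P^\gp/\theta(M)^\gp$, and this does not use $F\cap\theta(M)=0$ at all. Since $M$ is sharp and saturated, $M^\gp$ is torsion-free. Hence the finite kernel of $\theta^\gp$ is trivial and $\theta(M)^\gp\simeq M^\gp$ is torsion-free. Therefore $F^\gp_{\rm tors}$ embeds into the torsion of $P^\gp/\theta(M)^\gp$, which has order prime to $\Sigma$ by smoothness. This is exactly the paper's argument. With it, the rest of your proposal goes through, and your elementary finite-generation/saturation argument and explicit closure argument in (b) are valid replacements for the paper's citations.

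One smaller inaccuracy: smoothness permits prime-to-$p$ torsion in $F^\gp$, for instance $P=M\oplus\bZ/\ell$ with $F=P^\times$. Then $k[F]$ is not a domain, $Z_F$ is not integral, and Hochster's theorem does not apply verbatim.
\begin{itemize}
\item In (b), integrality is not needed: monomials are nonzerodivisors in $k[F]$, so $\Spec(k[F^\gp])$ is still dense.
\item In (d), normality follows from $k[F]\simeq k[(F/F^\times)\oplus\bZ^r]\otimes_k k[T]$, where $F^\times\simeq\bZ^r\oplus T$ and $k[T]$ is finite \'etale over $k$.
\end{itemize}
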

\begin{proof}
    \labelcref{lemitem:strata-closures1} The image of $F$ in $\Spec(M)$ by the map $\theta: \Spec(P) \to \Spec(M)$ must be the image of $\Spec(M \to k) \to \Spec(M)$ which is the face $0$. In other words $0 = \theta^{-1}(F) = F \cap M$. The morphism $0 = \theta^{-1}(F) \to F$ is sft by \cite[Lemma~2.3.2]{AHLS-Log}, and hence $F$ is an fs monoid by \cite[Proposition~2.2.9]{AHLS-Log}.    

    Since $M$ is sharp and saturated, $M^\gp$ is torsion free. Hence torsion of $F^\gp$ injects into $P^\gp/M^\gp$ which only contains torsion of order prime to $p$ because $M \to P$ is smooth. It follows that $0 \to F$ is smooth. 

    \labelcref{lemitem:strata-closures2}
    The map $P \to k[F]$ induces a surjective homomorphism $k[P]/(M_+) \surj k[F]$ and thus a strict closed immersion $Z_F \inj W$. The associated vanishing ideal is generated by $P \setminus F$, which is the defining ideal for the closure of the stratum associated to $F$.

    \labelcref{lemitem:strata-closures3} This follows from \labelcref{lemitem:strata-closures1}.

    \labelcref{lemitem:strata-closures4}
    By \cite[Proposition I.3.4.1]{Ogus}, $\underline{Z}_F$ is normal since $F$ is fs. 
\end{proof}

\begin{lem} \label{lem:surgery-properties}
    In the situation of \cref{setup:surgery}, the following statements hold. 
    \begin{enumerate}[(a)]
        \item \label{lemitem:surgery-finite-type}
        The underlying scheme of $X$ is of finite type over $k$.
        
        \item \label{lemitem:surgery-finite-normalisation}
        The map $\nu\colon X^{(0)}\to X$ is finite and surjective, and for every point $x \in X^{(0)}$ the residue fields $k(x) = k(\nu(x))$ agree. 
        
        \item \label{lemitem:surgery-tameness}
        A finite Kummer \'etale map $Y\to X$ is tame relative to $k$ if and only if its base change $Y^{(0)} = Y\times_X X^{(0)}\to X^{(0)}$ is tame relative to $k$. 
                
        \item \label{lemitem:surgery-fs-and-smooth}
        The log scheme $X^{(a)}_\star$ is fs and smooth over $k$ (with trivial log structure), and $U^{(a)}$ is the locus where the log structure on $X^{(a)}_\star$ is trivial.

        \item \label{lemitem:surgery-relative-strip-off-log}
        There exists a unique morphism of log schemes $\varepsilon^{(a)}\colon X^{(a)} \to X^{(a)}_\star$ inducing the identity on the underlying schemes. 
    \end{enumerate}
\end{lem}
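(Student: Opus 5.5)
All five assertions are Zariski local on $X$, apart from the global finiteness in \labelcref{lemitem:surgery-finite-normalisation} and the tameness statement in \labelcref{lemitem:surgery-tameness}. My plan is to reduce to the local model $W = \Spec\big(P\to k[P]/(\theta(M_+))\big)$ of \cref{lem:strata-closures}, using a strict \'etale map $X\to W$ on a Zariski open.

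\emph{Assertion \labelcref{lemitem:surgery-finite-type}.} By \cite[Lemma~2.3.2]{AHLS-Log}, $P$ is finitely generated over $M$ by elements $p_1,\dots,p_r$. Since $M_+$ maps to zero in $k[P]/(\theta(M_+))$, every monomial $p+m$ with $m\in M_+$ dies. Hence this ring is generated over $k$ by the finitely many images of $p_1,\dots,p_r$, so $W$ is of finite type. \'Etale maps are of finite type, and $X$ is quasi-compact, so $\underline X$ is of finite type over $k$.

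\emph{Assertion \labelcref{lemitem:surgery-finite-normalisation}.} By \cref{lem:noeth-faces}, $W$ has finitely many strata. By \cref{lem:strata-closures}, the strata closures $Z_F$ are normal. Among them, the irreducible components of $W$ are the $Z_F$ for the minimal faces $F$ in the image, and the normalisation of $W$ is their disjoint union. Since normalisation commutes with \'etale base change, the normalisation of $X$ is locally $\coprod_F X\times_W Z_F$. The map from this to $X$ is a disjoint union of closed immersions, so it is finite and surjective with trivial residue field extensions. Finiteness and residue fields glue globally.

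\emph{Assertion \labelcref{lemitem:surgery-tameness}.} One direction is \cref{lem:tame-log-properties}\ref{lemitem:tame-log-properties-bc}. For the converse I check the test squares \eqref{eqn:tame-log-test-sq}. Each point of $X$ lifts to $X^{(0)}$ with the same residue field, and $X^{(0)}\to X$ is strict. So a test square $\Spec K\to\underline X$ lifts through $\underline X^{(0)}$: the map factors through some point $x$, which we lift to $x'$ with $k(x')=k(x)$. The pulled-back log structures on $T$ agree, so prime-to-$p$-ness transfers.

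\emph{Assertion \labelcref{lemitem:surgery-fs-and-smooth}.} Locally, $X^{(0)}$ is strict \'etale over $Z_F$, and by \cref{lem:strata-closures}\labelcref{lemitem:strata-closures3} $Z_{F,\ast}$ is log smooth with trivial locus equal to the open stratum. For $X^{(a)}$ with $a\geq 1$, the fibre product $Z_F\times_W Z_G$ is $\Spec(P\to k[F\cap G])$, since the ideals generated by $P\setminus F$ and $P\setminus G$ add up to the one generated by $P\setminus(F\cap G)$. This is again of the same form, with $F\cap G$ a face in the image, so it is normal and fs-smooth. The main subtle point is identifying $U^{(a)}$ with the open stratum of each component $Z_{F\cap G}$, and so matching the induced compactifying log structure with $Z_{F\cap G,\ast}$. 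For this I would use \cref{lem:ovalpha--const}: on $Z_H$ the stratum map to $\Spec(P)$ is constant exactly on the open stratum. Jumps of $\overline{\cM}$ along the boundary show that no larger open set has locally constant $\overline{\cM}$.

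\emph{Assertion \labelcref{lemitem:surgery-relative-strip-off-log}.} Locally I define $\varepsilon^{(a)}$ via the face inclusion $H\hookrightarrow P$. The compatibility holds because $H\to k[H]$ agrees with $P\to k[H]$ restricted to $H$. Uniqueness holds because the log structure of $X^{(a)}_\star$ is the compactifying one, so $\cM_{X^{(a)}_\star}\subseteq \cO$ consists of functions invertible on $U^{(a)}$. A map of log structures over the identity, $\cM_\star\to\cM_{X^{(a)}}$, must lift the inclusion into $\cO$. On $U^{(a)}$ this is forced, and since $\cM_{X^{(a)}}$ is integral and $U^{(a)}$ is dense and reduced, it is forced everywhere. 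Uniqueness then lets the local maps glue.
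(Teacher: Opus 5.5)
Your parts (a)--(d) follow the paper's route: reduce to the chart $W$, identify $X^{(0)}$ and $X^{(a)}$ with disjoint unions of strata closures $X_F$, and use $Z_F\times_W Z_G=Z_{F\cap G}$. Your identification of $U^{(a)}$ in (d) is more explicit than the paper's. There is one slip in (b): the irreducible components of $W$ are the connected components of the $Z_F$ for the \emph{maximal} faces $F$ in the image of $W\to\Spec(P)$, not the minimal ones, since a larger face gives a larger closure $Z_F$.

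\textbf{Gap in (e).} The genuine gap is the uniqueness in (e), which your gluing also depends on. You argue that a map $\cM_{X^{(a)}_\star}\to\cM_{X^{(a)}}$ over the identity is forced on $U^{(a)}$, and hence everywhere, because $\cM_{X^{(a)}}$ is integral and $U^{(a)}$ is dense and reduced. That principle is false. The Remark right after this lemma gives a counterexample:
\begin{itemize}
\item take $X=\Spec(\bN^2\to k[x])$ with $e_0,e_1\mapsto x$; it is fs and reduced, and $U=D(x)$ is dense;
\item there are nevertheless two distinct maps $\varepsilon_0,\varepsilon_1\colon X\to X_\star=\Spec(\bN\to k[x])$ over the identity, and they agree on $U$;
\item concretely, $(e_0,1)\neq(e_1,1)$ in the stalk of $\cM_X$ at the origin, although both map to $x$ in $\cO$ and coincide over $U$.
\end{itemize}

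\textbf{What is needed.} The missing input is the special local form of $X^{(a)}$. Since $\alpha\circ\varepsilon^\flat$ must be the inclusion $\cM_{X^{(a)}_\star}\subseteq\cO_{X^{(a)}}$, uniqueness amounts to $\alpha$ being injective on $\alpha^{-1}(\cM_{X^{(a)}_\star})$. Take a geometric point $\bar x$ of a component $X_H$ and set $H'=\alpha_{\bar x}^{-1}(\cO^\times_{\bar x})\subseteq H$. The stalk of $\cM_{X^{(a)}}$ at $\bar x$ is $P\oplus_{H'}\cO^\times_{\bar x}$. An element $(q,u)$ maps to a nonzero function only if $q\in H$, because $P\setminus H$ maps to $0$ on $X_H$. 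Hence $\alpha^{-1}(\cM_{X^{(a)}_\star})$ has stalk $H\oplus_{H'}\cO^\times_{\bar x}$, which is the stalk of the compactifying log structure $\cM_{X_{H,\star}}$ by part (c) of \cref{lem:strata-closures}. On it $\alpha$ is injective, since that log structure is a subsheaf of $\cO$. So $\alpha$ maps $\alpha^{-1}(\cM_{X^{(a)}_\star})$ isomorphically onto $\cM_{X^{(a)}_\star}$. This forces $\varepsilon^\flat$ to be the inverse of that isomorphism, which gives uniqueness and lets the local maps glue. In the counterexample this fails exactly because $\alpha^{-1}(\cM_{X_\star})$ is all of $\bN^2\oplus\cO^\times$, which does not inject into $\cO$.
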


\begin{proof}
These assertions are Zariski local on $X$, so we may assume that there exists a smooth morphism of saturated monoids $M\to P$ (necessarily injective) and a strict \'etale map  
\[ 
    X \la W = \Spec(P\to k[P]/(M_+)), \qquad M_+ = M\setminus 0 \subseteq P.
\]
By \cite[Corollary~2.5.18]{AHLS-Log} we see that $P$ is finitely presented over $M$, hence $W$ and consequently $X$ are of finite type, showing \labelcref{lemitem:surgery-finite-type}. In particular, the normalisation map $\nu$ is finite, showing the first part of  \labelcref{lemitem:surgery-finite-normalisation}.

In order to show the claim on the residue fields in \labelcref{lemitem:surgery-finite-normalisation}, we first explicate the map $\nu\colon X^{(0)}\to X$ in the Zariski local situation $W$ used in the previous paragraph and studied in \cref{lem:strata-closures}. 
The irreducible components of~$W$ correspond to the connected components of the $Z_F \inj W$ associated to maximal faces $F$ in the image of $W \to \Spec(P)$. These $Z_F$ are all normal and thus $W^{(0)}$, the normalisation of $W$, is the disjoint union of these maximal $Z_F$. Since $X \to W$ is strict \'etale, the same holds for the normalisation $X^{(0)} = X \times_W W^{(0)}$: this is the disjoint union of the connected components of the $X_F \inj X$ which are the base changes of $Z_F \inj W$ along the strict \'etale $X \to W$; they can also be described as the closures of the open strata of the log stratification of $X$.

Since $X^{(0)}$ is the union of the irreducible components of $X$, the assertion about the residue fields in \labelcref{lemitem:surgery-finite-normalisation} follows at once, and assertion \labelcref{lemitem:surgery-tameness} is also an immediate consequence. 

For assertion \labelcref{lemitem:surgery-fs-and-smooth}, we note first that for faces $F_1$ and $F_2$ in the image of $W \to \Spec(P)$, the intersection $F_{12} = F_1 \cap F_2$ is a face, and naturally 
\[
Z_{F_1} \cap Z_{F_2} = Z_{F_1} \times_W Z_{F_2} = Z_{F_{12}}.
\]
Consequently, we have the equality $X_{F_{12}} = X_{F_1} \cap X_{F_2}$. It follows that $X^{(a)}$ is also a disjoint union of normal subschemes of $X$ of the form $X_F$ for some faces $F$ of $P$. This makes $X^{(a)}_\star$ a disjoint union of subschemes $X_{F,\star}$ which are strict \'etale over $Z_{F,\star}$ and therefore fs and log smooth over $\Spec(k)$ with trivial log structure. 
 
The map $X^{(a)}\to X^{(a)}_\star$ in \labelcref{lemitem:surgery-relative-strip-off-log} is locally on $X_F$ given by  the map $X_F\to X_{F,\star}$ induced by the obvious map 
\[
(F \to k[F]) \la (P \to k[F])
\]
of prelog rings. In order to construct the map globally we need to establish its uniqueness locally. 
Since, by definition, $\cM_{X^{(a)}_\star}$ is a subsheaf of $\cO_{X^{(a)}}$, it suffices to show that $\alpha\colon \cM_{X^{(a)}}\to\cO_{X^{(a)}}$ maps the preimage of $\cM_{X^{(a)}_\star}\subseteq \cO_{X^{(a)}}$ isomorphically onto $\cM_{X^{(a)}_\star}$. This is an \'etale  local question to be checked in a geometric point $\bar{x}$ of $X_F$.  An element $(q,u)$ of the stalk 
\[
\cM_{X^{(a)},\bar x} = P\oplus_{\alpha_{\bar x}^{-1}(\cO_{X,(\bar x)}^\times)} \cO_{X,(\bar x)}^\times
\]
maps to a nonzero element of $\cO_{X,(\bar x)}$ if and only if $q\in F$. Thus, the preimage in question is $F\oplus_{\alpha_{\bar x}^{-1}(\cO_{X,(\bar x)}^\times)} \cO_{X,(\bar x)}^\times$, which maps isomorphically onto the stalk $\cM_{X^{(a)}_\star, \bar x}$. This shows \labelcref{lemitem:surgery-relative-strip-off-log} locally, and thus constructs the map in question globally together with the claimed uniqueness.
\end{proof}

\begin{rmk}
    The proof of \labelcref{lemitem:surgery-relative-strip-off-log} used the special local form of $X^{(a)}$, and the map $\varepsilon$ might not exist or be non-unique in simple situations. For an example exhibiting non-uniqueness, consider the diagonal in $\bA^2 = \Spec(\bN^2\to k[\bN^2])$, i.e.\ $X=\Spec(\bN^2\to k[x])$ where $\bN^2\to k[x]$ sends both generators $e_0,e_1$ to $x$. In this case, $U=D(x)$ and $X_\star = \Spec(\bN\to k[x])$ where $\bN\to k[x]$ sends the generator $e$ to $x$. There are two different maps $\varepsilon_i\colon X \to X_\star$, for $i=0,1$, induced by the identity on $k[x]$ and the two inclusions $\bN\to\bN^2$ sending $e\mapsto e_i$.
\end{rmk}

Thanks to \cref{lem:surgery-properties}~\ref{lemitem:surgery-finite-normalisation} and \ref{lemitem:surgery-tameness} and \cref{cor:descent-beyond-fs}, the van Kampen formula of \cref{sec:descent for Gal cats} yields (up to a choice of base points and paths) an isomorphism
\[ 
    \pitame(X/k) \simeq \pitame(\Desc(X^{(0)}/X), \FEtt_{-/k}).
\]
The next lemma allows us to strip off part of the log structure on the connected components of $X^{(a)}$ appearing implicitly in the above formula. It is a ``partially compactified'' version of \cref{cor:torus-fibration-tame}.

\begin{lem} \label{lem:right-exact-Xa}
    Let $Y$ be a connected component of $X^{(a)}$ and let $Y_\star$ be the corresponding component of $X^{(a)}_\star$ (the underlying schemes of $X^{(a)}$ and $X^{(a)}_\star$ are the same). Let $\bar\eta$ be a geometric generic point of $Y$. Then we have an exact sequence
    \[
        \begin{tikzcd}
            \pi_1(\overline{\cM}_{Y,\bar\eta}) \ar[r] & \pitame(Y/k,\bar\eta)\ar[r] & \pitame(Y_\star/k,\bar\eta)\ar[r] & 1.
        \end{tikzcd}
    \]
    The group on the left is isomorphic to $\widehat{\mathbb{Z}}'(1)^r$ for some $r\geq 0$. 
\end{lem}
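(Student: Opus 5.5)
We follow the proof of \cref{cor:torus-fibration-tame}, with the trivial log structure on $\underline{Y}$ replaced by the log structure of $Y_\star$. The map $\varepsilon^{(a)}$ of \cref{lem:surgery-properties}~\ref{lemitem:surgery-relative-strip-off-log} restricts to a map $\varepsilon\colon Y\to Y_\star$ that is the identity on underlying schemes. Hence pull-back gives an exact functor $\varepsilon^*\colon\FEtt_{Y_\star/k}\to\FEtt_{Y/k}$, by \cref{lem:tame-log-properties}~\ref{lemitem:tame-log-properties-bc}. Applying \cref{lem:tame-log-properties}~\ref{lemitem:tame-log-properties-bc} to the composite $\bar\eta\to Y\to Y_\star$ shows that $\pi_1(\overline{\cM}_{Y,\bar\eta})\to\pitame(Y_\star/k,\bar\eta)$ factors through $\pi_1(\overline{\cM}_{Y_\star,\bar\eta})$. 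This group is trivial, because $\bar\eta$ lies in $U^{(a)}$, where the log structure of $Y_\star$ is trivial by \cref{lem:surgery-properties}~\ref{lemitem:surgery-fs-and-smooth}. So the composite of the two maps in the sequence is trivial.

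For the rest of exactness we check two things.

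First, \emph{surjectivity}: $\varepsilon^*$ should send connected objects to connected objects. Let $Y'\to Y_\star$ be a connected finite Kummer \'etale cover. Over $U=U^{(a)}\cap Y$ it is strict \'etale, and $\underline{Y}$ is normal (it is a union of the normal $X_F$ by \cref{lem:strata-closures}). So $\underline{Y'}$ is the normalisation of $\underline{Y}$ in the \'etale cover $Y'_U\to U$, and connectedness of $Y'$ is equivalent to connectedness of $Y'_U$. The base change $Y'\times_{Y_\star}Y$ restricts over $U$ to $Y'_U$ with the pulled-back log structure, so it contains a dense connected open. It remains to show it has no extra components. I would argue that Kummer \'etale covers of the locally constant log scheme $U_Y$ (that is, $U$ with the log structure from $Y$) are determined by their restriction, and that finite Kummer \'etale covers of $Y$ are also normal-type objects determined over a dense open. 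Since $Y$ is smooth over $S$, $\underline{Y}$ is reduced and these covers are normal, so their components are determined generically.

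Second, \emph{exactness in the middle}: let $Z\to Y$ be a connected tame cover on whose fibre over $\bar\eta$ the group $\pi_1(\overline{\cM}_{Y,\bar\eta})$ acts trivially. Restrict $Z$ to $U_Y$. By \cref{lem:criterion strict} and \cref{lem:strict-locus-clopen}, the strict locus of $Z_U\to U_Y$ is clopen; it is nonempty because it contains a point over $\bar\eta$, so $Z_U\to U_Y$ is strict. Hence $Z_U$ descends to a finite \'etale cover $V\to U$. Let $Z_\star$ be the normalisation of $Y_\star$ in $V$, with log structure induced by $V$. We must show that $Z_\star\to Y_\star$ is Kummer \'etale, is tame relative to $k$, and satisfies $Z_\star\times_{Y_\star}Y\cong Z$.

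\emph{Main obstacle: the purity step.} Kummer \'etaleness of $Z_\star$ should follow from the log purity theorem (Abhyankar) for the fs log smooth $Y_\star$, provided $V\to U$ is tamely ramified along the boundary divisors $Y_\star\setminus U$. Tameness of $V$ relative to $k$ along the generic points of the boundary comes from tameness of $Z$: at a boundary generic point $\xi$, take a test valuation through $\xi$. The pull-back of $Z$ to $\Spec(K)$ is prime-to-$p$, and the same argument as above shows it is strict, hence an honest prime-to-$p$ extension. The isomorphism $Z_\star\times_{Y_\star}Y\cong Z$ then holds because both sides are normal finite covers agreeing over the dense open $U$. Tameness of $Z_\star$ over $Y_\star$ follows from \cref{lem:tame-log-properties}~\ref{lemitem:tame-log-properties-XQ}, since fs Kummer covers are tame.

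Finally, the identification $\pi_1(\overline{\cM}_{Y,\bar\eta})\simeq\widehat{\bZ}'(1)^r$ follows from \cref{prop:FKEt-pi1-local}.
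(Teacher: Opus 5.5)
Your surjectivity step and your strict-locus step are fine, but for a simpler reason than the one you give. Degrees are constant over connected bases, so every nonempty object of $\FEt_Y$ has nonempty restriction to $U$, and every nonempty object of $\FEt_U$ (with the log structure from $Y$) has nonempty fibre over $\bar\eta$. Hence connectedness of $(\varepsilon^*Y')_U$ already gives connectedness of $\varepsilon^*Y'$. Likewise, the fact that \emph{all} points over $\bar\eta$ are strict (by \cref{lem:criterion strict}) forces $Z_U\to U$ to be strict, without knowing that $Z_U$ is connected.

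The genuine gap is the last step of exactness in the middle, the isomorphism $Z_\star\times_{Y_\star}Y\cong Z$. You deduce it from ``both sides are normal finite covers agreeing over $U$'', which rests on ``$Y$ is smooth over $S$, so these covers are normal''. This is false. $Y$ is a strata closure $\Spec(P\to k[F])$ with the pulled-back log structure, not a log smooth $S$-scheme. Its finite Kummer \'etale covers are in general non-reduced, because extracting roots of elements of $P\setminus F$ (which map to $0$) produces nilpotents. For example, on the branch $\Spec(P\to k[x_1])$ of \cref{ex:semistable-log-curve}, adjoining $\tfrac1n e_2$ with $n$ invertible in $k$ yields the nonzero nilpotent $[\tfrac1n e_2]$. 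What your argument actually needs is that an isomorphism over $U$ between two objects of $\FEt_Y$ extends over $Y$. That is, the restriction $\FEt_Y\to\FEt_U$ must be fully faithful, which is not formal.

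This full faithfulness is precisely the key claim of the paper's proof, and it is established by a local computation. A connected $Y'\in\FEt_Y$ is locally a component of $\Spec(Q\to k[F]\otimes_{k[P]}k[Q])$ for a Kummer \'etale $P\to Q$. Its reduction is locally $\Spec(Q\to k[G])$, where $G$ is the face over $F$, which is fs and smooth over $0$. So $Y'_{\rm red}$ is normal and $Y'$ is geometrically unibranch. The preimage of $U$ is locally $\Spec(k[G^{\rm gp}])$, hence dense, and therefore $Y'\times_Y U$ is connected.

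With this lemma your argument closes, but the paper uses it more economically and never constructs $Z_\star$. It places the exact sequence of \cref{cor:torus-fibration-tame} for $U$ above the desired one. The right vertical arrow is the purity isomorphism $\pitame(U_\star/k)\isomto\pitame(Y_\star/k)$, and the middle one is the surjection $\pitame(U/k)\surj\pitame(Y/k)$, which is equivalent to the full faithfulness. A diagram chase then concludes.

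Your justification of tameness of $Z_\star\to Y_\star$ (``fs Kummer covers are tame'') is also wrong as stated. An Artin--Schreier cover of $\bA^1_k$ with trivial log structure is fs Kummer \'etale but not tame relative to $k$. What is needed is that tameness over $U$ propagates to $Y_\star$, which is the input the paper packages into the purity isomorphism.
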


\begin{proof}
Let $U=U^{(a)}\cap Y$ be the locus where $\overline{\cM}_Y$ is locally constant, let $j\colon U\to Y$ be the inclusion, and let $U_\star\subseteq Y_\star$ be the corresponding open. Then $U_\star=\underline{U}$ is the locus where the log structure on $Y_\star$ is trivial, and since $Y_\star$ is log regular by \cref{lem:surgery-properties}~\labelcref{lemitem:surgery-fs-and-smooth}, the open $U_\ast$ is connected and the map $\pitame(U_\star/k)\to \pitame(Y_\star/k)$ is an isomorphism by log purity as recalled later in \cref{thm:GR-log-Abhyankar}. We have a commutative diagram of profinite groups
\[
    \begin{tikzcd}
            \pi_1(\overline{\cM}_{U,\bar\eta}) \ar[r] \ar[d,equal] & \pitame(U/k,\bar\eta)\ar[r] \ar[d] & \pitame(U_\star/k,\bar\eta)\ar[r] \ar[d,"\simeq\text{ by purity}"] & 1 \\
            \pi_1(\overline{\cM}_{Y,\bar\eta}) \ar[r] & \pitame(Y/k,\bar\eta)\ar[r] & \pitame(Y_\star/k,\bar\eta)\ar[r] & 1
    \end{tikzcd}
\]
in which the top row is exact by \cref{cor:torus-fibration-tame}. We claim that the middle vertical arrow $\pitame(U/k)\to\pitame(Y/k)$ is surjective. If this is the case, then an easy diagram chase shows that the bottom row is exact as well, completing the proof.

The assertion that $\pitame(U/k)\to\pitame(Y/k)$ is surjective is equivalent to the full faithfulness of $\FEtt_{Y/k}\to\FEtt_{U/k}$. Since both source and target are full subcategories of $\FEt$, we can ignore the tameness condition and show the full faithfulness of $\FEt_Y\to\FEt_U$. This translates to: if $Y'\in \FEt_Y$ is connected, then $U' = Y'\times_Y U$ is connected. In order to show this, it is enough to show that $U'$ is dense in $Y'$ and that $Y'$ is geometrically unibranch. We can check these properties by an étale local calculation. Locally, $X$ has the form $\Spec(P\to k[P]/(M_+))$ and $Y$ is a connected component of $\Spec(P\to k[F])$ for a face $F \subseteq P$ with $F \cap M = 0$, see the proof of \cref{lem:surgery-properties}. Then $Y'$ locally is a connected component of  
\[
    \Spec(Q \to k[F]\otimes_{k[P]} k[Q])
\]
for some Kummer \'etale map $P \to Q$ (\cref{lem:fKet-local-structure}). Let $G$ be the face of $Q$ corresponding to $F$ under the bijection $\Spec(Q) \to \Spec(P)$. Then the reduction of $Y'$ is locally of the form $\Spec(Q \to k[G])$. Now $F \to G$ is Kummer \'etale, hence $G$ is also fs and smooth over $0$.
Thus, the reduction of $Y'$ is normal, and hence $Y'$ is geometrically unibranch. The same calculation shows that $U'$ is dense as its underlying topological space locally equals that of $\Spec(k[G^{\rm gp}])$ in the above presentation.
\end{proof}

\begin{cor} \label{cor:surgery-fp}
    In the situation of \cref{setup:surgery}, suppose that $k$ is algebraically closed and that the smooth scheme $U^{(0)}$ (with trivial log structure) admits a projective toroidal  compactification. Then $\pitame(X/k)$ is finitely presented. 
\end{cor}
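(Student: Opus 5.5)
The strategy is to apply \cref{cor:DD-fgfp}~\labelcref{coritem:DD-fp} to the normalisation $\nu\colon X^{(0)}\to X$ and the tame fibred category $\FEtt_{-/k}$.

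\emph{Step 1: effective descent.} By \cref{lem:surgery-properties}~\labelcref{lemitem:surgery-finite-normalisation}, $\nu$ is finite, surjective and of finite presentation, and it is strict by construction. Hence \cref{cor:descent-beyond-fs} shows that $\nu$ is of effective descent for $\FEt$. By \cref{lem:surgery-properties}~\labelcref{lemitem:surgery-tameness}, tameness can be tested after pulling back along $\nu$. The argument of \cref{lem:f-eff-desc-for-FEt-and-FEtt} then carries over verbatim to the log setting, so effective descent also holds for $\FEtt_{-/k}$. This yields $\pitame(X/k)\simeq \pi_1(\Desc(X^{(0)}/X,\FEtt_{-/k}))$, as noted before \cref{lem:right-exact-Xa}.

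\emph{Step 2: finiteness of the complex.} Since $\underline{X}$ is of finite type over $k$ (\cref{lem:surgery-properties}~\labelcref{lemitem:surgery-finite-type}), the sets $\pi_0(X^{(a)})$ are finite for $a=0,1,2$. For every connected component $Y$ of $X^{(1)}$ we need $\pitame(Y/k)$ to be finitely generated. This follows from \cref{thm:pi1tame log is fg}, since $Y$ is of type $\typeVd$ and of finite type over $k$.

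\emph{Step 3: finite presentation at the vertices.} This is the main point. Let $Y$ be a connected component of $X^{(0)}$, and let $Y_\star$ be the corresponding component of $X^{(0)}_\star$. By \cref{lem:right-exact-Xa} we have an exact sequence
\[
\widehat{\bZ}'(1)^r \to \pitame(Y/k)\to \pitame(Y_\star/k)\to 1 .
\]
The image of $\widehat{\bZ}'(1)^r$ is a finitely generated abelian profinite group, hence finitely presented by \cref{lem:abelian-fg-implies-finitely-presented}. Therefore, by the argument of \cite[Proof of Claim~2.7]{EsnaultShustermanSrinivas2022:FinitePresentationTame} (as used in \cref{cor:pi1tame locally constant log is fg}), it suffices to show that $\pitame(Y_\star/k)$ is finitely presented.

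Now $Y_\star$ is fs and log smooth over $k$ with trivial locus $U=U^{(0)}\cap Y$ (\cref{lem:surgery-properties}~\labelcref{lemitem:surgery-fs-and-smooth}). Log purity (\cref{thm:GR-log-Abhyankar}) identifies $\pitame(Y_\star/k)$ with $\pitame(U/k)$, where $U$ is a smooth connected variety. By hypothesis, $U$ has a projective toroidal compactification. I expect the main obstacle to lie here: \cite[Theorem~1.1]{EsnaultShustermanSrinivas2022:FinitePresentationTame} is stated for snc compactifications, not toroidal ones.

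To handle this, I would first try to resolve the toroidal compactification by a toric (log) blow-up. Such a blow-up is an isomorphism over $U$ and can be chosen to produce a projective compactification whose boundary is snc, after possibly a further toroidal resolution that stays projective (Kato--Niziol style). This brings us into the snc case. Alternatively, if only a toroidal boundary is available, one can use that the tame fundamental group depends only on $U$ and apply the result to any snc model. Once vertex finite presentation is in place, \cref{cor:DD-fgfp}~\labelcref{coritem:DD-fp} gives that $\pitame(X/k)$ is finitely presented.
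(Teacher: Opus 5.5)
Your proposal is correct and follows the paper's proof essentially step for step. The common route is effective descent along the strict normalisation $\nu$ (\cref{cor:descent-beyond-fs} together with \cref{lem:surgery-properties}), finite generation of all the groups $\pitame(Y/k)$ from \cref{thm:pi1tame log is fg}, and finite presentation at the vertices via \cref{lem:right-exact-Xa}, the Esnault--Shusterman--Srinivas extension argument, log purity, and a log blow-up as in \cite{Niziol} turning the projective toroidal compactification of $U$ into a projective snc one. The differences are cosmetic: the paper concludes with \cref{thm:naked pi1tame fp} (with $f=\id$) rather than citing \cite[Theorem~1.1]{EsnaultShustermanSrinivas2022:FinitePresentationTame} directly, and your closing ``alternatively'' remark is unnecessary, since the log blow-up already supplies the snc model.
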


\begin{proof}
By the van Kampen formula of \S\ref{sec:descent for Gal cats}, since each $X^{(a)}$ (for $a\leq 2$) has finitely many connected components, it suffices to show that for every connected component $Y$ of some $X^{(a)}$, the group $\pitame(Y/k)$ is finitely generated (true by our general result \cref{thm:pi1tame log is fg}), and finitely presented if $a=0$. By \cref{lem:right-exact-Xa}, this group sits inside an exact sequence 
\[
    \begin{tikzcd}
        \pi_1(\overline{\cM}_{Y,\bar\eta}) \ar[r] & \pitame(Y/k,\bar\eta)\ar[r] & \pitame(Y_\star/k,\bar\eta)\ar[r] & 1.
    \end{tikzcd}
\]
Arguing as in the proof of \cref{cor:pi1tame locally constant log is fg}, we see that $\pitame(Y/k)$ is finitely presented if and only if $\pitame(Y_\star/k)$ is. On the other hand, $U=U^{(0)}\cap Y_\star$ (which is a connected component of $U^{(0)}$) is the locus where the log structure on $Y_\star$ is trivial, and hence by log purity (\cref{thm:GR-log-Abhyankar}) we have $\pitame(Y_\star/k)\simeq \pitame(U/k)$ (see the proof of \cref{lem:right-exact-Xa}). By our assumption, $U$ has a projective toroidal compactification $\overline{U}$. Applying a suitable log blowup \cite{Niziol}, we may assume that $\overline{U}$ is an snc compactification, in which case the assertion follows from \cref{thm:naked pi1tame fp}.
\end{proof}

\begin{ex}[Semistable curve, continued] \label{ex:semistable-log-curve-2}
    Consider the situation in \cref{ex:semistable-log-curve}, and assume that $k$ is algebraically closed. Let $Y_i = \Spec(P \to k[x_i])$, for $i = 1,2$, denote the two connected components of $X^{(0)}$.
    Since $M=V$ is divisible and equal to the stalk of $\overline{\cM}$ at the generic points of $X^{(0)}$, for $a=0$ the group $\pi_1(\overline{\cM}_{Y_i,\bar\eta})$ in \cref{lem:right-exact-Xa} is trivial.  This identifies both $\pitame(Y_1/k)$ and $\pitame(Y_2/k)$ canonically with $\pitame(\Spec(\bN \to k[x])/k) = \widehat{\mathbb{Z}}'(1)$. As both maps $Y_i \to X$ are monomorphisms, we can by \cite[Proposition~4.3]{Stix2006:vanKampen} simplify the category of descent data to `ordered descent data' where we fix an ordering of the connected components of $X^{(0)}$ and only consider those components of $X^{(a)}$ arising from fibre products for ordered $(a+1)$-many factors. Here this means we are left with just $Y_{12} = Y_1 \times_X Y_2 = \Spec(P \to k)$. \cref{prop:FKEt-pi1-local} yields the formula $\pitame(Y_{12}/k) = \pi_1(P) = \widehat{\mathbb{Z}}'(1)$.   
    By van Kampen, we have a pushout diagram
    \[
        \begin{tikzcd}
            \pitame(Y_{12}/k) \ar[r] \ar[d] & \pitame(Y_1/k) \ar[d] \\
            \pitame(Y_2/k) \ar[r] & \pitame(X/k)
        \end{tikzcd}
    \]
    which thanks to what we know equals
    \[
        \begin{tikzcd}
            \widehat{\mathbb{Z}}'(1) \ar[r] \ar[d] & \widehat{\mathbb{Z}}'(1) \ar[d] \\
            \widehat{\mathbb{Z}}'(1) \ar[r] & \pitame(X/k).
        \end{tikzcd}
    \]
    The top and left maps are isomorphisms (a special case of \cref{lem:pi1XF} below), and we conclude that 
    \[
        \pitame(X/k) \simeq \widehat{\mathbb{Z}}'(1).  
    \]
\end{ex}

The following lemma (whose inclusion was prompted by a question of Paul Alexander Helminck, see \cref{ex:helminck}) will allow us to treat more general examples of the toric kind. 

\begin{lem} \label{lem:pi1XF}  
     In the Zariski local situation of \cref{setup:surgery}, assume $k$ is algebraically closed and $P$ is sharp. Let $F \subseteq P$ be a face with $F \cap M = 0$, and denote 
     \[
     Z_F = \Spec(P\to k[F])
     \]
     as in \cref{lem:strata-closures}. Then $Z_F$ is connected and the map $Z_0 = \Spec(P\to k) \inj Z_F$, where the subscript $0$ refers to the face $0\subseteq P$, induces isomorphisms
     \[
        \pi_1(P) = \pi_1(Z_0) = \pitame(Z_0/k)\la \pitame(Z_F/k)
    \]
    (since these groups are abelian, we can omit base points). 
\end{lem}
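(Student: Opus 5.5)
The plan is to squeeze $\pitame(Z_F/k)$ between the monoid group $\pi_1(P/F)$ and the toric group $\pi_1(F)$, using \cref{lem:right-exact-Xa}. We then compare this with the analogous decomposition of $\pi_1(P)$ coming from the vertex $Z_0$. Connectedness of $Z_F$ is immediate: $F$ is integral, so $k[F]$ is a domain and $\underline{Z}_F$ is irreducible. Moreover, $F$ is fs, sharp (since $P$ is) and smooth over $0$ by \cref{lem:strata-closures}. Hence $\underline{Z}_F=\Spec(k[F])$ is a normal affine toric variety whose torus-fixed point is exactly $\underline{Z}_0$. The equality $\pi_1(P)=\pi_1(Z_0)=\pitame(Z_0/k)$ follows from \cref{prop:FKEt-pi1-local} together with \cref{lem:tame-log-properties}~\ref{lemitem:tame-log-properties-XequalsS}: the closed point $\underline{Z}_0=\Spec(k)$ is its own base for tameness.

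\emph{Step 1 (injectivity).} Let $P\to Q$ be Kummer \'etale, with $[Q^{\gp}:P^{\gp}]$ prime to $p$. The base change $Z_{F,Q}=Z_F\times_{\Spec(\bZ[P])}\Spec(\bZ[Q])$ is a finite Kummer \'etale cover of $Z_F$. After pull-back to any test point it becomes the corresponding Kummer cover of degree prime to $p$, so it is tame relative to $k$, by the argument of \cref{lem:tame-log-properties}~\ref{lemitem:tame-log-properties-XQ}. Its restriction to $Z_0$ is $\Spec(Q\to k\otimes_{k[P]}k[Q])$. By \cref{prop:FKEt-pi1-local} (or \cref{lem:fKet-local-structure}), such covers generate $\FEt_{Z_0}$. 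Hence every connected cover of $Z_0$ is a quotient of the restriction of a tame cover of $Z_F$. This shows that $\pi_1(P)\to\pitame(Z_F/k)$ is injective.

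\emph{Step 2 (surjectivity).} The component $Z_F$ is exactly of the type treated in \cref{lem:right-exact-Xa}; its geometric generic point $\bar\eta$ has $\overline{\cM}_{Z_F,\bar\eta}=P/F$, and $Z_{F,\star}=\Spec(F\to k[F])$. This gives an exact sequence
\[
\pi_1(P/F)\la \pitame(Z_F/k)\la \pitame(Z_{F,\star}/k)\la 1 .
\]
By log purity (\cref{thm:GR-log-Abhyankar}), $\pitame(Z_{F,\star}/k)$ equals the tame fundamental group of the torus $\Spec(k[F^{\gp}])$. Over an algebraically closed field this group is $\Hom(F^{\gp},\hZ'(1))=\pi_1(F)$, via Kummer covers $F\to \tfrac1n F$. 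To get surjectivity it then suffices to check two things.
\begin{itemize}
\item The image of $\pi_1(P/F)$ lies in the image of $\pi_1(P)$. Both groups act through the inertia at the generic point, and $\pi_1(P/F)\subseteq\pi_1(P)$ through $P\to P/F$ compatibly with specialisation from $\bar\eta$ to the vertex.
\item The composite $\pi_1(P)\to\pi_1(F)$ is surjective. Rather than computing an $\Ext$ group (which is subtle because of the divisible part of $M$), I would show this on the cover side. For $F\to F'$ Kummer, the pushout $Q$ of $F\to F'$ along $F\to P$, followed by saturation, is Kummer \'etale over $P$. The cover $Z_{F,Q}$ then restricts on $Z_{F,\star}$ to the toric Kummer cover attached to $F'$. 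So every connected cover of $Z_{F,\star}$ stays connected after pull-back along $Z_0\to Z_F\to Z_{F,\star}$.
\end{itemize}
A diagram chase then gives surjectivity of $\pi_1(P)\to\pitame(Z_F/k)$.

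The main obstacle I expect is Step 2. The delicate points are the identification of $\pitame(Z_{F,\star}/k)$ with $\pi_1(F)$, and keeping track of the base points, since $\bar\eta$ and $Z_0$ are different points. One must verify that the map $\pi_1(P/F)\to\pitame(Z_F/k)$ from \cref{lem:right-exact-Xa}, transported to the vertex, really factors through $\pi_1(P)$. The cleanest route is probably to phrase everything in terms of the explicit Kummer covers $Z_{F,Q}$. It then suffices to show that every connected tame cover of $Z_F$ is dominated by some $Z_{F,Q}$, which is exactly what the exact sequence together with purity provides.
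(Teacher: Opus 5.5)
Your proof is correct. Step~1 is exactly the paper's argument for essential surjectivity of $\FEtt_{Z_F/k}\to\FEt_{Z_0}$, but Step~2 takes a different route.

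The paper does not use the exact sequence of \cref{lem:right-exact-Xa}. Given a connected tame cover $Y\to Z_F$, it uses \cref{lem:fKet-local-structure} to see that $(Y_{\rm red})_\star\to Z_{F,\star}$ is a connected tame Kummer \'etale cover. By purity and $\pitame(Z_{F,\star}/k)=\pi_1(F)$, this cover has the form $\Spec(G\to k[G])$ for a Kummer map $F\to G$. Its fibre over the vertex is a single point, and since $Y$ and $(Y_{\rm red})_\star$ have the same underlying space, $Y_0$ is connected. That argument stays on the cover side and never compares base points. Your route instead reuses the right-exact sequence as a black box and reduces surjectivity to two group-level checks. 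The price is the cospecialisation compatibility in your first bullet, which you only sketch.

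Three points should be tightened.
\begin{enumerate}
\item Integrality of $F$ alone does not make $k[F]$ a domain; you need $F^{\gp}$ torsion-free. This holds because $F$ is sharp and saturated, which you note one sentence later.
\item \cref{lem:right-exact-Xa} is stated for connected components of $X^{(a)}$. $Z_F$ is such a component only when $F$ is an intersection of maximal faces in the image of $W\to\Spec(P)$; for instance it fails for $M=0$ and $F\subsetneq P$. You should say that the lemma's proof, which is a local computation on $\Spec(P\to k[F])$, applies verbatim to $Z_F$.
\item Your phrase ``both groups act through the inertia at the generic point'' is inaccurate, since $\pi_1(P)$ is the inertia at the vertex. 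Justify the first bullet instead by factoring $\bar\eta\to Z_F$ through the strict henselisation of $Z_F$ at the vertex. By \cref{prop:FKEt-pi1-local} its fundamental group is $\pi_1(P)$. It receives $\pi_1(Z_0)$ isomorphically, and it receives $\pi_1(P/F)$ via the inclusion induced by $P^{\gp}\surj(P/F)^{\gp}$. Since the kernel of $\pitame(Z_F/k)\to\pitame(Z_{F,\star}/k)$ is normal, the conjugation introduced by changing base points is harmless.
\end{enumerate}
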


\begin{proof}
Since $P$ is sharp, also all its faces $F \subseteq P$ are sharp, and $F^\gp$ does not contain torsion. Since $\Spec(k[F^\gp])$ is dense in $\underline{Z}_F$, the log scheme $Z_F$ is connected. 

Let us now show that $\FEtt_{Z_F/k}\to \FEtt_{Z_0/k} = \FEt_{Z_0}$ is essentially surjective. By \cref{prop:FKEt-pi1-local}, every connected cover of $Z_0$ is of the form $\Spec(Q\to k[Q]\otimes_{k[P]} k)$ for some Kummer \'etale map $P\to Q$. This map is the base change to $Z_0$ of the Kummer \'etale map
\[
    Y_F(Q) = \Spec(Q \to k[Q] \otimes_{k[P]} k[F]) \la Z_F.
\]
This map is Galois as an object of $\FEt_{Z_F}$ with Galois group $\Hom(Q^\gp/P^\gp,\mu_\infty(k))$ 
of degree prime to $p$, and hence is tame relative to $k$. 

To show full faithfulness, we need to show that if $Y \to Z_F$ is a connected cover which is tame relative to $k$, then $Y_0 = Y \times_{Z_F} Z_0$ is connected. To this end, consider the map 
\[
    (Y_{\rm red})_\star \la Z_{F,\star} = \Spec(F \to k[F]).
\]
By \cref{lem:fKet-local-structure}, $Y$ is \emph{locally on $Z_F$} isomorphic to the disjoint union of some $Z_F(Q)$'s. Since $(Z_F(Q)_{\rm red})_\star \simeq \Spec(G\to k[G])$ where $G \subseteq Q$ is the saturation of $F$ in $Q$ (or the unique face above $F$), see the computation in the proof of \cref{lem:right-exact-Xa}, it follows that $(Y_{\rm red})_\star$ is locally on $Z_{F,\star}$ the disjoint union of $\Spec(G \to k[G])$ for some Kummer \'etale maps $F \to G$. We deduce from this analysis that $(Y_{\rm red})_\star \to Z_{F,\star}$ is Kummer \'etale and tame. 
On the other hand, by purity as in the proof of \cref{lem:right-exact-Xa} we compute 
\[
\pitame(Z_{F,\star}/k) = \pitame(\Spec(k[F^{\rm gp}])/k) = \pi_1(F).
\]
It follows that 
$(Y_{\rm red})_\star \simeq \Spec(G\to k[G])$
for some Kummer \'etale map $F\to G$. The fibre of $(Y_{\rm red})_\star$ above $\underline{Z}_0=\Spec(k)$ consists of a single (possibly non-reduced) point, and hence is connected. However, the log schemes $(Y_{\rm red})_\star$ and $Y$ share their underlying topological space, and we conclude that $Y_0$ is connected.
\end{proof}

\begin{cor} \label{cor:Paul}
    In the Zariski local situation of \cref{setup:surgery}, where we consider the log scheme  
    \[
         W = \Spec(P\to k[P]\otimes_{k[M]} k) = \Spec\big(P\to k[P]/(\theta(M_+))\big),
    \]  
    assume $k$ is algebraically closed and $P$ is sharp. Then $W$ is connected and the inclusion $Z_0 \inj W$ (notation as in \cref{lem:strata-closures}) induces an isomorphism
    \[
        \pi_1(P) = \pi_1(Z_0) = \pitame(Z_0/k) \isomto \pitame(W/k). 
    \]   
\end{cor}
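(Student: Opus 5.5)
We will follow the surgery procedure of \cref{setup:surgery} in the special case where $X = W$ is the local model itself, and then feed the output of \cref{lem:pi1XF} into the van Kampen formula.

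First, $W$ is connected. By the proof of \cref{lem:surgery-properties}, the irreducible components of $W$ are the $Z_F$ for the maximal faces $F$ in the image of $W\to\Spec(P)$. Since $P$ is sharp, the face $0$ has image $0 \in \Spec(M)$, so it lies in the image of $W$. Moreover $0 \subseteq F$ for every face $F$, hence $Z_0 = \Spec(P\to k) \subseteq Z_F$ for all such $F$. Each $Z_F$ is connected by \cref{lem:pi1XF}, and all of them meet in the closed point $Z_0$. Hence $W$ is connected.

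Next, consider the normalisation $\nu\colon W^{(0)} = \coprod_{F \text{ max}} Z_F \to W$. This map is finite and surjective, with trivial residue field extensions by \cref{lem:surgery-properties}. Combining \cref{cor:descent-beyond-fs} with \cref{lem:surgery-properties}~\ref{lemitem:surgery-tameness}, $\nu$ is of effective descent for $\FEtt_{-/k}$. Each map $Z_F \to W$ is a monomorphism, so as in \cref{ex:semistable-log-curve-2} we may pass to ordered descent data \cite[Proposition~4.3]{Stix2006:vanKampen}. The pieces that occur are then the fibre products
\[
Z_{F_1}\times_W Z_{F_2} = Z_{F_1\cap F_2}, \qquad Z_{F_1}\times_W Z_{F_2}\times_W Z_{F_3} = Z_{F_1\cap F_2\cap F_3}.
\]
Each of these is again of the form $Z_G$ for a face $G$ with $G \cap M = 0$, and each is connected by \cref{lem:pi1XF}. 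By the same lemma, every $\pitame(Z_G/k)$ receives an isomorphism from $\pi_1(P) = \pitame(Z_0/k)$ via the inclusion $Z_0 \subseteq Z_G$.

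The key step is now that all vertex, edge and face groups of the resulting $2$-complex of groups are identified with the same abelian group $\pi_1(P)$, compatibly. We choose every base point at the common point $Z_0$, which lies in all the pieces, so that every transition map $\pitame(Z_{G'}/k)\to\pitame(Z_G/k)$ for $G' \subseteq G$ becomes the identity of $\pi_1(P)$ via the inclusions $Z_0\subseteq Z_{G'}\subseteq Z_G$. The van Kampen formula \cite[Corollary~3.3]{Stix2006:vanKampen} then expresses $\pitame(W/k)$ as the quotient of the free product of copies of $\pi_1(P)$ and the profinite completion of the topological $\pi_1$ of the complex $E_\bullet$, modulo the edge and face relations. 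These relations identify all vertex copies of $\pi_1(P)$ with one another and kill the loops.

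I expect the main obstacle to be controlling the topological part. One must show that the nerve-type complex with vertices the maximal faces, edges the pairwise intersections and $2$-cells the triple intersections is simply connected. Better, one can avoid this combinatorics altogether: since all pieces share the common point $Z_0$ and all base points are placed there, the transition paths are canonical, and the cocycle data $\alpha_{vef}$ are trivial. A descent datum is then a local system with trivial gluing on a complex all of whose groups are the constant $\pi_1(P)$, i.e.\ it is determined by its restriction to $Z_0$. I would argue directly on categories. Full faithfulness of $\FEtt_{W/k}\to\FEt_{Z_0}$ follows because the pullback to each $Z_F$ is fully faithful onto its restriction to $Z_0$. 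For essential surjectivity, take a connected cover of $Z_0$ given by a Kummer \'etale map $P\to Q$; it extends to $\Spec(Q\to k[Q]\otimes_{k[P]}k[P]/(M_+))\to W$, which is Galois of prime-to-$p$ degree and hence tame. Together these show that $\pi_1(P)\to\pitame(W/k)$ is an isomorphism.
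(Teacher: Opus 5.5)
Your proof is correct, but its decisive step takes a different route from the paper's.

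\textbf{The paper's route.} The paper stays inside the van Kampen formula. For ordered descent data along $W^{(0)}=\coprod_F Z_F\to W$, every strictly ordered pair or triple of maximal faces contributes exactly one connected piece, namely $Z_{F_1\cap F_2}$ resp.\ $Z_{F_1\cap F_2\cap F_3}$. So the underlying $2$-complex is the $2$-truncation of a full simplex, hence simply connected. Since all constituent groups are canonically the abelian group $\pi_1(P)$, the paths can be chosen so that all transition maps and $\alpha$-data are trivial, and the formula collapses to $\pi_1(P)$. In particular, the ``main obstacle'' you anticipated is immediate from the connectedness of the intersections, which you had already established.

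\textbf{Your route.} You work directly on Galois categories.
\begin{itemize}
\item Full faithfulness of $\FEtt_{W/k}\to\FEt_{Z_0}$ follows from descent along $\nu$ together with \cref{lem:pi1XF} on each $Z_F$. You should say explicitly that compatibility on the overlaps uses the faithfulness part of \cref{lem:pi1XF} for the face $F_1\cap F_2$.
\item Essential surjectivity follows by extending a Kummer \'etale $P\to Q$ globally to the prime-to-$p$ torsor $W\times_{\Spec(\bZ[P])}\Spec(\bZ[Q])\to W$, exactly as in the proof of \cref{lem:pi1XF}. Passing to a connected component of its restriction to $Z_0$ is harmless once full faithfulness is known, because the essential image of a fully faithful exact functor is closed under connected components.
\end{itemize}

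\textbf{Comparison.} Your final argument uses only the fully faithful half of descent and only pairwise overlaps. Because covers are constructed globally, the cocycle condition, the $2$-cells and all base-point bookkeeping disappear. The cost is that it relies on the global chart $P$ of $W$ and on the common minimal stratum $Z_0$. The paper's computation, by contrast, is an instance of the surgery-plus-van-Kampen mechanism of \cref{ss:surgery}, which also applies when no global chart is available to produce covers.
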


\begin{proof}
Since $P$ is sharp, also all its faces $F \subseteq P$ are sharp. Therefore the closures of log strata $Z_F \inj W$ are connected by \cref{lem:pi1XF}. Since all $Z_F$ contain the minimal log stratum $Z_0 = \Spec(P \to k)$, the scheme $W$ is connected. 

We apply the van Kampen formula with ordered descent data given by the  connected components $Z_F = \Spec(P \to k[F])$ of $W^{(0)}$ (with $F \subseteq P$ maximal faces in the image of $W \to \Spec(P)$, see the proof of \cref{lem:surgery-properties}). We deduce from \cref{lem:pi1XF} that all connected components of $W^{(a)}$, for all $a\geq 0$, have fundamental groups canonically isomorphic to $\pi_1(P)$.  Since all fibre products of ordered factors are again of the form $Z_F$ and so connected, the underlying $2$-complex for ordered descent data is the $2$-truncation of a full simplex. Moreover, all constituents have tame fundamental group canonically isomorphic to $\pi_1(P) = \pitame(Z_0/k)$ which is abelian (thus base points do not matter), so we can choose the paths so that the annoying datum $\alpha_{ijk}^{(f)}$ in \cite[Corollary~5.3]{Stix2006:vanKampen} is the identity in all cases. Thus, the van Kampen formula simplifies to just~$\pi_1(P)$.
\end{proof}

\section{Vertical compactifications of adic spaces}
\label{s:compactifications}

This section is self-contained and serves as a preparation for \cref{s:fg-pi1-rig}.
Its goal is to study the universal vertical compactification $\Spa(X,S)$ of an adic space $X$ locally of finite type over a base adic space $S$, which exists under some mild hypotheses.
For rigid spaces this recovers the universal compactification from Huber's book \cite[\S 5.1]{HuberBook}, and for a map of discretely ringed adic spaces $X_0^{\rm ad}\to S_0^{\rm ad}$ induced by a map of schemes $X_0\to S_0$ one obtains the space $\Spa(X_0, S_0)$ we have already encountered in \cref{sec:adic tame discrete case}. Later in \cref{s:fg-pi1-rig}, we will need to consider a situation which is of neither kind, which is the map of adic spaces induced by an adic morphism of formal schemes $\fX\to\Spf(K^\circ)$.   The general construction, reviewed in \cref{ss:univ-comp}, is due to Solodov \cite{Solodov-vertical-compactification}.
However, in \cref{sec:root-triples} we give an alternative description of this compactification in terms of root triples. 

As we shall explain in \cref{ss:tame-pi1-rig}, in order to obtain a notion of (relative) tameness in rigid geometry with good finiteness properties, one should study finite \'etale morphisms $Y \to X$ whose extension to the universal vertical compactification $\Spa(X,S)$ satisfies a pointwise tameness condition. 

To a reader who is not interested in diving into the adic space structure of the universal vertical compactification, we recommend focusing on the definition of root triples in \cref{sec:root-triples}.
In most of the arguments in \cref{s:fg-pi1-rig} it is enough to know that the underlying set of $\Spa(X,S)$ is the set of root triples.

\subsection{Review of adic spaces}
\label{ss:adic-spaces}

We recall some general facts and terminology concerning adic spaces.
The types of adic spaces we encounter in this article are rigid spaces, their formal models, and discretely ringed adic spaces.
Accordingly, our focus will be on these spaces and their interplay.

\subsubsection*{Adic spaces}

For a Huber pair $(A, A^+)$ we denote by $A^\circ\subseteq A$ the subring of power-bounded elements and by $A^{\circ\circ}\subseteq A^\circ$ the ideal of topologically nilpotent elements. Then $(A, A^\circ)$ is a Huber pair as well, and we abbreviate $\Spa(A, A^\circ)$ to $\Spa(A)$.

If $(A, A^+)\to (B, B^+)$ is a homomorphism of Huber pairs, we write $\Spa(B, A^+)$ for the adic space $\Spa(B, B^+_{\min})$ where $B^+_{\min}\subseteq B$ is the smallest ring of integral elements containing the image of $A^+$, i.e.\ the integral closure of the sub-$A^+$-algebra of $B$ generated by $B^{\circ\circ}$.
Note that a continuous valuation of~$B$ that takes values less than or equal to~$1$ on~$A^+$ is already less than or equal to~$1$ on~$B^+_{\min}$.

An {\bf affinoid field} is a Huber pair $(K, K^+)$ where $K$ is a field and $K^+$ is a valuation ring of $K$, equipped with the valuation topology.
The property of $(K,K^+)$ being a Huber pair forces~$K^+$ to be {\bf microbial} meaning that it admits an element $\pi$ such that $K = K^+[1/\pi]$.
Every such element is called a {\bf pseudouniformiser}.
The residue field of $K^+$ is denoted by $K^\succ$ and called the {\bf specialisation field} of $K$. 

\subsubsection*{Specialisation}

Next, we discuss specialisation relations on adic spaces. A specialisation $x\leadsto x'$ on a topological space $X$ is a pair of points $x,x'\in X$ such that $x'\in\overline{\{x\}}$. 

If $x'$ is the specialisation of a point $x$ of an adic space $X$, we have a cospecialisation map $\cO_{X,x'}\to \cO_{X,x}$.
Then $x'$ is a \textbf{vertical (secondary)} specialisation of $x$ if and only if this map is a local homomorphism, so that it induces a homomorphism of residue fields $k(x')\to k(x)$. 
If $X$ is analytic, then every specialisation on $X$ is vertical. 

For a point $x\in X$ the following are equivalent:
\begin{enumerate}
    \item $x$ has no vertical generalisations,
    \item $k(x)^+ = k(x)^\circ$ (powerbounded elements),
    \item \begin{enumerate}[(i)]
            \item $x$ is analytic and $k(x)^+$ is of rank one or
            \item $k(x)$ has the trivial valuation (meaning $k(x)^+=k(x)$).
          \end{enumerate}
\end{enumerate}
We call a point satisfying the above equivalent conditions {\bf vertically maximal}.
For a point~$x$ of an adic space~$X$ we denote by~$x^\circ$ its {\bf maximal vertical generalisation}. 
It is the point corresponding to the valuation ring $k(x)^\circ$ of~$k(x)$.
Sometimes we will just call a point without vertical generalisations~$x^\circ$, without specifying any point~$x$.

\begin{lem} \label{lem:adic-preserves-vertical-maximal}
    Let $f \colon Y \to X$ be an adic morphism of adic spaces.
    Then for any point $y \in Y$ we have $f(y)^\circ = f(y^\circ)$.
    In particular,~$f$ takes vertically maximal points to vertically maximal points.
\end{lem}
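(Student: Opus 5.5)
The plan is to work with the local rings at $y$ and at $f(y)$ and to use the description of vertical generalisations by valuation rings. Set $x = f(y)$. By definition, $y^\circ$ is the point of $Y$ given by the same support as $y$ but with the valuation ring $k(y)^\circ$ of $k(y)$ in place of $k(y)^+$. Similarly, $x^\circ$ is given by $k(x)^\circ$. So the first step is to record that $f$ induces a local homomorphism $\cO_{X,x}\to\cO_{Y,y}$, hence an embedding of residue fields $k(x)\inj k(y)$ with $k(x)^+ = k(y)^+\cap k(x)$. This embedding depends only on the support, so it is the same for $y^\circ$. Consequently $f(y^\circ)$ is the vertical generalisation of $x$ corresponding to the valuation ring $k(y)^\circ\cap k(x)$, and it has the same residue field $k(x)$.

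The claim thus reduces to the equality of valuation rings
\[
k(y)^\circ\cap k(x) = k(x)^\circ .
\]
I would split into the two cases from the characterisation of vertically maximal points, namely analytic versus trivially valued.

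If $x$ is analytic, then $y$ is analytic as well, since $f$ is adic. In this case $k(x)^\circ$ is the unique rank-one generalisation of $k(x)^+$: it is the localisation at the height-one prime, equivalently the ring of powerbounded elements for the topology defined by a pseudouniformiser $\varpi$. Because $f$ is adic, the image of $\varpi$ in $k(y)$ is again topologically nilpotent, and it is a pseudouniformiser of $k(y)^+$. Therefore $k(y)^\circ = k(y)^+[1/\varpi]$ after localisation at the minimal prime containing $\varpi$. Intersecting with $k(x)$ gives
\[
\bigl(k(y)^+\cap k(x)\bigr)_{\sqrt{\varpi}} = k(x)^\circ ,
\]
because a subring of $k(x)$ on which $|\varpi|<1$ is rank-one-bounded exactly when $\varpi$ lies in its maximal ideal.

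If $x$ is non-analytic, then its support prime is open. Adicness of $f$ gives that the ideal of definition of $\cO_{X}$ generates an ideal of definition of $\cO_Y$, so $y$ is non-analytic too. For non-analytic points, the maximal vertical generalisation is the trivial valuation, so $k(y)^\circ = k(y)$ and $k(x)^\circ = k(x)$, and the equality is immediate.

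The main obstacle I expect is the analytic case. There one must check carefully that the intersection of $k(y)^\circ$ with $k(x)$ is exactly the rank-one coarsening, and not something coarser or trivial. This is where adicness is really used: it guarantees that the pseudouniformiser stays topologically nilpotent, with value $<1$ but nonzero, in $k(y)$. For a non-adic map this fails, for example $\Spa(K)\to\Spa(K^\circ)$.

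The final statement then follows at once. If $y$ is vertically maximal, then $y=y^\circ$, and so $f(y)=f(y^\circ)=f(y)^\circ$.
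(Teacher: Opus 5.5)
Your argument is correct, but it is organised differently from the paper's. The paper uses no case distinction. An adic homomorphism of Huber rings maps powerbounded elements to powerbounded elements, so on completed residue fields $\widehat{k(x)}^\circ\subseteq\widehat{k(y)}^\circ$. This says that $f(y^\circ)$ is a vertical generalisation of $x^\circ$, and vertical maximality of $x^\circ$ then forces $f(y^\circ)=x^\circ$. So the paper only proves the inclusion $k(x)^\circ\subseteq k(y)^\circ\cap k(x)$; maximality supplies the other one. Working with completions also means it never has to identify $k(y)$ with $k(y^\circ)$.

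You instead compute $k(y)^\circ\cap k(x)$ outright. In the analytic case it is the intersection of $(k(y)^+)_{\sqrt{\varpi k(y)^+}}$ with $k(x)$. In the non-analytic case it is the trivial valuation, after adicness shows that $y$ is non-analytic. This is more explicit and isolates where the hypothesis bites. However, your one-line justification in the analytic case (about subrings whose maximal ideal contains $\varpi$) only yields $k(y)^\circ\cap k(x)\subseteq k(x)^\circ$, which is the inclusion maximality gives for free. The reverse inclusion is the real content, and it is the valuation-ring form of the paper's key step. One way to see it: $a\in k(x)^\circ$ iff $a^n\varpi\in k(x)^+$ for all $n\geq 1$, and this persists in $k(y)^+$. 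Equivalently, $\sqrt{\varpi k(y)^+}\cap k(x)^+=\sqrt{\varpi k(x)^+}$, and localisation commutes with restriction along an extension of valuation rings.

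Your commentary on where adicness is used is off, although the proof itself is unaffected. Two facts need only continuity and $f(y)=x$, not adicness:
\begin{itemize}
\item $x$ analytic forces $y$ analytic;
\item $\varpi$ stays topologically nilpotent with nonzero value at $y$.
\end{itemize}
At analytic points the map of residue affinoid fields is then automatically adic. Adicness is needed exactly in your non-analytic case, to rule out an analytic $y$ lying over a non-analytic $x$.

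For the same reason your counterexample is wrong as stated. With $K^\circ$ carrying its $\pi$-adic topology, $\Spa(K)\to\Spa(K^\circ)$ is the adic inclusion of the generic fibre (the paper uses it as an adic base change), and the lemma holds for it. You need $K^\circ$ with the discrete topology. Then the image of the point of $\Spa(K)$ has the trivial valuation on $K$ as a further vertical generalisation, so it is not vertically maximal.
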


\begin{proof}
    For every adic homomorphism of Huber rings $A \to B$, the image of~$A^\circ$ is contained in~$B^\circ$ (compare \cite[Lemma~1.8~(i)]{HuberContinuousValuations}).
    This implies that
    \[
        \widehat{k(f(y)^\circ)}^+ = \widehat{k(f(y))}^\circ \subseteq \widehat{k(y)}^\circ = \widehat{k(y^\circ)}^+
    \]
    and consequently, 
    \[
        \widehat{k(f(y)^\circ)}^+ \subseteq \ \widehat{k(y^\circ)}^+ \cap \widehat{k(f(y))} = \widehat{k(f(y^\circ))}^+.
    \]
    The inclusion of valuation rings translates to a vertical specialisation relation $f(y^\circ) \leadsto f(y)^\circ$.
    The maximality of~$f(y)^\circ$ (with respect to vertical generalisation) forces the equality $f(y^\circ) = f(y)^\circ$.
\end{proof}

There is a similar characterisation for horizontal specialisations: $x'$ is a {\bf horizontal (primary) specialisation} if and only if $\cO_{X,x'}^+\to\cO^+_{X, x}$ is a local homomorphism, so that we get a homomorphism of specialisation fields $k(x')^\succ \to k(x)^\succ$.
Finally, a {\bf generalised horizontal specialisation} is a specialisation $x \leadsto x'$ that is either horizontal or factors as $x \leadsto y \leadsto x'$ such that~$y$ and~$x'$ are trivial valuations, and $x \leadsto y$ is horizontal.

\subsubsection*{Rigid spaces}

A {\bf non-archimedean field} is a field $K$ which is complete with respect to a rank one valuation. Its valuation ring $K^+ = K^\circ$ is thus $\pi$-adically complete with respect to every pseudouniformiser $\pi$ of $K^\circ$.

Let $K$ be a non-archimedean field. A {\bf rigid space} over $K$ is an adic space locally of finite type over $\Spa(K)$. So, a rigid space over $K$ is locally of the form $\Spa(A)$ for an affinoid $K$-algebra $A$. We shall denote the category of rigid spaces over $K$ by $\cat{Rig}_K$.

\subsubsection*{Formal schemes}

For our subsequent discussion of formal models and the specialisation map we shall rely on \cite{FujiwaraKato}, using \cite[Theorem~II~A.5.1]{FujiwaraKato} to translate results to the language of adic spaces.

Let $K$ be a non-archimedean field with valuation ring $K^+=K^\circ$ and pseudouniformiser $\pi$. If $A$ is an algebra topologically of finite type over $K^\circ$ (that is, a quotient of $K^\circ\langle T_1, \ldots, T_n\rangle$ for some $n\geq 1$, endowed with the $\pi$-adic topology), then $A_K = A[1/\pi]$ is an affinoid $K$-algebra and $A_K^\circ$ is the integral closure of the (image of) $A$ in $A_K$. We call a formal scheme over $K^\circ$ locally of finite type if it is locally of the form $\Spf(A)$ for a $K^\circ$-algebra $A$ topologically of finite type. We denote the category of such formal schemes over $K^\circ$ by $\cat{FSch}^{\rm lft}_{K^\circ}$. 

In \cite[\S~4]{Huber94:Generalization} Huber explains how the category of locally noetherian formal schemes embeds fully faithfully into the category of adic spaces by sending $\Spf(A)$ to $\Spa(A)$.
Being noetherian was needed because he had established sheafiness of~$A$ only in the noetherian setting (see \cite[Theorem~2.2]{Huber94:Generalization}).
Today we have \cite{ZavyalovSheafy} at hand that settles sheafiness also if~$A$ is only strongly rigid noetherian (e.g. topologically of finite type over a microbial valuation ring).
Consequently, we have a fully faithful functor
\[
    (-)^{\rm ad} \colon \cat{FSch}^{\rm lft}_{K^\circ} \longrightarrow \cat{AdSpc}_{K^\circ}
\]
to the category of adic spaces that are locally of finite type over~$K^\circ$.

\subsubsection*{Rigid generic fibre}

There exists a unique functor 
\[
    (-)_{\rm rig} \colon \cat{FSch}^{\rm lft}_{K^\circ} \longrightarrow \cat{Rig}_K,
\]
called the {\bf rigid generic fibre} functor, which sends open coverings to open coverings, and such that 
\[ 
    \Spf(A)_{\rm rig} \simeq \Spa(A_K).
\]
If~$\cX$ is a formal scheme locally of finite type over~$K^\circ$, then~$\cX_{\rm rig}$ is the generic fibre of the morphism $\cX^{\rm ad} \to \Spa (K^\circ)$.
Being a fibre product of adic spaces, it preserves open and closed immersions, finite maps, étale maps, etc.

\subsubsection*{The specialisation map}

Let $A$ be an algebra topologically of finite type over $K^\circ$. For any locally topologically ringed space $X$, we have a functorial isomorphism $\Hom(X, \Spf(A))\simeq \Hom(A, \cO(X))$ (continuous homomorphisms). Thus, the map $A\to A_K^\circ$ induces a map of locally topologically ringed spaces
\[ 
    {\rm sp}_A \colon (\Spf(A)_{\rm rig}, \cO^+) = (\Spa(A_K),  \cO^+) \longrightarrow (\Spf(A), \cO). 
\]
This construction respects basic open subsets: the preimage of $\{f\neq 0\}=\Spf(A\langle f^{-1}\rangle)$ is $\{|f|\geq  1\} = \Spa(A\langle f^{-1}\rangle_K$ and ${\rm sp}_A$ restricts to ${\rm sp}_{A\langle f^{-1}\rangle}$.  Hence it globalises: for every formal scheme $\fX$ locally of finite type over $K^\circ$ there exists a unique map of topologically locally ringed spaces
\[
    {\rm sp}_{\fX} \colon (\fX_{\rm rig}, \cO^+)\longrightarrow \fX
\]
such that for every map $\Spf(A)\to \fX$ of formal schemes locally of finite type over $K^\circ$, the resulting square 
\[ 
    \begin{tikzcd}
        (\Spa(A_K), \cO^+) \ar[d] \ar[r,"{\rm sp}_A"] & \Spf(A) \ar[d] \\
        (\fX_{\rm rig}, \cO^+) \ar[r,"{\rm sp}_\fX",swap] & \fX.
    \end{tikzcd}
\]
commutes.
Furthermore, we have ${\rm sp}_{\fX,*}(\cO) = \cO_\fX[1/\pi]$, and ${\rm sp}_{\fX,*}(\cO^+)$ coincides with the integral closure of (the image of) $\cO_{\fX}$ in $\cO_{\fX}[1/\pi]$.

\begin{defi}[{\cite[Definition~A.1]{AchingerLaraYoucis2022}}] \label{defi:eta-normal}
    A formal scheme $\fX$ locally of finite type over~$K^\circ$ is 
    {\bf \mbox{$\eta$-normal}} if $\cO_{\fX}$ is $\pi$-torsion free and integrally closed in $\cO_{\fX}[1/\pi]$, or equivalently if the map $\cO_\fX \isomto {\rm sp}_{\fX*}(\cO^+)$ is an isomorphism.
\end{defi}

\begin{rmk} \label{rmk:sp-facts}
We shall use the following facts about ${\rm sp}={\rm sp}_\fX\colon X=\fX_{\rm rig}\to \fX$.
\begin{enumerate}
    \item On the level of points, the specialisation map can be described as follows. Let $x\in X$ and let $\Spf(A)\subseteq\fX$ be an affine neighbourhood of ${\rm sp}(x)$, so that $\Spa(A_K)\subseteq X$ is an affinoid neighbourhood of $x$. Then $k(x)^+$ is a $\pi$-adically separated valuation ring, and the map $A\to k(x)^+$ induces a map of formal schemes 
    \[
        \Spf(k(x)^+) \longrightarrow \Spf(A) \subseteq \fX
    \]
    which sends the closed point of $\Spf(k(x)^+)\simeq \Spec(k(x)^+/\sqrt{(\pi)})$ to ${\rm sp}(x)$.
    \item If $\fX$ is {\bf admissible} (i.e. flat over $K^\circ$), then ${\rm sp}$ is surjective \cite[Proposition II 3.1.5]{FujiwaraKato}. 
    \item If $\fX$ is moreover $\eta$-normal, then every generic point $x\in \fX$ has a unique preimage $x'\in X$ under ${\rm sp}$. Moreover, we have
    \[
        \cO_{\fX, x} = \cO^+_{X, x'} = k(x')^+,
    \]
    and hence $k(x) = k(x')^\succ$. In particular, $\cO_{\fX,x}$ is a rank one valuation ring. These assertions follow by combining \cite[Proposition~2.2]{BhattHansen} with \cite[Lemma~A.12 and Proposition~A.15]{AchingerLaraYoucis2022}. 
\end{enumerate}
\end{rmk}

\subsubsection*{Raynaud's equivalence}

The rigid generic fibre functor sends the class $W$ of {\bf admissible formal blow-ups} (completed blowups in open coherent ideals) to isomorphisms, and induces by \cite[Theorems~II~A.5.1,~A.5.2]{FujiwaraKato} an equivalence
\[ 
    \cat{FSch}^{\rm ft}_{K^\circ}[W^{-1}] \isomto \cat{Rig}_K^{\rm qcqs}
\]
between formal schemes of finite type over $K^\circ$ with admissible formal blow-ups formally inverted and quasi-compact and quasi-separated rigid spaces over $K$. Both categories are equivalent to $\cat{FSch}^{\rm adm}_{K^\circ}[W^{-1}]$ where $\cat{FSch}^{\rm adm}_{K^\circ}$ is the category of admissible formal schemes over $K^\circ$. 

\subsubsection*{Formal models}

Let $X$ be a rigid space over $K$. A {\bf formal model} (resp.\ {\bf admissible formal model}) of $X$ is a formal scheme $\fX$ locally of finite type over $K^\circ$ (resp.\ admissible) together with an identification $\fX_{\rm rig}\simeq X$. We denote by $M(X)$ (resp.\ $M_{\rm adm}(X)$) the category of formal models (resp.\ admissible formal models) of $X$. Then $M(X)$ is a cofiltering category and $M_{\rm adm}(X)$ is (equivalent to) a cofiltering poset. The specialisation maps ${\rm sp}_\fX\colon X\to \fX$ for $\fX\in M(X)$ commute with the maps in $M(X)$ and induce an isomorphism of locally ringed spaces
\[
    (X, \cO_X^+)\isomto \varprojlim_{\fX\in M(X)} \, \fX. 
\]
In particular, for every $x\in X$ we have $\cO^+_{X,x} = \varinjlim_{\fX\in M(X)} \cO_{\fX, {\rm sp}_{\fX}(x)}$.
In fact, the admissible formal models $M_{\rm adm}(X)$ are cofinal in~$M(X)$ so it suffices to take the direct limit over~$M_{\rm adm}(X)$.
Moreover, if we start with an admissible formal model~$\cX$, we obtain a cofinal class of formal models of~$X$ by taking all admissible formal blow-ups of~$\cX$, see \cite[Chapter~II,\S~3.2 and A.5]{FujiwaraKato}.

\subsection{Vertical compactifications of adic spaces}
\label{ss:univ-comp}

In \cite[\S5.1]{HuberBook} Huber constructs the universal compactification for a given morphism $f\colon Y \to X$ of analytic adic spaces subject to the following hypotheses.
According to his general assumptions, the spaces~$X$ and~$Y$ have to be locally noetherian and, moreover, the morphism~$f$ is required to be locally of ${}^+$weakly finite type\footnote{Locally of the form $\Spa(B,B^+)\to\Spa(A,A^+)$ where $A\to B$ is topologically of finite type and where $B^+$ is the smallest integral subring containing a finitely generated $A^+$-algebra, see \cite[Definition 1.2.1]{HuberBook}.} , separated, and taut\footnote{A locally spectral topological space is {\bf taut} if it is quasi-separated and every quasi-compact open has quasi-compact closure. A spectral map $f\colon Y\to X$ is {\bf taut} if the preimage of every taut open of $X$ is taut. Examples include qcqs and partially proper morphisms of adic spaces. See \cite[\S 5.1]{HuberBook}.}.
The situation is quite different from the scheme setting, where we normally do not have an initial object among all compactifications.
What makes analytic adic spaces special in this respect is the fact that all specialisations are vertical.
The universal compactification is constructed by adding all possible vertical specialisations; there is no choice involved.

If we want to drop the hypothesis that~$X$ and~$Y$ are analytic, we have to account for horizontal specialisations entering the picture.
As a consequence, we can no longer expect the existence of a universal compactification.
However, we can construct the \emph{universal vertical compactification} as in \cite{Solodov-vertical-compactification} that can be thought of as adding only \emph{vertical} specialisations.
In this way, we obtain a \emph{vertically partially proper} morphism, which is a morphism that satisfies a valuative criterion for properness only for vertical specialisations.
Let us start by explaining this notion.

Recall from \cite[Definitions~8.26 and~8.54]{Wedhorn-adicspaces} that a Huber ring~$A$ is {\bf stably sheafy} if every Huber pair $(B,B^+)$ such that $B$ is topologically of finite type over~$A$ is sheafy.
An adic space is {\bf stable} if it is locally of the form $\Spa(A,A^+)$ for a stably sheafy Huber ring~$A$.

\begin{defi}
    Let~$S$ be a stable adic space and $f \colon X \to S$ a morphism locally of weakly finite type. 
    \begin{enumerate}[(i)]
        \item The morphism~$f$ is {\bf vertically specialising} at a point $x \in X$ if every vertical specialisation $f(x) \rightsquigarrow s'$ in~$S$ can be lifted to a specialisation $x \rightsquigarrow x'$ in~$X$.
        If~$f$ is vertically specialising at every point, we call it {\bf vertically specialising}.
        \item We call~$f$ {\bf universally vertically specialising} if every base change of~$f$ along an adic  morphism from a stable adic space is vertically specialising.
    \end{enumerate}
\end{defi}

In the above definition the lifted specialisation $x \rightsquigarrow x'$ is not required to be vertical.
However, if a lift exists, we can always modify it to a \emph{vertical} specialisation by \cite[Proposition~3.4]{Solodov-vertical-compactification}.
We further remark that in the definition of universally vertically specialising morphisms we test after base change via an \emph{adic} morphism because then we can guarantee that the respective fibre product exists.

\begin{lem}[Valuative criterion for being universally vertically specialising] \label{valuative-criterion-separatednes}
    Let~$S$ be a stable adic space and $f \colon  X \to S$ a morphism locally of weakly finite type.
    Then~$f$ is universally vertically specialising if and only if for every solid arrow diagram 
    \begin{equation} \label{eqn:valuative-criterion-test-square} 
        \begin{tikzcd}
            \Spa(k, k_2^+) \ar[r] \ar[d] & X \ar[d] \\
            \Spa(k, k_1^+) \ar[r] \ar[ur,dashed] & S,
        \end{tikzcd}
    \end{equation}
    where $(k,k_1^+)$ and~$(k,k_2^+)$ are affinoid fields with $k_1^+ \subseteq k_2^+$, there is a diagonal dashed arrow making the diagram commutative.
\end{lem}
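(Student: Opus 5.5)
The plan is to prove both implications by translating vertical specialisations into pairs of valuation rings on one field, and then to use the universal property of the fibre product for base changes.

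\emph{Sufficiency.} I first note that the lifting condition for squares \eqref{eqn:valuative-criterion-test-square} is stable under base change along any adic morphism $S'\to S$ from a stable adic space. Indeed, a square for $X\times_S S'\to S'$ composes to a square for $X\to S$. A diagonal for the latter, together with the given map $\Spa(k,k_1^+)\to S'$, yields a diagonal for the former by the universal property of the fibre product, which exists because $S'\to S$ is adic. So it suffices to show that $f$ itself is vertically specialising.

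Let $x\in X$ and let $f(x)\rightsquigarrow s'$ be a vertical specialisation. Then $k(s')=k(f(x))$, and $s'$ corresponds to a valuation ring $V\subseteq k(f(x))^+$ with $V$ microbial for the same topology. I take $k=\widehat{k(x)}$ and $k_2^+=\widehat{k(x)}{}^+$. By Chevalley's extension theorem applied inside $k_2^+$, I choose a valuation ring $k_1^+\subseteq k_2^+$ of $k$ with $k_1^+\cap\widehat{k(f(x))}=\widehat V$. To make this choice I work in the residue field of the prime ideal of $k_2^+$ cut out by the corresponding convex subgroup, and extend the valuation there. The pair $(k,k_1^+)$ is still an affinoid field, since a pseudouniformiser of $V$ stays a pseudouniformiser. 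The canonical maps $\Spa(k,k_2^+)\to X$ and $\Spa(k,k_1^+)\to S$ then form a square \eqref{eqn:valuative-criterion-test-square}. The image $x'$ of the closed point under a diagonal specialises $x$ and maps to $s'$, as required.

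\emph{Necessity.} Given a square \eqref{eqn:valuative-criterion-test-square}, I first check that $T=\Spa(k,k_1^+)\to S$ may be treated as adic. If $k$ is non-trivially valued, the morphism is adic because a pseudouniformiser of $k(s)$ maps to one of $k$. If $k$ is trivially valued, then $k_1^+=k_2^+=k$, and the identity diagonal already works. I then base change to $X_T=X\times_S T\to T$. The top arrow gives a section point $y\in X_T$ over the generic point $t=(k,k_2^+)$ of $T$, with $k(y)=k$. The specialisation $t\rightsquigarrow t'$ to the closed point $(k,k_1^+)$ is vertical, so by hypothesis it lifts to $y\rightsquigarrow y'$. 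By \cite[Proposition~3.4]{Solodov-vertical-compactification} I may take this lift to be vertical.

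A vertical specialisation of $y$ is the same as a valuation ring $W\subseteq k(y)^+=k_2^+$ of $k(y)=k$. Because $y'$ lies over $t'$, this ring satisfies $W\cap k=k_1^+$, which forces $W=k_1^+$. The point $y'$ together with the map $\Spa(k(y'),k(y')^+)=\Spa(k,k_1^+)\to X_T\to X$ is then the desired diagonal. Commutativity holds by construction, and the upper triangle commutes because $y'$ is a specialisation of the given section.

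\emph{Main obstacle.} I expect the delicate step to be the Chevalley extension in the sufficiency part. The new valuation ring must lie inside $k_2^+$, restrict exactly to $V$, and keep $(k,k_1^+)$ a genuine affinoid field after completion. Completion and the factorisation of residue maps through $\widehat{k(x)}$ need some care. I would handle this by composing valuations, in the sense of the composition $\succ$ above, along the convex subgroup determined by $k_2^+$.
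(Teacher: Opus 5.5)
Your sufficiency direction is fine: the lifting condition is stable under adic base change, and you extend $V$ to a valuation ring inside $k_2^+$ by a Chevalley extension through the residue field of $k_2^+$. The overall structure is the Huber-style argument the paper has in mind when it refers to \cite[Proposition~1.3.8]{HuberBook}.

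The gap is in the necessity direction. There you claim that $T=\Spa(k,k_1^+)\to S$ is adic whenever $k$ is non-trivially valued, ``because a pseudouniformiser of $k(s)$ maps to one of $k$''. This presupposes that $s$ is analytic, and it need not be. For example, $\bZ_p\to\bF_p\to\bF_p((t))$ defines a continuous but non-adic morphism $\Spa(\bF_p((t)),\bF_p[[t]])\to\Spa(\bZ_p)$ onto the non-analytic closed point. Such squares do occur for the spaces the paper cares about (formal schemes over $K^\circ$, discretely ringed spaces). For a non-adic $T\to S$ your hypothesis gives you nothing: universal vertical specialisation only controls base change along adic morphisms, and $X\times_S T$ need not even exist. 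Your other case is also misdescribed. If $k$ carries the discrete topology, $k_1^+\subsetneq k_2^+$ may be arbitrary valuation rings, so ``trivially valued'' does not force $k_1^+=k_2^+=k$.

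Both points are repaired by one observation. Let $\Spa(R,R^+)\subseteq S$ be an affinoid containing the image of $T$, and let $\phi\colon R\to\widehat{k}$ be the corresponding ring map.
\begin{itemize}
\item If $\widehat{k}$ is discrete, continuity forces $\phi$ to kill an ideal of definition $I$ of $R$. Hence $T\to S$ is adic, and your base change argument applies verbatim.
\item If $\widehat{k}$ is not discrete and $T\to S$ is not adic, then again $\phi(I)=0$. Indeed, a nonzero element of $\phi(I)$ would be a topologically nilpotent unit and would make $\phi$ adic.
\item Since $f$ is adic, the ring map $A\to\widehat{k}$ of the top arrow then also kills an ideal of definition of $A$. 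Here $\Spa(A,A^+)\subseteq f^{-1}(\Spa(R,R^+))$ is an affinoid containing the image of the top arrow.
\end{itemize}
Consequently both arrows factor through $\Spa(\widehat{k}_{\mathrm{disc}},\widehat{k_i^+})$, where $\widehat{k}_{\mathrm{disc}}$ denotes $\widehat{k}$ with the discrete topology. This discrete square has an adic bottom arrow, so it has a diagonal by the discrete case. Precomposing that diagonal with $\Spa(k,k_1^+)\to\Spa(\widehat{k}_{\mathrm{disc}},\widehat{k_1^+})$ gives the diagonal you need.

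You should also record that $T$ is stable, since the hypothesis only concerns base change from stable spaces. This holds because $\widehat{k}$ is either discrete or a complete rank one field.
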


\begin{proof}
    Analogous to \cite[Proposition~1.3.8]{HuberBook}, see also \cite[Remark~3.5]{Solodov-vertical-compactification}. 
\end{proof}

For simplicity we define vertical separatedness directly in terms of a valuative criterion.

\begin{defi}
    Let~$S$ be a stable adic space and $f \colon  X \to S$ a morphism that is locally of weakly finite type.
    Then~$f$ is {\bf vertically separated} if it is quasi-separated and for every solid arrow square (\ref{eqn:valuative-criterion-test-square}) there is at most one dashed arrow making the diagram commutative.
\end{defi}

As is to be expected, separatedness implies vertical separatedness (see the argument in \cite[Proposition~1.3.8]{HuberBook}) and a universally closed morphism is universally specialising and thus universally vertically specialising.

\begin{defi}
    Let~$S$ be a stable adic space.
    A morphism $f \colon  X \to S$ is {\bf vertically partially proper} if it is locally of ${}^+$weakly finite type, vertically separated and universally vertically specialising.

    It is {\bf vertically proper} if it is vertically partially proper and quasi-compact.
\end{defi}

Let~$S$ be a stable adic space.
It follows from \cref{valuative-criterion-separatednes} and the definition of vertical separatedness that a quasi-separated morphism $f \colon X \to S$ that is locally of ${}^+$weakly finite type is vertically partially proper if and only if for every solid arrow square (\ref{eqn:valuative-criterion-test-square}) there is a unique dashed arrow.

\begin{defi}
    Let~$S$ be a stable adic space and $f \colon  X \to S$ a morphism locally of ${}^+$weakly finite type.
    A {\bf vertical compactification} of~$f$ is a commutative triangle
    \[
     \begin{tikzcd}
         X  \ar[rr,open,"j"] \ar[dr,"f"']   &       & \ov{X}    \ar[dl,"\bar{f}"]  \\
                                            & S
     \end{tikzcd}
    \]
    of adic spaces in which $j$ is a locally closed embedding and~$\bar{f}$ is vertically partially proper.
    We call~$\overline{X}$ the {\bf universal vertical compactification} if, in addition, it is initial among all morphisms over~$S$ to adic spaces that are vertically partially proper over~$S$.
\end{defi}

\begin{rmk}
    In \cite{Solodov-vertical-compactification}, Solodov requires vertically partially proper morphisms to be separated (and not just vertically separated).
    Accordingly, at first sight, his universal vertical compactifications satisfy a slightly weaker universal property that only takes test objects $Y \to S$ that are vertically partially proper \emph{and} separated.
    However, the universal vertical compactification he constructs also satisfies the universal property without the separatedness assumption.
    Below we will retrace his arguments and explain why this is the case.
\end{rmk}

Generalising \cite[Lemma~5.1.7]{HuberBook}, Solodov gives sufficient conditions for a morphism $\ov X \to S$ to be the universal compactification of $X \to S$.
Here, we give a slightly stronger version because in our terminology vertical partial properness does not include separatedness (only vertical separatedness).

\begin{prop}[\cite{Solodov-vertical-compactification}, Lemma~3.19~iv)] \label{prop:check-univprop-comp}
    We consider a commutative triangle of adic spaces
    \[
        \begin{tikzcd}
            X \ar[r,"j",open] \ar[d,"f"]    & \ov X \ar[dl,"\bar{f}"]   \\
            S
        \end{tikzcd}
    \]
    with the following properties:
    \begin{itemize}
        \item $\bar{f}$ is vertically partially proper
        \item $j$ is a quasi-compact open immersion,
        \item $X$ is closed under generalised horizontal specialisations in~$\ov X$,
        \item every point of~$\ov X$ is a vertical specialisation of a point of~$X$,
        \item $\cO_{\ov X} \to j_*\cO_X$ is an isomorphism of sheaves of topological rings.
    \end{itemize}
    Then the above commutative triangle exhibits~$\ov X$ as the universal vertical compactification of~$X$ over~$S$ and this assertion even holds after base change to some open subspace of~$S$.
\end{prop}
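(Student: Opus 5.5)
Given a test object $g\colon X\to Y$ over $S$ with $Y\to S$ vertically partially proper, I would prove that it extends uniquely to $\bar g\colon\ov X\to Y$. This follows Huber's proof of \cite[Lemma~5.1.7]{HuberBook}, checking that only vertical separatedness of $Y$ is used and never full separatedness. The order is: first construct $\bar g$ on points, then show it is continuous, then define it on structure sheaves, and finally handle the base-change claim.

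On points, take $\bar x\in\ov X$. By hypothesis it is a vertical specialisation of some $x\in X$. I would take $x$ to be the maximal vertical generalisation $\bar x^\circ$; this lies in $X$ because $X$ is open in $\ov X$ and hence stable under generalisation. Since the specialisation is vertical, we have $k(x)=k(\bar x)$ and $k(\bar x)^+\subseteq k(x)^+$. The point $x$ gives $\Spa(k(x),k(x)^+)\to X\to Y$, and $\bar x$ gives $\Spa(k(x),k(\bar x)^+)\to\ov X\to S$. Together these form a square as in \eqref{eqn:valuative-criterion-test-square}. By \cref{valuative-criterion-separatednes} and vertical separatedness there is a unique diagonal lift, and I set $\bar g(\bar x)$ to be the image of the closed point. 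Uniqueness of the lift makes this independent of choices, and it forces any extension to agree with it on points. Compatibility with $g$ on $X$ itself is immediate.

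Continuity and the sheaf map are the main obstacle. I would work locally with an affinoid $V=\Spa(B,B^+)\subseteq Y$ and show that $\bar g^{-1}(V)$ is open. Following Huber, I would identify $\bar g^{-1}(V)$ with the set of vertical specialisations inside $\ov X$ of points of $g^{-1}(V)$ whose valuation is $\le1$ on $B^+$. Two hypotheses are needed for this: that $j$ is quasi-compact, so that $g^{-1}(V)$ is quasi-compact open when restricted over a quasi-compact open of $\ov X$, and that $X$ is closed under generalised horizontal specialisations. With these, the set should be expressible as a rational-type locus. The hard part is writing this set as an open subset. Concretely, over an affinoid $\Spa(A,A^+)\subseteq\ov X$, I would use $\cO_{\ov X}\isomto j_*\cO_X$ to pull the finitely many generators of $B$ back to elements of $A$, taking $B^+$ topologically generated modulo integral closure, as allowed by the ${}^+$weakly finite type hypothesis. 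I would then check that the locus is cut out by $|a_i|\le 1$, which is open. The sheaf map comes for free: $\bar g^{-1}\cO_Y\to j_*g^{-1}\cO_Y\to j_*\cO_X=\cO_{\ov X}$, with the $\cO^+$ condition checked pointwise via $|\cdot(\bar x)|\le1$ from the construction. Uniqueness of the morphism follows from uniqueness on points together with $\cO_{\ov X}=j_*\cO_X$.

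For the final clause, all the hypotheses are preserved by restricting to the preimage of an open $S'\subseteq S$. Each listed condition is local on $\ov X$, and vertical partial properness is stable under open base change, so the same argument applies verbatim.
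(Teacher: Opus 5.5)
The paper does not reprove this statement. It cites Solodov, Lemma~3.19 (combining items i) and iii)), and only remarks that vertical separatedness suffices there. Your direct Huber-style reconstruction is a legitimate alternative, and three of its parts are fine: the construction of $\bar g$ on points via the valuative criterion, the uniqueness argument, and the open base-change clause. (A minor point: a vertical specialisation does not give $k(x)=k(\bar x)$; work with the field $k(\bar x)$ and the coarsened valuation ring instead.)

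The genuine gap is in the continuity step, which is the only non-formal content of the proposition. Your concrete plan there is circular. On an affinoid $\Spa(A,A^+)\subseteq\ov X$ one has $A=\cO_{\ov X}(\Spa(A,A^+))=\cO_X(\Spa(A,A^+)\cap X)$, whereas the generators $b_i$ of $B$ only give sections of $\cO_X$ over $g^{-1}(V)$. They can be pulled back to $A$ only if $\Spa(A,A^+)\cap X\subseteq g^{-1}(V)$, and producing such neighbourhoods of $\bar x$ is exactly what has to be proved. Your sentence ``the set should be expressible as a rational-type locus'' names the hypotheses (quasi-compactness of $j$, closure under generalised horizontal specialisations) without actually using them.

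The missing ingredient is a neighbourhood lemma. Let $x_{\min}$ be the minimal vertical generalisation of $\bar x$ lying in $X$. It must not be $\bar x^\circ$: for $\bar x^\circ$ the statement below fails whenever $\bar x\in X$ is not vertically maximal. The lemma says that every open $U\subseteq X$ containing $x_{\min}$ contains $W\cap X$ for some open $W\ni\bar x$ of $\ov X$.

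Given this, continuity follows quickly.
\begin{enumerate}
\item By uniqueness of lifts, $g(x_{\min})$ generalises $\bar g(\bar x)\in V$, so you may take $U=g^{-1}(V)$.
\item View the $b_i$ as elements of $\cO_{\ov X}(W)=\cO_X(W\cap X)$.
\item Then $W\cap\bar g^{-1}(V)=W\cap\bigcap_i\{|b_i|\le 1\}$, because every point of $W$ has its maximal vertical generalisation in $W\cap X\subseteq g^{-1}(V)$. This set is open.
\end{enumerate}
The same lemma yields $\cO_{\ov X,\bar x}=\cO_{X,x_{\min}}$. You need this for $\bar g^\sharp$ to be local and compatible with the valuations. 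Compare \cref{lem:stalks-comp}, where the paper establishes exactly this property in its concrete setting, using Solodov's opens from \cref{lem:interiors-cover} rather than abstract hypotheses.

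The lemma is where the hypotheses really enter; one argues roughly as follows.
\begin{enumerate}
\item Shrink to an affinoid neighbourhood $\ov W_0$ of $\bar x$ and to a quasi-compact open $U_0\subseteq U$ containing $x_{\min}$.
\item Quasi-compactness of $j$ makes $Z=(X\cap\ov W_0)\setminus U_0$ constructible in the spectral space $\ov W_0$. Hence the closure of $Z$ consists of specialisations of points of $Z$.
\item Suppose some $z\in Z$ specialises to $\bar x$. Factor $z\leadsto u\leadsto\bar x$ as a vertical followed by a horizontal specialisation.
\item Compare the coarsening $z$ of $u$ with the coarsening of $u$ that horizontally specialises to $x_{\min}$; these lie in one vertical chain. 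Either $z\leadsto x_{\min}$ directly, or $z$ horizontally specialises to a point of the vertical chain between $\bar x$ and $x_{\min}$. That point lies in $X$ by the closure hypothesis, hence equals $x_{\min}$ by minimality.
\item Either way $z\in U_0$, a contradiction. So $W=\ov W_0\setminus\overline{Z}$ works.
\end{enumerate}
This is modulo the extra care needed at non-analytic points.
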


\begin{proof}
    Solodov's proof in \cite[Lemma~3.19~iv)]{Solodov-vertical-compactification} basically just combines items~i) and~iii) of his Lemma~3.19, which only require vertical separatedness.
\end{proof}

In \cref{sec:root-triples} we will identify the universal vertical compactification~$\overline{X}$ of $f \colon X \to S$ (if it exists, and under mild supplementary geometric assumptions) with an explicitly defined space $\Spa(X,S)$. 
But before having established that $\Spa(X,S)$ is indeed the universal vertical compactification of~$f$, we will exclusively use~$\overline{X}$ to denote the universal vertical compactification.

In order to ensure the existence of the universal vertical compactification of $f \colon X \to S$, some mild additional assumptions about~$S$ and~$X$ are necessary.
The condition is called \emph{(weak) square completeness}, a concept introduced in \cite[Definition~3.13]{Huebner2025:ProperBaseChange} and further differentiated in \cite[Definition~3.20]{Solodov-vertical-compactification}.
\begin{defi}
    An adic space~$X$ is called {\bf weakly square complete} if for three given points $x$, $x_h$, and $x_v$ in~$X$ such that~$x_h$ is a horizontal and~$x_v$ a vertical specialisation of~$x$, there is a point $x' \in X$ that is a horizontal specialisation of~$x_v$ and a vertical specialisation of~$x_h$.
     \[
      \begin{tikzcd}
       x_v	\ar[r,rightsquigarrow]							& x'			\\
       x	\ar[r,rightsquigarrow]	\ar[u,rightsquigarrow]	& x_h	\ar[u,rightsquigarrow]
      \end{tikzcd}
     \]
     If the point~$x'$ is unique for any choice of~$x$, $x_h$, and~$x_v$, we call~$X$ {\bf square complete}.
\end{defi}

Examples of square complete adic spaces include affinoid spaces, analytic spaces, formal schemes, and spaces of the form $\Spa(X,S)$ for a morphism of schemes $X \to S$. For an example of a non \emph{weakly} square-complete discretely ringed adic space see \cite[Example~3.12]{Huebner2025:ProperBaseChange}. 
We are now ready to state the existence theorem of the universal vertical compactification.
Its construction has been worked out in \cite{Solodov-vertical-compactification}.

\begin{thm}[{\cite[Theorem~3.29]{Solodov-vertical-compactification}}] \label{thm:Solodov}
    Let~$S$ be a square complete and stable adic space and $f\colon  X \to S$ a separated and taut morphism that is locally of ${}^+$weakly finite type and such that~$X$ is weakly square complete.
    Then~$f$ admits a universal vertical compactification.
\end{thm}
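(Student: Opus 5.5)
The universal vertical compactification of $f\colon X\to S$ will be built by imitating Huber's construction in \cite[\S5.1]{HuberBook}: construct it locally over affinoids of $S$ and $X$, and then glue. The universal property will be verified with \cref{prop:check-univprop-comp}.

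\emph{Local step.} Assume $S=\Spa(A,A^+)$ with $A$ stably sheafy, and $X=\Spa(B,B^+)$ with $(A,A^+)\to(B,B^+)$ of ${}^+$weakly finite type. Let $B^+_{\min}$ be the integral closure of the $A^+$-subalgebra generated by $B^{\circ\circ}$, and take as candidate $\ov X=\Spa(B,B^+_{\min})=\Spa(B,A^+)$. Then:
\begin{itemize}
\item $X\subseteq\ov X$ is the rational, hence quasi-compact, open defined by the finitely many generators of $B^+$ over $B^+_{\min}$.
\item $\ov X\to S$ is vertically partially proper by the valuative criterion \cref{valuative-criterion-separatednes}. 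Given $\Spa(k,k_2^+)\to X$ and a smaller $k_1^+$ mapping $A^+$ into $k_1^+$, we get $B^+_{\min}\to k_1^+$ by integrality, and the lift is unique.
\item Every point of $\ov X$ is a vertical specialisation of a point of $X$: pass to the valuation ring $k(x)^\circ$, which contains the image of $B^+$.
\item The equality $\cO_{\ov X}=j_*\cO_X$ is checked on rational subsets, using that both sides compute the same completed localisations of $B$.
\end{itemize}

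\emph{Horizontal closedness.} The hardest local point is showing that $X$ is closed under generalised horizontal specialisations in $\ov X$. For a horizontal specialisation $x\rightsquigarrow x'$ with $x\in X$, the support of $x'$ specialises and $k(x')^+\to k(x)^+$ is local, so the conditions $|b|\le1$ for $b\in B^+$ should transfer. The case through trivial valuations is handled separately. Here I expect to use the weak square completeness of $X$ to produce the missing points.

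\emph{Globalisation.} This is the main obstacle, because $X$ need not be covered by affinoids whose compactifications are compatible. Following Huber, write $X$ as a union of quasi-compact opens $U$ (using tautness), compactify each $U\to S_i$ over affinoids $S_i$ of $S$, and glue the $\ov U$ along the compactifications of the overlaps. Separatedness of $f$ ensures the gluing maps are open immersions, via the universal property of each local piece applied to the vertically separated test objects. Tautness guarantees that the resulting space is quasi-separated and that the closures behave.

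Square completeness of $S$ is needed so that the horizontal/vertical specialisation squares on $S$ lift uniquely. This makes the glued $\ov X\to S$ still universally vertically specialising, and it also makes the hypotheses of \cref{prop:check-univprop-comp} local on $S$. Applying that proposition on each piece then gives the universal property globally, and its "after base change to an open of $S$" clause ensures compatibility.
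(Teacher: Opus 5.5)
Your affinoid step is fine and is \cref{prop:affinoid-compactification}. Horizontal closedness there is not the hard point. A horizontal specialisation of $v$ is $v|_H$ for a convex subgroup $H$, and $v(b)\le 1$ implies $v|_H(b)\le 1$. Trivial valuations satisfy $|b|\le 1$ for every $b$. So the rational subset $X=\{|b_i|\le 1\}$ is automatically closed under generalised horizontal specialisations, and weak square completeness plays no role in the affinoid case (affinoids are square complete anyway).

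The genuine gap is the globalisation. You glue the compactifications $\ov U$ of opens $U\subseteq X$ along the $\ov{U\cap V}$, claiming that separatedness makes these maps open immersions. This is false. The universal property only gives morphisms, which are injective by \cref{lem:Spa-injective} but in general not open. For example, take $X=\Spa(K\langle T\rangle)$ over $\Spa(K)$ and $U=\{|T|\le|\pi|\}$. Then $\ov U=U\cup\{x_{|\pi|^+}\}$. The extra point satisfies $|T|\le 1$, so it maps to a point of $X$ itself. Hence the image of $\ov U$ in $\ov X$ is the closure of $U$ in $X$. This is a non-empty proper closed subset of the connected space $X$, so it is not open in $X$, and therefore not open in $\ov X$. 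The same example with $V=X$ shows that $\ov{U\cap V}\to\ov V$ is not open. So the $\ov U$ are not open subspaces of $\ov X$ — exactly the difficulty the paper flags before \cref{lem:interiors-cover} — and you have no gluing datum.

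The missing idea is Huber's, adapted by Solodov. Replace $\ov U$ by the subset $\ov U^\circ$ of points $y$ such that every vertical generalisation of $y$ that comes from a point of $X$ already comes from $U$; in the example, $\ov U^\circ=U$. You then have to prove three things:
\begin{enumerate}
\item $\ov U^\circ$ is open.
\item An inclusion $U\subseteq V$ of affinoid opens induces an open immersion $\ov U^\circ\to\ov V^\circ$ \cite[Lemma~3.27]{Solodov-vertical-compactification}.
\item For an affinoid cover $\{U_i\}$ of $X$, the $\ov U_i^\circ$ cover the whole space; for this, use the minimal vertical generalisation lying in $X$.
\end{enumerate}
Only then can you glue the $\ov U_i^\circ$ and verify the hypotheses of \cref{prop:check-univprop-comp} for the result. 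These steps are where the hypotheses you have misplaced actually enter. Tautness is used through the quasi-compactness of closures of quasi-compact opens, not through the existence of a quasi-compact open cover, which holds in any locally spectral space. Weak square completeness of $X$ is likewise needed here, not in the affinoid step.
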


The author, Ronald Solodov, closely follows Huber's line of argumentation for the construction of the universal compactification of an analytic adic space.
Note that even in the analytic setting, Solodov's result is more general than Huber's as it does not make any noetherianness assumptions.

\smallskip

In the remaining part of this section we will make the construction of the universal compactification more explicit.
Let us start with the case of a morphism of affinoid adic spaces.

\begin{prop}[{\cite[Theorem~3.14]{Solodov-vertical-compactification}}] \label{prop:affinoid-compactification}
    Let $f\colon X\to S$ be a morphism of ${}^+$-weakly finite type between affinoid adic spaces $X = \Spa(A,A^+)$ and $S = \Spa(R,R^+)$ where $S$ is stable.
    Then the universal vertical compactification of~$f$ is given by
    \[
     \bar{f} \colon \overline{X} = \Spa(A,R^+) \longrightarrow \Spa(R,R^+) = S.
    \]   
\end{prop}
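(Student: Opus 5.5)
The plan is to verify the five hypotheses of \cref{prop:check-univprop-comp} for the triangle $X=\Spa(A,A^+)\to \ov X=\Spa(A,R^+)\to S$. Here $\Spa(A,R^+)$ means $\Spa(A,A^+_{\min})$, where $A^+_{\min}$ is the integral closure of the $R^+$-algebra generated by $A^{\circ\circ}$. Since $A^+_{\min}\subseteq A^+$, the space $X$ is the rational subset of $\ov X$ cut out by $|a|\le 1$ for $a\in A^+$. Because $f$ is ${}^+$weakly finite type, $A^+$ is the integral closure of an $A^+_{\min}$-algebra generated by finitely many elements $a_1,\dots,a_n$. Hence $X=\{|a_i|\le 1\}$ is a rational open, and $j$ is a quasi-compact open immersion. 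Both pairs have the same underlying Huber ring $A$, and rational localisations of $\ov X$ restricted to $X$ give rational localisations of $X$. So $\cO_{\ov X}\to j_*\cO_X$ is an isomorphism of sheaves of topological rings: on a rational $U\subseteq\ov X$ the ring $\cO(U)$ depends only on $A$ and the rational data, not on the ring of integral elements, and this agrees with $\cO(U\cap X)$. Stability of $S$ guarantees sheafiness throughout.

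Next, vertical partial properness of $\bar f$ follows from the valuative criterion \cref{valuative-criterion-separatednes}. Given $\Spa(k,k_2^+)\to\ov X$ over $\Spa(k,k_1^+)\to S$, the continuous map $A\to k$ is already given. For it to extend to a point of $\Spa(k,k_1^+)\to\ov X$, we need $A^+_{\min}\to k_1^+$. The image of $R^+$ lands in $k_1^+$. The elements of $A^{\circ\circ}$ map to topologically nilpotent elements, which lie in every open valuation subring, in particular in $k_1^+$. Integral closure is preserved because $k_1^+$ is integrally closed. This gives existence; uniqueness is clear since the map on rings is fixed. Vertical separatedness and quasi-separatedness also follow, as affinoids are qcqs.

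It remains to check the two point-set conditions. First, every $x\in\ov X$ is a vertical specialisation of a point of $X$. Its maximal vertical generalisation $x^\circ$ has $k(x^\circ)^+=k(x)^\circ$, and $A^+\subseteq A^\circ$ maps into the power-bounded elements, so $x^\circ\in X$. Second, $X$ is closed under generalised horizontal specialisations in $\ov X$. Horizontal specialisations $x\leadsto x'$ are restrictions of the valuation to a support ideal, $|\cdot|_{x'}$ being $|\cdot|_x$ restricted to a convex subgroup. So if $x'\in X$, meaning $|a_i|_{x'}\le 1$, then $|a_i|_{x}\le 1$: both valuations agree modulo the convex subgroup, and for $a_i$ not in the support of $x'$ the horizontal relation forces $|a_i(x)|\le 1$ exactly when $|a_i(x')|\le1$. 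Points $y\leadsto x'$ with $y,x'$ trivial valuations are handled the same way, since $A^+$ lies in the valuation ring of a trivial valuation automatically.

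I expect the main obstacle to be the closedness of $X$ under (generalised) horizontal specialisation. The direction of the horizontal relation is subtle: horizontal specialisation corresponds to coarsening the valuation via a convex subgroup $H$ with the additional cofinality condition relative to $\Gamma$. I will use Huber's description of horizontal specialisations, $|f|_{x'}=|f|_x$ if $|f|_x\in H$ and $0$ otherwise, together with the convexity of $H\ni 1$. From this, $|a_i|_{x'}\le 1$ with $|a_i|_x>1$ is impossible: if $|a_i|_x\in H$ the two values agree, and if $|a_i|_x\notin H$ then by convexity $|a_i|_x>H$, which violates the continuity constraint defining horizontal specialisations within $\Spa$. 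With all five hypotheses verified, \cref{prop:check-univprop-comp} yields the claim.
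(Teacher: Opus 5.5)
Your overall strategy, checking the five hypotheses of \cref{prop:check-univprop-comp} for $X\subseteq \Spa(A,A^+_{\min})\to S$, is sound. The paper gives no argument of its own here and simply cites Solodov's Theorem~3.14, so your route is the standard Huber-style verification. The following steps are fine:
\begin{itemize}
\item $X=\{|a_i|\le 1\}$ is a rational open of $\ov X$;
\item $\cO_{\ov X}(U)=A\langle T/s\rangle=\cO_X(U\cap X)$ on rational $U$;
\item the valuative criterion;
\item every point has its maximal vertical generalisation in $X$.
\end{itemize}
For completeness, also note that $\bar f$ is of ${}^+$weakly finite type, which holds with an empty finite set since $A^+_{\min}$ is the integral closure of $R^+[A^{\circ\circ}]$.

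The step that is wrong as written is the closedness of $X$ under horizontal specialisations: you argue the implication in the wrong direction. The hypothesis requires that if $x\in X$ and $x\leadsto x'$ is a (generalised) horizontal specialisation in $\ov X$, then $x'\in X$. You instead show that $|a_i|_{x'}\le 1$ forces $|a_i|_x\le 1$, i.e.\ that $x'\in X$ implies $x\in X$. That is automatic anyway, since $X$ is open and hence stable under generalisation, and it does not give what the criterion needs.

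Your auxiliary claim that $|a_i|_x\notin H$ forces $|a_i|_x>H$ by convexity is also false. A horizontal specialisation is $x|H$ for a convex subgroup $H$ containing the characteristic subgroup $c\Gamma_x$, which is needed for $x|H$ to be a valuation. So every value $\ge 1$ lies in $H$, and any value of $|\cdot|_x$ outside $H$ is $<H$, in particular $<1$.

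The correct argument is one line and uses exactly the formula you quote. Since $|f|_{x|H}\in\{|f|_x,0\}$, we have $|f|_{x|H}\le |f|_x$ for all $f\in A$. Hence $|a_i|_x\le 1$ implies $|a_i|_{x|H}\le 1$, i.e.\ $x|H\in X$. Your treatment of the chains $x\leadsto y\leadsto x'$ with $y,x'$ trivial valuations is in the correct direction: $x'\in X$ automatically. With the horizontal case repaired in this way, all hypotheses of \cref{prop:check-univprop-comp} hold and your proof goes through. Contrary to your expectation, this step is the easiest of the five, not the main obstacle.
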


In the general case, Solodov (following Huber) first covers~$S$ by affinoid open subspaces~$S_i$ and shows that universal vertical compactifications over~$S_i$ can be glued together to a universal vertical compactification over~$S$.
This uses the last part of \cref{prop:check-univprop-comp}.

Consequently, it suffices to construct~$\ov X$ in case~$S$ is affinoid.
Now Solodov covers~$X$ with affinoid subspaces~$X_i$ and aims at gluing the respective universal compactifications~$\overline{X}_i$.
Here, a major technical difficulty arises because we cannot expect the spaces~$\overline{X}_i$ to be open in the universal compactification~$\overline{X}$ of~$X$.
Huber overcomes these difficulties by finding appropriate open subspaces~$U_i$ of~$\overline{X}_i$ that are expected to be open in~$\overline{X}$ but are large enough to cover~$\overline{X}$.
The universal compactification~$\overline{X}$ is then constructed by gluing the subspaces~$U_i$.
Solodov generalises this approach for his construction of the universal vertical compactification.
As a result, we are guaranteed the existence of the universal vertical compactification.
In addition, we extract from its proof the existence of the open subsets~$U_i$ mentioned above:

\begin{lem} \label{lem:interiors-cover}
    In the setting of \cref{thm:Solodov}, assume that~$S$ is affinoid. 
    \begin{enumerate}[(1)]
        \item \label{lemitem:interior-open}
        Let $U \subseteq X$ be an open affinoid subspace and consider the injective map $\ov{U} \to \ov{X}$ of universal vertical compactifications. Let $\ov{U}^\circ$ be the subset of $\ov{U}$ consisting of all points $y \in \ov{U}$ such that for any vertical generalisation~$x$ of~$y$ in $\ov{X}$ we either have $x \in U$ or $x \notin X$. 
        Then $\ov{U}^\circ$ is an open subset of $\ov{X}$ such that $\ov{U}^\circ \cap X = U$.

        \item \label{lemitem:interior-open-covering}
        Let $X = \bigcup_i U_i$ be an open covering by affinoid subspaces. Then the subsets $\ov{U_i}^\circ$ as constructed in \labelcref{lemitem:interior-open} associated to $U_i$ form an open covering of $\ov{X}$.
    \end{enumerate}
\end{lem}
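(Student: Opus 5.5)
The plan is to follow Huber's strategy from \cite[\S 5.1]{HuberBook}, as adapted by Solodov. We first describe $\ov{X}$ explicitly through the affinoid pieces. By \cref{prop:affinoid-compactification}, for an open affinoid $U=\Spa(A,A^+)\subseteq X$ the compactification is $\ov U=\Spa(A,R^+)$. Functoriality of universal vertical compactifications then gives a map $\ov U\to\ov X$.

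The first task is injectivity of $\ov U\to \ov X$. A point of $\ov U$ is a vertical specialisation of a point of $U$, by the fourth property in \cref{prop:check-univprop-comp}. It is therefore determined by its maximal vertical generalisation $x\in U$ together with a valuation ring $k(x)^+\subseteq k(x)^\circ$ containing the image of $R^+$. The same description holds for points of $\ov X$. Hence the map is injective, and it identifies $\ov U$ with the set of vertical specialisations in $\ov X$ of points of $U$.

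\textbf{Part (1).} The identity $\ov U^\circ\cap X=U$ is then immediate from the definition. For openness, I would write $\ov U^\circ$ as the complement in $\ov X$ of the closure of $(\ov X\setminus \ov U)\cup (X\setminus U)$, intersected with $\ov U$. More concretely, I would show that $\ov U^\circ$ is the set of points of $\ov X$ all of whose vertical generalisations lying in $X$ actually lie in $U$.

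Write $\ov U=\Spa(A,R^+)$ and $U=\Spa(A,A^+)$ with $A^+$ integral over a finitely generated $R^+$-algebra, generated by $f_1,\dots,f_n$. A point $y\in\ov U$ lies in $\ov U^\circ$ roughly when the chain of vertical generalisations leaves $U$ only through points outside $X$. I would try to exhibit $\ov U^\circ$ as a union of rational-type opens of $\ov X$, built from $|f_i|\le 1$ conditions imposed only on the ``finite'' part. Here I use that $U\subseteq X$ is quasi-compact open and that $X$ is taut over $S$.

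This is the main obstacle. Proving openness in $\ov X$ rather than in $\ov U$ requires knowing that $\ov X$ is locally obtained by gluing such pieces. I would extract this directly from Solodov's construction in the proof of \cref{thm:Solodov}: there $\ov X$ is built by gluing exactly these subsets, shown open in each $\ov U_j$ and compatible on overlaps, using tautness and weak square completeness.

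\textbf{Part (2).} Let $y\in\ov X$. Its maximal vertical generalisation $x=y^\circ$ lies in $X$, by the fourth property of \cref{prop:check-univprop-comp}, so $x\in U_i$ for some $i$. Among the vertical generalisations of $y$, which form a totally ordered chain, those lying in $X$ form a subchain closed under vertical generalisation. I want to choose $i$ so that this whole subchain lies in $U_i$.

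To do this, I would take the minimal (most special) point $x_0$ of the chain that still lies in $X$. The existence of such a point is where I would use that $X$ is closed under generalised horizontal specialisations and the valuative structure; quasi-compactness of the chain's intersection with $X$ should suffice. Then pick $U_i\ni x_0$. Since $U_i$ is open, it is closed under generalisation, so it contains all generalisations of $x_0$ in the chain. Therefore $y\in\ov{U_i}^\circ$.
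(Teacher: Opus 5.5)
\textbf{Part (1).} Your route here is the paper's. The identity $\ov U^\circ\cap X=U$ is read off from the definition. Openness in $\ov X$ is not proved directly but extracted from Solodov's construction: there $\ov U^\circ$ is his subspace $U_d\subseteq\ov U$, shown to be open, $U_d\to V_d$ is an open immersion for affinoids $U\subseteq V$ \cite[Lemma~3.27]{Solodov-vertical-compactification}, and $\ov X$ is glued from these pieces.

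Your tentative direct descriptions can be dropped. The complement-of-closure formula in particular is not a harmless reformulation: closures in $\ov X$ also see horizontal generalisations and limit points that are not specialisations. The injectivity discussion is also unnecessary, since the statement takes it as given. Note that it silently uses the root-triple description of $\ov X$ from \cref{prop:comp-homeo}.

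\textbf{Part (2).} The paper simply says ``by construction'': $\ov X$ is the gluing of the $\ov{U_i}^\circ=U_{i,d}$. That is exactly the fact you already invoked in (1), so (2) costs nothing further. Your pointwise argument is a genuinely different route. It works once you know that the vertical generalisations of $y$ lying in $X$ have a most special member $x_0$. You also need $y\in\ov{U_i}$, which your description of $\ov{U_i}$ as the vertical specialisations of points of $U_i$ supplies.

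The existence of $x_0$, however, is the whole content, and your justification does not deliver it. Closedness of $X$ under generalised horizontal specialisations plays no role. ``Quasi-compactness of the chain's intersection with $X$'' is neither proved nor obviously sufficient.

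The input that makes it true is tautness; $X$ is taut because $f$ is and $S$ is affinoid. The argument runs as follows.
\begin{enumerate}
\item Pick $z\in X$ on the chain, a quasi-compact open $W\ni z$ of $X$, and finitely many affinoids $W_1,\dots,W_m\subseteq X$ covering the quasi-compact closure of $W$ in $X$.
\item Every chain point in $X$ below $z$ lies in $\ov{\{z\}}\cap X$, hence in $\bigcup_j W_j$.
\item The traces of the $W_j$ on the chain are upward closed, hence nested. So a single $W_j=\Spa(B,B^+)$ contains every point of the chain lying in $X$; the points above $z$ are included because $z\in W_j$ and $W_j$ is open.
\item The intersection of the corresponding valuation rings still contains the image of $B^+$. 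It therefore defines a point of $W_j\subseteq X$, which by vertical separatedness is the infimum of this part of the chain. That point is your $x_0$.
\end{enumerate}
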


\begin{proof}
    \labelcref{lemitem:interior-open} In \cite{Solodov-vertical-compactification}, Solodov considers the universal vertical compactification~$\ov U$ (there denoted by~$U_c$) of an affinoid open $U \subseteq X$.
    He constructs a certain subspace $U_d \subseteq \ov U$ containing~$U$ that turns out to be open.
    In addition, an inclusion of open affinoids $U \subseteq V \subseteq X$ induces an open immersion $U_d \to V_d$, see \cite[Lemma~3.27]{Solodov-vertical-compactification}.
    Looking at the definition of~$U_d$ we identify it with $\ov{U}^\circ$ as the subset of~$\ov U$. The identification $\ov{U}^\circ \cap X = U$ follows from the definition of~$\ov{U}^\circ$.

    \labelcref{lemitem:interior-open-covering}
    In the end~$\ov X$ is obtained by gluing the spaces~$\ov{U_i}^\circ = U_{i,d}$ (in Solodov's notation) constructed from the members~$U_i$ of the open covering.
    Then the spaces~$\ov{U_i}^\circ$ form an open covering of~$\ov X$ by construction.   
\end{proof}

In the end, Solodov checks that his space~$\ov X$ satisfies the universal property using \cite[Lemma~3.19~iv)]{Solodov-vertical-compactification}.
With the updated version \cref{prop:check-univprop-comp} of this criterion we ensure that we can use any vertically partially proper morphism as test object (it does not need to be separated).

\subsection{Root triples} 
\label{sec:root-triples}

Let us sketch an alternative description of the universal vertical compactification that allows for an explicit definition of the points of $\overline{X}$.
In what follows, we will write down a set $\Spa(X,S)$ and endow it with a topology and a structure sheaf.
This construction works without any hypotheses on~$X$ or~$S$.
Our final goal will be to identify this space $\Spa(X,S)$ under the assumptions of \cref{thm:Solodov} with the universal vertical compactification~$\overline{X}$, in this way also showing that it carries the structure of an adic space. 

\begin{defi}
    Let $f\colon X\to S$ be a morphism of adic spaces. A \textbf{point of $X$ with centre on $S$} is a commutative square of the form
    \begin{equation} \label{eqn:pp-test-square} 
        \begin{tikzcd}
            \Spa(k, k^\circ) \ar[r,"u"] \ar[d] & X \ar[d] \\
            \Spa(k, k^+) \ar[r,"v",swap] & S
        \end{tikzcd}
    \end{equation}
    where $(k, k^+)$ is an affinoid field with ring of powerbounded elements $k^\circ$ (so $(k, k^\circ)$ is an affinoid field as well), and where the left vertical map is induced by the inclusion $k^+\subseteq k^\circ$ and~$u$ is adic. We denote such a datum by $(k^+, u, v)$ for simplicity. 

    We deem two points $(k_i^+, u_i, v_i)$, for $i=1,2$, of $X$ with centre on $S$ \textbf{equivalent} if there exists a third square with $(k^+, u, v)$ and commutative diagrams
    \[ 
        \begin{tikzcd}
            \Spa(k_i, k^\circ_i) \ar[r] \ar[d] \ar[rr,"u_i",bend left=15] & \Spa(k, k^\circ) \ar[r,"u",swap] \ar[d] & X \ar[d]  \\
            \Spa(k_i, k^+_i) \ar[r] \ar[rr,"v_i",swap,bend right=15] & \Spa(k, k^+)\ar[r,"v"] & S
        \end{tikzcd}
    \]
    for $i=1,2$ such that $(k_i,k_i^+)/(k,k^+)$ are extensions of valuation rings, i.e.\ $k^+=k_i^+\cap k$.
\end{defi}

By \cite[Lemma~4.3]{Solodov-vertical-compactification} this defines an equivalence relation.
In every equivalence class we can identify a distinguished representative given by a so called root triple.
Before we can define root triples, we need to recall centres of valuations.
For a pair $(x, V)$ consisting of a point $x\in X$ and a valuation subring $V\subseteq k(x)^+$, by a {\bf centre of $(x, V)$ on $S$} we shall mean a vertical specialisation $s\in S$ of $f(x)$ with the property that $k(s)^+ = V\cap k(s)$.
The centres of $(x,V)$ on~$S$ are in one to one correspondence with the morphisms $\Spa(k(x),V) \to S$ making the diagram
\[
 \begin{tikzcd}
     \Spa(k(x),k(x)^+)  \ar[r]  \ar[d]  & X \ar[d]  \\
     \Spa(k(x),V)       \ar[r,dotted]          & S
 \end{tikzcd}
\]
commutative.

\begin{defi} \label{defi:root-triple}
    A {\bf root triple} of $X/S$ is a triple $x = (x^\circ,k(x)^+,x^\succ)$ for a point $x^\circ \in X$ without vertical generalisations and a valuation subring 
    \[
        k(x)^+ \subseteq k(x^\circ)^+ = k(x^\circ)^\circ
    \]
    with centre $x^\succ \in S$. For a root triple~$x$ as above we use the notation $k(x)$ to denote $k(x^\circ)$.
    Moreover, we sometimes write $x^\succ$ also for the induced map $\Spa(k(x),k(x)^+) \to S$.
\end{defi}

A root triple $x = (x^\circ,k(x)^+,x^\succ)$ naturally defines a point of~$X$ with centre on~$S$
    \[  
        \begin{tikzcd}
            \Spa(k(x), k(x)^\circ) \ar[r] \ar[d] & X \ar[d] \\
            \Spa(k(x), k(x)^+) \ar[r,"x^\succ"] & S
        \end{tikzcd}
    \]
and indeed every equivalence class of points of~$X$ with centre on~$S$ has a unique representative given by a root triple (see \cite[Lemma~4.2]{Solodov-vertical-compactification}).
In particular, this ensures that the equivalence classes form a set.

\begin{defi}
    We denote the set of equivalence classes of points of~$X$ with centre on~$S$ by $\Spa(X,S)$.
\end{defi}

In what follows, we shall often implicitly identify a point $x \in \Spa(X,S)$, which is a priori an equivalence class of squares, with the corresponding root triple and we write $x = (x^\circ,k(x)^+,x^\succ)$.

\begin{ex} \label{ex:comp-affinoid}
    For the identity morphism $X \to X$ we obtain a natural identification of sets
    \[
     X \overset{\sim}{\longrightarrow} \Spa(X,X), \qquad x \mapsto (x^\circ,k(x)^+,x).
    \]
    The inverse sends a root triple $(x^\circ,k(x)^+,x^\succ)$ to~$x^\succ$.
\end{ex}

The formation of $\Spa(X,S)$ enjoys the following functoriality.
If
    \[ 
        \begin{tikzcd}
            X'\ar[d] \ar[r,"f_X"] & X\ar[d] \\
            S'\ar[r,"f_S"] & S
        \end{tikzcd}
    \]
is a commutative square of adic spaces such that~$f_X$ is adic, we obtain a natural morphism
\[
 \Spa(f_X,f_S) \colon \Spa(X',S') \longrightarrow \Spa(X,S)
\]
by sending a point $(k^+,u,v)$ of~$X$ with centre on~$S$ to $(k^+,f_X \circ u,f_S \circ v)$.
\begin{equation} \label{diagam:functoriality-comp}
 \begin{tikzcd}
     \Spa(k,k^\circ)    \ar[r,"u"]  \ar[d]  & X'    \ar[r,"f_X"]    \ar[d]  & X \ar[d]  \\
     \Spa(k,k^+)        \ar[r,"v"]          & S'    \ar[r,"f_S"]            & S.
 \end{tikzcd}
\end{equation}
This construction is compatible with the equivalence relation, and thus indeed defines a map $\Spa(X',S') \to \Spa(X,S)$.
In terms of root triples it sends $x' = (x'^\circ,k(x')^+,x'^\succ)$ to
\[
    f(x') = (f_X(x'^\circ),k(x')^+ \cap k(f(x'^\circ)),f_S(x'^\succ)).
\]
Note that this implicitly uses \cref{lem:adic-preserves-vertical-maximal} to ensure that $f(x'^\circ)$ is vertically maximal.

If~$f_X$ is not adic, we still get a map $\Spa(f_X,f_S)$ as above but its description is a bit more complicated.
Starting with a point $(k^+,u,v)$ we can consider the diagram~(\ref{diagam:functoriality-comp}) but $f_X \circ u$ might not be adic, in which case it does not directly define a point of~$X$ with centre on~$S$.
The morphism $f_X \circ u$ can only possibly not be adic when $(k,k^\circ)$ is analytic but the image point~$x$ of $f_X \circ u$ is not analytic.
In this case we replace the diagram with
\[
    \begin{tikzcd}
        \Spa(k,k)   \ar[r,"u"]  \ar[d]  & X \ar[d]  \\
        \Spa(k,k^+) \ar[r,"v"]          & S,
    \end{tikzcd}
\]
where the affinoid fields in the left column are now endowed with the discrete topology.
Since~$x$ is nonanalytic, the morphisms are still well defined and the above diagram defines a point of~$X$ with centre on~$S$.
We declare this point to be the image of $(k^+,u,v)$ in $\Spa(X,S)$.

The root triple description is obtained as follows:
We first note that we always have $k(f(x'^\circ)^\circ) = k(f(x'^\circ))$ because $f(x'^\circ)^\circ \ne f(x'^\circ)$ can only happen if these points are nonanalytic, in which case the residue fields are discrete.
We send $x' = (x'^\circ,k(x')^+,x'^\succ)$ to the root triple $x$ where $x^\circ = f(x'^\circ)^\circ$, $k(x)^+ = k(x')^+ \cap k(f(x'^\circ))$, and $x^\succ = f_S(x'^\succ)$.

\begin{lem} \label{lem:Spa-injective}
    If $f_X \colon X' \to X$ is a locally closed embedding and $f_S \colon S' \to S$ is separated, the map $\Spa(f_X,f_S)$ is injective.
\end{lem}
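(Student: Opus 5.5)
We take two root triples $x'_1,x'_2 \in \Spa(X',S')$ with the same image $x = (x^\circ,k(x)^+,x^\succ)$ in $\Spa(X,S)$, and show $x'_1 = x'_2$ by comparing the three components of the triples one at a time.

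\emph{First component.} Since $f_X$ is a locally closed embedding, it is adic. So the image of $x'_i$ has first component $f_X(x_i'^\circ)$, which is vertically maximal by \cref{lem:adic-preserves-vertical-maximal}. Thus $f_X(x_1'^\circ) = x^\circ = f_X(x_2'^\circ)$. A locally closed embedding is injective on points, so $x_1'^\circ = x_2'^\circ$; call this point $x'^\circ$.

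\emph{Second component.} A locally closed embedding induces isomorphisms of completed residue fields. Indeed, locally it is an open immersion followed by a closed immersion $\Spa(A/I)\to\Spa(A)$, and the residue field of $A/I$ at a point agrees with that of $A$, with the same valuation. Hence $k(x'^\circ) = k(x^\circ)$ compatibly with the valuations, at least after completion; we work with completions throughout. The second component of the image of $x'_i$ is $k(x'_i)^+ \cap k(x^\circ)$, which now equals $k(x'_i)^+$. Therefore $k(x'_1)^+ = k(x)^+ = k(x'_2)^+$; call this ring $V$.

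\emph{Third component.} The centres $x_i'^\succ$ correspond to morphisms $v_i\colon \Spa(k(x'),V)\to S'$. Both extend the same map $\Spa(k(x'),k(x')^\circ)\to X'\to S'$, and both have composite $f_S\circ v_i = x^\succ$ with $S$. We apply the uniqueness part of the valuative criterion to the square
\[
\begin{tikzcd}
\Spa(k(x'),k(x')^\circ) \ar[r] \ar[d] & S' \ar[d,"f_S"] \\
\Spa(k(x'),V) \ar[r,"x^\succ"] & S.
\end{tikzcd}
\]
This criterion is available because $f_S$ is separated, hence vertically separated (as noted after the definition of vertical separatedness, following \cite[Proposition~1.3.8]{HuberBook}). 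It says there is at most one diagonal lift, so $v_1 = v_2$. Since centres correspond bijectively to such morphisms, $x_1'^\succ = x_2'^\succ$, and hence $x'_1 = x'_2$.

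\emph{Expected obstacle.} The step needing the most care is the residue field compatibility in the second component. In the analytic case one must check that the valuation rings $k(\cdot)^+$ really correspond under $f_X$, and that nothing changes between a field and its completion. For a closed immersion this follows from the explicit description of $\Spa(A/I)$ as the closed subset of $\Spa(A)$ cut out by $I$. The third step is routine once vertical separatedness is available. One subtlety there: the valuative criterion is formulated for morphisms locally of weakly finite type. We only need its uniqueness part, and that part follows directly from separatedness, i.e.\ from the diagonal being a closed immersion, by Huber's argument.
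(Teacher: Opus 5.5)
Your proposal is correct and follows the paper's proof essentially step by step: injectivity of $f_X$ gives equality of the first components; the isomorphism of completed residue fields under a locally closed embedding shows that $k(x_i)^+$ is determined by its intersection with $k(x)$; and the uniqueness part of the valuative criterion for the separated map $f_S$ identifies the centres. Your extra remarks, namely that $f_X$ is adic so the first component of the image is $f_X(x_i^\circ)$, and that separatedness implies vertical separatedness, make explicit two points the paper uses implicitly.
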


\begin{proof}
    Suppose that $x_1$ and $x_2$ are two points of $\Spa(X',S')$ that map to the same point in $\Spa(X,S)$.
   We identify~$x_i$ with the corresponding root triple $x_i = (x_i^\circ,k(x_i)^+,x_i^\succ)$ and consider the diagrams
    \[
 \begin{tikzcd}
     \Spa(k(x_i),k(x_i)^\circ)    \ar[r]        \ar[d]  & X'    \ar[r,"f_X"]    \ar[d]  & X \ar[d]  \\
     \Spa(k(x_i),k(x_i)^+)        \ar[r,"x_i^\succ"]          & S'    \ar[r,"f_S"]            & S.
 \end{tikzcd}
\]
Since~$f_X$ is injective, we obtain $x_1^\circ = x_2^\circ$.
In particular, the upper line is the same in both diagrams.
We denote by~$x^\circ$ the image in~$X$ of $x_1^\circ = x_2^\circ \in X'$.
The morphism~$f_X$ being a locally closed embedding, the residue field extension $k(x_i)/k(x)$ induces an isomorphism on completions.
In particular, $k(x_i)^+$ is uniquely determined by $k(x_i)^+ \cap k(x)$, which is the same for $i=1,2$.
Finally, the morphisms~$x_1^\succ$ and~$x_2^\succ$ coincide by the valuative criterion for separatedness applied to the square
\[
    \begin{tikzcd}
        \Spa(k(x_1),k(x_1)^\circ) = \Spa(k(x_2),k(x_2)^\circ)   \ar[r]  \ar[d]  & S' \ar[d,"f_S"]    \\
        \Spa(k(x_1),k(x_1)^+) = \Spa(k(x_2),k(x_2)^+)           \ar[r]  & S,
    \end{tikzcd}
\]
where the lower map is $f_S \circ x_1^\succ = f_S \circ x_2^\succ$.
\end{proof}

As a next step we want to equip the set $\Spa(X,S)$ of equivalence classes of points of~$X$ with centre on~$S$ with a topology.
In the affinoid case this is straightforward.

\begin{lem} \label{lem:Spa-barX-affinoid}
    If $X = \Spa(A,A^+)$ and $S = \Spa(R,R^+)$ are affinoid, there is a natural identification (as sets)
    \[
    \Spa(X,S) = \Spa(A,R^+)
    \]
    and in case $S$ is stable and $X \to S$ is ${}^+$weakly of finite type, $\Spa(A,R^+)$ is the universal vertical compactification of~$X$ over~$S$.
\end{lem}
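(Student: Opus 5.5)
The plan is to prove the set-theoretic identification directly and then obtain the second assertion from \cref{prop:affinoid-compactification}.

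\emph{From root triples to valuations.} Let $(x^\circ,k(x)^+,x^\succ)$ be a root triple of $X/S$. Here $x^\circ\in\Spa(A,A^+)$ is a continuous valuation $|\cdot|_{x^\circ}$ on $A$ with support $\fp$, and $k(x)=k(x^\circ)$. The ring $k(x)^+\subseteq k(x^\circ)^\circ$ is a valuation ring, and the valuation $v$ it defines on $A\to k(x)$ is a vertical specialisation of $x^\circ$: it is a composite of $x^\circ$ with a valuation on the specialisation field. Since vertical specialisations of continuous valuations are continuous, $v$ is a continuous valuation on $A$. The existence of the centre $x^\succ$ means $R^+\to k(x)$ lands in $k(x)^+$, so $|R^+|_v\le1$. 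By the adic condition on $u$, elements of $A^{\circ\circ}$ are topologically nilpotent at $x^\circ$ and hence have $|\cdot|_v<1$. Therefore $|b|_v\le 1$ on the integral closure of $R^+[A^{\circ\circ}]$, i.e.\ on $A^+_{\min}$, and $v\in\Spa(A,R^+)$.

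\emph{From valuations to root triples.} Conversely, let $v\in\Spa(A,R^+)$. We take $x^\circ$ to be its maximal vertical generalisation, with valuation ring $k(v)^\circ$. This is the standard construction: the coarsening of $v$ by the convex subgroup generated by the values of topologically nilpotent elements in the analytic case, and the trivial valuation in the non-analytic case.

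Two points need checking for $x^\circ$.
\begin{itemize}
\item $x^\circ$ lies in $X=\Spa(A,A^+)$, i.e.\ $|A^+|_{x^\circ}\le1$. Every $a\in A^+$ is power-bounded, so $a$ lies in $k(v)^\circ=k(x^\circ)^+$ and hence $|a|_{x^\circ}\le1$. This is the step where working with $k^\circ$ rather than $k^+$ is essential.
\item $x^\circ$ is continuous, which holds because the maximal vertical generalisation of a continuous valuation is again continuous.
\end{itemize}
Then $k(x)^+:=k(v)^+\subseteq k(x^\circ)^\circ$. The induced map $R\to A\to k(v)$ sends $R^+$ into $k(v)^+$, which gives the centre $x^\succ\colon\Spa(k(v),k(v)^+)\to\Spa(R,R^+)$ as a continuous valuation on $R$.

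\emph{The two maps are inverse, and the second assertion.} A root triple is recovered from its vertical specialisation $v$, since the maximal generalisation of $v$ is $x^\circ$. The centre is unique, because a map $\Spa(k,k^+)\to\Spa(R,R^+)$ is determined by the valuation on $R$ pulled back along $R\to k$. Naturality is clear from the constructions. The second assertion is then exactly \cref{prop:affinoid-compactification}.

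The main obstacle is the continuity and adicness bookkeeping when $x^\circ$ is non-analytic, where $k(v)^\circ$ may be all of $k(v)$ with the discrete topology. In that case the description of $\Spa(X,S)$ must use discretely topologised affinoid fields, and one has to check that the root triple convention matches points of $\Spa(A,R^+)$ whose support is open. I would handle this by splitting into analytic and non-analytic $v$ and citing \cite[Lemma~4.2]{Solodov-vertical-compactification} for the uniqueness of the root-triple representative.
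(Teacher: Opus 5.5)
Your proposal is correct and follows the same route as the paper. A root triple is identified with the continuous valuation on $A$ given by $k(x)^+$, with inverse given by passing to the maximal vertical generalisation; the centre on the affinoid $S$ is unique and exists exactly when $|R^+|\le 1$, which together with $|A^{\circ\circ}|<1$ gives $|\cdot|\le 1$ on the integral closure of $R^+[A^{\circ\circ}]$; and the universal property is quoted from \cref{prop:affinoid-compactification}.

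Your version is more detailed than the paper's four-line proof. The one step worth an extra line is that power-bounded elements of $A$ land in $k(v)^\circ$, since mere continuity does not preserve boundedness. This holds because $A\to k(v)$ is adic for analytic $v$, or alternatively by a direct bound on $v(a^n)$ using a topologically nilpotent $s$ with $v(s)\neq 0$.
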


\begin{proof}
    An element of $\Spa(X,S)$ is defined by a root triple $x = (x^\circ,k(x)^+,x^\succ)$.
    Giving the pair $(x^\circ,k(x)^+)$ is equivalent to giving a continuous valuation $v$ on~$A$.
    The centre~$x^\succ$ is unique in the affinoid case.
    Its existence is equivalent to the valuation~$v$ taking values less than or equal to~$1$ on~$R^+$.
    The last assertion of the lemma follows from  \cite[Theorem~3.14]{Solodov-vertical-compactification}.
\end{proof}

By \cref{lem:Spa-barX-affinoid} the set $\Spa(X,S)$ for affinoid spaces~$X$ and~$S$ naturally carries the structure of an adic space.
In particular, it is endowed with a topology.
It is clear from the construction that commutative squares
\[
 \begin{tikzcd}
     X' \ar[r]  \ar[d]  & X \ar[d]  \\
     S' \ar[r]          & S
 \end{tikzcd}
\]
of \emph{affinoid} adic spaces induce morphisms of adic spaces $\Spa(X',S') \to \Spa(X,S)$.

For the general case, we use the following construction for the definition of a topology.
For all possible commutative squares
\begin{equation} \label{test-square-comp-topology}
 \begin{tikzcd}
     U  \ar[r,open] \ar[d]  & X \ar[d]  \\
     T  \ar[r,open]         & S,
 \end{tikzcd}
\end{equation}
where~$U$ and~$T$ are affinoid and the horizontal maps are open immersions, we consider the associated morphisms $\Spa(U,T) \to \Spa(X,S)$. 
By \cref{lem:Spa-injective} this map is injective, and we can thus treat $\Spa(U,T)$ as a subset of $\Spa(X,S)$.
Now we endow $\Spa(X,S)$ with the final topology for these inclusions.
This means that a subset of $\Spa(X,S)$ is open if and only if its intersection with $\Spa(U,T)$ is open for every square as above.
The first sanity check is the following:

\begin{lem}
    If $X = \Spa(A,A^+)$ and $S = \Spa(R,R^+)$ are affinoid, the above defined topology on $\Spa(X,S)$ coincides with the topology of the adic space $\Spa(A,R^+)$ via the identification from \cref{lem:Spa-barX-affinoid}.
\end{lem}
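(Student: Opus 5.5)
We need to show that when $X=\Spa(A,A^+)$ and $S=\Spa(R,R^+)$ are affinoid, the final topology on $\Spa(X,S)$, defined via all squares \eqref{test-square-comp-topology}, agrees with the adic topology of $\Spa(A,R^+)$. The proof compares the two topologies in both directions, using the fact that the identity square $U=X$, $T=S$ is one of the admissible test squares.

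First, the final topology is finer than the adic one. Take the test square with $U=X$ and $T=S$. Then $\Spa(U,T)=\Spa(A,R^+)$ with its adic topology. So a set that is open in the final topology meets this piece, which is the whole space, in an adic-open set. In other words, final-open implies adic-open.

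Conversely, let $W\subseteq \Spa(A,R^+)$ be adic-open, and fix an arbitrary test square with $U=\Spa(B,B^+)\subseteq X$ and $T=\Spa(R',R'^+)\subseteq S$ affinoid opens. We must show that the preimage of $W$ in $\Spa(U,T)=\Spa(B,R'^+)$ is open. By functoriality for squares of affinoid spaces, noted just before the lemma, the map
\[\Spa(B,R'^+)\to\Spa(A,R^+)\]
is the morphism of adic spaces induced by the continuous ring map $A\to B$ and by $R^+\to R'^+$. Here we use $R^+\subseteq R'^{+}_{\min}$, the smallest ring of integral elements of $B$ containing $R'^+$. This morphism is continuous, so preimages of adic-open sets are open. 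It remains to check that this morphism agrees with the set-theoretic inclusion used to define the final topology, as in \cref{lem:Spa-injective}. On root triples, both send $(x^\circ,k(x)^+,x^\succ)$ to the valuation on $A$ obtained by restricting. Under \cref{lem:Spa-barX-affinoid}, this is the same datum as the continuous valuation determined by $(x^\circ,k(x)^+)$.

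The main point requiring care is this compatibility check. We must verify that the functorial map of \cref{lem:Spa-barX-affinoid}-identified spaces really matches the root-triple description of $\Spa(f_X,f_S)$, including the adic/non-adic subtlety, which is harmless here since open immersions are adic. Once that is settled, the two topologies coincide.
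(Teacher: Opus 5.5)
Your proof is correct and follows the same route as the paper. The identity test square $U=X$, $T=S$ shows that final-open sets are adic-open, and the continuity of the induced morphism of affinoid adic spaces $\Spa(B,R'^+)\to\Spa(A,R^+)$, whose underlying map is the inclusion $\Spa(U,T)\subseteq\Spa(X,S)$, gives the converse. One small wording slip: ``final-open implies adic-open'' says the final topology is \emph{coarser} than the adic one, not finer, though this does not affect the argument.
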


\begin{proof}
    We consider a subset $\cU \subseteq \Spa(X,S)$.
    A square (\ref{test-square-comp-topology}) comes from a square
    \[
        \begin{tikzcd}
            (B,B^+)         & (A,A^+) \ar[l]    \\
            (D,D^+) \ar[u]  & (R,R^+) \ar[l] \ar[u]
        \end{tikzcd}
    \]
    of Huber pairs and thus induces a morphism of affinoid adic spaces $\Spa(B,D^+) \to \Spa(A,R^+)$ whose underlying map of sets identifies with $\Spa(U,T) \to \Spa(X,S)$.
    Consequently, if~$\cU$ is open in $\Spa(X,S)$ with the topology coming from the structure as an affinoid adic space, then $\cU \cap \Spa(U,T)$ is open.

    Conversely, suppose that $\cU \cap \Spa(U,T)$ is open for all squares~(\ref{test-square-comp-topology}).
    This applies in particular to $\Spa(U,T) = \Spa(X,S)$.
    Hence~$\cU$ is open in $\Spa(X,S)$ with its adic space topology.
\end{proof}

Suppose that the hypotheses of \cref{thm:Solodov} are satisfied for $f \colon X \to S$.
We define a map
\[
    \phi \colon \Spa(X,S) \la \ov X
\]
by sending a root triple $x = (x^\circ,k(x)^+,x^\succ)$ to the unique centre of~$k(x)^+$ in~$\ov X$ lying over~$x^\succ$ (which exists and is unique by the valuative criterion for vertical partial properness \cite{Bauer:valuative-criteria}).
We obtain a commutative diagram
\[
 \begin{tikzcd}
     X  \ar[rr] \ar[dr,"\iota"]  \ar[ddr,"f"']   &           & \ov X \ar[ddl]    \\
     & \Spa(X,S) \ar[d]  \ar[ur,"\phi"] \\
     & S.
 \end{tikzcd}
\]
Here, the map $\iota \colon X \to \Spa(X,S)$ sends~$x$ to $(x^\circ,k(x)^+,f(x))$ (alternatively, identify~$X$ with $\Spa(X,X)$ as in \cref{ex:comp-affinoid} and use the natural map $\Spa(X,X) \to \Spa(X,S)$).
The commutativity of the upper triangle is ensured by the vertical separatedness of $\ov X \to S$.

\begin{prop}[\cite{Solodov-vertical-compactification}, Theorem~4.8] \label{prop:comp-homeo}
    Under the hypotheses of \cref{thm:Solodov}, the above defined map~$\phi$ is a homeomorphism.
\end{prop}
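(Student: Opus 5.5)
The plan is to reduce to the affinoid case, where everything is explicit, and then glue using the open covering of \cref{lem:interiors-cover}. First, we may assume $S$ is affinoid. Universal vertical compactifications over an affinoid open cover $\{S_j\}$ of $S$ glue by the last part of \cref{prop:check-univprop-comp}. Likewise, $\Spa(X,S)$ is covered by the subsets $\Spa(X\times_S S_j, S_j)$: a root triple's centre $x^\succ$ lies in some $S_j$, and $S_j$ is open, hence stable under generalisation, so $f(x^\circ)\in S_j$ as well. These subsets are open in the final topology by construction. The maps $\phi$ are compatible with restriction to the $S_j$, so it suffices to treat $S=\Spa(R,R^+)$ affinoid.

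Next, we fix an affinoid open covering $X=\bigcup_i U_i$. For each $i$, \cref{lem:Spa-barX-affinoid} identifies $\Spa(U_i,S)=\Spa(A_i,R^+)=\ov{U_i}$ as adic spaces. By \cref{lem:Spa-injective} the map $\Spa(U_i,S)\to\Spa(X,S)$ is injective, and the composite with $\phi$ is the natural injection $\ov{U_i}\to\ov X$ coming from the universal property. This is because both send a root triple to the unique centre lying over $x^\succ$, and uniqueness holds by vertical separatedness.

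Bijectivity of $\phi$ comes as follows. For surjectivity, every point $y$ of $\ov X$ lies in some $\ov{U_i}^\circ\subseteq\ov{U_i}$ by \cref{lem:interiors-cover}\labelcref{lemitem:interior-open-covering}, so $y$ is in the image of $\Spa(U_i,S)$. For injectivity, a root triple is determined by its image: the point $x^\circ$ is the maximal vertical generalisation of $\phi(x)$, the valuation ring $k(x)^+$ is recovered from $k(\phi(x))^+$ via the completed residue field, and $x^\succ$ is the image of $\phi(x)$ in $S$.

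For the homeomorphism, the sets $V_i=\phi^{-1}(\ov{U_i}^\circ)$ cover $\Spa(X,S)$. On each $V_i$, the map $\phi$ is the restriction of the homeomorphism $\Spa(U_i,S)\cong\ov{U_i}$ to the subset $\ov{U_i}^\circ$, which is open in $\ov X$. It then suffices to show that $V_i$ is open in $\Spa(X,S)$ and that the final topology restricted to $V_i$ agrees with the subspace topology from $\Spa(U_i,S)$.

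The main obstacle is comparing topologies: showing $\phi$ is continuous, i.e.\ that each test map $\Spa(U,T)\to\ov X$ for a square \eqref{test-square-comp-topology} is continuous. I would prove this by identifying $\Spa(U,T)=\ov{U}^{T}$, the universal vertical compactification of $U$ over $T$, and noting that the morphism $\ov U^{T}\to\ov X$ induced by the universal property is a morphism of adic spaces, hence continuous, and agrees with $\phi$ on points. Openness of $\phi$ then follows because each $\phi(\cU\cap\Spa(U_i,S))\cap\ov{U_i}^\circ$ is open in $\ov{U_i}^\circ$, which is open in $\ov X$.
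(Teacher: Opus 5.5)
Your argument is correct, and it takes a different route from the paper, which gives no proof of its own here and simply cites Solodov's Theorem~4.8. You instead build the homeomorphism from ingredients the paper records independently of this proposition:
\begin{itemize}
\item the affinoid case (\cref{lem:Spa-barX-affinoid});
\item injectivity of $\Spa(U_i,S)\to\Spa(X,S)$ (\cref{lem:Spa-injective});
\item the universal property, which gives continuity by exactly the mechanism of \cref{prop:Spa-functoriality};
\item the covering of $\ov X$ by the opens $\ov{U_i}^\circ$ (\cref{lem:interiors-cover}), which gives surjectivity and openness.
\end{itemize}
Your route makes the role of the final topology transparent. Continuity has to be checked against all test squares, whereas openness only uses the squares $U_i\to S$ of one affinoid cover. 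The reduction to affinoid $S$ also only needs the easy comparison of topologies: a set open in $\Spa(X,S)$ meets $\Spa(X\times_S S_j,S_j)$ in a set open for the latter's own final topology. This suffices because your continuity argument works for arbitrary $S$. The cost is that the proof still rests on Solodov's construction through \cref{lem:interiors-cover}, so it reorganises his input rather than replacing it.

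Two points should be made explicit.

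First, the openness step needs more than \cref{lem:interiors-cover}(1) literally states. You use that $\ov{U_i}^\circ$, with the topology induced from $\ov{U_i}$, maps homeomorphically onto an open subset of $\ov X$, whereas (1) only asserts that the image is open. The needed refinement comes from Solodov's construction as recorded in that lemma's proof: $U_d=\ov{U}^\circ$ is an open subspace of $\ov U$, and $\ov X$ is glued from these. Without it, a continuous bijection onto an open subset need not be open.

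Second, in the continuity step you must apply the universal property of $\Spa(U,T)\cong\ov U^{T}$ to the vertically partially proper $T$-space $\ov X\times_S T=\bar f^{-1}(T)$, not to $\ov X$ itself. Moreover, the identification via \cref{lem:Spa-barX-affinoid} presupposes that $U\to T$ is ${}^+$weakly of finite type.
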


By the proposition the map
$\iota \colon X = \Spa(X,X) \longrightarrow \Spa(X,S)$ is a topological open embedding.
We endow $\Spa(X,S)$ with a structure sheaf by setting 
\[
\cO_{\Spa(X,S)} = \iota_* \cO_X.
\]
The computation of the stalk of $\iota_*\cO_X$ at a point $x \in \Spa(X,S)$ relies on the following lemma.

\begin{lem} \label{lem:stalks-comp}
    Suppose $f \colon X \to S$ satisfies the hypotheses of \cref{thm:Solodov}.
    Let $x \in \Spa(X,S)$ be a point and denote by~$x'$ the minimal vertical generalisation of~$x$ lying in~$X$ (so if $x' \leadsto y \leadsto x$ are vertical specialisations and $x' \ne y$, then~$y$ is not contained in~$X$).
    Then the natural homomorphism $(\iota_*\cO_X)_x \to (\iota_*\cO_X)_{x'} = \cO_{X,x'}$ is an isomorphism.
\end{lem}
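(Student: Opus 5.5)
The plan is to compare the two stalks by means of cofinal systems of open neighbourhoods. Since $(\iota_*\cO_X)_x = \varinjlim_{W\ni x}\cO_X(W\cap X)$, where $W$ runs over open neighbourhoods of $x$ in $\Spa(X,S)\simeq\ov X$ (using \cref{prop:comp-homeo}), I want to show that the neighbourhoods of the form $W\cap X$ are cofinal among the open neighbourhoods of $x'$ in $X$. The natural map $(\iota_*\cO_X)_x\to\cO_{X,x'}$ exists because $x$ is a specialisation of $x'$: every open neighbourhood of $x$ contains $x'$.

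First I reduce to the case where $S$ is affinoid and $X$ is covered by affinoids. Pick an open affinoid $U\subseteq X$ containing $x'$. I then claim that $x\in\ov U^\circ$, with notation as in \cref{lem:interiors-cover}\labelcref{lemitem:interior-open}. Since $\ov U\to\ov X$ is injective and $x'\in U$, the valuative description gives $x\in\ov U$: the chain of vertical specialisations from $x'^\circ$ down to $x$ lifts uniquely to the compactification of $U$. By the minimality of $x'$, every vertical generalisation $y$ of $x$ either lies outside $X$ (those strictly between $x'$ and $x$) or is a vertical generalisation of $x'$. This uses that the vertical generalisations of a point form a chain, as they correspond to overrings of a valuation ring. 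In the latter case $y\in U$, because $U$ is open and hence closed under generalisation. So $x\in\ov U^\circ$, which is open in $\ov X$ with $\ov U^\circ\cap X=U$.

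Now replace $X$ by $U$. The same argument applied to any smaller affinoid open $U'\ni x'$ inside $U$ shows that $\ov{U'}^\circ$ is an open neighbourhood of $x$ whose intersection with $X$ is exactly $U'$. Rational subsets $U'$ of $U$ containing $x'$ form a cofinal system of neighbourhoods of $x'$ in $X$. Hence
\[
(\iota_*\cO_X)_x=\varinjlim_{W\ni x}\cO_X(W\cap X)\longrightarrow \varinjlim_{U'\ni x'}\cO_X(U')=\cO_{X,x'}
\]
admits an inverse, given by sending a germ on $U'$ to the section over $\ov{U'}^\circ$. The two composites are the identity because every $W\cap X$ is an open neighbourhood of $x'$ and therefore contains some $U'$. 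This yields the isomorphism.

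The main obstacle is verifying $x\in\ov U^\circ$, in particular that $x$ lies in the image of $\ov U$ at all. For this I would use the root-triple description: $x$ has the same $x^\circ$ as $x'$, and $k(x)^+\subseteq k(x')^+$. So $x$ defines a root triple of $U$ over $S$, and hence a point of $\Spa(U,S)=\ov U$ by \cref{lem:Spa-barX-affinoid} and \cref{prop:comp-homeo}.
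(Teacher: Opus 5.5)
Your proposal is correct and follows essentially the paper's proof: both reduce the stalk isomorphism to the cofinality statement that, for an open $U'\ni x'$ of $X$, the set $\ov{U'}^\circ$ from \cref{lem:interiors-cover} is an open neighbourhood of $x$ in $\Spa(X,S)\simeq\ov X$ whose intersection with $X$ is contained in $U'$, and both use the minimality of $x'$ to check that $x\in\ov{U'}^\circ$. Your restriction to affinoid $U'$ (which \cref{lem:interiors-cover} requires), your root-triple check that $x$ lies in $\Spa(U',S)$, and your remark that $S$ may be taken affinoid are details the paper leaves implicit; that last reduction you also leave unjustified, and it follows from compatibility of $\Spa(X,S)$ with restriction to open subspaces of $S$ (last assertion of \cref{prop:check-univprop-comp}).
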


\begin{proof}
    We need to show the following:
    For every open neighbourhood $U' \subseteq X$ of~$x'$ there is an open neighbourhood $U \subseteq \Spa(X,S)$ of~$x$ such that $U \cap X \subseteq U'$.

    The subset $\Spa(U',S) \subseteq \Spa(X,S)$ contains~$x$.
    Moreover, by the definition of~$x'$, $x$ is contained in the subset $U \subseteq \Spa(U',S)$ consisting of all points $\bar{z}$ such that for any vertical generalisation $z$ of~$\bar{z}$ we either have $z \in U'$ or $z \notin X$.
    In particular, $U \cap X \subseteq U'$.
    Via the homeomorphism $\Spa(X,S) \cong \ov X$ from \cref{prop:comp-homeo} we can apply \cref{lem:interiors-cover} to $\Spa(X,S)$ telling us that~$U$ is an open neighbourhood of~$x$ in $\Spa(X,S)$.
\end{proof}

\cref{lem:stalks-comp} tells us that $(\Spa(X,S),\iota_*\cO_X)$ is a locally topologically ringed space.
We turn it into a locally $v$-ringed space by defining valuations on the residue fields of the stalks of $\iota_*\cO_X$.
For a point $x = (x^\circ,k(x)^+,x^\succ)$ in $\Spa(X,S)$ the point~$x'$ from \cref{lem:stalks-comp} is of the form $x' = (x^\circ,k(x')^+,x'^\succ)$ where~$k(x')^+$ is the smallest valuation ring satisfying $k(x)^+ \subseteq k(x')^+ \subseteq k(x^\circ)$ and having a centre on~$X$.
By \cref{lem:stalks-comp} we have an equality of residue fields $k(x) = k(x')$ and these are dense in $k(x^\circ)$.
Now we define the valuation ring at~$x$ to be $k(x)^+ \cap k(x)$.

We would like to check that the locally $v$-ringed space $\Spa(X,S)$ is indeed an adic space, i.e.~locally isomorphic to the adic spectrum of a Huber pair.

\begin{thm} \label{thm:Solodov-comp}
    Suppose~$X$ is weakly square complete,~$S$ is square complete and stable, and that the morphism $f \colon X \to S$ is locally of ${}^+$weakly finite type, separated, and taut.
    Then $\Spa(X,S)$ is an adic space isomorphic to the universal vertical compactification~$\overline{X}$ of~$X$ over~$S$.
\end{thm}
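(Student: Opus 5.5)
The plan is to compare the locally $v$-ringed space $(\Spa(X,S),\iota_*\cO_X)$ with the universal vertical compactification $\ov X$ through the homeomorphism $\phi\colon\Spa(X,S)\to\ov X$ of \cref{prop:comp-homeo}. By \cref{thm:Solodov}, $\ov X$ exists and is an adic space. It therefore suffices to show that $\phi$ upgrades to an isomorphism of locally $v$-ringed spaces, i.e.\ that it identifies structure sheaves and valuations at every point.

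The first step is the structure sheaf. Let $j\colon X\to\ov X$ be the open immersion. Since $\phi\circ\iota=j$ and $\phi$ is a homeomorphism, we get $\phi_*\iota_*\cO_X=j_*\cO_X$. I would then show $\cO_{\ov X}\isomto j_*\cO_X$ as sheaves of topological rings. This is one of the hypotheses of \cref{prop:check-univprop-comp}, and Solodov verifies it for his construction.

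If I have to argue it directly, I would work locally. By \cref{lem:interiors-cover}, $\ov X$ is covered by the opens $\ov{U_i}^\circ\subseteq\ov{U_i}=\Spa(A_i,R^+)$ from \cref{prop:affinoid-compactification}. On each of these the claim reduces to a statement about rational subsets of $\Spa(A_i,R^+)$ and their intersections with $U_i=\Spa(A_i,A_i^+)$. For a rational subset $V=\{|f_k|\le|g|\}\subseteq\Spa(A_i,R^+)$ with $V\cap U_i$ rational in $U_i$, both rings of sections are $A_i\langle f/g\rangle$, the completed localisation, and only the ring of integral elements changes. Hence the sheaves agree on a basis, and therefore everywhere.

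The second step is to match stalks and valuations. Fix $x\in\Spa(X,S)$ and let $x'$ be as in \cref{lem:stalks-comp}, which gives $(\iota_*\cO_X)_x\simeq\cO_{X,x'}$. On the $\ov X$ side, $\phi(x)$ is a vertical specialisation of $\phi(x')=j(x')$. The cospecialisation map $\cO_{\ov X,\phi(x)}\to\cO_{\ov X,j(x')}=\cO_{X,x'}$ is an isomorphism, by the same lemma transported along $\phi$ together with step one. The valuation on $k(\phi(x))$ is the unique one whose valuation ring $k(\phi(x))^+$ is determined by $k(x)^+$. This holds because $\phi(x)$ was defined as the centre of $k(x)^+$ on $\ov X$ over $x^\succ$, and a vertical specialisation is determined by its valuation ring inside $k(x^\circ)$. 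This agrees with our definition $k(x)^+\cap k(x)$, so $\phi$ is an isomorphism of locally $v$-ringed spaces.

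The main obstacle is the first step, namely proving $\cO_{\ov X}=j_*\cO_X$ as topological sheaves near boundary points that lie in no single $\ov{U}$ except through $\ov U^\circ$. The boundary points are where the covering from \cref{lem:interiors-cover} is needed. The topology, and not only the ring structure, must be tracked, since a completed localisation is needed and the $+$-ring differs. I would first try to read this off from Solodov's construction, where $\ov X$ is glued from the $U_{i,d}$, and reduce to the affinoid computation above.
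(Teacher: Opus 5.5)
Your proposal is correct and follows the paper's proof: transport $\iota_*\cO_X$ along the homeomorphism $\phi$ of \cref{prop:comp-homeo} to $j_*\cO_X\simeq\cO_{\ov X}$. Then check that the valuation ring at $\phi(x)$ is $k(x)^+\cap k(x)$. The paper takes $\cO_{\ov X}=j_*\cO_X$ directly as a property of Solodov's universal vertical compactification (the hypotheses of \cref{prop:check-univprop-comp}), so your fallback local verification over the $\ov{U_i}^\circ$ is not needed; if you did carry it out, you would first have to reduce to affinoid $S$, since \cref{lem:interiors-cover} assumes this.
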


\begin{proof}
    We constructed a natural homeomorphism $\phi \colon \Spa(X,S) \to \ov X$ by sending $(x^\circ,k(x)^+,x^\succ)$ to the unique centre of $k(x)^+$ on~$\ov X$ lying over~$x^\succ$ (see \cref{prop:comp-homeo}).
    On the one hand, the universal vertical compactification comes with an open immersion $i \colon X \to \overline{X}$ such that $i_*\cO_X = \cO_{\overline{X}}$.
    On the other hand we defined the structure sheaf on $\Spa(X,S)$ by $\iota_*\cO_X$.
    Via the homeomorphism $\phi$ the two sheaves $i_*\cO_X$ and $\iota_*\cO_X$ are identified.
    It remains to check that the valuations on the residue fields agree.
    For a point $x = (x^\circ,k(x)^+,x^\succ)$ in $\Spa(X,S)$ the valuation ring on $k(x)$ is $k(x)^+ \cap k(x)$ and the same is true for $\phi(x)$.
    This gives both assertions of the theorem.
\end{proof}

\begin{rmk} \label{rmk:hypotheses-compactification}
    The hypotheses of \cref{thm:Solodov-comp} are quite mild.
    Let us discuss the major settings of interest to this paper when they are satisfied.
    First of all, the assumption that~$S$ be sheafy just tells us that the structure presheaf is a sheaf and this continues to hold after base change.
    Tautness is also a minor restriction.
    \begin{enumerate}[(i)]
        \item Suppose that~$S$ and hence also~$X$ are analytic.
              Then~$S$ and~$X$ are automatically square complete.
              Our main application here is to rigid spaces, where $S = \Spa(K)$ is the spectrum of a non-archimedean field and $X \to S$ is locally of finite type.
              These spaces are always stable.
        \item Let us now look at the case of formal schemes, i.e. where $X \to S$ comes from a morphism of formal schemes.
              Also in this situation square completeness is automatic once we require~$X$ to be separated:
              If $x \leadsto v$ is a vertical specialisation and $x \leadsto h$ a horizontal specialisation, we can find an affinoid neighbourhood of~$v$ of the form $\Spa(A)$.
              Then also $x \in \Spa(A)$ and for every intermediate valuation ring $k(x)^+ \subseteq V \subseteq k(x)^\circ$ there is a corresponding horizontal specialisation in $\Spa(A)$.
              By the separatedness assumption, the horizontal specialisations of $x \in X$ are uniquely determined by a valuation ring~$V$ as above, so in particular $h \in \Spa(A)$.
              Since affinoids are square complete, we can find $z \in \Spa(A)$ such that $v \leadsto z$ horizontally and $h \leadsto z$ vertically.

              In \cite{ZavyalovSheafy} Zavyalov shows that strongly rigid-noetherian Huber pairs are sheafy.
              In particular, formal schemes that are locally the formal spectrum of such a ring give rise to stable adic spaces (sending $\Spf(A)$ to $\Spa(A)$).
              This applies to formal schemes that are locally of topologically finite type over a complete microbial valuation ring (e.g. over~$K^\circ$ for a non-archimedean field~$K$).
              So formal models of rigid spaces over a non-archimedean field satisfy the assumptions of \cref{thm:Solodov-comp}.
        \item For discretely ringed adic spaces stability is not an issue.
              Square completeness is satisfied for adic spaces of the form $\Spa(Y,T)$ for a morphism of schemes $Y \to T$.
              Moreover, tautness is certainly satisfied for spaces coming from schemes of finite type over a noetherian base.
              Our main application is to adic spaces of the form $X = \Spa(Y,k)$, where~$k=K^\succ$ is the residue field of a non-archimedean field $(K,K^\circ)$ and $Y$ is a scheme of finite type over~$k$.
              The base~$S$ is usually $S = \Spa(k)$.
    \end{enumerate}
\end{rmk}

We have already noted that for a morphism of \emph{analytic} adic spaces the universal vertical compactification is actually the universal compactification and coincides with Huber's construction in the noetherian setting.
In the opposite corner in the category of adic spaces there are located the discretely ringed adic spaces, and indeed for this type of adic spaces the universal vertical compactification recovers a familiar construction from \cite[\S~3.1]{Temkin2011:RelativeRiemannZariskiSpacesa}:

\begin{ex} \label{example-Spa-schemes}
        Let $f_0\colon X_0\to S_0$ be a morphism of schemes and let 
    \[
        f\colon X=\Spa(X_0, X_0)\la \Spa(S_0, S_0)=S
    \]
    be the associated morphism of discretely ringed adic spaces. Then $\Spa(X, S)$ is isomorphic to the discretely ringed adic space $\Spa(X_0, S_0)$.
    Indeed, by \cref{defi:Spa for schemes}, points of $\Spa(X_0, S_0)$ are triples $(x_0, V, \varphi)$ where $x_0\in X_0$, $V$ is a valuation subring of $k(x_0)$, and $\varphi \colon \Spec(V)\to S_0$ is a map of schemes making the square
\[ 
    \begin{tikzcd}
        \Spec(k(x_0)) \ar[r] \ar[d] & X_0 \ar[d] \\
        \Spec(V)\ar[r,"\varphi"] & S_0
    \end{tikzcd}
\]
commute.
The point $x_0$ lifts to a unique point $x^\circ$ of $X=\Spa(X_0, X_0)$ without vertical generalisations by endowing $k(x_0)$ with the trivial valuation.
Then $k(x^\circ) = k(x^\circ)^+ = k(x_0)$ and we set $k(x)^+ := V$.
Then $\varphi$ induces a map $\Spa(k(x^\circ), V)\to S$.
We denote by $x^\succ \in S$ the image of the closed point.
This determines a root triple $x = (x^\circ,k(x)^+,x^\succ)$, i.e. a point of $\Spa(X,S)$.
The resulting map $\Spa(X_0,S_0) \to \Spa(X,S)$ is bijective and the following observation shows that it respects the adic space structure by reducing to the affine case:
For every test square
\[
    \begin{tikzcd}
        U_0 \ar[r]  \ar[d]  & X_0   \ar[d] \\
        T_0 \ar[r]          & S_0
    \end{tikzcd}
\]
for affine schemes~$U_0$ and~$T_0$ with open immersions $U_0 \to X_0$ and $T_0 \to S_0$, the induced morphism of adic spaces $\Spa(U_0,T_0) \to \Spa(X_0,S_0)$ is an open immersion.
\end{ex}

\subsection{Functoriality of the vertical compactification}

We end this section by exploring the functoriality of the universal vertical compactification.

\begin{prop} \label{prop:Spa-functoriality}
Suppose that we are given a commutative square
\[
 \begin{tikzcd}
     Y     \ar[r,"f_X"]   \ar[d]  & X   \ar[d]  \\
     T      \ar[r,"f_S"]          & S
 \end{tikzcd}
\]
of adic spaces such that $f_S$ is adic and the vertical morphisms satisfy the assumptions of \cref{thm:Solodov-comp}.
Then there is a unique morphism of adic spaces $f \colon \Spa(Y,T) \to \Spa(X,S)$ making, as the dashed arrow, the diagram below commute.
\[
 \begin{tikzcd}[column sep= 1cm]
     Y          \ar[r,"f_X"]   \ar[d]  & X   \ar[d]  \\
     \Spa(Y,T)  \ar[r,dashed,"f"] \ar[d] & \Spa(X,S) \ar[d] \\
     T          \ar[r,"f_S"]          & S
 \end{tikzcd}
\]
In terms of root triples,~$f$ maps a point $y = (y^\circ,k(y)^+,y^\succ)$ to $(f_X(y^\circ)^\circ,k(y)^+ \cap k(f_X(y^\circ)^\circ),f_S(y^\succ))$ (i.e. the underlying map of sets is the one described in \cref{sec:root-triples}).
\end{prop}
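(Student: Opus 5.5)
The plan is to deduce the morphism from the universal property of $\Spa(X,S)\cong\ov X$ (\cref{thm:Solodov-comp}) after a base change along $f_S$, and then read off the pointwise description from uniqueness of centres.

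First form the fibre product $X_T = X\times_S T$. It exists because $f_S$ is adic. The projection $X_T\to T$ is again locally of ${}^+$weakly finite type, separated and taut, and the square-completeness hypotheses are checked locally on affinoids. Next, base change the vertically partially proper morphism $\Spa(X,S)\to S$ along the adic map $f_S$ to obtain $\Spa(X,S)\times_S T\to T$. This is vertically partially proper: the unique-lifting valuative criterion (\cref{valuative-criterion-separatednes} together with vertical separatedness) is stable under base change, and ${}^+$weakly finite type is preserved.

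The given square induces a morphism $g\colon Y\to X_T\to \Spa(X,S)\times_S T$ over $T$. Since $\Spa(Y,T)$ is the universal vertical compactification of $Y\to T$, and is initial among vertically partially proper targets over $T$ receiving $Y$, there is a unique extension $\Spa(Y,T)\to\Spa(X,S)\times_S T$. Composing with the projection gives $f$, and the two squares commute by construction.

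For uniqueness of $f$ itself, note that any $f$ making the diagram commute factors uniquely through $\Spa(X,S)\times_S T$, because the map to $T$ is prescribed. The uniqueness clause of the universal property then applies. I expect the main obstacle to be this uniqueness argument, since the universal property is stated only for vertical compactifications, and to justify base-change stability of vertical partial properness. If needed I would instead argue directly: $Y$ is dense in $\Spa(Y,T)$ via vertical specialisations, so two maps agreeing on $Y$ agree pointwise by vertical separatedness. Agreement of the structure sheaves then follows from $\cO_{\Spa(Y,T)}=\iota_*\cO_Y$.

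For the formula on root triples, take $y=(y^\circ,k(y)^+,y^\succ)$. Write $x^\circ=f_X(y^\circ)^\circ$, $V=k(y)^+\cap k(x^\circ)$ and $s=f_S(y^\succ)$. The point $f(y)$ is a vertical specialisation of $f(y^\circ)=f_X(y^\circ)$, and hence of $x^\circ$; for a non-adic $f_X$ one uses the description preceding \cref{lem:Spa-injective}. Its valuation ring is the restriction of $k(y)^+$, and it lies over $s$. The root triple $(x^\circ,V,s)$ names the unique centre of $V$ lying over $s$, by the homeomorphism $\phi$ of \cref{prop:comp-homeo} and the valuative criterion. Therefore $f(y)$ equals this point.
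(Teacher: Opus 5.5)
Your proposal is correct and follows the paper's proof. You base change $\Spa(X,S)\to S$ along the adic map $f_S$, get the unique extension $\Spa(Y,T)\to\Spa(X,S)\times_S T$ from the universal property of $\Spa(Y,T)$, compose with the projection, and identify $f(y)$ as the unique centre of $k(y)^+$ over $f_S(y^\succ)$. The detour through $X_T$ is unnecessary, since $Y\to\Spa(X,S)\times_S T$ is given directly by $Y\to X\to\Spa(X,S)$ and $Y\to T$, and your explicit argument for uniqueness of $f$ itself fills in a step the paper leaves implicit.
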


\begin{proof}
    We consider the solid arrow diagram
    \[
    \begin{tikzcd}
    & Y \ar[dl] \ar[d] \ar[r,"f_X"] & X \ar[d] \\
    \Spa(Y,T) \ar[r,dashed] \ar[dr] & \Spa(X,S) \times_S T \ar[r,"\pr"] \ar[d,"\rm vpp"] & \Spa(X,S) \ar[d,"\rm vpp"] \\
    & T \ar[r,"f_S"] & S.
    \end{tikzcd}
    \]
    The fibre product exists and $\Spa(X,S) \times_S T \to T$ is vertically partially proper by our assumption that $T \to S$ is adic.
    There is a unique dashed arrow by the universal property of the universal vertical compactification~$\Spa(Y,T)$.
    Composing it with the projection $\pr$,
    we obtain the desired morphism.
    
    By the valuative criterion for vertical partial properness \cite{Bauer:valuative-criteria}, the dotted map sends $y \in \Spa(Y,T)$ to the unique centre of $k(y)^+$ on $\Spa(X,S) \times_S T$ lying over~$T$.
    The projection to $\Spa(X,S)$ then maps it to the unique centre of $k(y)^+$ on $\Spa(X,S)$ lying over $f_S(y^\succ)$.
    This centre identifies with the root triple $(f_X(y^\circ)^\circ,k(y)^+ \cap k(f_X(y^\circ)^\circ),f_S(y^\succ))$.
\end{proof}

\begin{prop} \label{prop:Spa-iso-vpp}
    Suppose that the hypotheses of \cref{thm:Solodov} are satisfied for $X \to T$ and $T \to S$ is vertically partially proper.
    Then $\Spa(X,T) \to T \to S$ is the universal vertical compactification of~$X$ over~$S$.
\end{prop}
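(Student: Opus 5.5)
The plan is to verify the universal property directly. Concretely, we show that the composite $\Spa(X,T)\to T\to S$ is vertically partially proper, and that it is initial among vertically partially proper $S$-spaces receiving $X$.

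First, vertical partial properness of the composite. By \cref{thm:Solodov-comp} (whose hypotheses hold for $X\to T$ by assumption), $\Spa(X,T)\to T$ is vertically partially proper. I would check that vertical partial properness is stable under composition via the valuative criterion (\cref{valuative-criterion-separatednes} together with the uniqueness in vertical separatedness). Given a square \eqref{eqn:valuative-criterion-test-square} with target $\Spa(X,T)\to S$, I would proceed in two steps.
\begin{enumerate}[(1)]
\item Compose the top arrow to $T$ and use that $T\to S$ is vertically partially proper; this gives a unique lift $\Spa(k,k_1^+)\to T$.
\item Use this lift to form a test square for $\Spa(X,T)\to T$, and lift uniquely again.
\end{enumerate}
Uniqueness at both steps gives uniqueness for the composite. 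The remaining conditions are routine: local ${}^+$weak finite type and quasi-separatedness are stable under composition.

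Second, the universal property. Let $X\to Z$ be a morphism over $S$ with $Z\to S$ vertically partially proper. Form $Z\times_S T\to T$. This base change exists and is vertically partially proper, provided $T\to S$ is adic; if adicness is not available, I would argue with the valuative criterion pointwise instead. The map $X\to Z\times_S T$ then factors uniquely through $\Spa(X,T)$ by its universal property over $T$, and composing with the projection yields $\Spa(X,T)\to Z$.

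For uniqueness, suppose $g\colon\Spa(X,T)\to Z$ is any $S$-morphism extending $X\to Z$. Together with the structure map to $T$, the map $g$ gives a $T$-morphism $\Spa(X,T)\to Z\times_S T$ extending $X\to Z\times_S T$. That morphism is unique, so $g$ is determined.

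An alternative route avoids fibre products: apply \cref{prop:check-univprop-comp} to $X\subseteq\Spa(X,T)$ over $S$. The bullet conditions of that proposition do not mention the base, apart from vertical partial properness. They are inherited from the fact that $\Spa(X,T)$ is the universal vertical compactification over $T$: it is a quasi-compact open embedding, $X$ is closed under generalised horizontal specialisations, every point is a vertical specialisation of a point of $X$, and $\cO=j_*\cO_X$. This is perhaps cleaner, and I would present it as the main argument. To do so, I first need to extract these properties for $\Spa(X,T)$ over $T$ from Solodov's construction, which \cref{lem:stalks-comp} and the root-triple description supply.

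The main obstacle is justifying that the bulleted properties hold for $\Spa(X,T)$, and the converse direction of \cref{prop:check-univprop-comp}, without further hypotheses. In particular, quasi-compactness of $j$ is a concern; I would take it from Solodov's construction as recalled in \cref{lem:interiors-cover}.
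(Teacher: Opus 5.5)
Your first argument is the paper's proof. The paper observes that $\Spa(X,T)\to T\to S$ is vertically partially proper, and then takes a vertically partially proper $Y\to S$ with $Y_T=Y\times_S T$. It factors $X\to Y_T$ through $\Spa(X,T)$ by the universal property over $T$, and composes with $Y_T\to Y$. You additionally prove, via the valuative criterion, that vertical partial properness is stable under composition. You also make explicit why the extension is unique: pair $g$ with the structure map to $T$. The paper leaves both points implicit.

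Your hedge about adicness is not needed. Since $T\to S$ is vertically partially proper, it is locally of ${}^+$weakly finite type and hence adic. So $Y\times_S T$ exists and is vertically partially proper over $T$; the paper uses this tacitly here, as it does in \cref{prop:Spa-functoriality}. No pointwise fallback argument is required.

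The route you say you would present as the main argument applies \cref{prop:check-univprop-comp} directly over $S$. It is also correct: every condition in that criterion other than vertical partial properness of $\bar f$ is intrinsic to the pair $X\subseteq\Spa(X,T)$. It is, however, not a formal argument. It needs $\Spa(X,T)$ to satisfy those bulleted properties, notably quasi-compactness of $j$ and closedness of $X$ under generalised horizontal specialisations. These do not follow directly from \cref{lem:stalks-comp}, the root-triple description, or \cref{lem:interiors-cover}. The paper imports them only later, in the proof of \cref{prop:comp-fibre-product}, from the proof of \cite[Theorem~3.29]{Solodov-vertical-compactification}.

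So your alternative avoids fibre products and base change along $T\to S$, but pays for this with construction-specific input. The fibre-product argument uses nothing beyond universal properties. The paper prefers it for exactly this reason, as it remarks right after the proof; this also makes it the more robust choice for your write-up.
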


\begin{proof}
    We first note that $\Spa(X,T) \to S$ is vertically partially proper, so it is a vertical compactification of~$X$ over~$S$.
    In order to check universality, we consider a solid arrow commutative diagram
    \[
        \begin{tikzcd}
             X  \ar[drr,bend left]   \ar[dr,dashed] \ar[d]  \\
             \Spa(X,T) \ar[r,dashed] \ar[dr] & Y_T \ar[r] \ar[d,"\rm vpp"] \ar[r] & Y \ar[d,"\rm vpp"] \\
             & T \ar[r] & S,
        \end{tikzcd}
    \]
    where~$Y \to S$ is a vertically partially proper morphism and~$Y_T = Y \times_S T$.
    The universal property of the fibre product gives the diagonal dashed arrow and then the universal property for $\Spa(X,T)$ gives the horizontal dashed arrow.
    Composing with $Y_T \to Y$ we obtain the unique map required in the universal property of the universal vertical compactification of~$X$ over~$S$.
\end{proof}

What we have seen so far in this subsection relies entirely on the universal property of the universal vertical compactification.
We did not have to know anything about the construction or further geometric properties.
This will change with the following proposition where we will actually argue using the geometry of the spaces involved.

\begin{prop} \label{prop:comp-fibre-product}
    We consider a commutative diagram
        \[ 
            \begin{tikzcd}
                Y'\ar[r] \ar[d] & Y \ar[d] \\
                X'\ar[r] \ar[d] & X \ar[d] \\
                S'\ar[r]  & S
            \end{tikzcd}
        \]
    of adic spaces with $Y' \simeq X' \times_X Y$.
    Assuming that all vertical morphisms to~$S$ and~$S'$ satisfy the hypotheses of \cref{thm:Solodov-comp} and the horizontal morphisms  are adic, it follows that the natural morphism
    \[
     \Spa(Y',S') \longrightarrow \Spa(X',S') \times_{\Spa(X,S)} \Spa(Y,S)
    \]
    is an isomorphism of adic spaces.
\end{prop}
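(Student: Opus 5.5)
The natural morphism comes from \cref{prop:Spa-functoriality}, applied to the squares $Y'\to X'$ over $S'\to S'$, $Y'\to Y$ over $S'\to S$, and so on. The plan is to show it is an isomorphism by first checking that it is bijective on root triples. From this we then deduce a homeomorphism and an isomorphism of structure sheaves, using the description $\cO_{\Spa(X,S)}=\iota_*\cO_X$ together with \cref{lem:stalks-comp}.

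First, reduce to the affinoid case. The statement is local on the target $\Spa(X,S)$, and by gluing (as in Solodov's construction, \cref{lem:interiors-cover}) local on $S$, $S'$, and $X$. Locality on $X$ needs care, because the compactifications $\ov U$ of affinoid opens $U\subseteq X$ are not open in $\Spa(X,S)$; only the interiors $\ov U^\circ$ are. The plan is to cover $\Spa(X,S)$ by the opens $\ov{U_i}^\circ$ and check that their preimages in $\Spa(X',S')$ and $\Spa(Y,S)$ are the corresponding interiors for $U_i'=U_i\times_X X'$ and $U_i\times_X Y$. This should follow from the root-triple description: a vertical generalisation of a point lies in $X'$ exactly when its image lies in $X$, and then in $U'_i$ exactly when its image lies in $U_i$. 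Once reduced, write
\[
X=\Spa(A,A^+),\quad Y=\Spa(B,B^+),\quad X'=\Spa(A',A'^+),\quad S=\Spa(R,R^+),\quad S'=\Spa(R',R'^+).
\]
Then $Y'=\Spa(B',B'^+)$ with $B'=A'\widehat\otimes_A B$ and $B'^+$ the integral closure of the image of $A'^+\otimes B^+$. By \cref{lem:Spa-barX-affinoid}, the claim becomes
\[
\Spa(B',R'^+)\simeq \Spa(A',R'^+)\times_{\Spa(A,R^+)}\Spa(B,R^+).
\]
The right-hand side is $\Spa(B', C^+)$, where $C^+$ is the integral closure of the image of $R'^+\otimes_{R^+}R^+$, which is just the image of $R'^+$. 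So both sides are $\Spa(B',(R'^+)_{\min})$. This is the usual fibre product of affinoids; it is legitimate because the horizontal maps are adic, and stability is part of the hypotheses of \cref{thm:Solodov-comp}.

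A cleaner alternative, which I would try first, is a universal-property argument. Set $P=\Spa(X',S')\times_{\Spa(X,S)}\Spa(Y,S)$. This fibre product exists because $\Spa(X',S')\to\Spa(X,S)$ is adic, which I expect to follow from $X'\to X$ and $S'\to S$ being adic. The base change $P\to\Spa(X',S')$ of the vertically partially proper map $\Spa(Y,S)\to\Spa(X,S)$ is vertically partially proper. Vertical partial properness of $\Spa(Y,S)\to\Spa(X,S)$ itself should follow from the two-out-of-three behaviour of the valuative criterion \cref{valuative-criterion-separatednes} plus vertical separatedness. Hence $P\to S'$ is vertically partially proper, and $Y'=X'\times_XY\to P$ gives a map $\Spa(Y',S')\to P$ by universality. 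To show it is an isomorphism, I would verify the criterion of \cref{prop:check-univprop-comp} for $Y'\subseteq P$:
\begin{itemize}
\item $Y'=P\times_{\Spa(X',S')}X'$ is a quasi-compact open, as a base change of $X\subseteq\Spa(X,S)$;
\item every point of $P$ is a vertical specialisation of a point of $Y'$, checked on root triples using the valuative criterion;
\item $Y'$ is closed under generalised horizontal specialisations;
\item $\cO_P\to j_*\cO_{Y'}$ is an isomorphism.
\end{itemize}

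The main obstacle is the last condition. Ideally, pushforward of structure sheaves would commute with the completed tensor products defining $\cO_P$, but this is not formal. I would attack it locally through the affinoid computation above, where it reduces to the identity of Huber pairs $\Spa(B',(R'^+)_{\min})$. In effect, the affinoid reduction handles the sheaf condition, and the universal-property framework handles the gluing.
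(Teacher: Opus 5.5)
Your overall architecture matches the paper's: bijectivity on root triples, reduction to affinoid $X,X',Y,S,S'$, and the identity $\Spa(A'\widehat\otimes_A B,R'^+)=\Spa(A',R'^+)\times_{\Spa(A,R^+)}\Spa(B,R^+)$. However, the reduction step, where all the real work lies, is wrong as stated.

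\textbf{The preimages are not the interiors.} You claim that the preimage of $\ov{U_i}^\circ$ in $\Spa(X',S')$ is the interior $\ov{U_i'}^\circ$. This rests on ``a vertical generalisation lies in $X'$ exactly when its image lies in $X$'', which fails as soon as $X'\to X$ is not partially proper. Take $S=S'=\Spa(K)$, $X=Y=\Spa(K\langle T\rangle)$, $X'=Y'=\{|T|\le|\pi|\}$ and $U_1=\{|T|\le|\pi|\}$.
\begin{itemize}
\item Then $U_1'=X'$, so $\ov{U_1'}^\circ=\Spa(X',K)$ contains the boundary point $\xi'$ with $|T|=|\pi|^+$.
\item But $\xi'\notin X'$, while its image $\xi$ lies in $X\setminus U_1$; hence $\xi\notin\ov{U_1}^\circ$.
\end{itemize}
So the preimage is strictly smaller than the interior. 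Only an inclusion of the preimage in a union of interiors of affinoids holds; this is \cref{lem:compatible-covers}.

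\textbf{The pieces need not be affinoid.} Moreover $U_i\times_X X'$ and $U_i\times_X Y$ need not be affinoid, so their interiors are not even defined by \cref{lem:interiors-cover}. Localising on $S$, $S'$ and $X$ alone therefore does not let you write $X'=\Spa(A',A'^+)$ and $Y=\Spa(B,B^+)$.

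\textbf{What is missing} is the paper's second layer of covering.
\begin{enumerate}
\item Choose affinoid covers $U'_j$ of $U_i\times_X X'$ and $V_k$ of $U_i\times_X Y$.
\item Use the opens $\widetilde U'_j=(\ov U_i^\circ\times_{\Spa(X,S)}\Spa(X',S'))\cap\ov{U'_j}^\circ$, and $\widetilde V_k$ defined likewise. These cover by \cref{lem:compatible-covers}, precisely because only the inclusion is available.
\item Cover the target by the products $\widetilde U'_j\times_{\ov U_i^\circ}\widetilde V_k$.
\item Apply \cref{lem:compatible-covers} once more, now to the natural map $\varphi$ itself. This gives opens $\widetilde V'_{j,k}\subseteq\Spa(U'_j\times_{U_i}V_k,S')$ covering $\Spa(Y',S')$, on which $\varphi$ agrees with the affinoid isomorphism $\varphi_{j,k}$.
\end{enumerate}
This shows that $\varphi$ is locally an open immersion, and together with bijectivity that it is an isomorphism.

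\textbf{Bijectivity alone does not suffice.} Your first paragraph proposes upgrading bijectivity to an isomorphism via $\iota_*\cO$ and \cref{lem:stalks-comp}. That is not available: the structure sheaf of the fibre product is not known a priori to be a pushforward from $Y'$, which is essentially the content of the proposition.

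\textbf{The universal-property variant does not avoid this.} The condition $\cO_P\cong j_*\cO_{Y'}$ in \cref{prop:check-univprop-comp} is local on $P$ and requires exactly the open cover just described. The specialisation conditions likewise need control of the topology of $P$, which you only obtain from that cover.
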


\begin{proof}
    The fibre product $\Spa(X',S') \times_{\Spa(X,S)} \Spa(Y,S)$ exists because $\Spa(Y,S) \to \Spa(X,S)$ is locally of ${}^+$weakly finite type (being vertically partially proper) and $\Spa(X',S') \to \Spa(X,S)$ is adic because $X' \to X$ is adic.
    By functoriality of $\Spa(-,-)$ as in \cref{prop:Spa-functoriality}, we obtain the natural morphism
    \[
       \varphi \colon \Spa(Y',S') \longrightarrow \Spa(X',S') \times_{\Spa(X,S)} \Spa(Y,S).
    \]
    Using the explicit description via root triples, we check that~$\varphi$ is bijective.
    Now, it suffices to show that $\varphi$ is locally an open immersion.

    We first reduce to the case that $S$ and $S'$ are affinoid as follows. 
    In the situation of \cref{thm:Solodov-comp}, the hypotheses of \cref{prop:check-univprop-comp} 
    are satisfied (see the proof of \cref{prop:check-univprop-comp} in \cite[Theorem~3.29]{Solodov-vertical-compactification}).
    This implies that not only is $\Spa(X,S)$ the universal compactification of~$X$ over~$S$, but also for every open subspace $U \subseteq S$ the base change $\Spa(X,S) \times_S U$ is the universal compactification of $X \times_S U$ over~$U$.
    Applying this reasoning to $X \to S$, $X' \to S'$, $Y \to S$, and $Y' \to S'$, we may now assume that $S$ and $S'$ are affinoid. 

    We choose compatible affinoid open covers $\{U_i\}_{i \in I}$, $\{U'_j\}_{j \in J}$, and $\{V_k\}_{k \in K}$ of~$X$,~$X'$, and~$Y$, respectively, such that for all $i \in I$ we have
    \[
        U_i \times_X X' = \bigcup_{j \to i} U'_j, \qquad U_i \times_X Y = \bigcup_{k \to i} V_k.
    \]
    We can apply \cref{lem:compatible-covers} below to the squares
    \[
        \begin{tikzcd}
            X' \ar[r] \ar[d] & X \ar[d] \\
            \Spa(X',S')  \ar[r] & \Spa(X,S)
        \end{tikzcd} \qquad
             \begin{tikzcd}
            Y \ar[r] \ar[d] & X \ar[d] \\
            \Spa(Y,S)  \ar[r] & \Spa(X,S)
        \end{tikzcd}
    \]
    and the open subsets $\ov U_i^\circ \subseteq \Spa(X,S)$ from \cref{lem:interiors-cover}, noting that $\ov U_i^\circ \cap X = U_i$.
    Consequently, for fixed~$i$, the open subsets 
    \begin{itemize}
        \item 
        $\widetilde{U}'_j := (\ov U^\circ_i \times_{\Spa(X,S)} \Spa(X',S')) \cap \ov U'^\circ_j$ for $j \to i$ and
        \item 
        $\widetilde{V}_k := (\ov U^\circ_i \times_{\Spa(X,S)} \Spa(Y,S)) \cap \ov V_k^\circ$ for $k \to i$
    \end{itemize}  
    cover $\ov U^\circ_i \times_{\Spa(X,S)} \Spa(X',S')$ and $\ov U^\circ_i \times_{\Spa(X,S)} \Spa(Y,S)$, respectively.
    Moreover $\widetilde{U}'_j \cap X' = U'_j$ and $\widetilde{V}_k \cap Y = V_k$ and for $j \to i$ and $k \to i$, the images of~$\widetilde{U}'_j$ and $\widetilde{V}_k$ are contained in~$\ov U^\circ_i$.
    Hence, the fibre products $\widetilde{U}'_j \times_{\ov U^\circ_i} \widetilde{V}_k$ cover $\Spa(X',S') \times_{\Spa(X,S)} \Spa(Y,S)$ and we can apply \cref{lem:compatible-covers} again, this time to the square
    \[
        \begin{tikzcd}
            Y' \ar[r,"\id"] \ar[d] & X' \times_X Y \ar[d] \\
            \Spa(Y',S') \ar[r,"\varphi"] & \Spa(X',S') \times_{\Spa(X,S)} \Spa(Y,S)
        \end{tikzcd}
    \]
    with the open subsets $\widetilde{U}'_j \times_{\ov U^\circ_i} \widetilde{V}_k$, and the trivial open cover of the affinoid $V'_{j,k} = U'_j \times_{U_i} V_k$ by itself (for all $(j,k) \in J \times_I K$).
    In this way we construct an open cover of $\Spa(Y',S')$ by subsets $\widetilde{V}'_{j,k}$  lying in $\Spa(V'_{j,k},S')$ and fitting into the diagram
        \[
        \begin{tikzcd}
            & \Spa(V'_{j,k},S') \arrow[rr,"\varphi_{j,k}"] \arrow[dd]
            && \Spa(U'_j,S') \times_{\Spa(U_i,S)} \Spa(V_k,S) \arrow[dd]
            \\
            \widetilde{V}'_{j,k} \arrow[rr,dashed, crossing over]  \arrow[ur,open] \arrow[dr,open]
            &&  \widetilde{U}'_j \times_{\ov{U}_i^\circ} \widetilde{V}_k \arrow[ur,open] \arrow[dr,open]
            \\
            & \Spa(Y',S') \arrow[rr,"\varphi"] 
            && \Spa(X',S') \times_{\Spa(X,S)} \Spa(Y,S) 
        \end{tikzcd}   
    \]
    where all diagonal maps are open immersions. The square with the morphisms $\ph$ and $\varphi_{j,k}$ commutes by the functoriality of $\Spa(-,-)$, hence both morphisms agree on $\widetilde{V}'_{j,k}$.
    Since the subsets~$\widetilde{V}'_{j,k}$ cover $\Spa(Y',S')$, if we knew that~$\varphi_{j,k}$ is an isomorphism, we could conclude that~$\varphi$ is a bijective open immersion, hence an isomorphism.
    
    So finally, we need to treat the case where $S = \Spa(R,R^+)$, $S' = \Spa(R',R'^+)$, $X = \Spa(A,A^+)$, $Y = \Spa(B,B^+)$, and $X' = \Spa(A',A'^+)$ are affinoid.
    Then
    \begin{align*}
        \Spa(X',S') \times_{\Spa(X,S)} \Spa(Y,S) &=  \Spa(A',R'^+) \times_{\Spa(A,R^+)} \Spa(B,R^+) \\
        & = \Spa(A' \otimes_A B,R'^+) 
        = \Spa(Y',S'). \qedhere
    \end{align*}    
\end{proof}

\begin{lem} \label{lem:compatible-covers}
    Let $T$ be an affinoid adic space. Let $Z \to T$ be a morphism of adic spaces satisfying the hypotheses of \cref{thm:Solodov-comp}.
    Suppose we are given a diagram
    \[
        \begin{tikzcd}
            Z   \ar[r,"\varphi_0"]  \ar[d,open] & W_0 \ar[d,open] \\
            \Spa(Z,T) \ar[r,"\varphi"]                  & W
        \end{tikzcd}
    \]
    of adic spaces, with an open immersion $W_0 \to W$ and an open subspace $U \subseteq W$. 
    If $\varphi_0^{-1}(U \cap W_0) = \bigcup_{j \in J} V_j$ is a covering by open affinoid subspaces $V_j \subseteq Z$, then, with the open $\ov V_j^\circ \subseteq \Spa(Z,T)$ as in \Cref{lem:interiors-cover} we have 
    \[
        \varphi^{-1}(U) \subseteq \bigcup_{j \in J} \ov V_j^\circ.
    \]    
\end{lem}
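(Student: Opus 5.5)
Let $x\in\varphi^{-1}(U)\subseteq\Spa(Z,T)$, written as a root triple $(x^\circ,k(x)^+,x^\succ)$. The plan is to produce a vertical generalisation of $x$ lying in $Z$ and in a suitable $V_j$, and then to check the defining condition of $\ov V_j^\circ$ from \cref{lem:interiors-cover}~\labelcref{lemitem:interior-open}. By \cref{lem:stalks-comp}, $x$ has a minimal vertical generalisation $x'$ lying in $Z$, namely the point $(x^\circ,k(x')^+,x'^\succ)$, where $k(x')^+$ is the smallest valuation ring between $k(x)^+$ and $k(x^\circ)$ with a centre on $Z$. Every vertical generalisation $z$ of $x$ with $z\in Z$ is a vertical generalisation of $x'$. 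This holds because vertical generalisations of $x$ correspond to the totally ordered chain of valuation rings between $k(x)^+$ and $k(x^\circ)^\circ$, and those with centre on $Z$ form an upper segment.

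Since $U$ is open in $W$ and $\varphi(x)\in U$, the vertical generalisation $\varphi(x')$ of $\varphi(x)$ also lies in $U$. Indeed, open subsets are stable under generalisation and $\varphi$ is continuous, so it preserves specialisations. Moreover $\varphi(x')=\varphi_0(x')\in W_0$, by commutativity of the square. Hence $x'\in\varphi_0^{-1}(U\cap W_0)$, and we may choose $j$ with $x'\in V_j$. Every further vertical generalisation of $x'$ also lies in the open $V_j$.

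It remains to show $x\in\ov V_j^\circ$. First we need $x\in\ov{V_j}$. The image of $\ov{V_j}=\Spa(V_j,T)\to\Spa(Z,T)$ consists of the root triples whose maximal point lies in $V_j$ and whose valuation has a centre on $T$. Since $x^\circ\in V_j$ (it is a generalisation of $x'$) and $x^\succ\in T$, we get $x\in\ov{V_j}$. We are using here that $T$ is affinoid, so centres on $T$ are automatic once values are $\le1$ on $R^+$, via \cref{lem:Spa-barX-affinoid}. Second, let $z$ be any vertical generalisation of $x$ in $\Spa(Z,T)$ with $z\in Z$. Then $z$ is a vertical generalisation of $x'$ by the minimality above, so $z\in V_j$. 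This is exactly the condition defining $\ov V_j^\circ$.

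The main obstacle I expect is the bookkeeping in the first step. One must justify that vertical generalisations of $x$ inside $\Spa(Z,T)$ are totally ordered, and that the subset lying in $Z$ has a minimum which is itself in $Z$. For this I would use the identification of $\Spa(Z,T)$ with the universal vertical compactification (\cref{thm:Solodov-comp}) together with \cref{lem:stalks-comp}, which already asserts the existence of $x'$. One must also make sure that membership of $x$ in the image of $\ov{V_j}$ is compatible with the embedding of \cref{lem:Spa-injective}.
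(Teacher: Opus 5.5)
Your proposal is correct and follows the paper's argument. Take the minimal vertical generalisation $x'$ of $x$ lying in $Z$, note $\varphi(x')=\varphi_0(x')\in U\cap W_0$, choose $j$ with $x'\in V_j$, and conclude $x\in \ov V_j^\circ$. You additionally spell out the steps the paper leaves implicit: stability of $U$ under generalisation, $x^\circ\in V_j$ giving $x\in\ov{V_j}$, and the fact that every vertical generalisation of $x$ lying in $Z$ generalises $x'$.
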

\begin{proof}
    For~$z \in \varphi^{-1}(U)$ let $z_0 \leadsto z$ be the minimal vertical generalisation of~$z$ contained in~$Z$.
    Then $\varphi(z_0) = \varphi_0(z_0) \in U \cap W_0$.
    We choose $j \in J$ with $z_0 \in V_j$.
    Then~$z$ is contained in~$\ov V_j^\circ$.
\end{proof}

\begin{cor} \label{cor:functoriality-compactification}
    Let $X \to S$ be as in \cref{thm:Solodov-comp}.
    For any adic morphism $S' \to S$ with $S'$ stable and square complete, the base change $\Spa(X,S)\times_S S'$ is the universal vertical compactification of $X \times_S S'$ over~$S'$.
    In other words, $\Spa(X\times_S S',S')$ is an adic space with a natural isomorphism
    \[
     \Spa(X \times_S S',S') \overset{\sim}{\longrightarrow} \Spa(X,S)\times_S S'.
    \]
\end{cor}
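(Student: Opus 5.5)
The plan is to obtain the corollary as the special case of \cref{prop:comp-fibre-product} in which the top square is taken trivially. We apply that proposition to the diagram
\[
    \begin{tikzcd}
        X\times_S S' \ar[r] \ar[d] & X \ar[d] \\
        X\times_S S' \ar[r] \ar[d] & X \ar[d] \\
        S'\ar[r] & S,
    \end{tikzcd}
\]
but first use \cref{prop:Spa-iso-vpp} and the identification $\Spa(S,S)=S$ of \cref{ex:comp-affinoid} to rewrite the relevant terms. Concretely, the proposition with $Y=X$, $X'=S'$ (over $S'$), and the middle row $S'\to S$ gives
\[
    \Spa(X\times_S S',S') \isomto \Spa(S',S')\times_{\Spa(S,S)}\Spa(X,S) = S'\times_S \Spa(X,S).
\]
So the diagram to which \cref{prop:comp-fibre-product} is applied is $Y=X$, $X$ replaced by $S$, $X'=S'$, base $S'\to S$. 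Its top square is cartesian since $Y'=X\times_S S'$.

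The first step is to check the hypotheses. The horizontal maps $S'\to S$ and $X\times_S S'\to X$ must be adic; the first is adic by assumption, and the second is a base change of an adic map. The vertical morphisms are $X\to S$, $\id\colon S\to S$, $\id\colon S'\to S'$ and $X\times_S S'\to S'$, and each must satisfy the hypotheses of \cref{thm:Solodov-comp}. For the identities we use that $S,S'$ are stable and square complete, and that the identity is separated, taut and locally of ${}^+$weakly finite type. For $X\times_S S'\to S'$, the properties separated, taut and locally of ${}^+$weakly finite type are stable under adic base change; stability and square completeness of $S'$ are assumed.

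The main obstacle is weak square completeness of $X\times_S S'$, which is not formally inherited. I would argue directly. Given $x\leadsto x_h$ horizontal and $x\leadsto x_v$ vertical in the fibre product, project to $X$ and to $S'$ and use weak square completeness of $X$ and square completeness of $S'$ to complete the squares there. Then lift the resulting point of $X$ and point of $S'$ to the fibre product, working locally on affinoids where the fibre product is $\Spa$ of a completed tensor product. If this fails in general, I would instead add the hypothesis in the cases of interest from \cref{rmk:hypotheses-compactification}, where square completeness is automatic: analytic spaces, formal schemes, and discretely ringed spaces from schemes.

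Finally, $\Spa(S,S)=S$ and $\Spa(S',S')=S'$ as adic spaces by \cref{ex:comp-affinoid}, since the universal vertical compactification of the identity is the identity. The isomorphism of \cref{prop:comp-fibre-product} then reads $\Spa(X\times_S S',S')\isomto S'\times_S\Spa(X,S)$. This is compatible with the maps to $S'$, and it identifies the base change with the universal vertical compactification by \cref{thm:Solodov-comp}.
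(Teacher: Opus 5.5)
Your reduction is the same as the paper's. You apply \cref{prop:comp-fibre-product} to the diagram with rows $X\times_S S'\to X$, $S'\to S$, $S'\to S$, and then identify $\Spa(S,S)=S$ and $\Spa(S',S')=S'$. (Your displayed diagram has $X\times_S S'\to X$ as its middle row, but your text states the correct choice.)

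The paper treats every hypothesis of that proposition as evident except one: weak square completeness of $X\times_S S'$. This is the step you do not carry out, so there is a genuine gap.
\begin{itemize}
\item Your phrase ``lift the resulting point of $X$ and point of $S'$ to the fibre product'' is not an argument. A pair of points of $X$ and $S'$ over a common point of $S$ may have several preimages in $X\times_S S'$, and you give no way to choose one with the required specialisation relations.
\item You never use the hypothesis that actually matters here, namely uniqueness of the corner in $S$. Without it, the two corners you construct need not even lie over the same point of $S$.
\item Your fallback of adding hypotheses would not prove the corollary as stated.
\end{itemize}

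The missing idea, which is the paper's \cref{lem:square-complete-stable}, is not to construct the corner point directly. Instead, find a single affinoid open of $X\times_S S'$ containing $x'$, $x'_h$, $x'_v$, and then use square completeness of affinoids.
\begin{enumerate}
\item Complete the image squares in $X$ and in $S'$ to corners $x_{vh}$ and $s'_{vh}$. Weak square completeness of $X$ and of $S'$ suffices for this.
\item The images of both completed squares in $S$ complete the image in $S$ of the original square. By square completeness of $S$, the corners $x_{vh}$ and $s'_{vh}$ therefore have the same image $s_{vh}$.
\item Choose an affinoid neighbourhood $V$ of $s_{vh}$, and affinoid neighbourhoods $U$ of $x_{vh}$ and $V'$ of $s'_{vh}$ mapping into $V$.
\item Then $U\times_V V'$ is an affinoid open of $X\times_S S'$. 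It contains $x'$, $x'_h$, $x'_v$: their images in $X$ and $S'$ are generalisations of $x_{vh}$ and $s'_{vh}$, and open sets are stable under generalisation.
\item Since affinoids are square complete, the required corner exists inside $U\times_V V'$.
\end{enumerate}
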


\begin{proof}
We want to apply \cref{prop:comp-fibre-product} to the diagram
\[
    \begin{tikzcd}
        X \times_S S'\ar[r] \ar[d] & X\ar[d] \\
        S'\ar[r] \ar[d] & S \ar[d] \\
        S'\ar[r] & S.
    \end{tikzcd}
\]
For doing so we still need to check that $X \times_S S' \to S'$ satisfies the hypotheses of \cref{thm:Solodov-comp}, i.e. that $X \times_S S'$ is weakly square complete.
This follows from the subsequent \cref{lem:square-complete-stable}.
\end{proof}

\begin{lem} \label{lem:square-complete-stable}
    Let
    \[
        \begin{tikzcd}
            X'  \ar[r]  \ar[d]  & X \ar[d]  \\
            S'  \ar[r]          & S
        \end{tikzcd}
    \]
    be a cartesian diagram of adic spaces, where~$S' \to S$ is adic, $S$ is square complete, and~$X$ and~$S'$ are weakly square complete.
    Then~$X'$ is weakly square complete.
\end{lem}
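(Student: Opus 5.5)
We must show: given $x'\in X'$ with a horizontal specialisation $x'\leadsto x'_h$ and a vertical specialisation $x'\leadsto x'_v$, there is $z'\in X'$ which is a horizontal specialisation of $x'_v$ and a vertical specialisation of $x'_h$. The plan is to project everything to $X$ and to $S'$, complete the square in each factor using weak square completeness, and then glue the two completions to a single point of $X'$. The gluing is where square completeness of $S$ (uniqueness, not just existence) enters.

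Let $p\colon X'\to X$ and $q\colon X'\to S'$ be the projections. Since $S'\to S$ is adic, so is $p$, and continuous maps of adic spaces preserve the type of specialisation: horizontal ones via the local maps on $\cO^+$, vertical ones via the local maps on $\cO$. Applying weak square completeness of $X$ to $p(x')$, $p(x'_h)$, $p(x'_v)$ gives a point $z_X\in X$ completing the square. Likewise, weak square completeness of $S'$ gives $z_{S'}\in S'$. In $S$, both images of $z_X$ and $z_{S'}$ complete the same square over the image of $x'$. Square completeness of $S$ then forces them to coincide, say to $z_S$.

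Next we produce a point of $X'$ over the pair $(z_X,z_{S'})$ which specialises correctly. Work locally: choose affinoid neighbourhoods of $z_X$, $z_{S'}$ and $z_S$ that are compatible. They contain all generalisations, so the whole square lives in the affinoid fibre product $\Spa(C,C^+)$ with $C=A\hat\otimes_R R'$. Points of $X'$ over the pair correspond to valuations on the completed residue field tensor product $k(z_X)\otimes_{k(z_S)}k(z_{S'})$ compatible with the given valuation rings. The key idea is to build $z'$ as the combination of the valuation of $x'_v$ (the vertical specialisation) with the support and centre data from $x'_h$.

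Concretely, $x'_h$ has the same valuation ring type as $x'$ but a smaller support prime. $x'_v$ has the same support as $x'$ but a smaller valuation ring $V\subseteq k(x')^+$. In the affinoid $\Spa(C,C^+)$, the affinoid case is square complete. Given the valuation at $x'_v$ and the horizontal specialisation of $x'$ to $x'_h$ induced by a prime $\mathfrak q$ on $k(x')^+$, form the point obtained from $V$ via the same horizontal procedure: take the valuation $V/\mathfrak q'$ on the residue field at the corresponding prime of $V$, which exists since $V\subseteq k(x')^+$. This is the standard square completion in affinoids (\cite[Example~3.12]{Huebner2025:ProperBaseChange} context), so $z'$ exists. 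Compatibility with $z_X,z_{S'}$ is then not even needed; it only serves as a sanity check.

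This reduces the lemma to showing that all four points lie in one affinoid open of $X'$. That is the main obstacle: an open containing $x'_h$ and $x'_v$ need not contain their putative common specialisation. I would first try taking an affinoid neighbourhood of $z_X$ in $X$, $z_{S'}$ in $S'$ and $z_S$ in $S$. This works only if $z'$ lies over them, and that is exactly where square completeness of $S$ (agreement of the images) is used to make the fibre-product affinoid $U_X\times_{U_S}U_{S'}$ well defined and containing $x'$, $x'_h$, $x'_v$ (as generalisations of points over $z$). Inside it, affinoid square completeness produces $z'$.
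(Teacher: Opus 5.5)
Your proof is correct and is essentially the paper's. You complete the projected squares in $X$ and $S'$ and use uniqueness in the square complete $S$ to see that the two completions have the same image $z_S$. You then choose compatible affinoid neighbourhoods $U_X\to U_S\leftarrow U_{S'}$ of $z_X,z_S,z_{S'}$, so that the affinoid open $U_X\times_{U_S}U_{S'}\subseteq X'$ contains $x',x'_h,x'_v$ because opens are stable under generalisation, and finish by square completeness of affinoids.

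The middle paragraph constructing $z'$ by hand is superfluous and slightly off. A horizontal specialisation enlarges the support rather than shrinking it, and the example you cite is one of a space that fails weak square completeness. Nothing in your final argument depends on that paragraph.
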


\begin{proof}
    We consider a diagram
         \[
      \begin{tikzcd}
       x'_v									\\
       x'	\ar[r,rightsquigarrow]	\ar[u,rightsquigarrow]	& x'_h	
      \end{tikzcd}
     \]
     in~$X'$, where $x' \leadsto x'_v$ is a vertical and $x' \leadsto x'_h$ a horizontal specialisation.
     We need to find $x'_{vh} \in X'$, a horizontal specialisation $x'_v \leadsto x'_{vh}$, and a vertical specialisation $x'_h \leadsto x'_{vh}$ completing the square.
     This is possible once we find a common affinoid neighbourhood   of~$x'$, $x'_h$, and~$x'_v$, as affinoids are square complete.
     
     Since~$X$ and~$S'$ are weakly square complete, the image diagrams in~$X$ and~$S'$ can be completed to squares
          \[
      \begin{tikzcd}
       x_v	\ar[r,rightsquigarrow]							& x_{vh}			\\
       x	\ar[r,rightsquigarrow]	\ar[u,rightsquigarrow]	& x_h	\ar[u,rightsquigarrow]
      \end{tikzcd} \qquad
      \begin{tikzcd}
       s'_v	\ar[r,rightsquigarrow]							& s'_{vh}			\\
       s'	\ar[r,rightsquigarrow]	\ar[u,rightsquigarrow]	& s'_h	\ar[u,rightsquigarrow]
      \end{tikzcd}
     \]
     with vertical and horizontal specialisations according to the definition of weak square completeness.
     Since~$S$ is even square complete, the images of both squares in~$S$ are the same.
     Choosing small enough affinoid neighbourhoods $U$, $V'$, $V$ of~$x_{vh}$, $s'_{vh}$, and their image $s_{vh}$ in~$S$, respectively, the fibre product $U' = U \times_{V} V'$ is an affinoid open in $X'$ containing $x'$, $x'_v$ and $x'_h$.
\end{proof}

\begin{rmk}
    In \cref{prop:comp-fibre-product} and \cref{cor:functoriality-compactification} the assumption that the relevant morphisms be adic is not only needed to ensure that the fibre product exists.
    For instance, the need for $S' \to S$ to be adic in \cref{cor:functoriality-compactification} is illustrated by the following example.
    Let $S'$ be the adic spectrum of a non-archimedean field $(K,K^\circ)$.
    We denote by~$S$ the spectrum of the same valued field $(K,K^\circ)$ but equipped with the discrete topology.
    The resulting morphism $S' \to S$ is not adic.
    For $X = \Spa(K,K)$ (with discrete topology on~$K$) the universal vertical compactification of~$X$ over~$S$ equals
    \[
     \Spa(X,S) = \Spa(K,K^\circ) = S.
    \]
    Its base change to~$S'$ equals~$S'$.
    But $X' = X \times_S S'$ is empty, so $\Spa(X,S) \times_S S'$ cannot be the universal vertical compactification of~$X'$ over~$S'$.
\end{rmk}

\begin{cor} \label{cor:base-change-comp}
    Let~$K$ be a non-archimedean field with residue field~$k$ and~$\fX$ a taut and separated formal scheme locally of finite type over~$K^\circ$.
    Let $\fX_\rig$ be its rigid generic fibre and~$\fX_k$ its special fibre.
    Then
    \begin{align*}
     \Spa(\fX^{\rm ad},K^\circ)_\eta &= \Spa(\fX_\rig,K),   \\
     \Spa(\fX^{\rm ad},K^\circ)_k &= \Spa(\fX_k,k).
    \end{align*}
\end{cor}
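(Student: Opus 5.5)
We deduce both identities from \cref{cor:functoriality-compactification}, applied to the adic morphism $\fX^{\rm ad}\to S:=\Spa(K^\circ)$ and to the two "fibres" of $S$. First we check the hypotheses of \cref{thm:Solodov-comp} for $\fX^{\rm ad}\to S$. By \cref{rmk:hypotheses-compactification}(ii), $\fX^{\rm ad}$ is stable and (weakly) square complete, since $\fX$ is separated and locally of topologically finite type over $K^\circ$. The base $S=\Spa(K^\circ)$ is affinoid, hence square complete, and stable by Zavyalov's result. Separatedness and tautness come from the assumptions on $\fX$. Local ${}^+$weak finite type holds because locally the map is $\Spa(A,A^\circ_{\min})\to\Spa(K^\circ)$ with $A$ topologically of finite type.

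The space $\Spa(K^\circ)=\Spa(K^\circ,K^\circ)$ has two points, the generic point $\eta$ and the closed point $s$. We write $\Spa(K)\to S$ for the open immersion of $\eta$. For the closed point, $\Spa(k)\to S$ is induced by $K^\circ\to K^\circ/\mathfrak m=k$, with $k$ discrete; we equivalently use $K^\circ\to K^\circ/\pi\to k$. Both maps are adic: the open immersion trivially, and the closed one because the ideal of definition $(\pi)$ maps into the nilpotent ideal of $K^\circ/\pi$, which is open in the discrete topology. Both targets are stable and square complete (affinoid, or a point). \cref{cor:functoriality-compactification} then gives
\[
    \Spa(\fX^{\rm ad}\times_S \Spa(K),\Spa(K))\simeq \Spa(\fX^{\rm ad},K^\circ)\times_S\Spa(K),
\]
and the analogous isomorphism for $\Spa(k)$. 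Here $\Spa(\fX^{\rm ad},K^\circ)_\eta$ denotes the right-hand side.

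It remains to identify the fibre products. We have $\fX^{\rm ad}\times_S\Spa(K)=\fX_\rig$ by the description of the rigid generic fibre recalled in \S\ref{ss:adic-spaces}. For the special fibre, $\fX^{\rm ad}\times_S\Spa(K^\circ/\pi)$ is the discretely ringed adic space $\Spa(\fX_{K^\circ/\pi},\fX_{K^\circ/\pi})$ attached to the scheme $\fX\otimes K^\circ/\pi$, computed affinoid-locally as $\Spa(A/\pi A,A/\pi A)$. Passing further to $k$, and using the scheme case \cref{example-Spa-schemes}, gives $\Spa(\fX_k,k)$.

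The step needing most care is the closed-fibre identification. One point is that $\fX_k$ means $\fX\otimes_{K^\circ} k$, which is not a base change along a single adic map when the valuation is non-discrete, since $\mathfrak m$ is not finitely generated. We handle this by applying the corollary to the adic map $\Spa(k)\to S$ directly. The fibre product $\fX^{\rm ad}\times_S\Spa(k)$ exists because $\fX^{\rm ad}\to S$ is locally of weakly finite type, and locally it equals $\Spa(A\otimes_{K^\circ}k)$ with discrete topology, which is $\Spa(\fX_k,\fX_k)$. \cref{example-Spa-schemes} then identifies $\Spa(\Spa(\fX_k,\fX_k),\Spa(k,k))$ with the scheme-theoretic $\Spa(\fX_k,k)$. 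We also have to check that $\Spa(\fX_k,\fX_k)\to\Spa(k)$ satisfies the hypotheses of \cref{thm:Solodov-comp}. This is \cref{rmk:hypotheses-compactification}(iii), with tautness holding because $\fX_k$ is locally of finite type over $k$.
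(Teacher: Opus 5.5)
Your proposal is correct and follows the paper's proof exactly: apply \cref{cor:functoriality-compactification} to $\fX^{\rm ad}\to\Spa(K^\circ)$ along the adic maps $\Spa(K)\to\Spa(K^\circ)$ and $\Spa(k)\to\Spa(K^\circ)$. Then identify the two fibres with $\fX_\rig$ and with the discretely ringed space of $\fX_k$, which \cref{example-Spa-schemes} turns into $\Spa(\fX_k,k)$.

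Two remarks in your text can be dropped. The detour through $K^\circ/\pi$ is unnecessary, since that fibre agrees with the $k$-fibre only up to nilpotents. Your claim that $\fX_k$ is not a base change along a single adic map contradicts your own verification that $\Spa(k)\to\Spa(K^\circ)$ is adic.
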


\begin{proof}
    This follows by applying \cref{cor:functoriality-compactification} to the base change of $\fX^{\rm ad} \to \Spa(K^\circ)$ via $\Spa(K) \to \Spa(K^\circ)$ and $\Spa(k) \to \Spa(K^\circ)$, respectively.
\end{proof}

\begin{ex} \label{ex:comp-disk}
    We illustrate the identifications from  \cref{cor:base-change-comp} by going through the constructions when
    \[
        \fX = \bA^1_{K^\circ} =  \Spf (K^\circ \langle T \rangle)
    \]
    is the formal affine line.
    Its generic fibre is the unit disc
    \[
        \fX_\eta = \bD_K = \Spa (K \langle T \rangle),
    \]
    whose compactification $\Spa(\bD_K,K)$ has one additional point~$x_{1^+}$ corresponding to the valuation of $K \langle T \rangle$ sending~$T$ to a value that is infinitesimally bigger than~$1$.
    It can be written as the composition of the Gauß valuation (of radius~$1$) with the valuation on its specialisation field $k(T)$ corresponding to $\infty \in \bP^1_k$.
    The special fibre is the affine line
    \[
        \fX_k = \bA^1_k = \Spec (k[T])
    \]
    with universal vertical compactification $\Spa(k[T],k)$.
    Also here we get one additional point~$x_{\infty}$, namely the valuation of $k(T)$ corresponding to the point $\infty \in \bP^1_k$.
    Now according to \cref{cor:base-change-comp}, the universal vertical compactification $\Spa(\bA^1_{K^\circ},K^\circ)$ has the two additional points~$x_{1^+}$ and~$x_\infty$.
    Note that~$x_\infty$ is a horizontal specialisation of~$x_{1^+}$ and if we were to extend the specialisation map from \cref{ss:adic-spaces} to universal vertical compactifications, we would send~$x_{1^+}$ to~$x_\infty$.
\end{ex}

\section{Tame fundamental groups of rigid spaces}
\label{s:fg-pi1-rig}

In this section, we define and study the tame fundamental group $\pitame(X/K)$ of a connected rigid-analytic space relative to a base non-archimedean field $K$ (see \S\ref{ss:adic-spaces} for our conventions and basic facts regarding rigid geometry). A finite \'etale cover $f\colon Y\to X$ is said to be tame relative to $K$ if for every $y\in Y$, the extension $k(y)/k(f(x))$ is tamely ramified with respect to every valuation subring $V$ of $k(y)$ containing $K^\circ$ (see \cref{def:tame-rig}). As explained in \S\ref{sec:root-triples}, if $X$ is separated and taut, such pairs $(y,V)$ are the points of the universal compactification $\overline{Y} = \Spa(Y, K)$, and $Y\to X$ is tame relative to $K$ if and only if for the induced finite \'etale map $\overline{Y}\to \overline{X}$ the extensions of residue fields are tamely ramified (see \cref{prop:tame-S-compactification}). Finite \'etale covers of $X$ which are tame relative to $K$ form a Galois category with fundamental group~$\pitame(X/K)$.

Our main technical result (\cref{cor:Abhy-infty-equiv}, proved in \S\ref{ss:log-abhyankar}--\ref{ss:tameness-comparison}) states that if $K$ is either discretely valued or algebraically closed, then for a connected strictly semistable formal scheme $\fX$ over $K^\circ$ with generic fibre $X$ and special fibre $\fX_k$, we have an isomorphism
\[
    \pitame(X/K) \simeq \pitame(\fX^{\rm log}_k/k)
\]
where $\fX^{\rm log}$ is the log formal scheme obtained by endowing $\fX$ with the log structure induced by the generic fibre (called the ``standard log structure'' below), and $\fX^{\rm log}_k$ is the special fibre endowed with the log structure induced from $\fX^{\rm log}$. Then $\fX^{\rm log}_k$ is a log scheme of type $\typeVd$ over $k$ and the group on the right is the one studied in \S\ref{s:fg-pi1-log}. In particular, combined with \cref{thm:pi1tame log is fg} this implies immediately that $\pitame(X/K)$ is finitely generated if $\fX$ is quasi-compact and $k$ is algebraically closed. 

Using an auxiliary result about the behaviour of $\pitame(X/K)$ with respect to field extensions (\S\ref{ss:K-vs-C}), some descent assertions (\S\ref{ss:alterations-v-descent}), and uniformisation results of Gabber (\cref{prop:h-locally-regular}) and Temkin (\cref{thm:Temkin-unif}), we can drop the semistable reduction hypothesis, as well as the restrictions on $K$, and show that $\pitame(X/K)$ is finitely generated if $X$ is qcqs and if the tame Galois group $\pitame(K)$ is finitely generated (\cref{thm:fg-pi1-rig}). Employing \cref{cor:surgery-fp} we also show that $\pitame(X/K)$ is finitely presented under more restrictive assumptions (\cref{thm:fp-sometimes}). We conclude this section with a few examples (see \cref{ex:polydisc}--\cref{ex:helminck}).

\subsection{The tame fundamental group of a rigid space}
\label{ss:tame-pi1-rig}

We give the definition of the tame fundamental group for arbitrary morphisms of adic spaces, before specialising to the case of rigid-analytic varieties. First, recall the following definition from \cite{Hubner2021:AdicTameSite}. 

\begin{defi}[{Absolute tameness}] \label{def:tame-rig-abs}
    Let $f\colon Y\to X$ be an \'etale morphism of adic spaces. We say that $f$ is {\bf tame} if for every $y\in Y$ with image $x\in X$,  the finite separable extension of valued fields 
    \[
        (k(y), k(y)^+)/(k(x), k(x)^+)
    \]
    is tamely ramified, see \S\ref{sec:tame extensions valued fields}.
\end{defi}

We extend this definition by making it relative to a base adic space $S$.

\begin{defi}[Relative tameness] \label{def:tame-rig}
    Let $X\to S$ be a morphism of adic spaces and let $Y\to X$ be a finite \'etale morphism. We say that $Y\to X$ is \textbf{tame relative to $S$} if for every root triple 
    \[
        y= (y^\circ, k(y)^+, y^\succ) \in \Spa(Y, S)
        \qquad \text{(see~\cref{defi:root-triple})}
    \]
    with image $x = (x^\circ, k(x)^+, x^\succ)\in \Spa(X, S)$, the finite separable extension of valued fields $(k(y), k(y)^+)/(k(x), k(x)^+)$ is tamely ramified. 
\end{defi}

\begin{rmks} \label{rmks:relative-tameness}
    \begin{enumerate}[(1)]
        \item Note that we defined tameness relative to~$S$ only for \emph{finite} étale morphisms. So with this notion we will be able to define a tame fundamental group of~$X$ relative to~$S$ (see \cref{def:tamepi1-rigid}) but we do not talk about a tame site of~$X$ relative to~$S$. We could define tameness relative to $S$ for a general étale morphism just as above but this is not a useful concept. For instance, $\Spa(Y,S) \to \Spa(X,S)$ fails to be étale if $Y \to X$ is étale but not finite. Moreover, many of the properties proved in \cref{lem:properties-tameness-relative-S} below are not satisfied if $Y\to X$ is not finite. 
        \item In the setting of \cref{def:tame-rig}, any point $y \in Y$ gives rise to the root triple $(y^\circ,k(y)^+,y^\succ)$ where~$y^\succ$ is the image of~$y$ in~$S$. So if $Y \to X$ is tame relative to~$S$, it is also (absolutely) tame. For tameness relative to~$S$ there might be additional tameness conditions as there can exist root triples that do not arise from points of~$Y$. 
        \item \label{enumitem:relative-tameness-test-square} A finite \'etale $Y\to X$ is tame relative to $S$ if and only if the following condition holds: for every point of $\Spa(X, S)$ represented by a square \labelcref{eqn:pp-test-square}, the pull-back of $Y$ to $\Spa(k, k^\circ)$ is a finite \'etale map. Since $\Spa(k, k^\circ)$ and $\Spa(k, k^+)$ have the same finite \'etale covers (namely, finite separable $k$-algebras), we obtain a finite \'etale cover of $\Spa(k, k^+)$, and we want it to be tame with respect to $k^+$. 
    \end{enumerate}
\end{rmks}

\begin{lem} \label{lem:properties-tameness-relative-S}
    Let $Y\to X\to S$ be maps of adic spaces, with $Y\to X$ finite \'etale. 
    \begin{enumerate}[(a)]
        \item \label{lemitem:sorite-tameness-relative-S1}
        If $X=S$, then $Y\to X$ is tame relative to $S$ if and only if it is tame (in the sense of \cref{def:tame-rig-abs}).
        \item \label{lemitem:sorite-tameness-relative-S2}
        Given a map $S\to T$, if $Y\to X$ is tame relative to $T$, then it is tame relative to $S$. The converse holds if $S\to T$ is vertically partially proper.
        \item 
        \label{lemitem:sorite-tameness-relative-S3}
        Given a surjective finite \'etale $Z\to Y$, the map $Z \to X$ is tame relative to~$S$ if and only if $Z\to Y$ and $Y\to X$ are tame relative to $S$. 
        \item 
        \label{lemitem:sorite-tameness-relative-S4}
        Given a commutative diagram
        \[ 
            \begin{tikzcd}
                Y'\ar[r] \ar[d] & Y \ar[d] \\
                X'\ar[r] \ar[d] & X \ar[d] \\
                S'\ar[r]  & S
            \end{tikzcd}
        \]
        where $Y' = Y\times_X X'$, if $Y\to X$ is tame relative to $S$, then $Y'\to X'$ is tame relative to~$S'$.
    \end{enumerate}    
\end{lem}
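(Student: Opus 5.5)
All four parts reduce to pointwise statements about root triples, combined with the elementary facts on tame extensions of valued fields from \cref{sec:tame extensions valued fields}. The key input throughout is that for finite étale $Y\to X$, the map $\Spa(Y,S)\to\Spa(X,S)$ is surjective, and its fibre over a root triple $x=(x^\circ,k(x)^+,x^\succ)$ consists of pairs $(y^\circ, V)$. Here $y^\circ$ runs through the (automatically vertically maximal, by \cref{lem:adic-preserves-vertical-maximal}) preimages of $x^\circ$, and $V$ runs through the extensions of $k(x)^+$ to $k(y^\circ)$, with centre $y^\succ=x^\succ$. I would justify this via \cref{rmks:relative-tameness}~\labelcref{enumitem:relative-tameness-test-square}: pulling $Y$ back to $\Spa(k,k^\circ)$ gives a finite separable $k$-algebra. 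Its points over $\Spa(k,k^+)$ are exactly the extensions of $k^+$, and these yield equivalent test squares, hence root triples of $Y$.

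For \labelcref{lemitem:sorite-tameness-relative-S1}, use \cref{ex:comp-affinoid}: root triples of $Y/X$ correspond to points of $Y$, namely $y\mapsto (y^\circ,k(y)^+,f(y))$. Since $k(y)$ is dense in $k(y^\circ)$ and tameness is insensitive to passing to the henselisation/completion of the valuation, the two conditions agree. The points of $\Spa(Y,X)$ are the root triples with centre on $X$; through the identification $Y=\Spa(Y,X)$ these are the points of $Y$.

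For \labelcref{lemitem:sorite-tameness-relative-S2}, a root triple with centre on $S$ maps to one with centre on $T$ with the same underlying $(y^\circ,k(y)^+)$. Tameness relative to $T$ therefore implies tameness relative to $S$. For the converse with $S\to T$ vertically partially proper, lift the centre $\Spa(k,k^+)\to T$ to $S$ using the valuative criterion \cref{valuative-criterion-separatednes}, applied to the square whose upper arrow is $\Spa(k,k^\circ)\to X\to S$. Part \labelcref{lemitem:sorite-tameness-relative-S4} uses the same idea. A root triple $y'$ of $Y'/S'$ maps to a root triple $y$ of $Y/S$ with $k(y)\subseteq k(y')$ and $k(y)^+=k(y')^+\cap k(y)$, and $k(y')$ is a compositum of $k(y)$ and $k(x')$ up to completion, since $Y'=Y\times_X X'$. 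Tameness of $(k(y),k(y)^+)/(k(x),k(x)^+)$ then passes to the base change, because tame extensions are stable under base change. Concretely, via \cref{rmks:relative-tameness}~\labelcref{enumitem:relative-tameness-test-square}, the pullback of $Y'$ to a test point $\Spa(k,k^\circ)$ of $X'$ equals the pullback of $Y$ along the composite test point of $X$. This makes \labelcref{lemitem:sorite-tameness-relative-S4} immediate.

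For \labelcref{lemitem:sorite-tameness-relative-S3}, consider a root triple $z$ over $y$ over $x$. In the tower $k(x)\subseteq k(y)\subseteq k(z)$, tameness of the composite is equivalent to tameness of both steps. This follows from \cref{defi:tame-extension-of-valued-fields}, since degrees of strict henselisations multiply. The direction "$Z\to X$ tame implies $Y\to X$ tame" needs surjectivity of $Z\to Y$, so that every root triple $y$ lifts to some $z$; I get this from the surjectivity of $\Spa(Z,S)\to\Spa(Y,S)$ established at the start.

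The main obstacle is making the fibre description and the surjectivity precise, in particular that all extensions of $k(x)^+$ arise with centre on $S$ (the centre is simply composed). I would handle this entirely through test squares, reducing to finite separable algebras over an affinoid field.
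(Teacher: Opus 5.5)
Your proposal is correct and follows essentially the paper's own argument: (a) via $\Spa(Y,X)=Y$, (c) via the tower property of tameness, (d) via the test-square characterisation in \cref{rmks:relative-tameness}~\labelcref{enumitem:relative-tameness-test-square}, and (b) by comparing root triples over $S$ and over $T$. In (b), your direct lifting of centres through \cref{valuative-criterion-separatednes} is just the pointwise content of the bijection $\Spa(X,S)\to\Spa(X,T)$ that the paper quotes from \cref{prop:Spa-iso-vpp}, and it has the mild advantage of not needing that proposition's standing hypotheses. One small slip: \cref{lem:adic-preserves-vertical-maximal} only shows that $y^\circ$ lies over $x^\circ$ whenever $y$ does, which is all your surjectivity argument in (c) needs; the claim that \emph{every} preimage of $x^\circ$ is vertically maximal instead follows from incomparability of extensions of a valuation to a finite field extension.
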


\begin{proof}
\labelcref{lemitem:sorite-tameness-relative-S1} In this case $\Spa(X, S) = X$ and $\Spa(Y,S) = Y$.
The first part of \labelcref{lemitem:sorite-tameness-relative-S2} is a special case of \labelcref{lemitem:sorite-tameness-relative-S4} (take  $X=X'$), and the second follows from the fact that $\Spa(X, S)\to \Spa(X, T)$ is a bijection if $S\to T$ is vertically partially proper (see \cref{prop:Spa-iso-vpp}).
Part \labelcref{lemitem:sorite-tameness-relative-S3} follows from the standard fact that for algebraic extensions of valued fields $M/L/K$, the extension $M/K$ is tame if and only if $M/L$ and $L/K$ are tame. Finally, part \labelcref{lemitem:sorite-tameness-relative-S4} is clear by the characterisation in \cref{rmks:relative-tameness}~\ref{enumitem:relative-tameness-test-square}.
\end{proof}

We can understand tameness relative to~$S$ in terms of the universal vertical compactification.

\begin{prop} \label{prop:tame-S-compactification}
    Let $Y\to X\to S$ be maps of adic spaces.
    Suppose that $Y\to X$ is finite \'etale and that $X\to S$ satisfies the assumptions of \cref{thm:Solodov} and \cref{thm:Solodov-comp}. Then the map $Y\to S$ also satisfies said assumptions, and the induced map (see \cref{prop:Spa-functoriality}) between universal vertical compactifications relative to $S$
        \[ 
            \bar{f}\colon \Spa(Y,S)\longrightarrow \Spa(X,S)
        \]
    is finite \'etale. The map $f$ is tame relative to $S$ if and only if the map $\bar{f}$ is tame (in the absolute sense of \cref{def:tame-rig-abs}).
\end{prop}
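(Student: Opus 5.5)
We verify the hypotheses for $Y\to S$ first, then prove that $\bar f$ is finite \'etale by a fibre product computation, and finally compare the two tameness conditions pointwise using root triples.

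\emph{Hypotheses for $Y\to S$.} A finite \'etale map is separated and of ${}^+$weakly finite type, so $Y\to S$ is separated and locally of ${}^+$weakly finite type. Finite maps are quasi-compact and proper, so preimages of taut opens stay taut, and $Y\to S$ is taut. For weak square completeness of $Y$ we imitate the proof of \cref{lem:square-complete-stable}. Given $y\leadsto y_v$ (vertical) and $y\leadsto y_h$ (horizontal), complete the image square in $X$ to a point $x_{vh}$ and take an affinoid neighbourhood $U$ of $x_{vh}$. Then $U$ contains all images, and its preimage in $Y$ is affinoid because $Y\to X$ is finite. Affinoids are square complete, which gives the required point.

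\emph{$\bar f$ is finite \'etale.} By \cref{prop:comp-fibre-product}, applied to the diagram $Y\to X$, $X\to X$, $S=S$ with $X'=X$, we have $\Spa(Y,S)\times_{\Spa(X,S)}\Spa(X,S)\cong\Spa(Y,S)$, which is trivial. The useful form is local. Cover $\Spa(X,S)$ by the opens $\overline{U}_i^\circ$ of \cref{lem:interiors-cover} with $U_i=\Spa(A_i,A_i^+)$ affinoid (after reducing to $S=\Spa(R,R^+)$ affinoid, as in the proof of \cref{prop:comp-fibre-product}). Over $U_i$ we have $Y_i=\Spa(B_i,B_i^+)$ with $A_i\to B_i$ finite \'etale and $B_i^+$ the integral closure of $A_i^+$. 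By \cref{lem:Spa-barX-affinoid}, $\Spa(Y_i,S)=\Spa(B_i,R^+)$ and $\Spa(U_i,S)=\Spa(A_i,R^+)$. The integral closure of $R^+$ in $B_i$ equals that of the minimal ring of integral elements of $A_i$ over $R^+$, so $\Spa(B_i,R^+)=\Spa(A_i,R^+)\times_{\Spa(A_i,A_i^+)}$-type finite \'etale base change of $\Spa(A_i,R^+)$, and this is finite \'etale. It remains to identify $\bar f^{-1}(\overline{U}_i^\circ)$ with $\overline{(f^{-1}U_i)}^\circ$. This uses that minimal vertical generalisations lying in $Y$ map to minimal ones in $X$, because $Y\to X$ is finite and a vertical generalisation in the compactification lies in $Y$ iff its image lies in $X$. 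I expect this identification to be the main obstacle; I would handle it with \cref{lem:compatible-covers}, applied to $W=\Spa(X,S)$, $W_0=X$, $U=\overline{U}_i^\circ$.

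\emph{Tameness.} The points of $\Spa(Y,S)$ are root triples $y=(y^\circ,k(y)^+,y^\succ)$, and the residue field of $\Spa(Y,S)$ at $y$ is the residue field $k(y')$ of the minimal vertical generalisation $y'\in Y$ (\cref{lem:stalks-comp}), valued by $k(y)^+\cap k(y')$. Its completion is $\widehat{k(y^\circ)}$. Tameness of a finite extension of valued fields is insensitive to henselisation and completion with respect to the compatible rank-one topology, so tameness of $\bar f$ at $y$ is equivalent to tameness of $(k(y^\circ),k(y)^+)/(k(x^\circ),k(x)^+)$. This is exactly the condition in \cref{def:tame-rig}. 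Since $\bar f$ is surjective onto the image root triples and every root triple of $Y$ is a point, the two conditions coincide.
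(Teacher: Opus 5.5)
Your proposal is correct. It follows the paper's skeleton: the affinoid computation, then the cover of $\Spa(X,S)$ by the opens $\overline{U_i}^\circ$ of \cref{lem:interiors-cover}, then the root-triple comparison. It differs in three places.

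\textbf{Hypotheses for $Y\to S$.} You actually verify that $Y\to S$ satisfies the assumptions of \cref{thm:Solodov} and \cref{thm:Solodov-comp}. The paper only asserts this. Your argument for weak square completeness, via an affinoid neighbourhood of $x_{vh}$ and its affinoid preimage under the finite map, is the one from \cref{lem:square-complete-stable} and is fine.

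\textbf{Globalisation of finite \'etaleness.} You applied \cref{prop:comp-fibre-product} with $X'=X$, which is vacuous. The paper applies it with $X'=X_i$ an affinoid open and $Y'=f^{-1}(X_i)$. This identifies the base change of $\bar f$ along $\Spa(X_i,S)\to\Spa(X,S)$ with $\Spa(f^{-1}X_i,S)\to\Spa(X_i,S)$, and one then restricts to the open $\Spa(X_i,S)^\circ$. That is precisely the step you flagged as the main obstacle.

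Your direct route also works, and \cref{lem:compatible-covers} is the right tool. Take $Z=Y$, $W_0=X$, $W=\Spa(X,S)$, $U=\overline{U_i}^\circ$, and cover $\varphi_0^{-1}(U\cap X)=f^{-1}(U_i)$ by itself. This gives
\[
\bar f^{-1}(\overline{U_i}^\circ)\subseteq \overline{f^{-1}(U_i)}^\circ\subseteq \Spa(f^{-1}U_i,S).
\]
That inclusion alone shows $\bar f^{-1}(\overline{U_i}^\circ)$ is the preimage of $\overline{U_i}^\circ$ under the finite \'etale map $\Spa(f^{-1}U_i,S)\to\Spa(U_i,S)$. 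Two facts are needed here: the uniqueness in \cref{prop:Spa-functoriality}, and the fact (built into Solodov's gluing) that the open $\overline{f^{-1}(U_i)}^\circ$ carries the same adic structure in $\Spa(Y,S)$ as in $\Spa(f^{-1}U_i,S)$. The reverse inclusion, via $\bar f^{-1}(X)=Y$ from the valuative criterion for the finite map $Y\to X$, is true but not needed. In effect you reprove the relevant special case of the argument inside \cref{prop:comp-fibre-product}. The paper's route is shorter given that proposition; yours is more self-contained.

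Your affinoid sentence is garbled and should read as follows. The integral closure of $A^+_{i,\min}$ in $B_i$ is a ring of integral elements containing $R^+$, hence equals $B^+_{i,\min}$. Therefore $(A_i,A^+_{i,\min})\to(B_i,B^+_{i,\min})$ is finite \'etale.

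\textbf{Tameness comparison.} You justify, via insensitivity of tameness to completion, what the paper calls immediate from the root-triple description. For higher-rank $k(y)^+$ this is cleanest through \cref{composition-tame}: completion only changes the rank-one part and leaves the residual part and the ramification index unchanged.
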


\begin{proof}
We first convince ourselves that~$\bar{f}$ is finite étale.
We start with the affinoid case.
If~$S$ and~$X$ are affinoid, so is~$Y$, and we can write
\[
 S = \Spa(R,R^+), \qquad X = \Spa(A,A^+), \qquad Y = \Spa(B,B^+),
\]
for complete Huber pairs $(R,R^+)$, $(A,A^+)$, and $(B,B^+)$.
Then
\[
 \Spa(X,S) = \Spa(A,R^+) = \Spa(A,A^+_{\min}), \qquad \Spa(Y,S) = \Spa(B,R^+) = \Spa(B,B^+_{\min}),
\]
(see \cref{prop:affinoid-compactification} and \cref{rmk:hypotheses-compactification}) where~$A^+_{\min}$ and~$B^+_{\min}$ are the smallest rings of integral elements in~$A$ and~$B$, respectively, containing~$R^+$.
Since $Y \to X$ is finite étale, $B$ is finite étale over~$A$ and~$B^+$ is the integral closure of~$A^+$ in~$B$.
The integral closure of~$A^+_{\min}$ in~$B$ is a ring of integral elements in~$B$ and must therefore equal~$B^+_{\min}$.
We conclude that~$\bar{f}$ is finite étale in the affinoid case.

In order to deduce the general case, we cover~$S$ by affinoid opens~$S_i$.
Then by \cref{cor:functoriality-compactification}, 
\[
 \Spa(X \times_S S_i,S_i) \simeq \Spa(X,S) \times_S S_i
\]
and these spaces form an open covering of $\Spa(X,S)$.
The restriction of~$\bar{f}$ to $\Spa(X \times_S S_i,S_i)$ is the natural map
\[
 \Spa(Y \times_S S_i,S_i) \la \Spa(X \times_S S_i,S_i)
\]
and since the property of being finite étale is local on the target, we are reduced to the case where~$S$ is affinoid.

Now we cover~$X$ by open affinoids~$X_i$.
We obtain a surjective family
\[
 \{\Spa(X_i,S) \la \Spa(X,S)\}_i
\]
of subspaces.
By \cref{lem:interiors-cover}, the open subspaces $\Spa(X_i,S)^\circ \subseteq \Spa(X_i,S)$ defined there comprise an open cover of $\Spa(X,S)$.
Using \cref{prop:comp-fibre-product} we identify the restriction of~$\bar{f}$ to $\Spa(X_i,S)$ with the morphism
\[
 \Spa(X_i \times_X Y,S) \longrightarrow \Spa(X_i,S).
\]
By the affinoid case this morphism is finite étale and stays so when restricted to~$\Spa(X_i,S)^\circ$.
As the open subsets~$\Spa(X_i,S)^\circ$ cover $\Spa(X,S)$, this shows that~$\bar{f}$ is finite étale.

The last assertion about~$f$ being tame relative to~$S$ if and only if~$\bar{f}$ is tame follows immediately by the description of the points of $\Spa(Y,S)$ as root triples.
\end{proof}

By a {\bf geometric point} of an adic space $X$ we shall mean a map of adic spaces $\Spa(L,L^+)\to X$ where $(L, L^+)$ is an affinoid field with $L$ separably closed, see \cite[\S 3.2]{AchingerLaraYoucis2023:GeometricArcsFundamental}. Note that the space $\Spa(L, L^+)$ might have more than one point.

\begin{lem} 
\label{lemma:adic-tame-galois}
    Let $X\to S$ be a morphism of adic spaces. Suppose that $X$ is connected and let $F_{\ov x}$ be the fibre functor associated to a geometric point  $\ov{x}$ of $X$. 

    Then $\FEtt_{X/S}$ is a Galois category with fibre functor $F_{\ov{x}}$. 
\end{lem}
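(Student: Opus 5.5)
The plan is to deduce the statement from \cref{lem:Galois-subcategory-criterion}, applied to the full subcategory $\FEtt_{X/S}\subseteq \FEt_X$. First I need that $\FEt_X$ is itself a Galois category with fibre functor $F_{\ov x}$ for connected $X$. For adic spaces this is standard: finite \'etale covers of a connected adic space form a Galois category, and a geometric point gives a fibre functor (cf.\ \cite{AchingerLaraYoucis2023:GeometricArcsFundamental}). I will quote it rather than reprove it. The subcategory $\FEtt_{X/S}$ contains the final object $\id_X$, because the trivial extension of valued fields is tame.

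It then remains to check conditions \labelcref{lemitem:fullGalCat1}--\labelcref{lemitem:fullGalCat3} of \cref{lem:Galois-subcategory-criterion}. Each is a pointwise statement about the extensions $(k(y),k(y)^+)/(k(x),k(x)^+)$ attached to root triples, and I would reduce each to a fact about tame extensions of valued fields.

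For \labelcref{lemitem:fullGalCat1}, let $Y'\to Y$ be an epimorphism in $\FEt_X$ (a surjective finite \'etale map) with $Y'$ tame relative to $S$. Then $Y$ is tame by \cref{lem:properties-tameness-relative-S}~\labelcref{lemitem:sorite-tameness-relative-S3}. At the level of points: every root triple of $Y/S$ lifts to one of $Y'/S$. Lift the point $y^\circ$ (possible since $Y'\to Y$ is surjective, and the lift may be taken vertically maximal by \cref{lem:adic-preserves-vertical-maximal}), extend the valuation ring $k(y)^+$ to $k(y')$, and keep the same centre on $S$. A subextension of a tame extension is tame.

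For \labelcref{lemitem:fullGalCat2}, $\Spa(\coprod Y_i,S)=\coprod\Spa(Y_i,S)$ as sets of root triples, so tameness is checked componentwise. For \labelcref{lemitem:fullGalCat3}, the product of $Y_1$ and $Y_2$ is $Y_1\times_X Y_2$. By \cref{lem:properties-tameness-relative-S}~\labelcref{lemitem:sorite-tameness-relative-S4}, the base change $Y_1\times_X Y_2\to Y_2$ is tame relative to $S$. Composing with $Y_2\to X$ and using transitivity of tameness in towers (the argument of part~\labelcref{lemitem:sorite-tameness-relative-S3}, which does not need surjectivity for this direction), the product is tame.

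The main obstacle is the lifting of root triples in \labelcref{lemitem:fullGalCat1}. One has to make sure that the lifted triple really has a centre on $S$: the centre is the same point of $S$, and $k(y')^+\cap k(s)=k(y)^+\cap k(s)$, so this works. One also has to make sure that the completions behave, since finite \'etale maps of adic spaces give finite separable residue field extensions, so valuations extend. In addition, the Galois-category property of $\FEt_X$ for a general, possibly non-analytic, connected adic space needs a reference; I would cite it.
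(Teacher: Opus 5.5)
Your proposal is correct and follows the paper's proof, which likewise just checks the conditions of \cref{lem:Galois-subcategory-criterion} for $\FEtt_{X/S}\subseteq\FEt_X$ using \cref{lem:properties-tameness-relative-S}. Your lifting of root triples for condition (i) and your tower argument for products spell out details that the paper leaves implicit in parts (c) and (d) of that lemma.
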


\begin{proof}
This follows from the criterion of \cref{lem:Galois-subcategory-criterion} and \cref{lem:properties-tameness-relative-S}. 
\end{proof}

\begin{defi} \label{def:tamepi1-rigid}
\begin{enumerate}
    \item 
    In the situation of \cref{lemma:adic-tame-galois}, we denote by 
    $\pitame(X/S, \ov{x})$
    the fundamental group of the Galois category $\FEtt_{X/S}$ with base point $F_{\ov{x}}$, and call it the \textbf{tame fundamental group} of $X$ relative to $S$. 
    \item
    In the special case $X = S$ we call $\pitame(X, \ov{x}) = \pitame(X/X, \ov{x})$ the \textbf{(absolute) tame fundamental group} of $X$. The latter invariant has been introduced in \cite[\S9]{Hubner2021:AdicTameSite}.
    \item    
    If $X$ is a rigid-analytic space over a non-archimedean field $K$, we write $\pitame(X/K, \ov{x})$ for the tame fundamental group of $X$ relative to $\Spa(K)$. 
\end{enumerate}
\end{defi}

\begin{rmk} \label{rmk:Berkovich-covers}
    In \cite[\S6.3]{Berkovich:EtaleCohomology} Berkovich studies a variant of tame covers of Berkovich spaces over a non-archimedean field of residue characteristic $p > 0$.
    In the language of adic spaces, the (connected) covers he considers are (connected) finite étale covers $Y \to X$ whose Galois closure $Y' \to X \in \FEt_X$ satisfies the following: for any point $y' \in Y'$ with image $x \in X$, the degree $[\widehat{k(y')}:\widehat{k(x)}]$ is prime to~$p$.
    Every such cover is tame relative to~$K$ in our sense.
    However, a nontrivial strongly étale $p$-cover is tame relative to~$K$ but not tame in Berkovich's sense.
    Berkovich's covers form a Galois subcategory of $\FEtt_{X/K}$ by~\cref{lem:Galois-subcategory-criterion} and the corresponding fundamental group~$\pi_1^B(X/K)$ is a quotient of~$\pitame(X/K)$.
\end{rmk}

Since for $X\to S\to T$ we have full subcategories 
\[
    \FEtt_{X/T}\subseteq  \FEtt_{X/S}\subseteq \FEtt_X\subseteq \FEt_X,
\]
we obtain continuous surjections of profinite groups
\[ 
    \pi_1(X, \overline{x}) \surj \pitame(X, \overline{x}) \surj \pitame(X/S,\overline{x}) \surj \pitame(X/T,\overline{x}).
\]
In particular, for a rigid-analytic space $X$ over $K$, we obtain surjections
\begin{equation} \label{eqn:pi1comp-maps}
    \pi_1(X, \overline{x}) \surj \pitame(X,\overline{x}) \surj \pitame(X/K,\overline{x}).
\end{equation}
Neither of these maps is an isomorphism in general, as the following example shows. 

\begin{ex} \label{ex:unit-disc-Cp}
    Let $K = \mathbb{C}_p$ be a completed algebraic closure of $\mathbb{Q}_p$ and let 
    \[ 
        X = \Spa(K\langle T\rangle)
    \]
    be the affinoid unit disc over $K$. In the sequence of surjections \eqref{eqn:pi1comp-maps}, neither of the maps is an isomorphism, and only the right-most group is finitely generated (in fact trivial, see \cref{ex:polydisc}). 
    
    Let $\mathfrak{X} = \Spf(K^\circ \langle T\rangle)$ be the standard affine formal model of $X$ with special fibre $\mathbb{A}^1_k$ where $k=K^\succ$ is the residue field of $K$. Every finite \'etale map $Y_0\to \mathbb{A}^1_k$ lifts uniquely to a finite \'etale $\mathfrak{Y}\to \mathfrak{X}$. Its generic fibre $Y=\mathfrak{Y}_{\rm rig}\to \mathfrak{X}_{\rm rig}=X$ is a tame map \emph{relative to $X$}, and $Y$ is connected if and only if $Y_0$ is (since $\mathfrak{Y}$ is normal, cf.~\cite[Lemma A.2+Corollary A.16]{AchingerLaraYoucis2022}). We obtain a surjection
    \[ 
        \pitame(X,\overline{x}) \surj \pi_1(\mathbb{A}^1_k, \overline{x}).
    \]
    Since the target is not finitely generated, neither are $\pitame(X, \overline{x})$ and $\pi_1(X, \overline{x})$. As we shall prove later in \S\ref{ss:tameness-comparison}, the cover $Y\to X$ is tame \emph{relative to $K$} if and only if $Y_0 \to \mathbb{A}^1_k$ is tame relative to $k$ (in the sense of \cref{defi:tame cover of schemes X over S}), which happens only if $Y_0 \to\mathbb{A}^1_k$ is trivial.

    To be explicit, the finite \'etale covers
    \[ 
        Y_\lambda = \Spa(K\langle T, U\rangle/(U^p-U-\lambda T)) \longrightarrow \Spa(K\langle T\rangle) = X
    \]
    for $\lambda\in\mathbb{C}_p$ with $|\lambda|=1$ give rise to $\mathbb{F}_p$-torsors such that $Y_\lambda\neq Y_{\lambda'}$ if $|\lambda-\lambda'|=1$. They are all tame relative to $X$, so $\Hom(\pitame(X,\overline{x}),\bF_p)$ is infinite and therefore $\pitame(X,\overline{x})$ is not finitely generated. On the other hand, the $Y_\lambda \to X$ are not tame relative to $K$.  
    To see the latter, consider, as in \cref{ex:comp-disk}, the unique boundary point of the universal compactification 
    \[
        \Spa(X,K)\setminus X = \{\xi\}.
    \]
    Then the corresponding maximal point $\xi^\circ$ is the Gau\ss\ point. Its specialisation field $k(\xi^\circ)^\succ$ is the function field $k(T)$ of $\mathbb{A}^1_k$. If $v_\infty$ is the valuation on $k(T)$ corresponding to the point at infinity, then the valuation corresponding to $\xi$ is the composite of the two. In the cover $Y_\lambda \to X$ the Gau\ss\ point is inert with residue field extension $k(U)/k(T)$, the function field of the Artin-Schreier extension defined by $U^p-U - \bar \lambda T = 0$, where $0 \not=\bar \lambda \in k$ is the reduction of $\lambda$. It follows that $v_\infty$ is wildly ramified in this residual representation, hence the composite $\xi$ is also not tame by \cref{composition-tame}.

    In order to construct a finite \'etale $Y\to X$ which is not even tame relative to $X$, consider the finite \'etale cover of the disc described by the equation $U^p - p^{1/2}U  - T = 0$. Here the specialisation field extension at the Gau\ss\ point is purely inseparable.    
\end{ex}

\subsection{Behaviour with respect to field extensions} 
\label{ss:K-vs-C}

To deal with a non-algebraically closed base field $K$, we need to investigate how the tame fundamental group $\pitame(X/K)$ of a geometrically connected rigid space $X$ over $K$ compares to its geometric variant $\pitame(X_C/C)$ where $C$ is a completed algebraic closure of $K$. This relationship is slightly subtle, as the analogue of the fundamental sequence is not exact in general. Fortunately, it is always almost exact, as we show in \cref{prop:weak-fundamental-exact-seq} below. The case when $X$ is a point is covered by the following lemma. 

\begin{lem} \label{lem:structure of pitame na field}
    Let $(K,K^+)$ be a henselian valued field of residue characteristic $p \ge 0$.
    We fix an algebraic closure of $(K,K^+)$ giving rise to a geometric point $\bar x$ of $S=\Spa(K,K^+)$.
    We use the notation $\pitame(K)$ to denote the Galois group $\Gal(K^t/K)$.
    Then there is a canonical isomorphism
        \[
        \pitame(S,\bar x) \isomto \pitame(K).
        \]
\end{lem}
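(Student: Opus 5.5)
We will identify the Galois category $\FEtt_{S/S}$ of tame finite étale covers of $S=\Spa(K,K^+)$ with the category of finite tame extensions (finite products thereof) of $(K,K^+)$. The isomorphism $\pitame(S,\bar x)\isomto\pitame(K)$ then follows from the main theorem of Galois categories, since the fibre functor at $\bar x$ corresponds to $L\mapsto\Hom_K(L,\ov K)$.

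First, recall that finite étale covers of $S$ are of the form $\Spa(L,L^+)$, where $L$ is a finite étale $K$-algebra and $L^+$ is the integral closure of $K^+$ in $L$. This is \cite[Lemma~1.4 / Example~1.6.6]{HuberBook}-type standard: finite étale $A$-algebras over an affinoid field correspond to finite separable extensions, with the ring of integral elements given by integral closure. Thus $\FEt_S$ is equivalent to the category of finite étale $K$-algebras, i.e.\ to $\Gsets{\Gal(K^\sep/K)}$. We must also check the topology: the completion of $K$ may differ from $K$ if $K$ is not complete, but henselianity gives $\FEt_{\Spa(K,K^+)}=\FEt_{\Spa(\widehat K,\widehat K^+)}$ via Krasner-type/henselian comparison. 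We would cite the standard equivalence for henselian valued fields, e.g.\ from \cite{Hubner2021:AdicTameSite}.

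Next, we unwind tameness. Since $K^+$ is henselian, the integral closure $L^+$ of $K^+$ in a finite field extension $L$ is a single valuation ring, so $\Spa(L,L^+)$ consists of points (one for each vertical generalisation of the closed point), all lying over the corresponding points of $S$. By \cref{lem:properties-tameness-relative-S}~\labelcref{lemitem:sorite-tameness-relative-S1}, we need tameness at every point $y$. At the closed point this is tameness of $(L,L^+)/(K,K^+)$. At a generalisation, corresponding to a coarsening $K^+_\fp$, tameness follows from that at the closed point by \cref{composition-tame}~\labelcref{lemitem:composition tame}, because tameness of the composite implies tameness of the first constituent. Hence $\Spa(L,L^+)\to S$ is tame if and only if $L/K$ is tame in the sense of \cref{defi:tame-extension-of-valued-fields}. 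Therefore $\FEtt_{S}$ corresponds to the finite continuous $\Gal(K^\sep/K)$-sets whose stabilisers contain $\Gal(K^\sep/K^t)$, i.e.\ to $\Gsets{\Gal(K^t/K)}$, using that the tame extensions form a Galois subcategory (\cref{lem:Galois-subcategory-criterion}).

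The main obstacle is the first step for non-complete $K$: the affinoid field $(K,K^+)$ and its residue fields $k(y)$ are formally completions, so we must ensure that both the cover equivalence and the tameness notion are insensitive to completion. We would handle this by noting that the completion of a henselian valued field is an immediate extension inducing an isomorphism of absolute Galois groups with compatible ramification data, namely the same value group and residue field.
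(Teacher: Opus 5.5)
Your proposal is correct and follows the paper's route. The paper's one-line proof is exactly the observation that $\Spa(S,S)=S$ contains only coarsenings of $K^+$, so tameness of a cover reduces to tameness of $(L,L^+)/(K,K^+)$. You additionally spell out three points the paper leaves implicit: the identification of $\FEt_S$ with finite \'etale $K$-algebras, the passage from coarsenings to the closed point via \cref{composition-tame}, and the harmless comparison with the completion $\widehat K$ for henselian $K$.
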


\begin{proof}
    The assertion follows immediately from the definition since the relevant space of root triples $\Spa(S,S) = S$ contains only coarsenings of the valuation defining $K^+$. 
\end{proof}

Let now $X$ be a geometrically connected qcqs rigid analytic space over a non-archimedean field $K$ and let $C$ be a completed algebraic closure of $K$. Pick a geometric point $\ov x$ of $X_C$. We are interested in the exactness of the following sequence of profinite groups 
\begin{equation} \label{eqn:fundamental-ex-seq}
    \begin{tikzcd}
        \pitame(X_C/C, \ov x)\ar[r] & \pitame(X/K,\ov x) \ar[r] & \pitame(K)\ar[r] & 1.
    \end{tikzcd}
\end{equation}
The subtlety surrounding this sequence which we hinted at in the first paragraph stems from the fact that for a wildly ramified finite separable extension $K'/K$, the induced finite \'etale morphism $X_{K'}\to X$ might be tame relative to $K$ (see \cref{ex:Raynaud inspired example,ex:moregeneral-twisted-nodal-curve} below). 
In order to address this issue, let us consider the full subcategory $\cC_{X/K}$ of $\FEt_K$ of all (spectra of) finite \'etale algebras $A/K$ such that 
\[
    X_A = X \times_{\Spa(K)} \Spa(A)\la X
\]
is tame relative to $K$. This contains $\FEtt_{K}$ but might a priori be larger if $X(K)=\emptyset$. The category $\cC_{X/K}$ is a Galois category whose fundamental group we denote by $\pi_1(\cC_{K/X})$. Clearly we have surjections
\[
\pi_1^\et(K) \surj \pi_1(\cC_{K/X}) \surj \pitame(K).
\]

\begin{prop} 
\label{prop:weak-fundamental-exact-seq}
    Let $X$ be a geometrically connected qcqs rigid analytic space over a non-archimedean field $K$. Let $C$ be the completion of an algebraic closure of $K$.  Let $\bar x$ be a geometric point of the base change $X_C$. We will also denote by $\bar x$ its image in $X$ and compute $\pi_1(\cC_{K/X}) \surj \pitame(K)$ with respect to the separable closure of $K$ in $\bar x$. 
    \begin{enumerate}[ref=(\arabic*)]
        \item 
        \label{propitem:weak-fes ct version}
        The sequence of profinite groups
        \begin{equation}
            \label{eq:homotopy exact sequence ct version}
            \begin{tikzcd}
                \pitame(X_C/C, \bar x)\ar[r] & \pitame(X/K, \bar x) \ar[r] & \pi_1(\cC_{K/X}) \ar[r] & 1           
            \end{tikzcd}
        \end{equation}        
        is exact.
        \item \label{propitem:finite kernel}
        The kernel of $\pi_1(\cC_{K/X}) \surj \pitame(K)$ is finite.
        \item 
        \label{propitem:weak-fes}
        If we assume furthermore that $X(K^t) \not= \varnothing$, then 
        $\pi_1(\cC_{K/X})\isomto \pitame(K)$ and
        \begin{equation}
            \label{eq:homotopy exact sequence}
            \begin{tikzcd}
                \pitame(X_C/C, \bar x)\ar[r] & \pitame(X/K, \bar x) \ar[r] & \pitame(K) \ar[r] & 1   
            \end{tikzcd}
        \end{equation}        
        is exact.
    \end{enumerate}
\end{prop}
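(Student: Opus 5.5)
We prove the three parts in order, relying throughout on the standard criterion for exactness of homotopy sequences of Galois categories (\cite[V Proposition~6.11]{SGA1}). For \labelcref{propitem:weak-fes ct version} it suffices to check three things. First, $\pitame(X/K)\to\pi_1(\cC_{K/X})$ is surjective: for connected $A\in\cC_{K/X}$, i.e.\ a finite separable field $L/K$, the space $X_L$ is connected because $X$ is geometrically connected. Second, the composite is trivial: $X_A\times_K C$ is a split cover of $X_C$. Third, the kernel condition: a connected cover $Y\to X$ tame relative to $K$ whose base change $Y_C\to X_C$ has a section over $\bar x$ should come from $\cC_{K/X}$.

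For the third point we consider the Stein factorisation. Since $X$ is qcqs, $A=\cO(Y)$ is a finite étale $K$-algebra; geometric connectedness of $X$ together with finite flat base change gives $\cO(Y_C)=A\otimes_K C$. The map $Y\to X_A$ is finite étale, and we aim to show it is an isomorphism. Its base change to $C$ is a finite étale map $Y_C\to X_{A\otimes C}=\coprod X_C$. A connected component of $Y_C$ carrying a point over $\bar x$ that maps isomorphically to $X_C$ is the standard argument, but we need all components. Here we would use that $Y\to X$ is Galois after passing to a Galois closure, which lies in $\FEtt_{X/K}$ by \cref{lem:properties-tameness-relative-S}. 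The Galois group then acts transitively on the components of $Y_C$, so every component is isomorphic to $X_C$. Hence $Y_C\simeq X_{A\otimes C}$, and by descent along $C/K$ (fpqc descent for finite étale maps of affinoids, or fully faithful base change at the level of finite étale algebras), $Y\simeq X_A$. Finally, $X_A\to X$ is tame relative to $K$ because $Y$ is, so $A\in\cC_{K/X}$.

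For \labelcref{propitem:finite kernel}, choose a point $x\in X$ that is classical, so that $k(x)$ is finite over $K$. Given $L\in\cC_{K/X}$, the cover $X_L\to X$ is tame at the root triple $(x,k(x)^\circ,\ast)$, so $L\otimes_K k(x)$ is tame over $k(x)$. Consequently every $L\in\cC_{K/X}$ becomes tame after base change to $k(x)$. This means the open subgroup $\pi_1(k(x))$ maps into the kernel of $\pi_1(K)\to\pi_1(\cC_{K/X})$ modulo wild inertia of $k(x)$. Comparing via \cref{cor:pitame injective for fields}, we get that $\ker(\pi_1(\cC_{K/X})\to\pitame(K))$ injects into the quotient $\pitame(k(x))\backslash\pitame(K)$-like finite index data. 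More precisely, the kernel is a quotient of the wild inertia of $K$ modulo the wild inertia of $k(x)$, which has order at most $[k(x):K]$.

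For \labelcref{propitem:weak-fes}, take the finite tame extension $K'$ with a point $x\in X(K')$ and apply the same argument with $k(x)=K'$ tame over $K$: tameness of $L\otimes K'$ over $K'$ together with $K'/K$ tame gives $L/K$ tame, so $\cC_{K/X}=\FEtt_K$. By \cref{lem:structure of pitame na field} this yields the exact sequence~\eqref{eq:homotopy exact sequence}. The main obstacle is the kernel step in \labelcref{propitem:weak-fes ct version}, namely identifying $Y$ with $X_{\cO(Y)}$, which requires $\cO(Y_C)=\cO(Y)\otimes_K C$ for qcqs rigid spaces and descent along the non-finite extension $C/K$.
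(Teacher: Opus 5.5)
Your parts (2) and (3) follow the paper's argument. You restrict to a classical point $x$ and use $k(x)^t=k(x)K^t$ from \cref{cor:pitame injective for fields}, so every $L\in\cC_{X/K}$ lies in $k(x)K^t$, which has degree at most $[k(x):K]$ over $K^t$. For (3) you take $k(x)\subseteq K^t$. In (2) your first formulation, that $\pi_1(k(x))$ maps into the kernel, is wrong; only your ``more precisely'' sentence is correct.

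The genuine gap is the kernel step of (1), which you yourself flagged. The claim that $A=\cO(Y)$ is a finite \'etale $K$-algebra because $X$ is qcqs is false. It holds for proper $X$, but for the closed unit disc $X=Y=\Spa(K\langle T\rangle)$ one has $\cO(Y)=K\langle T\rangle$. This breaks the rest of the step:
\begin{itemize}
\item there is no Stein factorisation $Y\to X_A\to X$;
\item $\cO(Y_C)$ is $\cO(Y)\widehat\otimes_K C$, not $A\otimes_K C$;
\item ``descent along $C/K$'' is not an available black box, since $C$ is neither finite nor algebraic over $K$.
\end{itemize}
The missing input is that the connected components of $Y_C$ are already defined over a finite extension of $K$. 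Equivalently, $\Gal(\overline K/K)$ must act continuously on $\pi_0(Y_C)$. The paper imports this from de Jong's Proposition~2.13 \cite{deJong95:FundamentalGroupsofAnalytic}. It can also be extracted from \cite[Corollary~3.2.3]{Conrad:IrreducibleComponents}, which the paper uses in \cref{thm:fg-pi1-rig}.

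Your Galois-closure step also fails. The hypothesis that $Y_C\to X_C$ has a section does not pass to the Galois closure $\widetilde Y$: the diagonal section of the fibre power of $Y_C$ over $X_C$ does not lie in $\widetilde Y_C$. In group-theoretic terms you would need the image of $\pitame(X_C/C)$ to be normal, which is part of what has to be proved.

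With the correct input this step is unnecessary. Choose $L/K$ finite Galois such that the components of $Y_L$ are geometrically connected.
\begin{itemize}
\item The component of $Y_C$ mapping isomorphically onto $X_C$ is then the base change of a component of $Y_L$ that is finite \'etale of degree $1$ over $X_L$. Hence $Y_L\to X_L$ has a section.
\item Work in $\FEt_X$. Because $X$ is geometrically connected, $X_L\to X$ is a connected Galois cover with group $\Gal(L/K)$; it corresponds to an open normal subgroup $V$.
\item A section of $Y_L$ means $V\subseteq gUg^{-1}$, where $U$ is the stabiliser of a point in the fibre of $Y$. Normality of $V$ gives $V\subseteq U$.
\item Therefore $Y\simeq X_{L^H}$ for some $H\le\Gal(L/K)$, and $L^H\in\cC_{X/K}$ because $Y$ is tame.
\end{itemize}
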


\begin{proof}
    \labelcref{propitem:weak-fes ct version}
    The map $\pitame(X/K,\bar x) \surj \pi_1(\cC_{K/X})$ is surjective, because $X$ is assumed to be geometrically connected, hence $L \mapsto X_L$ maps connected objects to connected objects. 

    The composition $\pitame(X_C/C, \bar x) \to \pitame(X/K,\bar x) \surj \pi_1(\cC_{K/X})$ is the zero homomorphism, because for every $L$ in $\cC_{X/K}$ the cover $C \times_K X_L = \coprod_{L \inj C} X_C \to X_C$ is completely split. 

    We proceed as in \cite[X]{SGA1}. It remains to show that for every open subgroup $U \subseteq \pitame(X/K,\bar x)$ corresponding to a connected tame cover $X' \to X$ such that the projection  $X_C \to X$ admits a lift $X_C \to X'$, there is a separable extension $L/K$ such that $X' \simeq X_L$ as tame covers of $X$. This follows at once from
    \cite[Proposition~2.13]{deJong95:FundamentalGroupsofAnalytic}
    (which is stated for Berkovich spaces). Note that $L$ automatically belongs to $\cC_{X/K}$.
    
    \labelcref{propitem:finite kernel} The Galois category $\cC_{X/K}$ also makes sense if $X$ is not necessarily geometrically connected over $K$. Moreover, for a $K$-morphism $Y \to X$ of rigid analytic varieties, $\cC_{X/K}$ is contained in $\cC_{Y/K}$. In order to show \labelcref{propitem:finite kernel} we may replace $X$ by $Y$.
    
    Let $y = \Spa(K') \to X$ be a classical point with $K'/K$ finite. It thus suffices to show that there is a field extension $M$ of $K$ that is a finite extension of $K^t$ such that all fields $L$ from $\cC_{y/K}$ embed into $M$. A field $L$ belongs to $\cC_{y/K}$ if and only if $L \otimes_K K'$ is a product of tame extensions of $K'$. Being a subfield of those factors, $L$ is contained in $K'^t = K' K^t$, see  \cref{cor:pitame injective for fields}, which is finite over $K^t$. This verifies \labelcref{propitem:finite kernel} and assertion \labelcref{propitem:weak-fes} follows at once from the above proof if we pick $K' \subseteq K^t$.     
\end{proof}

\begin{cor} \label{cor:fg-alg-closed}
    In the situation of \cref{prop:weak-fundamental-exact-seq}, suppose that $\pitame(X_C/C,\bar x)$ and $\pitame(K)$ are finitely generated. Then $\pitame(X/K,\bar x)$ is finitely generated as well.
\end{cor}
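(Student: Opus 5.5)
The plan is to use the exact sequence \eqref{eq:homotopy exact sequence ct version} from \cref{prop:weak-fundamental-exact-seq}\labelcref{propitem:weak-fes ct version}:
\[
\pitame(X_C/C,\bar x) \la \pitame(X/K,\bar x) \la \pi_1(\cC_{K/X}) \la 1.
\]
Finite generation is stable under the following two operations on profinite groups. If $N \to G \to Q \to 1$ is exact with $N$ and $Q$ finitely generated, then $G$ is finitely generated. Indeed, choose lifts of topological generators of $Q$ together with the images of generators of $N$. The closed subgroup $H$ they generate contains the closed normal subgroup $\overline{\mathrm{im}(N)}$. Moreover $H$ surjects onto $Q$, because the image of $H$ is a closed subgroup containing a dense set. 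Hence $H = G$.

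So it suffices to show that $\pi_1(\cC_{K/X})$ is finitely generated. The first group in the sequence is finitely generated by hypothesis.

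By \cref{prop:weak-fundamental-exact-seq}\labelcref{propitem:finite kernel}, the surjection $\pi_1(\cC_{K/X}) \surj \pitame(K)$ has a finite kernel $F$. We then apply the same extension argument to
\[
1 \la F \la \pi_1(\cC_{K/X}) \la \pitame(K) \la 1.
\]
Here $F$ is finite, hence finitely generated, and $\pitame(K)$ is finitely generated by hypothesis. It follows that $\pi_1(\cC_{K/X})$ is finitely generated, and so $\pitame(X/K,\bar x)$ is finitely generated as well.

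There is essentially no obstacle beyond \cref{prop:weak-fundamental-exact-seq} itself. The only point needing care is the group-theoretic lemma, specifically that "topologically generated" passes through the closure of the image of $N$. This is handled by working with closed subgroups throughout, as above.
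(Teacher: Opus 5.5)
Your proposal is correct and follows the paper's own proof: part (2) of the proposition gives finite generation of $\pi_1(\cC_{K/X})$, and the exact sequence of part (1) then gives finite generation of $\pitame(X/K,\bar x)$. Your extension argument, which works with closed subgroups and uses that continuous images of compact groups are closed, supplies the group-theoretic detail that the paper leaves implicit.
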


\begin{proof}
By \cref{prop:weak-fundamental-exact-seq}~\labelcref{propitem:finite kernel} also 
$\pi_1(\cC_{K/X})$ is finitely generated. The claim thus follows from the short exact sequence from
\cref{prop:weak-fundamental-exact-seq}~\labelcref{propitem:weak-fes ct version}.
\end{proof}

\begin{rmk}
    The conclusions of \cref{prop:weak-fundamental-exact-seq} and \cref{cor:fg-alg-closed} also hold for a geometrically connected scheme of finite type over $K$ with tameness relative to $K^\circ$.
\end{rmk}

We finish this subsection by giving two examples of geometrically connected rigid spaces $X$ over $K$ for which $\pi_1(\cC_{X/K})\neq \pitame(K)$ and consequently for which \eqref{eqn:fundamental-ex-seq} is not exact. 

\begin{ex}[{due to Wittenberg based on ideas of Raynaud from  \cite[\S9]{Raynaud1970:SpecialisationFoncteurPicard}}] \label{ex:Raynaud inspired example}
    Let $K$ be a complete discretely valued field with residue field $k$ of characteristic $p>0$ and let $E$ be an elliptic curve over $K^\circ$ with a $p$-torsion point $x\in E(K^\circ)$ with nonzero image in $E(k)$. Translation by $x$ gives a free action of $\bZ/p\bZ$ on $E$ and an \'etale $\bZ/p\bZ$-torsor $E\to E/(\bZ/p\bZ)=F$.   
    Let $L/K$ be a wildly ramified $\mathbb{Z}/p\mathbb{Z}$-extension. We set
    \[
        X = \left(E\times_{\Spec(K^\circ)} \Spec(L^\circ)\right)\bigg/ (\bZ/p\bZ)
    \]
    with the quotient by the free diagonal action of $\bZ/p\bZ$, using the translation-by-$x$ action on $E$ and the Galois action on $L^\circ$. 
    So $X \to F$ is an $E/F$-twisted version of $F_{L^0} \to F$. Since the twisting $\bZ/p\bZ$-torsor $E\to F$ becomes trivial after pull-back to $E$, so does $X \to F$ and hence we have a cartesian square
    \[
        \begin{tikzcd}
            E_{L^0} \ar[r] \ar[d] & E \ar[d] \\
            X \ar[r] & F.
        \end{tikzcd}
    \]
    In particular, $E_{L^0}\to X$ is \'etale. On the other hand, the generic fibre of this map is the projection $E_L = X_L\to X_K$. To see this, note that $X_L=E_L\to F_K$ is a connected $(\bZ/p\bZ)^2$-torsor and that $X_K$ and $E_K$ are non-isomorphic degree $p$ intermediate coverings (as $X_K$ is a non-trivial twisted form of $E_K$), which implies that $E_L \simeq X_K\times_{F_K} E_K$, as desired.
    We conclude that the map of generic fibres $X_L\to X_K$ is tame relative to $K^\circ$, and that the induced map of rigid analytic spaces $X_L^{\rm an}=(X_K^{\rm an})_L\to X_K^{\rm an}$ is tame relative to $K$, showing that $\pi_1(\cC_{X_K^{\rm an}/K})\neq\pitame(K)$.
\end{ex}

\begin{ex}[Twisted nodal curve] \label{ex:moregeneral-twisted-nodal-curve}
Let $K$ be a non-archimedean field with a wildly ramified cyclic separable extension $L/K$. Let $\tau \in \Gal(L/K)$ be a generator. Let $X=\Spa(A)$ where
\[
    A= \{f\in K\langle T\rangle\,:\, f(0) = \tau(f(1))\}, 
\]
which is an affinoid curve with a single node with residue field $L$. We claim that $X_L\to X$ is tame relative to $K$. 
One computes that 
\[
    A\otimes_K L \simeq \left\{(f_\sigma) \in {\textstyle \prod_{\sigma \in \Gal(L/K)}} L[X] \,:\, f_{\tau\sigma}(1)=\tau(f_{\sigma}(0)), \forall \sigma \in \Gal(L/K) \right\}.
    \]
This means that $X$ is geometrically connected over $K$ and $X_L$ agrees with a chain of affinoid disks $D_\sigma$ indexed by $\sigma \in \Gal(L/K)$ where $0 \in D_{\sigma}$ is glued transversally and over $L$ with $1 \in D_{\tau\sigma}$. The projection $X_L \to X$ is a finite Nisnevich cover that splits itself, hence it is tame relative to $\Spa(K)$, although $L/K$ is not. 
\end{ex}

\subsection{\texorpdfstring{Alterations and $v$-descent}{Alterations and v-descent}}
\label{ss:alterations-v-descent}

The goal of this subsection is twofold. First, using the good topological properties of qcqs analytic adic spaces (\cref{lem:submersive}), we prove a general descent statement for finite generation of the (tame) fundamental group under surjections (\cref{cor:descent-of-fg-fett}). We then show, using Gabber's uniformisation results, that every rigid space admits a qcqs surjection from a regular rigid space. 

\begin{lem} 
\label{lem:submersive}
    Let $Y\to X$ be a surjective map between qcqs analytic adic spaces. Then $Y\to X$ is topologically submersive, i.e.\ $X$ has the quotient topology. If $Y\to X$ is moreover of finite type, then this holds universally for base-changes along all maps of qcqs analytic adic spaces $X'\to X$. 
\end{lem}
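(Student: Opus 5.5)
We work at the level of underlying topological spaces, which are spectral because $X$ and $Y$ are qcqs, and the map $f\colon Y\to X$ is spectral. The aim is to show that a subset $Z\subseteq X$ whose preimage $f^{-1}(Z)$ is closed must itself be closed. For a quasi-compact spectral map, $f(f^{-1}(Z))=Z$ by surjectivity, and images of proconstructible sets are proconstructible. Hence $Z=f(f^{-1}(Z))$ is proconstructible, since $f^{-1}(Z)$ is closed and therefore proconstructible. In a spectral space, a proconstructible subset is closed if and only if it is stable under specialisation. So the whole statement reduces to showing that $Z$ is closed under specialisation in $X$.

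The analytic hypothesis enters at this point. Take a specialisation $x\leadsto x'$ in $X$ with $x\in Z$. Because $X$ is analytic, every specialisation is vertical, so $x'$ corresponds to a valuation ring $k(x')^+\subseteq k(x)^+$ on the same completed residue field. Choose $y\in Y$ with $f(y)=x$. We want a specialisation $y\leadsto y'$ with $f(y')=x'$.

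To produce it, we pick a valuation ring $V\subseteq k(y)^+$ on $k(y)$ extending $k(x')^+$; such a $V$ exists by Chevalley's extension theorem applied to the extension $k(y)/k(x)$. Since $V$ is microbial, being contained in $k(y)^+$ of an analytic point, it gives a point $y'$ of the affinoid neighbourhood $\Spa(k(y),V)\to\Spa(k(y),k(y)^+)$.

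The main obstacle is that this $y'$ need not actually lie in $Y$. The map $f$ is not assumed partially proper, so a vertical specialisation of $y$ in the "compactification" might fall outside $Y$. We handle this with a density argument instead. The set $f^{-1}(Z)$ is closed, hence closed under specialisation. Vertical generalisations of $x'$ correspond to valuation rings between $k(x')^+$ and $k(x)^+$, and we work with the rank-one point $x'^\circ=x^\circ$. Then $f^{-1}(x)$, $f^{-1}(x')$ and their maximal generalisations lie in the same fibre over $x^\circ$.

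Concretely, we first reduce to the case where $x$ is the rank-one point $x^\circ$. This reduction is valid because $Z$ is a union of closures of points iff it contains the generalisation closure, and $f^{-1}(Z)$ closed implies it contains $f^{-1}(x)$ and everything specialising from it. Since $f$ is surjective, some $y'\in Y$ maps to $x'$. Its maximal generalisation $y'^\circ$ maps to $x'^\circ=x^\circ$ by \cref{lem:adic-preserves-vertical-maximal}. Hence $y'^\circ\in f^{-1}(Z)$ whenever $x^\circ\in Z$. We check that $x\in Z$ implies $x^\circ\in Z$ by proconstructibility: $Z$ is stable under generalisation up to the fibre, using surjectivity on the maximal points. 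Then closedness of $f^{-1}(Z)$ gives $y'\in f^{-1}(Z)$, so $x'\in Z$.

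For the universal statement, we note that if $f$ is of finite type then the base change $Y\times_X X'\to X'$ is again a surjective map between qcqs analytic adic spaces. Surjectivity is preserved on points by the standard fibre-product-of-valuations argument, and quasi-compactness and quasi-separatedness are preserved under finite type base change. Applying the first part to this base change then gives the universal version.
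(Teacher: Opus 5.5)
\textbf{There is a genuine gap: the reduction to the case $x=x^\circ$ rests on a false claim.}

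Your opening reduction is correct: everything comes down to showing that $Z$ is stable under specialisation. Your final argument also correctly handles a specialisation $x^\circ\leadsto x'$ out of a vertically maximal point. There you choose $y'\in f^{-1}(x')$, use $f(y'^\circ)=x'^\circ$ from \cref{lem:adic-preserves-vertical-maximal}, and conclude by closedness of $f^{-1}(Z)$.

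The gap is the claim that $x\in Z$ forces $x^\circ\in Z$. This is false. Take $f=\id$ and $Z=\overline{\{x\}}$ for any point $x\neq x^\circ$, e.g.\ a rank-two point of the unit disc. Then $f^{-1}(Z)=Z$ is closed and contains $x$. But $x^\circ\notin Z$, since otherwise $x$ and $x^\circ$ would be specialisations of each other in a $T_0$ space. Closedness of $f^{-1}(Z)$ gives stability under specialisation only, and proconstructibility gives no stability under generalisation. So no argument can push $x$ up to $x^\circ$. Your first attempt, lifting $x\leadsto x'$ starting from a point over $x$, also runs in the wrong direction; as you noticed, it fails without partial properness.

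\textbf{The missing idea: every morphism of analytic adic spaces is \emph{generalising}.} This is Huber, Lemma~1.1.10(v), which the paper's proof cites together with Scholze's lemma that a quasi-compact, generalising, surjective spectral map is a quotient map. It says: for $y'\in Y$ over $x'$ and \emph{every} generalisation $x$ of $x'$, not only $x^\circ$, there is a generalisation $y$ of $y'$ with $f(y)=x$. The argument runs as follows, working on completed residue fields.
\begin{enumerate}
\item The point $x$ is given by localising $k(x')^+$ at a prime $\mathfrak p$, which is nonzero because $x$ is analytic.
\item Since $k(x')^+\subseteq k(y')^+$ is an extension of valuation rings, $\Spec k(y')^+\to\Spec k(x')^+$ is surjective. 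Choose a prime $\mathfrak q$ of $k(y')^+$ above $\mathfrak p$.
\item Since $\mathfrak q\neq 0$ and $k(y')^+$ is microbial, $\mathfrak q$ contains the height-one prime. Hence the localisation at $\mathfrak q$ lies between $k(y')^+$ and $k(y')^\circ$. It is therefore a vertical generalisation $y$ of $y'$ inside $Y$, and it lies over $x$.
\item Now $y\in f^{-1}(Z)$ because $x\in Z$. Closedness then gives $y'\in f^{-1}(Z)$, so $x'\in Z$.
\end{enumerate}
With this step in place of your reduction, the first part is complete. Your base-change paragraph is fine and matches the paper's remark that surjectivity is universal in this setting.
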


\begin{proof}
As in this setting ``surjectivity is universal'', we are quickly reduced to just proving that $Y\to X$ is topologically submersive. For this, see \cite[Lemma~2.5]{ScholzeECoD} and the remark of \emph{loc.~cit.} that all maps between analytic adic spaces are generalising, see \cite[Lemma~1.1.10(v)]{HuberBook}. 
\end{proof}

Let $Y\to X$ be a map between qcqs rigid spaces. Then pull-back defines a rigid analogue of \labelcref{eq:CechDescentSettingForFEt}, namely a functor 
\[
    \FEt_X\la \Desc(Y/X, \FEt).
\]

\begin{lem} 
\label{lem:univ-top-subm-descent}
    Let $f\colon Y\to X$ be a surjective map between qcqs rigid spaces. Then the pull-back functor
    \[ 
       f^\ast \colon  \FEt_X\la \Desc(Y/X, \FEt)
    \]
    is fully faithful, and the same holds with $\FEtt$ or $\FEtt_{-/S}$ in place of $\FEt$.
\end{lem}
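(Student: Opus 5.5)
The plan is to reduce full faithfulness of $f^\ast$ to a statement about connected components, and then use the topological submersivity from \cref{lem:submersive}. Since $\FEtt_X$ and $\FEtt_{X/S}$ are full subcategories of $\FEt_X$, and the corresponding tame descent categories are full subcategories of $\Desc(Y/X,\FEt)$, full faithfulness for the tame variants follows formally once it is proved for $\FEt$ (exactly as in \cref{lem:f-desc-for-FEt-and-FEtt}). So it suffices to treat $\FEt$.

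For $\FEt$, I would first identify morphisms with sections. A morphism $Z_1\to Z_2$ in $\FEt_X$ corresponds to its graph, which is a clopen subset $\Gamma\subseteq Z_1\times_X Z_2$ mapping isomorphically to $Z_1$. Likewise, a morphism of descent data corresponds to a clopen subset $\Gamma'$ of $(Z_1\times_X Z_2)\times_X Y$ that is compatible with the gluing, meaning its two pullbacks to $\times_X(Y\times_X Y)$ agree. Hence the statement reduces to the following claim for a finite étale $W\to X$ (taking $W=Z_1\times_X Z_2$): clopen subsets of $W$ correspond bijectively to clopen subsets $\Gamma'\subseteq W_Y=W\times_X Y$ whose two preimages in $W\times_X Y\times_X Y$ coincide. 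The requirement that $\Gamma$ map isomorphically to $Z_1$ can be checked after the surjective base change $Y\to X$, because finite étale morphisms are open and closed and the degree is pointwise.

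To prove the claim, first note that $W_Y\to W$ is a surjective map of qcqs rigid spaces, being the base change of a finite type surjection. By \cref{lem:submersive} it is topologically submersive. If $\Gamma'$ has equal preimages under the two projections, then $\Gamma'$ is saturated with respect to the fibres of $W_Y\to W$. For this step I need that points of $W_Y$ with the same image in $W$ are linked through $W\times_X Y\times_X Y$. This holds because the map $|W_Y\times_W W_Y|\to|W_Y|\times_{|W|}|W_Y|$ is surjective for adic spaces, which is standard from the existence of a common field for two valuations over a point. Given saturation, $\Gamma'$ is the preimage of its image $\Gamma$. Submersivity then shows that $\Gamma$ is open, and applying the same argument to the complement shows that $\Gamma$ is closed. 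Injectivity of the correspondence follows from surjectivity of $f$.

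The main obstacle is this topological step: checking surjectivity onto the set-theoretic fibre product, and confirming that the topological and open-immersion structure, not merely the underlying sets, descends. For finite étale $W$, an open subset that is also closed is an open and closed subspace, so no further structure-sheaf descent is needed, because the graph of a morphism is determined by its underlying clopen subset.
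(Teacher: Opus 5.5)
Your proof is correct and follows the paper's route: you reduce to $\FEt$ because the tame categories are full subcategories, and then run the argument of SGA1 IX, Corollaire 3.3, using the topological submersivity from \cref{lem:submersive}. The paper only cites that argument, whereas you write it out: graphs of morphisms as clopen subsets, saturation via surjectivity of $|W_Y\times_W W_Y|\to|W_Y|\times_{|W|}|W_Y|$, and submersivity of the surjection $W_Y\to W$ between qcqs rigid spaces.
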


\begin{proof} 
As in \Cref{lem:f-desc-for-FEt-and-FEtt}, if $f^\ast$ is fully faithful for $\FEt$, then $f^\ast$ is also fully faithful for $\FEtt$ and $\FEtt_{-/S}$.
So we have to prove only the assertion for $\FEt$. In this case, the result follows from \cref{lem:submersive} exactly as in \cite[Exp.\ IX, Corollaire 3.3]{SGA1}.
\end{proof}

\begin{rmk}[Effective descent]
In the situation of \cref{lem:univ-top-subm-descent}, if the base field $K$ is of positive or mixed characteristic, one can use the theory of diamonds \cite{ScholzeECoD} to show that    
    \[ 
       f^\ast \colon  \FEt_X\la \Desc(Y/X, \FEt)
    \]
is an equivalence. (We will not use this observation in this paper.)

The argument for this is as follows. In both cases, $X$ is an analytic adic space over $\bZ_p$. By \cite[Lemma 15.6]{ScholzeECoD}, the associated diamond $X^{\diamond}$ is locally spatial and $\FEt_X \simeq \FEt_{X^\diamond}$. Analogous facts hold for $Y$, $Y\times_X Y$, and $Y\times_X Y \times_X Y$ (note that diamantification commutes with fibre products). By Lemma~15.6 of \emph{loc.~cit.}, the map $|Y^\diamond| \to |X^\diamond|$ is still a surjective map of qcqs spaces and by Lemma~12.11 of \emph{loc.~cit.}, this implies that the map of diamonds $f^\diamond \colon Y^\diamond \to X^\diamond$ is surjective, so a v-covering. Any descent datum of diamonds along $f^\diamond$ is effective. By Proposition 10.11 of \emph{loc.~cit.}, the gluing is finite \'etale over $X^\diamond$ and so comes from $\FEt_X$, as desired. We thank Ben Heuer for supplying this argument. 
\end{rmk}

\begin{cor} \label{cor:descent-of-fg-fett}
    Let $Y\to X$ be a surjective map between qcqs rigid spaces over $K$. 
    If for every connected component $Y'$ of $Y$ the fundamental group $\pitame(Y'/K)$ is finitely generated, then for every connected component $X'$ of $X$ the fundamental group $\pitame(X'/K)$ is finitely generated.
\end{cor}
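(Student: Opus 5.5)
This follows the template of \cref{thm:naked pi1tame fg}: apply the descent machinery of \cref{sec:descent for Gal cats} to the Čech complex of $f\colon Y\to X$. Fix a connected component $X'$ of $X$. Since $X$ is qcqs, it has finitely many connected components. Each connected component is open and closed, hence qcqs, because a connected component of a qcqs rigid space is a finite union of connected components of affinoids, which are clopen. Replacing $X$ by $X'$ and $Y$ by $Y\times_X X'$ (still qcqs and surjective onto $X'$), we may assume that $X$ is connected. The connected components of $Y\times_X X'$ are connected components of $Y$, because $Y\times_X X'$ is clopen in $Y$. So the hypothesis still applies to them.

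Next we form the $2$-complex of sets $E_\bullet = [\pi_0(Y) \leftleftarrows \pi_0(Y\times_X Y) \,\cdots\, \pi_0(Y\times_X Y\times_X Y)]$, together with the fibred category $\cG_\bullet$ given by $\FEtt_{-/K}$ on connected components. This is fibred in Galois categories by \cref{lemma:adic-tame-galois}, and pull-backs are exact by \cref{lem:properties-tameness-relative-S}~\labelcref{lemitem:sorite-tameness-relative-S4}. The fibre products $Y\times_X Y$ and $Y\times_X Y\times_X Y$ are again qcqs rigid spaces, so they have finitely many connected components. In particular $E_0$ and $E_1$ are finite, and every connected component is open, as required for \cref{ex:FETdescent}.

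We then need $E_\bullet$ to be connected. This follows because $f^\ast\colon \FEt_X\to \Desc(Y/X,\FEt)$ is fully faithful by \cref{lem:univ-top-subm-descent}. Indeed, the idempotents of $X$ equal the equaliser of the idempotents of $Y$ under the two pull-backs to $Y\times_X Y$; this is full faithfulness applied to the trivial covers $X\sqcup X$. Hence $\pi_0(|E_\bullet|)\to\pi_0(X)$ is bijective, just as in the proof of \cref{prop:pi1tame-strat}. Equivalently, the quotient-topology statement of \cref{lem:submersive} shows directly that any clopen decomposition of $|E_\bullet|$ descends to one of $X$.

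Now \cref{lem:univ-top-subm-descent} states that $f^\ast\colon\FEtt_{X/K}\to \Desc(Y/X,\FEtt_{-/K})$ is fully faithful, i.e.\ $f$ satisfies descent. By \cref{prop:DD is galois for connected E} the target is a Galois category. The vertex groups $\pitame(Y'/K)$ are finitely generated by hypothesis. Therefore \cref{cor:DD-fgfp}~\labelcref{coritem:DD-fg} gives that $\pitame(X/K)$ is finitely generated. The only step needing care is the finiteness and openness of connected components of the fibre products and the connectedness of $E_\bullet$; both reduce to quasi-compactness and \cref{lem:submersive}, so I expect no serious obstacle.
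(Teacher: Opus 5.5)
Your proposal is correct and is essentially the paper's proof: reduce to connected $X$, use the full faithfulness from \cref{lem:univ-top-subm-descent} to get a surjection from the fundamental group of the tame descent category onto $\pitame(X/K)$, and conclude with \cref{prop:DD-fgfp} (equivalently \cref{cor:DD-fgfp}). The paper leaves implicit two points you spell out, namely the finiteness of $\pi_0$ of the fibre products (from quasi-compactness) and the connectedness of $E_\bullet$; your argument for the latter, applying full faithfulness to the trivial cover $X\sqcup X$, is valid.
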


\begin{proof}
Replacing $Y\to X$ with its base change to a connected component of $X$, we may assume that $X=X'$ is connected. \Cref{lem:univ-top-subm-descent} then produces a surjection
\[ 
    \pi_1(\Desc(Y/X), \FEtt_{-/K}) \surj \pitame(X/K).
\]
The source of this map is finitely generated by \cref{prop:DD-fgfp}, and hence so is $\pitame(X/K)$. 
\end{proof}

Rigid spaces are ``$h$-locally regular'' is our next result in \cref{prop:h-locally-regular} below.

\begin{defi}[{Relative analytification, see \cite[Proposition~3.8]{Huber94:Generalization}}] \label{def:relative-analytification}
    Let $R$ be an affinoid $K$-algebra, let $S_0=\Spec(R)$, and let $X_0\to S_0$ be a morphism locally of finite type. Let $S=\Spa(R)$. The {\bf relative analytification} is the morphism of rigid spaces $X=X\times_{S_0} S\to S$. 
\end{defi}

In general, relative analytification preserves open immersions and open coverings (which allows us to globalise the above picture), proper maps \cite[Lemma~5.7.3]{HuberBook}, surjective maps \cite[Lemma~3.9]{Huber94:Generalization} etc.

\begin{prop} 
\label{prop:h-locally-regular}
    Let $X$ be a qcqs rigid space over a perfect non-archimedean field $K$. Then there exists a surjective map of rigid spaces $Y\to X$ with $Y$ qcqs and smooth over $K$.
\end{prop}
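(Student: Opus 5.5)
The plan is to reduce to the affinoid case and then to an algebraic resolution problem, using the relative analytification of \cref{def:relative-analytification} together with Gabber's local uniformisation (the paper announced it would use \cite{TravauxGabberIX}).

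First, since $X$ is qcqs, I cover it by finitely many affinoids $X_i=\Spa(A_i)$. The disjoint union $\coprod X_i\to X$ is a qcqs surjection, so it suffices to treat $X=\Spa(A)$ affinoid. Next I replace $X$ by $X_{\rm red}$; this does not change the target statement, since a surjection onto $X_{\rm red}$ composes to a surjection onto $X$. I then further replace $X$ by the disjoint union of its irreducible components, which is again a finite surjection. So I may assume that $A$ is a reduced affinoid domain.

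Now consider $X_0=\Spec(A)$. This is an excellent noetherian scheme, because affinoid algebras are excellent. Since $K$ is perfect, the regular locus of any scheme of finite type over $A$ coincides with its smooth locus as far as the analytic fibre is concerned: a rigid space over a perfect field is smooth exactly when it is regular. Gabber's theorem (local uniformisation by alterations, \cite{TravauxGabberIX}) gives, for the excellent scheme $X_0$ of finite dimension, an $h$-covering (indeed an alteration followed by a Zariski covering) $\{Y_{0,j}\to X_0\}$ with each $Y_{0,j}$ regular. More concretely, I will use the version producing a proper surjective generically finite $Y_0'\to X_0$ together with a finite Zariski cover of $Y_0'$ by regular schemes of finite type over $X_0$. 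I then take $Y_0=\coprod_j Y_{0,j}$, which is of finite type over $X_0$ and jointly surjective.

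The third step is to pass to relative analytification $Y=Y_0\times_{X_0}X\to X$. Surjectivity is preserved by \cite[Lemma~3.9]{Huber94:Generalization}, and $Y$ is quasi-compact because $Y_0$ is quasi-compact and of finite type, while quasi-separatedness follows from separatedness of the pieces. Regularity is preserved as well: the completed local rings of $Y$ at classical points agree with completions of local rings of $Y_0$ at the corresponding closed points, and regularity is detected on completions. Hence $Y$ is regular, and therefore smooth over the perfect field $K$.

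The main obstacle is to pin down precisely the form of Gabber's result to invoke. Whether \cite{TravauxGabberIX} provides regular (rather than merely normal crossings or only generically regular) schemes covering all of $X_0$, including in the mixed characteristic of $A$ when $K$ has characteristic $0$ but $K^\circ$ has characteristic $p$, requires care; here $A$ is a $\bQ$-algebra or an $\bF_p$-algebra, which helps. If only the alteration version over a dense open is available, I would instead run noetherian induction on the closed complement, using the finiteness of dimension of $A$ to terminate.
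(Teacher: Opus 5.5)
Your reduction to an affinoid $X=\Spa(A)$, the appeal to Gabber's uniformisation for the excellent scheme $\Spec(A)$, the preservation of surjectivity and of regularity at classical points under relative analytification, and the final step ``regular $\Rightarrow$ smooth over perfect $K$'' all match the paper. The gap is the claim that $Y=Y_0\times_{X_0}X$ is quasi-compact because $Y_0$ is quasi-compact and of finite type over $X_0$. Relative analytification preserves quasi-compactness only for proper morphisms. For example, the analytification of $\bA^1_{X_0}$ is $X\times_K\bA^{1,\mathrm{an}}_K$, and that of a distinguished open $D(f)\subseteq X_0$ is $\{x\in X : f(x)\neq 0\}$; neither is quasi-compact.

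You also cannot fix this by shrinking $Y$ to a quasi-compact open. In the paper's punctured blow-up example, $Y_0$ is the blow-up of the origin in $\Spec K\langle x,y\rangle$ with one point of the exceptional divisor removed. This $Y_0$ is regular and surjects onto $X_0$ with finite type, and its analytification surjects onto $X$, yet no quasi-compact open of it does. So a regular $Y_0$ surjecting onto $X_0$ with finite type is genuinely not enough, and your noetherian-induction fallback runs into the same problem.

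As for the input, what Gabber's theorem provides (and what the paper uses) is a separated finite-type $h$-covering $Y_0'\to X_0$ with $Y_0'$ regular but not proper. In its normal-form refinement the Zariski pieces need not be regular. Even if you had a regular proper $Y_0'$, you would have to analytify $Y_0'$ itself (proper, hence qcqs). Analytifying $\coprod_j Y_{0,j}$ instead loses quasi-compactness again.

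The missing step is what the paper does with \cite[Corollary~10.4]{SuslinVoevodsky}. Refine the $h$-covering to a proper surjection $X_0'\to X_0$ together with a finite Zariski cover $X_0'=\bigcup_i U_{0,i}$ such that each $U_{0,i}\to X_0$ factors through the regular $Y_0'$. After relative analytification, $X'\to X$ is proper, so $X'$ is quasi-compact. Hence $X'$ is covered by finitely many quasi-compact opens $V_i\subseteq U_i$. The image of $\coprod_i V_i$ in $Y'$ is quasi-compact, so it lies in a quasi-compact open $Y\subseteq Y'$. This $Y$ is regular (being open in $Y'$) and quasi-separated (since $Y'\to X$ is separated and $X$ is affinoid). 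The map $Y\to X$ is surjective because $\coprod_i V_i\to X'\to X$ is. Properness of $X_0'$ is exactly what lets you extract a quasi-compact piece without losing surjectivity.
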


This proposition is essentially a result by Berkovich \cite[Theorem 1.3.1]{Berkovich:VanishingCyclesFormalSchemes} but in the adic setting, and in fact it follows from the Berkovich version.
Indeed, any qcqs adic space is taut and, by  \cite[Proposition 8.3.1+Remark 8.3.2]{HuberBook} (cf.\ also \cite[Theorem 1.6.1]{Berkovich:EtaleCohomology}), the category of qcqs rigid spaces over $K$ is equivalent to compact hausdorff strictly $K$-analytic (Berkovich) spaces. Let us denote the quasi-inverse functors realising this equivalence by $(-)^{\mathrm{Berk}}$ and $(-)^{\mathrm{rig}}$. By \cite[Theorem 1.3.1]{Berkovich:VanishingCyclesFormalSchemes}\footnote{The statement of \cite[Theorem 1.3.1]{Berkovich:VanishingCyclesFormalSchemes} gives more information, but rig-smoothness is obtained as a step in the proof.}, there is a ``rig-smooth'' (see \cite[p.~335]{Berkovich:SmoothLocallyContrII}) $Y_0$ with a compact map $Y_0 \to X^\mathrm{Berk}$. So, $Y_0$ is compact and there is a finite set of affinoid subdomains $\{U_i\}$ covering $Y_0$ such that each $U_i$ admits a quasi-\'etale map to $(\mathbb{A}^n_K)^{\rm Berk}$.
By 
\cite[Lemma 1.6.2]{Berkovich:EtaleCohomology} 
and
\cite[Proposition 8.3.1+Remark 8.3.2]{HuberBook} again,
the corresponding rigid space $Y = Y_0^{\mathrm{rig}}$ is qcqs and locally admits an \'etale map to $(\mathbb{A}^n_K)^{\rm an}$. By \cite[Corollary 1.6.10]{HuberBook}, $Y$ is smooth over $K$.

For convenience, let us sketch the central part of the proof of \cref{prop:h-locally-regular} directly in the adic case.

\begin{proof}
Replacing $X$ by the constituents of a finite affinoid open covering, we may assume that $X=\Spa(A)$ is affinoid with $A$ an affinoid $K$-algebra. Let $X_0=\Spec(A)$, which is an excellent scheme. By Gabber's uniformisation theorem \cite[Th\'eor\`eme~1.1]{TravauxGabberIX}, there exists a separated morphism of finite presentation $Y'_0\to X_0$ where $Y'_0$ is regular and which is an $h$-covering. By \cite[Corollary~10.4]{SuslinVoevodsky}, the $h$-covering admits a refinement of normal form, so there exists a proper surjection $X'_0\to X_0$, an open covering $X'_0 = \bigcup_{i=1}^r U_{0,i}$, and a commutative square
\[ 
    \begin{tikzcd}
        \coprod U_{0,i} \ar[d] \ar[r] & Y'_0 \ar[d] \\
        X'_0 \ar[r] & X_0.
    \end{tikzcd}
\]
Analytifying the above square in a relative sense, we obtain a commutative square of rigid spaces 
\[ 
    \begin{tikzcd}
        \coprod U_i \ar[d] \ar[r] & Y' \ar[d] \\
        X'\ar[r] & X
    \end{tikzcd}
\]
where $X'\to X$ is a proper  (cf.~\cite[Lemma~5.7.3]{HuberBook}) surjection and $\coprod U_i \to X'$ is a Zariski open covering, and where $Y'$ is regular (by which we mean that the local rings at classical points are regular).
Indeed, one checks (using e.g.\ \cite[Satz 2.1]{Kopf}) that relative analytification preserves completed local rings of classical points. Cf.\ also \cite[Lemma 2.6.3]{Berkovich:EtaleCohomology} for the Berkovich spaces incarnation of this result and \cite[I \S 4.1, Proposition 2]{Bosch} for a proof of the same statement when the target is $\Spa(K)$. Then one checks that regularity together with perfectness of $K$ implies smoothness (cf.\ \cite[\S 5]{Berkovich:VanishingCyclesAnalytic}).

Since $X'\to X$ is proper, the rigid space $X'$ is quasi-compact and separated. Thus, there exist quasi-compact opens $V_i\subseteq U_i$ such that $X'=\bigcup_{i=1}^r V_i$. To see this, take a refinement of $\coprod U_i \to X'$ by affinoids, hence quasi-compact opens, and let $V_i$ be the union of those opens inside $U_i$. The image of $\coprod _{i=1}^r V_i\to Y'$ is quasi-compact, and hence contained in a quasi-compact open $Y\subseteq Y'$, which is also quasi-separated because $Y'\to X$ is separated and $X$ is affinoid. The map $Y\to X$ is surjective by construction. 
\end{proof}

The following example explains a subtle point in the above proof. 

\begin{ex}[Punctured blowup]
Let $X = \Spa(K\langle x, y\rangle)$ be the affinoid bidisc, and let $Y'\to X$ be the blowup of the ideal $(x, y)$. Let $P \in Y'$ be a $K$-point in the exceptional divisor, and let $Y = Y'\setminus\{P\}$. Then the map $Y\to X$ is surjective, but no quasi-compact open $U\subseteq Y$ surjects onto $X$. Suppose otherwise, and consider in suitable coordinates the chart $\Spa(K\langle x,t\rangle)\subseteq Y'$ with $t=y/x$ in which $p$ has coordinates $(0,0)$. Then for $n\gg 0$, the open ball $B_n = \{|x|, |t|<1/n\}\subseteq Y'$ is disjoint from $U$, and therefore the points $(x,0) \in X$ with $0<|x|<1/n$ are not in the image of $U\to X$.  

Note that if $Y'_0\to X_0=\Spec(K\langle x, y\rangle)$ is the blowup of $(x,y)$, and $Y_0 = Y'_0\setminus\{P\}$, then $Y\to X$ is the analytification of $Y_0\to X_0$. However, $Y_0\to X_0$ is not an $h$-covering even though it is surjective. The refinement of normal form used in the proof of \cref{prop:h-locally-regular} is needed precisely to ensure that after taking analytifications, the map will remain surjective after passing to a suitable quasi-compact open.
\end{ex}

\subsection{Review of log formal schemes}
\label{ss:log-formal-schemes}

In this subsection, we discuss the log geometry of formal schemes needed for the treatment of semistable formal models of rigid spaces in \S\ref{ss:log-abhyankar}. See \cite[\S 7]{Fersi} for some foundations of log formal schemes (though in lesser generality than needed here).

As before, we fix a complete rank one valuation ring $K^\circ$ with fraction field $K$ and pick a pseudouniformiser $\pi\in K^\circ$. All formal schemes over $K^\circ$ are assumed to be $\pi$-adic, i.e.\ such that $\pi$ generates an ideal of definition. Logarithmic structures can be defined on any ringed topos, and by a {\bf log formal scheme} we mean a formal scheme $\mathfrak{X}$ endowed with a log structure 
\[
\alpha\colon\cM_\fX\to\cO_\fX.
\]

The data of a $\pi$-adic formal scheme $\fX$ is equivalent to the data of a sequence of schemes $\fX_n = \fX\otimes K^\circ/\pi^{n+1}$ over $K^\circ/\pi^{n+1}$ together with identifications
\[ 
    \fX_n \simeq \fX_{n+1} \otimes_{K^\circ/\pi^{n+2}} K^\circ/\pi^{n+1}.
\]
The following basic result extends this equivalence to log structures.

\begin{lem} \label{lem:log-fsch-equiv}
    Let $\cM_\fX$ be a log structure on a $\pi$-adic formal scheme $\fX$, and let $\cM_{\fX_n}$ be the induced log structure on the scheme $\fX_n$. Then
    \[
    \cM_{\fX_n} = \cM_\fX/U_n
    \quad \text{where} \quad
    U_n = \ker(\cO_{\fX}^\times \to \cO_{\fX_n}^\times) = 1+\pi^{n+1}\cO_\fX
    \]
    and
    \[ 
        \cM_\fX = \varprojlim \cM_{\fX_n} = \varprojlim \cM_\fX/U_n.
    \]
    Conversely, given a sequence of log structures $\cM_{\fX_n}$ on $\fX_n$ and identifications $\cM_{\fX_{n+1}}|_{\fX_{n}} \simeq \cM_{\fX_n}$, the above formula defines a log structure on $\fX$ whose restriction to $\fX_n$ equals $\cM_{\fX_n}$. 
\end{lem}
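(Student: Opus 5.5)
The plan is to prove both directions by reducing to the elementary algebra of the surjections $\cO_\fX \to \cO_{\fX_n}$, which have nilpotent kernel on each affine open. The key tool is the standard fact that a log structure pulled back along a closed immersion $i\colon \fX_n \hookrightarrow \fX$ is the pushout
\[
\cM_{\fX_n} = i^{-1}\cM_\fX \oplus_{i^{-1}\cO_\fX^\times} \cO_{\fX_n}^\times .
\]
Here $\fX_n$ and $\fX$ share the same underlying topological space, so $i^{-1}$ is harmless.

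For the first formula, I would first check that $\cO_\fX^\times \to \cO_{\fX_n}^\times$ is surjective. A local section lifting a unit modulo $\pi^{n+1}$ is a unit, because $\pi$ is topologically nilpotent and $\fX$ is $\pi$-adically complete. The pushout is therefore the quotient of $\cM_\fX$ by the subgroup $U_n = 1+\pi^{n+1}\cO_\fX$, acting through $\cO_\fX^\times \subseteq \cM_\fX$. One subtlety is that the pushout of integral monoids along a surjection of groups is computed sheaf-theoretically. Since $\cM_\fX$ need not be integral, I would verify by hand that the equivalence relation on $\cM_\fX$ generated by the $U_n$-action is exactly the orbit relation, using that $\alpha^{-1}(\cO^\times)=\cO^\times$. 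The identification $U_n = 1+\pi^{n+1}\cO_\fX$ is immediate, since a unit reducing to $1$ has the form $1+\pi^{n+1}a$.

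For $\cM_\fX = \varprojlim \cM_\fX/U_n$, the natural map is compatible with $\alpha$. To show it is an isomorphism, I would argue on stalks or on sections over affine opens. For injectivity, if $m,m'$ agree in every quotient, then $m' = u_n m$ with $u_n \in U_n$. Since $U_n$ acts freely on the fibre of $\cM_\fX$ over $\cO^\times$-orbits, the $u_n$ are compatible and converge to $1$ in the $\pi$-adic topology, giving $m=m'$. For surjectivity, given a compatible system $(m_n)$, I would lift $m_0$ to some $m\in\cM_\fX$ and correct it successively by units $u_n\in U_n$. The product $\prod u_n$ converges in $\cO_\fX^\times$ by completeness, and the corrected element maps to the given system. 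This step, the convergence and freeness argument on a possibly non-integral monoid sheaf, together with the passage from sections to sheaves via $\varprojlim$ commuting with sections, is where I expect the main care is needed. If freeness fails, I would restrict to integral log structures, which is the only case used later.

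For the converse, I would define $\cM_\fX := \varprojlim \cM_{\fX_n}$ with $\alpha = \varprojlim \alpha_n$ into $\varprojlim \cO_{\fX_n} = \cO_\fX$. To check that this is a log structure, note first that $\alpha^{-1}(\cO_\fX^\times)$ consists of compatible systems with each $m_n$ over a unit. Each such $m_n$ lies in $\cO_{\fX_n}^\times \subseteq \cM_{\fX_n}$, so the system lies in $\varprojlim \cO_{\fX_n}^\times = \cO_\fX^\times$. To recover $\cM_{\fX_n}$ by restriction, I would apply the first part, which shows that $\cM_\fX/U_n \to \cM_{\fX_n}$ is an isomorphism. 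Surjectivity of each transition map $\cM_{\fX_{n+1}} \to \cM_{\fX_n}$ follows, via the pushout description, from the surjectivity on units established above. The Mittag-Leffler argument then proceeds exactly as in the surjectivity step.
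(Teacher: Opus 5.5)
Your treatment of the first formula is the paper's own argument. Units lift along $\cO_\fX\to\cO_{\fX_n}$, so the preimage of $\cO_{\fX_n}^\times$ in $\cM_\fX$ is $\cO_\fX^\times$, and the pushout $\cM_\fX\oplus_{\cO_\fX^\times}\cO_{\fX_n}^\times$ is $\cM_\fX/U_n$. Two small remarks here:
\begin{itemize}
\item The kernel $\pi^{n+1}\cO_\fX$ is not nilpotent on affines, only topologically nilpotent. That is what your completeness argument actually uses.
\item Your pushout worry is void: translation by a subgroup of units is already a congruence on any commutative monoid.
\end{itemize}
For the rest the routes differ. The paper obtains $\cM_\fX=\varprojlim\cM_\fX/U_n$ by pointing to the proof of \cite[Proposition~7.8]{Fersi}, and it omits the converse altogether. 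You argue directly: freeness of the $\cO_\fX^\times$-action gives injectivity, since it lets the locally defined $u_n$ glue and forces $u_n=u_0\in\bigcap_n U_n(W)=\{1\}$. Successive correction plus $\pi$-adic completeness gives surjectivity and the converse. Your route is self-contained and shows exactly which hypotheses are used; the citation is shorter but hides them.

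Your fallback to integral log structures is in fact forced, because in the generality stated the second formula fails. On $\Spf\bZ_p$ with $p$ odd, take $\cM=\bZ_p^\times\sqcup\bZ_p^\times/H$ with $H=(1+p)^{\bZ}$. The second piece is an ideal on which units act by multiplication, with the evident monoid law, and $\alpha$ is $0$ on it. This is a log structure, but $H$ is dense in $1+p\bZ_p$, so $HU_n=1+p\bZ_p$ for all $n$. Pick $u\in(1+p\bZ_p)\setminus H$, which exists since $H$ is countable. Then the classes $[1]\neq[u]$ in $\bZ_p^\times/H\subseteq\cM$ have the same image in $\varprojlim_n\cM/U_n$. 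The paper only applies the lemma to saturated, hence integral, log structures, so the integral case is the correct statement.

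Two steps still need tightening.

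First, in the surjectivity step all corrections must live over one open. Surjectivity of a sheaf map is local, so shrink once to an affine $W=\Spf A$ over which $m_0$ lifts. There freeness makes each discrepancy a section of $U_{n-1}/U_n$ over $W$. It lifts to $U_{n-1}(W)=1+\pi^nA$ because $1+\pi^nA\to 1+\pi^nA/\pi^{n+1}A$ is onto, and the product converges in $A$.

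Second, in the converse the first part only yields $\cM_\fX|_{\fX_n}=\cM_\fX/U_n$. You must still show that $\cM_\fX/U_n\to\cM_{\fX_n}$ is bijective.
\begin{itemize}
\item Surjectivity: Mittag-Leffler needs the transition maps to be onto on sections over affines, not merely as sheaves. This holds since their fibres are torsors under the quasi-coherent sheaf $1+\pi^{k+1}\cO_{\fX_{k+1}}\cong\pi^{k+1}\cO_{\fX_{k+1}}$.
\item Injectivity, which you do not address: two compatible systems with the same $n$-th term differ at each level $k\ge n$ by a unit in $\ker(\cO_{\fX_k}^\times\to\cO_{\fX_n}^\times)$. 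This unit is unique by integrality, so these units glue and are compatible, defining a section of $U_n$.
\end{itemize}
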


\begin{proof}
The preimage of $\cO_{\fX_n}^\times\subseteq\cO_{\fX_n}$ under $\alpha_n\colon \cM_{\fX}\to\cO_\fX\to\cO_{\fX_n}$ equals $\cO_\fX^\times$. Therefore
\[
    \cM_{\fX_n} = \cM_\fX \oplus_{\alpha_n^{-1}(\cO_{\fX_n}^\times)} \cO_{\fX_n}^\times = \cM_{\fX} \oplus_{\cO_{\fX}^\times} (\cO_{\fX}^\times/U_n) = \cM_\fX/U_n,
\]
showing the first assertion. The second follows from the proof of \cite[Proposition~7.8]{Fersi}. 
We omit the proof of the final assertion.
\end{proof}

We are primarily interested in log structures which locally admit a chart, in particular smooth log formal schemes over $K^\circ$. For a monoid $P\to A$ mapping into a $\pi$-adic ring $A$, we denote by $\Spf(P\to A)$ the log scheme $\Spf(A)$ endowed with the log structure charted by $P$. 

\begin{defi}
    \begin{enumerate}[(a)]
        \item A log formal scheme $\fX$ is {\bf quasi-coherent} (resp.\ {\bf integral}, {\bf saturated}, {\bf fine}, {\bf fs}) if it is \'etale locally of the form $\Spf(P\to A)$ for a monoid (resp.\ integral, saturated, fine, fs monoid) $P$ mapping into a $\pi$-adic ring $A$.
        \item (See \cite[Definition~3.5.1]{AHLS-Log} for the scheme case.) Let $f\colon \fY\to\fX$ be a morphism of saturated adic log formal schemes over $K^\circ$. We say that $f$ is {\bf smooth} (resp.\ {\bf \'etale}, {\bf Kummer \'etale}) if for every strict \'etale map $\fX' = \Spf(P\to A)\to\fX$ with a saturated monoid $P$, \'etale locally on $\fY' = \fY\times_\fX\fX'$ there exists a  smooth (resp.\ \'etale, resp.\ Kummer \'etale) morphism of monoids $P\to Q$ (with $\Sigma$ the set of primes non-invertible on $X$) such that $\fY'\to\fX'$ factors as
    \[
        \fY\la \Spf\left(Q\to (A\otimes_{\bZ[P]}\bZ[Q])^\wedge_\pi\right) \to \Spf(P\to A) = \fX'
    \]
    where the first map is strict and smooth (resp.\ \'etale, resp. \'etale). Here, $(-)^\wedge_\pi$ denotes the $\pi$-adic completion.  
    \end{enumerate}
\end{defi}

Arguing as in the scheme case \cite[\S 3.5]{AHLS-Log}, one shows these three classes of morphisms (smooth, \'etale and Kummer \'etale) are closed under composition and base change. We warn the reader, however, that, since smooth maps of monoids are not always finitely generated,  the underlying maps of formal schemes might not be locally topologically of finite type in general. Below we shall work exclusively with Kummer \'etale maps between smooth log formal schemes over $\Spf(K^\circ)^{\rm log}$ where $K$ is algebraically closed or discretely valued, in which case there is no problematic behaviour. 

\begin{defi} \label{def:std-log-str}
    The log structure ``induced by the generic fibre'' is the key example of a log structure on a formal scheme.
    Let $\fX$ be a $\pi$-adic formal scheme over $K^\circ$. 
    \begin{enumerate}
        \item The subsheaf
        \[ 
            \cM_\fX^{\rm std} = \cO_\fX \times_{\cO_\fX[1/\pi]} \cO_\fX[1/\pi]^\times \quad\subseteq\quad \cO_\fX
        \]
        is a log structure, called the {\bf standard log structure}.
        \item We denote by $\fX^{\rm log}$ the log formal scheme $(\fX, \cM_\fX^{\rm std})$, and by $\fX^{\rm log}_k$ its special fibre $\fX_k$ with the induced log structure $\cM_\fX^{\rm std}|_{\fX_k}$. 
        \item A log structure $\cM$ on $\fX$ is {\bf vertical} if the image of $\cM\to\cO_\fX$ lies in $\cM_\fX^{\rm std}$, i.e.\ if the image of every local section of $\cM$ in $\cO_\fX[1/\pi]$ is invertible.
    \end{enumerate}
\end{defi}

Intuitively, being vertical means that the induced log structure on the rigid generic fibre $\fX_{\rm rig}$ is trivial.  Thus, $\cM_\fX^{\rm std}$ is the maximal vertical log structure. An important special case is the log formal scheme $\Spf(K^\circ)^{\rm log}$, which admits a chart by the monoid 
    \[
        V = K^\circ \cap K^\times \la K^\circ,
    \]
    cf.\ \cite[Lemma~4.4.1]{AHLS-Log}. Note that $V$ is a valuative monoid, which is moreover divisible if $K$ is algebraically closed. Thus $\Spf(K^\circ)_k^{\rm log} = \Spec(V\to k)$ is of type $\typeVd$ if $K$ is algebraically closed or if $K$ is discretely valued.

\begin{lem} \label{lem:spf-Kcirc-satisfies364}
    The valuation map $\nu \colon K^0 \cap K^\times \to \Gamma_K^+$ admits a splitting $\sigma$, which induces an isomorphism of log formal schemes
    \[
    \Spf(K^\circ)^{\log} \isomto \Spf(\Gamma_K^+ \xrightarrow{\sigma} K^\circ).
    \]
\end{lem}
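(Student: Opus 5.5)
The plan is to split the proof into an algebraic part, a splitting of the valuation map, and a logarithmic part, identifying the standard log structure with the one charted by $\Gamma_K^+$.

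For the splitting, I will regard $\nu$ as a homomorphism of monoids from $V = K^\circ\cap K^\times$ onto $\Gamma_K^+$, written additively. Its restriction to groups, $V^\gp = K^\times \to \Gamma_K$, is a surjection with kernel $(K^\circ)^\times$. A splitting of monoids $\sigma\colon \Gamma_K^+\to V$ is the same as a group homomorphism $\sigma\colon\Gamma_K\to K^\times$ with $\nu\circ\sigma=\id$, because such a $\sigma$ automatically sends $\Gamma_K^+$ into $V$. Whether such a section exists is therefore a question about the group extension
\[
1\to (K^\circ)^\times\to K^\times\to\Gamma_K\to 0.
\]
Since $\Gamma_K$ is a subgroup of $\bR$, it is torsion free. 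In the discretely valued case $\Gamma_K\simeq\bZ$, so the extension splits by choosing a uniformiser. In the case relevant to $\typeVd$, with $K$ algebraically closed, $\Gamma_K$ is divisible, hence isomorphic to a $\bQ$-vector space. A section can then be built with Zorn's lemma by extending it step by step: given $\sigma$ on a subgroup $\Gamma'$ and an element $\gamma$ with $n\gamma\in\Gamma'$ minimal, pick an $n$-th root in $K^\times$ of an element of valuation $n\gamma$, which exists because $K$ is algebraically closed, and adjust it by a unit. In general I expect this to reduce to divisibility of $(K^\circ)^\times$, or at least enough divisibility to make $\Ext^1(\Gamma_K,(K^\circ)^\times)=0$. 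This step is the main obstacle. For a general rank one $K$ the lemma is only used in the algebraically closed or discretely valued cases, and I would handle those two separately, with the algebraically closed case going through the divisibility of $K^\times$ and of $(K^\circ)^\times$.

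For the logarithmic part, recall that $V\to K^\circ$ is a chart for $\Spf(K^\circ)^{\log}$, which was noted earlier citing \cite[Lemma~4.4.1]{AHLS-Log}. The composite $\Gamma_K^+\xrightarrow{\sigma}V\subseteq K^\circ$ then induces a log structure $\Gamma_K^+\oplus_{\sigma^{-1}((K^\circ)^\times)}(K^\circ)^\times$ with a map to $\cM^{\rm std}$. Because $\sigma^{-1}((K^\circ)^\times)=0$, this source is $\Gamma_K^+\oplus (K^\circ)^\times$. The map $(\gamma,u)\mapsto \sigma(\gamma)u$ is a bijection onto $V$, using the decomposition $V = \sigma(\Gamma_K^+)\cdot (K^\circ)^\times$ which comes from the splitting. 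Finally, since $\Spf(K^\circ)$ is a single point, checking that the associated log structures agree on the global sections (equivalently on the stalk) is enough, and this gives the asserted isomorphism of log formal schemes.
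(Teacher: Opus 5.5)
Your proposal is correct and follows the paper's proof. The paper splits $1\to K^{\circ\times}\to K^\times\to\Gamma_K\to 0$ because $\Gamma_K\simeq\bZ$ is free or $K^{\circ\times}$ is divisible, and then restricts the section to $\Gamma_K^+$; your Zorn argument is simply the proof that a divisible group is injective, specialised to this case, and your isomorphism $\Gamma_K^+\oplus K^{\circ\times}\isomto V$ spells out the log-structure part that the paper leaves implicit.

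The one imprecision is the reduction to checking global sections on a single point. That step is not valid in the \'etale topology when $k$ is not separably closed. The right reason is equally short: this isomorphism of prelog structures over $K^\circ$ induces an isomorphism of the associated log structures, and adjoining the constant units $K^{\circ\times}$ does not change the associated log structure.
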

\begin{proof} 
    The exact sequence 
    \[
        \begin{tikzcd}
            1 \ar[r] & K^{\circ\times} \ar[r] & K^\times \ar[r,"\nu"] & \Gamma_K\ar[r] & 0.
        \end{tikzcd}
    \]
   splits because by assumption $\Gamma_K \simeq \bZ$ is free or $K^{\circ \times}$ is divisible. The restriction of a splitting induces a splitting $\sigma \colon \Gamma_K^+ \to K^\circ \cap K^\times$. 
\end{proof}

The most important special case of a log smooth formal scheme over $\Spf(K^\circ)^{\rm log}$ is $\fX^{\rm log}$ for a semistable formal scheme $\fX$ over $K^\circ$. Before we show this, let us recall the definition. 

\begin{defi} \label{def:ss-formal-sch}
    A formal scheme $\fX$ over $K^\circ$ is {\bf semistable} if \'etale locally on $\fX$ there exists a pseudouniformiser $\pi\in K^\circ$ and an \'etale map
    \[
        \fX \la \Spf\left( K^\circ\langle T_1,\dots, T_n\rangle/(\pi - T_1\cdot\ldots\cdot T_n) \right).
    \]
    We say that $\fX$ is {\bf strictly semistable} if such a map exists Zariski locally on $\fX$. 
\end{defi}

\begin{ex} \label{ex:standard-log-structure-Kcirc}
    Here is an example showing that our definition of a strictly semistable formal scheme covers the usual cases. For $n \geq s > 0$, the morphism
    \[
    \Spf(K^\circ\langle X_1,\ldots, X_s, X_{s+1}^\pm, \ldots, X_n^\pm\rangle/(\pi - X_1 \cdot \ldots \cdot X_s)) \to \Spf( K^\circ\langle T_1,\dots, T_n\rangle/(\pi - T_1\cdot\ldots\cdot T_n))
    \]
    sending $X_i \mapsto T_i$ for $i \not= s$ and $X_s \mapsto T_s \cdot(T_{s+1} \cdot \ldots \cdot T_n)$ is an open immersion. In particular, its source is strictly semistable. 

    Note that the case $s=1$ covers the formally smooth (good reduction) case as well.
\end{ex}

We define the standard semistable monoid in $n$ variables relative to a monoid $M$ and $\gamma \in M$ (cf.\ \cite[Example~2.4.5]{AHLS-Log}) as the monoid $P_n(\gamma)$ together with a smooth map 
\[
M \la P_n(\gamma) = M[e_1,\ldots,e_n]/(e_1 + \ldots + e_n = \gamma).
\]

\begin{lem} \label{lem:strictly-semistable-formal-scheme-reduces-to-setup364}
Let $M = \Gamma_K^+$ and choose a splitting $\sigma$ as in \cref{lem:spf-Kcirc-satisfies364}. 
Let $\fX$ be a (strictly) semistable formal scheme over $K^\circ$. Then the standard log structure on $\fX$ admits \'etale  locally (resp.\ Zariski locally), with a pseudouniformiser $\pi$ that satisfies 
$\sigma(\nu(\pi)) = \pi$, charts of the form 
\[
    \begin{tikzcd}
        \fX^{\log} \arrow[rr,"\text{strict \'etale}"] \arrow[d]
        && \Spf\left(P_n(\nu(\pi)) \xrightarrow{\alpha} K^\circ\langle T_1,\dots, T_n\rangle/(\pi - T_1\cdot\ldots\cdot T_n) \right) \arrow[d] \\
        \Spf(K^\circ)^{\log}  \arrow[rr,"\sim"] & &  
        \Spf(M \xrightarrow{\sigma} K^\circ) 
    \end{tikzcd}
\]
where $\alpha(e_i) = T_i$, and $\alpha$ agrees with $\sigma$ on $M = \Gamma_K^+$.
Consequently, in the strictly semistable case the special fibre $\fX^{\log}_k$ is described by \cref{setup:surgery}. 
\end{lem}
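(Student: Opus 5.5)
The plan is to check everything Zariski (resp.\ \'etale) locally on $\fX$, where by \cref{def:ss-formal-sch} there is an \'etale map $\fX\to\Spf(A)$ with $A = K^\circ\langle T_1,\dots,T_n\rangle/(\pi - T_1\cdots T_n)$. First I fix the pseudouniformiser. Given any $\pi$, the element $\sigma(\nu(\pi))$ has the same valuation as $\pi$, so $\pi = u\,\sigma(\nu(\pi))$ with $u\in K^{\circ\times}$. Replacing $T_1$ by $u^{-1}T_1$ is an automorphism of $K^\circ\langle T\rangle$, so we may assume $\sigma(\nu(\pi))=\pi$. The prelog map $\alpha\colon P_n(\nu(\pi))\to A$, $e_i\mapsto T_i$, $\gamma\mapsto\sigma(\gamma)$ for $\gamma\in M$, is then well defined: the only relation is $e_1+\dots+e_n=\nu(\pi)$, and it holds because $T_1\cdots T_n=\pi=\sigma(\nu(\pi))$. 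The diagram commutes with $\Spf(K^\circ)^{\log}\simeq\Spf(M\xrightarrow{\sigma}K^\circ)$ from \cref{lem:spf-Kcirc-satisfies364}.

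The main step, and the one I expect to be the obstacle, is showing that the log structure $\cM^{\alpha}$ associated to $\alpha$ agrees with $\cM^{\rm std}$, first on $\Spf(A)$ and then on $\fX$ after strict \'etale base change. Each $\alpha(p)$ is invertible after inverting $\pi$, since each $T_i$ divides $\pi$, so $\cM^\alpha\to\cM^{\rm std}$ exists. For the isomorphism I would argue on stalks at points $x$ of the special fibre. Let $I\subseteq\{1,\dots,n\}$ be the set of $T_i$ vanishing at $x$. The $T_j$ with $j\notin I$ are units, and the claim becomes that every $f\in\cO_{\fX,x}$ invertible in $\cO_{\fX,x}[1/\pi]$ factors as a unit times $\sigma(\gamma)\prod_{i\in I}T_i^{a_i}$. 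The cleanest route is probably via $\eta$-normality: $\fX$ is normal, $\pi$-torsion free, and its generic points over $x$ correspond to the components $\{T_i=0\}$ with $i\in I$. By \cref{rmk:sp-facts} the local rings at those generic points are rank one valuation rings, with valuations $v_i$ satisfying $v_i(T_j)=\delta_{ij}v(\pi)$ and $v_i|_K=v$. Given $f$, one looks for $\gamma\in M$ and $a_i\ge0$ with $v_i(f)=v(\gamma)+a_iv(\pi)$ for all $i$. Then $g=f/(\sigma(\gamma)\prod T_i^{a_i})$ has $v_i(g)=0$ for all $i$ and is invertible in $\cO[1/\pi]$, and integral closedness (normality) makes $g$ a unit.

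The delicate point is the existence of such $\gamma$ and $a_i$. The differences $v_i(f)-v_j(f)$ must lie in $\bZ\,v(\pi)$, which one should verify via the chart (e.g.\ by reduction to monomials, or via a direct computation on $A$ localized at $x$). If that becomes messy, the fallback is to cite the standard computation that semistable models have $\cM^{\rm std}$ charted by $P_n$ (as in \cite[Lemma~4.4.1]{AHLS-Log}-type arguments). Since the map $\fX\to\Spf(A)$ is \'etale, $\cM^{\rm std}$ pulls back to $\cM^{\rm std}$: inverting $\pi$ commutes with flat base change. Hence $\fX^{\log}\to\Spf(P_n\to A)$ is strict \'etale.

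Finally, reducing modulo $\pi$, or rather modulo the ideal $\sigma(M_+)$, whose radical is the maximal ideal, gives $\fX_k^{\log}\to\Spec(P_n\to k[P_n]/(M_+))$ strict \'etale. Topological invariance (\cref{prop:FKEt-top-inv}) allows passing to reduced structures where needed. Since $M=\Gamma_K^+$ is sharp, divisible (or $\bZ$) and valuative, and $M\to P_n(\nu(\pi))$ is smooth, this is exactly \cref{setup:surgery} in the strictly semistable (Zariski) case, with quasi-compactness and connectedness assumed as needed.
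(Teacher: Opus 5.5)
Your reduction to the standard model $A=K^\circ\langle T_1,\dots,T_n\rangle/(\pi-T_1\cdots T_n)$ and your rescaling of $T_1$ to arrange $\sigma(\nu(\pi))=\pi$ are exactly the paper's proof, which then declares the identification $\cM^\alpha=\cM^{\rm std}$ obvious. Your own argument for that identification, however, does not close.

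The congruence you flag, $v_i(f)-v_j(f)\in\bZ\,v(\pi)$ for $f$ dividing a power of $\pi$, is not a side condition. It is the entire content of the claim. Already for $n=2$ and $f\in A$ it rests on the Newton polygon fact that a unit of the annulus $\{|\pi|\le|T_1|\le 1\}$ has the same dominant monomial $cT_1^m$ on both boundary circles. In general it amounts to a local class group computation for the singularity $T_1\cdots T_n=\pi$, on \'etale stalks where no monomial expansion is available and with $\pi$ in general not a uniformiser. ``Reduction to monomials'' does not supply it.

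Separately, ``normality makes $g$ a unit'' is only automatic when $\cO_{\fX,x}$ is noetherian, since a normal noetherian domain is the intersection of its localisations at height one primes. For $K$ algebraically closed you would need $\eta$-normality together with the maximum principle: the spectral norm on the generic fibre is attained at the points specialising to the generic points of the reduced special fibre. You also leave injectivity of $\cM^\alpha\to\cM^{\rm std}$ unaddressed, though that does follow from your valuations.

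What makes the paper's ``obvious'' legitimate is to bypass valuations altogether. The ring $A$ is the $\pi$-adic completion of $K^\circ\otimes_{\bZ[M]}\bZ[P_n(\nu(\pi))]=K^\circ[T_1,\dots,T_n]/(T_1\cdots T_n-\pi)$. Hence $\Spf(P_n(\nu(\pi))\xrightarrow{\alpha}A)$ is log smooth over $\Spf(M\xrightarrow{\sigma}K^\circ)\simeq\Spf(K^\circ)^{\log}$, because $M\to P_n(\nu(\pi))$ is smooth. It is vertical, because every $T_i$ divides $\pi$ and $\sigma(M)\subseteq K^\times$. Then \cref{prop:log-sm-vertical}~\ref{propitem:log-sm-vertical-b} gives $\cM^\alpha=\cM^{\rm std}$ at once.

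That result's proof rests on the scheme-level statement \cite[Proposition~5.3.1]{AHLS-Log}, plus a flat descent argument to pass to the $\pi$-adic completion. For discretely valued $K$ the scheme-level statement can also be seen from Kato's theorem that the log structure of a log regular scheme is the compactifying one \cite{KatoToricSingularities}. Your fallback should cite this, not \cite[Lemma~4.4.1]{AHLS-Log}, which only concerns $\Spf(K^\circ)$ itself.

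The remaining steps of your proposal are fine: compatibility of $\cM^{\rm std}$ with \'etale base change, passage to the special fibre, and matching \cref{setup:surgery}. Note only that for discretely valued $K$ one has $M\simeq\bN$, not $\bZ$.
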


\begin{proof}
    By \cref{def:ss-formal-sch} it suffices to understand a global chart for the standard log structure on $\fX = \Spf\left( K^\circ\langle T_1,\dots, T_n\rangle/(\pi - T_1\cdot\ldots\cdot T_n) \right)$. For the chosen splitting $\sigma$ the pseudouniformiser $\pi$ in the equation defining $\fX$ might differ from $\sigma(\nu(\pi))$ by a unit in $K^\circ$. After scaling one of the coordinates appropriately, we may assume that indeed $\sigma(\nu(\pi)) = \pi$. The rest is obvious.     
\end{proof}

Let us recall Temkin's uniformisation result, to be used in the proof of \cref{ss:tamepi1-rig-fgfp}.

\begin{thm}[{Temkin \cite[Corollary~3.3.2]{Temkin2017:AlteredLocalUniformization}}] \label{thm:Temkin-unif}
    Let $X$ be a smooth qcqs rigid space over a non-archimedean field $K$. Then there exists a finite extension $L$ of $K$, an affine strictly semistable formal scheme $\fY$ over $L^\circ$, and a surjective \'etale map of rigid $L$-spaces $\fY_\rig\to X_L$.
\end{thm}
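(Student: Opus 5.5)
This statement is quoted from \cite[Corollary~3.3.2]{Temkin2017:AlteredLocalUniformization}, and we would use it as stated. Still, here is how it reduces to Temkin's local result, which is the real input.

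\emph{Step 1: reduce to an affinoid with a formal model.} Since $X$ is qcqs, it has a finite cover by affinoid opens $X_i$. Suppose each $X_i$ admits, after a finite extension $L_i/K$, a surjective étale map $(\fY_i)_\rig\to (X_i)_{L_i}$ from an affine strictly semistable $\fY_i$. If we can replace all $L_i$ by a common finite extension $L$, the finite disjoint union $\coprod_i\fY_i$ is again affine and strictly semistable, and its generic fibre maps surjectively and étale onto $X_L$. So we may assume $X$ is affinoid. Raynaud's theorem (\S\ref{ss:adic-spaces}) then gives $X=\fX_\rig$ for an admissible affine formal model $\fX$ over $K^\circ$.

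\emph{Step 2: apply altered local uniformisation and use compactness.} Temkin's altered local uniformisation applies to each point $x$ of $X$, or better of the Riemann--Zariski space $\varprojlim\fX'$ over admissible blowups, which is quasi-compact. It produces a finite extension $L_x/K$, an admissible blowup $\fX'$ of $\fX_{L_x^\circ}$, and a formal scheme étale over a standard model $\Spf(L_x^\circ\langle T_1,\dots,T_n\rangle/(\pi_x-T_1\cdots T_n))$. This formal scheme maps to $\fX'$ with image a neighbourhood of the centre of $x$, and it is rig-étale over $X$ on that neighbourhood. Smoothness of $X$ is what lets us take these maps to be étale on generic fibres, not merely alterations. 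Quasi-compactness yields finitely many such charts whose images cover $X$. Shrinking to affine opens and using Zariski-local charts (via \cref{ex:standard-log-structure-Kcirc}) makes each chart affine and strictly semistable.

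\emph{Step 3: pass to a common field.} This is the step I expect to be the main obstacle. Let $L$ be the compositum of the $L_x$. Base changing a chart $T_1\cdots T_n=\pi_x$ to $L^\circ$ gives $T_1\cdots T_n=\varpi^e u$, where $\varpi$ is a pseudouniformiser of $L$ and $u$ is a unit. This is log smooth over $\Spf(L^\circ)^{\log}$ but not semistable. First absorb $u$ by adjoining roots of unit coefficients, which costs a further finite extension of $L$. Then resolve the toroidal singularity by a subdivision of the cone of the standard semistable monoid $P_n(e)$ into unimodular simplices of height one, in the style of the Kempf--Knudsen--Mumford--Saint-Donat semistable reduction theorem. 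The corresponding log blowup is strictly semistable after possibly another finite extension. Since it is an isomorphism on generic fibres, it does not affect surjectivity or étaleness of the map to $X_L$. Finally we take affine opens again and their disjoint union to conclude.
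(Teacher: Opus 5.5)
Your proposal matches the paper, which gives no proof of its own and simply invokes \cite[Corollary~3.3.2]{Temkin2017:AlteredLocalUniformization}. In your optional sketch, Step~3 is not actually an obstacle: \cref{def:ss-formal-sch} allows an arbitrary pseudouniformiser, so $\pi_x$ remains a pseudouniformiser of $L$, and base change along $L_x^\circ\to L^\circ$ preserves being affine and strictly semistable, with no roots of units or KKMS subdivision needed (a unit could anyway be absorbed by rescaling one coordinate, as in the proof of \cref{lem:strictly-semistable-formal-scheme-reduces-to-setup364}); the genuinely hard point is instead the one you defer to Temkin in Step~2, namely obtaining maps that are \'etale on generic fibres rather than mere alterations, which smoothness of $X$ alone does not provide.
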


The following proposition lists some good properties of certain smooth log formal schemes over $\Spf(K^\circ)^{\rm log}$.

\begin{prop} \label{prop:log-sm-vertical}
    Suppose that $K$ is algebraically closed or discretely valued. Let $\fX$ be a log formal scheme over $\Spf(K^\circ)^{\rm log}$.
    \begin{enumerate}[(a)]
        \item \label{propitem:log-sm-vertical-a}
        If $\fX$ is smooth over $\Spf(K^\circ)^{\rm log}$, then its underlying formal scheme is normal and flat over $K^\circ$, in particular $\eta$-normal.
        
        \item \label{propitem:log-sm-vertical-b}
        If $\fX$ is smooth and vertical over $\Spf(K^\circ)^{\rm log}$, then $\cM_\fX$ coincides with the standard log structure $\cM^{\rm std}_\fX$.
        
        \item \label{propitem:log-sm-vertical-c}
        If the underlying formal scheme of $\fX$ is semistable over $K^\circ$ and $\cM_\fX$ is the standard log structure, then $\fX$ is smooth and vertical over $\Spf(K^\circ)^{\rm log}$. 
        
        \item \label{propitem:log-sm-vertical-d}
        If $\fX$ is smooth and vertical over $\Spf(K^\circ)^{\rm log}$ and $\fY\to \fX$ is Kummer \'etale, then $\fY$ is smooth and vertical over $\Spf(K^\circ)^{\rm log}$.
    \end{enumerate}
\end{prop}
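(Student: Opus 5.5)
All four assertions are étale local on $\fX$, so the plan is to reduce everything to explicit charts and compute. Smoothness means that étale locally $\fX$ is strict étale over a model $\Spf\big(Q\to (K^\circ\otimes_{\bZ[V]}\bZ[Q])^\wedge_\pi\big)$. Here $V=K^\circ\cap K^\times$, and by \cref{lem:spf-Kcirc-satisfies364} we may replace $V$ by $M=\Gamma_K^+$ via the splitting $\sigma$. The map $M\to Q$ is smooth. Because $M$ is either divisible valuative or $\bN$, \cref{def:Kummer-etale-monoid} and \cite[Corollary~2.5.18]{AHLS-Log} show that $Q$ is finitely presented over $M$ and of type $\typeVd$. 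So in each case we are studying the ring $A_Q=(K^\circ\otimes_{\bZ[M]}\bZ[Q])^\wedge_\pi$.

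For \ref{propitem:log-sm-vertical-a}, I would first show flatness. The monoid map $M\to Q$ is injective and integral, since smooth maps of saturated monoids over a valuative base are integral. Hence $\bZ[M]\to\bZ[Q]$ is flat, and $A_Q$ is $\pi$-torsion free, i.e.\ flat over $K^\circ$.

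Normality is the step I expect to be the main obstacle, because $K^\circ$ is non-noetherian when $K$ is algebraically closed. My first attempt would be a limit argument. Write $M=\varinjlim M_\lambda$ over finitely generated (hence $\cong\bN$ or $0$) submonoids and $K^\circ=\varinjlim K^\circ_\lambda$ over discrete valuation rings. Descend $Q$ to fs $Q_\lambda$ that are smooth over $M_\lambda$. Classical log regularity (Kato) makes $K_\lambda^\circ\otimes\bZ[Q_\lambda]$ normal, and normality passes to the filtered colimit. The remaining point is normality of the $\pi$-adic completion. For this I would use that rings topologically of finite type over $K^\circ$ are excellent-like for this purpose, combined with the results of \cite{AchingerLaraYoucis2022}. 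The special fibre is reduced because $Q$ is saturated, and the generic fibre is normal, being smooth over $K$ as a rigid space. Together with flatness this gives normality of the completion, and in particular $\eta$-normality.

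For \ref{propitem:log-sm-vertical-b}, verticality gives an inclusion $\cM_\fX\subseteq\cM^{\rm std}_\fX$, and I will prove equality on stalks at geometric points of the special fibre. Let $f$ be a local section of $\cO_\fX$ that is invertible after inverting $\pi$. I want to show $f$ lies in $\cM_\fX$. In the chart, the divisor of $f$ is supported on the special fibre. By normality from \ref{propitem:log-sm-vertical-a}, the local rings at the generic points of $\fX_k$ are rank one valuation rings (\cref{rmk:sp-facts}). Their valuations are computed from $Q^{\rm gp}$, because the irreducible components correspond to faces of $Q$. Saturation of $Q$ then shows that $f$ equals a unit times the image of an element of $Q$.

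Part \ref{propitem:log-sm-vertical-c} is essentially \cref{lem:strictly-semistable-formal-scheme-reduces-to-setup364}. The chart by $P_n(\nu(\pi))$ is smooth over $M$ and vertical, because each $T_i$ divides $\pi$. Combining this with \ref{propitem:log-sm-vertical-b} identifies the chart log structure with $\cM^{\rm std}$.

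For \ref{propitem:log-sm-vertical-d}, smoothness is stable under composition. For verticality, note that locally $\fY$ is built from $Q\to Q'$ Kummer, so every element $q'\in Q'$ has a multiple $nq'$ lying in $Q$. Since $\alpha(q')^n$ is invertible after inverting $\pi$, so is $\alpha(q')$.
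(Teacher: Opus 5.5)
\textbf{The crux of (a), normality of the completion, is not established.} Your flatness argument is fine. Any map from a valuative monoid to an integral monoid is integral, so $\bZ[M]\to\bZ[Q]$ is flat, and $\pi$-torsion-freeness survives $\pi$-adic completion. The discretely valued case is also fine if you argue via excellence: $B=K^\circ\otimes_{\bZ[M]}\bZ[Q]$ is excellent and log regular, hence normal, and the $\pi$-adic completion of an excellent normal ring is normal.

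For $K$ algebraically closed, however, the argument breaks down in two places.

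\emph{The approximations are false in general.} If $\Gamma_K$ has rational rank $\ge 2$, which happens for algebraically closed complete fields, then $\Gamma_K^+$ is not a filtered union of submonoids $\cong\bN$. Likewise $K^\circ$ is not a filtered union of DVRs: a DVR $R\subseteq K^\circ$ containing $\pi$ is dominated by $K^\circ$, hence equals $K^\circ\cap\mathrm{Frac}(R)$, so $\Gamma_K$ would be a filtered union of cyclic groups. This is why the paper, where it does approximate (proof of \cref{thm:scheme-Abhyankar}), uses the higher-dimensional regular rings of \cref{lem:Zavyalov}, and only for semistable charts.

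\emph{The passage from $B$ to $\widehat B$ is unjustified.} This is the real crux, and you support it only by calling tft algebras over a non-noetherian $K^\circ$ ``excellent-like'', which they are not in any usable sense. The paper's key input is that $B\to\widehat B$ is ind-smooth \cite[Lemma~B.6 + Remark~B.7]{GabberZavyalov}. Hence normality of $B$ (from \cite[Lemma~5.3.4]{AHLS-Log}) passes to $\widehat B$ through smooth maps and filtered colimits.

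Your fallback criterion (flat, reduced special fibre, normal generic fibre) is stated without proof and is partly wrong. ``Special fibre reduced because $Q$ is saturated'' fails for $M=\bN$: the Kummer map $\bN\to\bN$, $1\mapsto 2$, gives $K^\circ[t]/(t^2-\pi)$, whose special fibre is $k[t]/(t^2)$. Also, the generic fibre is normal but not smooth unless $\fX$ is vertical. In the algebraically closed case the criterion can be rescued. Divisibility of $M$ and saturation of $Q$ do make $k[Q]/(M_+)$ reduced: if $nq=m+q'$ with $m\in M_+$, then $q-m/n\in Q$. But you then still need a precise result to the effect that an admissible $K^\circ$-algebra with reduced special fibre equals the ring of power-bounded elements of its generic fibre, combined with normality of that affinoid generic fibre.

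\textbf{In (b) the key step is asserted rather than proved.} To write $f$ as a unit times $\alpha(q)$ via valuations at the generic points $\beta$ of $\fX_k$, you need two facts you do not address.
\begin{enumerate}
\item[(i)] The vector $(v_\beta(f))_\beta$ must be the valuation vector of some element of $Q^{\rm gp}$. This is a class-group statement (boundary-supported principal divisors come from characters), the analogue of Kato's identity $\cM=\cO\cap j_*\cO_U^\times$ for log regular schemes. Saturation only lets you pass from $Q^{\rm gp}$ back to $Q$.
\item[(ii)] The quotient $f/\alpha(q)$, a unit at every $\beta$ and invertible on the generic fibre, must be a unit of $\cO_{\fX,\bar x}$. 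This needs a maximum-modulus or Hartogs-type argument using $\eta$-normality on \'etale neighbourhoods.
\end{enumerate}
The paper avoids both by descending from the scheme statement \cite[Proposition~5.3.1]{AHLS-Log}. Since $\pi^n\in f\widehat B$ and $\widehat B/\pi^n=B/\pi^n$, one has $f\widehat B=J_0\widehat B$ for an ideal $J_0\subseteq B$. This $J_0$ is invertible by faithfully flat descent along $B\to B[1/\pi]\times\widehat B$, hence locally generated by some $f_0$ with $f_0g_0=\pi^n$, to which the scheme result applies.

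Deducing (c) from (b) via the smooth, vertical $P_n(\nu(\pi))$-chart is fine once (b) is available, and your (d) is the paper's argument.
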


\begin{rmk}
We expect that if $K$ is algebraically closed and $\fX$ is log smooth and vertical over $\Spf(K^\circ)^{\rm log}$, then there exists a log blow-up $\fX'\to\fX$ such that the underlying scheme of $\fX'$ is semistable and $\cM_{\fX'}$ is the standard log structure.
\end{rmk}

\begin{proof}
For log schemes over $K^\circ$, these assertions are proved in \cite[\S5]{AHLS-Log}. Here we show how to deduce from them the log formal scheme versions.

\ref{propitem:log-sm-vertical-a}
Working locally, we may assume that $\fX$ is the $\pi$-adic completion of an affine charted smooth log scheme $Y = \Spec(B)$ over $\Spec(V\to K^\circ)$, so that $\fX = \Spf(\widehat{B})$ where $\widehat{B}$ is the $\pi$-adic completion of $B$. By \cite[Lemma 5.3.4]{AHLS-Log}, log smoothness implies that the ring $B$ is normal and flat over $K^\circ$. 
By \cite[Lemma B.6 + Remark B.7]{GabberZavyalov}, the map $B \to \widehat{B}$ is ind-smooth. In particular, it is flat. Moreover, normality persists under smooth maps (see \stacks[Lemma]{033C}) and cofiltered limits (see \stacks[Lemma]{037D}), so we get that $\widehat{B}$ is normal and flat over $K^\circ$, as desired.

\ref{propitem:log-sm-vertical-b} Working locally we may assume that there exists a smooth homomorphism $M\to P$ and a strict \'etale map $X\to \Spf(P\to \widehat{B})$ where $\widehat{B}$ is the $\pi$-adic completion of $B = K^\circ\otimes_{\bZ[M]}\bZ[P]$. We may replace $X$ with the target of this map, in which case $X$ is the $\pi$-adic completion of the smooth log scheme $X_0 = \Spec(P\to B)$. Note that under our assumptions on $K$, the underlying scheme of $X_0$ is of finite presentation over $K^\circ$. 
Moreover, for every strict \'etale $U\to X$ there exists a strict \'etale $U_0\to X_0$ whose $\pi$-adic completion is $U\to X$ (see \cite[1.7.3]{HuberBook}). This enables us to work \'etale locally on $X$. 

By the scheme-theoretic result \cite[Proposition~5.3.1]{AHLS-Log}, we know that every $f\in B$ which satisfies $fg = \pi^n$ is locally of the form $\alpha(p)u$ for $p\in P$ and $u\in B^\times$, where $\alpha\colon P\to B$ is the given map. We want to prove the analogous result for $\widehat{B}$. Let $f\in \widehat{B}$ satisfy $fg = \pi^n$ for some $g\in \widehat{B}$ and $n\geq 0$ and let $J = f\widehat{B}$. Since $\pi^n\in J$ and $\widehat{B}/\pi^n = B/\pi^n$, there exists an ideal $J_0\subseteq B$ such that $J_0\cdot \widehat{B} = J$. Since $B\to B[1/\pi]\times\widehat{B}$ is faithfully flat (by  \cite[Chapter~0, Proposition~8.2.17]{FujiwaraKato}), and since $J_0\cdot B[1/\pi] = B[1/\pi]$ and $J_0\cdot \widehat{B} = J = (f)$ are both invertible ideals, by flat descent for line bundles we see that $J_0$ is an invertible ideal. Localising on $X$ and $Y$, we may assume that $J_0 = (f_0)$ is principal. Since $J_0$ contains $\pi^n$, we can write $\pi^n = f_0g_0$, and hence (shrinking $X$ further) $f_0 = \alpha(p)u_0$ for some $p\in P$ and $u_0\in B^\times$. Since $f \widehat{B} = f_0\widehat{B}$, there exists a unit $u_1\in \widehat{B}^\times$ such that $f=u_1 f_0$. Then $f = \alpha(p)u$ where $u=u_0u_1\in \widehat{B}^\times$. 

\ref{propitem:log-sm-vertical-c} This can be deduced directly from the scheme version, \cite[Corollary~5.3.7]{AHLS-Log}.

\ref{propitem:log-sm-vertical-d} Since Kummer \'etale maps are smooth and smooth maps are closed under composition, we have that $\fY$ is smooth. It remains to show $\fY$ is vertical. We may assume that $\fY = \fX\times_{\Spf(P\to \bZ[P])}\Spf(Q\to\bZ[Q])$ for a Kummer \'etale map $P\to Q$. We must show that the image $g$ of every $q\in Q$ in $\cO_\fY[1/\pi]$ is invertible. Since $P\to Q$ is Kummer \'etale, it is injective and we have $nq=p\in P$ for some $n\geq 1$ invertible in $K^\circ$. Since $\fX$ is vertical, the image $f$ of $p$ in $\cO_\fX[1/\pi]$ is invertible. But $g^n=f$, and it follows that $g$ is invertible as well.
\end{proof}

\begin{prop} \label{prop:FEt-formal-vs-sp-fibre}
    Let $\fX$ be a saturated log formal scheme over $K^\circ$. Then the inclusion $\fX_k\to \fX$ induces an equivalence
    \[
        \FEt_\fX \isomto \FEt_{\fX_k}.
    \]
\end{prop}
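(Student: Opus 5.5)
The plan is to reduce to the corresponding statement for the truncations $\fX_n = \fX\otimes K^\circ/\pi^{n+1}$ and then pass to the limit. First, for each $n$ the closed immersion $\fX_k \to \fX_n$ is strict and its underlying map of schemes is a universal homeomorphism, since $\pi$ is nilpotent on $\fX_n$ and $\fX_k=\fX_0$ up to nilpotents. By topological invariance, \cref{prop:FKEt-top-inv} (valid for saturated log schemes in \cite[Proposition~4.2.4(d)]{AHLS-Log}), each restriction functor $\FEt_{\fX_{n}}\to\FEt_{\fX_k}$ is therefore an equivalence. The same then holds for each transition functor $\FEt_{\fX_{n+1}}\to\FEt_{\fX_n}$, by two-out-of-three.

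It then remains to show that
\[
\FEt_\fX \la \varprojlim_n \FEt_{\fX_n}
\]
is an equivalence. This is the formal analogue of the classical statement for finite étale covers of $\pi$-adic formal schemes, and I would argue étale locally. For full faithfulness, finite Kummer étale maps have finite underlying maps of formal schemes, so morphisms are determined by compatible systems of morphisms modulo $\pi^{n+1}$. For essential surjectivity, take a compatible system $Y_n\to\fX_n$. The underlying finite morphisms form a compatible system of finite schemes, hence a $\pi$-adic formal scheme $\fY$ finite over $\fX$. The log structures $\cM_{Y_n}$ glue via \cref{lem:log-fsch-equiv} to a log structure on $\fY$. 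We must still check that $\fY\to\fX$ is Kummer étale, and this is a local question.

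For the local check, assume $\fX=\Spf(P\to A)$ is charted. By \cref{lem:fKet-local-structure} applied to $Y_0\to\fX_0$, étale locally $Y_0\simeq\coprod_j \fX_0\times_{\Spec\bZ[P]}\Spec\bZ[Q_j]$ for Kummer étale maps $P\to Q_j$. Strict étale neighbourhoods lift uniquely along nilpotent thickenings and $\pi$-adic completions (cf.\ \cite[1.7.3]{HuberBook}). The models
\[
\Spf\big(Q_j\to (A\otimes_{\bZ[P]}\bZ[Q_j])^\wedge_\pi\big)
\]
then give finite Kummer étale covers of $\fX$ whose reductions agree with $Y_0$. Uniqueness, via the levelwise equivalences above, identifies their truncations with the $Y_n$. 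Hence $\fY$ is locally of this form, so it is Kummer étale.

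The main obstacle is this last essential-surjectivity step: making the chart-lifting compatible with the given identifications at every level, and checking that the glued log structure matches the one coming from the $Q_j$. I would handle it by first constructing the canonical lift at each level, then using the levelwise full faithfulness to produce compatible isomorphisms, and finally gluing these in the strict étale topology using the stack property of $U\mapsto\FEt_U$.
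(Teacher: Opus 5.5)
Your proposal is correct and follows essentially the same route as the paper: topological invariance for the truncations $\fX_n$, then \cref{lem:log-fsch-equiv} to identify morphisms of log formal schemes with compatible systems of morphisms over the $\fX_n$ (note that it is this lemma, not finiteness, that handles full faithfulness on the level of log structures), and finally essential surjectivity checked \'etale locally via the explicit models $\Spf\big(Q_j\to (A\otimes_{\bZ[P]}\bZ[Q_j])^\wedge_\pi\big)$ supplied by \cref{lem:fKet-local-structure}. The only difference is that you first assemble a global candidate $\fY$ as a limit and then verify Kummer \'etaleness locally, whereas the paper writes down the local lift directly and lets full faithfulness glue the local lifts.
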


\begin{proof}
Let $\fX_n$ be the reduction of $\fX$ modulo $\pi^{n+1}$. It is clear from the definition that if $\fY\to\fX$ is finite Kummer \'etale, then so are $\fY_n\to\fX_n$ and $\fY_k\to \fX_k$, so that our functor is well-defined. By \cref{prop:FKEt-top-inv}, the restriction functors $\FEt_{\fX_n}\to\FEt_{\fX_k}$ are equivalences. On the other hand, by \cref{lem:log-fsch-equiv}, giving a map of formal log schemes $\fY\to\fX$ is the same as giving a compatible family of maps $\fY_n\to \fX_n$. This implies that the functor is fully faithful, and hence it remains to prove that it is essentially surjective \'etale locally on $\fX$. We may thus assume that $\fX=\Spf(P\to A)$ for a saturated monoid $P$. Let $Y\to \fX_k$ be an object of $\FEt_{\fX_k}$. By the local structure in \cref{lem:fKet-local-structure} we may assume that there exists a finite collection of Kummer \'etale monoid maps $P\to Q_j$, $j=1,\dots, r$, such that $Y = \coprod_{j=1}^r \Spec(Q_j \to (A\otimes k)\otimes_{\bZ[P]}\bZ[Q_j])$. We may then define 
\[
    \fY = \coprod_{j=1}^r \Spf(Q_j \to (A\otimes_{\bZ[P]}\bZ[Q_j])^\wedge_\pi) \la \Spf(P\to A) = \fX
\]
which is an object of $\FEt_\fX$ whose restriction to $\fX_k$ equals $Y\to\fX_k$.
\end{proof}

\subsection{Tame covers and Kummer covers (I): purity}
\label{ss:log-abhyankar}

In this subsection, $K$ will be either algebraically closed  or 
    discretely valued.
Our goal is to prove that for a semistable formal scheme $\fX$ over $K^\circ$, a finite \'etale cover $Y\to X$ of its generic fibre $X=\fX_{\rig}$ is tame (in the absolute sense, i.e.\ relative to $X$) if and only if it extends to a Kummer \'etale cover of $\fX^{\rm log}$ ($\fX$ endowed with the standard log structure). In order to do this, we shall first establish a version for schemes over~$K^\circ$, which in turn will ultimately rely on the classical log purity (a.k.a.\ log Abhyankar's lemma) recalled below. This is particularly delicate in the non-noetherian setting (i.e.\ if $K$ is algebraically closed), as we need to perform a rather intricate approximation argument to be able to apply classical log purity. Let us mention here that the case of $\Spec(K^+)$ for an arbitrary valuation ring $K^+$ has been handled in \cite[Corollary~4.4.9]{AHLS-Log}, identifying the tame Galois group of $K$ with the Kummer \'etale fundamental group of $\Spec((K^+\setminus 0)\to K^+)$. 

To state log purity, we need the following terminology. If $U\to X$ is an \'etale morphism of schemes and $\zeta\in X$ is a point such that $\cO_{X,\zeta}$ is a valuation ring, we say that $U\to X$ is {\bf tame above $\zeta$} if the map $\Spa(U, X)\to \Spa(X, X)$ is tame at every $u\in \Spa(U, X)$ mapping to the point $x\in \Spa(X, X)$ corresponding to the valuation induced by $\zeta$.
Explicitly, we are testing tameness of $\Spa(U,X) \to \Spa(X,X)$ at all preimages of the root triple $(\eta, \cO_{X,\zeta},\zeta)$ of $\Spa(X,X)$ where $\eta\in X$ is the image of the generic point of $\cO_{X,\zeta}$.

A natural setting where tameness above~$\zeta$ comes up is when $X$ is the underlying scheme of a regular log scheme and $U \to X$ factors through a finite étale morphism $U \to X^*$ (the locus of trivial log structure).
Since~$X$ is normal by \cite[Theorem~4.1]{KatoToricSingularities}, the local rings at the generic points of $X \setminus X^*$ are valuation rings and it makes sense to ask whether $U \to X$ is tame above these points.

\begin{thm}[{Classical log purity, \cite[Theorem~13.3.43]{GabberRameroFoundations}, \cite[Theorem~B]{Mochizuki}}] 
\label{thm:GR-log-Abhyankar}
    Let $X$ be a regular log scheme, and 
    let $X^* \subseteq X$ be the open subset where the log structure is trivial. 
    
    Then the restriction functor $\FEt_X\to \FEt_{X^*}$ induces an equivalence with the full subcategory of $\FEt_{X^*}$ consisting of \'etale covers that are tame above every generic point of $X\setminus X^*$.
\end{thm}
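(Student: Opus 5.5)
This is the classical log purity theorem, and it is cited from \cite[Theorem~13.3.43]{GabberRameroFoundations} and \cite[Theorem~B]{Mochizuki}. The plan is therefore to recall the structure of the standard argument in our language and check that the tameness notion used here, namely tameness above the generic points $\zeta$ of $X\setminus X^*$ as defined just before the statement, matches the hypothesis in those references.

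\emph{Full faithfulness and landing in the tame subcategory.} The underlying scheme of $X$ is normal by \cite[Theorem~4.1]{KatoToricSingularities}, and so is every $Y\in\FEt_X$, since $Y$ is Kummer \'etale over a regular log scheme and hence log regular. We also need $X^*$ dense in $X$ and $Y\times_X X^*$ dense in $Y$. Given this, $Y$ is the normalisation of $X$ in $Y\times_X X^*$, which gives full faithfulness of restriction.

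For tameness, it suffices to work locally. By \cref{lem:fKet-local-structure}, strict \'etale locally $Y$ has the form $X\times_{\Spec\bZ[P]}\Spec\bZ[Q]$ with $P\to Q$ Kummer of order prime to the residue characteristics. At a height-one point $\zeta$, the restriction of such a cover to $\Spec\cO_{X,\zeta}$ is dominated by an extraction of $n$-th roots with $n$ invertible. By \cref{lem:tameness-conditions}, that extension is tame.

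\emph{Essential surjectivity (the main step).} Let $U\to X^*$ be finite \'etale and tame above every $\zeta$. Let $Y$ be the normalisation of $X$ in $U$; we must show that $Y\to X$, with its induced log structure, is Kummer \'etale. The question is \'etale local, so we pass to the strict henselisation at a point $x$ with a chart $P=\overline{\cM}_{X,\bar x}$, an fs sharp monoid. The strategy is as follows.

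First, apply Abhyankar's lemma at each $\zeta$ (\cref{lem:tameness-conditions-galois}) to find $n$ prime to $p$ such that, after base change along $P\to \frac1n P$, the cover becomes unramified at every height-one point. Call the resulting log regular scheme $X'$.

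Second, invoke Zariski--Nagata purity for the regular or log regular $X'$ to extend the cover over all of $X'$ as a strict \'etale cover. Here one uses that log regular schemes satisfy purity of the branch locus relative to the log structure: Kato's computation shows that the completed local rings are of the form $k[[P]][[t]]$ (possibly twisted in mixed characteristic), and Auslander-type purity applies to such rings.

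Third, descend along the Kummer \'etale Galois cover $X'\to X$ to obtain that $Y$ is Kummer \'etale over $X$.

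The main obstacle is the second step, purity for log regular rings in mixed characteristic and in the non-normal-crossing case. This is where I would simply defer to \cite{GabberRameroFoundations} and \cite{Mochizuki} rather than reprove it.
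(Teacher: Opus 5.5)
The paper gives no argument of its own for this statement; it is quoted from \cite{GabberRameroFoundations} and \cite{Mochizuki}. Deferring to those sources is therefore acceptable, and your first part is fine. For full faithfulness you should also note that, by Kato, the log structure of the log regular $Y$ is $\cO_Y\cap j_*\cO_{Y^*}^\times$ for $j\colon Y^*\inj Y$, so morphisms of underlying schemes over $X$ are automatically morphisms of log schemes.

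Your outline of essential surjectivity, however, breaks at Step 2, which is exactly where the content lies. Log regular schemes need not satisfy purity of the branch locus in the strict sense, and Zariski--Nagata/Auslander purity fails for $k[[P]][[t_1,\dots,t_r]]$ in general. Here is a counterexample:
\begin{itemize}
\item Take $X=\Spec k[x,y,z]/(xy-z^m)$ with $m\geq 2$ prime to $p$, with the log structure of the toric monoid $P=\{(i,j)\in\bN^2 : i\equiv j \bmod m\}$.
\item Let $Y=\Spec k[u,v]\to X$ be given by $(x,y,z)\mapsto(u^m,v^m,uv)$, i.e.\ the quotient by $\mu_m$ acting with weights $(1,-1)$.
\item This map is étale away from the origin, in particular unramified at both height-one points of $X\setminus X^*$. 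So your Step 1 permits $n=1$ and $X'=X$.
\item Step 2 would then assert that $Y\to X$ is strict étale. This is false: $Y$ is regular while $X$ is singular at the origin. The map is only Kummer étale, via $P\to\bN^2$, exactly as log purity predicts.
\end{itemize}

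So on a log regular scheme, being étale in codimension one forces only \emph{Kummer} étaleness, and the Kummer index needed is invisible at the height-one points. Since $\frac1nP\cong P$, passing to $X'$ merely reproduces a scheme of the same kind, so no choice of $n$ made in Step 1 rescues Step 2.

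The missing ingredient is the genuinely logarithmic local statement. If $X$ is strictly local and log regular with chart $P=\overline{\cM}_{X,\bar x}$, then every finite étale cover of $X^*$ that is tame above the height-one points of $X\setminus X^*$ is split by base change along some Kummer étale map $P\to Q$. Equivalently, the Galois category of such covers has fundamental group $\Hom(P^{\rm gp},\hZ'(1))$. This is what the cited theorems prove, and it does not follow from ordinary purity applied to $X'$.

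A smaller point in Step 1: in mixed characteristic a height-one boundary point can have residue characteristic $0$, where the tameness hypothesis is empty. There, the fact that its ramification index is prime to the residue characteristic of the closed point must be deduced separately from étaleness over $X^*$.
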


Our first result is an extension of the above theorem to semistable log schemes over $K^\circ$, i.e.\ those which \'etale locally admit an \'etale morphism to $\Spec(K^\circ[T_1,\dots,T_n]/(\pi- T_1\cdot\ldots\cdot T_s))$ for some $n \geq s \geq 0$ and some pseudouniformiser $\pi\in K^\circ$. To this end, we first need to check that the local rings of the generic points of the special fibre are valuation rings. This follows from the lemma below, in which we use the following terminology analogous to \cref{defi:eta-normal}: a scheme $X$ locally of finite type over a valuation ring $K^+$ is {\bf $\eta$-normal} if it is locally of the form $\Spec(A)$ where $A$ is a finitely generated and flat $K^+$-algebra which is integrally closed in $A\otimes_{K^+} K$ where $K$ is the field of fractions of $K^+$. We note that being $\eta$-normal is an \'etale local property thanks to \stacks[Lemma]{03GG}.

\begin{lem} \label{lem:local-ring-val-ring}
    Let $(K,K^+)$ be a valued field with residue field $k = K^\succ$ and pseudouniformiser~$\pi$.
    Let $X$ be a reduced scheme that is of finite type, flat and $\eta$-normal over~$K^+$.
    Then for every generic point~$\beta$ of the special fibre~$X_k$, the local ring~$\cO_{X,\beta}$.
    Moreover, the map $K^+\to \cO_{X,\beta}$ is an extension of valuation rings with finite ramification index (see \stacks[Definition]{0ASG}). 
\end{lem}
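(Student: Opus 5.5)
The plan is to reduce to a local statement about the localisation of $A$ at a minimal prime over $\pi$, and then to read off the valuation ring from integral closedness. Work locally and write $X=\Spec(A)$ with $A$ finitely generated and flat over $K^+$ and integrally closed in $A_K = A\otimes_{K^+}K$. Let $\beta$ correspond to a prime $\fq\subseteq A$ minimal over $\pi A$. Since $A$ is flat over $K^+$, it is $\pi$-torsion free, so $B := A_\fq$ embeds in $B[1/\pi]$. By minimality, $\fq B$ is the radical of $\pi B$, so $\pi$ is topologically nilpotent in $B$ in the sense that every element of $\fq B$ has a power in $\pi B$.

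To show $B$ is a valuation ring, the first step is to show that $B[1/\pi]$ is a field, so that $B$ is a domain of dimension governed by its single height-one behaviour. Primes of $B[1/\pi]$ correspond to primes of $B$ not containing $\pi$, and those are contained in $\fq$. Since $A$ is finite type and flat over $K^+$, the generic and special fibres behave equidimensionally; I would use a dimension formula for finitely presented flat algebras over valuation rings to argue that $\fq$ lies over a height-one prime $\fp_0$ of $A$ with $\fp_0 \cap K^+ = 0$ and $\fq$ minimal over $\fp_0 + \pi A$. Reducedness is then used to localise further so that only one minimal prime $\fp_0$ survives in $B$. Hence $B$ is a domain, and $B[1/\pi]$ is its fraction field $L$. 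I expect this to be the main obstacle, since in the non-noetherian case one has to be careful with minimal primes; I would first approximate $K^+$ by a noetherian subring over which $A$ is defined.

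Next, I would show that $B$ is integrally closed in $L$, since localisation preserves integral closedness in $A_K$. Then I would show that for $x\in L^\times$ either $x$ or $x^{-1}$ lies in $B$. Write $x = a/\pi^n$ and consider the rank-one valuation on $L$ obtained as follows. The ring $B/\sqrt{\pi B}$ is the field $k(\beta)$, and because $\pi$ has a radical-principal behaviour, we define $v(b) = \inf\{ m/n : b^n \in \pi^m B\}$. Integral closedness of $B$ in $L$ gives $B = \{v\ge 0\}$; this is the standard argument that a one-dimensional integrally closed local domain whose maximal ideal is the radical of a principal ideal is a valuation ring. Finally, the ramification index is $[v(L^\times):v(K^\times)]$, which is bounded by the number of generators needed, via the multiplicity of $\fq$ in the finitely presented fibre $A/\pi A$. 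Hence it is finite.
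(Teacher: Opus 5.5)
The decisive step does not work. You invoke the principle that a one-dimensional integrally closed local domain whose maximal ideal is the radical of a principal ideal is a valuation ring. Without noetherian hypotheses this is false, and nothing else in your argument shows that $B=\cO_{X,\beta}$ is a valuation ring.

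For a counterexample, let $V$ be the valuation ring of the Gauss norm on $\bC_p(t)$, with residue field $\overline{\bF}_p(t)$, and let $R\subseteq V$ be the preimage of $\overline{\bF}_p$. Then:
\begin{itemize}
\item $R$ is local with $\Spec(R)=\{0,\mathfrak m_V\}$ and $\mathfrak m_V=\sqrt{pR}$;
\item $R$ has fraction field $\mathrm{Frac}(V)$;
\item $R$ is integrally closed, because $\overline{\bF}_p$ is algebraically closed in $\overline{\bF}_p(t)$.
\end{itemize}
Yet for a lift $\tilde t\in V^\times$ of $t$, neither $\tilde t$ nor $\tilde t^{-1}$ lies in $R$. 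Your function $v$ (which must be a $\sup$, not an $\inf$) is here just the valuation of $V$, and $\{v\ge 0\}=V\neq R$. So integral closedness cannot give $B=\{v\ge 0\}$; you need input from the finite type structure over $K^+$ that controls the residue field.

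The paper supplies this input as follows. For $\bA^n_{K^+}$, the local ring at the generic point of the special fibre is a valuation ring with the value group of $K$, since every polynomial is $c\cdot h$ with $c\in K^+$ and $h\notin\mathfrak m_{K^+}[T_1,\dots,T_n]$. In general, Noether normalisation of $X_k$ gives $X\to\bA^n_{K^+}$ sending $\beta$ to that generic point $\gamma$ with finite special fibre. After henselisation, $\cO^h_{X,\beta}$ is finite over the henselian valuation ring $\cO^h_{\bA^n,\gamma}$. Being reduced, flat and $\eta$-normal, it is normal, hence the integral closure of a henselian valuation ring in a finite extension of its fraction field. It is therefore a valuation ring of finite ramification index over $K^+$, and this descends to $\cO_{X,\beta}$.

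There are further problems.
\begin{itemize}
\item \emph{Wrong primes in higher rank.} The generic points of $X_k$ are the primes minimal over $\mathfrak m_{K^+}A$, not over $\pi A$; the two agree only if $K^+$ has rank one. The statement allows higher rank, since it only asks for a pseudouniformiser. In that case a prime minimal over $\pi A$ lies over the height-one prime $\sqrt{\pi K^+}\neq\mathfrak m_{K^+}$, so $K^+\to A_\fq$ is not even local. Meanwhile the correct $\cO_{X,\beta}$ has dimension equal to the rank of $K^+$ and cannot be described by a rank-one $v$.
\item \emph{Ramification index.} Your bound via ``the multiplicity of $\fq$ in $A/\pi A$'' has no content here, since $A_\fq/\pi A_\fq$ typically has infinite length, already for $A=K^+[T]$ with $K^+=\cO_{\bC_p}$. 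In the paper, finiteness of the ramification index comes for free from finiteness of $\cO^h_{X,\beta}$ over a valuation ring with the same value group as $K^+$.
\item \emph{Minimal primes.} You cannot ``localise further'' than the local ring $A_\fq$ to isolate one minimal prime. In rank one, your dimension count can be made to give $\dim B=1$, so $B[1/\pi]$ is a finite product of fields. It is then $\eta$-normality (idempotents of $B[1/\pi]$ are integral over $B$) that makes it a field.
\end{itemize}
So the obstacle you single out is the easy part; the valuation-ring property, which you treat as standard, is where the real work lies.
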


\begin{proof}
We discuss two special cases and deduce from them the general statement.
First, we treat the case where $X = \bA^n_{K^+} = \Spec (K^+[T_1,\ldots,T_n])$ and $\beta$ is the unique generic point of the special fibre corresponding to the prime ideal $\mathfrak{m}_{K^+}\cdot K^+[T_1,\ldots,T_n]$.
The field of fractions of $\cO_{X,\beta}$ is $K(T_1, \dots, T_n)$, and a fraction $f/g$ with $f,g\in K^+[T_1,\dots, T_n]$ belongs to $\cO_{X,\beta}$ if and only if we can find a representation with $g\notin \mathfrak{m}_{K^+} \cdot K^+[X_1, \ldots, X_n]$.
Since every element of $K^+[T_1,\ldots,T_n]$ can be written as a product $x h$ where $x\in K^+$ and $h\notin \mathfrak{m}_{K^+}[T_1,\ldots,T_n]$ (thus $h\in \cO_{X,\beta}^\times$), it follows that $\cO_{X,\beta}$ is a valuation ring and that  $K^+\to \cO_{X,\beta}$ is an extension of valuation rings with valuation index one. 

Second, we treat the case where~$K^+$ is henselian and~$X$ is finite over~$K^+$.
Then $\cO_{X,\beta}$ is reduced and finite flat over~$K^+$.
In this situation $\eta$-normality implies normality, so $\cO_{X,\beta}$ is normal and this already implies that it is a valuation ring.
By the finiteness of~$\cO_{X,\beta}$ over~$K^+$, it follows that $K^+\to \cO_{X,\beta}$ is an extension of valuation rings with finite valuation index.

For the general case, since the assertion is local on $X$, we may assume that $X = \Spec(A)$ is affine and $X_k$ is irreducible, so $\beta$ is the unique generic point of $X_k$.
By the Noether normalisation lemma, there exist $f_1,\dots, f_n\in A$ such that the map $f=(f_1,\dots, f_n)\colon X\to \bA^n_{K^+}$ induces a finite map $f_k\colon X_k\to \bA^n_k$ which sends $\beta$ to the generic point $\gamma$.
Let
\[
    f_\beta \colon X_\beta^h \la \bA_{K^+,\gamma}^{n,h}
\]
be the induced map of henselisations.
In the first paragraph we have seen that the localisation $\bA_{K^+,\gamma}^n$ -- and hence also its henselisation -- is the spectrum of a valuation ring extending $K^+$ with finite ramification index.
The second paragraph deduces that the same holds for~$X_\beta^h$.
Since a local ring is a valuation ring if and only if its henselisation is, we conclude that $\cO_{X,\beta}$ is a valuation ring extending $K^+$ with finite ramification index.
\end{proof}

Henceforth in this section we use the following notation. If $Z$ is a scheme over $K^\circ$, we denote by $Z^*$ its generic fibre $Z\otimes_{K^\circ} K$. We use the natural log scheme variants of \cref{def:std-log-str}.  Our version of \cref{thm:GR-log-Abhyankar} over $K^\circ$ is the following.    

\begin{thm}[Purity, scheme version] \label{thm:scheme-Abhyankar}
  Let $X$ be a semistable scheme over $K^\circ$ and let $Y^*\to X^* = X_K$ be a finite \'etale map.  Assume that $K$ is either algebraically closed or discretely valued. 
 Let $Y\to X$ be the normalisation of $X$ in $Y^*$. The following are equivalent:
  \begin{enumerate}[(a)]
    \item \label{propitem:Abhy-tame} 
        $Y^*\to X^*$ is tame relative to $X$,
    \item \label{propitem:Abhy-tame-boundary}
        $Y^*\to X^*$ is tame above every generic point of the special fibre $X_k$,
    \item \label{propitem:Abhy-ket}
        the morphism $Y^{\rm log}\to X^{\rm log}$ is Kummer \'etale.
  \end{enumerate}
\end{thm}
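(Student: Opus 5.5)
The implications \labelcref{propitem:Abhy-tame}$\Rightarrow$\labelcref{propitem:Abhy-tame-boundary} and \labelcref{propitem:Abhy-ket}$\Rightarrow$\labelcref{propitem:Abhy-tame} are comparatively formal; the real work is \labelcref{propitem:Abhy-tame-boundary}$\Rightarrow$\labelcref{propitem:Abhy-ket}.

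For \labelcref{propitem:Abhy-tame}$\Rightarrow$\labelcref{propitem:Abhy-tame-boundary}, I would use \cref{lem:local-ring-val-ring}. By \cref{prop:log-sm-vertical}, $X$ is normal, flat and $\eta$-normal, so for every generic point $\beta$ of $X_k$ the local ring $\cO_{X,\beta}$ is a valuation ring with centre $\beta$ on $X$. This gives a root triple of $\Spa(X^*,X)$, and tameness relative to $X$ includes tameness at it.

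For \labelcref{propitem:Abhy-ket}$\Rightarrow$\labelcref{propitem:Abhy-tame}, I would take a test point $(K',K'^+)$ of $\Spa(X^*,X)$ with centre on $X$ and base change to its strict henselisation. The pullback of $Y^{\log}\to X^{\log}$ to $\Spec(K'^+)$, with the pulled-back log structure, is Kummer \'etale. By \cref{prop:FKEt-pi1-local} (equivalently \cref{lem:fKet-local-structure}) it is locally $\Spec(Q\to\cdots)$ for a Kummer \'etale $P\to Q$, so the generic fibre is a product of Kummer extensions of degree prime to $p$. This is tame by \cref{lem:tameness-conditions}.

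For \labelcref{propitem:Abhy-tame-boundary}$\Rightarrow$\labelcref{propitem:Abhy-ket}, the question is \'etale local on $X$ (finite Kummer \'etale maps form a stack, and normalisation commutes with \'etale base change), so I would assume $X$ is \'etale over $\Spec(K^\circ[T_1,\dots,T_n]/(\pi-T_1\cdots T_s))$.

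In the discretely valued case $X$ is noetherian and regular, $X^{\log}$ is a regular fs log scheme with $X^*$ the trivial locus, and the generic points of $X\setminus X^*=X_k$ are exactly the $\beta$. \Cref{thm:GR-log-Abhyankar} then extends $Y^*$ to a finite Kummer \'etale cover. By uniqueness of normal extensions, and normality of Kummer \'etale covers of $X^{\log}$ (\cref{prop:log-sm-vertical}), this cover is the normalisation $Y$.

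In the algebraically closed case I would approximate. Write $K^\circ=\varinjlim R_\lambda$ over the valuation rings of finitely generated subextensions: discretely valued ones, or more usefully rank one valuation rings of complete discretely valued fields $K_\lambda$ containing $\pi$. Descend $X$ to a semistable $X_\lambda$ over $R_\lambda$ and $Y^*$ to a finite \'etale $Y_\lambda^*\to X_\lambda^*$. The descended cover may be wild along $(X_\lambda)_k$ even though $Y^*$ is tame. To handle this, I would use the tameness hypothesis above each $\beta$ together with \cref{lem:tameness-conditions-galois}: it gives a Kummer description $L\subseteq K'(\sqrt[n]{a_i})$ at the generic points of the special fibre. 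I would then enlarge $\lambda$, adjoining $n$-th roots of $\pi$ and of the relevant elements of $V$, until the descended cover is tame above the generic points of $(X_\lambda)_k$. Tameness at finitely many generic points involves finitely many data, so it should be achievable for $\lambda\gg0$ via the henselised approximation of \cref{lem:local-ring-val-ring}.

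After this reduction, I would apply the discretely valued case to get $Y_\lambda\to X_\lambda$ Kummer \'etale and base change to $K^\circ$. The base change is Kummer \'etale for the saturated pullback log structure, and its normality identifies it with $Y$.

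I expect the main obstacle to be this approximation step. The difficulty is controlling wild ramification under descent, since the residue valuation at $\beta$ over $K^\circ$ has value group divisible by $\Gamma_K$, while over $R_\lambda$ it is discrete. One must make sure the base change of $\lambda$ absorbs the ramification, and check compatibility of the log structures (saturations) with the standard log structure.
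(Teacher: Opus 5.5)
Your arguments for \labelcref{propitem:Abhy-tame}$\Rightarrow$\labelcref{propitem:Abhy-tame-boundary}, for \labelcref{propitem:Abhy-ket}$\Rightarrow$\labelcref{propitem:Abhy-tame}, and for the discretely valued case are fine. For \labelcref{propitem:Abhy-ket}$\Rightarrow$\labelcref{propitem:Abhy-tame} the paper instead quotes the identification of the Kummer \'etale fundamental group of $\Spec(k(x)^+\setminus 0\to k(x)^+)$ with the tame Galois group, which amounts to your local-structure argument. One small correction in the discretely valued case: $X$ need not be regular. For $\pi=\varpi^e$ with $e\geq 2$ and $s\geq 2$ it is only log regular, but that is all \cref{thm:GR-log-Abhyankar} needs.

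The genuine gap is the first step of the algebraically closed case. In general $K^\circ$ is \emph{not} a filtered union of discrete valuation rings, nor of valuation rings of (complete) discretely valued subfields. Suppose $R\subseteq K^\circ$ is a DVR containing some $a$ with $|a|<1$. Then $K^\circ\cap\mathrm{Frac}(R)$ is a valuation overring of $R$ different from $\mathrm{Frac}(R)$, hence equal to $R$, so $|\cdot|$ is discrete on $\mathrm{Frac}(R)$. Consequently, finitely many elements of $K^\circ$ lie in a common such $R$ only if the field they generate has discrete value group. This fails whenever $|K^\times|$ has rational rank $\geq 2$. It fails already for $K=\bC_p$: take $a$ a limit of algebraic $a_n$ with $e(\bQ_p(a_n)/\bQ_p)\to\infty$, converging faster than their Krasner radii; then every $a_n$ lies in the completion of $\bQ_p(a)$. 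Since $X$ and $Y^*$ involve arbitrary coefficients in $K^\circ$, there is no DVR to descend them to, and ``apply the discretely valued case'' has nothing to apply to.

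What the paper uses instead is \cref{lem:Zavyalov}. It gives $K^\circ=\varinjlim R_\lambda$ with $R_\lambda$ excellent, regular, strictly henselian local rings of possibly large dimension in which $V(\pi)_{\rm red}$ is snc, i.e.\ $\pi=u\,x_1^{a_1}\cdots x_r^{a_r}$. This changes the argument in two ways your plan does not anticipate.

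First, $X_\lambda$ is not semistable over a one-dimensional base. It is log regular for the log structure induced by $D(\pi)$, and classical purity is applied to it directly.

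Second, and this is the missing idea, $V(\pi)\subseteq X_\lambda$ has more irreducible components than $X_k$: locally $V(x_i,T_j)$ for all $i\leq r$, $j\leq s$, versus one component $\sqrt{(\pi,T_j)}$ per $j$ over $K^\circ$. Hypothesis \labelcref{propitem:Abhy-tame-boundary} says nothing directly about these extra generic points. The paper handles them in two steps.
\begin{enumerate}[(1)]
\item It spreads out the Kummer description of \cref{lem:tameness-conditions} at the $\beta_i$ to an open $U\subseteq X_\lambda$. Over $U$ the descended cover is tame at every valuation centred in $U$ (\cref{lem:locus of tameness}).
\item After strict henselisation at $\bar x$, it shows that every generic point of the boundary of $X_{\lambda,(\bar x)}$ specialises to the image of a generic point of the boundary of $X_{(\bar x)}$. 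Hence it lies in the preimage of $U$ (\cref{lem:generic-points-ugggh}).
\end{enumerate}
Your ``finitely many data'' heuristic corresponds to the first step. The second step, forced by the higher-dimensional approximating rings, is what your plan lacks.
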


\noindent\emph{Proof (part one).} 
\label{proof:scheme-Abhyankar}
Assertion \labelcref{propitem:Abhy-tame} obviously implies assertion \labelcref{propitem:Abhy-tame-boundary}. Assume \labelcref{propitem:Abhy-ket} and show that it implies \labelcref{propitem:Abhy-tame}. For any $x \in \Spa(X^*,X)$ we endow $S = \Spec(k(x)^+)$ with the standard log structure. The resulting  map of log schemes $\tilde{x} = \Spec(k(x)^+ \setminus 0 \to k(x)^+) \to X$ yields a pull-back of $Y \to X$ to $\tilde{x}$ that is a Kummer \'etale cover of $\tilde{x}$.
By  \cite[Corollary 4.4.9]{AHLS-Log} this corresponds to a tamely ramified extension of $k(x)$, and this shows \labelcref{propitem:Abhy-tame}.

For \labelcref{propitem:Abhy-tame-boundary}$\Rightarrow$\labelcref{propitem:Abhy-ket}, suppose first that $K$ is discretely valued. In this case, $\Spec(K^\circ)$ is log regular for the standard log structure. By \cite[Corollary~5.3.7]{AHLS-Log} (the scheme version of  \cref{prop:log-sm-vertical}(c)) the map $X\to \Spec(K^\circ)$ is log smooth for the standard log structures on source and target. Therefore by \cite[Theorem~8.2]{KatoToricSingularities} the log scheme $X$ is log regular as well. By log purity (\cref{thm:GR-log-Abhyankar}) $Y^*\to X^*$ extends to a Kummer \'etale $Y'\to X$. It remains to check that $Y'=Y$, for which we note that the log structure on $Y'$ is also the standard one, and again $Y'$ is log regular (being smooth over $X$). Thus in particular $Y'$ is normal \cite[Theorem~4.1]{KatoToricSingularities}, and hence $Y=Y'$.

It remains to show \labelcref{propitem:Abhy-tame-boundary}$\Rightarrow$\labelcref{propitem:Abhy-ket} for $K$ algebraically closed. In this case, log purity is not readily available, but we shall ultimately be able to apply it using approximation techniques. 

\hphantom{a} \hfill \emph{...the proof will resume after \cref{lem:generic-points-ugggh}. $\square$}  

\medskip

The approximation part of the proof of \cref{thm:scheme-Abhyankar} will rely on the following lemma. 

\begin{lem}[Zavyalov] \label{lem:Zavyalov}
    Let $K$ be an algebraically closed non-archimedean field, and let $\pi\in K$ be a pseudouniformiser. Then $K^\circ$ is isomorphic to the filtered colimit $\varinjlim R_\lambda$ of subalgebras $R_\lambda\subseteq K^\circ$ such that
    \begin{enumerate}[(i)]
        \item $\pi \in R_\lambda$;
        \item $R_\lambda$ is a strictly henselian local ring and $R_\lambda\to K^\circ$ is a local homomorphism;
        \item $R_\lambda$ is excellent and regular;
        \item the divisor $V(\pi)_{\rm red}\subseteq \Spec(R_\lambda)$ has strict simple normal crossings.
    \end{enumerate}
\end{lem}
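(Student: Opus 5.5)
We will realise $K^\circ$ as a filtered union of ``arithmetic'' subrings built from finitely generated data and then improve each one by desingularisation. Write $K^\circ=\varinjlim A_\mu$ over the finitely generated $\bZ[\pi]$-subalgebras $A_\mu\subseteq K^\circ$ containing $\pi$. For each $\mu$ we produce a subring $R\subseteq K^\circ$ with $A_\mu\subseteq R$ satisfying (i)--(iv), and we arrange these rings into a filtered system. The candidate for $R$ is the strict henselisation of a local ring of a regular model of $A_\mu$ at the centre of the valuation of $K$. Since $K$ is algebraically closed and $K^\circ$ is henselian with algebraically closed residue field, $K^\circ$ contains the strict henselisation of every local ring through which it factors locally. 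This gives (ii) once the local ring in question is dominated by $K^\circ$.

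\emph{Step 1 (regularise and make $\pi$ snc).} Here $\Spec(A_\mu)$ is of finite type over $\bZ$, hence excellent. The valuation of $K$ restricted to $\mathrm{Frac}(A_\mu)$ has a centre on every proper model. Apply Gabber/Temkin local uniformisation in the form of \cite[Th\'eor\`eme~1.1]{TravauxGabberIX}, or Temkin's inseparable local uniformisation. These give, after possibly replacing $\mathrm{Frac}(A_\mu)$ by a finite extension inside $K$, which is harmless because $K$ is algebraically closed, a regular scheme $Z$ of finite type over $\Spec(A_\mu)$ together with a point $z$ at which the valuation of $K$ is centred. The same results let us arrange that $V(\pi)_{\rm red}$ is strict snc near $z$. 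The key input is that these uniformisation results are valuation-local, with alterations allowed. The valuation of $K$ then gives a local injection $\cO_{Z,z}\to K^\circ$, which extends to $\cO_{Z,z}^{\rm sh}\to K^\circ$. Set $R=\cO_{Z,z}^{\rm sh}$, identified with its image. Then (i) holds. For (iii), $R$ is regular and excellent, because strict henselisation preserves both properties. For (iv), the snc property of $V(\pi)_{\rm red}$ is étale local, so it passes to $R$. Injectivity of $R\to K^\circ$ holds because $R$ is a domain whose fraction field is algebraic over $\mathrm{Frac}(A_\mu)$ and maps into the field $K$.

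\emph{Step 2 (filteredness).} Given two such rings $R_1,R_2\subseteq K^\circ$, apply Step 1 to a finitely generated subalgebra containing generators of suitable finite-type models of both. The new regular model $Z'$ should dominate the models for $R_1,R_2$ at the centre. Then the corresponding strict henselisations satisfy $R_1,R_2\subseteq R'$, since inclusions of local rings dominated by $K^\circ$ induce maps on strict henselisations compatible with the embeddings into $K^\circ$. The union of all $R$ contains every $A_\mu$, hence equals $K^\circ$.

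The main obstacle is Step 1: obtaining simultaneously regularity and strict snc of $V(\pi)_{\rm red}$ at the centre of a non-discrete, possibly higher-transcendence valuation, in mixed or positive characteristic. I would first try to use Gabber's local uniformisation with a prescribed divisor, namely the ``snc for a given nowhere dense closed subset'' version in \cite{TravauxGabberIX}, applied to $V(\pi)$. Failing that, I would use Temkin's \cite{Temkin2017:AlteredLocalUniformization}, which yields semistable or toroidal charts after a finite extension.
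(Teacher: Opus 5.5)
Your proof is correct. Its final step is exactly what the paper does: strictly henselise a filtered system of excellent regular local subrings of $K^\circ$ that contain $\pi$, are dominated by $K^\circ$, and in which $V(\pi)_{\mathrm{red}}$ is strict snc.

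The difference is where that system comes from. The paper does not build it: it quotes \cite[Lemma~A.2]{Zavyalov}, which already provides it, so its proof is one line. You instead reconstruct it from Gabber's weak local uniformisation, the same input the paper uses in \cref{prop:h-locally-regular}. This makes your argument self-contained, at the cost of three verifications that you only sketch.

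\emph{Lifting.} You must lift $\Spec(K^\circ)\to\Spec(A_\mu)$ to some member $X_i$ of Gabber's covering family. Lift the generic point using that $K$ is algebraically closed. Then use the valuative criterion for the proper surjections, and strict henselianity of $K^\circ$ for the \'etale ones.

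\emph{Injectivity.} The kernel of $\cO_{X_i,z}^{\mathrm{sh}}\to K^\circ$ is a prime meeting $A_\mu$ trivially. Since $X_i\to\Spec(A_\mu)$ is generically finite, a nonzero element of this kernel would satisfy a polynomial relation over $A_\mu$ with nonzero constant term. That constant term would lie in the kernel intersected with $A_\mu$, which is impossible.

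\emph{Filteredness.} Take affine neighbourhoods $\Spec(B_j)$ of the two centres and note that $B_j\hookrightarrow K^\circ$. Then uniformise the subring generated by $B_1$ and $B_2$; its centre maps to both old centres.

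Finally, strict snc is not \'etale local in general, so that phrase in your write-up is inaccurate. What you actually need, and what holds, is that a regular system of parameters $x_1,\dots,x_d$ of $\cO_{X_i,z}$ with $V(\pi)_{\mathrm{red}}=V(x_1\cdots x_r)$ remains one in the strict henselisation.
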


\begin{proof}
We apply \cite[Lemma~A.2]{Zavyalov} and replace the resulting subrings of $K^\circ$ with their respective strict henselisations with respect to the map to the residue field $k$.
\end{proof}

Let us henceforth fix a pseudouniformiser $\pi$ of $K^\circ$. We will consider schemes $Z$ defined over subalgebras $R\subseteq K^\circ$ containing $\pi$. In this case, we shall extend our previous notation and denote by $Z^*$ the open subset $D(\pi)\subseteq Z$ (this agrees with the previous use since $K= K^\circ[1/\pi]$).

\begin{lem}  
\label{lem:straightforward-approx}
    Let $K$ be an algebraically closed non-archimedean field. Let $X=\Spec(A)$ be an affine $K^\circ$-scheme equipped with an \'etale map
    \[ 
        X\longrightarrow W = \Spec(K^\circ[T_1, \dots, T_n]/(\pi- T_1\cdot\ldots\cdot T_s)).
    \]
    Let $X^* = X_K = \Spec(A[1/\pi])$ be its generic fibre and let $Y^*=\Spec(B)\to X^*$ be a finite \'etale map. Let $\{R_\lambda; \lambda \in I\}$ be a cofiltering system of subalgebras of $K^\circ$ containing $\pi$ such that $K^\circ=\varinjlim R_\lambda$, 
    and let
    \[
        W_\lambda = \Spec(R_\lambda[T_1, \dots, T_n]/(\pi- T_1\cdot\ldots\cdot T_s)).
    \]
    Then for some $\lambda\in I$ there exist an \'etale map $X_\lambda=\Spec(A_\lambda)\to W_\lambda$ and a finite \'etale map $Y_\lambda=\Spec(B_\lambda)\to \Spec(A_\lambda[1/\pi]) = X^*_\lambda$ whose base changes to $K^\circ$ are $X\to W$ and $Y^*\to X^*$, respectively.
\end{lem}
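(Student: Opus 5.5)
This is a standard limit (noetherian approximation) argument in the style of EGA IV §8, applied twice: first to $X\to W$, then to $Y^*\to X^*$.

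First, $W=\varprojlim_\lambda W_\lambda$ with affine transition maps, since $K^\circ[T_1,\dots,T_n]/(\pi-T_1\cdots T_s)=\varinjlim_\lambda R_\lambda[T_1,\dots,T_n]/(\pi-T_1\cdots T_s)$. The element $\pi$ lies in every $R_\lambda$, so each $W_\lambda$ is defined. The map $X\to W$ is étale, hence of finite presentation. Write $A$ as a quotient of a polynomial ring over $\cO(W)$ by finitely many relations. All coefficients involved lie in some $\cO(W_\lambda)$, and this gives a finitely presented $A_\lambda$ with $A_\lambda\otimes_{\cO(W_\lambda)}\cO(W)\simeq A$. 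By the limit theorem for étale morphisms (\stacks{07RP}), after enlarging $\lambda$ the map $X_\lambda=\Spec(A_\lambda)\to W_\lambda$ is étale. Concretely, one can instead use the local structure theorem: Zariski locally on $X$ it is standard étale, given by $(\cO(W)[t]/(g))_h$ with $g'$ invertible. A finite cover by such opens descends, since a finite set of data always does, and the resulting cover still covers $X_\lambda$ for large $\lambda$.

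Next, $X^*=\Spec(A[1/\pi])=\varprojlim X^*_\lambda$ with $X^*_\lambda=\Spec(A_\lambda[1/\pi])$. This is again a cofiltered limit of affine schemes with affine transitions, because localisation commutes with colimits: $A[1/\pi]=\varinjlim A_{\lambda'}[1/\pi]$, where $A_{\lambda'}=A_\lambda\otimes_{R_\lambda}R_{\lambda'}$. The map $Y^*\to X^*$ is finite étale, so finitely presented, and it descends to some $Y_{\lambda'}=\Spec(B_{\lambda'})\to X^*_{\lambda'}$ with base change $Y^*$. By the limit results for finite morphisms (\stacks{01ZO}) and for étale morphisms, enlarging $\lambda'$ makes $Y_{\lambda'}\to X^*_{\lambda'}$ finite étale. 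Finally, replace $\lambda$ by $\lambda'$ and $X_\lambda$ by its base change $X_{\lambda'}$; the étale map to $W_{\lambda'}$ is preserved. This gives both required descents over a single index.

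There is no real obstacle here. The only point to verify is that the index set is filtered and the colimits are honest ring colimits, so that the finite-presentation limit theorems apply. The one subtle thing is to keep $\pi$ in every $R_\lambda$, so that inverting $\pi$ commutes with the descent, and this is part of the hypotheses.
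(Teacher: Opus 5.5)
Your proposal is correct and matches the paper's proof. The paper's proof is a single line: étale (and finite étale) maps of affine schemes are of finite presentation, so the claim follows from the limit results of EGA IV \S 8 (descent of finitely presented morphisms, and of finiteness) combined with the limit lemma for étale morphisms \stacks[Lemma]{07RP}. Your extra bookkeeping, namely that inverting $\pi$ commutes with the colimit and that both descents can be arranged over a common index, is what the paper leaves implicit.
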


\begin{proof}
Since \'etale morphisms of affine schemes are of finite presentation, this follows from \cite[8.8.2 and 8.10.5(viii,x)]{EGAIV3} and \stacks[Lemma]{07RP}.
\end{proof}

We now fix a cofiltering system $\{R_\lambda\}$ as in \cref{lem:Zavyalov} and assume that there is an initial index $\lambda_0$ for which the claim of \cref{lem:straightforward-approx} holds. Then we define $X_\lambda = \Spec(A_\lambda) \to W_\lambda$ and $Y_\lambda = \Spec(B_\lambda) \to X_\lambda^*$ by base change from the objects for the initial $\lambda_0$. 

\begin{lem}
\label{lem:locus of tameness}
     Assume that $K$ is algebraically closed. 
Let $\beta_1, \ldots, \beta_m \in X$ be such that $\cO_{X,\beta_i}$ is a valuation ring for $i =1, \ldots, m$. Let $Y \to X$ be the normalisation of a finite \'etale map $Y^* \to X^*$ that is tame at $\beta_1, \ldots, \beta_m$. Then there exist a $\lambda$ and an open $U \subseteq X_\lambda$ containing the images of $\beta_1, \ldots, \beta_m$ and such that, for every $\zeta \in \Spa(X_\lambda^*, X_\lambda)$ centred in $U$ (i.e. $\zeta \in \Spa(U^*,U)$), the cover $Y_\lambda \to X_\lambda$ is tame at $\zeta$. 
\end{lem}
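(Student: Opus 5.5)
The idea is to use \cref{lem:tameness-conditions-galois}~\labelcref{lemitem:tame-abhyankar}: tameness at each $\beta_i$ is witnessed by a finite amount of algebraic data. That data descends to some $R_\lambda$ and then extends to an open neighbourhood, where the same Kummer description forces tameness.

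First I reduce to the Galois case. Replacing $Y^*$ by a Galois closure (a connected component of a fibre power of $Y^*$ over each connected component of $X^*$) does not affect tameness, by \cref{lem:properties-tameness-relative-S}~\labelcref{lemitem:sorite-tameness-relative-S3}, and this Galois closure also descends to some $\lambda$. Since the $\beta_i$ are finitely many, it suffices to handle one $\beta=\beta_i$ and intersect the resulting opens.

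Next I write down the Kummer data at $\beta$. Let $V=\cO_{X,\beta}$, with fraction field $k(\eta)$. Tameness at $\beta$ and \cref{lem:tameness-conditions-galois} give a finite unramified extension $V'/V^h$ (an étale neighbourhood of $\beta$), elements $a_1,\dots,a_r$, and an integer $n$ prime to $p$ such that every residue field of $Y^*$ over $\eta$ embeds into $k(\eta)'(\sqrt[n]{a_1},\dots,\sqrt[n]{a_r})$. I spread this out. There is an étale neighbourhood $X'\to X$ of $\beta$ on which the $a_j$ are defined. Over $X'^*$ minus a closed set avoiding the generic point, the base change of $Y^*$ is dominated by the Kummer cover $Z^*=X'^*[t_j]/(t_j^n-a_j)$. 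After restricting to a neighbourhood of $\beta$, I can choose the $a_j$ to be units times $\pi^{q_j}$ with $q_j\in\bQ$, or monomials in the local coordinates $T_i$. This is possible because $V$ has value group generated over $\Gamma_K$ by finitely many values (finite ramification index, \cref{lem:local-ring-val-ring}), and since $K$ is algebraically closed, $\pi^{q}\in K^\circ$. Everything here is finitely presented, so by \cref{lem:straightforward-approx} the neighbourhood $X'$, the elements $a_j$, the units, and the domination map descend to some $X'_\lambda\to X_\lambda$. Enlarging $\lambda$ ensures the $\pi^{q_j}$ lie in $R_\lambda$.

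Then I show tameness on the image $U$ of $X'_\lambda$. Take $\zeta\in\Spa(U^*,U)$ and lift it to $X'_\lambda$. At $\zeta$, the extension $k(\zeta)(\sqrt[n]{a_j})$ with $a_j=u_j\cdot(\text{monomial})$ is tame. Indeed, $u_j$ is a unit at the centre, so extracting its $n$-th root is unramified because $n$ is invertible in $R_\lambda$; the $n$-th root of the monomial part is Kummer of degree prime to $p$. Hence the Kummer cover is tame at every point centred in $U$. Since $Y_\lambda$ is dominated by it, $Y_\lambda$ is tame at $\zeta$ by \cref{lem:tameness-conditions}.

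The main obstacle is the domination step. I must make sure the embedding of fields at the generic point $\eta$ propagates to an actual morphism over a \emph{Zariski} (or étale) neighbourhood of $\beta$ in $X_\lambda$, and not merely over an open of $X^*$ that misses points centred near $\beta$. My first attempt would be to work with normalisations: the normalisation of $X'$ in $Z^*$ is finite over $X'$, so a dominating map defined at the generic points extends over the whole normal locus. This would use regularity of $R_\lambda$ and of $W_\lambda$ from \cref{lem:Zavyalov} to control the normalisation over $U$.
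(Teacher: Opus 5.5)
Your overall plan matches the paper's: apply \cref{lem:tameness-conditions} at each $\beta_i$, spread the Kummer data out to an \'etale neighbourhood of $\beta_i$, descend to some $\lambda$, take $U$ to be the union of the images, and conclude by \cref{lem:tameness-conditions} again. But the argument is incomplete exactly at the step you flag. As you set it up, the domination $Z^*\to Y^*\times_{X^*}X'^*$ exists only over a dense open $O\subseteq X'^*$, and you descend it in that form. Your final step then only covers valuations whose support lies in $O_\lambda$. You also cannot afterwards shrink $X'_\lambda$ around the image of $\beta$ to discard $X'^*_\lambda\setminus O_\lambda$: the local rings of $X'_\lambda$ are noetherian of higher dimension, and the closure of that set may contain the image of $\beta$. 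Your proposed remedy is not carried out. As stated it also leans on regularity of $W_\lambda$, which fails in general (e.g.\ when $\pi=x_1x_2$ in $R_\lambda$).

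The missing point is that the domination must be made global \emph{before} approximating, and this is where the valuation-ring hypothesis is used. In the situation of interest the $\beta_i$ are generic points of $X_k$. By \cref{lem:local-ring-val-ring}, $\cO_{X,\beta}$ extends $K^\circ$ with finite ramification index, so it has rank one with $\pi$ in its maximal ideal. The same holds for the local ring $\cO_{X',\beta'}$ of any \'etale neighbourhood. Hence $\cO_{X',\beta'}[1/\pi]$ is the fraction field $k(\eta')$, i.e.\ $\Spec k(\eta')=\varprojlim_W W^*$ over the affine open neighbourhoods $W$ of $\beta'$ in $X'$. By limit arguments, the $a_j$ (multiplied by a suitable $n$-th power of $\pi$), the Kummer cover, and the surjection onto $Y^*\times_{X^*}W^*$ are then defined over \emph{all} of $W^*$ for a suitable $W$. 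Equivalently, the generic points of $X'^*\setminus O$ lie in $X'^*$ and differ from $\eta'$, so none of them generalises $\beta'$. Alternatively, $X'^*$ is smooth over $K$, so morphisms between finite \'etale covers of $X'^*$ extend uniquely across dense opens; this is the correct form of your normalisation idea, applied over $X'^*$ rather than at level $\lambda$. This is the content of ``spreading out'' in the paper's proof; after it, descent to $\lambda$ is routine and your last step goes through.

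The Galois reduction and the special form of the $a_j$ are unnecessary. \cref{lem:tameness-conditions} allows arbitrary $a_j$, and $n$ is invertible in $R_\lambda$ because $R_\lambda\to K^\circ$ is local, hence prime to the residue characteristic of every valuation centred on $X_\lambda$. Your normal form ``unit times $\pi^{q_j}$, $q_j\in\bQ$'' is in any case wrong when $\Gamma_K\neq|\pi|^{\bQ}$.
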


\begin{proof}
Applying \cref{lem:tameness-conditions}~\ref{lemitem:tame-abhyankar} to the valuation rings $\cO_{X,\beta_i}$ and spreading out, we see that for each $i=1,\dots, m$ there exist
\begin{itemize}
    \item 
    an affine scheme $U_i$ with an \'etale map $U_i\to X$ whose image contains $\beta_i$,
    \item 
    elements $g_1,\dots, g_r\in \cO(U_i)$ whose images in $\cO(U_i^*)$ are invertible,
    \item 
    an integer $n\geq 1$ which is invertible on $X$;
    \item 
    a finite set $J_i$, and  
    \item 
    a surjective map of \'etale covers of $U_i^*$
    \[
        V_i = \coprod_{j \in J_i} 
        \Spec(\cO(U_i)[1/\pi][x_1,\dots, x_r]/(x_1^{n} - g_1, \dots, x_r^{n} - g_r)) \la Y^*\times_{X^*} U^*_i. 
    \]
\end{itemize}
Picking $\lambda$ large enough, we may assume that the $U_i\to X$, the functions $g_j\in \cO(U_i)$, and the map $V_i\to Y^*\times_{X^*}U_i^*$ are respectively the base changes to $X$ of \'etale maps $U_{i,\lambda}\to X_\lambda$, functions $g_j\in \cO(U_{i,\lambda})$ whose images in $\cO(U_{i,\lambda}^*)$ are invertible, and a surjection of \'etale covers of $U_{i,\lambda}^*$ from $V_{i,\lambda}$ (defined by the same formula over $U_{i,\lambda}$) onto the base change of $Y^*_\lambda$. It now suffices to take $U \subseteq X_\lambda$ to be the union of the images of the \'etale maps $U_{i,\lambda}\to X_\lambda$. Indeed, for every map $\Spec(V)\to U$ from a valuation ring $V$ which sends the generic point to $U^*$, the base change of $Y^*_\lambda \to X^*_\lambda$ to ${\rm Frac}(V)$ will be tamely ramified by the characterisation of  \cref{lem:tameness-conditions}~\ref{lemitem:tame-abhyankar}.
\end{proof}

\begin{lem} \label{lem:generic-points-ugggh}
    In the situation fixed after  \cref{lem:straightforward-approx}, let $\bar x\to X$ be a geometric point. Then for every $\lambda$, every open subset of the strict henselisation $X_{\lambda,(\bar x)}$ containing the images of the generic points of $X_{(\bar x)}\setminus (X_{(\bar x)})^*$ contains all generic points of $X_{\lambda,(\bar x)} \setminus (X_{\lambda,(\bar x)})^*$.
\end{lem}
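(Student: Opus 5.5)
The plan is to reduce the statement to a specialisation statement and then verify it by an explicit local computation in the semistable chart. Write $X'_\lambda = X_{\lambda,(\bar x)}$ and $X' = X_{(\bar x)}$. Open subsets are stable under generalisation. It therefore suffices to show the following: for every generic point $\eta$ of $V(\pi) = X'_\lambda\setminus (X'_\lambda)^*$ there is a generic point $\xi$ of $V(\pi)\subseteq X'$ whose image $\xi_\lambda$ in $X'_\lambda$ is a specialisation of $\eta$. In the chart, $V(\pi)\subseteq W_\lambda$ is the union of the $V(T_i)$ for $i\le s$, because $\pi = T_1\cdots T_s$. The same holds in $X'_\lambda$ and in $X'$, since $X_\lambda\to W_\lambda$ is étale, strict henselisation is flat, and $X = X_\lambda\otimes_{R_\lambda}K^\circ$. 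Hence every generic point in question is a generic point of some $V(T_i)_{\rm red}$.

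Fix $i$. In $W_\lambda$ we have $V(T_i)_{\rm red} = \bA^{n-1}_{\Spec(R_\lambda/\pi)_{\rm red}}$. By \cref{lem:Zavyalov}, $V(\pi)_{\rm red}\subseteq\Spec(R_\lambda)$ is a strict snc divisor $\bigcup_j D_j$ with each $D_j$ regular. Consequently $V(T_i)_{\rm red}\cap X'_\lambda$ is a union of strict henselisations $Z_{ij}$ of schemes étale over $\bA^{n-1}_{D_j}$, taken at the image of $\bar x$. Each $Z_{ij}$ is smooth over the regular strictly local scheme $D_j$, hence regular local and in particular irreducible. The generic points $\eta$ are therefore exactly the generic points $\eta_{ij}$ of the $Z_{ij}$.

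Next I will locate the candidate $\xi_\lambda$ inside the closed fibre. Since $R_\lambda\to K^\circ$ is local (\cref{lem:Zavyalov}), every point of $V(\pi)\subseteq X'$ lies over $\mathfrak m_{K^\circ}$, so its image in $X'_\lambda$ lies over the closed point of $\Spec(R_\lambda)$. The fibre $F_{ij}$ of $Z_{ij}$ over that closed point is the strict henselisation of a scheme smooth over the separably closed residue field $k_\lambda$ of $R_\lambda$. Hence $F_{ij}$ is irreducible, with generic point $\zeta_{ij}$, and $\zeta_{ij}\in\overline{\{\eta_{ij}\}}$. I would check this last specialisation by going down along the flat map $Z_{ij}\to D_j$: a point of $Z_{ij}$ over the closed point of $D_j$ generalises to a point over the generic point of $D_j$, and such a point lies in the closure of the unique generic point $\eta_{ij}$ of the irreducible $Z_{ij}$.

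Finally, I will show that $\zeta_{ij}$ is the image of a generic point of $V(T_i)_{\rm red}\cap X'$. The closed fibre of $V(T_i)\subseteq X'$ is $V(T_i)_{\rm red}\cap X' = F_{ij}\times_{k_\lambda}k$, taken at the strict henselisation. Here $k_\lambda\subseteq k$ with $k$ algebraically closed and $k_\lambda$ separably closed, so this base change is a universal homeomorphism onto its image up to strict henselisation. Its generic point therefore maps to $\zeta_{ij}$, and this generic point is one of the generic points of $X'\setminus (X')^*$.

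The main obstacle is making this identification rigorous: that the closed fibre of $X'$ over $\Spec k$ is the strict henselisation of $(X_\lambda)_{k_\lambda}\otimes_{k_\lambda}k$ at $\bar x$, and that flat base change to $k$ sends generic points to generic points. I would handle this using that strict henselisation commutes with the base change $X = X_\lambda\times_{R_\lambda}K^\circ$ followed by reduction modulo $\mathfrak m$, together with flatness of field extensions, which lets going down apply once more. Note that only the components $D_j$ passing through the closed point of $\Spec(R_\lambda)$ matter, and all of them do, since $\Spec(R_\lambda)$ is local.
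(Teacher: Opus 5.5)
Your argument is correct in substance and follows the same route as the paper. The special locus is cut out by $T_i$ together with the local parameters $x_j$ of $R_\lambda$. The pieces $Z_{ij}$ stay irreducible after strict henselisation because they are regular. Locality of $R_\lambda\to K^\circ$ forces the special locus of $X'$ to lie over the closed point of $\Spec(R_\lambda)$. The only organisational difference is that the paper first uses $X_{(\bar x)}\simeq W_{(\bar x)}$ to work on $W$ directly, while you pull back along $X_\lambda\to W_\lambda$.

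\emph{A false claim in your last step.} Base change along $k_\lambda\subseteq k$ is not a universal homeomorphism in general, because $k/k_\lambda$ need not be algebraic. The $R_\lambda$ come from finitely generated subrings of $K^\circ$, so $k_\lambda$ is typically far smaller than $k$. Your fallback via flatness and going down does correctly show that the generic point of $V(T_i)_{\rm red}\cap X'$ maps to $\zeta_{ij}$.

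\emph{You do not need to identify $\xi_\lambda$ at all.} The point $\xi_\lambda$ lies in $V(T_i)$ and over the closed point of $\Spec(R_\lambda)$, hence in $V(x_j)$. So $\xi_\lambda\in Z_{ij}\subseteq\overline{\{\eta_{ij}\}}$ by irreducibility of $Z_{ij}$. This containment $\mathfrak p_{ij}\subseteq\mathfrak q_j$ is exactly how the paper concludes.

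\emph{A step that needs one line of justification.} You should justify that the generic point $g_i$ of $V(T_i)_{\rm red}\cap X'$ is a generic point of $X'\setminus(X')^*$, rather than just a specialisation of one. Here is why it holds. The scheme $V(T_i)_{\rm red}\cap X'$ is faithfully flat over a local ring of the hyperplane $\{T_i=0\}\subseteq\bA^n_k$. Hence for $i'\neq i$ the coordinate $T_{i'}$ is nonzero in the domain $\mathcal{O}(V(T_i)_{\rm red}\cap X')$, so $g_i\notin V(T_{i'})$. Therefore $g_i$ is not a proper specialisation of any $g_{i'}$, and the generic points of $X'\setminus(X')^*$ are all among the $g_{i'}$.
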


\begin{proof}
Since $X\to W$ is \'etale, the induced map $X_{(\bar x)}\to W_{(\bar x)}$ is an isomorphism. We may therefore assume $X=W$ and $X_\lambda=W_\lambda$. Let us find the generic points of $W\setminus W^*$. Since $R_\lambda$ is a regular local ring in which $V(\pi)_{\rm red}$ is an snc divisor, there exist local parameters $x_1,\dots, x_d$ (where $d=\dim(R_\lambda)$), integers $a_1,\dots,a_r\geq 1$ with $1 \leq r\leq d$ and a unit $u \in R_{\lambda}^\times$ such that
\[
    \pi=u \cdot x_1^{a_1}\cdot\ldots\cdot x^{a_r}_r. 
\]
In this case, $W\setminus W^*=V(\pi)$ is the underlying topological space of the spectrum of 
\[
    R_\lambda[T_1,\dots, T_n]/(\pi - T_1\cdot\ldots\cdot T_s,\pi) = R_\lambda[T_1,\dots, T_n]/(x_1^{a_1}\cdot\ldots\cdot x^{a_r}_r, T_1\cdot\ldots\cdot T_s),
\]
and the minimal primes of this ring 
are of the form 
\[
\mathfrak{p}_{ij} = (x_i, T_j), \qquad i \leq r, \ j \leq s
\]
(if $s=0$, then this simplifies to $\mathfrak{p}_i=(x_i)$). The minimal primes of the ring 
\[
K^\circ[T_1,\dots, T_n]/(\pi, T_1\cdot\ldots\cdot T_s)
\]
are of the form 
\[
\mathfrak{q}_j = \sqrt{(\pi, T_j)},  \qquad j \leq s
\]
(if $s=0$ then this is just $\mathfrak{q} = \sqrt{(\pi)}$). Note that $x_i$ is contained in the maximal ideal of $K^\circ$ (since $R_\lambda \to K^\circ$ is local) that can be described as $\sqrt{(\pi)}$. We see that $\mathfrak{p}_{ij}\subseteq \mathfrak{q}_j$, resp.~$\mathfrak{p}_i \subseteq \mathfrak{q}$ if $s=0$. In other words, every generic point of $W_\lambda\setminus W^*_\lambda$ specialises to the image of a generic point of $W\setminus W^*$, and thus every open subset of $W_\lambda$ containing the images of the generic points of $W\setminus W^*$ contains all generic points of $W_\lambda\setminus W^*_\lambda$.

The computation shows that the irreducible components $V(\mathfrak{p}_{ij})$, resp.~$V(\mathfrak{p}_i)$ if $s=0$, of $W_\lambda\setminus W^*_\lambda$ are regular and in particular geometrically unibranch. It follows that the irreducible components of $W_{\lambda,(\bar x)} \setminus (W_{\lambda,(\bar x)})^*$ are in bijection with the irreducible components of $W_\lambda\setminus W^*_\lambda$ which pass through the image of $\bar x$ in $W_\lambda$, i.e.\ with those $\mathfrak{p}_{ij}$ such that $x_j(\bar x)=0=T_i(\bar x)$. Analogously, the irreducible components of $W_{(\bar x)}\setminus W^*_{(\bar x)}$ correspond to $\mathfrak{q}_i$ such that $T_i(\bar x) = 0$. Since $R_\lambda\to K^\circ$ is local and $\pi(\bar x)=0$, we have $x_j(\bar x)=0$ for all $j\leq r$. (The argument in the case $s=0$ is similar.) Again, it follows from this enumeration that every generic point $\zeta_{ij}$ of $W_{\lambda,(\bar x)} \setminus (W_{\lambda,(\bar x)})^*$ 
specialises to the image of a generic point $\beta_i$ of $W_{(\bar x)}\setminus W^*_{(\bar x)}$, and the result follows.
\end{proof}

We can now finish the proof of \cref{thm:scheme-Abhyankar} started earlier on page~\pageref{proof:scheme-Abhyankar}.

\begin{proof}[Proof of \cref{thm:scheme-Abhyankar} (part two)]
Assume that $K$ is algebraically closed and \ref{propitem:Abhy-tame-boundary} holds; we wish to show \ref{propitem:Abhy-ket} holds. Since the question is \'etale local on $X$, we pick a geometric point $\bar x\to X$ and show that it holds after base change to the strict henselisation  $X_{(\bar x)}$. We may assume that $X$ is affine and that there exists a pseudouniformiser $\pi$ and an \'etale map 
\[
    X\la W=\Spec(K^\circ[T_1,\dots, T_n]/(\pi-T_1\cdot\ldots\cdot T_s)).
\]
Apply \cref{lem:Zavyalov} and \cref{lem:straightforward-approx} to obtain the systems $Y^*_\lambda\to X_\lambda^*\to X_\lambda\to W_\lambda$ over $S_\lambda=\Spec(R_\lambda)$, as in the situation fixed after \cref{lem:straightforward-approx}. Apply \cref{lem:locus of tameness} with $\beta_i$ corresponding to the components of $V(\pi)$ to obtain a specific $\lambda$ and the open subset $U$, and then apply \cref{lem:generic-points-ugggh} to the preimage of $U$ in $X_{\lambda,(\bar x)}$. We deduce that the base change of $Y_\lambda\to X_\lambda$ to $(X_\lambda)_{(\bar x)}$ is tame at the generic points of $X_{\lambda,(\bar x)} \setminus (X_{\lambda,(\bar x)})^*$. Since $X_{\lambda,(\bar x)}$ is log regular with respect to the log structure induced by the open subset $(X_{\lambda,(\bar x)})^*$, we deduce from the classical purity \cref{thm:GR-log-Abhyankar} that the normalisation $Y_\lambda\to X_\lambda$ is Kummer \'etale at $\bar x\to X_\lambda$ (where we endow every scheme $Z\in \{S_\lambda, W_\lambda, X_\lambda, Y_\lambda \dots\}$ with the log structure induced by the open subset $Z^* = D(\pi)$). 

Let $Y'\to X$ be the map of log schemes obtained by base change of $Y_\lambda\to X_\lambda$ along the map of log schemes $X\to X_\lambda$. The map $Y'\to X$ is Kummer \'etale, and hence $Y'$ is smooth over $S$. It follows from \cite[5.3.1, 5.3.4, and 5.3.7]{AHLS-Log} that $Y'$ is $\eta$-normal, that $Y'\to X$ is finite, and that the log structure on $Y'$ is the one induced by the open subset $D(\pi)\subseteq Y'$. But then $Y'=Y$, and the result is proved.
\end{proof}

In order to deduce a version of purity for semistable formal schemes, we need an algebraisation-type result. 

\begin{lem}[{\cite[Corollaire, p.~578 + Remarque~2(c), p.~588]{Elkik}}] \label{Elkik-theorem} 
    Let $A$ be a ring and $\pi\in A$ a nonzerodivisor such that $(A, (\pi))$ is a henselian pair. Let $\widehat{A}$ be the $\pi$-adic completion of $A$. Then the morphism $A[1/\pi]\to \widehat{A}[1/\pi]$ induces an equivalence between finite \'etale $A[1/\pi]$-algebras and finite \'etale $\widehat{A}[1/\pi]$-algebras.
\end{lem}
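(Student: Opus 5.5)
The plan is to prove full faithfulness and essential surjectivity separately, reducing both to Elkik's approximation theorem for henselian pairs. Since the statement is a cited result of \cite{Elkik}, this is a sketch of how one would reprove it, not a new argument.

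\emph{Full faithfulness.} For finite \'etale $A[1/\pi]$-algebras $B$ and $C$, an $A[1/\pi]$-algebra map $B\to C$ is a section of the finite \'etale map $\Spec(C\otimes_{A[1/\pi]}B)\to\Spec(C)$, i.e.\ a clopen piece of a finite \'etale scheme, hence an idempotent. It therefore suffices to show: for every finite \'etale $A[1/\pi]$-algebra $D$, the map
\[
\idem(D)\la\idem(D\otimes_{A[1/\pi]}\widehat{A}[1/\pi])
\]
is bijective.
\begin{enumerate}
\item Let $D^+$ be the integral closure of $A$ in $D$. Then $D^+[1/\pi]=D$, and $(D^+,\pi D^+)$ is again a henselian pair, since $D^+$ is integral over $A$.
\item Idempotents of $D$ are integral over $A$, so they lie in $D^+$. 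This reduces the claim to comparing idempotents of $D^+[1/\pi]$ with those of the $\pi$-adic completion after inverting $\pi$.
\item I would then use the Gabber--Elkik idempotent lifting for henselian pairs: for an $A$-algebra $E$ that is integral over $A$ and whose generic fibre is \'etale, one has $\idem(E[1/\pi])\cong\idem(\widehat{E}[1/\pi])$. One way to get this is to observe that idempotents of $E[1/\pi]$ are $\pi$-power-bounded solutions of $e^2=e$. An approximate solution modulo a high power of $\pi$ can then be corrected, since $e^2-e$ has derivative $2e-1$, a unit in the \'etale algebra. This is Newton's method with the Jacobian bound taken from \'etaleness.
\end{enumerate}
Injectivity in the displayed map is straightforward, because $\pi$ is a nonzerodivisor and $A[1/\pi]\to\widehat{A}[1/\pi]$ is injective on the relevant finite modules.

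\emph{Essential surjectivity.} Let $\widehat{B}$ be finite \'etale over $\widehat{A}[1/\pi]$.
\begin{enumerate}
\item Finite \'etale algebras of rank $n$ over any base are classified by a finitely presented $A$-scheme $\mathcal{M}$: the scheme of multiplication tables on $\mathcal{O}^n$ satisfying associativity, commutativity and unit, with discriminant invertible. After inverting $\pi$, $\mathcal{M}$ is smooth modulo $\mathrm{GL}_n$.
\item Working Zariski-locally on $\widehat{A}[1/\pi]$ to make $\widehat{B}$ free, or using a presentation $\widehat{A}[1/\pi][x]/(f)$ with $f'$ invertible up to local gluing, $\widehat{B}$ corresponds to an $\widehat{A}[1/\pi]$-point of a scheme $V$ of finite presentation over $A$. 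This $V$ is smooth over $A[1/\pi]$, and the point can be scaled by a power of $\pi$ to an $\widehat{A}$-point.
\item Elkik's theorem then says: when $(A,\pi)$ is henselian and $V$ is smooth away from $V(\pi)$, any $\widehat{A}$-point of $V$ that is generically smooth can be approximated $\pi$-adically by an $A$-point. The approximation is fine enough that the two corresponding finite \'etale algebras over $\widehat{A}[1/\pi]$ become isomorphic. This follows from a Krasner-type lemma: \'etale algebras are rigid under small perturbation, which is again a Newton-method argument bounded by a power of the discriminant.
\item The $A$-point gives a finite \'etale $A[1/\pi]$-algebra $B$ with $B\otimes\widehat{A}[1/\pi]\cong\widehat{B}$.
\end{enumerate}

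I expect the main obstacle to be the approximation step. Elkik's proof needs a uniform Newton lemma in which the Jacobian ideal only contains a power of $\pi$, not a unit, and it has to handle the non-noetherian henselian pair. Elkik's Remarque~2(c) is precisely what removes the noetherian hypothesis, so I would quote that rather than reprove it. Gluing the local presentations, or choosing the $\mathrm{GL}_n$-torsor, is bookkeeping once the full faithfulness established above is available.
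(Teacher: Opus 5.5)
The paper gives no argument of its own here. It cites Elkik's Corollaire together with Remarque~2(c), which is also the load-bearing input of your sketch. What needs checking is therefore the reduction you build around that citation, and it has genuine gaps exactly in the non-noetherian setting the statement is about.

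\textbf{Full faithfulness.} The target of the functor is $\widetilde{D}:=D\otimes_{A[1/\pi]}\widehat{A}[1/\pi]$, but your step~3 compares $\idem(D)$ with $\idem(\widehat{D^+}[1/\pi])$.
\begin{itemize}
\item For a finite $A$-subalgebra $D_0\subseteq D^+$ with $D_0[1/\pi]=D$ one does have $\widehat{D_0}[1/\pi]\cong\widetilde{D}$. The reason is that the image of $D_0\otimes_A\widehat{A}$ in $\widetilde{D}$ is finitely generated, hence $\pi$-adically separated, with reductions $D_0/\pi^n D_0$.
\item $D^+$ is only integral over $A$, and $D^+/D_0$ can have unbounded $\pi$-power torsion even for $D=A[1/\pi]$. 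Then $\widehat{D^+}[1/\pi]$ can be strictly larger than $\widetilde{D}$, so step~3 addresses the wrong ring.
\item Your injectivity argument also fails, because $A[1/\pi]\to\widehat{A}[1/\pi]$ need not be injective. For example, let $A$ be a henselian valuation ring of rank two and let $\pi$ lie outside its height-one prime $\mathfrak{p}$. Then $\bigcap_n\pi^nA=\mathfrak{p}$, and the nonzero ideal $\mathfrak{p}A_{\mathfrak{p}}$ of $A[1/\pi]=A_{\mathfrak{p}}$ maps to zero.
\end{itemize}
Both problems are repaired by working with $D_0$ instead of $D^+$.
\begin{itemize}
\item Given $\hat e\in\idem(\widetilde{D})$ with $\pi^N\hat e\in\widehat{D_0}$, pick $f\in D_0$ with $f\equiv\pi^N\hat e$ modulo $\pi^M\widehat{D_0}$, and set $x=f/\pi^N$.
\item Since $D_0/\pi^nD_0\cong\widehat{D_0}/\pi^n\widehat{D_0}$ and $\widehat{D_0}$ is $\pi$-torsion free, one gets $x^2-x\in\pi^{M-2N}D_0$. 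Hence $x$ is integral, i.e.\ $x\in D^+$.
\item Henselianity of $(D^+,\pi D^+)$ gives an idempotent $e\in D^+$ with $e\equiv x$ modulo $\pi^{M-2N}D^+$.
\item Let $\widetilde{D}^+$ be the integral closure of $\widehat{A}$ in $\widetilde{D}$. Then $\delta=e-\hat e$ lies in $\pi\widetilde{D}^+$, and $\delta(1-\delta^2)=0$ with $1-\delta^2\in(\widetilde{D}^+)^\times$ forces $\delta=0$.
\item Injectivity on idempotents follows the same way. If $e_1,e_2$ agree in $\widetilde D$, then $d=e_1-e_2\in\pi D_0$, and $d(1-d^2)=0$ gives $d=0$.
\end{itemize}
Note also that $2e-1$ is a unit for an idempotent in any ring, so \'etaleness plays no role there. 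The real work is getting the approximate solution into a ring that is henselian along $\pi$, and that is where integrality enters.

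\textbf{Essential surjectivity.} The Zariski-local reduction cannot be glued, and full faithfulness does not help.
\begin{itemize}
\item Principal opens of $\Spec\widehat{A}[1/\pi]$ are not base changes of opens of $\Spec A[1/\pi]$. Localising $\widehat{A}[1/\pi]$ also leaves the henselian-pair setting in which Elkik's theorem applies. So there is nothing over $A[1/\pi]$ to glue.
\item A $\mathrm{GL}_r$-torsor, i.e.\ a point of a quotient stack, is not something Elkik's theorem approximates either.
\end{itemize}
You need one global point of one finitely presented scheme.
\begin{itemize}
\item Realise $\widehat{B}$ as the image of an idempotent matrix $P\in M_N(\widehat{A}[1/\pi])$. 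Add structure constants for a commutative, associative, unital multiplication on $\operatorname{im}(P)$ with invertible discriminant.
\item This defines a finitely presented $A[1/\pi]$-scheme. Zariski-locally over the smooth scheme of rank-$r$ idempotents it is a product with the smooth scheme of based rank-$r$ \'etale algebras, hence it is smooth.
\item Clear denominators to obtain an $A$-model on which the given point becomes an $\widehat{A}$-point.
\item Apply Elkik (with Remarque~2(c)) once, followed by your Krasner-type rigidity step.
\end{itemize}
With these repairs your outline becomes the standard deduction of the Corollaire from Elkik's approximation theorem.
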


\begin{cor} \label{finite-etale-Huber-pair}
    Let $(A,A^+)$ be a uniform Tate Huber pair, and let $\pi \in A^+$ be a topologically nilpotent unit.
    For every finite \'etale morphism $Y \to \Spa(A,A^+)$, we can find a homomorphism of Huber pairs
    $(A,A^+) \la (A_1,A_1^+)$
    such that 
    \begin{enumerate}[(i)]
    \item 
    $A^+ \to A_1^+$ is \'etale and $A_1 = A_1^+ \otimes_{A^+} A$, 
    \item 
    the induced map $A^+/\pi A^+\isomto A_1^+/\pi A_1^+$ is  an isomorphism, and 
    \item 
    there is a finite \'etale homomorphism of Huber pairs $(A_1,A_1^+) \la (B_1,B_1^+)$ and a cartesian square
    \[ 
    \begin{tikzcd}
        \Spa(B_1,B_1^+) \ar[d] \ar[r,"\sim"] & Y \ar[d] \\
        \Spa(A_1,A_1^+) \ar[r,"\sim"] & \Spa(A,A^+) . 
    \end{tikzcd}
    \]
    (Note that by (i) and (ii) the bottom map and therefore also the top map are isomorphisms.)
    \end{enumerate}
\end{cor}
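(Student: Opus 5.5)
The plan is to reduce the statement to Elkik's algebraisation theorem (\cref{Elkik-theorem}) by passing to the henselisation of $A^+$ along $\pi$.

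First set up the algebra. Since $(A,A^+)$ is uniform and Tate with topologically nilpotent unit $\pi\in A^+$, we have $A = A^+[1/\pi]$. Moreover $A^+$ is $\pi$-adically complete; if we only know that $A^+$ is open and bounded, we replace it by its completion, which is harmless because $\Spa$ only sees the completion. A finite étale $Y\to\Spa(A,A^+)$ is of the form $\Spa(B,B^+)$, where $B$ is finite étale over $A$ and $B^+$ is the integral closure of $A^+$ in $B$. This is the standard equivalence between finite étale covers of an affinoid and finite étale $A$-algebras.

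The key step is to realise $B$ as coming from an étale $A^+$-algebra with the same reduction mod $\pi$.

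1. Let $(A^{+})^{h}$ be the henselisation of the pair $(A^+,(\pi))$. It is a filtered colimit of étale $A^+$-algebras $A_1^+$ with $A^+/\pi\isomto A_1^+/\pi$.
2. The $\pi$-adic completion of $(A^{+})^{h}$ is again $A^+$. Since $A^+$ is complete, $\pi$ is a nonzerodivisor, and $(A^{+})^{h}[1/\pi]$ has completion $A$.
3. By \cref{Elkik-theorem}, the finite étale $A$-algebra $B$ descends to a finite étale $(A^{+})^{h}[1/\pi]$-algebra.
4. By standard limit arguments for finitely presented (finite étale) algebras, as in EGA IV$_3$ §8 and as used in \cref{lem:straightforward-approx}, this algebra descends further to a finite étale $A_1$-algebra $B_1$ for some stage $A_1^+$ of the colimit, where $A_1 = A_1^+[1/\pi] = A_1^+\otimes_{A^+}A$.
5. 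Define $B_1^+$ as the integral closure of $A_1^+$ in $B_1$.

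Conditions (i) and (ii) then hold by construction.

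Next we topologise and check that the square is cartesian with isomorphic rows.
1. Give $A_1$ the topology in which $A_1^+$ carries the $\pi$-adic topology. Since $A_1^+/\pi^n\cong A^+/\pi^n$ for all $n$ (étale with isomorphic reduction mod $\pi$, hence mod $\pi^n$), the completion of $(A_1,A_1^+)$ is $(A,A^+)$.
2. Consequently $\Spa(A_1,A_1^+)\to\Spa(A,A^+)$ is an isomorphism, because adic spectra are insensitive to completion.
3. The base change $B_1\otimes_{A_1}A$ is finite étale over $A$ and recovers $B$ by the Elkik equivalence. Hence $\Spa(B_1,B_1^+)\cong\Spa(B,B^+)=Y$, compatibly with the maps to $\Spa(A,A^+)$, which gives (iii).

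The main obstacle I expect is verifying the hypotheses of \cref{Elkik-theorem}, in particular the identification of the $\pi$-adic completion of $(A^{+})^{h}$ with $A^+$ and the nonzerodivisor condition. The second concern is checking that $B_1^+$ completes to $B^+$, i.e.\ that forming integral closure commutes with the completion here. I would handle the latter using that integral closure commutes with étale base change and with $\pi$-adic completion modulo $\pi^n$, and fall back on uniformity of $B_1$ if needed.
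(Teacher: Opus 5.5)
Your strategy (henselise $A^+$ along $\pi$, apply \cref{Elkik-theorem}, then descend to a finite \'etale stage of the colimit) is the same as the paper's. As written, however, it only proves a case in which the statement is empty. You assert that $A^+$ is $\pi$-adically complete, or else replace $A^+$ by its completion ``harmlessly''. Neither is allowed.

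The pair $(A,A^+)$ is not assumed complete, and conditions (i) and (ii) concern the given $A^+$. After your replacement, $A_1^+$ is \'etale over $\widehat{A^+}$. The composite $A^+\to\widehat{A^+}\to A_1^+$ is then not of finite presentation, so (i) fails. If instead $A^+$ really is complete, then $(A^+,(\pi))$ is already a henselian pair and $A^+_h=A^+$. Elkik's theorem then gives nothing, and one can simply take $A_1=A$, $B_1=B$.

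The incomplete case is exactly the one the paper needs. In the proof of \cref{thm:Abhyankar-formal-more-precise}, the corollary is applied to an \'etale algebra $\mathcal{A}$ over $K^\circ[T_1,\dots,T_n]/(\pi-T_1\cdots T_s)$. This ring is of finite type and not complete, and the point is to realise $Y$ by an \emph{algebraic} finite \'etale map $\mathcal{A}[1/\pi]\to B$.

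For the same reason, your opening claim that $Y=\Spa(B,B^+)$ with $B$ finite \'etale over $A$ is false in general. The equivalence between finite \'etale covers of $\Spa(A,A^+)$ and finite \'etale algebras holds only over the completion $\widehat A$. Descending such an algebra to an \'etale neighbourhood of $A^+$ that is trivial modulo $\pi$ is the entire content of the statement.

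The repair is to run your argument with the original, possibly incomplete, $A^+$:
\begin{enumerate}[(1)]
\item Let $B_3$ be the finite \'etale $\widehat A$-algebra corresponding to $Y$, and let $A^+_h$ be the henselisation of $(A^+,(\pi))$.
\item Since $A^+_h/\pi^n\cong A^+/\pi^n$ for all $n$, the $\pi$-adic completion of $A^+_h$ is $\widehat{A^+}$.
\item Uniformity makes $A^+\subseteq A^\circ$ bounded, hence a ring of definition, so $\widehat{A^+}[1/\pi]=\widehat A$.
\item The element $\pi$ is a nonzerodivisor on $A^+_h$: it is one on $A^+\subseteq A$, and $A^+\to A^+_h$ is flat. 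It is this, not completeness, that supplies the hypothesis of \cref{Elkik-theorem}.
\item \cref{Elkik-theorem} now descends $B_3$ to a finite \'etale $A^+_h[1/\pi]$-algebra.
\end{enumerate}
From here your steps 4--5 and the topological check go through, with $(\widehat A,\widehat{A^+})$ as the common completion of $(A,A^+)$ and $(A_1,A_1^+)$.

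Your worry about $B_1^+$ then disappears. The image of $B_1^+$ lies in $B_3^+$, the integral closure of $\widehat{A^+}$ in $B_3=\widehat{B_1}$. A continuous valuation on $B_3$ is $\le 1$ on $B_1^+$ if and only if it is $\le 1$ on $\widehat{A^+}$. Hence $\Spa(B_1,B_1^+)\cong\Spa(B_3,B_3^+)=Y$.
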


\begin{proof}
    Combining \cite[1.4.4]{HuberBook} with \cite[Corollary~1.7.3(iii)]{HuberBook}, we know that finite étale morphisms to $\Spa(A,A^+)$ are in one to one correspondence with finite étale homomorphisms
    \[
     (\widehat{A},\widehat{A}^+) \longrightarrow (B,B^+)
    \]
    of Huber pairs.
    So $Y \to \Spa(A,A^+)$ comes from a Huber pair $(B_3,B_3^+)$ over $(\widehat{A},\widehat{A}^+)$ such that $\widehat{A} \to B_3$ is finite étale and~$B_3$ is the integral closure of~$\widehat{A}^+$ in~$B_3^+$.
    We consider the henselisation $(A^+_h,\pi)$ of the pair $(A^+,\pi)$ and set $A_h=A^+_h[1/\pi]$.
    We thus have homomorphisms of Huber pairs
    \[
     (A,A^+) \longrightarrow (A_h,A^+_h) \longrightarrow (\widehat{A},\widehat{A}^+) 
    \]
    and both of them induce isomorphisms on completions, hence on adic spectra.
    Moreover, this completion is just the $\pi$-adic completion of the ring of integral elements (here we use the uniformity assumption).
    By the generalised version of Elkik's theorem (\cref{Elkik-theorem}) the finite \'etale homomorphism
    \[
     \widehat{A} \longrightarrow B_3
    \]
    is the base change of a unique finite \'etale homomorphism
    \[
     A_h \longrightarrow B_2.
    \]
    Now we can write $A^+_h$ as a colimit of étale homomorphisms
    \[
     A^+ \longrightarrow A_1^+
    \]
    inducing an isomorphism modulo~$\pi$.
    Inverting~$\pi$ we obtain a presentation of~$A_h$ as a colimit of \'etale $A$-algebras~$A_1$.
    The finite \'etale map $A_h \to B_2$ comes via base change from some finite \'etale map
    \[
     A_1 \longrightarrow B_1.
    \]
    Taking~$B_1^+$ to be the integral closure of~$A_1^+$ in~$B_1$, we obtain the desired finite \'etale homomorphism
    \[
     (A_1,A_1^+) \longrightarrow (B_1,B_1^+). \qedhere
    \]
\end{proof}

In order to state the version of \cref{thm:scheme-Abhyankar} for semistable formal schemes, we first recall the construction of the normalisation of a formal scheme $\fX$ in a finite cover $Y$ of its generic fibre, see \cite[Lemma~4.1]{AchingerLaraYoucis2022}. Let $\fX$ be an $\eta$-normal (see \cref{defi:eta-normal}) tft formal scheme over $K^\circ$ with generic fibre $X = \fX_{\rm rig}$, and let $f\colon Y\to X$ be a finite morphism of rigid spaces with $Y$ reduced. Then there exists a unique (up to a unique isomorphism) $\eta$-normal formal model $\fY$ of $Y$ together with a finite map $\mathfrak{f}\colon \fY\to \fX$ with $\mathfrak{f}_\rig = f$. We call it the {\bf $\eta$-normalisation} of $\fX$ in $Y$. It can be characterised uniquely by the following two properties.
\begin{enumerate}[(i)]
    \item Its formation commutes with \'etale base change (see \cite[Appendix~A]{AchingerLaraYoucis2022}).
    \item If $\fX = \Spf(\mathcal{A})$ for a tft $K^\circ$-algebra $\mathcal{A}$, and $Y = \Spa(B)$ for a finite algebra $B$ over $A = \mathcal{A}_K$, then $\fY = \Spf(\mathcal{B})$ where $\mathcal{B}$ is the integral closure of $\mathcal{A}$ in $B$. 
\end{enumerate}
Since every semistable formal scheme is $\eta$-normal (see \cref{prop:log-sm-vertical}), the above construction applies in our situation of interest. We warn the reader that the construction of $\mathfrak{Y}$ (or more precisely the finiteness of $\mathcal{A}\to\mathcal{B}$ in (ii)) relies on our assumptions on $K$, see \cite[Remark~4.3]{AchingerLaraYoucis2022}.

The analogue of \cref{lem:local-ring-val-ring} for formal schemes is the fact (featured in \cref{rmk:sp-facts}) that for an $\eta$-normal tft formal scheme over $K^\circ$ and a generic point $\beta$ of $\fX_k$, the local ring $\cO_{\fX,\beta}$ is a rank one valuation ring. If $X$ is its rigid generic fibre, let us call a point $\zeta\in X$ a {\bf boundary point} of $\fX$ if its specialisation ${\rm sp}(\zeta)$ is a generic point of the special fibre $\fX_k$. Then the set of boundary points maps bijectively onto the set of generic points of $\fX_k$. Moreover, if $\beta = {\rm sp}(\zeta)$, then $\cO_{\fX,\beta}=\cO_{X,\zeta}^+=k(\zeta)^+$, and hence $k(\zeta)^\succ = k(\beta)$.

\begin{thm}[Purity, formal scheme version] \label{thm:Abhyankar-formal-more-precise}
  Let $\fX$ be a semistable formal scheme over $K^\circ$ and let $X=\fX_{\rm rig}$.  Assume that $K$ is either algebraically closed or discretely valued. 
 Let $Y\to X$ be a finite \'etale map and let $\fY\to \fX$ be the $\eta$-normalisation of $\fX$ in $Y$. The following are equivalent:
  \begin{enumerate}[(a)]
    \item \label{propitem:f-Abhy-tame} 
        $Y\to X$ is tame relative to $X$,
    \item \label{propitem:f-Abhy-tame-boundary}
        $Y\to X$ is tame above every boundary point of $\fX$, 
    \item \label{propitem:f-Abhy-ket}
        the morphism $\fY^{\rm log}\to\fX^{\rm log}$ is Kummer \'etale.
  \end{enumerate}
\end{thm}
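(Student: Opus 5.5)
The plan is to reduce the formal version to the scheme version \cref{thm:scheme-Abhyankar} via the algebraisation result \cref{finite-etale-Huber-pair}. We argue the implications \labelcref{propitem:f-Abhy-ket}$\Rightarrow$\labelcref{propitem:f-Abhy-tame}$\Rightarrow$\labelcref{propitem:f-Abhy-tame-boundary}$\Rightarrow$\labelcref{propitem:f-Abhy-ket}. The middle implication is trivial, since a boundary point $\zeta$ is a point of $X$ and tameness relative to $X$ tests all points of $X$ with their valuations $k(\zeta)^+$.

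For \labelcref{propitem:f-Abhy-ket}$\Rightarrow$\labelcref{propitem:f-Abhy-tame}, take $x\in X$ and the map $\Spf(k(x)^+)\to\fX$ from \cref{rmk:sp-facts}(1), or rather its completion. We pull back $\fY^{\rm log}\to\fX^{\rm log}$ along the strict map of log formal schemes $\Spf(\widehat{k(x)^+})$, equipped with its standard log structure. This gives a finite Kummer \'etale cover of $\Spf(\widehat{k(x)^+})^{\log}$. By \cref{prop:FEt-formal-vs-sp-fibre} and topological invariance, this is the same as a Kummer \'etale cover of $\Spec(\widehat{k(x)^+})^{\log}$. By \cite[Corollary~4.4.9]{AHLS-Log}, it corresponds to a tame extension of $\widehat{k(x)}$, which is compatible with the generic fibre $Y\times_X\Spa(\widehat{k(x)},\widehat{k(x)}^+)$. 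Tameness is insensitive to completion of a henselian valued field, so this gives \labelcref{propitem:f-Abhy-tame}.

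For \labelcref{propitem:f-Abhy-tame-boundary}$\Rightarrow$\labelcref{propitem:f-Abhy-ket}, the question is local on $\fX$, because $\eta$-normalisation and the Kummer \'etale property are both \'etale local. So we may assume $\fX=\Spf(\mathcal{A})$, with $\mathcal{A}$ the $\pi$-adic completion of an \'etale algebra $A^+$ over $K^\circ[T_1,\dots,T_n]/(\pi-T_1\cdots T_s)$. We may moreover arrange that $\Spec(A^+)$ is a semistable scheme $X_0$; such algebraisations exist by \cite[1.7.3]{HuberBook}. We then apply \cref{finite-etale-Huber-pair} to $(A^+[1/\pi],\widetilde{A^+})$ and $Y$. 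This yields an \'etale $A^+\to A_1^+$, an isomorphism modulo $\pi$, together with a finite \'etale $Y_1^*\to\Spec(A_1^+[1/\pi])$ whose analytification is $Y$. Now $X_1=\Spec(A_1^+)$ is again semistable and has the same $\pi$-adic completion $\fX$.

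The boundary points of $\fX$ correspond to the generic points $\beta$ of the special fibre of $X_1$. The valuation ring $\cO_{X_1,\beta}$ of \cref{lem:local-ring-val-ring} has henselisation/completion $\cO_{\fX,\beta}=k(\zeta)^+$. Since tameness is detected after henselisation, hypothesis \labelcref{propitem:f-Abhy-tame-boundary} gives condition \labelcref{propitem:Abhy-tame-boundary} of \cref{thm:scheme-Abhyankar} for $Y_1^*\to X_1^*$. That theorem then shows that the normalisation $Y_1$ of $X_1$ in $Y_1^*$ is Kummer \'etale over $X_1^{\log}$. Completing $\pi$-adically, $\widehat{Y_1}\to\fX$ is Kummer \'etale and smooth over $\Spf(K^\circ)^{\log}$, and by \cref{prop:log-sm-vertical} it is vertical, hence carries the standard log structure and is $\eta$-normal. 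Its generic fibre is $Y$, so by uniqueness of the $\eta$-normalisation $\widehat{Y_1}=\fY$, and this gives \labelcref{propitem:f-Abhy-ket}.

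The main obstacle is this last step. We must check that $\pi$-adic completion of the scheme normalisation gives the formal $\eta$-normalisation, i.e.\ that integral closure commutes with completion here. This follows once we know $\widehat{Y_1}$ is log smooth, hence normal by \cref{prop:log-sm-vertical}\labelcref{propitem:log-sm-vertical-a}, and finite over $\fX$. A second point to verify is the compatibility of the log structures, the standard one on $\widehat{Y_1}$ versus the completion of the one on $Y_1$, via \cref{lem:log-fsch-equiv}.
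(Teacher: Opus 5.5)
Your proposal is correct and follows essentially the same route as the paper. For (b)$\Rightarrow$(c) you localise, algebraise $\fX$ via \cite[1.7.3]{HuberBook} and $Y$ via \cref{finite-etale-Huber-pair}, transfer boundary tameness along $\cO_{X_1,\beta}\to\cO_{\fX,\beta}$, apply \cref{thm:scheme-Abhyankar}, and identify the completed normalisation with $\fY$ via \cref{prop:log-sm-vertical}; for (c)$\Rightarrow$(a) you argue as in the scheme case via \cite[Corollary~4.4.9]{AHLS-Log}.

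There are two minor imprecisions, neither a real gap. First, $\cO_{\fX,\beta}$ is in general neither the henselisation nor the completion of $\cO_{X_1,\beta}$; what holds, and suffices, is that this map induces an isomorphism on $\pi$-adic completions. Second, in (c)$\Rightarrow$(a) the pull-back map is not strict, and topological invariance does not take you from $\Spf(\widehat{k(x)^+})$ to $\Spec(\widehat{k(x)^+})$; this is inessential, because tameness can be read off directly from the local Kummer form of $\fY\to\fX$ via \cref{lem:tameness-conditions}.
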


\begin{proof}
The implication \ref{propitem:f-Abhy-tame}$\Rightarrow$\ref{propitem:f-Abhy-tame-boundary} is obvious and  \ref{propitem:f-Abhy-ket}$\Rightarrow$\ref{propitem:f-Abhy-tame} follows as in the scheme case (see the proof of \cref{thm:scheme-Abhyankar}). It remains to show that  \ref{propitem:f-Abhy-tame-boundary} implies \ref{propitem:f-Abhy-ket} by reducing to the case of schemes. Since the assertion is \'etale local on $\fX$, we may assume that $\fX$ is affine and \'etale over 
\[
    \mathfrak{W} = \Spf(\widehat{\mathcal{R}}), \qquad \mathcal{R} = K^\circ[T_1, \ldots, T_n]/(\pi - T_1\cdot\ldots\cdot T_s),
\]
where $\mathcal{R}$ is given the $\pi$-adic topology. By \cite[Corollary~1.7.3(iii)]{HuberBook} we find an \'etale ring homomorphism $\mathcal{R}\to \mathcal{A}$ such that $\fX = \Spf(\mathcal{A})$ (where $\mathcal{A}$ is endowed with the $\pi$-adic topology). Using \cref{finite-etale-Huber-pair} we can replace $\mathcal{A}$ with an \'etale $\mathcal{A}$-algebra with the same formal spectrum so that $Y\to X$ is induced by a finite \'etale morphism $\mathcal{A}[1/\pi]=A\to B$.

Consider the morphism of schemes of finite type over $K^\circ$
\[ 
    \Spec(B) \la \Spec(\mathcal{A}).
\]
Here $\Spec(\mathcal{A})$ is \'etale over $\Spec(\mathcal{R})$ and hence semistable, and $\Spec(B)\to \Spec(A) = \Spec(\mathcal{A}[1/\pi])$ is finite \'etale. Moreover, since $|\Spec(\mathcal{A}/\pi \mathcal{A})| = |\fX|$, the boundary points of $\fX$ are in bijection with the generic points of $\Spec(\mathcal{A}/\pi \mathcal{A})$. 

Let $\beta \in \Spec(\mathcal{A})$ be a generic point of $\Spec(\mathcal{A}/\pi \mathcal{A})$. Then $\cO_{\Spec(\mathcal{A}), \beta}$ is a valuation ring by \cref{lem:local-ring-val-ring}. We claim that $\Spec(B)\to \Spec(A)$ is tame above $\beta$ in the sense of \cref{thm:scheme-Abhyankar}. We have an extension of $\pi$-adically separated valuation rings 
\[ 
    \cO_{\Spec(\mathcal{A}), \eta} \la \cO_{\fX, \eta} = \cO_{X, \nu}^+ = k(\nu)^+.
\]
Here $\eta\in \fX$ denotes ``the same'' point on the formal scheme, and $\nu\in X = \fX_{\rm rig}$ is its unique preimage under the specialisation map. This map of valuation rings induces an isomorphism on $\pi$-adic completions, as for every $f\in \mathcal{A}$ the map $\mathcal{A}[f^{-1}]\to \widehat{\mathcal{A}}\langle f^{-1}\rangle = \widehat{\mathcal{A}[f^{-1}]}$  induces an isomorphism mod $\pi^n$ for all $n$. Passing to the colimit over all $f\in \mathcal{A}$ with $f(\beta)\neq 0$ we obtain the assertion. Therefore, tame extensions of $\cO_{\Spec(\mathcal{A}), \eta}$ correspond to tame extensions of $k(\nu)^+$.  

We are now in position to apply \cref{thm:scheme-Abhyankar} to $\Spec(B) \to \Spec(\mathcal{A})$, obtaining that if $\mathcal{B}$ is the integral closure of $\mathcal{A}$ in $B$, then $\Spec(\mathcal{B})\to\Spec(\mathcal{A})$ is Kummer \'etale with respect to the standard log structures. Let $\fY' = \Spf(\mathcal{B})$. Then $\fY'\to \fX$ is Kummer \'etale, and hence by \cref{prop:log-sm-vertical} we have that $\fY'$ is $\eta$-normal and it has the standard log structure. Clearly we have $\fY'_\rig = \Spa(B, \mathcal{B}) = Y$, and hence $\fY'\simeq \fY$. Thus $\fY\to \fX$ is Kummer \'etale, as desired.
\end{proof}

\begin{cor}
\label{cor:generic fibre and eta normalisation}
    Let $\fX$ be a semistable formal scheme over $K^\circ$ endowed with the standard log structure, and let $X=\fX_{\rig}$ be the rigid generic fibre.  Assume that $K$ is either algebraically closed or discretely valued. 
    The functor $\fY \mapsto Y = \fY_{\rm rig}$ induces an equivalence of Galois categories
    \[
    \FEt_\fX \longrightarrow \FEtt_{X/X} =\FEtt_X
    \]
    between finite Kummer \'etale covers of the log scheme $\fX$ and \'etale covers of $X$ which are tame (relative to $X$, see \cref{def:tame-rig}).
    The inverse functor is given by $\eta$-normalisation endowed with the standard log structure.
\end{cor}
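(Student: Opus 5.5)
The plan is to show that the generic fibre functor $\fY\mapsto\fY_\rig$ is well defined from $\FEt_\fX$ to $\FEtt_X$, that $\eta$-normalisation gives a functor in the opposite direction, and that the two are mutually quasi-inverse. Both directions are essentially packaged in \cref{thm:Abhyankar-formal-more-precise}.

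\emph{Well-definedness of $(-)_\rig$.} Let $\fY\to\fX$ be finite Kummer \'etale. By \cref{prop:log-sm-vertical}\ref{propitem:log-sm-vertical-c} and \ref{propitem:log-sm-vertical-d}, $\fY$ is smooth and vertical over $\Spf(K^\circ)^{\log}$. Then \ref{propitem:log-sm-vertical-a} and \ref{propitem:log-sm-vertical-b} give that $\fY$ is $\eta$-normal and carries the standard log structure. I would check finite \'etaleness of $Y=\fY_\rig\to X$ locally with the chart description $\fY=\fX\times_{\Spf(\bZ[P])}\Spf(\bZ[Q])$. On the generic fibre the log structure becomes trivial by verticality, so every element of $P$ maps to a unit. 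Consequently $\bZ[P^\gp]\to\bZ[Q^\gp]$ is finite \'etale after inverting the order of $Q^\gp/P^\gp$, which is invertible on $X$. Finiteness of $Y\to X$ follows since $\fY\to\fX$ is finite. Because $\fY$ is $\eta$-normal and finite over $\fX$, it is the $\eta$-normalisation of $\fX$ in $Y$. The implication \ref{propitem:f-Abhy-ket}$\Rightarrow$\ref{propitem:f-Abhy-tame} of \cref{thm:Abhyankar-formal-more-precise} then shows that $Y\to X$ is tame.

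\emph{Essential surjectivity and inverse.} Conversely, let $Y\to X$ be finite \'etale and tame, and let $\fY$ be its $\eta$-normalisation; it exists under our hypotheses on $K$. The implication \ref{propitem:f-Abhy-tame}$\Rightarrow$\ref{propitem:f-Abhy-ket} gives that $\fY^{\log}\to\fX^{\log}$ is Kummer \'etale. It is finite by construction, so $\fY^{\log}\in\FEt_\fX$ and $\fY_\rig\simeq Y$.

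\emph{Full faithfulness.} This is where some care is needed. For $\fY,\fY'\in\FEt_\fX$, a map $Y\to Y'$ over $X$ extends uniquely to $\fY\to\fY'$ by the functoriality and uniqueness of $\eta$-normalisation: $\fY$ is $\eta$-normal and finite over $\fX\leftarrow\fY'$. Locally this is integral closure, with $\cO_{\fY'}$ mapping into the integral closure of $\cO_\fX$ in $\cO(Y)$, which is $\cO_\fY$. The extension is automatically a map of log formal schemes, because both sides carry the standard log structure $\cO\times_{\cO[1/\pi]}\cO[1/\pi]^\times$ and this construction is functorial. Faithfulness follows from $\pi$-torsion-freeness of $\cO_\fY$, which makes $\cO_\fY\to\cO_Y$ injective.

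Finally, an equivalence between Galois categories is automatically exact, so we obtain an equivalence of Galois categories. Granting \cref{thm:Abhyankar-formal-more-precise}, the main point to verify is that finite Kummer \'etale covers of $\fX$ are exactly the $\eta$-normal standard-log objects, which is \cref{prop:log-sm-vertical}.
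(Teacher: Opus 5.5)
Your proposal is correct and follows the same route as the paper. You use \cref{prop:log-sm-vertical} to see that finite Kummer \'etale covers of $\fX$ are $\eta$-normal with the standard log structure, and then apply the implications (c)$\Rightarrow$(a) and (a)$\Rightarrow$(c) of \cref{thm:Abhyankar-formal-more-precise}; the extra details you give (finite \'etaleness of $\fY_\rig\to X$ via charts, and full faithfulness via integral closure and functoriality of the standard log structure) are what the paper compresses into ``the two functors are mutual inverses''.
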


\begin{proof}
Combining the four assertions of \cref{prop:log-sm-vertical},
if $\fY\to \fX$ is a Kummer \'etale map, then $\fY$ is $\eta$-normal and its log structure is the standard log structure. It now follows from \cref{thm:Abhyankar-formal-more-precise}~\labelcref{propitem:f-Abhy-ket}$\Rightarrow$\labelcref{propitem:f-Abhy-tame} that the generic fibre functor sends $\FEt_\fX$ into $\FEtt_{X}$.  Similarly, it follows from \cref{thm:Abhyankar-formal-more-precise}~\labelcref{propitem:f-Abhy-tame} $\Rightarrow$\labelcref{propitem:f-Abhy-ket} that the $\eta$-normalisation functor sends $\FEtt_{X}$ to $\FEt_\fX$. The two functors are mutual inverses. 
\end{proof}

\subsection{Comparison of tameness conditions}
\label{ss:tameness-comparison-pre}

The goal of this subsection is to prove an auxiliary result comparing tameness at infinity on the special and generic fibres of a formal scheme locally of finite type over~$K^\circ$. A finite \'etale map $f\colon \fY\to \fX$ of formal schemes locally of finite type over $K^\circ$ induces a finite \'etale map of $k$-schemes $f_k\colon \fY_k\to \fX_k$ and a finite \'etale map $f_{\rm rig}\colon \fY_{\rm rig}\to \fX_{\rm rig}$ of rigid spaces over $K$. The question is whether 
\[
    \text{$f_k$ is tame relative to $k$} \quad\overset{?}{\Leftrightarrow}\quad \text{$f_{\rm rig}$ is tame relative to $K$.}
\]
In \cref{prop:tameness-comparison-etale}, we confirm this under the assumption that the formal scheme $\fX$ is $\eta$-normal and that the scheme $\fX_k$ has the \emph{cdh uniformisation property} (\cref{def:cdh-unif}). This result is the key ingredient in the subsequent \S\ref{ss:tameness-comparison}, in which we prove a similar statement regarding finite Kummer \'etale covers of log formal schemes. 

\begin{rmk}
Let us explain the principal mechanism used to compare these two tameness conditions.
If we want to check tameness of $\fY_\rig \to \fX_\rig$ relative to~$K$, we need to consider a root triple $y = (y^\circ,k(y)^+,y^\succ)$ of~$\fY_\rig$ with image $x = (x^\circ,k(x)^+,x^\succ)$ in~$\fX_\rig$ and see whether the extension of valued fields $(k(y),k(y)^+)/(k(x),k(x)^+)$ is tamely ramified.
The valuation ring~$k(x)^+$ is the composition of $k(x^\circ)^+$ and a valuation ring~$V_x$ on the specialisation field~$k(x^\circ)^\succ$ and similarly for~$k(y)^+$ (call the corresponding valuation ring~$V_y$).
Since $\fY_\rig \to \fX_\rig$ is the generic fibre of the \'etale map $\fY\to \fX$, we already know that $(k(y^\circ),k(y^\circ)^+)/(k(x^\circ),k(x^\circ)^+)$ is unramified (and in particular tame).
But tameness behaves well with respect to composite valuations (\cref{composition-tame}), so $(k(y),k(y)^+)/(k(x),k(x)^+)$ is tame if and only if $(k(y^\circ)^\succ,V_y)/(k(x^\circ)^\succ,V_x)$ is tame.

The valued field $(k(x^\circ)^\succ,V_x)$ defines a point~$x_1$ on $\Spa(\fX_k,k)$ by restricting~$V_x$ to the residue field $k(\spe x^\circ)$ of the image of~$x^\circ$ under the specialisation map $\spe \colon X \to \fX_k$.
In a similar way we define the point $y_1  \in \Spa(\fY_k,k)$. 

If $\fY_k \to \fX_k$ is tame relative to~$k$, the extension of valued fields $(k(y_1),k(y_1)^+)/(k(x_1),k(x_1)^+)$ is tamely ramified.
The tricky question is whether this implies that $(k(y^\circ)^\succ,V_y)/(k(x^\circ)^\succ,V_x)$ is tame.
For the converse direction we have to answer the even more difficult question: does the tameness of $(k(y^\circ)^\succ,V_y)/(k(x^\circ)^\succ,V_x)$ imply the tameness of $(k(y_1),k(y_1)^+)/(k(x_1),k(x_1)^+)$?
As a direct conclusion this is not true in general. 
There might be a nontrivial residue extension $k(x^\circ)^\succ/k(x_1)$ and the wild ramification of $k(y_1)/k(x_1)$ might be killed when base changing the extension to $k(x^\circ)^\succ$.
However, we will use that under an extra assumption (the cdh uniformisation property) it is enough to test tameness only at points of a special type (\cref{tameness-support-generic-point}).
For these points we can ensure that $k(x^\circ)^\succ = k(x_1)$.

We start this subsection with the construction of these special points.
\end{rmk}

In the two statements below (\cref{lem:valuation-on-regular} and \cref{lem:valuation-on-cdh-unif}), we use the following terminology. Let $\xi\leadsto x$ be a specialisation of points on a scheme $X$ and let $k(\xi)^+\subseteq k(\xi)$ be a valuation subring. We say that $k(\xi)^+$ has {\bf residue field $k(x)$} if the map $\cO_{X,x}\to \cO_{X,\xi}\to k(\xi)$ has image in $k(\xi)^+$, the map $\cO_{X,x}\to k(\xi)^+$ is local, and induces an isomorphism of residue fields $k(x)\simeq k(\xi)^\succ$. Equivalently, there exists a (necessarily unique) map $\Spec(k(\xi)^+)\to X$ extending $\Spec(k(\xi))\to X$, mapping the closed point $s$ to $x$, and inducing an isomorphism $k(x)\simeq k(\xi)^\succ =k(s)$. 

\begin{lem} \label{lem:valuation-on-regular}
    Let $\xi\leadsto x$ be a specialisation on a scheme $X$ and let $Z=\overline{\{\xi}\}$ with the reduced subscheme structure. Suppose that the local ring $\cO_{Z,x}$ is regular. Then there exists a valuation subring $k(\xi)^+\subseteq k(\xi)$ with residue field $k(x)$.
\end{lem}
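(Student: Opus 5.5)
We first reduce to a statement about a regular local ring. Set $R = \cO_{Z,x}$, a regular local ring with residue field $k(x)$ and fraction field $k(\xi)$, since $\xi$ is the generic point of $Z$. A valuation subring $k(\xi)^+\subseteq k(\xi)$ with residue field $k(x)$ amounts to a valuation ring $V$ of $\mathrm{Frac}(R)$ that dominates $R$ (that is, $R\subseteq V$ and $\mathfrak m_V\cap R=\mathfrak m_R$) and induces an isomorphism $R/\mathfrak m_R\isomto V/\mathfrak m_V$. The maps $\cO_{X,x}\to\cO_{Z,x}\to k(\xi)$ factor through $R$, so working with $R$ in place of $\cO_{X,x}$ loses nothing. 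The task is therefore to construct a residually trivial dominating valuation on a regular local ring.

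We argue by induction on $d=\dim R$. If $d=0$, then $R$ is a field and $V=R$ works. For $d\geq 1$, choose a regular system of parameters $t_1,\dots,t_d$. Then $R/(t_1)$ is regular local of dimension $d-1$, and $\fp=(t_1)$ is a height one prime. Since $R$ is regular, it is a UFD, so $R_\fp$ is a discrete valuation ring whose valuation is $\mathrm{ord}_{t_1}$ and whose residue field is $\mathrm{Frac}(R/(t_1))$. By induction, $\mathrm{Frac}(R/(t_1))$ carries a valuation ring $W$ dominating $R/(t_1)$ with residue field $k(x)$. We then take the composite valuation
\[
V \;=\; \{\,f\in R_\fp \;:\; \bar f\in W\,\},
\]
in the sense of the composition $(K_1,K_1^+)\succ(K_2,K_2^+)$ from \cref{sec:tame extensions valued fields}, with $K_1=\mathrm{Frac}(R)$, $K_1^+=R_\fp$ and $K_2=\mathrm{Frac}(R/(t_1))$. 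As observed there, $V$ is a valuation ring, and its residue field equals the residue field of $W$, namely $k(x)$.

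It remains to verify that $V$ dominates $R$. First, $R\subseteq R_\fp$, and the image of $R$ in $\mathrm{Frac}(R/(t_1))$ is $R/(t_1)\subseteq W$, so $R\subseteq V$. For locality, take $f\in\mathfrak m_R$. If $f\in\fp$, then $f$ lies in the maximal ideal of $R_\fp$, which is contained in $\mathfrak m_V$. Otherwise $\bar f\in\mathfrak m_{R/(t_1)}\subseteq\mathfrak m_W$, and again $f\in\mathfrak m_V$. Hence $\mathfrak m_R\subseteq \mathfrak m_V\cap R$, and the induced map on residue fields $k(x)=R/\mathfrak m_R\to V/\mathfrak m_V=W/\mathfrak m_W$ is the isomorphism supplied by the induction hypothesis.

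The only nonformal input is that $R_{(t_1)}$ is a discrete valuation ring, equivalently that $\fp=(t_1)$ is a height one prime with $R_\fp$ regular of dimension one. This follows from regularity of $R$, or from $R$ being a UFD in which $t_1$ is prime. Everything else is bookkeeping with composite valuations, so I expect no real obstacle. Alternatively, one could take the monomial valuation with weights $1,\omega,\omega^2,\dots$ for a suitable transcendental $\omega$; this would also work but requires more verification.
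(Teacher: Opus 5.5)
Your proof is correct and is essentially the paper's argument. The paper takes the chain of specialisations $\xi = x_0 \leadsto x_1 \leadsto \cdots \leadsto x_d = x$ cut out by the ideals $(t_1,\dots,t_i)$ of a regular system of parameters and composes the discrete valuation rings $\cO_{\overline{\{x_{i-1}\}},x_i}$; your induction on $d$ unwinds to exactly this, and your explicit check that $V$ dominates $R$ spells out a step the paper leaves implicit.
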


\begin{proof}
We may replace $X$ with $\Spec(\cO_{Z,x})$ and thus reduce to the case where $X=Z$ is the spectrum of a regular local ring $R=\cO_{X,x}$ with generic point $\xi$ and closed point $x$. Let $f_1, \ldots, f_d$ be a regular sequence of parameters in $R$, which yields a sequence of specialisations
\[
 \xi = x_0 \rightsquigarrow x_1 \rightsquigarrow \cdots \rightsquigarrow x_d = x,
\]
where $x_i$ corresponds to the prime ideal $(f_1, \dots, f_i)$ and each $\cO_{\overline{\{x_{i-1}\}},x_i}$ is a discrete valuation ring with uniformiser $f_i$, fraction field $k(x_{i-1})$, and residue field $k(x_i)$.
Let $k(\xi)^+$ be the valuation ring of $k(\xi)$ obtained as the composition of all these discrete rank one valuation rings.
It is discrete of rank~$d$ and its residue field $k(\xi)^\succ$ equals $k(x_d) = k(x)$.
\end{proof}

We can generalise the above result to certain non-regular schemes. For this we need the following definition.

\begin{defi} \label{def:cdh-unif}
    A quasi-excellent scheme $X$ has the {\bf cdh uniformisation property} if for every $x\in X$ and every irreducible component $Z\subseteq X$ containing $x$ (with the reduced scheme structure) there exists a dominant finite type morphism $f\colon Z'\to Z$ with $Z'$ integral and regular, and a point $x'\in f^{-1}(x)\subseteq Z'$ with $k(x)=k(x')$.
\end{defi}

It is easy to see that the cdh uniformisation property can be checked Zariski locally.
Moreover, if $Y\to X$ is a smooth map and $X$ has the cdh uniformisation property, then so does $Y$.
However, the property is not étale local as the following example shows.

\begin{ex} \label{ex:nodal-curve-noncdh}
    Consider the nonsplit nodal cubic over $\mathbb{R}$:
    \[
        X = \Spec(\mathbb{R}[x,y]/(y^2 + x^2 - x^3))
    \]
    and let $P=(x=y=0)\in X$ be the node. The normalisation $X'\to X$ is given by $X'=\Spec(\mathbb{R}[t])$ where $x=t^2+1$ and $y=t^3+t=tx$, and the point $P'=(t^2+1=0)\in X'$ is the unique preimage of $P$. Then $X$ does not have the cdh uniformisation property. Indeed, suppose that $f\colon Y\to X$ is a dominant finite type map with $Y$ regular and $Q\in Y$ satisfies $f(Q)=P$ and $k(Q)=k(P)=\mathbb{R}$. Since $Y$ is normal, $f$ factors through $X'$, and then $Q$ maps to $P'$, so that $k(Q)\supseteq k(P')=\mathbb{C}$, contradiction. However, the base change $X_\mathbb{C}$ does have the cdh uniformisation property, as exhibited by its normalisation $X'_\mathbb{C}\to X_\mathbb{C}$.
\end{ex}

The main point about schemes with the cdh uniformisation property is that we can dominate their local rings with valuation rings with the same residue field.

\begin{lem} \label{lem:valuation-on-cdh-unif}
    Let $\xi\leadsto x$ be a specialisation on a scheme $X$. Suppose that the scheme $Z = \overline{\{\xi\}}$ (with the reduced subscheme structure) has the cdh uniformisation property. Then there exists a valuation subring $k(\xi)^+\subseteq k(\xi)$ with residue field $k(x)$. 
\end{lem}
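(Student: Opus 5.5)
We reduce to the regular case, \cref{lem:valuation-on-regular}, by passing to a regular cover and then restricting the resulting valuation. First replace $X$ by $Z=\overline{\{\xi\}}$ with its reduced structure. This is harmless, because the desired map $\Spec(k(\xi)^+)\to X$ factors through $Z$: its generic point goes to $\xi$, and $Z$ is reduced and closed. So we may assume $X=Z$ is integral with generic point $\xi$, that $x\in Z$, and that $Z$ has the cdh uniformisation property. Note that $Z$ is its own unique irreducible component.

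Next apply \cref{def:cdh-unif} to the point $x$ and the component $Z$. This gives a dominant morphism of finite type $f\colon Z'\to Z$ with $Z'$ integral and regular, and a point $x'\in f^{-1}(x)$ with $k(x')=k(x)$. Let $\xi'$ be the generic point of $Z'$. Since $f$ is dominant, $f(\xi')=\xi$, so we get a field extension $k(\xi)\subseteq k(\xi')$, which is finitely generated. Moreover $\xi'\leadsto x'$ and $\overline{\{\xi'\}}=Z'$ is regular at $x'$. So \cref{lem:valuation-on-regular} gives a valuation ring $k(\xi')^+\subseteq k(\xi')$ with residue field $k(x')$. That is, there is a morphism $\Spec(k(\xi')^+)\to Z'$ sending the closed point to $x'$ and inducing $k(x')\isomto k(\xi')^\succ$.

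Now set $k(\xi)^+ := k(\xi')^+\cap k(\xi)$, a valuation ring of $k(\xi)$. Composing with $f$ gives $\Spec(k(\xi')^+)\to Z$ sending the closed point to $x$. We check the three required properties.
\begin{enumerate}
\item The image of $\cO_{Z,x}$ lies in $k(\xi')^+\cap k(\xi)=k(\xi)^+$, since $\cO_{Z,x}\to\cO_{Z',x'}$ is local and $\cO_{Z',x'}\to k(\xi')^+$ is local.
\item The map $\cO_{Z,x}\to k(\xi)^+$ is local. Indeed, $\mathfrak m_x$ maps into $\mathfrak m_{k(\xi')^+}\cap k(\xi)=\mathfrak m_{k(\xi)^+}$.
\item On residue fields we get a chain of inclusions $k(x)\subseteq k(\xi)^\succ\subseteq k(\xi')^\succ=k(x')=k(x)$, where the composite is the identity on $k(x)$. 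Hence every inclusion is an equality, and $k(x)\isomto k(\xi)^\succ$.
\end{enumerate}

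The only step needing care is the last one: we must see that the composite $k(x)\to k(\xi)^\succ\to k(\xi')^\succ$ really is the isomorphism $k(x)\to k(x')$. This holds because both composites are the residue field maps of one and the same commutative square of local homomorphisms $\cO_{Z,x}\to\cO_{Z',x'}\to k(\xi')^+$. I do not expect a genuine obstacle; the whole content is packed into \cref{def:cdh-unif} and \cref{lem:valuation-on-regular}.
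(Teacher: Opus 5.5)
Your proposal is correct and follows the paper's proof: take the regular cover $Z'\to Z$ with $x'\mapsto x$ from \cref{def:cdh-unif}, apply \cref{lem:valuation-on-regular} to $\xi'\leadsto x'$, and intersect the resulting valuation ring with $k(\xi)$. Your checks of locality and of the residue field isomorphism $k(x)\isomto k(\xi)^\succ$ spell out what the paper compresses into the phrase ``with the same residue field''.
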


\begin{proof}
Let $Z'\to Z$ and $x'\in Z'$ be as in \cref{def:cdh-unif}. Let $\xi'\in Z'$ be the generic point, which maps to $\xi$. By \cref{lem:valuation-on-regular} there exists a valuation subring $k(\xi')^+\subseteq k(\xi')$ with residue field $k(x')=k(x)$. Then $k(\xi')^+\cap k(\xi)\subseteq k(\xi)$ is a valuation subring with the same residue field. 
\end{proof}

For our purposes, the cdh uniformisation property is important since it ensures that tameness can be checked at valuations with generic support.

\begin{lem}  \label{tameness-support-generic-point}
    Let $f\colon Y\to X$ be a finite \'etale  morphism of quasi-excellent schemes over a base $S$. Suppose that $X$ has the cdh uniformisation property. If 
    \[
        \Spa(f) \colon \Spa(Y,S) \la \Spa(X,S)
    \]
    is tame at all points of $\Spa(Y,S)$ whose support is a generic point of~$Y$, then $\Spa(f)$ is tame everywhere.
\end{lem}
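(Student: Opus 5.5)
The strategy is to reduce tameness at an arbitrary root triple to tameness at a point supported on a generic point of $Y$. We do this by composing the given valuation with a valuation of the kind produced in \cref{lem:valuation-on-cdh-unif}, and then pass between the two using \cref{composition-tame}.

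Fix a point $y=(y,k(y)^+,s)\in\Spa(Y,S)$ with image $x=(x,k(x)^+,s)\in\Spa(X,S)$, where $x=f(y)$. Let $\xi$ be a generic point of an irreducible component $Z\subseteq X$ containing $x$. Since $f$ is finite étale, hence open and closed, each irreducible component of $Y$ through $y$ maps onto such a component, so we may choose $Z$ to be $\overline{\{f(\eta)\}}$ for a generic point $\eta$ of $Y$ with $\eta\leadsto y$. Since $Z$ is an irreducible component of $X$ with reduced structure, it inherits the cdh uniformisation property from $X$: the definition of that property is phrased in terms of irreducible components. Then \cref{lem:valuation-on-cdh-unif} gives a valuation ring $k(\xi)^+_1\subseteq k(\xi)$ with residue field exactly $k(x)$.

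Form the composite $(k(\xi),W):=(k(\xi),k(\xi)^+_1)\succ(k(x),k(x)^+)$. The centre map $\Spec(W)\to\Spec(k(\xi)^+_1)\to X$ sends the closed point to the centre of $k(x)^+$, and $\Spec(k(x)^+)\to S$ gives a centre on $S$. Hence $(\xi,W,s)$ is a point of $\Spa(X,S)$ supported at the generic point $\xi$.

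Next we lift to $Y$. The fibre of $f$ over the local scheme $\Spec(W)$ is finite étale, so choose a point $(\eta,W',s)$ of $\Spa(Y,S)$ over $(\xi,W,s)$ such that its specialisation lies over the chosen $y$. Concretely, first extend $k(\xi)^+_1$ to $k(\eta)$. Because $Y\times_X\Spec(k(\xi)^+_1)\to\Spec(k(\xi)^+_1)$ is finite étale, it is a disjoint union of spectra of unramified extensions, so we can pick $\eta$ with residue field of $k(\eta)^+_1$ equal to $k(y)$. Then compose with $k(y)^+$. By hypothesis $(k(\eta),W')/(k(\xi),W)$ is tame.

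\cref{composition-tame}\labelcref{lemitem:composition tame} then shows that the second constituent $(k(y),k(y)^+)/(k(x),k(x)^+)$ is tame, which is the statement at $y$. The ramification-index caveat in that proposition only concerns the converse direction, so it does not interfere here.

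The main obstacle is the lifting step: we must match the chosen $y$ over $x$ with a component of $Y\times_X\Spec(k(\xi)^+_1)$ whose closed point is $y$ and whose residue field is $k(y)$. The plan is to use that finite étale covers of the local scheme $\Spec(k(\xi)^+_1)$ are products of unramified extensions, whose closed fibres recover $Y_x=f^{-1}(x)$ via residue fields, since the residue field of $k(\xi)^+_1$ is $k(x)$. This makes every $y\in f^{-1}(x)$ reachable with the correct residue field $k(y)$.
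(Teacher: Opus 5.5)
Your proof is correct and is essentially the paper's own argument. Both proofs take a valuation ring $k(\xi)^+_1$ on a generic point $\xi\leadsto x$ with residue field $k(x)$, supplied by the cdh uniformisation property. Both then localise $Y\times_X\Spec(k(\xi)^+_1)$ at $y$ to obtain an unramified extension of valuation rings with residue field $k(y)$, compose with $(k(y),k(y)^+)/(k(x),k(x)^+)$, apply the hypothesis at the resulting generic point of $Y$, and conclude with \cref{composition-tame}.

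Two small points to tidy. First, the generic point $\eta$ has to be the one determined by $y$, namely the generic point of the component of $Y\times_X\Spec(k(\xi)^+_1)$ through $y$. This is not necessarily the $\eta$ you chose at the start, and it is generic in $Y$ because it lies over $\xi$ and $f$ is quasi-finite. Second, there is no map $\Spec(W)\to\Spec(k(\xi)^+_1)$. The centre of $W$ on $S$ is obtained by gluing $\Spec(k(\xi)^+_1)\to X\to S$ with $\Spec(k(x)^+)\to S$ along $\Spec(k(x))$.
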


\begin{proof}
Suppose that $\Spa(f)$ is tame at all points with generic support.
We want to show that $\Spa(f)$ is tame at $(y,k(y)^+)\in \Spa(Y, S)$ (we omit the centre of $k(y)^+$ on $S$ from the notation). Let $(x, k(x)^+)$ be its image in $\Spa(X, S)$ and let $x'$ be a generic point of $X$ specialising to $x$. By \cref{lem:valuation-on-cdh-unif}, there exists a valuation subring $k(x')^+\subseteq k(x')$ with residue field $k(x)$. We thus have a map $V=\Spec(k(x')^+)\to X$ sending the generic point to $x'$, the special point to $x$, both without changing the residue field. In particular, the fibre $Y_x$ containing $y$ can be regarded as a closed subscheme of $W = Y\times_X V$. The ring $k(y')^+ = \cO_{W,y}$ is a valuation ring with residue field $k(y)$ and fraction field $k(y')$ for a unique $y'\in Y_{x'}$. Since $y\mapsto x$, the map $k(x')^+\to k(y')^+$ thus constructed is local, so that $(k(y'), k(y')^+)/(k(x'), k(x')^+)$ is an extension of valuation rings. Since $W\to V$ is finite \'etale, this extension is moreover unramified. Composing each of these valuation rings with the valuation rings in the extension 
\[
    (k(y')^\succ=k(y), k(y)^+)  /  (k(x')^\succ = k(x), k(x)^+)
\]
we obtain a point $(y', k(y')^+\times_{k(y)} k(y)^+)\in \Spa(Y, S)$ lying over $(x', k(x')^+\times_{k(x)} k(x)^+)\in \Spa(X, S)$. Since $Y\to X$ is flat, $y'$ is a generic point of $Y$, and thus by our assumption, the corresponding extension of valuation rings is tame. We are now in position to apply \cref{composition-tame}, which implies that $(k(y), k(y)^+)/(k(x),k(x)^+)$ is tame, so that $\Spa(f)$ is tame at $(y, k(y)^+)$.
\end{proof}

In accordance with \cref{def:tame-rig}, we call a finite \'etale  map of separated finite type formal schemes $\fY\to \fX$ {\bf tame relative to $K^\circ$} if the induced finite \'etale map of universal compactifications
\[
    \Spa(\fY^{\rm ad}, K^\circ) \la \Spa(\fX^{\rm ad}, K^\circ)
\]
is tame.
Note that~$\fX^{\rm ad}$ is indeed an adic space and satisfies the hypotheses of \cref{thm:Solodov-comp} (see \cref{rmk:hypotheses-compactification}), so the universal vertical compactification exists and similarly for~$\fY$.
The above definition is clearly local on~$\fX$.
If~$\fX$ is not necessarily separated and quasi-compact, we can define tameness of a finite étale morphism $\fY \to \fX$ by requiring it to be tame when restricted to all affine opens (for which the universal vertical compactification is known to exist).

\begin{prop} \label{prop:tameness-comparison-etale}
    Let $\fX$ be a formal scheme locally of finite type over $K^\circ$ and let $f \colon \fY\to \fX$ be a finite \'etale map.
    Consider the following conditions:
    \begin{enumerate}[(a)]
        \item \label{propitem:tameness-comparison-etale-a} The finite \'etale map of $k$-schemes $f_k \colon \fY_k\to\fX_k$ is tame relative to $k$;
        \item \label{propitem:tameness-comparison-etale-b}The map of formal schemes $f \colon \fY\to\fX$ is tame relative to $K^\circ$;
        \item \label{propitem:tameness-comparison-etale-c} The finite \'etale map of rigid spaces $f_\rig \colon \fY_{\rm rig}\to\fX_{\rm rig}$ is tame relative to $K$.
    \end{enumerate}
    Then \labelcref{propitem:tameness-comparison-etale-a} $\Leftrightarrow$ \labelcref{propitem:tameness-comparison-etale-b} $\Rightarrow$ \labelcref{propitem:tameness-comparison-etale-c}, and if $\fX$ is $\eta$-normal and $\fX_k$ has the cdh uniformisation property, then all three conditions are equivalent.
\end{prop}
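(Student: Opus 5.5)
We work affine-locally on $\fX$ (tameness relative to $K^\circ$, $k$ and $K$ is local on the base). This lets us assume $\fX$ and $\fY$ separated and quasi-compact, so the universal vertical compactifications exist by \cref{rmk:hypotheses-compactification}. We then use \cref{cor:base-change-comp} throughout: it identifies the generic fibre of $\Spa(\fX^{\rm ad},K^\circ)$ with $\Spa(\fX_\rig,K)$ and its special fibre with $\Spa(\fX_k,k)$, and likewise for $\fY$.

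Step 1: \labelcref{propitem:tameness-comparison-etale-b}$\Rightarrow$\labelcref{propitem:tameness-comparison-etale-c} and \labelcref{propitem:tameness-comparison-etale-b}$\Rightarrow$\labelcref{propitem:tameness-comparison-etale-a}. Both follow from base change of relative tameness, \cref{lem:properties-tameness-relative-S}\labelcref{lemitem:sorite-tameness-relative-S4}, applied along $\Spa(K)\to\Spa(K^\circ)$ and $\Spa(k)\to\Spa(K^\circ)$. For the special fibre we then use \cref{example-Spa-schemes}, which says that adic tameness relative to $\Spa(k)$ of the discretely ringed space agrees with scheme tameness relative to $k$.

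Step 2: \labelcref{propitem:tameness-comparison-etale-a}$\Rightarrow$\labelcref{propitem:tameness-comparison-etale-b}. Every point of $\Spa(\fY^{\rm ad},K^\circ)$ lies either in the generic fibre or in the special fibre over $\Spa(K^\circ)$. Special-fibre points are handled by \labelcref{propitem:tameness-comparison-etale-a} through the identification in Step 1. For a generic-fibre point, i.e.\ a root triple $y$ of $\fY_\rig$ with image $x$, we write $k(x)^+$ as the composite of $k(x^\circ)^+$ with a valuation ring $V_x$ on $k(x^\circ)^\succ$, and similarly for $y$. Since $f$ is étale, the rank-one part $k(y^\circ)/k(x^\circ)$ is unramified. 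By \cref{composition-tame} it therefore suffices to show that $(k(y^\circ)^\succ,V_y)/(k(x^\circ)^\succ,V_x)$ is tame.

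This residual extension is the base change of $k(y_1)/k(x_1)$ along $k(x_1)\subseteq k(x^\circ)^\succ$, where $x_1,y_1$ are the points of $\Spa(\fX_k,k)$ and $\Spa(\fY_k,k)$ obtained by restricting $V_x,V_y$ to the residue fields at the specialisations, as in the preceding remark. The fibre of $\fY_k$ over $\spe(x^\circ)$ splits $k(y^\circ)^\succ$ off as a factor after base change. Tameness of $k(y_1)/k(x_1)$ with respect to $V_x\cap k(x_1)$ then passes to the base change, since tameness is stable under base change (\cref{lem:tameness-conditions}\labelcref{lemitem:tame-abhyankar1}). So this direction is formal.

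Step 3: \labelcref{propitem:tameness-comparison-etale-c}$\Rightarrow$\labelcref{propitem:tameness-comparison-etale-a} under $\eta$-normality and the cdh uniformisation property; this is the main obstacle. Because $\fX_k$ has the cdh property, \cref{tameness-support-generic-point} reduces the check to points $(y_1,k(y_1)^+)$ of $\Spa(\fY_k,k)$ whose support $y_1$ is a generic point of $\fY_k$. Since $f_k$ is étale, the image $x_1$ is then a generic point of $\fX_k$. By $\eta$-normality and \cref{rmk:sp-facts}, $x_1$ has a unique preimage $x^\circ$ (a boundary point), with $\cO_{\fX,x_1}=k(x^\circ)^+$ of rank one and $k(x^\circ)^\succ=k(x_1)$, and the same holds for $y_1$ because $\fY$ is étale over $\fX$, hence $\eta$-normal.

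Composing $k(x^\circ)^+$ with $k(x_1)^+$ gives a root triple of $\fX_\rig$ over $\Spa(K)$. Its centre exists because $K^\circ\subseteq k(x^\circ)^+$ maps into $k$, and $k(x_1)^+\supseteq k$. The point $y_1$ similarly gives a root triple of $\fY_\rig$ lying over it. Tameness at this root triple, given by \labelcref{propitem:tameness-comparison-etale-c}, together with the equality of specialisation fields and \cref{composition-tame}\labelcref{lemitem:composition tame}, yields tameness of $(k(y_1),k(y_1)^+)/(k(x_1),k(x_1)^+)$. The delicate points are verifying that every point of $\Spa(\fY_k,k)$ above $(x_1,k(x_1)^+)$ arises from such a composite root triple, and that \cref{tameness-support-generic-point} applies to finite étale maps of $k$-schemes with $S=\Spec k$.
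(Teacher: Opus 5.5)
Your proposal is correct. Steps 1 and 3 are essentially the paper's argument. The two ``delicate points'' you flag at the end do not arise. If, as the paper does, you start from a point $(y_1,k(y_1)^+)$ of $\Spa(\fY_k,k)$ and form its composite root triple upstairs, its image is automatically the composite over $x^\circ$ with $k(y_1)^+\cap k(x_1)$. Moreover, \cref{tameness-support-generic-point} is stated for an arbitrary base $S$, so it applies with $S=\Spec k$.

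The genuine difference is in (a)$\Rightarrow$(b). The paper never analyses generic-fibre points. It uses three facts: $\Spa(\fY^{\rm ad},K^\circ)\to\Spa(\fX^{\rm ad},K^\circ)$ is finite \'etale (\cref{prop:tame-S-compactification}); its tame locus is open \cite[Corollary~4.4]{Hubner2021:AdicTameSite}; and every point specialises into the special fibre, where (a) gives tameness.

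You instead run the mechanism of the remark opening this subsection forwards. You strip off the unramified rank-one part with \cref{composition-tame} and realise the residual extension as a base change of the tame extension at $x_1$. This is valid, does not need $\eta$-normality, and avoids the openness theorem. However, the step you only sketch should be made explicit. Pulling $\fY$ back along $\Spf(\widehat{k(x^\circ)}^+)\to\fX$ gives a finite \'etale algebra over a henselian rank-one valuation ring. This is a product of unramified valuation rings whose residue fields are the $k(y^\circ)^\succ$ for $y^\circ$ over $x^\circ$. Reducing modulo the maximal ideal identifies these with the factors of $(\fY_k\times_{\fX_k}\Spec k(\spe x^\circ))\otimes_{k(\spe x^\circ)}k(x^\circ)^\succ$. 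Hence each $k(y^\circ)^\succ$ is a compositum of $k(\spe y^\circ)$ and $k(x^\circ)^\succ$, with $V_y$ restricting to $V_x$ and to $V_y\cap k(\spe y^\circ)$. Base-change stability of tameness then applies.

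The paper's route buys brevity at the cost of invoking a nontrivial black box. Yours is self-contained and shows directly how tameness on the special fibre controls the boundary points of the generic fibre.
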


\begin{proof}
The assertion is local on~$\fX$, so we may assume that~$\fX$ is separated and quasi-compact.
By \cref{cor:base-change-comp} the generic fibre of the induced map of universal compactifications $\Spa(\fY^{\rm ad}, K^\circ) \to \Spa(\fX^{\rm ad}, K^\circ)$ identifies with $\Spa(\fY_\rig, K) \to \Spa(\fX_\rig, K)$ and its special fibre is $\Spa(\fY_k,k) \to \Spa(\fX_k,k)$.
Since tameness is stable under base change, it is now clear that~\labelcref{propitem:tameness-comparison-etale-b} implies~\labelcref{propitem:tameness-comparison-etale-a} and~\labelcref{propitem:tameness-comparison-etale-c}.

Let us now show that~\labelcref{propitem:tameness-comparison-etale-a} implies~\labelcref{propitem:tameness-comparison-etale-b}.
The morphism $\Spa(\fY^{\rm ad},K^\circ) \to \Spa(\fX^{\rm ad},K^\circ)$ is finite étale by \cref{prop:tame-S-compactification}.
By assumption, it is tame at all points of the special fibre.
Every other point of $\Spa(\fY^{\rm ad},K^\circ)$ has a specialisation lying in the special fibre.
By the openness of the tame locus \cite[Corollary~4.4]{Hubner2021:AdicTameSite} it follows that $\Spa(\fY^{\rm ad}, K^\circ) \to \Spa(\fX^{\rm ad}, K^\circ)$ is tame.

Finally, let us deduce~\labelcref{propitem:tameness-comparison-etale-a} from~\labelcref{propitem:tameness-comparison-etale-c} in case~$\fX$ is $\eta$-normal and~$\fX_k$ has the cdh uniformisation property.
Suppose that $\Spa(\fY_\rig, K) \to \Spa(\fX_\rig, K)$ is tame.
Being étale over~$\fX_k$, the scheme $\fY_k$ inherits the cdh uniformisation property from~$\fX_k$. 
Therefore, by \cref{tameness-support-generic-point}, to show that $\Spa(\fY_k, k) \to \Spa(\fX_k, k)$ is tame, it suffices to test tameness at valuations whose support is a generic point of an irreducible component.
So let us take a valuation ring~$V_y$ on $k(\eta)$ for the generic point~$\eta$ of an irreducible component of $\fY_k$.
Since $\fX$ is $\eta$-normal, so is~$\fY$, and by \cref{rmk:sp-facts}(3) there is a (unique) point $y \in \fY_\rig$ with $\spe(y) = \eta$ for which
\[
k(y)^\succ = k(\eta).
\]
Let $\bar{y}$ be the composition of~$y$ with~$V_y$.
This is a point of~$\Spa(\fY_\rig,K)$.
We now map the whole constellation to~$\Spa(\fX_\rig,K)$:
\[
x = f(y),    \quad   \bar{x} = \Spa(f)(\bar{y}),   \quad   \eta_X = f(\eta),   \quad V_x = V_y \cap k(\eta_X).
\]
Then $k(x)^\succ = k(\eta_X)$ and $(k(\bar{x}),k(\bar{x})^+) = (k(x),k(x)^+) \succ (k(\eta_X),V_x)$.
We know by assumption that the extension of valued fields
\[
(k(\bar{y}),k(\bar{y})^+)/(k(\bar{x}),k(\bar{x})^+)
\]
is tame.
Therefore, \cref{composition-tame} implies that $(k(\eta),V_y)/(k(\eta_X),V_x)$
is tame, i.e.~$\Spa(f_k)$ is tame at~$(\eta,V_x)$.
\end{proof}

\subsection{Tame covers and Kummer covers (II): tameness at infinity}
\label{ss:tameness-comparison}

We consider a  strictly semistable formal scheme $\fX$ over~$K^\circ$ with rigid generic fibre~$\fX_\rig$ and special fibre~$\fX_k$.
When  $K$ is either algebraically closed or discretely valued, by \cref{cor:generic fibre and eta normalisation} 
we have equivalences
\[
 \FEt^t_{\fX_\rig} \simeq \FEt_\fX \simeq \FEt_{\fX_k},
\]
where $\FEt_\fX$ and $\FEt_{\fX_k}$ are the respective categories of finite Kummer \'etale covers.
In this subsection, we add tameness conditions at the boundary to the picture.
Let $\fY \to \fX$ be a finite (Kummer) étale morphism of log formal schemes, let $\fY_\rig \to \fX_\rig$ be its rigid generic fibre (a tame finite étale map of rigid spaces) and let $\fY_k \to \fX_k$ be its special fibre (a finite \'etale map of log schemes over the residue field~$k$). In \cref{prop:tameness-comparison} below, we shall prove that $\fY_\rig \to \fX_\rig$ is tame relative to $K$ (in the sense of \cref{def:tame-rig}) if and only if $\fY_k\to \fX_k$ is tame relative to $k$ (in the sense of \cref{def:tame-log}).
As a consequence we obtain equivalences of categories
\[
 \FEt^t_{\fX_\rig/K} \simeq \FEt_{\fX/K^\circ} \simeq \FEt_{\fX_k/k}.
\]
Ultimately, we shall deduce this result from \cref{prop:tameness-comparison-etale}. In order to apply it, we first need to be able to ensure that $\fX_k$ has the cdh uniformisation property.
We start our investigation with two preparatory lemmas.

\begin{lem} \label{lem:extend-valuative}
    Let $\alpha \colon P \to k$ be a morphism from an fs monoid to the multiplicative monoid of a field.
    Then there is a valuative monoid $V$ with $P \subseteq V \subseteq P^\gp$ such that~$f$ extends to a morphism $f_V \colon V \to k$.
\end{lem}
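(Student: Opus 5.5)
The plan is to reduce to a face of $P$ and then extend along a valuation of the group. Note that the statement writes $f$ for the map, which is $\alpha$. Set $F = \alpha^{-1}(k^\times)$. This is a face of $P$, because $\alpha(x+y) = \alpha(x)\alpha(y)$ is nonzero exactly when both factors are nonzero. Then $\alpha$ sends $F$ into $k^\times$ and sends $P\setminus F$ to $0$. Since $k^\times$ is a group, the restriction $\alpha|_F$ extends uniquely to a group homomorphism $\alpha^\gp_F\colon F^\gp \to k^\times$.

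The idea is to choose $V$ so that $V^\times = V\cap(-V)$ meets $P$ exactly in $F$. One then defines $f_V$ by $f_V(v) = \alpha^\gp_F(v)$ for $v\in V^\times$ and $f_V(v) = 0$ for $v\in V\setminus V^\times$. For this to make sense we also need $V^\times \subseteq F^\gp$.

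To build $V$, work in the torsion-free finitely generated group $G = P^\gp/F^\gp$; it is torsion-free because $P$ is saturated and $F$ is a face. The image $\overline{P} = P/F$ is a sharp fs monoid. Choose a linear map $\phi\colon G \to \bR$ that is strictly positive on $\overline{P}\setminus 0$; such a map exists because $\overline{P}$ is sharp and fs, so its dual cone has nonempty interior. Perturb $\phi$ generically so that it is injective on $G$. Define
\[ V = \{ x\in P^\gp : \phi(\bar x) > 0 \}\cup F^\gp. \]
This $V$ is a submonoid of $P^\gp$ and contains $P$. It is valuative: every $x\in P^\gp$ has $\phi(\bar x)>0$, $\phi(\bar x)<0$, or $\bar x=0$, and by injectivity of $\phi$ the last case means $x\in F^\gp$. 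Its units are exactly $F^\gp$, and $F^\gp\cap P = F$ because $F$ is a face of the saturated monoid $P$.

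It remains to check that $f_V$ is a monoid morphism. If both arguments are units, this is the multiplicativity of $\alpha^\gp_F$. If one argument is a nonunit, the sum is a nonunit, and both sides vanish. Finally, $f_V|_P = \alpha$: on $F$ by construction, and on $P\setminus F$ because those elements are nonunits of $V$ and are sent to $0$ by both maps.

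The main obstacle is the existence of a functional $\phi$ that is strictly positive on $\overline{P}\setminus 0$ and injective on $G$; this is where sharpness and finite generation of $P/F$ enter. If choosing $\phi$ generically is awkward, the fallback is a lexicographic order: take $\phi_1$ strictly positive on $\overline{P}\setminus 0$, then refine its ties on $\ker\phi_1$ by further functionals. This yields a total group order on $G$ (rank possibly greater than one) in which $\overline{P}\setminus 0$ is positive, and the resulting $V$ works in the same way.
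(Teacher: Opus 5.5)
Your proof is correct, but it takes a different route from the paper. The paper first localises $P$ at $F=f^{-1}(k^\times)$ so that $f$ becomes local. It then takes an arbitrary valuative monoid $V$ dominating $P$, citing \cite[Proposition~I.2.4.1]{Ogus}. Finally it extends $f^\times\colon P^\times\to k^\times$ to $V^\times$, using that $V^\times/P^\times$ is finitely generated and torsion-free (by saturation), hence free, so that $V^\times\simeq P^\times\oplus V^\times/P^\times$. In that route $V^\times$ can be strictly larger than $P^\times=F^{\gp}$ (e.g.\ $P=\bN^2$, $F=0$, $V=\{a+b\geq 0\}$), so the extension involves a non-canonical choice of splitting.

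You instead build $V$ by hand from an injective functional on $P^{\gp}/F^{\gp}$ that is positive on $(P/F)\setminus 0$. This forces $V^\times=F^{\gp}$ exactly, so $f_V$ is uniquely determined and no splitting step is needed. Saturation enters at the analogous point in both arguments: your torsion-freeness of $P^{\gp}/F^{\gp}$ corresponds to the paper's torsion-freeness of $V^\times/P^\times$.

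The paper's version is shorter because it treats the dominating valuative monoid as a black box. Yours is self-contained, at the cost of convex-geometric input: strict convexity of the cone of the sharp fine monoid $P/F$, plus a generic perturbation or the lexicographic refinement. It also gives the slightly sharper conclusion that $V$ can be chosen with $V^\times=F^{\gp}$.
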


\begin{proof}
    Let~$F$ be the face $f^{-1}(k^\times) = P \setminus f^{-1}(0)$ of~$P$.
    By the universal property of localisation, we can extend~$f$ to the localisation $P-F$.
    Therefore, we may assume in the following that $f$ is local, i.e.\ that it maps all non-units to~$0$ (and all units to $k^\times$).

    By \cite[Proposition~I.2.4.1]{Ogus} there is a valuative monoid~$V$ dominating~$P$, i.e. $P \subseteq V \subseteq P^\gp$ such that $P \to V$ is local.
    We claim that~$f$ extends to any such monoid (in fact we do not need it to be valuative, it works with any monoid dominating~$P$).
    Restricting~$f$ to the units of~$P$ we obtain a group homomorphism
    \[
        f^\times \colon P^\times \longrightarrow k^\times
    \]
    and we want to extend it to~$V^\times$.
    Note that~$V^\times$ and~$P^\times$ are finitely generated abelian groups as they are both subgroups of the finitely generated abelian group~$P^\gp$.
    Moreover, the quotient $V^\times/P^\times$ is torsion-free by saturatedness of~$P$.
    We conclude that $V^\times/P^\times$ is free and $V^\times \simeq P^\times \oplus (V^\times/P^\times)$.
    Hence, we can easily extend~$f^\times$ to $f_V^\times \colon V^\times \to k^\times$.

    We now define a homomorphism of monoids $f_V \colon V \to k$  by setting $f_V|_{V^\times} = f_V^\times$ and sending all non-units to zero.
    This is indeed an extension of~$f$ because $P \to V$ is local and~$f$ maps all non-units to zero.
\end{proof}

\begin{lem} \label{lem:log-blowup-fibres}
    Let $X$ be an fs log scheme admitting a chart Zariski locally and let $f\colon X'\to X$ be a log blow-up.
    For every point $x\in X$ there exists an $x'\in f^{-1}(x)$ with $k(x')=k(x)$.
\end{lem}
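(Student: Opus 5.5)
The plan is to reduce to the case of a global chart and then describe the fibre of the log blow-up explicitly. The question is Zariski local on $X$, since a log blow-up restricted to an open subset is the log blow-up of the restricted ideal. So we may assume that $X$ admits a global chart $P\to\cM(X)$ with $P$ an fs monoid, and that $f$ is the log blow-up along a monoid ideal $I\subseteq P$. In other words, $X'=X\times_{\Spec(\bZ[P])}\Spec(\bZ[P])_I'$, saturated, where $\Spec(\bZ[P])_I'$ is the toric blow-up. This toric blow-up is covered by the affine charts $\Spec(\bZ[P_a])$ for generators $a\in I$. Here $P_a\subseteq P^\gp$ is the saturation of the submonoid generated by $P$ and $I-a$.

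Fix $x\in X$ and let $\alpha_x\colon P\to k(x)$ be the composite of the chart with evaluation at $x$. This is a monoid map to the multiplicative monoid of $k(x)$. Giving a point $x'\in f^{-1}(x)$ with $k(x')=k(x)$ amounts to giving a $k(x)$-point of the fibre $\Spec(k(x))\times_X X'$. By the base change description, it suffices to extend $\alpha_x$ to a monoid homomorphism $P_a\to k(x)$ for some generator $a\in I$. Such an extension gives a ring map $k(x)\otimes_{\bZ[P]}\bZ[P_a]\to k(x)$, hence a $k(x)$-rational point in the fibre. Some care is needed because of the saturation step in fs fibre products, but the saturated chart $P_a$ already accounts for it.

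To produce the extension, apply \cref{lem:extend-valuative} to obtain a valuative monoid $V$ with $P\subseteq V\subseteq P^\gp$ and an extension $\alpha_V\colon V\to k(x)$. Since $V$ is valuative and $I$ is finitely generated, the images of the generators of $I$ in $V^\gp=P^\gp$ are totally ordered by divisibility in $V$. Choose a generator $a$ that is minimal in this order, so that $b-a\in V$ for every generator $b$ of $I$. Then $I-a\subseteq V$. As $V$ is saturated (being valuative), $P_a\subseteq V$ as well. Restricting $\alpha_V$ to $P_a$ gives the required extension of $\alpha_x$.

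The main obstacle I expect is the bookkeeping in identifying the fibre of the fs log blow-up with the $P_a$-charts, that is, checking that the saturation built into fs fibre products matches $\Spec(\bZ[P_a])$. One also has to check that monoid maps $P_a\to k(x)$ compatible with $\alpha_x$ really correspond to $k(x)$-points of the fibre. Both are standard toric facts once the chart is fixed.
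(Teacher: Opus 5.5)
Your proof is correct and is essentially the paper's argument. The paper also extends $\alpha_x$ to a valuative monoid $V \subseteq P^\gp$ via \cref{lem:extend-valuative} and notes that the ideal becomes principal in $V$. It then factors $\Spec(V\to k)\to X$ through $X'$ by the universal property of the log blow-up, applied to a fine submonoid of $V$ such as your chart $P_a$.

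The only difference is in the reduction. The paper first base-changes strictly to $\Spec(P\to k(x))$, where it is immediate that the ideal is generated by a monoid ideal of $P$. You instead assert this on a Zariski neighbourhood without comment; the claim is true there, but it needs a short argument.
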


\begin{proof}
Since the formation of log blowups is compatible with strict base change, we easily reduce to the case $X = \Spec(P\xrightarrow{\alpha} k)$ for a field $k$ and an fs monoid $P$. In this case, $X'$ is the blow-up corresponding to a finitely generated ideal subsheaf  $\overline{\mathcal{J}}\subseteq \overline{\cM}_X$. Since the latter is the constant sheaf with value $P/P^\times$ over $\Spec(k)$, the sheaf $\overline{\mathcal{J}}$ is also constant, and hence $X'\to X$ is the log blowup corresponding to an ideal $J\subseteq P$ (the preimage of $\overline{\mathcal{J}}(X)\subseteq P/P^\times$ in $P$). Using \cref{lem:extend-valuative} we can extend $\alpha \colon P \to k$ to a monoid homomorphism $V \to k$ for a valuative monoid~$V$ with $P \subseteq V \subseteq P^\gp$.
In~$V$ the ideal~$J$ becomes principal as all finitely generated ideals of valuative monoids are principal.
By the universal property of a log blow-up (see \cite[Definition~III.2.6.2]{Ogus}, note that this is applicable even though~$V$ is not fine by passing to a suitable fine submonoid of~$V$), the map
\[ 
    \Spec(V\to k) \longrightarrow \Spec(P\xrightarrow{\alpha}k) = X
\]
factors through the blow-up $X'$. This shows that $X'(k)\neq \emptyset$, as desired.
\end{proof}

\begin{prop} \label{lem:log-sm-cdh-unif}
    Let $S = \Spec(M\xrightarrow{\alpha} k)$ where $k$ is a field and $M$ a saturated monoid that is finitely generated over a sharp divisible valuative monoid~$V$, and where $\alpha^{-1}(k^\times)=\{0\}$.
    Let $f\colon X\to S$ be a log smooth morphism which admits a chart Zariski locally; that is, Zariski locally on $X$ there exists a smooth map of saturated monoids $M\to P$ and a strict \'etale morphism
    \[
        X \la \Spec(P\to k[P] \otimes_{k[M]} k).
    \]
    Then the underlying scheme $\underline{X}$ has the cdh uniformisation property.
\end{prop}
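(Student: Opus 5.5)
We will verify the definition of the cdh uniformisation property Zariski locally on $\underline{X}$; it is Zariski local, so we may assume there is a smooth map $M\to P$ and a strict étale map $X\to W=\Spec(P\to k[P]\otimes_{k[M]}k)$. Fix $x\in X$ and an irreducible component $Z\ni x$. By the proof of \cref{lem:surgery-properties}, the irreducible components of $X$ are the connected components of the base changes $X_F$ of the strata closures $Z_F=\Spec(P\to k[F])\subseteq W$ for maximal faces $F$. Moreover $\underline{X_F}\to\Spec(k[F])$ is étale, and $F$ is fs with $0\to F$ smooth by \cref{lem:strata-closures}. So $Z$, with its reduced structure, is an open and closed piece of $\underline{X_F}$, hence étale over the toric variety $\Spec(k[F])$, and $x\in Z$. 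Since étale maps preserve regularity and residue-field behaviour of preimages, it suffices to produce the required $Z'\to Z$ by base change of a construction over $T=\Spec(k[F])$.

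The construction over $T$ will be a toric resolution. The monoid $F$ is fs, so $T_\ast=\Spec(F\to k[F])$ is an fs log scheme with a global chart. We choose a log blow-up $T'\to T_\ast$ whose source is log regular with underlying scheme regular. This is the standard toric resolution via a regular subdivision of the cone of $F$, and we realise it as a log blow-up in an ideal of $F$ following \cite{Niziol}. The map $T'\to T$ is proper, birational and dominant, with $T'$ integral and regular. We then set $Z'=Z\times_T T'$. It is étale over $T'$, hence regular; it is integral after taking the component dominating $Z$, which is legitimate because $Z'\to Z$ is birational since $Z\to T$ is étale; and it is of finite type and dominant over $Z$.

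It remains to find $x'\in Z'$ over $x$ with $k(x')=k(x)$. Let $t\in T$ be the image of $x$. By \cref{lem:log-blowup-fibres} applied to $T'\to T_\ast$, which has a global chart, there is $t'\in T'$ over $t$ with $k(t')=k(t)$. The fibre of $Z'\to T'$ over $t'$ is then $Z_t\otimes_{k(t)}k(t')=Z_t$, which contains a point $x'$ corresponding to $x$ with $k(x')=k(x)$.

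The main obstacle is the existence of a log blow-up of $\Spec(F\to k[F])$ with regular source, for $F$ fs and possibly with nontrivial torsion in $F^{\gp}$. Smoothness of $0\to F$ makes that torsion prime to $p$, so $k[F^\times]$ is étale over $k$ after the étale base change; we plan to split $F\simeq F^\times\oplus\overline F$ and apply the toric resolution to the sharp part $\overline F$.
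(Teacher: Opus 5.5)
Your proposal is correct and is essentially the paper's proof. The paper also passes to the strata closures $X_F$: it uses the normalisation $X^{(0)}$ with log structure $X^{(0)}_\star$, which is strict étale over $\Spec(F\to k[F])$. It then resolves by a log blow-up via toric resolution (Kato's (10.4), which applies to any log regular fs log scheme, so your torsion worry is moot), and lifts points with trivial residue field extension by \cref{lem:log-blowup-fibres}. The only difference is that you do the log blow-up on the toric chart and pull it back along the étale map, whereas the paper blows up $X^{(0)}_\star$ directly.

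One point to tighten: instead of ``taking the component dominating $Z$'', observe that $Z'=Z\times_T T'$ is already irreducible, since its generic points lie over the dense torus, where $Z'\to Z$ is an isomorphism. This guarantees that the point $x'$ you construct lies on the integral regular scheme you keep.
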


\begin{proof}
We are in the situation of \cref{setup:surgery}.
The normalisation map $\nu \colon X^{(0)} \to X$ induces isomorphisms on residue fields by \cref{lem:surgery-properties}.
Moreover, the log scheme~$X_\star^{(0)}$ (which has the same underlying scheme as~$X^{(0)}$ but with the compactifying log structure defined by the locally constant locus in~$X^{(0)}$) is fs and smooth over~$k$ (with the trivial log structure).
By toric resolution of singularities (see \cite[(10.4)]{KatoToricSingularities}) there exists a log blowup $Z \to X_\star^{(0)}$ such that~$\underline{Z}$ is regular.
Now $\underline{Z} \to \underline{X}_\star^{(0)} \to \underline{X}$ is surjective and every point $x \in \underline{X}$ can be lifted to a point $x' \in \underline{X}_\star^{(0)}$ with trivial residue field extension.
Moreover, from \cref{lem:log-blowup-fibres} we can further lift~$x'$ to $z\in \underline{Z}$  with $k(z)=k(x')$.
This shows the cdh uniformisation property.
\end{proof}

We now come back to the setting we are interested in:
we want to study tameness at the boundary of finite Kummer étale morphisms $\fY \to \fX$ of log formal schemes over~$K^\circ$.

\begin{prop} \label{prop:tameness-comparison}
    Let $\fX$ be a strictly semistable log formal scheme over $K^\circ$ and let $\fY\to\fX$ be a finite Kummer \'etale map.  Assume that $K$ is either algebraically closed or discretely valued. 
 Then the following are equivalent:
    \begin{enumerate}[(a)]
        \item the finite Kummer \'etale map of special fibres $\fY_k\to \fX_k$ is tame relative to $k$,
    
        \item the finite \'etale map of generic fibres $\fY_\rig\to\fX_\rig$ is tame relative to $K$.
    \end{enumerate}
\end{prop}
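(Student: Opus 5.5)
The plan is to reduce the Kummer \'etale statement to the strict \'etale comparison \cref{prop:tameness-comparison-etale} by passing to a Kummer cover that makes $\fY\to\fX$ strict, and then to descend tameness back down.

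First, localise. Both conditions are local on $\fX$ for the Zariski topology, by \cref{lem:tame-log-properties}~\ref{lemitem:tame-log-properties-tame-local} on the special fibre and by the pointwise definition via root triples on the generic fibre. We may therefore assume that $\fX$ is affine with a global chart as in \cref{lem:strictly-semistable-formal-scheme-reduces-to-setup364}, namely a chart $P = P_n(\nu(\pi))$ over $M=\Gamma_K^+$.

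Second, make the map strict. By \cref{cor:fKet-local-structure}, applied to the special fibre and then lifted to $\fX$ via \cref{prop:FEt-formal-vs-sp-fibre}, there is a Kummer \'etale map $P\to Q$ such that $\fY_Q\to \fX_Q$ is strict \'etale, where $\fX_Q=\fX\times_{\Spf(\bZ[P])}\Spf(\bZ[Q])$. The object $\fX_Q$ is Kummer \'etale over $\fX$, so by \cref{prop:log-sm-vertical} it is smooth and vertical, and in particular $\eta$-normal. Its special fibre is log smooth over $\Spec(M'\to k)$ with a Zariski chart, where $M'$ is the saturation of $M$ in $Q$ (still finitely generated over a divisible valuative monoid). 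Then \cref{lem:log-sm-cdh-unif} shows that $(\fX_Q)_k$ has the cdh uniformisation property. Since $\fY_Q\to\fX_Q$ is strict finite \'etale, \cref{lem:tame-log-properties}~\ref{lemitem:tame-log-properties-strict} identifies tameness of $(\fY_Q)_k\to(\fX_Q)_k$ relative to $k$ with scheme-theoretic tameness of the underlying map. \cref{prop:tameness-comparison-etale} then gives the equivalence for $\fY_Q\to\fX_Q$.

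Third, descend along $\fX_Q\to\fX$. On the special fibre this is exactly \cref{lem:tame-log-properties}~\ref{lemitem:tame-log-properties-XQ}: $\fY_k\to\fX_k$ is tame if and only if its base change to $(\fX_Q)_k$ is. On the generic fibre, $(\fX_Q)_\rig\to\fX_\rig$ is finite \'etale, and if $[Q^\gp:P^\gp]$ is prime to $p$ we expect it to be tame relative to $K$: at a root triple it is generated by $n$-th roots of elements of the form $\alpha(q)$ with $p\nmid n$, and these are invertible on the generic fibre, so \cref{lem:tameness-conditions}~\ref{lemitem:tame-abhyankar1} applies. We also need $(\fX_Q)_\rig\to\fX_\rig$ to be surjective, which holds because it is finite \'etale of positive degree on each component. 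Given this, \cref{lem:properties-tameness-relative-S}~\labelcref{lemitem:sorite-tameness-relative-S3} reduces tameness of $\fY_\rig\to\fX_\rig$ to tameness of $(\fY_Q)_\rig\to(\fX_Q)_\rig$. Here we use that $(\fY_Q)_\rig=\fY_\rig\times_{\fX_\rig}(\fX_Q)_\rig$, because the rigid generic fibre commutes with saturated fibre products: on the generic fibre the log structure is trivial and saturation does not change it. Both directions of the statement then follow.

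The step I expect to be the main obstacle is the generic-fibre descent in the third step. We must check that the Kummer cover $(\fX_Q)_\rig\to\fX_\rig$ is tame relative to $K$ at every root triple, including the boundary points of the universal compactification. The elements $\alpha(q)$ generating the cover must be units there, which verticality should give. The argument must also cover the case where $P$ contains the divisible part $\Gamma_K^+$ and $K$ is algebraically closed: there the cover only extracts roots of the $T_i$, but one still has to handle the ``unit'' part of $Q^\gp/P^\gp$ carefully so that the ramification index stays prime to $p$ even in the composite valuations, as in the phenomenon illustrated by \cref{composition-tame}.
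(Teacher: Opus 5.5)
Your proposal follows the paper's proof step for step: localise to a Zariski chart, and base change along a Kummer \'etale $P\to Q$ that makes $\fY_Q\to\fX_Q$ strict. Then get the cdh uniformisation property of $(\fX_Q)_k$ from \cref{lem:log-sm-cdh-unif}, and conclude for $\fY_Q\to\fX_Q$ by \cref{lem:tame-log-properties}~\ref{lemitem:tame-log-properties-strict} and \cref{prop:tameness-comparison-etale}. Finally, descend along $\fX_Q\to\fX$ using \cref{lem:tame-log-properties}~\ref{lemitem:tame-log-properties-XQ} on the special fibre and the analogous argument on the generic fibre; the paper only asserts the generic-fibre step (``the same reasoning applies''), and you carry it out correctly.

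Your anticipated obstacle is not one. Every root triple over $\Spa(K)$ has the same residue characteristic $p$ as $K^\circ$, and $(\fX_Q)_\rig\to\fX_\rig$ is a torsor under the finite diagonalisable group with character group $Q^{\rm gp}/P^{\rm gp}$, of order prime to $p$. So it is tame at every root triple directly, with no need to decompose the valuation.

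One correction: take $\Spec(M\to k)$ itself as the base of $(\fX_Q)_k$, as the paper does (the composite $M\to P\to Q$ is smooth), not $\Spec(M'\to k)$ with $M'$ the saturation of $M$ in $Q$. Nonzero elements of $M'$ need not vanish on $(\fX_Q)_k$: for $K$ discretely valued, $P=P_n(\nu(\pi))\simeq\bN^n$ and $Q=\frac{1}{2}P$, the element $\frac{1}{2}(e_1+\dots+e_n)$ maps to $y_1\cdots y_n\neq 0$ in $k[y_1,\dots,y_n]/(y_1^2\cdots y_n^2)$. Hence in general there is no morphism $(\fX_Q)_k\to\Spec(M'\to k)$ at all.
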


\begin{proof}
We set $M = \Gamma_K^+$. 
Since the assertion is Zariski local on $\fX$, we may assume by \cref{lem:strictly-semistable-formal-scheme-reduces-to-setup364} that there is a chart
\[
    \fX \la \Spf(P \to K^\circ \otimes_{k[M]}k[P])
\]
with $M \to P$ a smooth morphism. 
For a Kummer homomorphism of monoids $P \to Q$ we consider the cartesian squares
\begin{equation} \label{diagram:cover-P-Q}
 \begin{tikzcd}
     \fY_Q    \ar[r]  \ar[d]  
     & \fX_Q   \ar[r]  \ar[d]  &
     \Spf(Q \to K^\circ \otimes_{k[M]}k[Q])  \ar[d]  \\
     \fY      \ar[r]          
     & \fX     \ar[r]       &    \Spf(P \to K^\circ \otimes_{k[M]}k[P]).
 \end{tikzcd}
\end{equation}
Passing to special fibres we obtain an analogous diagram of log schemes over~$k$.
By \cref{cor:fKet-local-structure} we can choose $P \to Q$ in such a way that $\fY_{Q,k} \to \fX_{Q,k}$ is strict étale.
Since strict étaleness can be checked on the special fibre, also $\fY_Q \to \fX_Q$ is strict étale.
The morphism $(\fX_Q)_k \to \fX_k$ is automatically tame relative to~$k$ as it is a torsor under a finite abelian group scheme of degree prime to the residue characteristic. 
So in the base change of the diagram~(\ref{diagram:cover-P-Q}) to $k$, the left and middle vertical arrows are tame relative to~$k$ and therefore the upper left horizontal arrow $(\fY_Q)_k \to (\fX_Q)_k$ is tame relative to~$k$ if and only if the lower left horizontal arrow $\fY_k \to \fX_k$ is tame relative to~$k$ (see \cref{lem:tame-log-properties}\ref{lemitem:tame-log-properties-XQ}). The same reasoning applies to the generic fibres.

This reduces us to proving the proposition for $\fY_Q \to \fX_Q$ instead of $\fY \to \fX$.
In particular, we may assume that our morphism is strict étale.
Note that by construction $\fX_Q\to \Spf(K^\circ)$ admits a global chart such that the special fibre $\fX_{Q,k}^{\log}$ satisfies the \cref{setup:surgery}, in particular because the composition $M \to P \to Q$ is smooth. Therefore \cref{lem:log-sm-cdh-unif} applies, and hence its special fibre $\fX_{Q,k}$ has the cdh uniformisation property. 
By \cref{lem:tame-log-properties}\ref{lemitem:tame-log-properties-strict} we can thus forget about the log structure on~$\fX$ and just study finite étale morphisms $\fY \to \fX$ of $\eta$-normal formal schemes such that~$\fX_k$ has the cdh uniformisation property.
Then the result follows from \cref{prop:tameness-comparison-etale}.
\end{proof}

Combining with \cref{cor:generic fibre and eta normalisation}, we obtain the main result of this section:

\begin{cor} \label{cor:Abhy-infty-equiv}
    Let $\fX$ be a strictly semistable log formal scheme over $K^\circ$. Assume that $K$ is either algebraically closed or discretely valued. 
    The functor ``generic fibre'' 
    $\FEt_{\fX_k}\simeq \FEt_\fX\to \FEt_{\fX_\rig}$ induces equivalences
    \[
        \FEtt_{\fX_k/k} \simeq \FEtt_{\fX/K^\circ} \simeq \FEtt_{\fX_\rig/K}.
    \]
\end{cor}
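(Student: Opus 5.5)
The plan is to assemble the corollary from three ingredients that are already in place: the untwisted equivalences of \cref{prop:FEt-formal-vs-sp-fibre} and \cref{cor:generic fibre and eta normalisation}, the tameness comparison of \cref{prop:tameness-comparison}, and the formal argument of \cref{prop:tameness-comparison-etale}.

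First, I need a working definition of $\FEtt_{\fX/K^\circ}$ for the log formal scheme $\fX$. I will take it to be the full subcategory of $\FEt_\fX$ consisting of those $\fY\to\fX$ whose special fibre $\fY_k\to\fX_k$ is tame relative to $k$ in the sense of \cref{def:tame-log}. Equivalently, it consists of those $\fY\to\fX$ whose generic fibre is tame relative to $K$; once the two conditions are shown to agree, the choice does not matter.

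Second, I will use the two known equivalences. \cref{prop:FEt-formal-vs-sp-fibre} gives $\FEt_\fX\simeq\FEt_{\fX_k}$ via restriction to the special fibre. \cref{cor:generic fibre and eta normalisation} gives $\FEt_\fX\simeq\FEtt_{\fX_\rig}$ via the rigid generic fibre, whose inverse is $\eta$-normalisation equipped with the standard log structure. The latter requires $K$ to be algebraically closed or discretely valued and $\fX$ to be semistable with the standard log structure, and both hypotheses hold here.

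The main step is to check that these two equivalences match up the full subcategories cut out by tameness. Let $\fY\to\fX$ be an object of $\FEt_\fX$, so that its special fibre $\fY_k\to\fX_k$ and its generic fibre $\fY_\rig\to\fX_\rig$ both correspond to it. By \cref{prop:tameness-comparison}, $\fY_k\to\fX_k$ is tame relative to $k$ if and only if $\fY_\rig\to\fX_\rig$ is tame relative to $K$. Since $\FEtt_{\fX_\rig/K}\subseteq\FEtt_{\fX_\rig}$ is a full subcategory, and every cover that is tame relative to $K$ is in particular absolutely tame by \cref{rmks:relative-tameness}, the essential images of the tame subcategories correspond under both equivalences. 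Restricting each equivalence to these full subcategories then yields equivalences between them.

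I expect no real obstacle, since \cref{prop:tameness-comparison} was proved precisely for this purpose. The one point needing care is the definition of $\FEtt_{\fX/K^\circ}$, and in particular whether it should instead be defined through the vertical compactification $\Spa(\fX^{\rm ad},K^\circ)$ in analogy with the strict case of \cref{prop:tameness-comparison-etale}. If the paper intends that definition, I will reduce to the strict étale case by the Kummer base change trick from the proof of \cref{prop:tameness-comparison}. I will then invoke the equivalence (a)$\Leftrightarrow$(b) in \cref{prop:tameness-comparison-etale}, using that $\fX_Q$ is $\eta$-normal by \cref{prop:log-sm-vertical} and that $\fX_{Q,k}$ has the cdh uniformisation property by \cref{lem:log-sm-cdh-unif}.
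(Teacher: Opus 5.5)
Your proposal is correct and follows the paper's own route: the corollary comes from combining \cref{prop:tameness-comparison} with the untwisted equivalences of \cref{prop:FEt-formal-vs-sp-fibre} and \cref{cor:generic fibre and eta normalisation}, then restricting to the full subcategories cut out by tameness. The paper never defines $\FEtt_{\fX/K^\circ}$ for Kummer covers, so your choice to define it via tameness of either fibre is the natural reading; a definition through $\Spa(\fX^{\rm ad},K^\circ)$ would only make sense directly for strict \'etale covers, which is exactly why your fallback reduces to that case.
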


\cref{lem:log-sm-cdh-unif} is a crucial ingredient in the proof of \cref{cor:Abhy-infty-equiv}. In \cref{lem:log-sm-cdh-unif} it is important that the chart exists Zariski locally and not only étale locally because the cdh uniformisation property is not étale local.
But this property was key to constructing sufficiently nice points where to test tameness.
This is the reason why we work with \emph{strictly} semistable formal models in this subsection. Note that the nodal cubic in \cref{ex:nodal-curve-noncdh} is the special fibre of a semistable but not strictly semistable formal scheme over $\mathbb{R}(\!(\pi)\!)$.

\subsection{Finite generation, finite presentation, and examples}
\label{ss:tamepi1-rig-fgfp}

We are now ready to prove the main results of this section regarding finite generation and finite presentation of $\pitame(X/K)$. 

\begin{thm} \label{thm:fg-pi1-rig}
    Let $K$ be a non-archimedean field whose tame Galois group $\pitame(K)$ is finitely generated. Let $X$ be a qcqs connected rigid analytic space over $K$ with geometric point $\bar x\to X$. 
    Then $\pitame(X/K,\bar x)$ is finitely generated.
\end{thm}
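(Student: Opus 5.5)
The plan is to follow the chain of reductions sketched in the introduction: first reduce to $K$ algebraically closed, then to $X$ smooth, then to $X$ having a strictly semistable formal model, and finally conclude via the comparison with the special fibre.

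\emph{Step 1: passing to $C$.} Let $C$ be a completed algebraic closure of $K$. I would like to apply \cref{cor:fg-alg-closed}, but it requires $X$ to be geometrically connected. In general, $X$ becomes geometrically connected over a finite separable extension $K'$, in the sense that $X \to \Spa(K')$ with $X$ geometrically connected over $K'$ (for instance, take $K'$ to be the algebraic closure of $K$ in $\cO(X)$, which is finite for connected qcqs $X$). Tameness relative to $K$ and relative to $K'$ coincide, by \cref{lem:properties-tameness-relative-S}\labelcref{lemitem:sorite-tameness-relative-S2}, since $\Spa(K')\to\Spa(K)$ is finite and hence vertically partially proper. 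By \cref{cor:pitame injective for fields}, $\pitame(K')$ is open in $\pitame(K)$, so it is again finitely generated. Hence we may assume $X$ is geometrically connected, and by \cref{cor:fg-alg-closed} it suffices to show that $\pitame(Y/C)$ is finitely generated for every connected qcqs rigid space $Y$ over an algebraically closed $C$.

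\emph{Step 2: reduction to the semistable case.} Since $C$ is perfect, \cref{prop:h-locally-regular} gives a qcqs smooth $Y'$ with a surjection $Y'\to Y$. By \cref{cor:descent-of-fg-fett}, it suffices to treat each connected component of $Y'$, so we may assume $Y$ is smooth. Then \cref{thm:Temkin-unif}, where the finite extension $L$ of $C$ is trivial, gives an affine strictly semistable $\fY$ over $C^\circ$ with a surjective étale map $\fY_\rig\to Y$. Both spaces are qcqs, since $\fY$ is affine of finite type. Applying \cref{cor:descent-of-fg-fett} once more, together with the fact that connected components of $\fY_\rig$ come from connected components of $\fY$ (by $\eta$-normality and the fact that $\fY_\rig\to\fY$ induces a bijection on $\pi_0$), we reduce to $X=\fX_\rig$ with $\fX$ connected, affine and strictly semistable over $C^\circ$.

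\emph{Step 3: conclusion.} By \cref{cor:Abhy-infty-equiv}, we have $\pitame(X/C)\simeq\pitame(\fX^{\log}_k/k)$ after a choice of compatible base points. By \cref{lem:strictly-semistable-formal-scheme-reduces-to-setup364}, $\fX_k^{\log}$ is of type $\typeVd$, its underlying scheme is of finite type over the algebraically closed field $k$, and it is connected because $\fX$ is. Therefore \cref{thm:pi1tame log is fg} gives finite generation.

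The main obstacle I expect is the bookkeeping in Steps 1 and 2: checking that connectedness hypotheses and base points survive each reduction. The most delicate point is the geometric connectedness needed for \cref{prop:weak-fundamental-exact-seq}, together with verifying that the Stein-type field $K'$ exists and is finite for qcqs connected rigid spaces; I would first try to extract this from Raynaud's equivalence applied to a normal formal model.
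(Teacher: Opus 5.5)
Your proposal is correct and follows the paper's proof essentially step by step: reduce to $C$ via \cref{cor:fg-alg-closed}, pass to a smooth cover via \cref{prop:h-locally-regular}, then to strictly semistable reduction via \cref{thm:Temkin-unif} and \cref{cor:descent-of-fg-fett}, and conclude with \cref{cor:Abhy-infty-equiv} and \cref{thm:pi1tame log is fg}. The only difference is in Step 1, where the paper avoids your field-of-constants construction. Your recipe does not give a field when $X$ is non-reduced (nilpotents are algebraic over $K$), and geometric connectedness over $K'$ would still need a Galois-descent argument; the paper instead cites Conrad for a finite $K'$ such that the connected components of $X_{K'}$ are geometrically connected, and descends along the surjection $X_{K'}\to X$ using \cref{cor:descent-of-fg-fett}.
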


\begin{proof} 
Let $C$ be a completed algebraic closure of $K$. There exists a finite extension $K'\subseteq C$ of $K$ such that the connected components of $X_{K'}$ are geometrically connected (see \cite[Corollary 3.2.3]{Conrad:IrreducibleComponents}). By descent (\cref{cor:descent-of-fg-fett}), we may replace $X$ with one of those components and $K$ with $K'$ and hence assume that $X$ is geometrically connected. By \cref{cor:fg-alg-closed}, we may then replace $K$ with $C$ and $X$ with $X_C$. We have thus reduced to the case where $K$ is algebraically closed. 
   
Let $Y\to X$ be a surjection from a regular (and hence smooth, since $K$ is algebraically closed) qcqs rigid space $Y$, which exists by \cref{prop:h-locally-regular}. By Temkin's uniformisation result recalled as \cref{thm:Temkin-unif}, there exists an \'etale surjection $Z\to Y$ such that $Z$ is qcqs and admits a strictly semistable formal model~$\fZ$.

Let $\fW^{\rm log}$ be a connected component of $\fZ$ endowed with the standard log structure and $W\subseteq Z$ the corresponding connected component of $Z$. By \cref{cor:Abhy-infty-equiv} we have an isomorphism of profinite groups
\[ 
    \pitame(W/K) \simeq \pitame(\fW^{\rm log}_k/k).
\]
Combining this with \cref{thm:pi1tame log is fg}, we conclude that the tame fundamental group of every connected component of~$Z$ is finitely generated. Applying descent (\cref{cor:descent-of-fg-fett}) to the surjective map $Z\to X$, we deduce the finite generation of $\pitame(X/K)$.
\end{proof}

\begin{cor}
    In the setting of \cref{thm:fg-pi1-rig}, the prime-to-$p$ quotient of the étale fundamental group $\pi_1(X)$ is finitely generated (where~$p$ is the residue characteristic exponent of~$K$).
\end{cor}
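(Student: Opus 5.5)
The plan is to realise the prime-to-$p$ quotient $\pi_1(X,\bar x)^{\wedge p'}$ as a quotient of $\pitame(X/K,\bar x)$ and then apply \cref{thm:fg-pi1-rig}. A quotient of a finitely generated profinite group is finitely generated, so this settles the claim, with the hypothesis that $\pitame(K)$ is finitely generated inherited from \cref{thm:fg-pi1-rig}. The argument mirrors the corresponding corollary for schemes following \cref{thm:naked pi1tame fg}.

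First, I consider the full subcategory $\cC'\subseteq\FEt_X$ of prime-to-$p$ objects in the sense of \cref{def:prime-to-p-obj}. These are the covers whose connected components have Galois closures of degree prime to $p$. By \cref{rmk:prime-to-p-obj} this is a Galois subcategory, and its fundamental group is $\pi_1(X,\bar x)^{\wedge p'}$.

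Next, I check that $\cC'\subseteq\FEtt_{X/K}$. Let $Y\to X$ be a connected cover whose Galois closure $Y'\to X$ has degree $d$ prime to $p$. For any root triple $y'\in\Spa(Y',K)$ with image $x\in\Spa(X,K)$, the extension $k(y')/k(x)$ is Galois, and its group is a subquotient of $\Gal(Y'/X)$. In particular its degree divides $d$, so the inertia degree $[k(y')^\sh:k(x)^\sh]$ is prime to the residue characteristic. By \cref{defi:tame-extension-of-valued-fields} the extension is therefore tame.

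Here "residue characteristic" means that of $k(x)^+$, which contains $K^\circ$. This residue characteristic is $p$, or $0$ when $p=1$, because $K^\circ\subseteq k(x)^+$ is a local inclusion. Using \cref{lem:properties-tameness-relative-S}, I pass from $Y'$ to the subquotient $Y$, so $Y$ is tame relative to $K$.

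Then $\cC'$ is a Galois subcategory of the Galois category $\FEtt_{X/K}$, using \cref{lem:Galois-subcategory-criterion} as in \cref{rmk:prime-to-p-obj}. Hence the inclusion induces a surjection $\pitame(X/K,\bar x)\surj\pi_1(X,\bar x)^{\wedge p'}$.

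The only point needing real care is the degree bookkeeping for root triples not coming from points of $Y'$. These triples still have $k(y')$ equal to a residue field of a point of the Galois cover, with the decomposition group embedded in $\Gal(Y'/X)$, so I expect no genuine obstacle there.
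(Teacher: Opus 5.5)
Your proposal is correct and follows the paper's route. The paper likewise notes that the covers whose connected components have Galois closures of degree prime to $p$ form a full subcategory of $\FEtt_{X/K}$ satisfying \cref{lem:Galois-subcategory-criterion}, so that $\pi_1(X)^{\wedge p'}$ is a quotient of $\pitame(X/K)$ and is finitely generated by \cref{thm:fg-pi1-rig}; your degree count at root triples, combined with \cref{lem:properties-tameness-relative-S} to pass from the Galois closure to $Y$, spells out the inclusion into $\FEtt_{X/K}$ that the paper only asserts.
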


\begin{proof}
The category of all étale covers of~$X$ whose connected components have Galois closures of degree prime to~$p$ is a full subcategory of $\FEtt_{X/K}$ satisfying the conditions of \cref{lem:Galois-subcategory-criterion}.
The corresponding fundamental group is thus a quotient of $\pitame(X/K)$ and inherits the property of being finitely generated.
\end{proof}

Let us take a look at Berkovich's tame fundamental group~$\pi_1^B(X/K)$ studied in \cite[\S6.3]{Berkovich:EtaleCohomology}.
In \cref{rmk:Berkovich-covers} we noted that $\pi_1^B(X/K)$ is a quotient of $\pitame(X/K)$.
Consequently, we can deduce from \cref{thm:fg-pi1-rig} that it is finitely generated:

\begin{cor}
    In the setting of \cref{thm:fg-pi1-rig}, Berkovich's tame fundamental group $\pi_1^B(X/K)$ is finitely generated.
\end{cor}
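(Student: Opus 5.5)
The plan is to deduce the corollary formally from \cref{thm:fg-pi1-rig} via the Galois-theoretic quotient mechanism of \cref{lem:Galois-subcategory-criterion}, exactly as in the preceding corollary on prime-to-$p$ quotients. By \cref{rmk:Berkovich-covers}, Berkovich's tame covers form a Galois subcategory of $\FEtt_{X/K}$ with fundamental group $\pi_1^B(X/K)$. Because the inclusion is an exact functor between Galois categories with compatible fibre functors $F_{\bar x}$, it yields a continuous homomorphism $\pitame(X/K,\bar x)\to \pi_1^B(X/K,\bar x)$, which is surjective since the functor is fully faithful (\cite[V Proposition~6.9]{SGA1}). Continuous images of finitely generated profinite groups are finitely generated, so \cref{thm:fg-pi1-rig} gives the claim.

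The one step requiring care, and the only genuine obstacle, is the assertion of \cref{rmk:Berkovich-covers} that Berkovich covers are tame relative to $K$ and satisfy conditions (i)--(iii) of \cref{lem:Galois-subcategory-criterion}. For tameness, take a connected Berkovich cover with Galois closure $Y'\to X$. At every root triple $y$ of $\Spa(Y',K)$ over $x$, the completed degree $[\widehat{k(y^\circ)}:\widehat{k(x^\circ)}]$ is prime to $p$. Consequently the extension $(k(y),k(y)^+)/(k(x),k(x)^+)$, whose strict henselisation degree divides this local degree, is tame in the sense of \cref{defi:tame-extension-of-valued-fields}. Closure under quotients, connected components and products then comes from characterising Berkovich covers as the finite sets on which the action factors through a quotient in which each point stabiliser image has prime-to-$p$ index. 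If this check proves delicate, I would instead define the Galois subcategory directly as the $\pitame(X/K)$-sets cut out by that characterisation and note that it matches Berkovich's category.
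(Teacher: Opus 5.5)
Your proposal is correct and follows the paper's proof exactly. The paper cites \cref{rmk:Berkovich-covers} to view $\pi_1^B(X/K)$ as a quotient of $\pitame(X/K)$, applies \cref{thm:fg-pi1-rig}, and concludes because quotients of finitely generated profinite groups are finitely generated. Your extra check of that remark is not in the paper, and its divisibility step genuinely needs two things you use only implicitly: passing to the Galois closure, and henselianity of $k(x^\circ)$, so that the completed degree equals $[k(y^\circ):k(x^\circ)]$.
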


The method of proof of \cref{thm:fg-pi1-rig} is inexplicit, in that it does not provide a formula for $\pitame(X/K)$. Below, we show how $\pitame(X/K)$ can be computed in practice.  We use \cref{cor:Abhy-infty-equiv} and the results of \S\ref{ss:surgery} to give a partial result regarding the finite presentation of $\pitame(X/K)$ of a rigid space $X$ over an algebraically closed $K$. Afterwards, we use the same approach to compute some examples.

\begin{thm} \label{thm:fp-sometimes}
    Let $K$ be an algebraically closed non-archimedean field and let $X/K$ be the rigid generic fibre of a connected quasi-compact strictly semistable formal scheme $\fX/K^\circ$. Let $U\subseteq \fX_k$ be the smooth locus of the special fibre (with no log structure). Suppose that $U$ admits a projective snc compactification. Then $\pitame(X/K)$ is finitely presented. 
\end{thm}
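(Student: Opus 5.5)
The plan is to transport the problem to the special fibre and then invoke the surgery result. Since $K$ is algebraically closed, \cref{cor:Abhy-infty-equiv} gives an equivalence of Galois categories $\FEtt_{\fX_\rig/K}\simeq \FEtt_{\fX^{\log}_k/k}$. With compatible base points this yields
\[
    \pitame(X/K)\simeq \pitame(\fX^{\log}_k/k).
\]
The rigid space $X$ is connected because $\fX$ is connected and $\eta$-normal (\cref{prop:log-sm-vertical}), and $\fX_k$ is connected as well. It therefore suffices to show that $\pitame(\fX^{\log}_k/k)$ is finitely presented.

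Next we place $\fX^{\log}_k$ into \cref{setup:surgery}. By \cref{lem:strictly-semistable-formal-scheme-reduces-to-setup364}, Zariski locally on $\fX$ there is a strict étale chart by $P_n(\nu(\pi))$ over $M=\Gamma_K^+$. Here $M$ is a sharp divisible valuative monoid because $K$ is algebraically closed, and $M\to P_n(\nu(\pi))$ is smooth. Reducing modulo the maximal ideal gives exactly the Zariski-local charts required by \cref{setup:surgery}, with $S=\Spec(M\to k)$. The special fibre is quasi-compact and connected, so the hypotheses of \cref{cor:surgery-fp} are met apart from the compactification assumption.

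We must therefore identify $U^{(0)}$ with the smooth locus $U$ of $\fX_k$. Locally $\fX_k$ is étale over $\Spec(k[T_1,\dots,T_n]/(T_1\cdots T_s))$. The normalisation $X^{(0)}$ is the disjoint union of the components $T_j=0$. The locus in $X^{(0)}$ where $\overline{\cM}$ is locally constant is the open stratum of each component, namely where exactly one $T_j$ vanishes. This is the preimage of the smooth locus of $\fX_k$. On that locus $\nu$ is an isomorphism, since the smooth locus meets only one branch, so $U^{(0)}\simeq U$ as schemes. Checking this identification in the local model, and confirming it glues, is the only real verification required.

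The hypothesis that $U$ has a projective snc compactification then gives the projective toroidal compactification of $U^{(0)}$ that \cref{cor:surgery-fp} asks for. That corollary yields finite presentation of $\pitame(\fX^{\log}_k/k)$, hence of $\pitame(X/K)$. The main obstacle is the matching between $U^{(0)}$ and $U$; everything else is a direct chain of earlier results.
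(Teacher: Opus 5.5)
Your proposal is correct and is essentially the paper's proof: it uses the same chain \cref{cor:Abhy-infty-equiv}, then \cref{lem:strictly-semistable-formal-scheme-reduces-to-setup364} to land in \cref{setup:surgery}, then \cref{cor:surgery-fp}. The one addition is your explicit check that $U^{(0)}\simeq U$. There you show that the locally constant locus of $\overline{\cM}$ on the normalisation is exactly $\nu^{-1}$ of the smooth locus, where $\nu$ is an isomorphism. The paper leaves this identification implicit, and your local computation of it is sound.
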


\begin{proof}
By \cref{cor:Abhy-infty-equiv} we have $\pitame(X/K)\simeq \pitame(\fX_k/k)$.
The special fibre $\fX_k$ endowed with the restriction of the standard log structure of $\fX$ satisfies the assumptions in \cref{setup:surgery} by \cref{lem:strictly-semistable-formal-scheme-reduces-to-setup364}. We conclude that $\pitame(\fX_k/k)$ is finitely presented thanks to \cref{cor:surgery-fp}.
\end{proof}

\begin{rmk}
    Our methods show that in order to check tameness, it suffices to check a finite number of rank one and two points of the universal compactification. More precisely, in the situation of \cref{thm:fp-sometimes}, let $U\hookrightarrow \overline{U}$ be an snc compactification, let $\beta_1, \dots, \beta_n$ be the generic points of $\fX_k$, and let $\gamma_1, \dots, \gamma_r$ be the generic points of $\overline{U}\setminus U$. As noted in \cref{rmk:sp-facts}, for each $i=1,\dots, n$ there is a rank one point $\zeta_i$ of $X$, the unique point specialising to $\beta_i$. Moreover, for each $i=1,\dots, r$, there is a rank two point $\xi_i$ of the universal compactification $\overline{X}$, whose valuation is described as the composition of the valuation corresponding to the irreducible component $Z$ of $\fX_k$ such that $\gamma_i$ lies in the closure of $Z \cap U$ with the discrete valuation on $k(Z) = k(Z \cap U)$ with valuation ring $\cO_{\overline{U}, \gamma_i}$. Then a finite \'etale cover $Y \to X$ is tame relative to $K$ if and only if it is tame above each of $\zeta_1,\dots,\zeta_n$ and $\xi_1, \dots, \xi_r$. 
\end{rmk}

\begin{lem}[Good reduction]
    Let $X/K$ be the generic fibre of a connected quasi-compact smooth formal scheme $\fX/K^\circ$.  Assume that $K$ is algebraically closed. 
 Let $\fX_k$ be the special fibre of $\fX$ (not equipped with a log structure!). Then $\pitame(X/K) \simeq \pitame(\fX_k/k)$.
\end{lem}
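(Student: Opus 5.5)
We reduce the statement to \cref{cor:Abhy-infty-equiv} and then strip the log structure on the special fibre.

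\emph{Step 1: $\fX$ is strictly semistable.} A smooth formal scheme is Zariski locally étale over $\Spf(K^\circ\langle T_1,\dots,T_n\rangle)$. By \cref{ex:standard-log-structure-Kcirc} with $s=1$, this is an open of the standard semistable model $\Spf(K^\circ\langle X_1,\dots,X_n\rangle/(\pi - X_1\cdots))$. So $\fX$ is strictly semistable, and \cref{cor:Abhy-infty-equiv} gives
\[
\pitame(X/K)\simeq \pitame(\fX^{\log}_k/k).
\]

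\emph{Step 2: the log structure on $\fX^{\log}_k$.} By \cref{lem:strictly-semistable-formal-scheme-reduces-to-setup364} in the case $s=1$, we have locally $P = P_1(\nu(\pi))\oplus\bN^{0}$ up to units. Concretely, $P\simeq M\oplus$ (a group), so $\ov\cM_{\fX^{\log}_k}$ is the constant sheaf $M=\Gamma_K^+$. Equivalently, $\fX^{\log}_k = \fX_k\times_{\Spec k}\Spec(M\to k)$ is strict over the log point. Hence the log structure is locally constant, with stalk $\ov\cM=\Gamma_K^+$.

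\emph{Step 3: strip the log structure.} Apply \cref{cor:torus-fibration-tame} to get the exact sequence
\[
\pi_1(\Gamma_K^+)\to \pitame(\fX^{\log}_k/k)\to\pitame(\fX_k/k)\to 1 .
\]
Since $K$ is algebraically closed, $\Gamma_K$ is divisible. Thus $\pi_1(\Gamma_K^+)=\Hom(\Gamma_K,\hZ'(1))=0$, and the middle map is an isomorphism. Combining with Step 1 gives $\pitame(X/K)\simeq\pitame(\fX_k/k)$.

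The main obstacle is Step 2: verifying that the charted log structure is really pulled back from the base in the smooth case. This needs that the unit $X_2\cdots X_n$ is absorbed into the units, so that $\ov\cM$ has no contribution from the $e_i$. Connectedness of $\fX_k$ follows from that of $\fX$, since $\fX$ is $\pi$-adic.
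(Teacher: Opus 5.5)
Your proof is correct and follows the paper's argument exactly. Smooth implies strictly semistable, so \cref{cor:Abhy-infty-equiv} gives $\pitame(X/K)\simeq\pitame(\fX^{\log}_k/k)$. Then \cref{cor:torus-fibration-tame} strips the constant log structure, because $\pi_1(\Gamma_K^+)=\Hom(\Gamma_K,\hZ'(1))=0$ by divisibility of $\Gamma_K$.

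One small correction to Step~1: \cref{ex:standard-log-structure-Kcirc} with $s=1$ exhibits the formal torus $\Spf(K^\circ\langle X_2^{\pm 1},\dots,X_n^{\pm 1}\rangle)$ as an open of the standard semistable model, not the formal affine space. So you should first translate the \'etale coordinates Zariski locally, replacing $T_i$ by $T_i+1$ near points where $T_i$ vanishes, so that they become units and the chart lands in the torus.
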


\begin{proof}
Recall that $\fX_k^{\log}$ denotes the special fibre endowed with log structure. Since smooth implies strictly semistable, we have $\pitame(X/K)\simeq \pitame(\fX_k^{\log}/k)$ by \cref{cor:Abhy-infty-equiv}. Since the log structure of $\fX_k^{\log}$ is constant, by \cref{cor:torus-fibration-tame} and the fact that the stalks of $\overline{\cM}_{\fX^{\log}_k}$ are divisible we get $\pitame(\fX^{\log}_k/k) \simeq \pitame(\fX_k/k)$. 
\end{proof} 

\begin{ex}[Product of polydiscs and circles] \label{ex:polydisc}
    Let $X = \Spa(K\langle x_1, \dots, x_n, y_1^{\pm 1},\dots, y_m^{\pm 1}\rangle)$.   
Then $X$ admits a smooth formal model 
    \[
        \fX = \Spf(K^\circ\langle x_1, \dots, x_n, y_1^{\pm 1},\dots, y_m^{\pm 1}\rangle)
    \]
    with special fibre $Y = (\bA^1)^n\times(\mathbb{G}_m)^m$. Suppose that $K$ is algebraically closed. By the K\"unneth formula for $\pitame(-/k)$ of smooth varieties with snc compactification \cite[Thm. 5.1]{Orgogozo2003:AlterationsGroupeFondamental} and the fact that $\pitame(\bA^1/k)=1$ and $\pitame(\mathbb{G}_m/k)\simeq \widehat{\bZ}'(1)$, we have 
    \[ 
        \pitame(X/K) \simeq \pitame(Y/k)\simeq  \widehat{\bZ}'(1)^m. 
    \]
\end{ex}

\begin{ex}[Annulus]
    Let $X=\Spa(K\langle x, y\rangle/(xy-\pi))$ be the affinoid annulus with outer radius $1$ and inner radius $|\pi|$ for some pseudouniformiser $\pi$.  Assume that $K$ is algebraically closed. 
The formal model $\fX = \Spf(K^\circ\langle x,y\rangle/(xy-\pi))$ is strictly semistable and its special fibre $\fX_k$ is the log scheme considered in \cref{ex:semistable-log-curve,ex:semistable-log-curve-2}. Consequently, we obtain
    \[
        \pitame(X/K)\simeq \pitame(\fX_k/k)\simeq \widehat{\bZ}'(1).
    \]
\end{ex}

\begin{ex}[Punctured disc]
   Let $X = \Spa(K\langle x\rangle)\setminus\{0\}$ be the punctured affinoid disc.  Assume that $K$ is algebraically closed. 
   We can write $X$ as the increasing union of affinoid annuli $X_n = \{|\pi|^n\leq |x|\leq 1\}$. 
   Analytification and restriction yield homomorphisms
   \[
   \pitame(X_n/K) \la \pitame(X_{n+1}/K) \la \ldots \la \pitame(X/K) \la \pitame(\mathbb{G}_{m,K}/K).
   \]
   By the previous example, we have $\pitame(X_n/K) \simeq \widehat{\mathbb{Z}}'(1)$ for every $n$ and the transition maps $\pitame(X_n/K)\to \pitame(X_{n+1}/K)$ as well as the composition of maps to $\pitame(\mathbb{G}_{m,K}/K) \simeq \widehat{\mathbb{Z}}'(1)$ are isomorphisms. In order to show that $\pitame(X/K)$ is also isomorphic to $\widehat{\mathbb{Z}}'(1)$, it remains to show that a connected $Y \in \FEtt(X/K)$ remains connected after restriction to $X_n$ for large enough $n$. As any two points in $Y$ can be joined by a connected quasi-compact subset $Z$ and $Y = \bigcup_n Y_{|X_n}$  is an open covering, this subset $Z$ eventually belongs to some $Y_{|X_n}$ and so these points are in the same connected component of $Y_{|X_n}$ for large $n$. Thus the constant colimit $\varinjlim_n \pi_0(Y_{|X_n})$ injects into $\pi_0(Y) = *$. This concludes the proof. 
\end{ex}

The case $n=3$ of the following computation answers a question of Paul Alexander Helminck.

\begin{ex}[Standard semistable formal scheme] \label{ex:helminck}
    Assume that $K$ is algebraically closed. 
    Let $X = \Spa(K\langle T_1,\ldots,T_n\rangle/(\pi - T_1 \cdot \ldots \cdot T_n))$, for some integer $n \geq 1$ and some pseudouniformiser $\pi \in K$. 
    Then $\fX = \Spf(K^\circ\langle T_1, \ldots, T_n\rangle/(\pi - T_1 \cdot \ldots \cdot T_n))$ is a strictly semistable formal model of~$X$ and by applying 
    \cref{lem:strictly-semistable-formal-scheme-reduces-to-setup364}
    we get the following description of its special fibre 
    \[
        \fX_k = \Spec\left(P_{n}(\pi) \to k[T_1, \ldots,T_n]/(T_1 \cdot \ldots \cdot T_n)\right)
    \]   
    where $P_{n}(\pi) = \Gamma_K^+[e_1, \ldots, e_n]/(e_1 + \ldots + e_n = \nu(\pi))$ as in \cref{lem:strictly-semistable-formal-scheme-reduces-to-setup364}, and $e_i \mapsto T_i$.
    By \cref{cor:Paul} we obtain
    \[
        \pitame(X/K)  = \pitame(\fX_k/k) = \pi_1(P_{n}(\pi))
        = \Hom\big(P_{n}(\pi)^\gp, \widehat{\bZ}'(1)\big) 
        \simeq \widehat{\bZ}'(1)^{n-1}.
    \]    
    Of course \cref{cor:Paul} applies also to more general ``polyhedral affinoids''.
\end{ex}

\printbibliography

\end{document}
